\documentclass{article}
\usepackage[utf8]{inputenc}
\usepackage[T1]{fontenc}
\usepackage{graphicx} 

\usepackage{amsmath}
\usepackage{mathrsfs}
\usepackage{mathtools}
\usepackage{geometry}
\usepackage{tikz-cd}
\usepackage{relsize}
\usepackage[bbgreekl]{mathbbol}
\usepackage{amsfonts}
\DeclareSymbolFontAlphabet{\mathbb}{AMSb} 
\DeclareSymbolFontAlphabet{\mathbbl}{bbold}
\usepackage{bbm}
\usepackage{amssymb}
\usepackage{stmaryrd}

\usepackage{graphicx}
\usepackage{float}
\usepackage[style = alphabetic, backend = biber, maxnames = 99, minalphanames=3, maxalphanames=4]{biblatex}
\usepackage{rotating}
\makeatletter
\@ifundefined{c@abspage}{
}{
    \let\c@abspage\relax
}
\makeatother
\usepackage{enumitem}
\usepackage{amsthm, thmtools}
\usepackage{mdframed}
\usepackage[hidelinks]{hyperref}
\usepackage[capitalise]{cleveref}
\usepackage{xcolor}
\definecolor{linecolor}{gray}{0.70}

\newmdenv[
    linecolor=linecolor,
    leftline=true,
    rightline=false,
    topline=false,
    bottomline=false,
    linewidth=2pt,
    skipabove=\topsep,
    skipbelow=\topsep,
    innerleftmargin=10pt,
    innerrightmargin=10pt,
    innertopmargin=0pt,
    innerbottommargin=0pt,
    backgroundcolor=white
]{remarkbox}

\newtheorem{proposition}{Proposition}[section]
\newtheorem{theorem}[proposition]{Theorem}
\newtheorem{lemma}[proposition]{Lemma}

\newtheorem{corollary}[proposition]{Corollary}
\newtheorem*{theorem*}{Theorem}
\newtheorem*{proposition*}{Proposition}

\newtheorem{mainthm}{Theorem}

\newtheorem{maincor}[mainthm]{Corollary}
\crefname{maincor}{Corollary}{Corollaries}

\newcommand{\SpacingBeforeRemarkBox}{\vspace{1mm}}
\newcommand{\SpacingAfterRemarkBox}{\vspace{1mm}}
\newenvironment{remark}{\SpacingBeforeRemarkBox\refstepcounter{proposition}\begin{remarkbox}\noindent\textbf{Remark \theproposition.} }{\end{remarkbox}\SpacingAfterRemarkBox}

\newenvironment{example}{\SpacingBeforeRemarkBox\refstepcounter{proposition}\begin{remarkbox}\noindent\textbf{Example \theproposition.} }{\end{remarkbox} \SpacingAfterRemarkBox}
\newenvironment{notation}{\SpacingBeforeRemarkBox\refstepcounter{proposition}\begin{remarkbox}\noindent\textbf{Notation \theproposition.} }{\end{remarkbox}\SpacingAfterRemarkBox}

\newenvironment{definition}{\SpacingBeforeRemarkBox\begin{remarkbox}\noindent\textbf{Definition.} }{\end{remarkbox}\SpacingAfterRemarkBox}
\newenvironment{construction}{\SpacingBeforeRemarkBox\refstepcounter{proposition}\begin{remarkbox}\noindent\textbf{Construction \theproposition.} }{\end{remarkbox}\SpacingAfterRemarkBox}
\newenvironment{assumption}{\SpacingBeforeRemarkBox\refstepcounter{proposition}\begin{remarkbox}\noindent\textbf{Assumption \theproposition.} }{\end{remarkbox}\SpacingAfterRemarkBox}

\newcommand{\A}{\mathbb{A}}

\newcommand{\C}{\mathbb{C}}
\newcommand{\F}{\mathbb{F}}
\newcommand{\G}{\mathbb{G}}

\renewcommand{\L}{\mathbb{L}}
\newcommand{\N}{\mathbb{N}}
\renewcommand{\O}{\mathcal{O}}
\renewcommand{\P}{\mathbb{P}}
\newcommand{\Q}{\mathbb{Q}}
\newcommand{\R}{\mathrm{R}}

\renewcommand{\S}{\mathbb{S}}
\newcommand{\T}{\mathbb{T}}
\newcommand{\V}{\mathbb{V}}
\newcommand{\Z}{\mathbb{Z}}

\newcommand{\mc}[1]{\mathcal{#1}}
\newcommand{\mr}[1]{\mathrm{#1}}

\newcommand{\ms}[1]{\mathscr{#1}}
\renewcommand{\bar}[1]{\overline{#1}}
\renewcommand{\tilde}[1]{\widetilde{#1}}
\renewcommand{\hat}[1]{\widehat{#1}}

\newcommand{\llb}{\llbracket}
\newcommand{\rrb}{\rrbracket}

\newcommand{\<}{\left<}
\renewcommand{\>}{\right>}

\newcommand{\modc}[1]{\;\left(\mathrm{mod}\, #1\right)}

\newcommand{\inj}[1][]{\overset{#1}{\hookrightarrow}}
\newcommand{\sur}[1][]{\xrightarrow{#1}\mathrel{\mkern-14mu}\rightarrow}
\newcommand{\prism}{{\mathlarger{\mathbbl{\Delta}}}}
\newcommand{\heart}{\ensuremath\heartsuit}

\usepackage{etoolbox}

\newcommand{\defineobject}[2]{\expandafter#1\csname#2\endcsname{\mathrm{#2}}}
\forcsvlist{\defineobject{\newcommand}}{Ab, AbPreShv, AbShv, add, Alg, an, Ani, Aut, BK, Bun, CAff, CAlg, can, Cat, Ch, cobar, Coh, LCondAn, conj, Corr, CRing, crys, Crys, cts, cyc, CycSp, CycSyn, dbl, Det, Diff, Div, dR, End, ev, Ext, FF, Fil, FmlStk, Frac, Frob, FSch, Fun, Gal, gl, GL, gp, Gpd, gr, Grp, Gys, HC, HH, HKR, Hodge, Hom, HP, HT, id, Ind, inv, Isom, Iw, KL, ko, KO, Kos, ku, KU, LCA, lf, lin, Loc, Log, LProFSet, map, Map, MF, Mod, mot, Nilp, Nm, Nyg, op, ord, per, perf, Perf, perfd, Pic, Poly, Pro, proj, PreShv, qcoh, QCoh, qdR, qsyn, qSyn, QSyn, qrsp, rad, Rees, refl, Refl, Rep, rig, Sch, Sel, Set, Shv, Span, Spin, syn, Syn, SynSp, Tate, TC, THH, Thom, Top, tor, Tor, Tot, TP, tr, Tr, TR, triv, Vect, WCart}

\newcommand{\defineoperator}[2]{\expandafter#1\csname#2\endcsname{\operatorname{#2}}}
\forcsvlist{\defineoperator{\newcommand}}{cofib, coker, Cone, Exp, fib, im, mSpec, Proj, rank, Sp, Spa, Spd, Spec, Spev, Spf, supp, Sym, TSym, LSym}

\newcommand{\et}{\mr{\acute et}}

\newcommand{\proet}{\mathrm{pro\acute et}}

\DeclareMathOperator*{\colim}{colim}

\newcommand{\cartesian}[2]{\arrow["\lrcorner"{anchor=center, pos=0.25}, draw=none, from=#1, to=#2]}
\newcommand{\cocartesian}[2]{\arrow["\lrcorner"{anchor=center, pos=0.25}, draw=none, from=#2, to=#1]}

\title{Coleman Isomorphisms in Syntomic Cohomology and $\THH$}
\author{Kush Singhal}
\date{}

\begin{document}

\maketitle

\begin{abstract}
    This paper proves a generalisation of Coleman's isomorphism (between norm compatible cyclotomic units and a group of invertible power series) to various cohomology theories evaluated on proper regular $p$-adic formal schemes, for future applications to Iwasawa theory. This generalisation is an immediate consequence of a description, as a cyclotomic synthetic spectrum, of the limit over transfer maps as one goes up the cyclotomic tower of motivically filtered $\THH$. This crucially uses a calculation of $\THH(\Z_p[\zeta_{p^n}])$ due to Devalapurkar--Raksit as well as a calculation of the free loop transfer due to Schlichtkrull. Along the way, we construct transfer maps for various cohomology theories along finite locally free regular maps, using some elements of $\P^1$-stable motivic homotopy theory.
\end{abstract}
\setcounter{tocdepth}{2}
\tableofcontents

\section{Introduction}

Throughout, fix an odd prime $p$. 

\subsection{Statement of Main Results}

Consider the power series ring in 1-variable $\Z_p\llb q-1\rrb \simeq \varprojlim_n \Z_p[\Z/p^n]$. This is equipped with a ring endomorphism $\varphi$, the Frobenius lift, uniquely determined by $\varphi(q) = q^p$. The ring $\Z_p\llb q-1\rrb$ is also equipped with a $\Z_p^\times$-action via ring maps induced by multiplication on $\Z/p^n$. This article generalises and gives a more conceptual interpretation and generalisaiton of the following computation of Coleman \cite{coleman1979division}. 
\begin{theorem}[Coleman]\label{thm::coleman_iso}
    Let $N_\varphi : \Z_p\llb q-1\rrb \to \Z_p\llb q-1\rrb$ denote the unique map determined by \[(\varphi \circ N_\varphi) (f(q)) = \prod_{\zeta\in \mu_p} f(\zeta q).\] 
    Then, there is a $\Z_p^\times$-equivariant isomorphism of abelian groups 
    \[\varprojlim_n \Z_p[\zeta_{p^n}]^\times \cong \left\{f\in \Z_p\llb q-1\rrb^\times : N_\varphi(f) = f\right\}\]
    where the left hand side is the inverse limit over norm maps.
\end{theorem}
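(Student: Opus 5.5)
We follow Coleman's original strategy, using Lubin--Tate theory for the multiplicative formal group $\hat{\G}_m$. Fix a compatible system of primitive $p^n$-th roots of unity $(\zeta_{p^n})_n$ and set $\pi_n = \zeta_{p^n}-1$, a uniformizer of $\Z_p[\zeta_{p^n}]$. Write elements of $\Z_p\llb q-1\rrb$ as power series $f(q)$, so that evaluation at $q=\zeta_{p^n}$ makes sense. The map will be
\[\{f\in \Z_p\llb q-1\rrb^\times : N_\varphi f = f\} \to \varprojlim_n \Z_p[\zeta_{p^n}]^\times,\qquad f\mapsto \bigl((\varphi^{-n} f)(\zeta_{p^n})\bigr)_n .\]
Because $\varphi$ does not act on $\Z_p\llb q-1\rrb$ invertibly, we make this precise as Coleman does: we evaluate $f$ at $\zeta_{p^{n}}$ with the Frobenius on coefficients twisted by $\mathrm{Frob}^{-n}$. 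Over $\Z_p$ this twist is trivial, so the map is simply $f\mapsto (f(\zeta_{p^n}))_n$.

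\textbf{Step 1: the map lands in the norm limit.} For $f$ with $N_\varphi f=f$, the defining identity gives
\[\prod_{\zeta\in\mu_p} f(\zeta q) = \varphi(f)(q) = f(q^p).\]
Substitute $q=\zeta_{p^{n+1}}$. The conjugates of $\zeta_{p^{n+1}}$ over $\Q_p(\zeta_{p^n})$ are exactly the elements $\zeta\zeta_{p^{n+1}}$ with $\zeta\in\mu_p$. Hence
\[N_{n+1,n}\bigl(f(\zeta_{p^{n+1}})\bigr)=f(\zeta_{p^n}),\]
so the system $(f(\zeta_{p^n}))_n$ is norm compatible. $\Z_p^\times$-equivariance is immediate, since $a\in\Z_p^\times$ acts by $q\mapsto q^a$ on one side and by $\zeta_{p^n}\mapsto\zeta_{p^n}^a$ on the other.

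\textbf{Step 2: injectivity.} If $f(\zeta_{p^n})=1$ for all $n$, then $f-1$ has infinitely many zeros in the open unit disc in the variable $q-1$. By Weierstrass preparation, $f-1=0$.

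\textbf{Step 3: surjectivity.} This is the main obstacle. Given a norm-compatible system $(u_n)$, write each $u_n = g_n(\pi_n)$ with $g_n\in\Z_p\llb T\rrb$. We first show that the operator $\mathcal N = \varphi^{-1}\circ\prod_{\zeta}(\cdot)(\zeta q)$ is well defined on $\Z_p\llb q-1\rrb^\times$. This uses the fact that $\prod_\zeta g(\zeta q)$ is invariant under $q\mapsto\zeta q$, and the invariants form the image of $\varphi$. We then prove Coleman's key estimate:
\[\mathcal N(g)\equiv g \pmod{p}\ \text{ implies }\ \mathcal N(g)\equiv \cdots,\qquad \text{more precisely}\quad g_1\equiv g_2 \pmod{p^k}\ \Rightarrow\ \mathcal N g_1\equiv \mathcal N g_2 \pmod{p^{k+1}}.\]
The proof goes by reducing mod $p$, where $\prod_\zeta g(\zeta q)\equiv g(q)^p \equiv \varphi(g)$ up to the Frobenius on coefficients. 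Using this estimate, the sequences $\mathcal N^{m}(g_{n+m})$ stabilise $p$-adically in $n$ and $m$. Compactness of $\Z_p\llb q-1\rrb$ then lets us extract a limit $f$ with $\mathcal N f=f$ and $f(\zeta_{p^n})=u_n$ for all $n$. Checking the last interpolation property requires that the $u_n$ are norm compatible along the whole tower.

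The delicate parts are the mod-$p$ congruence argument and the exact bookkeeping of Frobenius twists. Together these give the isomorphism of \Cref{thm::coleman_iso}.
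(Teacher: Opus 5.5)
Your argument is Coleman's original interpolation proof, which is a genuinely different route from the paper's. The paper does not reprove this statement; it cites Coleman. \cref{rmk::recover_coleman_from_thm} then explains how the statement falls out of \cref{maincor::higher_coleman_iso} for $X=\Spf\Z_p[\zeta_p]$ and $r=1$:
\begin{itemize}
\item via $\Z_p(1)^\syn\simeq\R\Gamma_\et(-,\G_m)^\wedge_p[-1]$, the norm limit becomes a limit of weight-one syntomic cohomology;
\item the corollary identifies this with $\fib(\mathrm{shift}-\id)$ on $\mc M_\Iw$, i.e.\ with $\fib(\tau-\id)$ on $\Z_p\llb q-1\rrb$;
\item $q\tfrac{d}{dq}\log$ converts $N_\varphi$-fixed units into $\tau$-fixed power series.
\end{itemize}
That route is a specialisation of the whole $\THH$ machinery and only sees the $p$-completed statement, so the factor $\mu_{p-1}$ is lost. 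Yours is elementary, integral, and makes the map explicit as evaluation at $\zeta_{p^n}$. Your Steps 1 and 2 are fine. In Step 1, the description of the conjugates of $\zeta_{p^{n+1}}$ needs $n\ge 1$, which is where the tower starts.

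The problem is Step 3, which is the real content of the theorem. Two things there are not right as written.

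\textbf{The key estimate is not proved by your argument.} The estimate $g_1\equiv g_2\pmod{p^k}\Rightarrow N_\varphi g_1\equiv N_\varphi g_2\pmod{p^{k+1}}$ (for $k\ge 1$, with $N_\varphi$ your $\mathcal N$) is true. But reducing mod $p$ cannot prove it. The congruence $\prod_\zeta g(\zeta q)\equiv g(q)^p\equiv\varphi(g)$ holds modulo $\zeta_p-1$, hence modulo $p$ for $\Z_p$-coefficients. This yields only the base case $N_\varphi g\equiv g\pmod p$. The gain of a power of $p$ needs a separate idea:
\begin{itemize}
\item By multiplicativity, reduce to $g=1+p^kh$.
\item Expand $\prod_{\zeta\in\mu_p}(1+p^kh(\zeta q))\equiv 1+p^k\sum_{\zeta}h(\zeta q)\pmod{p^{2k}}$.
\item Note that $\sum_{\zeta\in\mu_p}h(\zeta q)=p\,\varphi(\tau h)$, since the sum kills $q^i$ for $p\nmid i$ and multiplies $q^i$ by $p$ for $p\mid i$.
\item Pull the congruence back through $\varphi$ using $\Z_p\llb q-1\rrb=\bigoplus_{i=0}^{p-1}q^i\varphi(\Z_p\llb q-1\rrb)$, and use $p^m\Z_p[\zeta_p]\cap\Z_p=p^m\Z_p$ to get rid of the $\zeta_p$-coefficients.
\end{itemize}

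\textbf{The stabilisation claim is unjustified.} You assert that the $N_\varphi^m(g_{n+m})$ ``stabilise $p$-adically in $n$ and $m$''. But the $g_n$ are arbitrary lifts, determined only modulo $\Phi_{p^n}(q)$, so nothing controls these power series $p$-adically. What the estimates control is their values. The correct bookkeeping is as follows.
\begin{itemize}
\item The two congruences give $N_\varphi^s g\equiv N_\varphi^m g\pmod{p^{m+1}}$ for every unit $g$ and all $s\ge m$.
\item Norm compatibility gives $N_\varphi^s(g_N)(\zeta_{p^{N-s}})=u_{N-s}$.
\item Hence $h_m:=N_\varphi^m(g_{2m})$ satisfies $h_m(\zeta_{p^j})\equiv u_j\pmod{p^{m+1}}$ for $1\le j\le m$.
\item A limit point $f$ of $(h_m)$ in the compact ring $\Z_p\llb q-1\rrb$ then satisfies $f(\zeta_{p^j})=u_j$ for all $j$.
\item Since also $(N_\varphi f)(\zeta_{p^j})=N_{j+1,j}(u_{j+1})=u_j$, your Step 2 gives $N_\varphi f=f$. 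Compactness alone does not give this.
\end{itemize}
With these two repairs the argument is complete.
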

This result (and its generalisation to Galois cohomology of de Rham representations by Cherbonnier--Colmez) is one of the key local inputs in Iwasawa theory, in particular the theory of explicit reciprocity laws\footnote{For a comprehensive survey of explicit reciprocity laws and the role that Coleman's isomorphism and its generalisations play in this story, see the survey of Venjakob \cite{venjakob2025explicit} and the references therein.}. These explicit reciprocity laws are, for example, used in the construction of $p$-adic $L$-functions, which are the image of certain motivic classes under the Perrin-Riou $p$-adic regulator map. This regulator map is constructed using Coleman's isomorphism as well as some (rational) $p$-adic Hodge theory. Integral versions of this regulator map have been constructed in the literature, albeit with fairly severe restrictions on Hodge-Tate weights (see for instance \cite{lei2012coleman}). Integrality is important, since this gives control over the valuations of special values of the $p$-adic $L$-function of the motive and allows one to prove instances of the generalised Iwasawa main conjecture or the Bloch-Kato conjecture. A construction of the regulator map that is both integral and geometric\footnote{Many constructions of Euler systems require playing games with cohomology theories, and a geometric construction gives one better access to the underlying geometry of the motive.} would thus require understanding how various cohomology theories of integral $p$-adic Hodge theory, like the prismatic cohomology or syntomic cohomology of \cite{bms2, BS_prisms}, interpolate in the cyclotomic tower.

With a view towards studying (in future work) an integral version of Perrin-Riou's regulator map and its role in constructing $p$-adic $L$-functions using modern integral $p$-adic Hodge theory, we generalise Coleman's isomorphism to higher dimensional algebraic (formal) varieties, other cohomology theories like prismatic cohomology, as well as to higher motivic weights. We obtain such generalisations in one fell swoop by using homotopy-theoretic techniques as in \cite{bms2}. By \cite{bms2}, topological Hochschild homology $\THH$ and its motivic filtration $\Fil_\mot^{\ge *}\THH$ package together (Nygaard filtered) prismatic and syntomic cohomology. The following \cref{mainthm::desc_of_limit_of_THH_of_cyc} is the main result of this paper, with immediate consequences to (Nygaard-filtered) prismatic, syntomic, and (Hodge-filtered) de Rham cohomology listed in the following corollaries \ref{maincor::limit_of_prismatic_coh}-\ref{maincor::limit_of_de_rham_coh}. To keep track of motivic filtrations on $\THH$, we need to use the theory of `cyclotomic synthetic spectra' of \cite{antieau2024cyclotomic}; for unexplained notation see \cref{sctn::cyc_synth_sp}.

\begin{mainthm}[\cref{thm::limit_THH_for_cyclotomic_tower} and \cref{cor::when_we_bring_qdR_out_of_limit}]\label{mainthm::desc_of_limit_of_THH_of_cyc}
    Suppose $X$ is a proper regular $p$-adic formal scheme over $\Z_p[\zeta_p]$ and write $X_n := X\times_{\Spf \Z_p[\zeta_p]} \Spf \Z_p[\zeta_{p^n}]$. There is a functorial-in-$X$ equivalence of $\Fil_\mot^{\ge *}\THH(X)$-modules of cyclotomic synthetic spectra
    \begin{equation}\label{eqn::desc_of_lim_thh}
        \varprojlim_n \Fil_\mot^{\ge *}\THH(X_n)^\wedge_p \simeq \left(\Fil_\mot^{\ge *-1}\THH(X/\S_p\llb q-1\rrb)^\wedge_p \right) \; \hat\otimes_{\S_\ev\llb q-1\rrb^\Tate} \; F^{\ge *} \ms M_\Iw(1)[1]
    \end{equation}
    that is furthermore equivariant for the natural $(1+p\Z_p)^\times\subset \Z_p^\times$ via the Galois action on the left. Here:
    \begin{itemize}
        \item the limit on the left hand side is over transfer maps (see \cref{sctn::intro_trace_maps_in_coh_theories} below),
        \item $\S_\ev\llb q-1\rrb^\Tate$ is the cyclotomic spectrum whose underlying spectrum is the $E_\infty$-ring $$\S_\ev\llb q-1\rrb:= (\S_\ev[q])^\wedge_{(p,q-1)} \simeq \varprojlim_n \S_\ev[\Z/p^n]$$ with trivial circle action and with cyclotomic Frobenius given by the Tate diagonal \cite[Section III.1]{nikolaus_scholze}, and endowed with a $\Z_p^\times$-action coming from the usual multiplication action on $\Z/p^n$ as $n\to \infty$;
        \item $\Z_p[\zeta_p]$ is a $\S_p\llb q-1\rrb$-algebra via $q\mapsto \zeta_p$;
        \item $\THH(X/\S_p\llb q-1\rrb)^\wedge_p$ is equipped with a $\Z_p^\times$-action coming from $\S_p\llb q-1\rrb$;
        \item $F^{\ge *}\ms M_\Iw$ is the cyclotomic synthetic spectrum with underlying synthetic spectrum with $S^1$-action \[F^{\ge *} \ms M_\Iw := \varprojlim_\tau \S_\ev\llb q-1\rrb^\triv \]
        equipped with the trivial circle action where $$\tau : \S_\ev\llb q-1\rrb \to \S_\ev\llb q-1\rrb, \quad q^i \mapsto \begin{cases}
            0 & p\nmid i,\\
            q^{i/p} & p|i,
        \end{cases}$$ 
        and with the cyclotomic Frobenius given by the shift map 
        \[\varprojlim_\tau \S_\ev\llb q-1\rrb^\triv \to \left(\varprojlim_\tau \S_\ev\llb q-1\rrb^\triv\right)^{tC_{p,\ev}} \simeq \varprojlim_\tau \S_\ev\llb q-1\rrb^\triv, \quad (f_n) \mapsto (f_{n+1});\]
        \item $(1)$ denotes a Tate twist (for a more precise formulation see Notation \ref{notn::spectral_Tate_twist}); and
        \item the tensor product on the right is the $p$-complete tensor product of synthetic spectra.
    \end{itemize}

    In additional to the usual contravariant functoriality in $X$, this equivalence is also covariantly functorial in transfer maps for finite locally free regular maps, as well as Gysin maps for regular closed immersions. 
\end{mainthm}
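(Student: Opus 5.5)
We reduce to the universal case $X = \Spf \Z_p[\zeta_p]$ and then base change. The transfer maps for the finite locally free regular maps $X_{n+1}\to X_n$ are compatible with base change along $\Z_p[\zeta_p]\to \Z_p[\zeta_{p^n}]$. Since $X_n = X\times_{\Z_p[\zeta_p]} \Z_p[\zeta_{p^n}]$ and $\Z_p[\zeta_{p^n}]$ is finite free over $\Z_p[\zeta_p]$, we have, $p$-adically,
\[
\Fil_\mot^{\ge *}\THH(X_n)\simeq \Fil_\mot^{\ge *}\THH(X)\otimes_{\Fil_\mot^{\ge *}\THH(\Z_p[\zeta_p])}\Fil_\mot^{\ge *}\THH(\Z_p[\zeta_{p^n}]).
\]
The plan is to verify this first. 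Under it the transfers become $\id\otimes \mathrm{tr}_n$, where $\mathrm{tr}_n$ is the transfer for $\Z_p[\zeta_{p^{n+1}}]/\Z_p[\zeta_{p^n}]$. Rather than tensoring over $\THH(\Z_p[\zeta_p])$, it is better to rewrite everything relative to $\S_p\llb q-1\rrb$. Since $\Z_p[\zeta_{p^n}] = \Z_p[\zeta_p]\otimes_{\S_p\llb q-1\rrb}(\text{the } q\mapsto q^{p^{n-1}} \text{ base change})$ at the level of $\S_p\llb q-1\rrb$-algebras, we get
\[
\THH(X_n)\simeq \THH(X/\S_p\llb q-1\rrb)\otimes_{\THH(\S_p\llb q-1\rrb)}\THH(\Z_p[\zeta_{p^n}]) ,
\]
and similarly with motivic filtrations via the even filtration. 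The problem is thereby reduced to computing $\varprojlim_n$ of the $\THH(\S_p\llb q-1\rrb)$-modules $\Fil_\mot^{\ge*}\THH(\Z_p[\zeta_{p^n}])$ along transfers.

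Next we compute that limit using the Devalapurkar--Raksit description of $\THH(\Z_p[\zeta_{p^n}])$ relative to $\S\llb q-1\rrb$. Roughly, $\THH(\Z_p[\zeta_{p^n}]/\S_p\llb q-1\rrb)$ is a polynomial-type object on a degree-two class, with the relevant ideal generated by the $p^n$-th cyclotomic polynomial. The absolute $\THH$ is obtained by tensoring with $\THH(\S\llb q-1\rrb)$, which contributes the free loop factor $\S[\Z]\otimes \S[S^1]$-style terms (the class $d\log q$ in degree $1$). The transfer on the $\S[\Z/p^n]$ and free loop parts is identified through Schlichtkrull's computation of the free loop transfer. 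On the $q$-variable it acts as the map $\tau$ ($q^i\mapsto q^{i/p}$ or $0$), and on the circle/degree-one part it gives the shift. This is where the Tate twist $(1)[1]$ and the index shift $*-1$ come from: the degree-one class $d\log q$ accounts for the suspension and weight shift. Assembling these, the limit becomes $\Fil^{\ge *-1}\THH(X/\S_p\llb q-1\rrb)\hat\otimes_{\S_\ev\llb q-1\rrb} \varprojlim_\tau \S_\ev\llb q-1\rrb(1)[1]$. The remaining steps are as follows.
\begin{itemize}
\item The limit commutes with the $p$-complete tensor product. This holds because at each graded level the towers are of finite type and Mittag-Leffler; this uses properness and regularity of $X$, which give that $\Fil^{i}_\mot\THH(X/\S_p\llb q-1\rrb)$ is $p$-completely perfect over the base.
\item The cyclotomic Frobenius is compatible with the shift. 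The Frobenius on $\THH(\Z_p[\zeta_{p^{n+1}}])$ lands in the Tate construction of the level-$n$ object via $q\mapsto q^p$.
\item The construction is $(1+p\Z_p)^\times$-equivariant, which follows from naturality of all constructions in the $\Z_p^\times$-action on $\varprojlim\S[\Z/p^n]$.
\end{itemize}

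The main obstacle is the second step, namely identifying the transfer maps on $\Fil_\mot^{\ge*}\THH(\Z_p[\zeta_{p^n}])$ compatibly with the cyclotomic structure and the motivic filtration, not just on homotopy groups. I would first do the computation for the group rings $\S[\Z/p^n]$ (equivalently $\S[q]/(q^{p^n}-1)$) and for $\S\llb q-1\rrb$ itself. There the transfer is a map of cyclotomic spectra given by Schlichtkrull's free loop transfer, which can be computed on $\THH(\S[\Z/p^n])=\S[L B\Z/p^n]$. I would then base change along the even, filtration-compatible maps $\S_\ev[\Z/p^n]\to\Z_p[\zeta_{p^n}]$, checking that all objects involved are even so that the motivic filtration is computed by the even filtration and hence commutes with these base changes. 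Functoriality in transfers and Gysin maps along $X$ then follows because every identification above is natural in the $\THH(X/\S_p\llb q-1\rrb)$-module variable.
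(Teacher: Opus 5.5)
Your ingredients (Devalapurkar--Raksit, Schlichtkrull's free loop transfer, base change of transfers) are the paper's. However, the central step, identifying the limit, is only asserted, and the mechanism you propose for it does not work.

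\textbf{The limit and the filtration.} First, the absolute objects are not even: $\pi_{2i-1}\THH(\Z_p[\zeta_{p^n}])\neq 0$, and $\Fil_\mot^{\ge *}\THH(R_n)\simeq \tau_{\ge 2*-1}\THH(R_n)$ for $R$ a QRSP. So ``checking that all objects involved are even'' cannot control the motivic filtration, and the filtered base-change formula you want to verify first is unsupported. Your rewritten formula $\THH(X_n)\simeq \THH(X/\S_p\llb q-1\rrb)\otimes_{\THH(\S_p\llb q-1\rrb)}\THH(\Z_p[\zeta_{p^n}])$ is also false. For $X=\Spf\Z_p[\zeta_p]$ and $n=1$ its right side is $\ku_p^{(-1)}\otimes_{\S_p\llb q-1\rrb}\ku_p^{(-1)}$, whose $\pi_1$ is torsion-free, whereas $\pi_1\THH(\Z_p[\zeta_p])$ is torsion.

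Second, nothing in the outline explains why only the ``$d\log q$'' part survives the limit. There is no splitting of $\THH(X_n)$ into a relative part and a free-loop part. The stable transfer of the $p$-fold cover of circles is indeed $p\oplus 1$ on $\S_p\oplus\S_p[1]$, but that splitting is not $\S_p[S^1]$-linear, so it does not survive base change.

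The paper's replacement has three parts:
\begin{enumerate}
\item The fibre sequence
$\Fil_\mot^{\ge *}\THH(R_n)\to \Fil_\mot^{\ge *}\THH(R_n/\S_p\llb q_n-1\rrb)\to \Fil_\mot^{\ge *-1}\THH(R_n/\S_p\llb q_n-1\rrb)(1)[2]$
of \cref{prop::prismatic_coh_from_q_de_rham_coh}, coming from $j_{n-1}=\tau_{\ge 0}\ku_{p,n-1}^{h(1+p^n\Z_p)}$. Its outer terms are even on QRSPs, which is how the filtration on the absolute term is controlled.
\item An identification of the transfer as a map of these fibre sequences, with middle component $\sum_{i=0}^{p-1}\psi^{g^{ip^{n-1}}}\circ\rho_n$ and right component $\rho_n$. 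This goes through the pushouts along $\S_p[BC_{p^n}]\to \S_p[B^2\Z]$ and an Adams-operation description of the tensored-up stable transfer.
\item Vanishing of the limit of the middle terms. This needs the fact that the $1+p^n\Z_p$-action on $\gr^*_\mot\THH(R_n/\S_p\llb q_n-1\rrb)$ is canonically trivial modulo $\zeta_p-1$, an input from Zeyu Liu's stacky description of prismatic cohomology over $\Z_p[\zeta_{p^n}]$. The middle transition maps then become multiplication by $p\equiv 0$.
\end{enumerate}
Only then do the connecting maps identify the limit with $\varprojlim_{\rho_n}\Fil_\mot^{\ge *-1}\THH(R_n/\S_p\llb q_n-1\rrb)(1)[1]$.

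\textbf{The Frobenius.} This step is not a formality either. The diagram expressing the transfer through $\rho_n$ is not cyclotomic. On $\pi_0(-)^{hS^1}$, $\rho_n$ kills $q_{n+1}^i$ for $p\nmid i$, while Frobenius sends $q_{n+1}\mapsto q_{n+1}^p$, so $\rho_n$ does not commute with Frobenius (Remark~\ref{rmk::transfer_formula_not_commute_with_frob}). You therefore cannot take a limit of cyclotomic diagrams. Instead the Frobenius of $\varprojlim_n\Fil_\mot^{\ge *}\THH(R_n)$ must be transported across the connecting maps $\partial_n$. The paper does this in four steps:
\begin{enumerate}
\item It factors the Frobenius of $\ku_{p,n-1}^{(-1)}$ through $\ku_{p,n}^{(-1)}$, giving the maps $\tilde\varphi_n$.
\item It constructs $\rho_n$ via $K(1)$-local ambidexterity of $BC_p$, so that $\rho_{n-1}^{tC_p}\circ\tilde\varphi_n\simeq\tilde\varphi_{n-1}\circ\rho_n$.
\item It proves, by an $S^1$-nilpotence computation, that $\THH(R_{n+1})^{tC_p}$ is already computed by the level-$n$ relative fibre (\cref{lem::S1-nilp-of-tilde-jpn-square}, \cref{lem::equivalence_on_TP_with_lesser_n}).
\item It chases the pentagon of \cref{prop::identifying_frob_in_the_tower}, which is where the shift map comes from.
\end{enumerate}
Your sentence that the Frobenius ``lands in the Tate construction of the level-$n$ object via $q\mapsto q^p$'' is exactly what these ingredients are needed to prove.
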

We explain how this recovers Coleman's isomorphism in Remark \ref{rmk::recover_coleman_from_thm}.
\begin{remark}
    Readers familiar with Iwasawa theory may recognise the map \[ \Z_p\llb q-1\rrb \simeq \pi_0\S_p\llb q-1\rrb \xrightarrow{\pi_0\tau} \pi_0\S_p\llb q-1\rrb\simeq  \Z_p\llb q-1\rrb\]
    as being the normalised trace of Frobenius operator usually denoted by $\psi$ in the number theory literature. However, the symbol $\psi$ is also used to denote the Adams operations on $\KU_p$ which will be used frequently in \cref{sctn::trace_for_cyclotomic_tower}, and so we will use the symbol $\tau$ for the map defined above. We will abuse notation by also writing $\tau$ for the map $\Z_p\llb q-1\rrb\to \Z_p\llb q-1\rrb$ on $\pi_0$.
\end{remark}
\cref{mainthm::desc_of_limit_of_THH_of_cyc} has a lot of immediate consequences for other cohomology theories. To state these, we need to define the following $\Z_p\llb q-1\rrb$-module \[\mc M_\Iw := \pi_0 \ms M_\Iw \cong \varprojlim_\tau \Z_p\llb q-1\rrb.\]
It can be rewritten as \[\mc M_\Iw \cong \Hom_{\Z_p\llb q-1\rrb}(A_{\inf}(\Z_p^\cyc), \Z_p\llb q-1\rrb),\]
the module of (continuous) $\Z_p\llb q-1\rrb$-linear maps from $A_{\inf}(\Z_p^\cyc) \to \Z_p\llb q-1\rrb$, where $$A_{\inf}(\Z_p^\cyc) \cong \left(\colim_n \Z_p\llb q^{p^{-n}}-1\rrb\right)^\wedge_{p,q-1}$$ is Fontaine's period ring for the integral perfectoid ring $\Z_p^\cyc$.

\cref{mainthm::desc_of_limit_of_THH_of_cyc} has the following concrete application to the prismatic cohomology of \cite{bms2, BS_prisms, BL_absolute_prismatic_coh}. In the following, $\hat \prism_X$ is the completion with respect to the Nygaard filtration $\Fil_\Nyg^{\ge *}\hat \prism_X$ of the absolute prismatic cohomology of $X$, and $\hat \prism_{X/\Z_p\llb q-1\rrb}^{(1)}$ is the completion with respect to the Nygaard filtration $\Fil^{\ge r}_\Nyg \hat \prism_{X/\Z_p\llb q-1\rrb}^{(1)}$ of the Frobenius twisted prismatic cohomology of $X$ relative to the $q$-de Rham prism $\Z_p\llb q-1\rrb$. We will denote their $r$'th Breuil--Kisin twist by $\{r\}$. 

\begin{maincor}\label{maincor::limit_of_prismatic_coh}
    Suppose $X$ is a proper regular $p$-adic formal scheme over $\Z_p[\zeta_p]$, and write $X_n:= X\times_{\Spf\Z_p[\zeta_p]} \Spf \Z_p[\zeta_{p^n}]$. Then, there are $(1+p\Z_p)^\times$-equivariant equivalences\footnote{This statement should still hold true if one does not Nygaard complete both sides. 
    This should follow from a version of \cref{mainthm::desc_of_limit_of_THH_of_cyc} for the decompleted $\THH$ constructed in the upcoming work of Devalapurkar--Raksit--Hahn--Yuan \cite{DHRY_even_stacks}.}
    \begin{align*}
        \varprojlim_n \Fil_\Nyg^{\ge r} \hat \prism_{X_n}\{r\} &\simeq \Fil_\Nyg^{\ge r-1} \hat \prism_{X/\Z_p\llb q-1\rrb}^{(1)} \{r-1\} \hat\otimes_{\Z_p\llb q-1\rrb} \mc M_\Iw (1)[-1],\\
        \varprojlim_n \hat \prism_{X_n}\{r\} &\simeq \hat \prism_{X/\Z_p\llb q-1\rrb}^{(1)}\{r-1\}\hat\otimes_{\Z_p\llb q-1\rrb} \mc M_\Iw (1)[-1],
    \end{align*}
    where the limit on the left hand side is over prismatic trace maps (see \cref{sctn::intro_trace_maps_in_coh_theories} below), the right hand side of both these equivalences are $(p,q-1)$-adically completed, and $(1)$ is the Tate twist. 

    Moreover, in addition to the usual contravariant functoriality in $X$, these equivalences are covariantly functorial for Gysin maps associated to regular closed immersions and for transfers along finite locally free regular maps. 
\end{maincor}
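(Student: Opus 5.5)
The corollary should follow from \cref{mainthm::desc_of_limit_of_THH_of_cyc} by passing to graded pieces of the motivic filtration on $\TC^-$ (equivalently, on $\THH^{hS^1}$), using the comparison of \cite{bms2} between motivic graded pieces and Nygaard-filtered prismatic cohomology. Concretely, I will first apply the functor $(-)^{hS^1}$, or more precisely the functor from cyclotomic synthetic spectra to filtered objects which extracts $\gr^r_\mot \TC^-$. For any quasisyntomic $Y$, \cite{bms2} gives $\gr^r_\mot\TC^-(Y) \simeq \Fil^{\ge r}_\Nyg\hat\prism_Y\{r\}[2r]$ and $\gr^r_\mot \TP(Y)\simeq \hat\prism_Y\{r\}[2r]$. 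The relative analogue for $\THH(X/\S_p\llb q-1\rrb)$ gives $\Fil_\Nyg^{\ge r}\hat\prism^{(1)}_{X/\Z_p\llb q-1\rrb}\{r\}[2r]$; the Frobenius twist appears because we are working over the $q$-de Rham prism. The left side of \eqref{eqn::desc_of_lim_thh} commutes with these functors, since limits commute with homotopy fixed points and with taking graded pieces. One also has to check that the transfer maps on $\THH$ induce the prismatic trace maps on graded pieces, which is essentially how the prismatic traces of \cref{sctn::intro_trace_maps_in_coh_theories} are defined.

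The key step is to compute $\gr^r$ of the right side of \eqref{eqn::desc_of_lim_thh}. The factor $F^{\ge *}\ms M_\Iw$ has trivial circle action. Its underlying synthetic spectrum is a limit of copies of $\S_\ev\llb q-1\rrb$, whose associated graded is concentrated in a single weight with value $\pi_0 = \mc M_\Iw$ there, so it behaves like a flat $\Z_p\llb q-1\rrb$-module placed in filtration weight $0$. The twist $(1)[1]$ then shifts the weight by one and contributes the cohomological shift. The weight $r$ piece of the $\hat\otimes_{\S_\ev\llb q-1\rrb^\Tate}$ tensor product with $\Fil^{\ge *-1}_\mot$ is therefore the weight $r-1$ piece of the relative theory tensored with $\mc M_\Iw(1)$. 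Tracking shifts, $[2r]$ on the left against $[2(r-1)]$ together with the $[1]$ from the theorem, should give the stated $[-1]$ after accounting for the convention of the Tate twist $(1)$ in Notation \ref{notn::spectral_Tate_twist}, which I expect to carry a $[2]$. The same computation with $\TP$ in place of $\TC^-$ gives the second equivalence. There, the Frobenius on $F^{\ge *}\ms M_\Iw$, the shift map, is an isomorphism after inverting, consistent with $\mc M_\Iw$ passing through unchanged. Completeness is used to identify $\hat\otimes$ on graded pieces with the $(p,q-1)$-completed tensor product.

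The main obstacle I expect is commuting $\gr^r_\mot(-)^{hS^1}$ past the completed tensor product. I would try to use that $\ms M_\Iw$ has trivial $S^1$-action and is pro-free over $\S_\ev\llb q-1\rrb$: $\mc M_\Iw\cong\Hom(A_\inf(\Z_p^\cyc),\Z_p\llb q-1\rrb)$ is a product of copies of $\Z_p\llb q-1\rrb$. Homotopy fixed points then commute with the product after $p$-completion, and the Nygaard-completeness of both sides lets us pass to the limit. Equivariance for $(1+p\Z_p)^\times$ and the functoriality for Gysin and transfer maps are inherited directly from the corresponding statements in \cref{mainthm::desc_of_limit_of_THH_of_cyc}.
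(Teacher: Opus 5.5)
Your route is the paper's: \cref{cor::limit_of_nyg_filt_prismatic_coh_over_cyc_tower} is obtained there as the associated-graded version of \cref{cor::when_we_bring_qdR_out_of_limit}. That is, one takes $\gr^r$ of $(-)^{h\T_\ev}$ and $(-)^{t\T_\ev}$, and the transfers induce the prismatic traces, as you say. The gap is in the step you flag as the main obstacle, and your proposed fix does not work.

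It is true that $\mc M_\Iw\cong\prod_{\N}\Z_p\llb q-1\rrb$, since the $\proj_n$ are split surjections with finite free kernels. But pro-freeness does not let a completed tensor product commute with that product. The map $N\hat\otimes_{\Z_p\llb q-1\rrb}\prod_\N\Z_p\llb q-1\rrb\to\prod_\N N$ is an equivalence only under a finiteness hypothesis on $N$, namely almost perfectness as in \cref{lem::almost_perfect_commute_with_limit_for_connective_sp}. It already fails for $N$ the completion of $\bigoplus_\N\Z_p\llb q-1\rrb$: modulo $(p,q-1)$ it becomes $\bigoplus_\N\F_p\otimes_{\F_p}\prod_\N\F_p\to\prod_\N\bigoplus_\N\F_p$, which is not surjective.

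The fact that homotopy fixed points commute with products only helps after this interchange has been made. Nygaard (or motivic) completeness only reduces the question to graded pieces, where you again need exactly this finiteness. Treating \cref{mainthm::desc_of_limit_of_THH_of_cyc} as a black box does not remove the issue. It performs the interchange at the level of $\THH$, but you must do it once more on the prismatic side to identify $\gr^r$ of $(-)^{h\T_\ev}$ of its right-hand side with $\Fil^{\ge r-1}_\Nyg\hat\prism^{(1)}_{X/\Z_p\llb q-1\rrb}\{r-1\}\hat\otimes\mc M_\Iw(1)[-1]$.

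The missing input is \cref{lem::sufficient_condition_for_almost_perf_gr_Nyg}. For $X$ proper and regular, $\gr^i_\Nyg\hat\prism^{(1)}_{X/\Z_p\llb q-1\rrb}\simeq\Fil^\conj_i\bar\prism_{X/\Z_p\llb q-1\rrb}\{i\}$ is perfect over $\Z_p\llb q-1\rrb$. This is because the conjugate graded pieces $(\wedge^j\L_{X/\Z_p[\zeta_p]})^\wedge_p[-j]\{j\}$ are perfect by regularity and have perfect cohomology by properness. This is precisely where the hypotheses of the corollary enter, and your argument never uses them.

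A clean way to organise the proof is to start from the limit form, \cref{thm::limit_THH_for_cyclotomic_tower}. Each term there is $\Fil^{\ge *-1}_\mot\THH(X/\S_p\llb q_1-1\rrb)(1)[1]\otimes_{\S_p\llb q_1-1\rrb}\S_p\llb q_n-1\rrb$, a base change along a finite free map with trivial circle action. So $(-)^{h\T_\ev}$ and $\gr^r$ pass through it, giving $\varprojlim_n\Fil^{\ge r-1}_\Nyg\hat\prism^{(1)}\{r-1\}\otimes_{\Z_p\llb q_1-1\rrb}\Z_p\llb q_n-1\rrb(1)[-1]$. Only then use the finiteness above, through the complete Nygaard filtration with perfect graded pieces, to move $\varprojlim_n$ inside the completed tensor product.

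Two smaller points:
\begin{itemize}
\item $(-)^{t\T_\ev}$ does not commute with sequential limits formally, so the $\TP$ statement needs a connectivity argument as in \cref{lem::synthetic_tate_commute_with_limit_over_transfers}.
\item $\gr^*F^{\ge *}\ms M_\Iw$ is not concentrated in weight $0$, since the even filtration on $\S_p$ has nonzero higher graded pieces. What you really use is $F^{\ge *}\ms M_\Iw\simeq\S_\ev\otimes_{\mr{ins}^0\S_p}\mr{ins}^0\ms M_\Iw$. After base change over $\S_\ev\llb q-1\rrb$, this makes the $i$th filtration piece equal to $F^{\ge i}(-)\otimes_{\S_p\llb q-1\rrb}\ms M_\Iw$.
\end{itemize}
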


Recall that both the absolute and relative prismatic cohomology are equipped with divided Frobenius maps 
\[\varphi_X\{r\} : (\Fil_\Nyg^{\ge r}\prism_X) \{r\} \to \prism_X \{r\} \quad \text{ and } \quad \varphi_X\{r\} : (\Fil^{\ge r}_\Nyg \prism_{X/\Z_p\llb q-1\rrb}^{(1)}) \{r\} \to \prism_{X/\Z_p\llb q-1\rrb} \{r\}.\]
The $r$'th integral syntomic cohomology $\Z_p(r)^\syn(X)$ of $X$ \cite{bms2, BL_absolute_prismatic_coh} is given by the fibre \[\Z_p(r)^\syn(X) := \fib\left((\Fil_\Nyg^{\ge r}\prism_X) \{r\} \xrightarrow{\can - \varphi\{r\}} \prism_X \{r\}\right) \simeq \fib\left((\Fil_\Nyg^{\ge r}\hat\prism_X) \{r\} \xrightarrow{\can - \varphi\{r\}} \hat \prism_X \{r\}\right)\]
where $\can : (\Fil_\Nyg^{\ge r}\prism_X) \{r\} \to \prism_X \{r\}$ is the canonical map from a filtration to the underlying object. 

\begin{maincor}\label{maincor::higher_coleman_iso}
    Suppose $X$ is a qcqs proper regular $p$-adic formal scheme over $\Z_p[\zeta_p]$. Write $X_n := X\times_{\Spf \Z_p[\zeta_p]} \Spf \Z_p[\zeta_{p^n}]$. Then, for any $r\in \Z$, there is a $(1+p\Z_p)^\times$-equivariant equivalence of objects in $\mc D(\Z_p)^\wedge_p$ 
    \begin{multline*}
        \varprojlim_n \Z_p(r)^\syn(X_n) \\\simeq 
        \fib\Bigg(\Fil^{r-1}_\Nyg \hat \prism_{X/\Z_p\llb q-1\rrb}^{(1)}\{r-1\} \hat \otimes_{\Z_p\llb q-1\rrb} \mc M_\Iw \xrightarrow{\can - \Phi_X\{r-1\} } \hat \prism_{X/\Z_p\llb q-1\rrb}^{(1)}\{r-1\} \hat \otimes_{\Z_p\llb q-1\rrb} \mc M_\Iw \Bigg)(1)[-1]
    \end{multline*}
    where the transition maps on the limit on the left hand side are given by the trace maps in syntomic cohomology (see \cref{sctn::intro_trace_maps_in_coh_theories} below), the tensor products on the right are $(p,q-1)$-adically completed, $(1)$ denotes the Tate twist, and where $\Phi_X\{r-1\}$ is the map given by the composition
\begin{align*}
    \Fil^{\ge r-1}_\Nyg \prism_{X/\Z_p\llb q-1\rrb}^{(1)}\{r-1\} \otimes_{\Z_p\llb q-1\rrb} \varprojlim_\tau \Z_p\llb q-1\rrb &\xrightarrow{\varphi_X\{r-1\} \otimes \mr{shift}} \prism_{X/\Z_p\llb q-1\rrb}\{r-1\} \otimes_{\Z_p\llb q-1\rrb} \varprojlim_\tau \Z_p\llb q-1\rrb \\&\longrightarrow \prism_{X/\Z_p\llb q-1\rrb}^{(1)}\{r-1\} \otimes_{\Z_p\llb q-1\rrb} \varprojlim_\tau \Z_p\llb q-1\rrb
\end{align*}
    with the second map induced by \[\id \otimes 1: \prism_{X/\Z_p\llb q-1\rrb} \to \prism_{X/\Z_p\llb q-1\rrb} \otimes_{\Z_p\llb q-1\rrb, \varphi} \Z_p\llb q-1\rrb =: \prism_{X/\Z_p\llb q-1\rrb}^{(1)}.\]

    This equivalence is moreover functorial in $X$ both contravariantly for the pull-back on cohomology, but also covariantly for Gysin maps associated to regular closed immersions and for transfers along finite locally free regular maps. 
\end{maincor}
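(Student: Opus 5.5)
This corollary should follow from \cref{mainthm::desc_of_limit_of_THH_of_cyc} by applying the functor that extracts syntomic cohomology from a cyclotomic synthetic spectrum, and then using that this functor commutes with limits.

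\textbf{Step 1: syntomic cohomology as $\TC$ of a synthetic spectrum.} By \cite{bms2}, $\Z_p(r)^\syn(X)[2r]$ is the $r$-th graded piece of the motivic filtration on $\TC(X)^\wedge_p$. Equivalently, it is the $r$-th graded piece of the synthetic $\TC$ of the cyclotomic synthetic spectrum $\Fil^{\ge *}_\mot\THH(X)^\wedge_p$. This synthetic $\TC$ is the fibre of $\can - \varphi$ from the homotopy fixed points to the Tate construction, and on graded pieces it becomes
\[\fib\bigl(\Fil^{\ge r}_\Nyg\hat\prism_X\{r\} \xrightarrow{\can-\varphi\{r\}} \hat\prism_X\{r\}\bigr).\]
The functor $\gr^r\TC$ is an exact functor of cyclotomic synthetic spectra and commutes with limits. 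The trace maps in syntomic cohomology from \cref{sctn::intro_trace_maps_in_coh_theories} are, by construction, $\gr^r\TC$ of the $\THH$ transfers. Hence
\[\varprojlim_n \Z_p(r)^\syn(X_n)[2r] \simeq \gr^r\TC\bigl(\varprojlim_n \Fil^{\ge *}_\mot\THH(X_n)^\wedge_p\bigr).\]

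\textbf{Step 2: compute $\gr^r\TC$ of the right-hand side of \eqref{eqn::desc_of_lim_thh}.} I would first check that the homotopy fixed points and Tate construction commute with the completed tensor product over $\S_\ev\llb q-1\rrb^\Tate$ with $F^{\ge *}\ms M_\Iw(1)[1]$. This should hold because $F^{\ge *}\ms M_\Iw$ has trivial circle action and is a limit of copies of $\S_\ev\llb q-1\rrb$. Its graded pieces are then $\mc M_\Iw$ concentrated in the appropriate weight, so it is flat and pro-free over $\Z_p\llb q-1\rrb$, and tensoring with it passes through $(-)^{hS^1}$ and $(-)^{tS^1}$ after $(p,q-1)$-completion.

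Next I would identify the graded pieces of $\THH(X/\S_p\llb q-1\rrb)$ relative to $\S_\ev\llb q-1\rrb^\Tate$. The homotopy fixed points should give the Nygaard-completed relative prismatic cohomology $\Fil^{\ge r-1}_\Nyg\hat\prism^{(1)}_{X/\Z_p\llb q-1\rrb}\{r-1\}$. The Tate construction should give $\hat\prism_{X/\Z_p\llb q-1\rrb}$, base changed along $\varphi$ using the Tate-diagonal Frobenius on $\S_\ev\llb q-1\rrb^\Tate$. This is the relative analogue of \cite{bms2}, and I expect it to be established in the body of the paper together with \cref{maincor::limit_of_prismatic_coh}. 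The index shift $*-1$, the twist $(1)$ and the shift $[1]$ account for $r-1$, $(1)$ and $[-1]$ after removing $[2r]$: weight $1$ contributes $[2]$, and $[2]$ combined with $[1]$ leaves $[-1]$.

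\textbf{Step 3: identify the Frobenius.} The cyclotomic Frobenius on the tensor product is the tensor product of the two Frobenii. On the $\THH$ factor it gives the divided Frobenius $\varphi_X\{r-1\}$, landing in $\prism_{X/\Z_p\llb q-1\rrb}$. On $\ms M_\Iw$ it gives the shift map. The canonical map on the Tate side and the comparison with the $\varphi$-twist yield the map $\id\otimes 1$. Composing these gives exactly $\Phi_X\{r-1\}$. Functoriality, both contravariant and covariant for Gysin and transfer maps, and $(1+p\Z_p)^\times$-equivariance are inherited from \cref{mainthm::desc_of_limit_of_THH_of_cyc}, since $\gr^r\TC$ is a functor.

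\textbf{Main obstacle.} I expect the hardest step to be Step 2: rigorously commuting $\TC$ (in particular $(-)^{tS^1}$) with the completed tensor product against the pro-object $F^{\ge *}\ms M_\Iw$. My first approach would be to write $\ms M_\Iw$ as the limit over $\tau$, commute that limit outside all the constructions (limits commute with fibres and homotopy fixed points), and handle each finite stage, which is a free rank-one $\S_\ev\llb q-1\rrb$-module, by the projection formula. A secondary difficulty is keeping track of the relative Nygaard completion and the Breuil–Kisin twist conventions so that the target is exactly $\hat\prism^{(1)}\{r-1\}\hat\otimes\mc M_\Iw$.
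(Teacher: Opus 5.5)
Your architecture is the one the paper intends. The paper presents this corollary as an immediate consequence of \cref{mainthm::desc_of_limit_of_THH_of_cyc}: apply synthetic $\TC$, take $\gr^r$, read off the terms from \cref{maincor::limit_of_prismatic_coh}, and read off the Frobenius from \cref{thm::limit_THH_for_cyclotomic_tower}. Your Step 1 and the inheritance of functoriality and equivariance are fine.

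The gap is in Step 2, the step you flag yourself. Flatness and pro-freeness of $\mc M_\Iw$ do not by themselves let $-\hat\otimes F^{\ge *}\ms M_\Iw$ pass through $(-)^{h\T_\ev}$ or $(-)^{tC_{p,\ev}}$. The first is an infinite limit and the second involves orbits. A completed tensor product with a countable product of free modules need not commute with limits unless the other factor satisfies a finiteness condition: already $\left(\prod_{\N}\Z_p\right)\hat\otimes_{\Z_p}\left(\bigoplus_{\N}\Z_p\right)^\wedge_p\to\prod_{\N}\left(\bigoplus_{\N}\Z_p\right)^\wedge_p$ fails to be surjective. Your fallback has the same hole. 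To ``commute that limit outside all the constructions'' you first need $A\hat\otimes\varprojlim_n(-)\simeq\varprojlim_n(A\hat\otimes -)$, which is not formal. The Tate construction also commutes with the sequential limit only because of a connectivity argument, not because limits commute with fibres. Relatedly, nothing in your Step 2 uses that $X$ is proper and regular, even though a finiteness input has to be used again after passing to $\TC$.

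The paper's ingredients give a route that never commutes $\TC$ past $\hat\otimes F^{\ge *}\ms M_\Iw$.

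Start from the limit form in \cref{thm::limit_THH_for_cyclotomic_tower}. $\TC$ is corepresentable, so it commutes with the limit over the $\rho_n$. The functor $(-)^{tC_{p,\ev}}$ commutes with that limit because all terms are connective in the Postnikov $t$-structure (\cref{lem::synthetic_tate_commute_with_limit_over_transfers}).

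Hence $\gr^r\TC$ of the limit is the fibre of $\can-(\tilde\varphi_n)_n\circ\mr{shift}$ between $\varprojlim_n$ of $\Fil^{\ge r-1}_\Nyg\hat\prism^{(1)}_{X/\Z_p\llb q-1\rrb}\{r-1\}$ and $\varprojlim_n$ of $\hat\prism^{(1)}_{X/\Z_p\llb q-1\rrb}\{r-1\}$. Each is base-changed to $\Z_p\llb q_n-1\rrb$, then twisted and shifted. Here $\Z_p\llb q_n-1\rrb$ is free of rank $p^{n-1}$ over $\Z_p\llb q_1-1\rrb$, not of rank one. The Frobenius sends stage $n+1$ to stage $n$ rather than preserving stages, and that is what produces $\varphi_X\{r-1\}\otimes\mr{shift}$.

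Only at the end do you rewrite $\varprojlim_n(-\otimes\Z_p\llb q_n-1\rrb)$ as $-\hat\otimes\mc M_\Iw$, on Nygaard graded pieces, via \cref{lem::almost_perfect_commute_with_limit_for_connective_sp}. This needs $\gr^i_\Nyg\hat\prism^{(1)}_{X/\Z_p\llb q-1\rrb}$ to be almost perfect, which is \cref{lem::sufficient_condition_for_almost_perf_gr_Nyg}. That is exactly where properness and regularity enter; this step is \cref{cor::limit_of_nyg_filt_prismatic_coh_over_cyc_tower}.

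Finally, in Step 3 you should justify pulling the twist $(1)$ outside the fibre. Frobenius fixes the Bott class, $\varphi^{hS^1}(\beta)=\can(\beta)$ (\cref{lem::computing_htpy_gps_of_fixed_pts_of_kupn}), so in weight $r$ the divided Frobenius on the twisted object is $\varphi_X\{r-1\}$.
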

\begin{remark}\label{rmk::recover_coleman_from_thm}
    Recall that $$\Z_p(1)^\syn(-) \simeq \R\Gamma_\et(-, \G_m)^\wedge_p [-1] .$$ Thus, Coleman's isomorphism is a computation of $\varprojlim_n \Z_p(1)^\syn(\Z_p[\zeta_{p^n}])$. One recovers Coleman's isomorphism from \cref{maincor::higher_coleman_iso} by taking $X=\Spf \Z_p[\zeta_p]$ and $r=1$, and using that \[\Fil^0_\Nyg \hat\prism_{\Z_p[\zeta_p]/\Z_p\llb q-1\rrb}^{(1)} \simeq \hat\prism_{\Z_p[\zeta_p]/\Z_p\llb q-1\rrb}^{(1)} \simeq \Z_p\llb q-1\rrb\] as well as the isomorphisms \[\fib\left(\varprojlim_\tau \Z_p\llb q-1\rrb \xrightarrow{\mr{shift}-\id} \varprojlim_\tau \Z_p\llb q-1\rrb\right) \simeq \fib(\Z_p\llb q-1\rrb \xrightarrow{\tau - \id} \Z_p\llb q-1\rrb) \xleftarrow[q \tfrac{\mr d}{\mr dq}\log]{\simeq} \left(\left(\Z_p\llb q-1\rrb^\times\right)^{N_\varphi = \id}\right)^\wedge_p.\]
\end{remark}

One can also prove a statement about $p$-complete Hodge completed (derived) de Rham cohomology $\hat\dR_X$, and its Hodge filtration $\Fil_H^{\ge r} \hat \dR_X$. This requires a little extra work, since one must go from $\THH$ to $\HH$. 

\begin{maincor}\label{maincor::limit_of_de_rham_coh}
    Suppose $X$ is a proper regular $p$-adic formal scheme over $\Z_p[\zeta_p]$, and write $X_n:= X\times_{\Spf\Z_p[\zeta_p]} \Spf \Z_p[\zeta_{p^n}]$. Then, there are $(1+p\Z_p)^\times$-equivariant equivalences
    \begin{align*}
        \varprojlim_n \Fil_H^{\ge r}\hat \dR_{X_n} &\simeq \Fil_H^{\ge r-1} \hat \dR_{X/\Z_p\llb q-1\rrb}^{(1)} \hat\otimes_{\Z_p\llb q-1\rrb} \mc M_\Iw (1)[-1],\\
        \varprojlim_n \hat \dR_{X_n} &\simeq \hat \dR_{X/\Z_p\llb q-1\rrb}^{(1)} \hat\otimes_{\Z_p\llb q-1\rrb} \mc M_\Iw (1)[-1],
    \end{align*}
    where the left hand side is the limit over trace maps in derived de Rham cohomology, the right hand side of all of these equivalences are $(p,q-1)$-adically completed, the $(-)^{(1)}$ denote Frobenius twists over $\Z_p\llb q-1\rrb$, and the remaining notation is as in \cref{maincor::limit_of_prismatic_coh}

    Moreover, in addition to the usual contravariant functoriality in $X$, these equivalences are covariantly functorial for Gysin maps associated to regular closed immersions and for transfers along finite locally free regular maps. 
\end{maincor}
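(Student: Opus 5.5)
The plan is to deduce \cref{maincor::limit_of_de_rham_coh} from \cref{mainthm::desc_of_limit_of_THH_of_cyc}. The passage goes from $\THH$ to $\HH$, and then from the motivic filtration on $\HH$ to the Hodge filtration.

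\textbf{Step 1: base change along $\S\to\Z$.} The motivic filtration on $\HH(-/\Z_p)^\wedge_p$ is obtained from that on $\THH$ by the base change $\Fil_\mot^{\ge *}\HH(X) \simeq \Fil_\mot^{\ge *}\THH(X)\hat\otimes_{\Fil_\mot^{\ge *}\THH(\Z_p)} \Z_p^{\triv}$ in synthetic spectra. Here $\Z_p^{\triv}$ carries the trivial filtration, i.e.\ it is the even filtration of $\THH(\Z_p)\otimes_{\THH(\Z_p)}\Z_p$. On $\HH$ the graded pieces are $\gr^i_\mot \HH(X)\simeq \Fil^{\ge i}_H\hat\dR_X[2i]$ up to the usual HKR-type identification, and likewise after applying $(-)^{hS^1}$.

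I will apply $-\hat\otimes_{\Fil_\mot\THH(\Z_p)}\Z_p$ to both sides of \eqref{eqn::desc_of_lim_thh}, forgetting the cyclotomic Frobenius. Two points justify this.

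\begin{itemize}
\item On the left, the base change commutes with $\varprojlim_n$. Each $\THH(X_n)$ is a module over $\THH(\Z_p)$, and $\Z_p$ is a finite complex over $\THH(\Z_p)$ up to $p$-completion: $\THH(\Z_p)\to\Z_p$ is a finite-stage Postnikov truncation, and after filtration-wise completion each graded piece involves only finitely many cells. Base change is therefore exact and commutes with limits gradedwise. The transfers are $\THH(\Z_p)$-linear, since they are built in \cref{sctn::intro_trace_maps_in_coh_theories} compatibly with module structures.
\item On the right, $\Fil_\mot\THH(X/\S_p\llb q-1\rrb)\otimes_{\THH(\Z_p)}\Z_p$ becomes $\Fil_\mot\HH(X/\Z_p\llb q-1\rrb)$. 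This uses $\THH(\S_p\llb q-1\rrb)\otimes_{\THH(\S_p)}\S_p\simeq\S_p\llb q-1\rrb$ together with a relative version of the base change, which is where the Frobenius twist $(1)$ appears. The factor $F^{\ge *}\ms M_\Iw$ is inert under this base change, because its $S^1$-action is trivial.
\end{itemize}

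\textbf{Step 2: taking $S^1$-fixed points and extracting gradeds.} I would take $(-)^{hS^1}$ filtration-wise to pass from $\HH$ to $\HC^-$, where $\gr^r_\mot\HC^-(X)\simeq\Fil^{\ge r}_H\hat\dR_X[2r]$. Since $\ms M_\Iw$ is $\S_\ev\llb q-1\rrb$ with trivial action, the homotopy fixed points distribute over the completed tensor product. This uses properness of $X$ to get finiteness, so the completed tensor product commutes with the limit defining $(-)^{hS^1}$. On gradeds, the shift $*-1$ together with $[1]$ and $(1)$ reproduces the indexing $\Fil^{\ge r-1}_H\hat\dR^{(1)}_{X/\Z_p\llb q-1\rrb}\hat\otimes\mc M_\Iw(1)[-1]$. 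Concretely, $[2r]$ versus $[2(r-1)]$ combined with $[1]$ gives $[-1]$. The identification $\pi_0$-wise of $\gr^0$ of $F^{\ge*}\ms M_\Iw$ with $\mc M_\Iw$ uses evenness of $\S_\ev\llb q-1\rrb$. The unfiltered statement follows by passing to $\HP$-type colimits, or equivalently to Hodge completion of the underlying object, using that the Hodge filtration is complete.

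\textbf{Step 3: compatibilities.} Equivariance for $(1+p\Z_p)^\times$ and functoriality for Gysin maps and transfers are inherited from \cref{mainthm::desc_of_limit_of_THH_of_cyc}. This requires checking that the trace maps in derived de Rham cohomology agree with the base change of the $\THH$ transfers, which I expect holds by construction in \cref{sctn::intro_trace_maps_in_coh_theories}.

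\textbf{Main obstacle.} The hardest step is Step 1, namely commuting the base change $\otimes_{\THH(\Z_p)}\Z_p$ past the inverse limit over transfers. I would handle this at the level of associated graded pieces: by Bökstedt periodicity, $\gr^*_\mot\THH(\Z_p)$ is a polynomial-type object, so $\Z_p$ is a cofiber of multiplication by the Bökstedt class. This makes the base change a single cofiber sequence $\Sigma^{2}\gr^{*-1}\xrightarrow{\sigma}\gr^*\to\gr^*\otimes\Z_p$, which obviously commutes with limits. The real subtlety is that over $\Z_p$ this works for $\THH(\Z_p)$ while $X_n$ lives over ramified rings. The relative version over $\S_p\llb q-1\rrb$ must therefore be matched carefully, using $\HH(-/\Z_p)$ throughout.
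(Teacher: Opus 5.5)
\textbf{The gap is in Step 1.} Your first sentence asserts the filtered base change
$\Fil^{\ge *}_\mot\HH(X)\simeq\Fil^{\ge *}_\mot\THH(X)\hat\otimes_{\Fil^{\ge *}_\mot\THH(\Z_p)}\Z_p$, together with its analogue turning $\Fil^{\ge *}_\mot\THH(X/\S_p\llb q-1\rrb)$ into $\Fil^{\ge *}_\mot\HH(X/\Z_p\llb q-1\rrb)$. You do not prove either, and this is exactly what the paper flags at the start of \cref{sctn::de_rham_coh}. The maps $\THH(\Z_p)\to\Z_p$, $\THH(-)\to\HH(-)$ and $\THH(-/\S_p\llb q_n-1\rrb)\to\HH(-/\Z_p\llb q_n-1\rrb)$ are not $p$-completely eff, so the Hahn--Raksit--Wilson base-change results for the even filtration do not apply.

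Concretely, the positive-weight part of $\gr^*_\mot\THH(\Z_p)$ is $\Z_p/i$ placed in degree $2i-1$. So for a QRSP $R$, the bar construction computing $\gr^*_\mot\THH(R)\otimes_{\gr^*_\mot\THH(\Z_p)}\Z_p$ contains terms such as $\Tor_1(\pi_{2a}\THH(R),\Z_p/i)$ in weight $a+i$ and degree $2(a+i)+1$. To conclude that the base-changed filtration is $\tau_{\ge 2*}\HH(R)$, you would have to show that all such off-diagonal terms are killed by the module structure. You do not address this.

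The obstacle you do single out is actually easy, and your fix for it is wrong. $\Z_p$ is not a perfect (finite) $\THH(\Z_p)$-module. Nor is $\THH(\Z_p)$ polynomial on a B\"okstedt class: that is $\THH(\F_p)$, whereas here $\pi_{2i-1}\THH(\Z_p)\cong\Z_p/i$. What is true is that $\Z_p$ is almost perfect over the connective Noetherian ring $\THH(\Z_p)$. So \cref{lem::almost_perfect_commute_with_limit_for_connective_sp} lets $-\otimes_{\THH(\Z_p)}\Z_p$ pass through sequential limits of connective modules, but only on underlying spectra, not filtration-wise.

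\textbf{How the paper avoids this.} It never base-changes filtered objects. It base-changes only the underlying sheaves (\cref{lem::HH(rel_qdR)_is_THH(rel_qdR)_tensored_with_Zp}) and recovers the filtrations on QRSPs from evenness:
\begin{itemize}
\item $\HH(R)$ is even.
\item $\HH(R/\Z_p\llb q_n-1\rrb)\simeq\HH(R)\otimes_{\THH(\Z_p[\zeta_{p^n}])}\THH(\Z_p[\zeta_{p^n}]/\S_p\llb q_n-1\rrb)$ is even, because $j_{n-1}^{(-1)}\to\ku_{p,n-1}^{(-1)}$ \emph{is} eff (\cref{lem::shj_n_to_shku_n_is_eff}, \cref{cor::HH_and_rel_HH_even_for_QRSP_R}).
\end{itemize}
The fibre sequence and transfer diagram for $\HH$ (\cref{lem::de_rham_coh_from_rel_dR}, \cref{prop::trace_for_HH}) are then obtained by taking the covers $\tau_{\ge 2*}$ and $\tau_{\ge 2*-1}$ of the base-changed unfiltered $\THH$-diagrams. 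The vanishing of the middle limit follows from $\varprojlim_n\THH(R_n/\S_p\llb q_n-1\rrb)\simeq 0$ plus the almost-perfectness lemma.

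\textbf{Reorder Step 2.} You try to commute $(-)^{h\T_\ev}$ past $\hat\otimes\,\ms M_\Iw$, which is not perfect over $\S_\ev\llb q-1\rrb$. Instead:
\begin{itemize}
\item take $(-)^{h\T_\ev}$ and $\gr^r$ termwise in $\varprojlim_n\Fil^{\ge *-1}_\mot\HH(X_n/\Z_p\llb q_n-1\rrb)(1)[1]$, where they commute with the limit for free;
\item only then pull $\varprojlim_n$ into the tensor product, using pseudo-coherence of $\L_{X/\Z_p\llb q_1-1\rrb}$. This is where properness and regularity enter.
\end{itemize}

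\textbf{Smaller slips.}
\begin{itemize}
\item $\gr^i_\mot\HH(X)$ is derived Hodge cohomology $L\Omega^i_X[2i]$; the Hodge filtration only appears after $(-)^{hS^1}$.
\item $\THH(\S_p\llb q-1\rrb)\otimes_{\THH(\S_p)}\S_p$ is just $\THH(\S_p\llb q-1\rrb)$. The identity you need is $\HH(-/\Z_p\llb q-1\rrb)\simeq\HH(-)\otimes_{\THH(\S_p\llb q-1\rrb)}\S_p\llb q-1\rrb^\triv$.
\item $(1)$ is the Tate twist. The Frobenius twist $(-)^{(1)}$ comes from the convention $q_1\mapsto\zeta_p$.
\end{itemize}
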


\subsection{Transfer Maps}\label{sctn::intro_trace_maps_in_coh_theories}

A key technical input is the existence of trace maps in prismatic cohomology in the first place: indeed, the reason the trace maps (aka corestriction maps) \[\R\Gamma_\et(L, -) \to \R\Gamma_\et(K,-)\] exist in \'etale cohomology for finite field extensions $L/K$ is that $\Spec L \to \Spec K$ is a finite \'etale map. However, $\Spf \Z_p[\zeta_{p^n}] \to \Spf \Z_p [\zeta_{p^{n-1}}]$ is not \'etale, though it is finite locally free. 

Relatedly, on the homotopy theoretic side, we have glossed over a key technical detail: that the transfer maps, obtained by applying $\THH$ to the map \[\Big(\Perf(X) \xrightarrow{ f_*} \Perf(Y) \Big)\in \Cat_\infty^\perf\]
for $f$ a finite locally free map, preserve the motivic filtration. 

Both these issues are resolved by the following, which is one of the main technical results of this paper and may be of independent interest. In order to express the theorem in full generality, we use the theory of prismatic $F$-gauges, for which we refer the reader to \cite{Bhatt_FGaugeLec}. In the following, for a map $f:X\to Y$ of $p$-adic formal schemes we denote by $f^\syn : X^\Syn \to Y^\Syn$ the induced map on syntomifications. We will use $f^{\syn,*}$ and $f^\syn_*$ for the \emph{derived} pull-backs and pushforwards respectively. 

\begin{mainthm}\label{mainthm::prismatic_trace}
    Suppose $f:X\to Y$ is a finite locally free regular map of quasi-syntomic qcqs $p$-adic formal schemes. Then, for any perfect prismatic $F$-gauge $\mc E\in \mc \Perf(Y^\Syn)$, there is a `trace' map of prismatic $F$-gauges \[\tr_f^\syn(\mc E) : f_*^\syn (f^{\syn, *} \mc E) \to \mc E\]
    functorial in $\mc E$ as well as functorial in base-change in $f$ and composition in $f$ (the latter only up to non-canonical homotopy), satisfying the following properties:
    \begin{enumerate}
        \item \label{mainthm_part::normalisation} The composition \[\mc E \to  f_*^\syn (f^{\syn,*}\mc E) \xrightarrow{\tr_f^\syn(\mc E)} \mc E\]
        is multiplication by $\deg f$.

        \item \label{mainthm_part::projection_formula} Under the identification provided by the projection formula $f_*^\syn (f^{\syn, *} \mc E) \simeq f_*^\syn \O_{X^\Syn} \otimes \mc E$, one has $\tr_f^\syn(\mc E)\simeq \tr_f^\syn(\O_{X^\Syn}) \otimes \id_{\mc E}$. 
        
        \item \label{mainthm_part::etale_comparison} (\emph{\'Etale comparison}) For any $\mc E\in \Perf(Y^{\Syn})$, \'etale realisation of the trace map on $F$-gauges \[f_*^\syn f^{\syn,*} \mc E \to \mc E\]
        coincides with the \'etale trace map \[f_{\eta,*} f_\eta^* T_\et(\mc E) \to T_\et(\mc E)\]
        on $\Z_p$-local systems constructed in \cite{li_reinecke_zavyalov_trace}, where $T_\et$ denotes \'etale realisation.

        \item \label{mainthm_part::finite_etale_comparison} If $f$ is finite \'etale, then this coincides with the trace map for finite \'etale extensions discussed in \cite[Section 7.2]{guo_reinecke}.
        
        \item \label{mainthm_part::trace_on_perfectoids} When $X = \Spf S$ and $Y = \Spf R$ are both $p$-torsion free integral perfectoids with $S/R$ finite locally free, the trace map \[A_{\inf}(S) \to A_{\inf}(R)\]
        is induced by the usual trace map associated to the finite locally free extension $A_{\inf}(R)\to A_{\inf}(S)$.
    \end{enumerate}
\end{mainthm}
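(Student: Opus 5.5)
The construction goes through $\P^1$-stable motivic homotopy theory, as the abstract suggests. The key point is that a finite locally free regular map $f:X\to Y$ factors, Zariski locally on $Y$, as a regular closed immersion $i:X\hookrightarrow \P^N_Y$ followed by the projection $\pi:\P^N_Y\to Y$. Since $f$ is finite locally free of some rank $d$, we can take $i$ to be the immersion into $\P(f_*\O_X)$ or into $\A^d_Y\subset \P^d_Y$. The trace is then defined as the composite of a Gysin map for $i$ with a pushforward along $\pi$:
\[f_*^\syn f^{\syn,*}\mc E \xrightarrow{\Gys_i} \pi_*^\syn\pi^{\syn,*}\mc E\{c\}[2c] \xrightarrow{\int_\pi} \mc E,\]
where $c$ is the codimension and the Breuil--Kisin twists cancel by virtue of $f$ being finite.

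Both ingredients come from the projective bundle formula for syntomic cohomology and $F$-gauges, which says $\pi_*^\syn\pi^{\syn,*}\mc E\simeq \bigoplus_{j=0}^N \mc E\{-j\}[-2j]$. The pushforward $\int_\pi$ is projection onto the top summand. The Gysin map is built from the deformation to the normal cone together with homotopy invariance, or purity, for regular immersions in $F$-gauges. I would phrase all of this by saying that $\mc E\mapsto \R\Gamma(Y'^\Syn,\mc E|_{Y'})$ defines a $\P^1$-spectrum, that is, an oriented motivic ring spectrum or module over it, on quasi-syntomic schemes over $Y$. Once that is in place, the general machinery of Déglise-style fundamental classes, or the six-functor formalism for oriented theories, supplies transfers along proper lci maps. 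These transfers come with functoriality for base change and composition, the latter only up to homotopy because the factorisation is non-canonical. The projection formula, part (\ref{mainthm_part::projection_formula}), is built into this formalism because all the maps are $\mc E$-linear: the construction is performed for $\O$ and then tensored with $\mc E$.

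The main obstacle is $\A^1$-invariance. Syntomic cohomology is not $\A^1$-invariant, so the standard motivic machinery does not apply directly. I would avoid it by working only with $\P^1$-bundle formulas and purity for regular closed immersions. Purity should follow from the Gysin sequence of Bhatt--Lurie or Tang for syntomic cohomology. Independence of the choice of factorisation is then proved by the usual trick of comparing two factorisations through $\P^N\times\P^M$.

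The remaining properties I would check as follows. For normalisation, part (\ref{mainthm_part::normalisation}), the composite is cap product with $f_*[X]\in H^0(Y)$, the degree of the fundamental class. This reduces to computing the class of a degree-$d$ finite subscheme of $\P^N$, which is $d$ times the point class. Since everything is a map between vector-bundle-type objects, the computation can be done after pullback to a cover where $f$ becomes split. For the comparisons, parts (\ref{mainthm_part::etale_comparison}) and (\ref{mainthm_part::finite_etale_comparison}), I would use that étale realisation is compatible with Chern classes and with Gysin maps, hence with fundamental classes. The traces of \cite{li_reinecke_zavyalov_trace} and \cite{guo_reinecke} are characterised by the same axioms: compatibility with base change and the normalisation on $\P^N$. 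Here I use that $\Z_p$-local systems have no nonzero maps detecting the ambiguity, because the target is discrete after restricting to finite étale opens. Finally, for perfectoids, part (\ref{mainthm_part::trace_on_perfectoids}), the $F$-gauge is $A_{\inf}$ concentrated in degree $0$. Both the constructed trace and the algebraic trace are $A_{\inf}(R)$-linear maps $A_{\inf}(S)\to A_{\inf}(R)$, and they agree after inverting $\mu$, where by the étale comparison both reduce to the étale trace. Since $A_{\inf}(R)$ is $\mu$-torsion free, they agree.
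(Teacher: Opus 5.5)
The overall shape of your construction matches the paper's: embed $X$ in $\A^n_Y\subset\P^n_Y$, apply a Gysin map, and take the top coefficient of the projective bundle formula. You are also right that Tang's Gysin maps are what replaces $\A^1$-invariance. What is missing is the idea that makes the listed properties provable: the paper never defines the trace from Gysin maps alone. It starts from the transfer $\mc F(f_*)$, which exists for free on every localising invariant $\mc F$ ($\THH$, $\TC^-_\prism$, $\TP_\prism$, $L_{K(1)}\TC$, \dots). The factorisation is used only to show that this transfer preserves the motivic filtration. The crux is a computation in $K_0$:

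\begin{itemize}
\item Because $X$ lies in $\A^n_Y$, it misses the hyperplane at infinity, so $\iota^*\O(k)$ is trivial and $[\iota_*M]=[f_*M]\cdot(1-[\O(-1)])^n$.
\item Tang's filtered Gysin map has underlying map $\mc F(\iota_*)$, and the Chern classes are pulled back from $K$-theory.
\item Hence the coefficient of $c(\O(1))^n$ is $\mc F(f_*)$, and all lower coefficients vanish.
\end{itemize}

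With this in hand, independence of the factorisation is automatic on underlying objects. Normalisation reduces by $K(R)$-linearity to $[S]=d$ in $K_0(R)$. The map of $F$-gauges comes from running the argument for $\TC^-_\prism$ and $\TP_\prism$, compatibly with $\can$ and $\varphi$, over QRSPs; there $F$-gauges are Nygaard-filtered modules with Frobenius.

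Without this link to $K$-theory, your verifications break down.

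\begin{itemize}
\item \emph{Normalisation.} You pull back to a cover over which $f$ splits. If this means $X\times_YY'\simeq\bigsqcup_dY'$, no such $p$-completely faithfully flat cover exists unless $f$ is already \'etale, because \'etaleness descends. The ramified maps $\Spf\Z_p[\zeta_{p^{n+1}}]\to\Spf\Z_p[\zeta_{p^n}]$ that the paper is about are exactly the excluded case. If it only means that $f_*\O_X$ becomes free, that computes nothing in syntomic cohomology by itself. The assertion that a degree-$d$ finite subscheme of $\P^N_Y$ has top coefficient $d$ is precisely what needs proof.

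\item \emph{\'Etale comparison.} The Li--Reinecke--Zavyalov trace is characterised by the degree normalisation over all finite flat maps of locally Noetherian rigid spaces, not by a normalisation on $\P^N$. Their uniqueness argument passes to a geometric point $\Spa(C,C^+)$ and then to the connected components of the generic fibre. These components generally do not come from the formal model: $\Spf(\O_C\otimes_{\Z_p}\Z_p[\zeta_p])$ is local, but its generic fibre consists of $p-1$ points. The global degree then fixes only the sum of the local multiplicities. You need a transfer intrinsic to the generic fibre. The paper uses $\gr^0_\mot L_{K(1)}\TC\simeq\R\Gamma_\et((-)_\eta,\Z_p)$, whose transfer depends only on $X_\eta$ via the Thomason filtration. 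On a connected $\mc X$ with $\mc X_{\mr{red}}=\Spa(C,C^+)$, it is multiplication by $\mr{rank}_C\O(\mc X)$, again by $K$-theory.

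\item \emph{Perfectoids.} You invoke the \'etale comparison, but perfectoid generic fibres are in general not locally Noetherian. So neither part (3) nor the Li--Reinecke--Zavyalov trace applies. Moreover, $\mu$ requires $p$-power roots of unity in $R$. The paper argues directly instead. Modulo $\ker\theta=(d)$, the transfer on $A_{\inf}$ reduces to the transfer on $\pi_0\THH=\HH_0$, which is the classical trace $S\to R$. Both maps $A_{\inf}(S)\to A_{\inf}(R)$ are $\varphi$-equivariant. So their difference, which vanishes modulo $d$, lands in $\bigcap_n\varphi^n(d)A_{\inf}(R)=0$.
\end{itemize}
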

\begin{remark}
    In \cite{Bhatt_FGaugeLec}, \'etale realisation of an $F$-gauge is only defined for perfect $F$-gauges. However, $f_*^\syn \O_{X^\Syn}$ for $f$ a finite map that is not \'etale will not be perfect, and so \textit{a priori} \'etale realisation as defined in \cite{Bhatt_FGaugeLec} is not well-defined. We address this issue in Remark \ref{rmk::extending_etale_realisation}.
\end{remark}
\cref{mainthm::prismatic_trace} is conditional on the forthcoming work \cite{DHRY_even_stacks}. However, \cref{mainthm::desc_of_limit_of_THH_of_cyc} and its corollaries are not conditional on \cite{DHRY_even_stacks}, since \cref{mainthm::desc_of_limit_of_THH_of_cyc} only requires knowing that the transfer maps on $\THH$ preserve the motivic filtrations. The rest of \cref{mainthm::prismatic_trace} is simply a sanity check to ensure that the transfer maps induced on the associated graded of the motivic filtration satisfy expected properties.

The key point in the proof of \cref{mainthm::prismatic_trace} is that transfer maps exist for free for localising invariants, so all one needs to do is to check that the associated motivic filtrations are preserved by the transfer map; this is \cref{thm::transfer_map_for_finite_loc_free_extensions}. Following a suggestion of Clausen, we do this by factoring our finite locally free map $f:X\to Y$ through a regular closed immersion $X\inj \P^n_Y$ into projective space, and then using the Gysin maps of Tang \cite{tang2024syntomic, tang2026_gysin} as well as the projective bundle formula. Once we know that the motivic filtrations are preserved, the rest of \cref{mainthm::prismatic_trace} is then straightforward using standard techniques in the theory of prismatic cohomology.

\subsection{Idea Of Proof of {\cref{mainthm::desc_of_limit_of_THH_of_cyc}}}

In order to prove \cref{mainthm::desc_of_limit_of_THH_of_cyc}, we first give an explicit description of the filtered transfer map in motivically filtered $\THH$, in terms of a diagram of fibre sequences. One then takes limits in this diagram. One then needs to show that the limit of the middle term vanishes, which is a consequence of the fact that the $1+p^{n}\Z_p$ action on $\prism_{-/\Z_p\llb q_{n-1}-1\rrb}$ is trivial modulo $q-1$ (see \cite[Section 3.8]{BL_absolute_prismatic_coh} for the case $n=1$ and \cite[Theorem 1.0.6]{zeyuliu2025stacky} in general). This yields \cref{mainthm::desc_of_limit_of_THH_of_cyc} only at the level of synthetic spectra with synthetic circle action. As we elaborate below, the cyclotomic Frobenius requires some more work. 

In the following, we use the short hand $q_n := q^{p^{-n}}$, so that $\Z_p[\zeta_{p^n}]$ is a $\S_p\llb q_n-1\rrb$-algebra via $q_n\mapsto \zeta_{p^n}$. 
\begin{mainthm}[\cref{thm::computing_trace_maps_explicitly_for_any_R}]\label{mainthm::desc_of_transfers}
    Let $g$ be a topological generator of $(1+p\Z_p)$, and $R$ a $p$-quasi-syntomic $\Z_p[\zeta_p]$-algebra. Set $R_n := R\otimes_{\Z_p[\zeta_p]} \Z_p[\zeta_{p^n}]$. The transfer map $$\tr: \Fil_\mot^{\ge *} \THH(R_{n+1})\to \Fil_\mot^{\ge *} \THH(R_n)$$ sits in the following functorial-in-$R$ commutative diagram of $\Fil_\mot^{\ge *} \THH(R_n)$-modules in synthetic spectra with synthetic $S^1$-action:
    \[\begin{tikzcd}
        {\Fil_\mot^{\ge *}\THH(R_{n+1})} & {\Fil_\mot^{\ge *}\THH(R_{n+1}/\S_p\llb q_{n+1}-1\rrb)} && {\Fil_\mot^{\ge *-1}\THH(R_{n+1}/\S_p\llb q_{n+1}-1\rrb)(1)[2]} \\
        & {\Fil_\mot^{\ge *}\THH(R_{n}/\S_p\llb q_{n}-1\rrb)} && {\Fil_\mot^{\ge *-1}\THH(R_{n}/\S_p\llb q_{n}-1\rrb)(1)[2]} \\
        {\Fil_\mot^{\ge *}\THH(R_n)} & {\Fil_\mot^{\ge *}\THH(R_{n}/\S_p\llb q_{n}-1\rrb)} && {\Fil_\mot^{\ge *-1}\THH(R_{n}/\S_p\llb q_{n}-1\rrb)(1)[2]}
        \arrow[from=1-1, to=1-2]
        \arrow["\tr"', from=1-1, to=3-1]
        \arrow["{\psi^{g^{p^n}} - \id}", from=1-2, to=1-4]
        \arrow["{\rho_n}"', from=1-2, to=2-2]
        \arrow["{\rho_n(1)[2]}"', from=1-4, to=2-4]
        \arrow["{\psi^{g^{p^n}} - \id}", from=2-2, to=2-4]
        \arrow["{\sum_{i=0}^{p-1}\psi^{g^{ip^{n-1}}}}", from=2-2, to=3-2]
        \arrow["\id", equals, from=2-4, to=3-4]
        \arrow[from=3-1, to=3-2]
        \arrow["{\psi^{g^{p^{n-1}}}} - \id", from=3-2, to=3-4]
    \end{tikzcd}\]
    where top and bottom rows are fibre sequences, the left most map is the transfer map in filtered $\THH$, $\psi^{a}$ for $a\in \Z_p^\times$ are the Adams operations lifting the usual action of $\Z_p^\times$ on $\S_p\llb q_n-1\rrb$, and the map $\rho_n$ is given by\footnote{For a more precise definition of $\rho_n$ see Construction \ref{cons::construction_of_rho_n}.} 
    \begin{align*}
        \Fil_\mot^{\ge *}\THH(R_{n+1}/\S_p\llb q_{n+1}-1\rrb) &\simeq \Fil_\mot^{\ge *}\THH(R_{n}/\S_p\llb q_{n}-1\rrb) \otimes_{\S_p\llb q_n-1\rrb} \S_p\llb q_{n+1}-1\rrb \\
        &\simeq \bigoplus_{i=0}^{p-1} \Fil_\mot^{\ge *}\THH(R_{n}/\S_p\llb q_{n}-1\rrb) \cdot q_{n+1}^i\\
        &\xrightarrow{\id \otimes \proj_{i=0}}\Fil_\mot^{\ge *}\THH(R_{n}/\S_p\llb q_{n}-1\rrb).
    \end{align*}
\end{mainthm}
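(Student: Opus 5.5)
Our approach is to build the two rows first, then construct $\rho_n$ so that the squares commute, and only at the end identify the induced map on fibres with the transfer.

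\textbf{Step 1: the rows.} For each $n$ we obtain the horizontal fibre sequence
\[\Fil_\mot^{\ge *}\THH(R_n)\to \Fil_\mot^{\ge *}\THH(R_n/\S_p\llb q_n-1\rrb)\xrightarrow{\psi^{g^{p^{n-1}}}-\id}\Fil_\mot^{\ge *-1}\THH(R_n/\S_p\llb q_n-1\rrb)(1)[2]\]
by base change. Write $\THH(R_n/\S_p\llb q_n-1\rrb)\simeq \THH(R_n)\otimes_{\THH(\S_p\llb q_n-1\rrb)}\S_p\llb q_n-1\rrb$. The key input is the Devalapurkar--Raksit computation of $\THH(\Z_p[\zeta_{p^n}])$. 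It presents $\S_p\llb q_n-1\rrb$ as a $\THH(\S_p\llb q_n-1\rrb)$-module, namely $\THH(\S_p\llb q_n-1\rrb)$ is a ``Koszul/circle-type'' extension $\S_p\llb q_n-1\rrb\otimes \Sigma^\infty_+ S^1$-like object. From this presentation, $\THH(-)$ is recovered from the relative version as the fibre of a single operator. That operator is $\psi^{g^{p^{n-1}}}-\id$, where $g^{p^{n-1}}$ topologically generates the Galois group $1+p^n\Z_p$ of the tower over $\Z_p[\zeta_{p^n}]$, and its target carries the Tate twist $(1)[2]$ and the shift $*-1$ in weight. This should be the filtered analogue of the familiar statement that absolute prismatic cohomology is the homotopy fixed points for $1+p^n\Z_p$ acting on $q$-de Rham cohomology, cf.\ \cite{BL_absolute_prismatic_coh}, \cite{zeyuliu2025stacky}. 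We check compatibility with the motivic filtration by quasisyntomic descent to quasiregular semiperfectoids, where everything is concentrated in even degrees and the sequence becomes a short exact sequence of $\pi_{2*}$'s.

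\textbf{Step 2: the middle column and commutativity.} We define $\rho_n$ as in Construction~\ref{cons::construction_of_rho_n}. This uses the base-change identification along the finite free map $\S_p\llb q_n-1\rrb\to\S_p\llb q_{n+1}-1\rrb$, of rank $p$ with basis $q_{n+1}^i$, followed by projection to the $i=0$ summand. Equivalently, $\rho_n$ is $\frac1p\mathrm{Tr}$, which is the spectral lift of $\tau$.

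\begin{itemize}
\item \emph{Top square.} It commutes because $\psi^{g^{p^n}}$ fixes $q_n$ and $\rho_n$ is $\S_p\llb q_n-1\rrb$-linear. We must also check that $\psi^{g^{p^n}}$ commutes with $\rho_n$; since $g^{p^n}$ permutes the set $\{q_{n+1}^i\}$ up to $\S_p\llb q_n-1\rrb$-units, this reduces to $\tau$-equivariance on $\pi_0$, which holds.
\item \emph{Bottom square.} It commutes via the telescoping identity
\[(\psi^{g^{p^{n-1}}}-\id)\circ\sum_{i=0}^{p-1}\psi^{g^{ip^{n-1}}}=\psi^{g^{p^n}}-\id.\]
\end{itemize}

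Both squares can be made coherent by working with $\S_p\llb q_n-1\rrb$-module maps with $\Z_p$-actions, so they induce a map on fibres.

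\textbf{Step 3 (the main obstacle): identifying the fibre map with $\tr$.} The transfer is $\THH$ applied to $f_*:\Perf(R_{n+1})\to\Perf(R_n)$, and it is a priori unrelated to $\rho_n$.

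\begin{itemize}
\item First, the transfer is compatible with base change along $\THH(\S_p\llb q_n-1\rrb)\to\S_p\llb q_n-1\rrb$. On the relative terms it is induced by the transfer for $\S_p\llb q_n-1\rrb\to\S_p\llb q_{n+1}-1\rrb$, i.e.\ a trace of $\THH$ of spherical group rings.
\item For that trace we invoke Schlichtkrull's computation of the free loop transfer. For $\Sigma^\infty_+$ of the $p$-fold cover $S^1\to S^1$, it is the sum over the fibre. After $p$-completion and taking into account the $S^1$-directions, this becomes $\sum_i\psi^{g^{ip^{n-1}}}\circ\rho_n$ on the relative term.
\item Finally, we show the transfer preserves motivic filtrations, which is Theorem~\ref{thm::transfer_map_for_finite_loc_free_extensions}. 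Agreement on underlying spectra then promotes to filtered objects, because the maps between the relevant even-filtered objects are determined after descent to quasiregular semiperfectoids, where the filtrations are double-speed Postnikov.
\end{itemize}

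I expect the delicate point to be matching Schlichtkrull's formula with the Adams-operation description. In particular we must verify that the summand projection is $\rho_n$ and not a twisted version, and track the residual homotopy coherence. Functoriality in $R$ is automatic, since all constructions are base changes of the case $R=\Z_p[\zeta_p]$.
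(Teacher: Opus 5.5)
The gap is Step~3, which is where all the content lies, and the plan around it is not well-posed. You have the right ingredients (base change of transfers, Schlichtkrull, QRSP descent), but they are not assembled into an argument.

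First, the map on fibres induced by the squares of Step~2 depends on the homotopies you choose. Changing them changes the fibre map by composites that factor through the connecting map $\partial\colon \Fil_\mot^{\ge *-1}\THH(R_n/\S_p\llb q_n-1\rrb)(1)[1]\to\Fil_\mot^{\ge *}\THH(R_n)$. So you cannot build the diagram first and identify the fibre map with $\tr$ afterwards; the diagram, homotopies included, has to be extracted from a computation of $\tr$.

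Second, the computation you sketch does not produce the middle column. Base-changing the absolute transfer along $\THH(\S_p\llb q_n-1\rrb)\to\S_p\llb q_n-1\rrb$ gives a map out of $\THH(R_{n+1}/\S_p\llb q_n-1\rrb)$. That is not the top middle term $\THH(R_{n+1}/\S_p\llb q_{n+1}-1\rrb)$: the two differ by the factor $\THH(\S_p\llb q_{n+1}-1\rrb/\S_p\llb q_n-1\rrb)\simeq\S_p\llb q_{n+1}-1\rrb\otimes\S_p[BC_p]$. Nor is $\sum_i\psi^{g^{ip^{n-1}}}\circ\rho_n$ the transfer of anything; $\rho_n$ does not even commute with Frobenius (Remark~\ref{rmk::transfer_formula_not_commute_with_frob}). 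Your sentence that Schlichtkrull's formula ``becomes $\sum_i\psi^{g^{ip^{n-1}}}\circ\rho_n$ on the relative term'' is the statement to be proved, not an argument.

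The missing idea is an intermediate object through which $\tr$ factors, namely the fibre of the middle row.
By \cref{lem::THH_transfer_bootstrap}, $\tr$ is the base change along $\THH(\S_p[p^{-n}\Z])\to\THH(R_n)$ of the free loop transfer. Schlichtkrull's formula (\cref{prop::explicit_thh_transfer_for_cyclic_groups}) writes the free loop transfer as projection onto the $\gamma=0$ summand of $\bigoplus_{\gamma}\THH(\S_p[p^{-n}\Z])\otimes_{\S_p[Bp^{-n}\Z]^\triv}\S_p[Bp^{-n-1}\Z]_{(\gamma)}$, followed by $\id\otimes\tr_{st}$ with $\tr_{st}\colon\S_p[Bp^{-n-1}\Z]\to\S_p[Bp^{-n}\Z]$ the stable transfer. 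Hence $\tr$ factors through $\THH(R_n)\otimes_{\S_p[Bp^{-n}\Z]^\triv}\S_p[Bp^{-n-1}\Z]^\triv$.

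The key input is \cref{lem::stable_transfer_compatible_with_adams_op_transfer}. It identifies this object, for $R=\Z_p[\zeta_p]$, with $\fib(\psi^{g^{p^n}}_{\circ\circ}\colon\ku_{p,n-1}^{(-1)}\to\ku_{p,n-1}^{(-1)}(1)[2])$. The proof uses the pushout $\ku_{p,n-1}^{(-1)}\simeq j_{n-1}^{(-1)}\otimes_{\S_p[BC_{p^n}]^\triv}\S_p[B^2\Z]^\triv$ of \cref{lem::some_pushouts_of_cyc_spectra} and a mod~$p$ check on the degree-one exterior class. The same lemma identifies $\id\otimes\tr_{st}$ with $(\sum_i\psi^{g^{ip^{n-1}}},\id)$, by tensoring the square that compares the group transfer $\S_p[BC_{p^{n+1}}]\to\S_p[BC_{p^n}]$ with $\sum_i g^{ip^{n-1}}$ on $\S_p[B^2\Z_p]$ and using $\Z_p^\times$-equivariance of the Bott map. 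The $\gamma=0$ projection is then matched with $\rho_n$ via \cref{lem::properties_of_projection_map_on_kupn}.

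This is what explains why the middle row (with $1+p^{n+1}\Z_p$ acting on the level-$n$ relative $\THH$) appears at all. It also produces the whole diagram with its homotopies, and general $R$ follows by applying $\THH(R)\otimes_{\THH(\Z_p[\zeta_p])}-$.

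There are two smaller errors:
\begin{itemize}
\item $\psi^{g^{p^n}}$ does not fix $q_n$: it sends $q_n\mapsto q_n^{g^{p^n}}$, which agrees with $q_n$ only modulo $q-1$. The top square commutes because $\rho_n$ is $\Z_p^\times$-equivariant by construction, not because of $\S_p\llb q_n-1\rrb$-linearity.
\item For $R$ a QRSP over $\Z_p[\zeta_p]$, the ring $R_n$ is not QRSP and $\THH(R_n)$ is not even. The filtered promotion instead rests on $\Fil_\mot^{\ge *}\THH(R_n)\simeq\tau_{\ge 2*-1}\THH(R_n)$ (\cref{cor::identifying_mot_filt_on_thh(R_n)_for_R_qrsp}), i.e.\ one applies $\tau_{\ge 2*-1}$ to the unfiltered diagram.
\end{itemize}
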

One can get formulas for the transfer maps in Nygaard filtered prismatic cohomology by taking associated graded of the motivic filtration on $\TC^-$. 
\begin{remark}\label{rmk::transfer_formula_not_commute_with_frob}
    We warn that this diagram is \emph{not} a commutative diagram in cyclotomic synthetic spectra. Indeed, the map labelled $\rho_n$ does not commute with cyclotomic Frobenius, as one checks upon applying $\pi_0(-)^{hS^1}$.
\end{remark}

\cref{mainthm::desc_of_transfers} is proven by a calculation using the explicit description of $\THH(\Z_p[\zeta_{p^n}])$ and $\THH(\Z_p[\zeta_p]/\S_p\llb q_1-1\rrb)$ as cyclotomic spectra due to Devalapurkar--Raksit (recalled in \cref{sctn::thh_of_cyclotomic_extns}) and the explicit calculation of the free loop transfer map $\THH(\S[\Z])\to \THH(\S[p\Z])$ due to Schlichtkrull (recalled in \cref{sctn::thh_of_grp_algebras}), followed by an analysis of motivic filtrations. Apart from \cref{mainthm::prismatic_trace}, this is the other place in the paper that the homotopy theoretic input is essential in proving Corollaries \ref{maincor::limit_of_prismatic_coh}--\ref{maincor::limit_of_de_rham_coh}, since we use in an essential way the structure of $\THH$ of spherical group algebras. Moreover, the reduction to the case of a point uses symmetric monoidality of $\THH$, whereas an analogous K\"unneth formula for absolute prismatic cohomology fails. 

\cref{mainthm::desc_of_transfers} suffices to give a description of $\varprojlim_n \Fil_\mot^{\ge *} \THH(X_n)$ as a synthetic spectrum with synthetic $S^1$-action. To get around the obstruction noted in Remark \ref{rmk::transfer_formula_not_commute_with_frob}, we make the observation that the Frobenius map $\varphi$ on $\Z_p\llb q_n-1\rrb$ factors as \[\Z_p\llb q_n-1\rrb \inj \Z_p\llb q_{n+1}-1\rrb \xrightarrow[q_{n+1} \mapsto q_n]{\simeq} \Z_p\llb q_n-1\rrb\]
where the first map is the canonical inclusion. Moreover, this second isomorphism is compatible with the projection map $\proj = \pi_0\rho_n^{hS^1}$:
\[\begin{tikzcd}
    \Z_p\llb q_{n+1}-1\rrb \ar[equals]{rr}{q_{n+1} \mapsto q_n}\ar{d}{\proj} && \Z_p\llb q_n-1\rrb \ar{d}{\proj} \\
    \Z_p\llb q_{n}-1\rrb\ar[equals]{rr}{q_{n} \mapsto q_{n-1}} && \Z_p\llb q_{n-1}-1\rrb.
\end{tikzcd}\]
One can calculate the cyclotomic Frobenius map in \cref{mainthm::desc_of_limit_of_THH_of_cyc} using a version of this algebraic observation for $\Fil^\bullet \THH(X_n / \S_p\llb q_{n-1}-1\rrb)$, along with a diagram chase. In fact, it is this observation that leads to the appearance of the shift map. Actually lifting this algebraic observation to spectra, and making the entire argument work, requires an incarnation of $\tau$ in terms of the $K(1)$-local ambidexterity of $BC_p$ (related to the idempotent cutting out the height 1 cyclotomic extension of \cite{CSY_chromatic_cyclotomic}), as well as an $S^1$-nilpotence calculation.

\subsection{Future Applications}
In \cite{CC_IwTh} (see also \cite{berger_exp_map, benois2000iwasawa, kkt, lei2012coleman}), Coleman's isomorphism and its generalisations to Galois cohomology are used to construct the Perrin-Riou $p$-adic regulator map, which interpolates between inverses of the Bloch--Kato exponential map and the dual exponential map. Recently, in \cite{flachkrausemorin_int_BK_exp}, a prismatic interpretation of the Bloch--Kato exponential map was studied. In future work, we will study the relation between the work carried out here and the work of \cite{flachkrausemorin_int_BK_exp}, in order to construct an integral version of Perrin-Riou's $p$-adic regulator. 

As previously mentioned, the Coleman isomorphism can be used to construct the Kubota--Leopoldt $p$-adic $L$-function (see \cite{coates_sujatha} for instance). However, it has already been observed in \cite{AndoHopkinsRezk} that the Kubota--Leopoldt $p$-adic $L$-function has a description as a certain endomorphism of $p$-complete topological real $K$-theory $\KO_p$ coming from the Atiyah--Bott--Shapiro spin orientation of $\KO$. In fact, the norm operator $N_\varphi$ in Coleman's isomorphism also shows up in Ando's criterion for an orientation to be $H_\infty$ \cite{ando1995isogenies}. One expects that a $K(1)$-local version of ($\TC$ applied to) \cref{mainthm::desc_of_limit_of_THH_of_cyc}, along with an analysis \textit{a la} \cite[Section 2]{sanath_arpon_image_of_j}, ties together these observations. This is also closely related to explicit class field theory for $\Q_p$ via the Lubin--Tate formal group $\hat \G_m$. From this perspective, one could even hope that Iwasawa theory for elliptic curves has some connection to orientation properties of the spectra of topological modular forms $\mr{tmf}$. We explore these ideas (at least for $\hat \G_m$) in current work in progress.

\subsection{Summary of Sections}\label{sctn::summary_of_sections}

\cref{sctn::preliminaries} gives some preliminaries on cyclotomic spectra, filtered localising invariants, and motivic filtrations that we will need. In \cref{sctn::transfer_maps_and_construction_of_trace}, we prove \cref{mainthm::prismatic_trace}: in \cref{sctn::transfer_map_for_finite_extension} we show that the transfer map on localising invariants respects the corresponding motivic filtrations, \cref{sctn::transfer_maps_on_F_gauges} proves the existence part of \cref{mainthm::prismatic_trace}, \cref{sctn::calculating_prismatic_trace_for_perfectoids} gives a calculation of the trace map for perfectoids, and finally \cref{sctn::etale_comparison_of_trace} establishes the comparison of our transfer maps with the \'etale traces of \cite{li_reinecke_zavyalov_trace}. 

The bulk of this paper is \cref{sctn::trace_for_cyclotomic_tower}; the plan for this section is as follows. In \cref{sctn::prelim_calculations}, we list a collection of technical lemmas that will be used in the following; the reader may safely skip this on a first reading and refer back to lemmas as necessary. In \cref{sctn::case_of_point}, we prove \cref{mainthm::desc_of_transfers} for the case $X = \Spf \Z_p$, using the results in \cref{sctn::thh_of_grp_algebras} and \cref{sctn::thh_of_cyclotomic_extns}. Using symmetric monoidality of $\THH$ and an analysis of its motivic filtration, we then obtain \cref{mainthm::desc_of_transfers} in general in \cref{sctn::computing_transfers_general_case}. With this description in hand, we then prove \cref{mainthm::desc_of_limit_of_THH_of_cyc} in \cref{sctn::lim_of_thh_in_cyc_tower}. Finally, in \cref{sctn::de_rham_coh} we prove \cref{maincor::limit_of_de_rham_coh}. 

\subsection*{Notation}
\addcontentsline{toc}{subsection}{Notation}

\begin{enumerate}
    \item $p$ is an odd prime number.
    
    \item Throughout, we will set $\Z_p^\cyc := \Z_p[\mu_{p^\infty}]^\wedge_p$ the integral subring of the $\Z_p^\times$-Galois perfectoid extension of $\Q_p$ containing all $p$-power roots of unity. We fix a compatible choice of primitive $p$-power roots of unity $\zeta_{p^n}$ (so $\zeta_{p^{n+1}}^p = \zeta_{p^n}$ and $\zeta_p\ne 1$) inside $\Z_p^\cyc$. We have the $(\Z/p^n\Z)^\times$-Galois extension $\Q_p(\zeta_{p^n})$ of $\Q_p$. We will denote the integral subrings of these fields by $\Z_p[\zeta_{p^n}]$. 

    \item For any $\infty$-category $\mc C$, write $\gr\mc C := \Fun(\Z^\delta, \mc C)$ for the category of graded objects of $\mc C$, and write $\Fil\mc C:= \Fun((\Z,\le ),\mc C)$ for the category of decreasingly filtered objects. For more on filtered objects see \cref{sctn::cyc_synth_sp}.

    \item For $G$ a topological group, we will write $\mc C^{BG} := \Fun(BG, \mc C)$ for the category of $\mc C$-valued local systems on $BG$, equivalently the category of objects of $\mc C$ with $G$-action, equivalently Borel $G$-objects in $\mc C$. Some more notation related to spectra with $G$-action is given in \cref{sctn::gp_actions_in_htpy_thy}.

    \item Write $\Sp$ for the symmetric monoidal category of spectra, with symmetric monoidal product written as $\otimes$ and unit the sphere spectrum $\S$. Write $\map(-, -)$ for the internal mapping spectrum in $\Sp$. One can, and should, think of $\Sp$ as the ``derived category $\mc D(\S)$ of $\S$-modules''. For $X\in \mc D(\Z)$, we will abuse notation by writing $X\in \Sp$ for the Eilenberg-MacLane spectrum associated to $X$.

    Throughout, $\CAlg$ will denote the category of $E_\infty$-ring spectra, and $\CAlg_R$ the category of $E_\infty$-$R$-algebras for $R$ an $E_\infty$-ring. We will also need to consider $\CAlg_\Z^\heart$, the 1-category of classical commutative $\Z$-algebras. 
    
    Write $(-)[1] : \Sp\to \Sp$ for the suspension functor in $\Sp$, which induces the canonical shift functor on the corresponding triangulated category. One has that \[\pi_n (X[1]) \simeq \pi_{n-1} X\]
    for all $X\in \Sp$. Cohomologically, we have $H^n(C[1]) = H^{n+1}(C)$ for $C$ a complex of abelian groups.

    Write $\tau_{\ge n}$ for the truncation functor on $\Sp$, so that \[\pi_i \tau_{\ge n}X := \begin{cases}
        0 & i<n,\\
        \pi_i X & i\ge n.
    \end{cases}\]

    \item Write $\Sp_p$ for the full-subcategory of $\Sp$ consisting of $p$-complete spectra. Unless otherwise specified, in \cref{sctn::trace_for_cyclotomic_tower} we will always be working in the $p$-complete category, and so we will suppress notation indicating $p$-completion. This is particularly relevant for all tensor products showing up in \cref{sctn::trace_for_cyclotomic_tower}: we use $\otimes$ instead of $\hat \otimes$ to denote the $p$-complete tensor product. Similarly, for the Tate construction for a group $G$ (see \cref{sctn::gp_actions_in_htpy_thy}), we write $(-)^{tG}$ to mean the $p$-completion of the usual Tate construction on $G$. 

    \item For any topological space $A$, we will write $\S_p[A]$ for the $p$-complete suspension spectrum $(\Sigma_+^\infty A)^\wedge_p$. As mentioned, we suppress the $p$-completion. 

    \item For any $E_\infty$-ring object $R$ in a symmetric monoidal stable $\infty$-category $\mc C$, write $\Mod_R(\mc C)$ for the category of $R$-module objects in $\mc C$. 

    \item We will write the $n$'th Frobenius twist of $\Z_p\llb q-1\rrb$ as $\Z_p\llb q_n-1\rrb$, where $q_n = q^{p^{-n}}$. Thus, the non-linear Frobenius map $\Z_p\llb q-1\rrb \to \Z_p\llb q-1\rrb$ can be viewed as a $\Z_p\llb q-1\rrb$-algebra map \[\Z_p\llb q-1\rrb \to \Z_p\llb q_1-1\rrb.\]
    Of course, Frobenius induces an isomorphism $\Z_p\llb q_{n+1}-1\rrb \xrightarrow{\simeq} \Z_p\llb q_n-1\rrb$ where $q_{n+1} \mapsto q_n$. 

    \item We reserve the notation $\varprojlim_n$ for specifically sequential limits, while denoting limits over any other indexing diagram with $\lim_i$. 
    
    \item Let $\KU_p$ denote the $p$-adic completion of topological complex $K$-theory. There is a continuous action of $\Z_p^\times$ via Adams operations on $\KU_p$, which will be denoted by $\psi^g : \KU_p \to \KU_p$ for $g\in \Z_p^\times$. We write $\ku_p := \tau_{\ge 0} \KU_p$. The Adams operations act on $\ku_p$, and we will also write $\psi^g$ for this. 
    
    \item For any module $M$ over $\ku_p$, write \[M(1)[2] := \tau_{\ge 2}\ku_p \otimes_{\ku_p} M.\] This notation makes sense since multiplication by the Bott class yields the equivalence $\tau_{\ge 2}\ku_p \simeq \ku_p[2]$ on underlying spectra, and the Adams operations act on the Bott class via multiplication.

    \item Recall that a map $R\to S$ of classical commutative $p$-complete rings is $p$-quasi-syntomic (resp. a $p$-quasi-syntomic cover) if $S/p$ is flat (resp. faithfully flat) over $R/p$ and $\L_{S/R}/p \in \mc D(S/p)$ has Tor amplitude in $[-1,0]$ (cf. \cite[Definition 4.10]{bms2}). A ring $R$ is $p$-quasi-syntomic if $\Z_p\to R$ is $p$-quasi-syntomic. As usual, we will abuse notation by only writing quasi-syntomic rather than $p$-quasi-syntomic.
    
    We write $\QSyn_{R}$ for the Grothendieck site of quasi-syntomic maps $R\to S$, where the covers are the quasi-syntomic covers. This site has a basis by the quasi-regular semi-perfectoids (see \cite[Section 4.4]{bms2}), which we will always abbreviate by QRSP.

    We also comment on the notion of regularity that we are assuming in \cref{mainthm::prismatic_trace}. We say a finite map $R\to S$ of classical commutative $p$-complete rings is regular if it is Koszul-regular in the sense of \cite[Tag068E]{stacks-project}, i.e. if it is a locally complete intersection. 

    \item For $f:X\to Y$ a map of $p$-adic formal schemes, we denote by $f^\syn : X^\Syn \to Y^\Syn$ the induced map on the syntomic stacks of \cite{Bhatt_FGaugeLec}. We will use $f^{\syn,*}$ and $f^\syn_*$ for the {derived} pull-backs and pushforwards respectively as we shall not need their underived versions. However, to prevent confusion, we still use the notation $\R\Gamma(-)$ to denote derived global sections. 
\end{enumerate}

\subsection*{Acknowledgements and Declaration on AI Use}
\addcontentsline{toc}{subsection}{Acknowledgements and Declaration on AI Use}
The author would like to thank their advisor Mark Kisin for invaluable advice and innumerable inputs. The author is grateful to Sanath Devalapurkar for helpful technical discussions about his work as well as his patience with many technical questions. Additionally, the author would also like to thank Keita Allen, Bhargav Bhatt, Jonathan Buchanan, Dustin Clausen, Oakley Edens, Jeremy Hahn, Dhilan Lahoti, Frank Lu, Akhil Mathew, Matthew Niemiro, Dylan Pentland, Lucas Piessevaux, Arpon Raksit, Maxime Ramzi, Florian Riedel, Tomer Schlank, Andy Senger, John Sim, Chris Skinner, Natalie Stewart, Fernando Trejos Suarez, and Longke Tang for \emph{many} helpful conversations in the course of this project. Finally, the author would like to thank Mark Kisin, Dylan Pentland, Florian Riedel, and Fernando Suarez for helpful comments on an early draft of this paper. 

ChatGPT 5.6 Sol was used during the writing of this paper to find typos, grammatical mistakes, and mathematical errors, as well as to occasionally trawl the literature for useful references. All of the proofs present in this paper are human-written and all references human-verified. The author takes full responsibility and ownership for the contents of this paper. 

\section{Preliminaries}\label{sctn::preliminaries}

\subsection{Cyclotomic Spectra and $\THH$}\label{sctn::prelim_cyc_spectr_and_thh}

In the following we give a very brief exposition of cyclotomic spectra and $\THH$, mostly in order to fix notation. We follow \cite{nikolaus_scholze}. 

\subsubsection{Group Actions in Homotopy Theory}\label{sctn::gp_actions_in_htpy_thy}

Suppose $G$ is a topological group, and $\mc C$ is a stable $\infty$-category. Consider a subgroup $H\subset G$, and let $\pi: BH\to BG$ and $\rho:BG\to B(G/H)$ be the canonical maps on classifying spaces induced by the inclusion $H\inj G$ and the projection $G\to G/H$. Pull-back $\pi^*: \mc C^{BG}\to \mc C^{BH}$ corresponds to the restriction functor, where an object with $G$-action is sent to the object with $H$-action obtained by forgetting the rest of the $G$-action. Similarly, $\rho^* : \mc C^{B(G/H)}\to \mc C^{BG}$ implements the functor sending an object with $G/H$-action to the object with $G$-action where $G$ acts via the quotient $G/H$. In particular, the pull-back functor $\rho^*$ when $H = G$ corresponds to endowing an object of $\mc C$ with trivial $G$-action, which we will denote by $(-)^\triv$. 

The functor $\rho^*$ has left and right adjoints, namely \[\rho_! : \mc C^{BG}\to \mc C^{B(G/H)} \quad \text{ and } \quad \rho_* : \mc C^{BG} \to \mc C^{B(G/H)}\]
obtained by taking the colimit and limit along $H$ respectively. These functors are often denoted by $\rho_! =: (-)_{hH}$ and $\rho_* =: (-)^{hH}$, and are referred to as the homotopy orbits and homotopy fixed points respectively. On homotopy groups, these functors give group homology and group cohomology respectively.

Now, if $G$ is a compact Lie group of dimension $d$, and $\mc C$ is stable, then there is a norm map \[\Nm_G : (-)_{hG}[d] \to (-)^{hG}\]
obtained informally by integrating along fibres. See \cite[Section I.1, I.4]{nikolaus_scholze} for a quick overview. 

\begin{definition}
    The cofibre of $\Nm_G$ is defined to be \emph{Tate cohomology $(-)^{tG}$}.
\end{definition}
If $G$ is finite and so $d = 0$, then the usual norm map $\Nm_G$ coincides with the map \[\sum_{g\in G} g : (-)_{hG}\to (-)^{hG}\]
from orbits to fixed points, and the resulting cofibre $(-)^{tG}$ coincides with the usual notion of Tate cohomology.

Since $(-)_{hG}$ and $(-)^{hG}$ are both exact functors of stable $\infty$-categories, so too is $(-)^{tG}$. However, not much more can be said about the commutation of $(-)^{tG}$ with limits or colimits in great generality. With connectivity conditions, we have the following. 

\begin{lemma}[{\cite[Lemma 2.11(b)]{antieau_nikolaus_2021cartier}}]\label{lem::orbits_and_inverse_lims_commute}
    Suppose that $F:I\to \Sp^{BS^1}$ is an $I$-diagram in spectra with $S^1$-action whose limit is $X:= \lim_i F(i)$, such that there exists $N\in \Z$ satisfying $F(i) \in \Sp_{\ge N}$ for all $i$. Suppose $I$ satisfies the following property: there exists $d\in \Z$ such that the functor $\lim_I : \Sp^I \to \Sp$ sends $\Sp^I_{\ge 0}$ to $\Sp_{\ge -d}$. Then, \[X_{hC_p} \simeq \lim_i (F(i)_{hC_p}) \quad \text{ and } \quad X^{tC_p} \simeq \lim_i (F(i)^{tC_p}).\]
    In particular, countable products and $\N$-indexed sequential limits in $\Sp^{BS^1}$ commute with $(-)_{hC_p}$ and $(-)^{tC_p}$.
\end{lemma}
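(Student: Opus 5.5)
We want to show that $(-)_{hC_p}$ and $(-)^{tC_p}$ commute with the limit $X = \lim_i F(i)$ under the stated uniform connectivity hypotheses. Since $(-)^{tC_p}$ is the cofibre of the norm $(-)_{hC_p}\to(-)^{hC_p}$, and $(-)^{hC_p}$ is a right adjoint and so commutes with all limits, it suffices to treat homotopy orbits. The Tate statement then follows from the five lemma applied to the map of cofibre sequences, because finite limits commute with cofibres in a stable category. One point to note: the $C_p$-action only uses restriction along $C_p\subset S^1$, and restriction commutes with limits. So we may work in $\Sp^{BC_p}$ throughout.

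For orbits, we filter $X_{hC_p} = \colim_{BC_p} X$ by the skeleta of $BC_p$. Write $(-)_{hC_p}^{\le k}$ for the colimit over the $k$-skeleton of the standard cell structure, which is the finite colimit $\mathrm{sk}_k(EC_p)_+\otimes_{C_p}(-)$. Each $(-)_{hC_p}^{\le k}$ is a finite colimit of copies of the underlying spectrum, so it is exact and commutes with arbitrary limits. Moreover, the cofibre of $Y^{\le k}_{hC_p}\to Y_{hC_p}$ is built from cells of dimension $>k$, hence lies in $\Sp_{\ge N+k+1}$ whenever $Y\in\Sp_{\ge N}$. We then compare
\[
X_{hC_p}^{\le k} \simeq \lim_i F(i)^{\le k}_{hC_p} \longrightarrow \lim_i F(i)_{hC_p},
\]
whose fibre is $\lim_i$ of a diagram of fibres lying in $\Sp_{\ge N+k}$. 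By the hypothesis on $I$ (applied after shifting), this limit lies in $\Sp_{\ge N+k-d}$. Thus the comparison map $X_{hC_p}\to \lim_i F(i)_{hC_p}$ agrees with the map from $X^{\le k}_{hC_p}$ up to errors in degrees $\ge N+k-d$, so it is an isomorphism on $\pi_j$ for $j<N+k-d-1$. Letting $k\to\infty$ shows it is an equivalence. Note that $X$ itself lies in $\Sp_{\ge N-d}$, so the error term for $X$ is also uniformly bounded below.

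For the final assertion, we check the condition on $I$ in the two cases. For countable products we may take $d=0$, since products are exact on homotopy groups. For $\N$-indexed sequential limits, the Milnor sequence gives $d=1$, because $\lim^1$ contributes only one degree down.

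The main (mild) obstacle is bookkeeping the connectivity of the fibre so that the comparison map is correctly identified. In particular, one must check that the natural map $X_{hC_p}\to\lim_i F(i)_{hC_p}$ is compatible with the skeletal truncations, which follows from naturality of $(-)^{\le k}_{hC_p}\to(-)_{hC_p}$.
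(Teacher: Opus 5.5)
Your proof is correct, and it takes a different route from the paper only in the sense that the paper gives no argument here: it quotes \cite[Lemma 2.11(b)]{antieau_nikolaus_2021cartier}. So your write-up is a self-contained proof of the cited result, not a variant of an in-paper proof.

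Your structure is the natural one.
\begin{itemize}
\item The norm cofibre sequence, together with the fact that $(-)^{hC_p}$ preserves all limits, reduces the Tate statement to $(-)_{hC_p}$.
\item The skeletal approximations $(-)^{\le k}_{hC_p}$ are finite colimits of evaluation functors, so they preserve all limits.
\item Uniform boundedness below enters twice. First, it makes the skeletal error $(N+k+1)$-connective for each $F(i)$, so the $I$-limit of the fibres is $(N+k-d)$-connective. Second, it gives $X\in \Sp_{\ge N-d}$, which controls the skeletal error for $X$ itself.
\end{itemize}
Comparing through the commuting square with $X^{\le k}_{hC_p}\simeq \lim_i F(i)^{\le k}_{hC_p}$ and letting $k\to\infty$ then finishes the argument. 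Your values $d=0$ for countable products and $d=1$ for sequential limits (via the Milnor sequence) are also right.

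Two small points are worth making explicit.
\begin{itemize}
\item The comparison maps are $S^1/C_p$-equivariant, and equivalences are detected on underlying spectra. So passing to $\Sp^{BC_p}$ loses nothing.
\item The ``in particular'' clause must be read under the uniform bounded-below hypothesis, as you do. For example, take $\prod_{n\ge 0}\Sigma^{-n}\F_p$ with trivial action; it agrees with the direct sum. Its $C_p$-orbits have $\pi_0\cong\bigoplus_n \F_p$, whereas $\prod_n(\Sigma^{-n}\F_p)_{hC_p}$ has $\pi_0\cong\prod_n\F_p$. So orbits do not commute with countable products in general.
\end{itemize}
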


\subsubsection{Cyclotomic Spectra}

\begin{definition}
    A \emph{($p$-typical) cyclotomic spectrum} is the following data:
    \begin{itemize}
          \item a spectrum $X$ with $S^1$-action, and
          \item a $S^1$-equivariant map \[\varphi_p : X\to X^{tC_p}.\]
    \end{itemize}
    The stable $\infty$-category of cyclotomic spectra is denoted $\CycSp$.
\end{definition}
\begin{construction}
    There is a functor \[(-)^\triv : \Sp \to \CycSp \]
    which sends a spectrum $X$ to the cyclotomic spectrum $X^\triv$ whose underlying spectrum is $X$ equipped with the trivial $S^1$-action, and whose cyclotomic Frobenius is the $S^1$-equivariant composition \[X\to X^{hC_p} \to X^{tC_p}.\]
    The first map is induced from the fact that $X$ has trivial circle action.
\end{construction}
Cyclotomic spectra have a symmetric monoidal structure as well, with unit $\S^\triv$. For a precise $\infty$-categorical construction of this structure, see \cite[Construction IV.2.1]{nikolaus_scholze}. 

\begin{construction}
    Suppose $X$ and $Y$ are cyclotomic spectra with cyclotomic Frobenii $\varphi_X : X\to X^{tC_p}$ and $\varphi_Y:Y\to Y^{tC_p}$. We define the tensor product of $X$ and $Y$ as a cyclotomic spectrum with underlying spectrum with $S^1$-action given simply by $X\otimes Y$ with the diagonal circle action, and with the cyclotomic Frobenius given by \[X\otimes Y \xrightarrow{\varphi_X \otimes \varphi_Y} X^{tC_p}\otimes Y^{tC_p} \to (X\otimes Y)^{tC_p}\]
    where the second arrow is from the lax symmetric monoidal structure on $(-)^{tC_p}$. We abuse notation and write $X\otimes Y$ viewed as a cyclotomic spectrum. 
\end{construction}
We will use the following fact implicitly.
\begin{lemma}
    The functor \[(-)^\triv : \Sp \to \CycSp\]
    is symmetric monoidal.
\end{lemma}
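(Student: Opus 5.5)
The plan is to exhibit $(-)^\triv$ as a composite of functors that are already known to be symmetric monoidal, and then to compare the Frobenius maps. Following \cite[Construction IV.2.1]{nikolaus_scholze}, $\CycSp$ is the lax equaliser of two functors $\Sp^{BS^1} \rightrightarrows \Sp^{BS^1}$. The first is the identity. The second is $X\mapsto X^{tC_p}$, with its residual $S^1\simeq S^1/C_p$-action. Both are lax symmetric monoidal, and the identity is strictly so. Under this description, the symmetric monoidal structure on $\CycSp$ is the one induced on the lax equaliser. So a symmetric monoidal functor $\Sp\to \CycSp$ is the same as two pieces of data. The first is a symmetric monoidal functor $F:\Sp\to\Sp^{BS^1}$. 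The second is a symmetric monoidal natural transformation $F\Rightarrow (-)^{tC_p}\circ F$ of lax symmetric monoidal functors.

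Step 1: take $F=(-)^\triv=\rho^*$, pullback along $BS^1\to \ast$. Pullback of local systems is symmetric monoidal, since the tensor product on $\Sp^{BS^1}$ is computed pointwise.

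Step 2: the transformation $X\to X^{hC_p}\to X^{tC_p}$ should be lax symmetric monoidal. The first map is the unit $\id\Rightarrow \rho_*\rho^*$ of the adjunction $\rho^*\dashv\rho_*$, for $\rho: BS^1\to B(S^1/C_p)$ restricted appropriately. Since $\rho^*$ is symmetric monoidal, its right adjoint $\rho_*=(-)^{hC_p}$ is canonically lax symmetric monoidal. The unit is then automatically a symmetric monoidal transformation, by doctrinal adjunction. The second map is the canonical map $(-)^{hC_p}\to(-)^{tC_p}$. This is lax symmetric monoidal by \cite[Theorem I.3.1]{nikolaus_scholze}, which says $(-)^{tC_p}$ is the universal lax symmetric monoidal localisation of $(-)^{hC_p}$ killing induced objects. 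Composing the two gives the required datum, and hence a symmetric monoidal lift of $(-)^\triv$.

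Step 3: I must check that the lifted tensor product agrees with the explicit one in the construction of $X\otimes Y$ given above. That construction applies $\varphi_X\otimes\varphi_Y$ and then the lax structure map of $(-)^{tC_p}$. This is exactly how the lax equaliser tensor product is defined, so the compatibility is a tautology once the lax monoidality of the transformation in Step 2 is in place.

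The main obstacle is the $\infty$-categorical coherence. I need not just that the relevant squares commute up to homotopy, but that the transformation is symmetric monoidal as a transformation of $\infty$-operad maps. I would handle this entirely by the universal property in \cite[Theorem I.3.1]{nikolaus_scholze} together with the formal fact that units of monoidal adjunctions are monoidal. I would not attempt any hands-on verification.
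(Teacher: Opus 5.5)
Your argument is correct. The paper gives no proof of this lemma; it is recorded as a standard fact to be used implicitly, resting on the symmetric monoidal structure on $\CycSp$ from \cite[Construction IV.2.1]{nikolaus_scholze}. So your proposal supplies a justification the paper leaves out.

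Two points you use tacitly deserve a sentence each.
\begin{enumerate}
    \item You identify symmetric monoidal functors into the lax equaliser with pairs consisting of a symmetric monoidal $F$ and a monoidal transformation $F\Rightarrow(-)^{tC_p}\circ F$. This relies on two facts. First, the monoidal structure is defined as a pullback of $\infty$-operads, so lax monoidal functors into it are computed componentwise. Second, the forgetful functor $\CycSp\to\Sp^{BS^1}$ is symmetric monoidal and conservative, so strongness can be checked after forgetting.
    \item In Step~2 you need $(-)^{hC_p}\to(-)^{tC_p}$ to be lax monoidal as a transformation of functors $\Sp^{BS^1}\to\Sp^{B(S^1/C_p)}$ with residual action. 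The finite-group statement of \cite[Theorem I.3.1]{nikolaus_scholze} alone does not give this; Nikolaus--Scholze also prove the residual-action version, which is what you should cite.
\end{enumerate}

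For comparison, there is a shorter route.
\begin{itemize}
    \item The first functor in the lax equaliser is the identity, so colimits in $\CycSp$ are created by the forgetful functor to $\Sp^{BS^1}$.
    \item Hence $\CycSp$ is presentably symmetric monoidal, and $(-)^\triv:\Sp\to\CycSp$ preserves colimits.
    \item The functor sends $\S$ to the unit $\S^\triv$. This uses only that $\S\to\S^{hC_p}\to\S^{tC_p}$ is the unit map of the lax monoidal functor $(-)^{tC_p}$.
    \item So $(-)^\triv$ is equivalent to $X\mapsto X\otimes\S^\triv$. That functor is symmetric monoidal because $\Sp$ is the initial presentably symmetric monoidal stable $\infty$-category.
\end{itemize}
This avoids the doctrinal-adjunction bookkeeping. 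Your approach instead describes the monoidal structure maps explicitly, and they match directly with the formula for $X\otimes Y$ given in the paper.
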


\begin{definition}
    For $X\in \CycSp$, write \[\TC(X) := \map_{\CycSp}(\S^\triv, X).\]
    This defines a functor \[\TC: \CycSp \to \Sp.\]
\end{definition}
Essentially by definition, one checks that $\TC$ is right adjoint to the functor $(-)^\triv : \Sp \to \CycSp$, and that for $X\in \CycSp$ \[\TC(X) \simeq \fib(X^{hS^1}\xrightarrow{\can - \varphi_X^{hS^1}} (X^{tC_p})^{hS^1}).\]

Here are some more examples of cyclotomic spectra. 
\begin{construction}\label{cons::shift_operator_on_cycsp}
    Suppose $X$ is a connective cyclotomic spectrum. We define a new cyclotomic spectrum $X^{(-1)}$ whose underlying $S^1$-spectrum is $\tau_{\ge 0}(X^{tC_p})$ (where $C_p\subset S^1$ is the subgroup of $p$'th roots of unity in $\C^\times$). There is a canonical $S^1$-equivariant map $\tilde \phi_X: X\to \tau_{\ge 0}(X^{tC_p})$ factoring the cyclotomic Frobenius $\phi_X: X\to X^{tC_p}$ of $X$ (since $X$ is connective). The cyclotomic Frobenius on $X^{(-1)}$ is then the composition \[\tau_{\ge 0} X^{tC_p} \to X^{tC_p} \xrightarrow{\tilde \phi_X^{tC_p}} (\tau_{\ge 0}X^{tC_p})^{tC_p}.\]
    One checks that the map $X\to \tau_{\ge 0} (X^{tC_p})$ upgrades to a map \[\phi_X^0 : X\to X^{(-1)}\]
    of cyclotomic spectra.

    The functor $(-)^{(-1)} : \CycSp_{\ge 0}\to \CycSp$ is lax symmetric monoidal since $\tau_{\ge 0}$ and $(-)^{tC_p}$ are too.
\end{construction}
If $X$ is an $S^1$-equivariant spectrum endowed with the trivial cyclotomic structure, then we write $X^{(-1)}$ for the cyclotomic spectrum $(X^\triv)^{(-1)} := \tau_{\ge 0} (X^\triv)^{tC_p}$.

\begin{construction}\label{cons::Tate_cyclotomic_structure}
    Suppose $A$ is an $E_\infty$-ring. Let $A^\Tate$ be the cyclotomic spectrum whose underlying spectrum with circle action is $A$ equipped with the trivial circle action, and whose cyclotomic Frobenius is given by the Tate-valued Frobenius $\varphi_A^\Tate : A\to A^\Tate$ defined in \cite[Section IV.1]{nikolaus_scholze}, given by \[A \xrightarrow{\Delta_p} (A\otimes \cdots \otimes A)^{tC_p} \xrightarrow{\mr{mult}} A^{tC_p}.\]
\end{construction}
One has $\S^\Tate \simeq \S^\triv$ (see \cite[Example IV.1.2(ii)]{nikolaus_scholze}). In general, the cyclotomic Frobenius on $A^\Tate$ can be computed via power operations on $A$.

We end with a technical lemma. In general, the forgetful functor $\CycSp \to \Sp^{BS^1}$ does not commute with arbitrary limits. However, in certain special cases, it does. 
\begin{lemma}\label{cor::limits_in_cycsp_computed_on_underlying}
    Suppose $F: I\to \CycSp_{\ge 0}$ is an $I$-diagram in connective cyclotomic spectra (i.e. the underlying spectrum is connective) where $I$ is a diagram as in \cref{lem::orbits_and_inverse_lims_commute}. Then, $\lim_I F$ exists in $\CycSp_{\ge 0}$, and the underlying spectrum with circle action of $\lim_I F$ is given by $\lim_i F(i) \in \Sp^{BS^1}$. 
\end{lemma}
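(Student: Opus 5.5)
The plan is to build the limit by hand, using the description of a cyclotomic spectrum as a pair consisting of an $S^1$-spectrum and a Frobenius. Concretely, $\CycSp$ is the lax equaliser of $\id$ and $(-)^{tC_p}$ on $\Sp^{BS^1}$. I will check that this lax equaliser formula survives limits once $(-)^{tC_p}$ commutes with the relevant limit, which is what \cref{lem::orbits_and_inverse_lims_commute} gives. Connectivity is then a separate point, and I expect it to be the main obstacle.

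\textbf{Construction and universal property.} Set $X := \lim_i F(i)$ in $\Sp^{BS^1}$; the forgetful functor $\Sp^{BS^1}\to\Sp$ preserves limits, so this is also the limit of underlying spectra. The $F(i)$ are uniformly bounded below (take $N=0$), and $I$ is as in \cref{lem::orbits_and_inverse_lims_commute}, so the canonical map
\[X^{tC_p}\xrightarrow{\ \simeq\ }\lim_i F(i)^{tC_p}\]
is an equivalence, $S^1$-equivariantly for the residual action of $S^1/C_p\cong S^1$. Define $\varphi_X : X\to X^{tC_p}$ as $\lim_i\varphi_{F(i)}$ followed by the inverse of this equivalence; this is $S^1$-equivariant. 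For the universal property, take $Y\in\CycSp$. The mapping spectrum in the lax equaliser is
\[\map_{\CycSp}(Y,Z)\simeq \fib\big(\map_{\Sp^{BS^1}}(Y,Z)\rightrightarrows \map_{\Sp^{BS^1}}(Y,Z^{tC_p})\big),\]
where the two maps are postcomposition with $\varphi_Z$ and precomposition with $\varphi_Y$ (followed by $(-)^{tC_p}$). Both terms convert $\lim_i$ in $Z$ into limits: the first tautologically, and the second because of the displayed equivalence. Fibres commute with limits, so $\map_{\CycSp}(Y,X)\simeq\lim_i\map_{\CycSp}(Y,F(i))$. Hence $(X,\varphi_X)$ is the limit of $F$ in $\CycSp$, and its underlying $S^1$-spectrum is $\lim_i F(i)$ by construction.

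\textbf{Passage to $\CycSp_{\ge0}$.} $\CycSp_{\ge 0}$ is a full subcategory of $\CycSp$. So once $X$ has connective underlying spectrum, it is automatically the limit of $F$ in $\CycSp_{\ge0}$, with the stated underlying object. Everything therefore reduces to showing $X\in\Sp_{\ge0}$, and this is the step I expect to be hardest: the hypothesis on $I$ only gives $X\in\Sp_{\ge -d}$. I would handle it via the Milnor/Bousfield--Kan spectral sequence $\lim^s_i\pi_t F(i)\Rightarrow\pi_{t-s}X$. Since $\pi_t F(i)=0$ for $t<0$, $\pi_{-k}X$ is built from $\lim^{k}_i\pi_0F(i)$ and higher $\lim^{k+j}_i\pi_jF(i)$. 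For countable products these vanish outright, since products are exact in abelian groups. For $\N$-indexed sequential limits only $\lim^1$ survives, and it enters only through $\pi_{-1}X=\lim^1_n\pi_0F(n)$. So I would verify the Mittag-Leffler condition on the system $\{\pi_0F(n)\}_n$.

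I flag honestly that the Mittag-Leffler condition does not follow from the hypotheses as written, so this step is where the argument could fail in general. In every diagram to which the lemma is applied later in the paper I expect it to hold: for example, for $\pi_0$ of the $\THH$ and $\varprojlim_\tau$ towers, which consist of surjections or of derived $(p,q-1)$-complete modules, whose $\lim^1$ vanishes after completion. If it failed, the statement would have to be read with the bounded-below category $\CycSp_{\ge -d}$ in place of $\CycSp_{\ge0}$; the first two steps go through verbatim there.
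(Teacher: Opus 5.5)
Your first two steps are the paper's proof. The paper likewise defines the Frobenius on $\lim_i F(i)$ as $\lim_i \varphi_{F(i)}$ followed by the equivalence $\lim_i F(i)^{tC_p}\simeq (\lim_i F(i))^{tC_p}$ of \cref{lem::orbits_and_inverse_lims_commute}, and then just asserts the universal property; your lax-equaliser formula for $\map_{\CycSp}$ is exactly that omitted verification. What the argument establishes is that $\lim_I F$ exists in $\CycSp$, with underlying $S^1$-spectrum $\lim_i F(i)$.

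Your concern about connectivity is justified, and the paper's proof does not address it. As literally stated, the lemma fails for sequential limits. Take $F(n)=\Z^\triv$ with every transition map multiplication by $p$; this is a tower in $\CycSp_{\ge 0}$, since $(-)^\triv$ is a functor. Then $\lim_n F(n)$ has $\pi_0=\bigcap_k p^k\Z=0$ and $\pi_{-1}=\lim^1_n \Z=\Z_p/\Z\neq 0$, so no object of $\CycSp_{\ge 0}$ has it as underlying spectrum.

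The correct statement is the one your argument proves. The limit exists in $\CycSp$ and lands in $\CycSp_{\ge -d}$. For $\N$-indexed towers it lies in $\CycSp_{\ge 0}$ exactly when $\lim^1_n \pi_0 F(n)=0$, and for countable products this is automatic.

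Two of your side remarks need adjusting.
\begin{itemize}
\item Derived completeness does not force $\lim^1$ to vanish. The $p$-complete groups $(\bigoplus_{k\ge n}\Z_p)^\wedge_p$, ordered by inclusion, have $\lim^1$ isomorphic to $\prod_k\Z_p$ modulo sequences tending to $0$, which is nonzero.
\item Mittag-Leffler is not the right criterion for the trace tower. The traces $\Z_p[\zeta_{p^{n+k}}]\to \Z_p[\zeta_{p^n}]$ have images $p^k\Z_p[\zeta_{p^n}]$, which never stabilise. There $\lim^1$ vanishes for a different reason: towers of finitely generated $\Z_p$-modules have vanishing $\lim^1$ by compactness.
\end{itemize}
For $\varprojlim_\tau$ your surjectivity reasoning is fine, since $\tau\circ\varphi=\id$ makes $\tau$ surjective.
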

\begin{proof}
    This is clear by \cref{lem::orbits_and_inverse_lims_commute}, since we can define the cyclotomic Frobenius $\varphi_p : \lim_I F \to (\lim_I F)^{tC_p}$ by the composition \[\lim_i F(i) \xrightarrow{\lim_i \varphi_{F(i)}} \lim_i F(i)^{tC_p} \simeq \left(\lim_i F(i)\right)^{tC_p}.\]
    That this satisfies the correct universal property is easy to see, using the fact that the forgetful functor $\Sp^{BS^1}\to \Sp$ commutes with all limits. 
\end{proof}

\subsubsection{$\THH$ and Related Functors}

\begin{definition}
    For an $E_\infty$-ring spectrum $A$, the \emph{topological Hochschild homology} $\THH(A)$ is a spectrum with $S^1$-action defined as the geometric realization of the cyclic bar construction
    \[\THH(A) \simeq A \otimes_{A \otimes A^{\op}} A.\]
\end{definition}
The cyclic bar construction may equivalently be expressed as the tensor of $A$ with the circle $S^1$ in the $\infty$-category of $E_\infty$-ring spectra:
\[\THH(A) \simeq A \otimes S^1.\]
This endows $\THH(A)$ with a natural action of the circle group $S^1$. For $A$ an $E_\infty$-ring, the definition of $\THH$ essentially yields the following universal property.
\begin{proposition}[{\cite[Proposition IV.2.2]{nikolaus_scholze}}]
    The (non-$S^1$-equivariant) map $A\to \THH(A)$ is initial among all maps $A\to R$ where $R$ is equipped with an $S^1$-action. 
\end{proposition}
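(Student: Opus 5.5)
The plan is to derive this from the description of $\THH(A)$ as the tensor $A\otimes S^1$ in $\CAlg$, together with the identification of $S^1$-equivariant objects with functors out of $BS^1$. Concretely, I would take $R$ to be an $E_\infty$-ring with $S^1$-action (the universal property should be read in $\CAlg^{BS^1}$, with maps $A\to R$ taken in $\CAlg$) and aim to show that restriction along $A\to\THH(A)$ gives an equivalence
\[\map_{\CAlg^{BS^1}}(\THH(A),R)\simeq \map_{\CAlg}(A,R).\]
In other words, $\THH(A)$ with its circle action is the value at $A$ of the left adjoint to the forgetful functor $\CAlg^{BS^1}\to\CAlg$.

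The first step is to record that this forgetful functor is restriction along $\mathrm{pt}\to BS^1$. Since $\CAlg$ is presentable, this restriction has a left adjoint, given by left Kan extension along $\mathrm{pt}\to BS^1$. The second step is to evaluate that left Kan extension pointwise. The fibre of $\mathrm{pt}\to BS^1$ is $S^1$, so the underlying object of the Kan extension of $A$ is $\colim_{S^1}A$ in $\CAlg$, which is the tensor $A\otimes S^1$. The induced circle action is the one coming from $S^1$ acting on itself by translation. The third step is to identify $A\otimes S^1$ with the cyclic bar construction $A\otimes_{A\otimes A^{\op}}A$ appearing in the definition of $\THH(A)$. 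For this I would write $S^1$ as the pushout $\mathrm{pt}\sqcup_{S^0}\mathrm{pt}$ and use that $-\otimes-$ preserves colimits in the space variable and that coproducts in $\CAlg$ are tensor products. This gives $A\otimes_{A\otimes A}A$, and $A^{\op}=A$ here because $A$ is commutative. Finally, the unit of the adjunction is the inclusion $A\to A\otimes S^1$ at the basepoint of $S^1$, and this is exactly the map $A\to\THH(A)$ in the statement.

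The step I expect to need the most care is the last part of the third step: checking that the circle action on $\THH(A)$ used in the paper, which comes from $A\otimes S^1$ via functoriality in $S^1$ acting on itself, is the same action that the left Kan extension produces. I would handle this by noting that both are obtained by applying the colimit-preserving functor $A\otimes(-)$ to the space $S^1$ with its left translation action, viewed as an object of $\mathcal{S}^{BS^1}$. With that identification in place, the universal property is just the unit of the adjunction, and no explicit computation with the bar construction is needed.
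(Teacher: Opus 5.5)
Your argument is correct and is essentially the one the paper has in mind: the paper gives no proof of its own, citing Nikolaus--Scholze and remarking that the universal property follows from $\THH(A)\simeq A\otimes S^1$ in $\CAlg$. That is exactly your identification of $A\mapsto A\otimes S^1$ (with the translation action) as left Kan extension along $\mathrm{pt}\to BS^1$, i.e.\ the left adjoint of the forgetful functor $\CAlg^{BS^1}\to\CAlg$, whose unit is the basepoint map $A\to\THH(A)$; and since the paper takes the circle action on $\THH(A)$ to be the one induced from $A\otimes S^1$, your handling of the equivariance step matches its conventions.
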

The circle action on $\THH(A)$ extends to a canonical cyclotomic structure \[\varphi_A:\THH(A)\to \THH(A)^{tC_p}\] making $\THH(A)$ a ring object in cyclotomic spectra. The following is essentially \cite[Corollary IV.2.4]{nikolaus_scholze}.

\begin{lemma}
    There is a functorial map $\pi_A:\THH(A)\to A^\Tate$ of cyclotomic spectra that, on underlying spectra, is a retract of the canonical (non-$S^1$-equivariant) map $A\to \THH(A)$. 
\end{lemma}

\begin{proposition}
    $\THH : \CAlg \to \CycSp$ is symmetric monoidal, i.e. for $E_\infty$-rings $A,B,C$, we have \[\THH(B\otimes_A C) \simeq \THH(B)\otimes_{\THH(A)} \THH(C).\]
\end{proposition}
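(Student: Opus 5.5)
We prove the symmetric monoidality by writing $\THH$ as a tensor with $S^1$ and then using how such tensors interact with pushouts.

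\textbf{Step 1: tensors with spaces preserve colimits.} The description $\THH(A) \simeq A\otimes S^1$ identifies $\THH$ with the tensoring of $\CAlg$ over spaces. In $\CAlg$ the coproduct is the tensor product of spectra, and pushouts are relative tensor products: the pushout of $B\leftarrow A\to C$ is $B\otimes_A C$. For a fixed space $K$, the functor $-\otimes K : \CAlg\to\CAlg$ is the colimit of the constant $K$-shaped diagram. Colimits commute with colimits, so $-\otimes K$ preserves all colimits, and in particular pushouts. Hence
\[(B\otimes_A C)\otimes S^1 \simeq (B\otimes S^1)\otimes_{A\otimes S^1}(C\otimes S^1),\]
which is exactly $\THH(B)\otimes_{\THH(A)}\THH(C)$ on underlying $E_\infty$-rings. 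The case $A=\S$ gives $\THH(B\otimes C)\simeq \THH(B)\otimes\THH(C)$, and since $\THH(\S)\simeq\S$ the unit is preserved. Thus $\THH:\CAlg\to\CAlg$ is symmetric monoidal for the cocartesian structures.

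\textbf{Step 2: the circle action.} The equivalence in Step 1 is natural in the variable $S^1$, because it is induced by the functoriality of $A\otimes K$ in $K$. Naturality makes it compatible with the $S^1$-actions, so it holds in $\CAlg(\Sp^{BS^1})$. Since the forgetful functor $\CAlg(\Sp^{BS^1})\to\Sp^{BS^1}$ is symmetric monoidal, the equivalence also holds at the level of spectra with $S^1$-action.

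\textbf{Step 3: the cyclotomic Frobenius.} This is the main obstacle. By construction, the tensor product of cyclotomic spectra has Frobenius given by $\varphi_X\otimes\varphi_Y$ followed by the lax monoidal map $X^{tC_p}\otimes Y^{tC_p}\to (X\otimes Y)^{tC_p}$. We must check that $\varphi_{B\otimes C}$ agrees with this composite under the equivalence of Step 2. For this we invoke the construction of \cite[Section IV.2]{nikolaus_scholze}, which builds $\varphi$ from the Tate diagonal. The Tate diagonal is a lax symmetric monoidal transformation $\mathrm{id}\Rightarrow ((-)^{\otimes p})^{tC_p}$, so the resulting functor $\THH:\CAlg\to\CAlg(\CycSp)$ is symmetric monoidal; see \cite[Corollary IV.2.4 and the surrounding discussion]{nikolaus_scholze}. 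Granting this, the cyclotomic refinement of the coproduct statement follows formally.

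\textbf{Step 4: relative tensor products in cyclotomic spectra.} In $\CAlg(\CycSp)$ coproducts are tensor products, so pushouts are computed as $\THH(B)\otimes_{\THH(A)}\THH(C)$, the geometric realization of the bar construction in $\CycSp$. The tensor product on $\CycSp$ commutes with colimits in each variable. The forgetful functor $\CycSp\to\Sp^{BS^1}$ preserves colimits, since $\CycSp$ is a lax equalizer and its colimits are computed underlying. So this bar construction agrees with the one computed in $\Sp^{BS^1}$. Combining with Steps 1 and 2, the pushout in $\CycSp$ is identified with $\THH(B\otimes_A C)$ compatibly with both the circle action and the Frobenius, which proves the proposition.
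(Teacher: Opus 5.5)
Your argument is correct and matches the paper's approach: the paper records this proposition without proof as a standard consequence of Nikolaus--Scholze's construction of $\THH$ via the Tate diagonal. Your Steps 1, 2 and 4 unpack the formal colimit-theoretic parts, and Step 3 defers the only genuinely cyclotomic input to the same source. One small point: lax monoidality of the Tate diagonal by itself only makes $\THH$ lax symmetric monoidal into $\CycSp$, and strong monoidality then follows because the forgetful functor $\CycSp\to\Sp^{BS^1}$ is symmetric monoidal and conservative, and you already showed in Step 2 that the underlying comparison map is an equivalence.
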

\begin{corollary}
    $\displaystyle \THH(A\otimes_\S B) \simeq \THH(A)\otimes_{\S^\triv} \THH(B).$
\end{corollary}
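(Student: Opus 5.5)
The corollary is the case $A = \S$ of the preceding proposition. The plan is to specialise the statement $\THH(B\otimes_A C)\simeq \THH(B)\otimes_{\THH(A)}\THH(C)$ to $A=\S$, and then identify $\THH(\S)$ with the unit $\S^\triv$ of $\CycSp$. Once that identification is made, the relative tensor product $\THH(A)\otimes_{\THH(\S)}\THH(B)$ is the tensor product over the monoidal unit. As a cyclotomic spectrum this is just $\THH(A)\otimes\THH(B)$ with the diagonal circle action and the Frobenius described in the construction of the tensor product above.

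To identify $\THH(\S)$ with $\S^\triv$, I would first compute the underlying spectrum. Since $\THH(\S)\simeq \S\otimes S^1$ in $\CAlg$ and $\S$ is initial in $\CAlg$, the tensor with $S^1$ is a colimit of the constant diagram at the initial object, so $\THH(\S)\simeq\S$. The circle action is then forced to be trivial, because the space of $E_\infty$-automorphisms of the initial object is contractible.

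For the cyclotomic structure, I would apply the lemma giving a functorial cyclotomic map $\pi_\S:\THH(\S)\to\S^\Tate$. On underlying spectra this map is a retract of the equivalence $\S\to\THH(\S)$, hence is itself an equivalence. Since the forgetful functor $\CycSp\to\Sp$ is conservative, $\pi_\S$ is an equivalence of cyclotomic spectra. Combined with $\S^\Tate\simeq\S^\triv$, this gives $\THH(\S)\simeq\S^\triv$.

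Alternatively, the whole corollary follows at once because a symmetric monoidal functor preserves units: $\THH:\CAlg\to\CycSp$ sends the unit $\S$ to the unit $\S^\triv$. Either way, I expect no real obstacle; the only point needing care is that the identification of $\THH(\S)$ with the unit must hold as cyclotomic spectra, not merely as underlying spectra, and the lemma on $\pi_\S$ supplies exactly that.
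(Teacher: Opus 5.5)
Your proposal is correct and matches the paper's intended argument: the paper states this corollary without proof, as the $A=\S$ case of the preceding proposition combined with $\THH(\S)\simeq\S^\triv$. You justify that identification correctly, either by unit preservation of the symmetric monoidal functor $\THH$, or by noting that the retraction $\pi_\S:\THH(\S)\to\S^\Tate\simeq\S^\triv$ is an equivalence on underlying spectra and using conservativity of $\CycSp\to\Sp$.
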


One can also define relative $\THH$, at least as spectra with $S^1$-action.
\begin{definition}
    For $R\to S$ a map of $E_\infty$-rings, define the $E_\infty$-$R$-algebra with $S^1$-action given by \[\THH(S/R) := S\otimes_{S\otimes_R S}S.\]
\end{definition}
In general, one has an equivalence of spectra with $S^1$-action \[\THH(S/R) \simeq \THH(S) \otimes_{\THH(R)} R^\triv.\]
If $R$ is a $\Z$-algebra, one often writes $\HH(S/R) := \THH(S/R)$. If $R = \Z$, then we simply write $\HH(S):= \THH(S/\Z)$. 

The cyclotomic structure is much more subtle; see \cite{blumberg2024relativecyclotomicstructuresequivariant} for more. However, we will be interested only in the case $R = \S_p\llb q-1\rrb$, where the latter is defined as follows. 
\begin{notation}
    Write $\S\llb x-1\rrb$ for the $(x-1)$-adic completion of $\S[x] := \S[\N]$, i.e. \[\S\llb x-1\rrb \simeq \varprojlim_n \cofib((x-1)^n : \S[x] \to \S[x]).\]
    Upon $p$-completion, we equivalently have \[\S_p\llb x-1\rrb \simeq \varprojlim_n \S_p[\Z/p^n].\]
    Thus, we can think of $\S_p\llb x-1\rrb$ as equivalently the completed group algebra $\S\llb \Z_p\rrb$. 
\end{notation}
\begin{definition}
    The cyclotomic structure on $\THH(S/\S\llb x-1\rrb)$ is given by the following tensor product in $\CycSp$ \[\THH(S/\S\llb x-1\rrb) := \THH(S) \otimes_{\THH(\S\llb x-1\rrb)} \S\llb x-1\rrb^\Tate.\]
\end{definition}
\begin{remark}
    Under $p$-completion, and for $S$ a $\Z_p[\zeta_p]$-algebra, there is an equivalence \[\THH(S/\S_p\llb x-1\rrb)^\wedge_p \simeq \THH(S/\S_p[x^{\pm 1}])^\wedge_p. \]
    The base reason is that in $\Sp_p$, $\S[B\Z_p] \simeq \S[B\Z] \simeq \S[S^1]$. 
\end{remark}

We also have other functors built out of $\THH$. 
\begin{definition}
    Suppose $R$ is an $E_\infty$-ring. Then, we have the functors $\CAlg \to \Sp$ given by 
    \[\TC^-(R) := \THH(R)^{hS^1} \quad \text{ and } \TP(R) := \THH(R)^{tS^1}.\]
\end{definition}
There are two maps $\TC^-(R)\to \TP(R)^\wedge_p$ when $R$ is connective. 
\begin{enumerate}
    \item For any spectrum with $S^1$-action, there is always a canonical map $(-)^{hS^1}\to (-)^{tS^1}$. This induces a map \[\can : \TC^-(R)\to \TP(R)^\wedge_p.\]
    \item We have the cyclotomic Frobenius $\varphi_{\THH(R)}: \THH(R) \to \THH(R)^{tC_p}$. Taking $(-)^{hS^1}$ yields the map \[\varphi^{hS^1}: \TC^-(R) \to (\THH(R)^{tC_p})^{hS^1} \simeq \TP(R)^\wedge_p\]
    where we use \cite[Lemma II.4.2]{nikolaus_scholze} for the last equivalence.
\end{enumerate}
One then has \[\TC(X) \simeq \fib(\TC^-(R) \xrightarrow{\can - \varphi} \TP(R)^\wedge_p).\]

\subsubsection{Decompleted Cyclotomic Spectra}\label{sctn::decompleted_cyclotomic_spectra}

We would like to get Nygaard decompleted $\TC^-$ and $\TP$. An \textit{ad hoc} approach is given in \cite{manam2024drinfeldformalgroup}. However, the following construction that will appear in forthcoming work \cite{DHRY_even_stacks}; an exposition without proofs is given in the thesis \cite{sanath_thesis}. Throughout, $C_p$ denotes the finite cyclic group of order $p$.

\begin{definition}
    A \emph{decompleted cyclotomic spectrum} $(X, \Phi X)$ is the data of 
    \begin{enumerate}
        \item a cyclotomic spectrum $X$ in the sense of \cite{nikolaus_scholze} (i.e. a spectrum $X$ with $S^1$-action along with an $S^1$-equivariant map $\varphi : X\to X^{tC_p}$),
        \item another spectrum $\Phi X$ with $S^1$-action, and
        \item maps $X\to \Phi X$ and $\Phi X\to X^{tC_p}$ that factor $\varphi$. 
    \end{enumerate}
    The cyclotomic spectrum $X$ will be referred to as the \emph{underlying cyclotomic spectrum} and the data of $\Phi X$ with the maps $X\to \Phi X$ and $\Phi X\to X^{tC_p}$ will be referred to as a \emph{decompleted structure} on $X$.

    The category of decompleted cyclotomic spectra will be denoted $\CycSp_\prism$. 
\end{definition}

Of course, there is a fully faithful functor \[\beta : \CycSp\to \CycSp_\prism\]
which endows a cyclotomic spectrum $X$ with the decompleted structure $\Phi X:= X^{tC_p}$ with the maps $\varphi : X\to \Phi X$ and $\id: \Phi X\to X^{tC_p}$. 

\begin{definition} 
    (cf. {\cite[Example 7.1.3]{sanath_thesis}})
    Let $R$ be a $\Z_p$-algebra. Denote by $\THH_\prism(R)^\wedge_p$ the decompleted cyclotomic $E_\infty$-ring whose underlying cyclotomic spectrum is $\THH(R)^\wedge_p$, and whose decompleted structure is given by declaring \[\Phi \THH_\prism(R)^\wedge_p := \left(\THH(R) \otimes_{\THH(\Z_p)} \THH(\Z_p)^{tC_p}\right)^\wedge_p.\]
    The map $\THH(R)^\wedge_p\to \Phi\THH_\prism(R)^\wedge_p$ is obtained by tensoring the unit map $\S_p\to \left(\THH(\Z_p)^{tC_p}\right)^\wedge_p$, while the map $\Phi\THH_\prism(R)^\wedge_p\to \left(\THH(R)^{tC_p}\right)^\wedge_p$ is induced by the commuting square \[\begin{tikzcd}
        \THH(\Z_p)^\wedge_p \ar{r}\ar{d} & \left(\THH(\Z_p)^{tC_p}\right)^\wedge_p \ar{d} \\
        \THH(R)^\wedge_p \ar{r} & \left(\THH(R)^{tC_p}\right)^\wedge_p
    \end{tikzcd}\]
    coming from functoriality of the cyclotomic Frobenius. 
\end{definition}
\begin{example}\label{eg::THH_prism(Z_p)}
    $\displaystyle \THH_\prism(\Z_p)^\wedge_p = \beta \THH(\Z_p)^\wedge_p$.
\end{example}

For $(X,\Phi X)$ a decompleted cyclotomic spectrum, we write $X^{C_p}$ for the ring spectrum with $S^1$-action defined by the following pull-back square
\[\begin{tikzcd}
    X^{C_p} \ar{r}\ar{d} & X^{h C_p} \ar{d}{\can} \\
    \Phi X \ar{r} & X^{ tC_p}.
	\arrow["\lrcorner"{anchor=center, pos=0.125}, draw=none, from=1-1, to=2-2]
\end{tikzcd}\]
In particular, the projection map yields a canonical map \[\can: (X^{C_p})^{hS^1} \to (\Phi X)^{hS^1}.\]
There is also a Frobenius map $\varphi:(X^{C_p})^{hS^1}\to (\Phi X)^{hS^1}$ defined as the composition \[(X^{C_p})^{hS^1} \to (X^{hC_p})^{hS^1} \simeq X^{hS^1} \to (\Phi X)^{hS^1}.\]
This sits in a Cartesian square 
\[\begin{tikzcd}
    {(X^{C_p})^{hS^1}} \ar{r}\ar{d}{\varphi} & {(X^{hC_p})^{hS^1} \simeq X^{hS^1}} \ar{d}{\varphi^{hS^1}} \\
    {(\Phi X)^{hS^1}} \ar{r} & {(X^{tC_p})^{hS^1}}.
    \cartesian{1-1}{2-2}
\end{tikzcd}\]

The decompleted cyclotomic spectrum $\THH_\prism(R)$ is related to prismatic cohomology by the following.
\begin{proposition}[{\cite{DHRY_even_stacks}}]
    Suppose $R$ is a QRSP. Then, 
    \begin{align*}
        \pi_{2n}\left(\left(\THH(R)^{C_p}\right)^{hS^1} \right) &\simeq (\Fil_\Nyg^n\prism_R)\{n\} \\
        \pi_{2n}\left(\left(\Phi\THH(R)\right)^{hS^1} \right) &\simeq \prism_R\{n\} 
    \end{align*}
    for any $n$. All the odd homotopy groups of the above spectra vanish.

    Moreover, the canonical map $\can:(\THH(R)^{C_p})^{hS^1}\to (\Phi \THH(R))^{hS^1}$ induces on homotopy groups the canonical map \[(\Fil_\Nyg^n \prism_R)\{n\} \inj \prism_R\{n\},\]
    and the Frobenius map $\varphi:(\THH(R)^{C_p})^{hS^1}\to (\Phi \THH(R))^{hS^1}$ induces on homotopy groups the divided Frobenius map \[\varphi_n :(\Fil_\Nyg^n \prism_R)\{n\} \to \prism_R\{n\}. \]
\end{proposition}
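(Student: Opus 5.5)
I would prove this by computing both spectra through the pullback square defining $X^{C_p}$, in the case $X = \THH_\prism(R)^\wedge_p$ with $R$ a QRSP.

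\textbf{The homotopy of $(\Phi\THH(R))^{hS^1}$.} By definition,
\[
\Phi \THH(R) = \THH(R)\otimes_{\THH(\Z_p)}\THH(\Z_p)^{tC_p}.
\]
The key input is the Segal conjecture for $\Z_p$ (Bökstedt periodicity): the Frobenius $\THH(\Z_p)\to\THH(\Z_p)^{tC_p}$ is an equivalence in degrees $\ge -1$. This identifies $\THH(\Z_p)^{tC_p}$ with the $2$-periodic, Bott-inverted $\THH(\Z_p)[\sigma^{-1}]$.

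After taking $S^1$-fixed points, I would compare with $\TP(R)$ via the map $\Phi\THH(R)\to\THH(R)^{tC_p}$. Using the quasisyntomic descent argument of BMS2, I would check evenness and flatness of $\pi_*$ over QRSPs, and then compute
\[
\pi_{2n}(\Phi\THH(R))^{hS^1} \cong \pi_0(\cdots)\{n\}.
\]
The claim to establish is that this ring is the non-completed prism $\prism_R$, rather than its Nygaard completion $\hat\prism_R = \pi_0\TP(R)$. The expected mechanism is that tensoring over $\THH(\Z_p)$ before taking $tC_p$ avoids the Nygaard completion built into $\THH(R)^{tC_p}$.

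\textbf{The homotopy of $(\THH(R)^{C_p})^{hS^1}$.} I would use the cartesian square
\[
(X^{C_p})^{hS^1} = (\Phi X)^{hS^1}\times_{\TP(R)} \TC^-(R),
\]
with the vertical maps being $\varphi$ and $\varphi^{hS^1}$. On $\pi_{2n}$, both $\TC^-$ and $\TP$ are even, and by BMS2 they are given by
\[
\pi_{2n}\TC^-(R) = \Fil_\Nyg^n\hat\prism_R\{n\}, \qquad \pi_{2n}\TP(R) = \hat\prism_R\{n\}.
\]
Under these identifications the map $\varphi^{hS^1}$ is the divided Frobenius. The Mayer--Vietoris sequence then reduces to the fiber product
\[
\prism_R\{n\}\times_{\hat\prism_R\{n\}}\Fil_\Nyg^n\hat\prism_R\{n\},
\]
provided the relevant map on $\pi_{\mathrm{odd}}$ contributions is surjective, so that no odd classes appear. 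Since Nygaard completion induces isomorphisms on the graded pieces $\Fil^n/\Fil^{n+1}$, this fiber product is exactly $\Fil_\Nyg^n\prism_R\{n\}$.

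\textbf{Identifying the maps.} The map $\can$ is the projection out of the fiber product, so it becomes the inclusion $\Fil_\Nyg^n \prism_R\{n\}\hookrightarrow\prism_R\{n\}$. The map $\varphi$ is the composite through $\TC^-(R)$ of the divided Frobenius, which restricts correctly to the decompleted filtration.

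\textbf{Main obstacle.} The step I expect to be hardest is the first one: showing that $\pi_0(\Phi\THH(R))^{hS^1}$ is the uncompleted $\prism_R$ together with its Breuil--Kisin twists. I would attempt it by
\begin{itemize}
\item reducing to perfectoid $R$ via the base change $\THH(R)\otimes_{\THH(\Z_p)}$, and
\item then using the descent from $\prism$ along the Hodge--Tate / Bott element.
\end{itemize}
This is presumably where the even-filtration machinery of \cite{DHRY_even_stacks} is essential.
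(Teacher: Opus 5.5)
The paper gives no proof of this proposition; it is quoted from \cite{DHRY_even_stacks}. Your argument therefore has to stand on its own, and it has a gap exactly where the content lies. Everything in the statement follows from the fibre square once two things are known. First, $\TP_\prism(R)=(\Phi\THH(R))^{hS^1}$ is even with $\pi_{2n}\cong\prism_R\{n\}$. Second, the map $(\Phi\THH(R))^{hS^1}\to(\THH(R)^{tC_p})^{hS^1}\simeq\TP(R)$ induces the Nygaard completion $\prism_R\{n\}\hookrightarrow\hat\prism_R\{n\}$.

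You list this as the ``main obstacle'' and do not prove it. The route you sketch also cannot detect the phenomenon. For perfectoid $R$ one has $\prism_R=A_{\inf}(R)=\hat\prism_R$, so reducing to perfectoids sees no difference between the decompleted and completed theories; the difference only appears for a QRSP $R$ over a perfectoid $\mathcal O$.

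What is missing is an argument that, presumably, $\Phi\THH(R)$ is even with $\pi_{2*}$ the uncompleted Hodge--Tate cohomology $\bar\prism_R\{*\}$. Then the homotopy fixed point spectral sequence builds $\prism_R$ from its $I$-adic filtration. This is unlike $\TP(R)=\THH(R)^{tS^1}$, whose Tate spectral sequence builds $\hat\prism_R$ from the Nygaard filtration. You would then still have to identify the resulting ring with $\prism_R$, compatibly with the map to $\pi_0\TP(R)=\hat\prism_R$.

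Your stated key input is also wrong. Since $\pi_2\THH(\Z_p)=0$, there is no Bott class $\sigma$. Because the Frobenius is an isomorphism in non-negative degrees, $\THH(\Z_p)^{tC_p}$ has $\pi_0\cong\Z_p$ but $\pi_2=0$, so it is not $2$-periodic. The description $\THH^{tC_p}\simeq\THH[\sigma^{-1}]$ is the one for $\F_p$ or perfectoid rings.

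The fibre square is also mislabelled. It is $(-)^{hS^1}$ of the defining pullback $X^{C_p}=\Phi X\times_{X^{tC_p}}X^{hC_p}$, so its right vertical map is $\can:\TC^-(R)\to\TP(R)$, not $\varphi^{hS^1}$. The square displayed in the paper has the same labels, but the pullback of inclusions recorded right after it shows that $\can$ is meant.

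This is not cosmetic. Since $\varphi(\Fil^m_\Nyg\prism_R)\subseteq I^m\prism_R$ and $\prism_R$ is $I$-adically complete, $\varphi$ extends to a map $\hat\prism_R\to\prism_R$. With the divided Frobenius on the right, the fibre product in degree $0$ would therefore be $\hat\prism_R$ rather than $\prism_R$.

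Your actual computation tacitly uses $\can$, so it is correct once the labels are fixed. It uses $\prism_R/\Fil^n_\Nyg\prism_R\cong\hat\prism_R/\Fil^n_\Nyg\hat\prism_R$, which also gives surjectivity onto $\hat\prism_R\{n\}$ and hence the vanishing of $\pi_{2n-1}$.

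The Frobenius then has to be identified separately. It is the composite $\TC^-_\prism(R)\to\TC^-(R)\to\TP_\prism(R)$, where the second map is induced by $X\to\Phi X$. To see that it is the divided Frobenius you need two facts: its composite with $\TP_\prism(R)\to\TP(R)$ is $\varphi^{hS^1}$, and $\prism_R\{n\}\to\hat\prism_R\{n\}$ is injective. The latter is again an output of the missing first step.
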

\begin{corollary}
    The even filtration (in the sense of \cite{HahnRaksitWilson_motivic_fil_on_TC}) on $\left(\THH(R)^{C_p}\right)^{hS^1}$ (resp. $\left(\Phi\THH(R)\right)^{hS^1}$)
    has associated graded given by $(\Fil_\Nyg^n\prism_R)\{n\}[2n]$ (resp. $\prism_R\{n\}[2n]$).
\end{corollary}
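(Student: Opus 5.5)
The corollary is a formal consequence of the preceding proposition together with the definition of the even filtration of \cite{HahnRaksitWilson_motivic_fil_on_TC}. The plan is to reduce to showing that, for $R$ a QRSP, the spectra $\left(\THH(R)^{C_p}\right)^{hS^1}$ and $\left(\Phi\THH(R)\right)^{hS^1}$ are even. For an even spectrum (more precisely, an even module over an even $E_\infty$-ring, via the module version of the even filtration) the even filtration coincides with the double-speed Postnikov filtration $\tau_{\ge 2*}$. Its associated graded in weight $n$ is then $\pi_{2n}(-)[2n]$, which the proposition identifies with $(\Fil_\Nyg^n\prism_R)\{n\}[2n]$, respectively $\prism_R\{n\}[2n]$.

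The steps are as follows.

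\textbf{Evenness.} Both spectra are even, because the proposition says their odd homotopy groups vanish.

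\textbf{Agreement with the Postnikov filtration.} Recall that the even filtration is defined as a limit, over maps $A\to B$ with $B$ even, of $\tau_{\ge 2*}B$. When the object is itself even it is initial in this indexing category, so the limit is simply $\tau_{\ge 2*}$ of the object. For $\left(\THH(R)^{C_p}\right)^{hS^1}$ one must first check that it is an $E_\infty$-ring, or else use the module version of the even filtration over the $E_\infty$-ring $\TC^-(R)$. The pullback defining $X^{C_p}$ is a pullback of $E_\infty$-rings with $S^1$-action, because all the maps in it are ring maps for $X=\THH_\prism(R)$. Hence it is an $E_\infty$-ring, and so is its $(-)^{hS^1}$.

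\textbf{Reading off the graded pieces.} Taking $\gr^n\tau_{\ge 2*}$ gives $\pi_{2n}[2n]$, and the identifications then follow from the proposition.

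\textbf{Main obstacle.} The only subtle point is which version of the even filtration is intended. If it is the one of \cite{HahnRaksitWilson_motivic_fil_on_TC}, computed for the ring as an abstract $E_\infty$-ring, then the argument above applies verbatim. If it is instead an $S^1$-equivariant or relative version, obtained by applying $(-)^{hS^1}$ to the even filtration on $\THH$, one would compare the two using the eff/even faithfully flat descent results of \emph{loc.\ cit.}; I expect evenness to make this comparison straightforward as well.
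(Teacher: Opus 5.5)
Your proposal is correct and is essentially the reasoning the paper leaves implicit; the paper states this corollary without proof, directly after the proposition. Since the proposition shows both $\left(\THH(R)^{C_p}\right)^{hS^1}$ and $\left(\Phi\THH(R)\right)^{hS^1}$ are even $E_\infty$-rings, their Hahn--Raksit--Wilson even filtration is the double-speed Postnikov filtration $\tau_{\ge 2*}$, whose $n$-th graded piece is $\pi_{2n}(-)[2n]$; your hedging about an $S^1$-equivariant variant is not needed for the statement as posed.
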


Thus, one can think of $$\TC^-_\prism(R):= \left(\THH(R)^{C_p}\right)^{hS^1}$$ as being Nygaard decompleted $\TC^-(R)$ and $$\TP_\prism(R):= \left(\Phi\THH(R)\right)^{hS^1}$$ as being Nygaard decompleted $\TP(R)$. Moreover, the Cartesian square defining $\THH(R)^{C_p}$ yields the following pull-back square 
\[\begin{tikzcd}
    \Fil^i_\Nyg\prism_R\{n\} \ar[hook]{r}\ar[hook]{d} & \Fil^i_\Nyg\hat\prism_R\{n\} \ar[hook]{d} \\
    \prism_R\{n\} \ar[hook]{r} & \hat\prism_R\{n\}
	\arrow["\lrcorner"{anchor=center, pos=0.125}, draw=none, from=1-1, to=2-2]
\end{tikzcd}\]
for all $n\ge 0$.

\begin{remark}
    Since $\THH$ satisfies quasi-syntomic descent, we can globalize the above constructions to define a sheaf in decompleted cyclotomic spectra $\THH_\prism$ on the quasi-syntomic site of $X$. In particular, we have the quasi-syntomic sheaves $\TC^-_\prism$ and $\TP_\prism$ whose even filtrations recover Nygaard filtered prismatic cohomology and usual prismatic cohomology of $X$ respectively. 
\end{remark}

Finally, we can define the motivic filtration on $\Phi\THH, \TC^-_\prism$, and $\TP_\prism$ as in \cite{bms2}.
\begin{definition}
    For $\mc F\in \{\Phi\THH_\prism, \TC^-_\prism, \TP_\prism\}$, we define the motivic filtration by \[\Fil^i \mc F(X):= \R\Gamma(\QSyn_X, \tau_{\ge 2i} \mc F(-)).\]
\end{definition}
Since QRSPs form a basis for $\QSyn$, we see that \[\Fil^i \mc F(X) \simeq \varprojlim_{R \text{ QRSP}, \;\Spf R\to X} \tau_{\ge 2i} \mc F(R). \]

\begin{remark}
    In \cite{HahnRaksitWilson_motivic_fil_on_TC}, the authors defined the even filtration $\Fil_\ev^\bullet$ on $\THH, \TP, \TC^-$, and $\TC$. Moreover, they showed that for any quasi-syntomic ring $R$, the even filtration coincides with the BMS filtration. It formally follows that the even filtration coincides with the above defined motivic filtration on affine $p$-adic formal schemes, for $\mc F\in \{\Phi\THH, \TC^-_\prism, \TP_\prism\}$. 

    However, the even filtration {does not} coincide with the BMS/motivic filtration on non-affine inputs. Since we will need to care about non-affine $p$-adic formal schemes, we thus use the BMS filtration rather than the even filtration.
\end{remark}

\subsection{A Zoo of Cohomology Theories}\label{sctn::zoo_of_coh_thries}

Many cohomology theories, and certainly all cohomology theories that we will consider, are in fact the associated graded of a multiplicative `motivic' filtration on a certain nice Zariski sheaf $\mc F : \Sch^\op \to \CAlg(\Sp)$ (resp. $\FSch^\op \to \CAlg(\Sp_p)$ for $\FSch$ the category of $p$-adic formal schemes)
valued in $E_\infty$-rings in spectra (resp. $p$-complete spectra). These sheaves are usually localising invariants.
\begin{definition}
    Let $\Cat_\infty^\perf$ denote the category of idempotent-complete small stable $\infty$-categories, and maps are exact functors (i.e. those that preserve finite limits and colimits). Note that $\Perf(X)\in \Cat_\infty^\perf$ for any ($p$-adic formal) scheme $X$.

    A sequence $$\mc A\xrightarrow{f} \mc B\xrightarrow{g} \mc C$$ in $\Cat_\infty^\perf$ is exact if the composite is zero, the functor $f$ is fully faithful, and the map $\mc B/\mc A\to \mc C$ is an equivalence.

    A \emph{localising invariant} is a functor $\mc F : \Cat_\infty^{\perf} \to \mc D$ valued in a stable $\infty$-category $\mc D$ that inverts Morita equivalences and sends any exact sequence of categories to a (co)fibre sequence in $\mc D$. 
\end{definition}
We will write $\mc F(X) := \mc F(\Perf(X))$ for any functor $\mc F:\Cat_\infty^\perf\to \mc D$.

\begin{lemma}
    Each of $\THH, \Phi\THH, \TP, \TP_\prism, \TC^-, \TC^-_\prism, \TC, \HH, \HP,\HC^-, L_{K(1)}\TC$ are all localising invariants. 
\end{lemma}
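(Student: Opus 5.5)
The plan is to reduce everything to the known localising invariants $\THH$, $\TC$, $K$ and the formal properties of the homotopy-theoretic constructions. The key structural input is already recorded: $\THH\colon \Cat_\infty^\perf\to \CycSp$ is a localising invariant; this is \cite{blumberg_mandell} together with \cite{nikolaus_scholze}, in which the cyclotomic structure is functorial in $\Cat_\infty^\perf$. I would take this as the base case and derive the others by exact functors applied after $\THH$.

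First, observe that the forgetful functor $\CycSp\to\Sp^{BS^1}$ is exact. Moreover $(-)^{hS^1}$, $(-)_{hS^1}$, $(-)^{tS^1}$, $(-)^{tC_p}$ and $p$-completion are exact functors of stable $\infty$-categories, since they are limits, colimits and cofibres of exact functors. Composing an exact functor with a localising invariant gives a localising invariant: exact functors preserve cofibre sequences, and invariance under Morita equivalence is inherited. This immediately gives $\TC^-$ and $\TP$. It also gives $\TC=\map_{\CycSp}(\S^\triv,-)\circ\THH$, because mapping spectra out of a fixed object are exact, or alternatively by the fibre formula $\fib(\can-\varphi)$.

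Second, for $\HH$, $\HC^-$ and $\HP$ I would use $\HH(\mc C)\simeq \THH(\mc C)\otimes_{\THH(\Z)}\Z$ on $\Z$-linear categories. Here $-\otimes_{\THH(\Z)}\Z$ is exact, and I would apply $S^1$-fixed points and Tate afterwards. For $L_{K(1)}\TC$, note that Bousfield localisation $L_{K(1)}$ is exact.

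Third, for the decompleted invariants $\Phi\THH$, $\TC^-_\prism$ and $\TP_\prism$, I would define $\Phi\THH_\prism(\mc C):=(\THH(\mc C)\otimes_{\THH(\Z_p)}\THH(\Z_p)^{tC_p})^\wedge_p$ for $\Z_p$-linear $\mc C$, which is exact in $\THH(\mc C)$. Then $\THH(\mc C)^{C_p}$ is a pullback of exact functors, hence exact, and applying $(-)^{hS^1}$ preserves this. The main obstacle is exactly here. The decompleted structure was defined above only for rings $R$, so I need to extend the definition to categories, with the structure maps natural in exact functors. I would handle this by the module description over $\THH(\Z_p)$, which is functorial in $\Cat_{\infty,\Z_p}^\perf$, and I would deliberately avoid using quasi-syntomic descent at this stage.
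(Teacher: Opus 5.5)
Your proposal is correct. For $\THH$, $\TC$, $\TC^-$, $\TP$, the Hochschild family and $L_{K(1)}\TC$ it is the paper's argument: exact functors applied after the cyclotomic localising invariant $\THH$, and base change along $\THH(\Z_p)\to\Z_p$ for $\HH$ (using $\Z$ in place of $\Z_p$ changes nothing).

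The one genuine difference is $\Phi\THH$. The paper gives no argument here and cites \cite[Theorem 1.9]{mao2024equivariant_decompleted_TP}. It then deduces the claim for $\TP_\prism=(\Phi\THH)^{hS^1}$ and for $\TC^-_\prism=\TP_\prism\times_{\TP}\TC^-$. This agrees with your $(\THH^{C_p})^{hS^1}$ via $(X^{hC_p})^{hS^1}\simeq X^{hS^1}$ and $(X^{tC_p})^{hS^1}\simeq (X^{tS^1})^\wedge_p$ for bounded below $X$.

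You instead take the formula of \cref{sctn::decompleted_cyclotomic_spectra} as the definition on $\Z_p$-linear categories, namely $\Phi\THH(\mc C)=(\THH(\mc C)\otimes_{\THH(\Z_p)}\THH(\Z_p)^{tC_p})^\wedge_p$. This is visibly exact in $\THH(\mc C)$, and the structure maps are natural by lax monoidality of $\THH$ and $(-)^{tC_p}$. Your route is self-contained and suffices for the paper, since all its inputs are $\Z_p$-linear. The citation buys brevity and defers to an independent treatment of the decompleted invariant.

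One thing to keep in mind: by avoiding quasi-syntomic descent, your argument does not identify $\Phi\THH(\Perf(X))$ with the quasi-syntomic globalisation the paper uses for non-affine $X$. That identification needs the module formula to be a quasi-syntomic sheaf on affines. This does not follow formally from descent for $\THH$, because $\THH(\Z_p)^{tC_p}$ is not bounded below, so tensoring with it need not commute with the totalisations. Zariski descent for qcqs $X$ is unaffected, being a finite limit. In any case, this compatibility is not part of the lemma itself.
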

\begin{proof}
    For $\THH, \TP, \TC^-,$ and $\TC$, this is well known: for $\THH$ this is \cite[Proposition 10.2]{blumberg2013universal} (see also \cite{hesselholt2020topological}). In fact, $\THH$ refines to a localising invariant $\THH: \Cat_\infty^\perf \to \CycSp$, and it thus follows for $\TC,\TP,\TC^-$ as each of these are exact functors $\CycSp\to \Sp$. That $\Phi\THH$ is a localising invariant is \cite[Theorem 1.9]{mao2024equivariant_decompleted_TP}. The claim then follows for $$\TP_\prism = (\Phi \THH)^{hS^1},$$ and thus for $\TC^-_\prism := \TP_\prism \times_{\TP} \TC^-$. Throughout, we are using that finite colimits in a stable $\infty$-category are finite limits and thus commute with arbitrary limits. Since $K(1)$-localisation also commutes with finite (co)limits, we see that $L_{K(1)}K$ is also an localising invariant. Finally, since $\HH(-) \simeq \THH(-) \otimes_{\THH(\Z_p)}\Z_p$, the claim follows for $\HH(-)$ from the case of $\HH$. 
\end{proof}
By \cite[Theorem 1.12]{blumberg2013universal} we know that for any localising invariant $\mc F$, we have \[\Map(K,\mc F)\simeq \mc F(\S)\]
where $K$ is (non-connective) algebraic $K$-theory and $\S$ is the sphere spectrum. Suppose moreover that $\mc F$ is a lax symmetric monoidal functor into $\Sp$; then the unit map for the $E_\infty$-ring $\mc F(\S)$ yields a canonical (up to homotopy) choice of natural transformation of lax symmetric monoidal functors $T: K\to \mc F$. If $\mc F= \THH$ (resp. $\mc F=\TC$), then $T$ is the Dennis trace (resp. the cyclotomic trace) \cite[Section 10]{blumberg2013universal}.

We now list some motivic filtrations on various localising invariants, where throughout the input $X$ is a $p$-quasi-syntomic $p$-adic formal scheme and the output is implicitly $p$-completed. 
\begin{enumerate}
    \item For any discrete commutative ring $k$, there is a motivic filtration on $\HH(X/k)$, $\HP(X/k)$, and $\HC^-(X/k)$, whose associated graded yields, respectively, $p$-complete derived Hodge cohomology $L\Omega_{X/k}^*[2*]$, $p$-complete Hodge-completed derived de Rham cohomology $\hat \dR_{X/k}[2*]$, and the $p$-complete Hodge filtration $\hat \dR^{\ge *}_{X/k}[2*]$ thereon.
    For more on this filtration see \cite{bms2, antieau2019periodic, raksit2020hochschildhomologyderivedrham, HahnRaksitWilson_motivic_fil_on_TC}.
    
    \item There is a motivic filtration on $\TC(X)$, $\TP(X)$, $\TC^-(X)$, and $\THH(X)$, whose associated graded is respectively syntomic cohomology $\Z_p^\syn(*)(X)[2*]$, Breuil--Kisin twisted Nygaard completed prismatic cohomology $\hat \prism_X\{*\}[2*]$, the Nygaard filtration $(\Fil_\Nyg^{\ge *} \hat\prism_X) \{*\}[2*]$ thereon, and its associated graded $\gr_\Nyg^*\hat \prism_X \{*\}[2*]$. For more on this filtration see \cite{bms2}, \cite{BS_prisms}, \cite{HahnRaksitWilson_motivic_fil_on_TC}.
    
    \item As described in the previous section, there are motivic filtrations on the {decompleted} variants of the above localising invariants $\TP_\prism, \TC^-_\prism$, and $\THH_\prism$ \cite{DHRY_even_stacks}. The associated motivic graded identifies respectively with Breuil--Kisin twisted prismatic cohomology $\prism_X\{*\}[2*]$, the Nygaard filtration thereon $\Fil_\Nyg^{\ge *} \prism_X \{*\}[2*]$, and its associated graded $\gr_\Nyg^* \prism_X \{*\}[2*]$.
    
    \item Endow $\Z_p[\zeta_p]$ with a $\S_p\llb q-1\rrb$-algebra structure via the map $q\mapsto \zeta_p$. For $X$ a $p$-adic formal scheme over $\Z_p[\zeta_p]$, there is a motivic filtration on $\TC(X/\S_p\llb q-1\rrb)$, $\TP(X/\S_p\llb q-1\rrb)$, $\TC^-(X/\S_p\llb q-1\rrb)$, and $\THH(X/\S_p\llb q-1\rrb)$; see \cite[Appendix A]{wagner2025q} and \cite{HahnRaksitWilson_motivic_fil_on_TC}. This motivic filtration agrees with the Hahn--Raksit--Wilson even filtration on affines. The associated graded of this motivic filtration identifies with relative syntomic cohomology $\Z_p^\syn(*)(X/\Z_p\llb q-1\rrb)[2*]$, Frobenius twisted Nygaard completed relative prismatic cohomology $\hat \prism^{(1)}_{X_1/\Z_p\llb q-1\rrb}\{*\}[2*]$, the Nygaard filtration thereon $\Fil_\Nyg^{\ge *}\hat \prism^{(1)}_{X_1/\Z_p\llb q-1\rrb}\{*\}[2*]$, and its associated graded $\gr_\Nyg^{*}\hat \prism^{(1)}_{X_1/\Z_p\llb q-1\rrb}\{*\}[2*]$.

    \item The localising invariant $L_{K(1)}K(-)$, when viewed as a functor from $\Z[\tfrac 1p]$-algebras to ($K(1)$-local) spectra, has the Thomason filtration given by the \'etale sheafification of $\tau_{\ge 2*} L_{K(1)}K(-)$, whose associated graded is \[\R\Gamma_\et(-, \Z_p(*))[2*].\]
    This filtration is moreover exhaustive, but not necessarily complete. 
    For any ring $R$, one has $$L_{K(1)}\TC(R^\wedge_p)\simeq L_{K(1)}\TC(R) \simeq L_{K(1)}K(R) \simeq L_{K(1)}K(R[\tfrac 1p])$$ by \cite{bcm_rmks_on_K1}, and so one can view the functor $\Fil_\mot^\bullet L_{K(1)}\TC$ as being a functor from arbitrary commutative $\Z$-algebras to filtered spectra, with associated graded given by 
    \[\R\Gamma_\et((-)^\wedge_p[\tfrac 1p], \Z_p(*))[2*]\]
    the \'etale cohomology of the rigid generic fibre. In \cite{kim2025thomason}, it is shown that the Thomason filtration on $L_{K(1)}K(R[\tfrac 1p]) \simeq L_{K(1)}\TC(R)$ is compatible with the motivic filtration on $\TC$.
\end{enumerate}

All of the usual comparison maps between these cohomology theories are realised by maps at the level of the corresponding filtered localising invariant. Moreover, all of the above filtered localising invariants except for filtered $K(1)$-local $K$-theory satisfy quasi-syntomic descent \cite{bms2} and are all quasi-syntomic locally even. 

All of the above filtrations, at least on affines, are obtained in two ways:
\begin{enumerate}
    \item Via descent where, on a basis of a certain topology, the motivic filtration is the double speed Postnikov filtration (using that the value of the localising invariant at such a basis is always even). 
    \item Variants of the even filtration as in \cite{HahnRaksitWilson_motivic_fil_on_TC}. 
\end{enumerate}
Of course, both of these coincide in many cases of interest.

\subsection{Cyclotomic Synthetic Spectra}\label{sctn::cyc_synth_sp}
In order to keep track of motivic filtrations on $\THH$, we need to use synthetic spectra (introduced in \cite{pstrkagowski2023synthetic}) as well as cyclotomic synthetic spectra from \cite{antieau2024cyclotomic}. We will follow the latter reference. This also gives us a chance to recall some fact about filtrations. 

\begin{definition}
    For a stable category $\mc C$, write $$\Fil \mc C := \Fun(\Z^\op, \mc C)$$ for the $\infty$-category of filtered objects, where $\Z^\op$ is the poset of decreasing integers. 
\end{definition}
We will write an object of $\Fil \mc C$ as $F^{\ge *}M$. Evaluation at $i\in \Z$ yields a limit and colimit preserving functor \[F^{\ge k} : \Fil \mc C \to \mc C, \quad F^{\ge *}M \mapsto F^{\ge k} M.\]
For each $k\in \Z$, there is also a shifting functor  \[-\<k\> : \Fil\mc C\to \Fil\mc C, \quad F^{\ge *}(M\<k\>) := F^{*-k}M.\]
If $\mc C$ is symmetric monoidal with tensor product $\otimes_{\mc C}$, then so is $\Fil \mc C$ via Day convolution, so that \[F^{\ge k}(F^{\ge *}M\otimes_{\Fil \mc C} F^{\ge *}N) := \colim_{i+j\ge k} F^{\ge i}M \otimes_{\mc C} F^{\ge j}N.\]
If $\mc C$ admits all sequential colimits, we have the underlying object functor \[F^{\ge -\infty} : \Fil \mc C \to \mc C, \quad F^{\ge *}M \mapsto F^{\ge -\infty}M := \colim_{k\to -\infty} F^{\ge k}M.\]
We say that an object $F^{\ge *}M\in \Fil \mc C$ is complete if $\lim_i F^{\ge i}M\simeq 0$, and we will denote the full subcategory of complete filtered objects of $\mc C$ by $\hat \Fil\mc C$.

The basic example of filtered spectra we will need is the following construction, see \cite[Section 2]{HahnRaksitWilson_motivic_fil_on_TC}. 
\begin{lemma}
    There is a lax symmetric monoidal functor \[\CAlg(\Sp_p) \to \CAlg(\hat \Fil \Sp_p), \quad R \mapsto F_\ev^*R\] 
    which is right Kan extended from even rings and satisfies $F_\ev^{\ge *}R := \tau_{\ge 2*} R$ whenever $R$ is even. 
\end{lemma}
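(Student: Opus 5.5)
The plan is to define $F_\ev^{\ge *}$ first on even $E_\infty$-rings and then right Kan extend it along the inclusion $\CAlg^\ev(\Sp_p)\subset \CAlg(\Sp_p)$, following \cite[Section 2]{HahnRaksitWilson_motivic_fil_on_TC}.

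\textbf{Step 1: the even case.} Start with the double-speed Whitehead tower. The functor $\tau_{\ge 2*} : \Sp_p \to \Fil\Sp_p$ is lax symmetric monoidal for Day convolution, because $\tau_{\ge 2i}M\otimes \tau_{\ge 2j}N$ is $(2i+2j)$-connective. It therefore induces a functor $\CAlg(\Sp_p)\to\CAlg(\Fil\Sp_p)$. Restrict this to the full subcategory of even (in particular connective) $p$-complete $E_\infty$-rings.

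\textbf{Step 2: the extension.} Define
\[F_\ev^{\ge *}R := \lim_{R\to B,\ B\text{ even}} \tau_{\ge 2*}B.\]
Right Kan extension along a fully faithful inclusion does not change the values on the subcategory, so $F_\ev^{\ge *}R\simeq \tau_{\ge 2*}R$ whenever $R$ is even. For the $E_\infty$-structure, note that limits in $\CAlg(\Fil\Sp_p)$ are computed underlying, so the limit lands in $\CAlg(\Fil\Sp_p)$. One can also phrase this as: right Kan extension of a lax symmetric monoidal functor along a symmetric monoidal inclusion is again lax symmetric monoidal. The even rings are closed under $p$-complete tensor products, since a tensor product of even connective spectra has even homotopy by the Künneth spectral sequence only after flatness, so I instead use the \cite{HahnRaksitWilson_motivic_fil_on_TC} argument via the operadic right Kan extension. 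The multiplicativity map $F^{\ge*}_\ev R\otimes F^{\ge*}_\ev S\to F^{\ge*}_\ev(R\otimes S)$ comes from the maps $\tau_{\ge2*}B\otimes\tau_{\ge2*}C\to\tau_{\ge2*}(B\otimes C)$, composed into an even target receiving $B\otimes C$. This point needs the most care.

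\textbf{Step 3: completeness.} Each $\tau_{\ge 2*}B$ is complete, since $\lim_i \tau_{\ge 2i}B\simeq 0$. Complete objects form a full subcategory of $\Fil\Sp_p$ that is closed under limits, so the limit is complete. Moreover, $p$-completeness is preserved by limits.

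I expect the main obstacle to be the set-theoretic size of the indexing category in the right Kan extension, which a priori might not exist. I would handle this as Hahn--Raksit--Wilson do: either pass to a larger universe, or observe that the limit can be computed over a small cofinal subcategory.
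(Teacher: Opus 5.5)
Your construction is the one the paper relies on. The paper gives no argument of its own and cites \cite[Section 2]{HahnRaksitWilson_motivic_fil_on_TC}, where $F^{\ge *}_\ev$ is defined as exactly this right Kan extension of $\tau_{\ge 2*}$ from even rings. The main steps of your Steps 1--3 are right:
\begin{itemize}
\item agreement on even rings, because $\id_R$ is initial in $\CAlg^\ev_{R/}$;
\item the $E_\infty$-structure, because limits in $\CAlg(\Fil\Sp_p)$ are computed on underlying objects;
\item completeness, because complete filtered objects are closed under limits.
\end{itemize}

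The part to correct is the monoidality discussion in Step 2. Even rings are \emph{not} closed under ($p$-complete) tensor products. For example, $\F_p$ is even, but $\pi_*(\F_p\otimes\F_p)$ is the dual Steenrod algebra, which has a class in degree $1$. So the inclusion of even rings is not symmetric monoidal, and the ``right Kan extension along a symmetric monoidal inclusion'' rephrasing is not available.

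No operadic argument is needed, though. The tensor product on $\CAlg(\Sp_p)$ is the coproduct, and a functor $\mc C\to\CAlg(\mc D)$ is the same thing as a lax symmetric monoidal functor $(\mc C,\sqcup)\to(\mc D,\otimes)$ \cite[Section 2.4.3]{HA_lurie}. So the lax structure comes for free once the right Kan extension is formed with values in $E_\infty$-algebras.

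Concretely, let $D$ be an even ring under $R\otimes S$. The component of the multiplication $F^{\ge *}_\ev R\otimes F^{\ge *}_\ev S\to \tau_{\ge 2*}D$ is obtained by restricting along $R\to D$ and $S\to D$ and then multiplying inside $\tau_{\ge 2*}D$. There is no need to pass through $B\otimes C$.
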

In particular, we set $\S_\ev := F_\ev^* \S_p \in \Fil \Sp_p$. We abuse notation by not including the dependence of $p$ in $\S_\ev$. 

\begin{definition}
    The category of ($p$-complete) synthetic spectra is the symmetric monoidal stable $\infty$-category $$\SynSp_p := \Mod_{\S_\ev}(\Fil \Sp_p).$$
\end{definition}
Note that $F_\ev^{\ge *}:\CAlg(\Sp_p) \to \CAlg(\hat \Fil \Sp_p)$ factors through a lax-symmetric monoidal functor \[F_\ev^{\ge *} : \CAlg(\Sp_p) \to \CAlg(\SynSp_p).\]

Set $\T_\ev := F_\ev^{\ge *} \S_p[S^1]$. The bicommutative bialgebra structure on $\S_p[S^1]$ naturally upgrades to a bicommutative bialgebra structure on $\T_\ev$ in the category $\SynSp_p$; see \cite{antieau2024cyclotomic} for more details. 

\begin{definition}
    A synthetic spectrum with synthetic circle action is a $\T_\ev$-module in $\SynSp_p$. Write $$\SynSp_{\T_\ev} := \Mod_{\T_\ev}(\SynSp_p),$$ where as usual we leave the $p$ implicit. 
\end{definition}
Since $\T_\ev$ is a bicommutative bialgebra, it in particular has an $E_\infty$-augmentation $\T_\ev \to \S_\ev$, and so restriction of scalars along this augmentation yields a functor \[(-)^\triv : \SynSp_p \to \SynSp_{\T_\ev} .\]
This functor has left and right adjoints \[(-)_{h\T_\ev} := \S_\ev \otimes_{\T_\ev} - : \SynSp_{\T_\ev} \to \SynSp_p \quad \text{ and } \quad (-)^{h\T_\ev} := \Hom_{\T_\ev}(\S_\ev, -) : \SynSp_{\T_\ev} \to \SynSp_p.\]
This is a filtered version of the homotopy orbits and homotopy fixed points under a circle action, whence the notation. There is a synthetic norm map $$\Nm_{\T_\ev}: (-)_{h\T_\ev} (-1)[1] \to (-)^{h\T_\ev}$$
(see \cite[Construction 2.58]{antieau2024cyclotomic}) and we define \[(-)^{t\T_\ev} := \cofib(\Nm_{\T_\ev}).\]

The $p^k$-power map $S^1 \to S^1$ induces a map $\rho(k) : \T_\ev \to \T_\ev$. There are left and right adjoints of the restriction-of-scalars along $\rho(k)$ functor $\rho(k)_*: \SynSp_{\T_\ev} \to \SynSp_{\T_\ev}$ given by
\begin{align*}
    (-)_{hC_{p^k,\ev}} := \rho(n)_*\T_\ev \otimes_{\T_\ev} - &: \SynSp_{\T_\ev} \to \SynSp_{\T_\ev} , \text{ and}\\ 
     (-)^{hC_{p^k,\ev}} := \Hom_{\T_\ev}(\rho(k)_*\T_\ev, -) &: \SynSp_{\T_\ev} \to \SynSp_{\T_\ev}.
\end{align*}
Again, there is a norm functor $$\Nm_{C_{p^k,\ev}}: (-)_{hC_{p^k,\ev}} (-1)[1] \to (-)^{hC_{p^k,\ev}}$$
and we define \[(-)^{tC_{p^k,\ev}} : \cofib(\Nm_{C_{p^k,\ev}}).\]

We finally define cyclotomic synthetic spectra. 
\begin{definition}
    A ($p$-complete) cyclotomic synthetic spectrum is a $p$-complete synthetic spectrum $F^{\ge *}M$ with synthetic circle action equipped with a map $\varphi_M : F^{\ge *}M \to (F^{\ge *}M)^{tC_{p,\ev}}$. Let $\CycSyn_p$ denote the category of $p$-complete cyclotomic synthetic spectra. 

    This is a symmetric monoidal stable $\infty$-category with monoidal unit $\S_\ev^\triv$. 
\end{definition}

The following example is \cite[Proposition 3.25]{antieau2024cyclotomic}.
\begin{example}\label{ex::even_cyc_sp_becomes_synth}
    Suppose $X\in \CycSp$ is an $E_\infty$-ring in cyclotomic spectra such that the underlying spectrum of $X$ is even. Then $F_\ev^{\ge *}X$ naturally admits the structure of an $E_\infty$-ring object in $\CycSyn$. This essentially follows because $F_\ev^{\ge *}X \simeq \tau_{\ge 2*}X$ and $(F_\ev^{\ge *}X)^{tC_{p,\ev}} \simeq \tau_{\ge 2*}X^{tC_p}$, and so the cyclotomic Frobenius $\varphi_X:X\to X^{tC_p}$ naturally upgrades to a filtered map. See \cite[Proposition 3.25]{antieau2024cyclotomic} for an upgrade of this construction to an equivalence of categories. 
\end{example}

Given $F^{\ge *}M\in \CycSyn_p$, we define \[\TC(F^{\ge *}M) := \map_{\CycSyn}(\S_\ev^\triv, F^{\ge *}M) \simeq \fib\left(\can - \varphi_M^{h\T_\ev} : (F^{\ge *}M)^{h\T_\ev} \to\left((F^{\ge *}M)^{tC_{p,\ev}}\right)^{h\T_\ev} \right).\]
With all of this notation out of the way, we finally come to the result we will need.
\begin{theorem}[{\cite{antieau2024cyclotomic}}]
    There is a functor \[F_\ev^{\ge *} \THH(-) : \QSyn_p \to \CAlg(\CycSyn_p)\]
    such that for any $R\in \QSyn$, there are natural equivalences of $E_\infty$-rings in $p$-complete filtered spectra:
    \begin{align*}
        F_\ev^{\ge *}\THH(R) &\simeq \Fil_\mot^{\ge *}\THH(R), &  \left(F_\ev^{\ge *}\THH(R) \right)^{h\T_\ev} &\simeq \Fil_\mot^{\ge *}\TC^-(R), \\
        \left(F_\ev^{\ge *}\THH(R) \right)^{t\T_\ev} &\simeq \Fil_\mot^{\ge *}\TP(R), & \TC\left(F_\ev^{\ge *}\THH(R) \right) &\simeq \Fil_\mot^{\ge *}\TC(R).
    \end{align*}
    Moreover, the underlying cyclotomic spectrum of $F_\ev^{\ge *}\THH$ identifies canonically with $\THH$.
\end{theorem}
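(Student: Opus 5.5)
The idea is to build $F_\ev^{\ge *}\THH$ first on quasiregular semiperfectoids, where everything is even and \cref{ex::even_cyc_sp_becomes_synth} applies directly. We then extend to all of $\QSyn_p$ by quasi-syntomic descent, using that QRSPs form a basis. Concretely, for $R$ a QRSP, \cite{bms2} gives that $\THH(R)^\wedge_p$ is even, and that $\TC^-(R)$ and $\TP(R)$ are even with $\THH(R)^{tC_p}\simeq \TP(R)$ after $p$-completion. Since $\THH(R)$ is an $E_\infty$-ring in $\CycSp$, \cref{ex::even_cyc_sp_becomes_synth} makes $\tau_{\ge 2*}\THH(R)$ an $E_\infty$-ring in $\CycSyn_p$. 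This is functorial in maps of QRSPs, since it is the restriction of the lax symmetric monoidal even filtration $F_\ev^{\ge *}$.

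The next step is the comparisons on QRSPs. We use the computation underlying \cite[Section 2]{antieau2024cyclotomic}: if $X$ is an even spectrum with $S^1$-action whose homotopy fixed points are even, then $(\tau_{\ge 2*}X)^{h\T_\ev}\simeq \tau_{\ge 2*}X^{hS^1}$. Indeed, the homotopy fixed point spectral sequence collapses, and $\T_\ev$ is designed so that its $S^1$-filtration is the double-speed one. The analogous identities should hold for $(-)^{tC_{p,\ev}}$ and $(-)^{t\T_\ev}$, given the evenness of $\TP(R)$ and of $\THH(R)^{tC_p}$. Taking the fibre of $\can-\varphi^{h\T_\ev}$ then gives $\Fil^{\ge *}_\mot\TC(R)$. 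On a QRSP, this fibre is by definition the BMS fibre of $\can - \varphi$ on $\tau_{\ge 2*}$.

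For general $R\in\QSyn_p$, we right Kan extend from QRSPs, i.e. we set
\[F_\ev^{\ge *}\THH(R) := \lim_{R\to S,\ S \text{ QRSP}} \tau_{\ge 2*}\THH(S).\]
The limit exists and is computed underlying, by a filtered analogue of \cref{cor::limits_in_cycsp_computed_on_underlying}. The point is that in each filtration degree $k$, the terms $\tau_{\ge 2k}\THH(S)$ are uniformly $2k$-connective. Moreover, quasi-syntomic descent can be computed by \v{C}ech nerves, which are totalisations with bounded cohomological amplitude. So \cref{lem::orbits_and_inverse_lims_commute} lets $(-)_{hC_p}$, $(-)^{tC_p}$, and likewise $(-)_{h\T_\ev}$, commute with the limit. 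Fixed points commute with all limits. Hence the identification of $(-)^{h\T_\ev}$, $(-)^{t\T_\ev}$ and $\TC$ with the BMS filtrations follows from the QRSP case, since both sides satisfy descent and are defined by the same limit.

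For the underlying object, note that $F^{\ge k}=\THH$ for $k\le 0$, because $\THH(S)$ is connective. The colimit over $k\to-\infty$ is therefore eventually constant, and so it commutes with the limit; this gives $\THH(R)$ by descent for $\THH$. The main obstacle I expect is the Tate step. Here $(-)^{t\T_\ev}$ is the cofibre of a synthetic norm involving orbits, so its commutation with the descent limit requires the uniform connectivity bounds above in each filtration degree. Identifying $(\tau_{\ge 2*}X)^{tC_{p,\ev}}$ with $\tau_{\ge 2*}X^{tC_p}$ also needs care about $p$-completion and about evenness of $\THH(S)^{tC_p}$. I would handle this first for QRSPs via \cite[Lemma II.4.2]{nikolaus_scholze} and the BMS computation, and then descend.
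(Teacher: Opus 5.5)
The overall plan is the natural one: use evenness on QRSPs via \cref{ex::even_cyc_sp_becomes_synth}, then right Kan extend. But the step you flag as the main obstacle rests on a false claim, and everything downstream depends on it.

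Totalisations of \v{C}ech nerves do \emph{not} have bounded cohomological amplitude. Already for cosimplicial abelian groups, $\lim_\Delta$ can have nonzero $\pi_{-n}$ for every $n$; cochains computing $H^*(B\Z/p;\F_p)$ are an example. So there is no $d$ as in the hypothesis of \cref{lem::orbits_and_inverse_lims_commute}, and a limit over all QRSPs under $R$ is no better. Uniform $2k$-connectivity of the \emph{terms} $\tau_{\ge 2k}\THH(S)$ gives no control over the \emph{limit}. Moreover, $F^{\ge k}(Y_{h\T_\ev})$ involves $F^{\ge k-j}Y$ for all $j\ge 0$, so even a good spectral lemma would not apply filtration-degree by filtration-degree. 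As it stands you therefore know neither of the following:
\begin{itemize}
\item that the limit in $\CAlg(\CycSyn_p)$ is computed on underlying $\T_\ev$-modules, so you do not yet know its underlying object is $\Fil^{\ge *}_\mot\THH(R)$ with the expected Frobenius;
\item that its $(-)^{t\T_\ev}$ and $\TC$ agree with the BMS filtrations, which are defined as limits of $\tau_{\ge 2*}\TP(S)$ and of the fibres of $\can-\varphi$.
\end{itemize}

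What is missing is a boundedness input specific to quasi-syntomic rings. One way to organise it:
\begin{itemize}
\item The fundamental class of $\T_\ev$ sits in weight $1$. So for $Y$ with $\gr^m Y=0$ for $m<0$, the pieces $\gr^k(Y_{h\T_\ev})$ and $\gr^k(Y_{hC_{p,\ev}})$ are finite colimits built from $\gr^0Y,\dots,\gr^kY$. Hence they commute with arbitrary limits on associated graded.
\item To pass from associated graded to filtered objects, you need the orbits of the limit to be complete. This holds once $\gr^k$ of the limit is bounded below by a bound tending to $\infty$ with $k$.
\item For quasi-syntomic $R$ this is true because $\gr^k_\mot\THH(R)\simeq\gr^k_\Nyg\hat\prism_R\{k\}[2k]$. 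The conjugate filtration, together with the Tor amplitude $[-1,0]$ of $\L_R$, bounds the cohomological amplitude of $\gr^k_\Nyg\hat\prism_R$ in terms of $k$ \cite{bms2, BL_absolute_prismatic_coh}. This is a genuine theorem, not a formal consequence of descent.
\end{itemize}

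Separately, $\THH(R)^{tC_p}$ is not $\TP(R)^\wedge_p$. What \cite[Lemma II.4.2]{nikolaus_scholze} gives is $(\THH(R)^{tC_p})^{hS^1}\simeq\TP(R)^\wedge_p$. You need evenness of $\THH(S)^{tC_p}$ for QRSP $S$ to identify $(\tau_{\ge 2*}\THH(S))^{tC_{p,\ev}}$ with $\tau_{\ge 2*}(\THH(S)^{tC_p})$, and hence for the $\TC$ comparison. That evenness is a separate input, not a consequence of evenness of $\TP$.
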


\subsection{Computations in $\THH$}
In this subsection we record some computations of $\THH$ of certain $E_\infty$-rings that will be crucial in the proof of \cref{mainthm::desc_of_limit_of_THH_of_cyc}.

\subsubsection{$\THH$ of Group Algebras}\label{sctn::thh_of_grp_algebras}

For $G$ a topological group, write $\S[G]:= \Sigma_+^\infty G$ for the corresponding $E_1$-ring. If $G$ is abelian, which we will assume from now on, then $\S[G]$ is an $E_\infty$-ring. 
In this case, the cyclotomic spectrum $\S[G]^\Tate$ is particularly nice.
\begin{lemma}\label{lem::generalized_segal_conjecture_and_Tate_frobenius}
    For $G$ either a discrete abelian group or a topological abelian group that is a finite CW complex, we have $\S_p[G]^{tC_p} \simeq \S_p[G]^\triv$ of $S^1$-equivariant spectra, and the Tate-valued Frobenius \[\varphi_{\S_p[G]}^\Tate : \S_p[G]\to \S_p[G]^{tC_p} \simeq \S_p[G]\]
    is induced by the multiplication-by-$p$ map $G\to G$. 
\end{lemma}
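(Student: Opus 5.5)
We identify the Tate-valued Frobenius of a suspension spectrum $\S[G]=\Sigma^\infty_+G$ with the Tate diagonal of the space $G$, and then invoke the Segal conjecture in its generalised form (as in \cite[Section IV.3]{nikolaus_scholze}). The plan has three steps.

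\emph{Step 1: reduce the Frobenius to a Tate diagonal.} The Tate-valued Frobenius is defined as
\[\S[G]\xrightarrow{\Delta_p}(\S[G]^{\otimes p})^{tC_p}\xrightarrow{\mr{mult}}\S[G]^{tC_p}.\]
We have $\S[G]^{\otimes p}\simeq \S[G^p]$, with $C_p$ permuting the factors. By naturality of the Tate diagonal for suspension spectra \cite[Section III.1]{nikolaus_scholze}, the composite $\S[G]\to \S[G^{p}]^{tC_p}$ factors as
\[\Sigma^\infty_+ G\xrightarrow{\Sigma^\infty_+\Delta}\Sigma^\infty_+ (G^p)^{hC_p}\to \S[G^p]^{tC_p}.\]
Here $\Delta$ is the space-level diagonal, and the second map is the Segal-type comparison map.

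\emph{Step 2: identify the comparison map.} Since $C_p$ acts trivially on the diagonal copy of $G$, this map passes through $\Sigma^\infty_+G\otimes \S^{tC_p}$. We then claim that the maps
\[\S[G]\otimes\S^{tC_p}\to \S[G^p]^{tC_p}\quad\text{and}\quad \S_p\to\S^{tC_p}\]
are $p$-adic equivalences:
\begin{itemize}
\item the second is the Segal conjecture for $C_p$ (Lin, Gunawardena);
\item the first follows from the fact that $(-)^{tC_p}$ kills the induced cells off the diagonal. For finite CW $G$ we induct on cells, using that Tate of an induced $C_p$-spectrum vanishes and that everything is exact. For discrete $G$ we write $\S[G^p]$ as a sum over $G^p$ and note that only the fixed points $G\subset G^p$ contribute; since $(-)^{tC_p}$ does not commute with infinite sums, we $p$-complete and use a bounded-below argument via \cref{lem::orbits_and_inverse_lims_commute}, or directly \cite[Theorem III.1.7]{nikolaus_scholze}.
\end{itemize}
Combining these gives an equivalence $\S_p[G]\simeq\S_p[G]^{tC_p}$ which is $S^1$-equivariant for the trivial action, since the residual $S^1/C_p$-action is trivial on $\S[G]$.

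\emph{Step 3: compose with multiplication.} The multiplication $\S[G^p]\to\S[G]$ is induced by the group law $G^p\to G$. Restricted to the diagonal this is $g\mapsto g^p$, i.e.\ multiplication by $p$ on $G$. Under the identification of Step 2, the composite is therefore $\Sigma^\infty_+[p]$ tensored with the Segal map, which the statement asserts is $[p]$ after identifying $\S_p[G]^{tC_p}\simeq\S_p[G]$.

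The main obstacle is Step 2: proving the equivalence $\S_p[G]^{tC_p}\simeq \S_p[G]$ and checking its compatibility with the trivial $S^1$-action. The equivariance is subtle because the residual action on $(-)^{tC_p}$ comes from $S^1/C_p\cong S^1$. I would first cite \cite[Theorem III.1.7 and Section IV.3]{nikolaus_scholze}, which treats $\S[\Omega X]$ and finite/discrete inputs, rather than redo the cell induction.
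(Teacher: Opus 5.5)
Your argument is correct in substance, but it takes a different route from the paper. The paper's proof is a citation. It takes the equivalence $\S_p[G]^{tC_p}\simeq \S_p[G]$ from \cite[Lemma 6.7]{yuan2023integral} in the discrete case and from \cite{burklund2024note} in the finite CW case, and says the description of the Frobenius follows from the definition. You instead derive everything from three inputs: the Segal conjecture for $\S$, the generalised Segal conjecture \cite[Theorem III.1.7]{nikolaus_scholze}, and the fact that on suspension spectra the Tate diagonal comes from the space-level diagonal. Concretely, $\Delta_p=\S[\Delta]^{tC_p}\circ\can$, where $\can\colon \S[G]\to\S[G]^{hC_p}\to\S[G]^{tC_p}$ is taken for the trivial action. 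This makes the argument self-contained and makes the second half explicit. Since $m\circ\Delta\colon G\to G^p\to G$ is $g\mapsto pg$ and $\can$ is natural, $\varphi^\Tate=(\Sigma^\infty_+[p])^{tC_p}\circ\can=\can\circ\Sigma^\infty_+[p]$. That is the claim, once $\can$ is known to be a $p$-completion. Your route also shows where the hypotheses enter: finiteness lets $\S[G]\otimes-$ commute with $(-)^{tC_p}$, and discreteness splits $G^p$ into the diagonal plus a free part.

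Three steps should be repaired or reorganised.

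\emph{Finite CW case.} No analysis of $G^p$ is needed for the first half. $\S[G]$ is a finite spectrum, so $\S[G]^{tC_p}\simeq\S[G]\otimes\S^{tC_p}$, and the Segal conjecture shows directly that $\can$ is a $p$-completion.

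\emph{Discrete case.} Drop the ``bounded-below argument via \cref{lem::orbits_and_inverse_lims_commute}''. That lemma concerns inverse limits. After $p$-completing $\bigoplus_G\S$ you would still need $(\bigoplus_G \S/p^k)^{tC_p}\simeq\bigoplus_G(\S/p^k)^{tC_p}$ for the trivial action, which is exactly the infinite-sum problem. The argument that works is the one behind ``only the fixed points contribute''.
\begin{itemize}
\item The off-diagonal summand $\S[G^p\setminus\Delta G]\simeq \S[C_p]\otimes\S[(G^p\setminus\Delta G)/C_p]$ is induced, of arbitrary size, so its Tate construction vanishes with no finiteness needed.
\item Hence $\S[\Delta]^{tC_p}\colon\S[G]^{tC_p}\to\S[G^p]^{tC_p}$ is an equivalence.
\item Since $\S[G]$ is connective, $\Delta_p$ is a $p$-completion by \cite[Theorem III.1.7]{nikolaus_scholze}, and therefore so is $\can$.
\end{itemize}
This step is also what justifies passing from the permutation-action Tate construction $(\S[G]^{\otimes p})^{tC_p}$ you computed to the trivial-action $\S[G]^{tC_p}$ in the statement. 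As written, ``combining these gives $\S_p[G]\simeq\S_p[G]^{tC_p}$'' conflates the two.

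\emph{Equivariance.} ``The residual action is trivial on $\S[G]$'' is not a justification. A trivial $S^1$-action does not in general give a trivial residual action on $X^{tC_p}$. It already fails for $X=\Z$: if the residual action were trivial, $(\Z^{tC_p})^{hS^1}$ would be a $\Z^{tC_p}$-module and hence $p$-torsion, whereas it is $(\Z^{tS^1})^\wedge_p$. The correct reason is that for $X$ with trivial action, $\can\colon X\to X^{tC_p}$ is $S^1$-equivariant for the residual $S^1/C_p\cong S^1$-action. Once it is a $p$-equivalence, it identifies $\S_p[G]^{tC_p}$ with $\S_p[G]^\triv$ as spectra with $S^1$-action.
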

\begin{proof}
    That $\S_p[G]^{tC_p} \simeq \S_p[G]$ follows in the discrete case from \cite[Lemma 6.7]{yuan2023integral} and in the finite CW complex case from \cite[Theorem 1.2 and Example 4.4]{burklund2024note}; these follow from the now proven Segal conjecture $\S_p^{tC_p} \simeq \S_p$. The second part of the statement then follows from the definition of the Tate-valued Frobenius.
\end{proof}

We summarise everything we need to know about $\THH(\S[G])$ for $G$ an abelian topological group. All of these results are classical, but for a modern reference see \cite{nikolaus_scholze}. 

\begin{theorem}\label{thm::THH_of_group_rings}
    Let $G$ be an abelian topological group.
    \begin{enumerate}
        \item There is a natural $S^1$-equivariant equivalence \[\THH(\S[G]) \simeq \S[LBG]\]
        where $LBG := \Map_{\ms S}(S^1, BG)$ is the free loop space of the classifying space $BG$, with the natural $S^1$-action. 

        \item The cyclotomic Frobenius is given by \[\THH(\S[G]) \simeq \S[LBG] \to \S[LBG]^{hC_p} \to \S[LBG]^{tC_p} \simeq \THH(\S[G])^{tC_p}\]
        where the first map is induced by the map \[LBG \to LBG^{hC_p} \simeq \Map(S^1, BG)^{hC_p} \simeq \Map(S^1/C_p, BG) \]
        given by taking $\Map(-,BG)$ of the $p$-power covering map $[p]:S^1\to S^1$ (this latter map factors through $S^1/C_p$ and induces the equivalence $S^1/C_p\cong S^1$). 

        \item The assembly map $\S[BG]^\triv \to \THH(\S[G])$, under the equivalence $\THH(\S[G]) \simeq \S[LBG]$, is induced by the $S^1$-equivariant map $BG\to LBG$ sending a point of $BG$ to the constant map valued at that point. 

        \item We have an equivalence of cyclotomic spectra \[\S^\triv \otimes_{\S[BG]^\triv} \THH(\S[G]) \simeq \S[G]^\Tate.\]

        \item For $H$ a subgroup of $G$, the canonical map $\THH(\S[H]) \to \THH(\S[G])$ induced by the ring map $\S[H]\to \S[G]$ is induced by the map $LBH\to LBG$ given by post-composing with the projection $BH\to BG$. 
    \end{enumerate}
\end{theorem}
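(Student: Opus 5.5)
The statement is classical, and the plan is to deduce everything from the identification of $\THH$ with the cyclic bar construction, together with the fact that $\Sigma^\infty_+$ is symmetric monoidal and commutes with colimits.

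\textbf{Part (1).} Write $\THH(\S[G]) \simeq \S[G]\otimes S^1$ as the tensor with $S^1$ in $\CAlg$, i.e. the geometric realisation of the cyclic bar construction $[n]\mapsto \S[G]^{\otimes(n+1)}$. Since $\Sigma^\infty_+$ is symmetric monoidal and colimit preserving, this is $\Sigma^\infty_+$ of the cyclic bar construction $B^{cyc}G$ of $G$ in spaces. The classical map $|B^{cyc}_\bullet G|\to LBG$ is an $S^1$-equivariant equivalence (Goodwillie, Jones). Since $G$ is abelian, one can alternatively see this as $G\otimes S^1 \simeq G\times BG\simeq \Omega BG\times BG \simeq LBG$ in $E_\infty$-groups. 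The $S^1$-equivariance comes from the cyclic structure.

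\textbf{Part (2).} I would invoke \cite[Section IV.3]{nikolaus_scholze}. There the cyclotomic structure on $\THH(\S[M])$ for a monoid $M$ is shown to be $\Sigma^\infty_+$ of the unstable Frobenius $B^{cyc}M\to (B^{cyc}M)^{hC_p}$, composed with the canonical map from $\Sigma^\infty_+(Y^{hC_p})$ to $(\Sigma^\infty_+Y)^{hC_p}\to(\Sigma^\infty_+ Y)^{tC_p}$. Identifying the unstable Frobenius under $B^{cyc}G\simeq LBG$ with restriction along the $p$-fold cover $S^1\to S^1/C_p\cong S^1$ is the main point to check. It follows because the Tate diagonal on the $p$-fold subdivided cyclic bar construction corresponds to the edgewise subdivision homeomorphism $|sd_p X|\cong |X|$, which on free loops is precomposition with $[p]$.

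\textbf{Parts (3) and (5).} These are naturality statements. The map $\S[H]\to\S[G]$ comes from a group map $H\to G$, so it induces a map of cyclic bar constructions and hence the map $LBH\to LBG$ given by post-composition with $BH\to BG$. For the assembly map in (3), the $S^1$-action on $\THH$ is adjoint to the map $\S[G]\to\THH(\S[G])$. This exhibits $\S[BG]\to\S[LBG]$ as coming from the $S^1$-fixed points, namely the constant loops $BG\subset LBG$. Concretely, under $LBG\simeq BG\times G$, constant loops form the $BG$ factor with trivial action. I would check this first on underlying spaces and then note that constant loops are strictly $S^1$-fixed.

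\textbf{Part (4).} On underlying $S^1$-spectra, $\S\otimes_{\S[BG]}\S[LBG]$ is $\S$ of the fibre of the evaluation $LBG\to BG$, which is $\Omega BG\simeq G$. For abelian $G$ the splitting $LBG\simeq BG\times G$ is an equivalence of $E_\infty$-spaces, and the $S^1$-action on the fibre is trivial after base change along the $S^1$-equivariant augmentation. So the underlying object is $\S[G]^\triv$.

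For the Frobenius, combine (2) with \cref{lem::generalized_segal_conjecture_and_Tate_frobenius}. Restricted to the fibre $G$ (loops based at the basepoint), precomposition with $[p]$ sends a loop $g$ to $g^p$, i.e. it is multiplication by $p$. This matches the Tate Frobenius on $\S[G]$. To get an equivalence of cyclotomic spectra, not just of Frobenii on $\pi_0$ or on underlying spectra, I would instead use the retraction $\pi_{\S[G]}:\THH(\S[G])\to\S[G]^\Tate$ from \cite[Corollary IV.2.4]{nikolaus_scholze}. This is a cyclotomic map, and it is $\S[BG]^\triv$-linear: the augmentation $\S[BG]\to\S$ is compatible because $\pi$ kills positive-degree $BG$ classes, via naturality in $G$ and the case $G=e$. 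So it factors through the base change, and the factored map is an equivalence by the underlying computation above.

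\textbf{Main obstacle.} I expect the hardest step to be the coherent identification of the cyclotomic Frobenius in (2) and (4). I would defer entirely to \cite{nikolaus_scholze} for it rather than reprove it.
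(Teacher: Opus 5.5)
The paper gives no argument for this theorem: it records (1)--(5) as classical with a pointer to \cite{nikolaus_scholze}. So your deferral for (1), (2) and (5) matches it. Part (3) is really the definition of the assembly map as the constant loops $\Map(S^1\to *,BG)$. Your appeal to the universal property of $A\to\THH(A)$ is beside the point, since that property only produces maps \emph{out of} $\THH(A)$. All that needs checking is that constant loops are $S^1$-equivariant for the trivial action on $BG$ and are fixed by precomposition with $[p]$.

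The gap is in (4), the one place where you actually argue. Your two key claims there are that $\S\otimes_{\S[BG]}\S[LBG]$ carries the trivial $S^1$-action, and that $\pi_{\S[G]}$ is $\S[BG]^\triv$-linear as a cyclotomic $E_\infty$-map. Neither justification works:
\begin{itemize}
\item Evaluation at the base point $LBG\to BG$ is not $S^1$-equivariant, so its fibre says nothing about the action.
\item Naturality along $e\to G$ or $G\to e$ constrains the composite $\S[BG]^\triv\to\THH(\S[G])\to\S[G]^\Tate$ only after composing with the augmentation of $\S[G]$.
\item A statement about homotopy groups is not a homotopy in $\CAlg(\CycSp)$, where such maps are $E_\infty$-maps $\S[BG]\to\TC(\S[G]^\Tate)$.
\end{itemize}

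More importantly, neither claim is formal, so no argument that ignores the strict commutativity of $G$ can be complete. Let $E$ be the connective spectrum of $G$. Then:
\begin{itemize}
\item $LBG\simeq\Omega^\infty(E\otimes\S[S^1])$, with $S^1$ acting on $\S[S^1]$ by translation;
\item the constant loops are $E\otimes[S^1]$, where $[S^1]\colon\S[1]^\triv\to\S[S^1]$ is the equivariant fundamental class (dual to the augmentation $\epsilon$);
\item the retraction underlying $\pi_{\S[G]}$ is $E\otimes\epsilon$.
\end{itemize}
But $\epsilon\circ[S^1]$ is already $\eta$ on underlying spectra, because the equivariant class carries the Lie group framing of $S^1$. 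Also $\cofib([S^1])\simeq\S^{-\lambda}[2]$ is $\S$ with a non-trivial $S^1$-action. Consequently, for a general grouplike $E_\infty$-space:
\begin{itemize}
\item the base change is $\Sigma^\infty_+$ of $G$ with an action twisted by the representation sphere;
\item the composite $BG\to LBG\to G$ underlying $\pi_{\S[G]}\circ\mathrm{asm}$ is $\Omega^\infty(\eta\otimes E)$, which for $E=\S$ is non-zero on $\pi_1$.
\end{itemize}

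What rescues (4) is that, for an abelian topological group, $E$ is an $H\Z$-module. The complex orientation of $H\Z$ gives $E\otimes\S^{-\lambda}[2]\simeq E^\triv$. And $E\otimes(\epsilon\circ[S^1])$ factors through an $H\Z$-module map $\bigoplus_{k\ge 0}H\Z[2k+1]\to H\Z$, hence vanishes.

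A workable route is:
\begin{enumerate}
\item Form the base change at the level of grouplike $E_\infty$-spaces with $S^1$-action and unstable Frobenius.
\item Identify the cofibre of the constant loops with $G^\triv$ using this $H\Z$-linearity, and observe that the induced unstable Frobenius is $[p]$.
\item Compare with the Tate Frobenius of $\S[G]$. Because the multiplication $G^{\times p}\to G$ is strictly $C_p$-equivariant, the space-level description of the Tate diagonal on suspension spectra shows this Frobenius is $\Sigma^\infty_+[p]$ followed by the canonical map to $\S[G]^{tC_p}$.
\end{enumerate}
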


\begin{remark}\label{rmk::thh_of_gruop_alg_for_infinite_cyc_grp}
    For $G = \Z$, we can write the $S^1$-action on $S^1\times \Z \simeq LB\Z$ as $t\cdot (s,n) = (t^ns, n)$ for $t\in S^1$ and $(s,n)\in S^1\times \Z$. 

    Under the $S^1$-equivariant equivalence $\THH(\S[\Z]) \simeq \S[S^1 \times \Z]$, the cyclotomic Frobenius lift $\THH(\S[\Z])\to \THH(\S[\Z])^{hC_p}$ is induced by the map \[S^1\times \Z \simeq \Map(S^1, B\Z) \to \Map(S^1, B\Z)^{hC_p} \simeq \Map(S^1/C_p, B\Z) \simeq S^1 \times \Z\]
    given by $(s, n) \mapsto (s, pn)$. Indeed, the map $z\mapsto sz^n$ upon pre-composition with the $p$-power map goes to $z\mapsto sz^{pn}$. This fixes a typo in \cite[Proposition 11.3]{bms2}. 
\end{remark}

We now make a few remarks about the transfer map in $\THH$ for group algebras of abelian groups, which is the key input in the proof of \cref{mainthm::desc_of_transfers}. Let $G$ be an abelian topological group and $H$ an open subgroup of finite index. Then $\S[G]\in \Perf(\S[H])$ so that we have the restriction of scalars map \[\Perf(\S[G])\to \Perf(\S[H]).\]
This map is moreover $\Perf(\S[H])$ linear. In particular, we get the $\THH(\S[H])$-linear transfer map \[\tr_{G/H}:\THH(\S[G]) \to \THH(\S[H]).\]
This map is explicitly described in \cite{schlichtkrull1998transfer, schlichtkrull2006transfer}. Rather than stating the result in full generality, we specialise to the case of infinite cyclic groups. 

\begin{proposition}[Schlichtkrull]\label{prop::explicit_thh_transfer_for_cyclic_groups}
    Under the $S^1$-equivariant equivalences of Remark \ref{rmk::thh_of_gruop_alg_for_infinite_cyc_grp}, the transfer map \[\tr_{\Z/n\Z} : \THH(\S[\Z])\to \THH(\S[n\Z])\] is given by \[\S[\Z\times S^1] \to \S[n\Z \times S^1], \quad (a, s) \mapsto \begin{cases}
        0 & n\nmid a, \\
        \sum_{j=0}^{n-1} (a, s_j) & n|a,
    \end{cases}\]
    where $\{s_0, ..., s_{n-1}\}$ is the preimage of $s\in S^1$ under the $n$-power map $S^1\to S^1$. 
\end{proposition}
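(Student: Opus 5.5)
We will compute the transfer $\tr_{\Z/n\Z}:\THH(\S[\Z])\to\THH(\S[n\Z])$ by first identifying it with a free loop transfer. Write $\S[\Z]\simeq \S[n\Z]\otimes_{\S}\S[\Z/n]$ only up to non-canonical splitting; more invariantly, $\S[\Z]$ is a free $\S[n\Z]$-module of rank $n$ with basis $\{1,x,\dots,x^{n-1}\}$. Since the restriction functor $\Perf(\S[\Z])\to\Perf(\S[n\Z])$ is induced by the bimodule ${}_{\S[n\Z]}\S[\Z]_{\S[\Z]}$, the induced map on $\THH$ is computed by the cyclic bar construction with this bimodule inserted. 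At the level of simplicial spaces this is the Becker--Gottlieb type transfer for the $n$-sheeted covering $B(n\Z)\to B\Z$, applied to free loop spaces: one obtains a map $\Sigma^\infty_+ LB\Z\to \Sigma^\infty_+ LB(n\Z)$ which is the stable transfer associated to the finite covering $L B(n\Z)\times_{LB\Z}(\text{lifts})$. Concretely I would follow Schlichtkrull's framework: realise $\THH(\S[G])$ as $\S[B^{\cyc}G]$, and show the transfer is induced by the pretransfer of the map of cyclic sets $B^{\cyc}(n\Z)\leftarrow \widetilde{B^\cyc}\to B^\cyc \Z$ where the middle object parametrises loops in $B\Z$ together with a lift to $B(n\Z)$ of their basepoint.

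Next I would analyse this covering componentwise. A free loop in $B\Z\simeq S^1$ of degree $a$ and basepoint $s$ lifts along the $n$-fold cover $B(n\Z)\to B\Z$ (the map $z\mapsto z^n$ on $S^1$ after identifying $B(n\Z)\simeq S^1$) to a closed loop if and only if $n\mid a$. Thus the fibre of the relevant map $L B(n\Z)\to LB\Z$ over the component $\{a\}\times S^1$ is empty when $n\nmid a$, and when $n\mid a$ it is an $n$-sheeted covering whose fibre over $(a,s)$ consists of the $n$ lifts $(a,s_j)$ with $s_j^n=s$ (here the degree is recorded in $n\Z$ as $a$, matching the convention of Remark \ref{rmk::thh_of_gruop_alg_for_infinite_cyc_grp}). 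The transfer of a finite covering of spaces on suspension spectra is, on the level of points, the sum over the fibre, giving exactly the formula $(a,s)\mapsto\sum_j(a,s_j)$ on the components with $n\mid a$ and zero otherwise.

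Finally I would check $S^1$-equivariance: the action $t\cdot(s,a)=(t^as,a)$ on the source and on the target must be compatible with taking $n$-th roots, which holds since lifting loops commutes with rotating the loop parameter. The main obstacle is the first step, namely rigorously identifying the categorical transfer on $\THH$ (defined via $\Perf$ and restriction of scalars) with the covering-space transfer on cyclic bar constructions including the $S^1$-structure; here I would simply invoke \cite{schlichtkrull1998transfer, schlichtkrull2006transfer}, where this comparison is proved for finite-index subgroups of discrete groups, and reduce the proposition to the elementary fibre count above.
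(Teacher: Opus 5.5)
Your proposal is correct and takes the same approach as the paper. The paper also gives no independent argument: it cites Schlichtkrull's identification of the $\THH$ transfer with the transfer of the covering $LB(n\Z)\to LB\Z$, i.e.\ the components of $LB\Z\times_{B\Z}B(n\Z)$ on which the lifted loop closes up, and specialises to $n\Z\subset\Z$ by exactly your count of lifts. One small correction: in the target coordinates the circle acts by $t\cdot(a,s)=(a,t^{a/n}s)$ rather than by $t^{a}$, and this is precisely what makes $(a,s)\mapsto\sum_j(a,s_j)$ equivariant.
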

\begin{remark}
    Here, the $S^1$-action on the right hand side of the decomposition $\THH(\S[n\Z]) \simeq \S[n\Z \times S^1]$ is given informally by \[t \cdot (a, s) = (a, t^{a/n} s)\]
    where $a\in n\Z$ and $s,t\in S^1$. 
\end{remark}
One checks rather easily that the above map as written is $S^1$-equivariant and commutes with Frobenius.

\subsubsection{$\THH$ of Cyclotomic Extensions}\label{sctn::thh_of_cyclotomic_extns}
In this subsection, we give an explicit description of the cyclotomic spectrum $\THH(\Z_p[\zeta_{p^n}])$, due to Devalapurkar--Raksit \cite{sanath_arpon_image_of_j}. These results are one of the key inputs in the proof of \cref{mainthm::desc_of_limit_of_THH_of_cyc}.
In order to give this explicit description, we need to recall some notation from \cite[Remark 6.1.5]{sanath_thesis}.
\begin{construction}
    For $n\ge 0$, define  \[\ku_{p,n} := \tau_{\ge 0} (\ku_p ^{hC_{p^{n}}})\]
    where $\ku_p$ has the trivial $C_{p^n}$-action. Here, $\ku_{p,0} := \ku_p$. There is a residual action of $S^1 / C_{p^n} \cong S^1$ on $\ku_{p,n}$ (it is the trivial circle action on $\ku_{p,0}=\ku_p$). This $S^1$-action on $\ku_{p,n}$ refines to a cyclotomic structure on $\ku_{p,n}$ as follows. 

    Since $\ku_{p}$ has the trivial $S^1$-action, we have an $S^1$-equivariant $E_\infty$-ring map $\ku_p \to \ku_p^{hC_p}$. Taking $hC_{p^n}$ fixed points followed by connective covers yields a map \[\ku_{p,n} = \tau_{\ge 0} \ku_p^{hC_{p^n}} \to \tau_{\ge 0} (\ku_p^{hC_p})^{hC_{p^n}} \simeq \tau_{\ge 0} (\ku_p^{hC_{p^n}})^{hC_p}.\] Since $\tau_{\ge 0} :\Sp\to \Sp_{\ge 0}$ is a right adjoint and so commutes with limits (computed in the appropriate category), we have \[\tau_{\ge 0} (\ku_p^{hC_{p^n}})^{hC_p} \simeq \tau_{\ge 0} \ku_{p,n}^{hC_p}.\] We then have the composition \[\ku_{p,n} \to \tau_{\ge 0} \ku_{p,n}^{hC_p} \to \ku_{p,n}^{hC_p} \to \ku_{p,n}^{tC_p}\]
    which defines the cyclotomic Frobenius on $\ku_{p,n}$. 
\end{construction}
Notice that the Adams operations induce an action of $\Z_p^\times$ on $\ku_{p, n}$, which is moreover compatible with the given cyclotomic structure. It is also clear that each $\ku_{p,n}$ is a $\ku_{p,n-1}$-algebra in cyclotomic spectra: indeed, we simply take $\tau_{\ge 0}(-)^{hC_{p^{n-1}}}$ of the $S^1$-equivariant map of $E_\infty$-rings $\ku_p \to \ku_p^{hC_p}$. This $\ku_{p,n-1}$-algebra structure on $\ku_{p,n}$ is also $\Z_p^\times$-equivariant. 

We now define the cyclotomic spectrum $ j_{n}$ as follows. 
\begin{construction}
    Define the cyclotomic spectrum 
    \[ j_{n} := \tau_{\ge 0} \left( \ku_{p,n}^{h(1+p^{n+1}\Z_p)} \right).\]
    This has an $S^1$-action induced by the $S^1$-action on $\ku_{p,n}$. The cyclotomic Frobenius of $ j_{n}$ is constructed as follows: recall that the cyclotomic Frobenius on $\ku_{p,n}$ factors through $\ku_{p,n}^{hC_p}$. Taking $(1+p^{n+1}\Z_p)$ fixed points and then taking connective covers yields \[ j_{n} \to \tau_{\ge 0} (\ku_{p,n}^{hC_p})^{h(1+p^{n+1}\Z_p)} \simeq \tau_{\ge 0} \left (\tau_{\ge 0}\ku_{p,n}^{h(1+p^{n+1}\Z_p)}\right)^{hC_p} \simeq \tau_{\ge 0}  j_{n}^{hC_p} \to  j_{n}^{hC_p} \to  j_{n}^{tC_p}.\]
\end{construction}
Since homotopy fixed points and connective covers are lax-symmetric monoidal, the canonical map $ j_n\to \ku_{p,n}$ is a map of cyclotomic $E_\infty$-rings. We also get a map of cyclotomic $E_\infty$-rings $ j_{n-1}\to  j_n$ given by taking connective covers of the composition \[\ku_{p,n-1}^{h(1+p^n\Z_p)} \to \ku_{p,n-1}^{h(1+p^{n+1}\Z_p)} \to \ku_{p,n}^{h(1+p^{n+1}\Z_p)},\]
where the first map is induced by restricting to fixed points of a smaller subgroup, and the second map is induced by the $\ku_{p,n-1}$-algebra structure map on $\ku_{p,n}$. Note that the $\Z_p^\times$-action on $\ku_{p,n}$ induces a residual $\Z_p^\times/(1+p^n\Z_p) \cong (\Z/p^{n+1}\Z)^\times$ action on $ j_n$.

The main result we will need is the following result, stated in \cite[Remark 6.1.5, Theorem 6.4.1]{sanath_thesis}. 
\begin{theorem}[{Devalapurkar--Raksit}]\label{thm::identification_of_THH_cyclotomic_extn_2}
    For $n\ge 1$, there is an equivalence of $\Gal(\Q_p(\zeta_{p^{n}}) /\Q_p) \cong (\Z/p^n\Z)^\times$-equivariant cyclotomic $E_\infty$-ring spectra \[\THH(\Z_p[\zeta_{p^{n}}]) \simeq  j_{n-1}^{(-1)}.\]
    Moreover, the $E_\infty$-cyclotomic ring map $\THH(\Z_p[\zeta_{p^n}]) \to \THH(\Z_p[\zeta_{p^{n+1}}])$ induced by functoriality of $\THH$ is the $(-)^{(-1)}$-functor of Construction \ref{cons::shift_operator_on_cycsp} applied to the $E_\infty$-ring map $ j_{n-1}\to  j_n$. 
\end{theorem}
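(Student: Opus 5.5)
The plan is to prove the statement in two stages. First I identify the \emph{relative} object $\THH(\Z_p[\zeta_{p^n}]/\S_p\llb q_n-1\rrb)$ with $\tau_{\ge 0}\ku_{p,n-1}^{tC_p}$. Then I descend to absolute $\THH$ by taking fixed points for a pro-$p$ group of Adams operations. The compatibility for $n\to n+1$ should fall out of naturality of each step.

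\textbf{Step 1 (equivariant homotopy of $\ku_{p,n}$).} I compute the homotopy of $\ku_{p,n}$ and its Tate construction. Since $\ku_p^{hS^1}\simeq \ku_p\llb t\rrb$ with $\pi_0$ the completed representation ring, I expect
\[\pi_0\ku_p^{hC_{p^{n}}}\cong \Z_p\llb q-1\rrb/(q^{p^{n}}-1) \quad\text{and}\quad \pi_0\ku_{p,n-1}^{tC_p}\cong \Z_p\llb q_n-1\rrb/[p]_{q_n}\cong \Z_p[\zeta_{p^n}].\]
Moreover $\pi_*\tau_{\ge 0}\ku_{p,n-1}^{tC_p}\cong \Z_p[\zeta_{p^n}][\beta]$ is polynomial on a Bott-type class in degree $2$. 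Adams operations act on $q$ by $q\mapsto q^a$, so they act on $\pi_0$ through the Galois action. Note that $\tau_{\ge0}(-)^{tC_p}$ is exactly the underlying spectrum of $(-)^{(-1)}$ (Construction \ref{cons::shift_operator_on_cycsp}), so this step records the underlying homotopy of $\ku_{p,n-1}^{(-1)}$.

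\textbf{Step 2 (relative comparison).} I use the universal property of $\THH$ of an $E_\infty$-ring as the initial $E_\infty$-ring with $S^1$-action under it. Relative to $\S_p\llb q_n-1\rrb$, this requires a map of $\S_p\llb q_n-1\rrb$-algebras $\Z_p[\zeta_{p^n}]\to \tau_{\ge0}\ku_{p,n-1}^{tC_p}$. I get it by sending $q_n$ to the class of the tautological character, then using that $\Z_p[\zeta_{p^n}]=\S_p\llb q_n-1\rrb/[p]_{q_n}$ on $\pi_0$, together with obstruction theory in the connective range. This produces an $S^1$-equivariant $E_\infty$-map
\[\THH(\Z_p[\zeta_{p^n}]/\S_p\llb q_n-1\rrb)\to \tau_{\ge0}\ku_{p,n-1}^{tC_p}.\]
To see it is an equivalence, I compare with the $q$-analogue of Bökstedt periodicity, $\pi_*\THH(\Z_p[\zeta_{p^n}]/\S_p\llb q_n-1\rrb)\cong \Z_p[\zeta_{p^n}][u]$ with $|u|=2$ (Krause--Nikolaus). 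It then suffices to check that $u\mapsto\beta$ up to a unit, which is a degree-$2$ computation via $\HH$ and $\L_{\Z_p[\zeta_{p^n}]/\Z_p\llb q_n-1\rrb}$. For the cyclotomic Frobenius I use the description of $\S_p\llb q-1\rrb^\Tate$ through Lemma \ref{lem::generalized_segal_conjecture_and_Tate_frobenius}, i.e.\ $q\mapsto q^p$, and check that it agrees on $\pi_0$ and on the Bott class. By evenness and Example \ref{ex::even_cyc_sp_becomes_synth}, that should pin down the $E_\infty$-cyclotomic map.

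\textbf{Step 3 (descent to absolute $\THH$).} By Theorem \ref{thm::THH_of_group_rings} and Remark \ref{rmk::thh_of_gruop_alg_for_infinite_cyc_grp}, $\THH(\S_p\llb q_n-1\rrb)\simeq \S_p[LB\Z_p]$ after $p$-completion. The relative theory is $\THH(R)\otimes_{\THH(\S_p\llb q_n-1\rrb)}\S_p\llb q_n-1\rrb^\Tate$. Killing the $\S[B\Z_p]$ factor in Theorem \ref{thm::THH_of_group_rings}(4) is Koszul dual to taking homotopy fixed points for a $\Z_p$-action. I expect to recover $\THH(\Z_p[\zeta_{p^n}])$ as $\tau_{\ge0}$ of the $(1+p^n\Z_p)$-homotopy fixed points, i.e.\ as the fibre of $\psi^{g^{p^{n-1}}}-\id$ (compare the rows of Theorem \ref{mainthm::desc_of_transfers}).

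On the $\ku$ side I then commute $(-)^{h(1+p^n\Z_p)}$ with $(-)^{tC_p}$ and $\tau_{\ge0}$. This uses that $1+p^n\Z_p\cong\Z_p$ has cohomological dimension $1$, so Lemma \ref{lem::orbits_and_inverse_lims_commute} applies, and gives $\tau_{\ge 0}\bigl(\ku_{p,n-1}^{h(1+p^n\Z_p)}\bigr)^{tC_p}$, which is $j_{n-1}^{(-1)}$. The residual $(\Z/p^n)^\times$-equivariance and the compatibility with $j_{n-1}\to j_n$ follow by naturality in $n$ of the maps in Steps 2 and 3.

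\textbf{Main obstacle.} The hard step is Step 2: upgrading the homotopy-level identification to an equivalence of \emph{cyclotomic} $E_\infty$-rings that intertwines the $\S_p\llb q_n-1\rrb$-action on $\THH$ with Adams operations on $\ku$. My first attempt would be to construct the map at the level of $\S_p\llb q_n-1\rrb^\Tate$-algebras, using that $\ku_p^{hS^1}$ receives a canonical map from $\S_p\llb q-1\rrb$ via the tautological line bundle on $BS^1$. Frobenius compatibility would then reduce to the behaviour of this map on $tC_p$, namely the Segal-conjecture-type identification of Lemma \ref{lem::generalized_segal_conjecture_and_Tate_frobenius}.
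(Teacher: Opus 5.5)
The paper does not reprove this statement (it is quoted from Devalapurkar--Raksit, via Devalapurkar's thesis, Remark~6.1.5 and Theorem~6.4.1). Your outline therefore has to stand on its own, and it has two genuine gaps.

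\textbf{Step 3 is false as stated.} Grant Step 2, so that $\THH(\Z_p[\zeta_{p^n}]/\S_p\llb q_n-1\rrb)\simeq\ku_{p,n-1}^{(-1)}$ equivariantly.

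\begin{itemize}
\item In $\pi_*\ku_{p,n-1}^{(-1)}=\Z_p[\zeta_{p^n}][u]$ the Bott class is $\beta=(\zeta_p-1)u$. A topological generator $g$ of $1+p^n\Z_p$ fixes $\zeta_p$, so $\psi^g(u)=gu$.
\item Hence for $i\ge1$ the fibre of $\psi^g-1$ has $\pi_{2i-1}\cong\Z_p[\zeta_{p^n}]/(ip^n)$.
\item By contrast, $\THH_{2i-1}(\Z_p[\zeta_{p^n}])\cong\Z_p[\zeta_{p^n}]/(ip^n/(\zeta_p-1))$ is governed by the inverse different. Already $\THH_1(\Z_p[\zeta_p])=\Omega^1_{\Z_p[\zeta_p]/\Z_p}$ has length $p-2$, not $p-1$.
\end{itemize}

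So $\tau_{\ge0}$ of the $(1+p^n\Z_p)$-fixed points is neither $\THH(\Z_p[\zeta_{p^n}])$ nor $j_{n-1}^{(-1)}$.

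On the $\ku$ side, the faulty step is commuting $\tau_{\ge0}$ past $(-)^{tC_p}$. The spectrum $j_{n-1}$ is the connective cover of $\ku_{p,n-1}^{h(1+p^n\Z_p)}$. Taking that cover throws away $\pi_{-1}\cong\pi_0\ku_{p,n-1}$, because the Adams operations act trivially on $\pi_0$. The resulting cofibre $(\Sigma^{-1}H\pi_0\ku_{p,n-1})^{tC_p}$ is nonzero in every odd degree. \cref{lem::orbits_and_inverse_lims_commute} only commutes $(-)^{tC_p}$ with limits, not with truncations.

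The correct descent statement is \cref{cor::-1-twist_of_fibre_seq_computing_jpn}. There the map $\psi^g_{\circ\circ}$ is $\psi^g-1$ divided by the Bott class. It lands in the twist $\ku_{p,n-1}^{(-1)}(1)[2]=\tau_{\ge2}((\tau_{\ge2}\ku_{p,n-1})^{tC_p})$, whose degree-$2i$ homotopy is only the submodule $(\zeta_p-1)u^i$. The rows of \cref{mainthm::desc_of_transfers}, which you cite, also land in this twist.

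The matching fibre sequence for $\THH(\Z_p[\zeta_{p^n}])$ is a substantive part of the theorem. In the paper it is deduced \emph{from} the theorem (second half of \cref{cor::-1-twist_of_fibre_seq_computing_jpn}), and it does not follow from a heuristic Koszul duality with a $\Z_p$-action.

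The same non-primitivity of $\beta$ affects your earlier steps. Your ``$u\mapsto\beta$ up to a unit'' in Step 2 is wrong: $u$ must hit the generator $\beta/(\zeta_p-1)$. In Step 1 the relevant relation is $\Phi_{p^n}(q_n)=[p]_{q_1}$, since $\Z_p\llb q_n-1\rrb/[p]_{q_n}\cong\Z_p[\zeta_p]$.

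\textbf{Step 2 does not construct the comparison map.} Everything rests on an $E_\infty$-$\S_p\llb q_n-1\rrb$-algebra map $\Z_p[\zeta_{p^n}]\to\tau_{\ge0}\ku_{p,n-1}^{tC_p}$. By the universal property this is equivalent to the $S^1$-equivariant map you are after, so it is the heart of the relative statement rather than a preliminary.

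``Obstruction theory in the connective range'' does not supply this map. The obstructions lie in topological Andr\'e--Quillen cohomology of the discrete ring $\Z_p[\zeta_{p^n}]$ over $\S_p\llb q_n-1\rrb$, with coefficients in the groups $\pi_{2k}$, which are nonzero for all $k$. You give no reason for these obstructions to vanish, and the presentation $\Z_p\llb q_n-1\rrb/\Phi_{p^n}(q_n)$ is only a statement about $\pi_0$.

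Similarly, agreement ``on $\pi_0$ and on the Bott class'' does not produce the homotopy $\varphi\circ f\simeq f^{tC_p}\circ\varphi$ that makes $f$ cyclotomic. Nor does it give coherent $\Z_p^\times$-equivariance or compatibility in $n$. Evenness (\cref{ex::even_cyc_sp_becomes_synth}) does not make the relevant $E_\infty$ mapping spaces discrete.

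So as written, the proposal proves neither the relative identification nor the descent to absolute $\THH$.
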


One also has the following statement on relative $\THH$, the case $n=1$ of which is already \cite[Theorem 6.4.1]{sanath_thesis}.
\begin{corollary}[Devalapurkar]\label{cor::rel_thh_of_cyc_extns}
    For any $n\ge 1$, view $\Z_p[\zeta_{p^n}]$ as a $\S_p\llb q_n-1\rrb$-algebra via the map $q_n\mapsto \zeta_{p^n}$. Then, there is an equivalence of $\Z_p^\times$-equivariant cyclotomic $E_\infty$-ring spectra \[\THH(\Z_p[\zeta_{p^n}]/\S_p\llb q_n-1\rrb) \simeq \ku_{p,n-1}^{(-1)},\]
    compatible with the equivalences $\THH(\Z_p[\zeta_{p^n}])\simeq j_{n-1}^{(-1)}$. 
\end{corollary}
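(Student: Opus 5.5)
The plan is to deduce the corollary from \cref{thm::identification_of_THH_cyclotomic_extn_2} by base change. We use the formula defining the relative cyclotomic structure,
\[\THH(\Z_p[\zeta_{p^n}]/\S_p\llb q_n-1\rrb) \simeq \THH(\Z_p[\zeta_{p^n}]) \otimes_{\THH(\S_p\llb q_n-1\rrb)} \S_p\llb q_n-1\rrb^\Tate.\]
The task is to identify each term on the right with something built from $j_{n-1}$ and $\ku_{p,n-1}$.

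First, compute $\THH(\S_p\llb q_n-1\rrb)\simeq \varprojlim_m \THH(\S_p[\Z/p^m])$ and its map to $\S_p\llb q_n-1\rrb^\Tate$. In $p$-complete spectra, $\S_p\llb q_n-1\rrb\simeq \S_p[\Z_p]$, which is equivalently $\S_p[\Z]$ completed along $q_n-1$. By \cref{thm::THH_of_group_rings}(4) together with \cref{lem::generalized_segal_conjecture_and_Tate_frobenius}, we get
\[\S_p\llb q_n-1\rrb^\Tate\simeq \THH(\S_p\llb q_n-1\rrb)\otimes_{\S_p[B\Z_p]^\triv}\S_p^\triv.\]
Since $B\Z_p\simeq S^1$ $p$-adically, this gives
\[\THH(\Z_p[\zeta_{p^n}]/\S_p\llb q_n-1\rrb)\simeq \THH(\Z_p[\zeta_{p^n}])\otimes_{\S_p[S^1]^\triv}\S_p^\triv,\]
where the $\S_p[S^1]^\triv$-module structure comes from the assembly map $\S[B\Z]\to\THH(\S[\Z])\to \THH(\Z_p[\zeta_{p^n}])$ via $q_n\mapsto\zeta_{p^n}$. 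This is an equivalence of cyclotomic $E_\infty$-rings, and it is $\Z_p^\times$-equivariant because $\Z_p^\times$ acts compatibly on $\S_p[\Z_p]$ and on $\Z_p[\zeta_{p^n}]$.

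Next, on the $\ku$ side, I would show
\[\ku_{p,n-1}\simeq j_{n-1}\otimes_{\S_p[S^1]}\S_p,\]
with the $\S_p[S^1]=\S_p[B\Z_p]$-module structure on $j_{n-1}$ given by the class $B(1+p^n\Z_p)\to j_{n-1}^\times$. The input is that for a $\Z_p$-action on $E=\ku_{p,n-1}$ one has $E^{h\Z_p}\otimes_{\S[B\Z_p]}\S\simeq E$ (Galois descent / the fiber sequence $E^{h\Z_p}\to E\xrightarrow{\psi^g-1}E$), after passing to connective covers. The step I expect to be the main obstacle is checking that this module structure agrees, under \cref{thm::identification_of_THH_cyclotomic_extn_2}, with the one coming from $q_n$, and that the identification is compatible with $(-)^{(-1)}$.

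For the $(-)^{(-1)}$ compatibility, note that $\tau_{\ge0}(-)^{tC_p}$ commutes with $\otimes_{\S_p[S^1]^\triv}\S_p^\triv$. This is because the cofiber of the augmentation is $\S_p[S^1]$-finite ($\S\to\S[S^1]$ has cofiber given by a shift, so the relevant bar constructions are finite), and $(-)^{tC_p}$ is exact. One also has to control the connective cover, for which I would use evenness of $\ku_{p,n-1}^{tC_p}$. To pin down where $q_n$ goes, I would compare at the level of $\pi_0(-)^{hS^1}$, where the class of $q_n$ maps to $\zeta_{p^n}$, matching Devalapurkar's computation for $n=1$. Once these points are settled, compatibility with the equivalence for $\THH(\Z_p[\zeta_{p^n}])$ holds by construction.
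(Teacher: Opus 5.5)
Your first step is the right reduction, and it is the route implicit in the paper (which gives no argument of its own beyond attributing the corollary to Devalapurkar). By \cref{thm::THH_of_group_rings}(4) for the discrete group $\Z$, after replacing $\S_p\llb q_n-1\rrb$ by $\S_p[q_n^{\pm1}]$, the corollary amounts to an equivalence of cyclotomic $E_\infty$-rings
\[j_{n-1}^{(-1)}\otimes_{\S_p[B\Z]^\triv}\S_p^\triv\simeq\ku_{p,n-1}^{(-1)}.\]
Here $\S_p[B\Z]^\triv$ acts on $j_{n-1}^{(-1)}\simeq\THH(\Z_p[\zeta_{p^n}])$ through the assembly map of $q_n\mapsto\zeta_{p^n}$, and this is the outer square of \cref{lem::some_pushouts_of_cyc_spectra}. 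The replacement of $\S_p\llb q_n-1\rrb$ by $\S_p[q_n^{\pm1}]$ is harmless $p$-adically by the remark after the definition of relative $\THH$. Your formula $\THH(\S_p\llb q_n-1\rrb)\simeq\varprojlim_m\THH(\S_p[\Z/p^m])$ is not needed, and would need proof since $\THH$ does not commute with inverse limits.

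The gap is in how you prove this equivalence. The principle ``$E^{h\Z_p}\otimes_{\S[B\Z_p]}\S\simeq E$'' is not a form of Galois descent and is false in general. For $E=H\Z_p$ with trivial action, $E^{h\Z}\simeq H\Z_p\oplus H\Z_p[-1]$. The only $E_\infty$-map from $\S[B\Z]$ to it is the augmentation, and the base change is $E^{h\Z}\otimes\S[\C\P^\infty]\not\simeq E$. Nor is there a tautological $E_\infty$-map $B(1+p^n\Z_p)\to j_{n-1}^\times$ attached to a homotopy fixed point spectrum.

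What makes the equivalence true is specific structure:
\begin{enumerate}
\item The map from $\S_p[B\Z]$ factors through $\S_p[BC_{p^n}]$, via assembly for $\S_p[C_{p^n}]\to\Z_p[\zeta_{p^n}]$.
\item The cofibre sequence $p^{-n}\Z[1]\to C_{p^n}[1]\to\Z[2]$, together with the Bott map $\S_p[B^2\Z]\to\ku_{p,n-1}^{(-1)}$ adjoint to $\beta=(q_1-1)u$, supplies the $E_\infty$-null-homotopy needed even to define a comparison map.
\item One then checks the map on mod $p$ homotopy. There $\pi_*(j_{n-1}^{(-1)}/p)\cong\pi_*(\ku_{p,n-1}^{(-1)}/p)\otimes_{\F_p}\Lambda_{\F_p}(\epsilon)$ is free over the degree-one class $\epsilon$ picked out by $B\Z$ (compare the proof of \cref{lem::stable_transfer_compatible_with_adams_op_transfer}).
\end{enumerate}
Working with the transported assembly structure from the start also dissolves your ``main obstacle'', since the matching of module structures is then built in. By contrast, checking $q_n\mapsto\zeta_{p^n}$ on $\pi_0(-)^{hS^1}$ could not determine an $E_\infty$-map out of $\S_p[B\Z]$, which is degree-one data. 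It certainly could not determine its cyclotomic and $\Z_p^\times$-equivariant structure.

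Second, even granting an untwisted equivalence $\ku_{p,n-1}\simeq j_{n-1}\otimes_{\S_p[S^1]}\S_p$, your passage to $(-)^{(-1)}$ does not work as argued. $\S_p$ is not a perfect $\S_p[S^1]$-module: already $\S_p\otimes_{\S_p[S^1]}\S_p\simeq\S_p[\C\P^\infty]$ is infinite, and finiteness of $\S_p[S^1]$ over $\S_p$ says nothing about $\S_p$ over $\S_p[S^1]$. So the bar construction is infinite. $(-)^{tC_p}$ does not commute with such colimits even for connective inputs. For example, with trivial action $(\bigoplus_k\Sigma^kH\F_p)^{tC_p}\simeq\prod_k(\Sigma^kH\F_p)^{tC_p}$, which differs from $\bigoplus_k(\Sigma^kH\F_p)^{tC_p}$ on $\pi_0$.

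Likewise $\tau_{\ge0}$ does not commute with $-\otimes_{\S_p[S^1]}\S_p$, which moves negative homotopy into positive degrees. For instance, $\Sigma^{-1}\S_p$ with the augmentation structure goes to $\Sigma^{-1}\S_p[\C\P^\infty]$. Evenness of $\ku_{p,n-1}^{tC_p}$ also says nothing about $j_{n-1}^{tC_p}$, which has odd homotopy. Since the object you actually need is $j_{n-1}^{(-1)}\simeq\THH(\Z_p[\zeta_{p^n}])$, the pushout should be established directly in the twisted setting, as \cref{lem::some_pushouts_of_cyc_spectra} does, rather than deduced from an untwisted one.
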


\begin{remark}\label{rmk::on_conventions_for_frob_twists}
    The reader may have noticed that we have switched convention from the introduction, as we now consider $\Z_p[\zeta_{p^n}]$ as a $\S_p\llb q_n-1\rrb$-algebra via $q_n \mapsto \zeta_{p^n}$, unlike in the introduction (especially the statement of Theorem \ref{mainthm::desc_of_limit_of_THH_of_cyc} and its corollaries) where we use the convention that $\Z_p[\zeta_p]$ is a $\S_p\llb q-1\rrb$-algebra via $q\mapsto \zeta_p$. This is related to Frobenius twists. 
    
    Indeed, the Frobenius map $\varphi:\Z_p\llb q-1\rrb \to \Z_p\llb q-1\rrb$ as also the inclusion $\Z_p\llb q-1\rrb \to \Z_p\llb q_1-1\rrb$. The Frobenius twist $\Z_p\llb q-1\rrb^{(1)}= \varphi^*\Z_p\llb q-1\rrb$, after restricting scalars along Frobenius (i.e. applying $\varphi_*$), identifies with $\Z_p\llb q_1-1\rrb$ as a $\Z_p\llb q-1\rrb$-algebra. 
    
    One has \[\pi_0\ku_{p,0}^{(-1),tS^1} \simeq \prism_{\Z_p[\zeta_p]/\Z_p\llb q-1\rrb}^{(1)}.\]
    In particular, it is a $\Z_p\llb q-1\rrb^{(1)}$-algebra.
    Since we will need to often apply $\varphi_*\varphi^*$, we just make the convention that $\Z_p\llb q-1\rrb^{(1)} =: \Z_p\llb q_1-1\rrb$. In particular, from here on out ( especially in \cref{sctn::trace_for_cyclotomic_tower}) we will use the convention that $\Z_p[\zeta_{p^n}]$ is a $\S_p\llb q_n-1\rrb$-algebra via $q_n \mapsto \zeta_{p^n}$
\end{remark}

One can also calculate $\THH(\Z_p^\cyc)$ as a cyclotomic spectrum. This is in fact much easier than the preceding theorem and follows by the exact same argument as in \cite[Proposition 11.7, Corollary 11.8]{bms2}, using that $\THH(\Z_p[\zeta_p]/\S_p\llb q-1\rrb) \simeq \ku_p^{(-1)}$.
\begin{corollary}[Devalapurkar--Raksit]\label{lem::thh_of_Zp_cyc}
    There is a $\Z_p^\times$-equivariant equivalence of cyclotomic $E_\infty$-ring spectra \[\THH(\Z_p^\cyc) \simeq \ku_p^{(-1)} \otimes_{\S_p\llb q_1-1\rrb^\Tate } \S_p\llb q^{1/p^\infty}-1\rrb^\Tate \]
    compatible with the equivalences $\THH(\Z_p[\zeta_{p^n}]) \simeq  j_{n-1}^{(-1)}$. Here, \[\S_p\llb q^{1/p^\infty}-1\rrb := \left(\colim_n \S_p\llb q_n-1\rrb\right)^\wedge_{(p,q-1)}\] 
\end{corollary}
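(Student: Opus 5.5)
We follow the strategy of \cite[Proposition 11.7, Corollary 11.8]{bms2}, as indicated just before the statement. Write $\Z_p^\cyc \simeq \left(\colim_n \Z_p[\zeta_{p^n}]\right)^\wedge_p$. Since $\THH$ commutes with filtered colimits and with $p$-completion (in the appropriate sense), we get
\[\THH(\Z_p^\cyc) \simeq \left(\colim_n \THH(\Z_p[\zeta_{p^n}])\right)^\wedge_p,\]
and this is an equivalence of cyclotomic spectra. The plan is to identify each term of this colimit with a base change of $\ku_p^{(-1)}$, compatibly with the transition maps, and then pass to the colimit.

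The key algebraic input is a base change statement. For each $n\ge 1$, the map $\S_p\llb q_1-1\rrb\to \S_p\llb q_n-1\rrb$ is a finite free map of $E_\infty$-rings. Moreover,
\[\Z_p[\zeta_{p^n}]\simeq \Z_p[\zeta_p]\otimes_{\S_p\llb q_1-1\rrb}\S_p\llb q_n-1\rrb,\]
since on $\pi_0$ both sides are $\Z_p[\zeta_p][x]/(x^{p^{n-1}}-\zeta_p)$ and the right side is discrete by flatness. Symmetric monoidality of $\THH$ then gives an equivalence of $\Z_p^\times$-equivariant cyclotomic rings
\[\THH(\Z_p[\zeta_{p^n}]/\S_p\llb q_n-1\rrb)\simeq \THH(\Z_p[\zeta_p]/\S_p\llb q_1-1\rrb)\otimes_{\S_p\llb q_1-1\rrb^\Tate}\S_p\llb q_n-1\rrb^\Tate.\]
For this, relative $\THH$ is defined as $\THH(-)\otimes_{\THH(\S\llb q_n-1\rrb)}\S\llb q_n-1\rrb^\Tate$, and $\THH(\S_p\llb q_n-1\rrb)\otimes_{\THH(\S_p\llb q_1-1\rrb)}\S_p\llb q_1-1\rrb^\Tate\simeq \S_p\llb q_n-1\rrb^\Tate$ because the map $\S_p\llb q_1-1\rrb\to\S_p\llb q_n-1\rrb$ is finite \'etale after inverting nothing relevant on $\THH$ of the group ring. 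Concretely, one uses \cref{thm::THH_of_group_rings}(4) for $\Z_p\supset p^{n-1}\Z_p$.

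Taking the colimit over $n$ and $p$-completing, the absolute side becomes $\THH(\Z_p^\cyc)$. Here we compare absolute and relative $\THH$ via
\[\THH(\Z_p^\cyc)\to \THH(\Z_p^\cyc/\S_p\llb q^{1/p^\infty}-1\rrb).\]
This map is an equivalence after $p$-completion, because $\THH(\S_p\llb q^{1/p^\infty}-1\rrb)\to \S_p\llb q^{1/p^\infty}-1\rrb^\Tate$ is a $p$-adic equivalence. Indeed, $\L$ of a perfect $\delta$-ring vanishes $p$-adically, and on group rings this is the statement that $LB(\Z[1/p])\to B\Z[1/p]$ is a $p$-adic equivalence on suspension spectra, using \cref{thm::THH_of_group_rings}(1),(3). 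Combining these, the colimit of the relative identifications, with $\THH(\Z_p[\zeta_p]/\S_p\llb q_1-1\rrb)\simeq \ku_{p,0}^{(-1)}=\ku_p^{(-1)}$ from \cref{cor::rel_thh_of_cyc_extns}, gives the claimed equivalence, since colimits commute with the relative tensor product.

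The compatibility with $\THH(\Z_p[\zeta_{p^n}])\simeq j_{n-1}^{(-1)}$ comes from naturality of the maps absolute $\to$ relative together with the compatibility in \cref{cor::rel_thh_of_cyc_extns}. $\Z_p^\times$-equivariance holds because every step is functorial for the Galois action, which acts on $\S_p\llb q_n-1\rrb$ compatibly.

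The main obstacle is the $p$-adic vanishing statement $\THH(\S_p\llb q^{1/p^\infty}-1\rrb)\simeq \S_p\llb q^{1/p^\infty}-1\rrb^\Tate$ as \emph{cyclotomic} spectra, together with commuting the completions with the colimit. For the first, we would use \cref{lem::generalized_segal_conjecture_and_Tate_frobenius} to match the Frobenii. For the second, the cyclotomic structures on the colimit are controlled by \cref{lem::orbits_and_inverse_lims_commute}-type arguments for connective objects.
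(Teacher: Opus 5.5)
Your strategy is the one the paper intends. It is the argument of \cite[Proposition 11.7, Corollary 11.8]{bms2}: identify $\THH(\Z_p^\cyc)$ $p$-adically with $\THH$ relative to the perfect base $\S_p\llb q^{1/p^\infty}-1\rrb$, then recognise the latter as a base change of $\THH(\Z_p[\zeta_p]/\S_p\llb q_1-1\rrb)\simeq\ku_p^{(-1)}$. Passing through the finite levels first is only a cosmetic variation.

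However, one justification you give is false and must be removed. Write $A=\S_p\llb q_1-1\rrb$ and $B=\S_p\llb q_n-1\rrb$.
\begin{itemize}
\item The object $\THH(B)\otimes_{\THH(A)}A^\Tate$ is $\THH(B/A)$. For $n\ge 2$ it is \emph{not} $B^\Tate$: by \cref{thm::THH_of_group_rings}(4),(5) applied to $p^{-1}\Z\subset p^{-n}\Z$, it is $B\otimes\S_p[BC_{p^{n-1}}]$.
\item This reflects that $q_1=q_n^{p^{n-1}}$ is ramified at $p$. After base change to $\Z$ its $\pi_1$ is $\Omega^1_{\Z_p\llb q_n-1\rrb/\Z_p\llb q_1-1\rrb}\cong\Z_p\llb q_n-1\rrb/p^{n-1}\neq 0$.
\item Fortunately you never need this claim. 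Symmetric monoidality gives $\THH(\Z_p[\zeta_{p^n}])\simeq\THH(\Z_p[\zeta_p])\otimes_{\THH(A)}\THH(B)$, hence $\THH(\Z_p[\zeta_{p^n}])\otimes_{\THH(B)}B^\Tate\simeq\THH(\Z_p[\zeta_p])\otimes_{\THH(A)}B^\Tate\simeq\THH(\Z_p[\zeta_p]/A)\otimes_{A^\Tate}B^\Tate$. This uses only naturality of $\THH(-)\to(-)^\Tate$.
\item The same formal argument works with $B$ replaced by $\S_p\llb q^{1/p^\infty}-1\rrb$, so you can skip the finite levels altogether.
\end{itemize}

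Second, the group-ring statement is misquoted. $LB\Z[1/p]\to B\Z[1/p]$ is not a $p$-adic equivalence on suspension spectra. Its target is $p$-adically $\S_p$, because $B\Z[1/p]$ has trivial reduced mod $p$ homology, but its source is not. What is true, for exactly that reason, is that the collapse map $\S[LBG]\simeq\S[G]\otimes\S[BG]\to\S[G]$ of \cref{thm::THH_of_group_rings}(4) is a $p$-adic equivalence for $G=\Z[1/p]$.

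This handles only the uncompleted colimit $\colim_n\S_p\llb q_n-1\rrb$. The statement involves its $(p,q-1)$-completion, and $\THH$ does not formally commute with that completion. Two ways to close this:
\begin{itemize}
\item Make your remark about perfect rings precise, which treats the completed ring directly. $\THH(\S_p\llb q^{1/p^\infty}-1\rrb)\otimes_{\S}\F_p\simeq\HH(k/\F_p)$, where $k=\pi_0\S_p\llb q^{1/p^\infty}-1\rrb/p$ is perfect. Then $\HH(k/\F_p)\simeq k$ because $\L_{k/\F_p}=0$.
\item Alternatively, note that $q=q_1^p\mapsto 1$ in $\pi_0\ku_p^{(-1)}=\Z_p[\zeta_p]$. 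So $q-1$ acts by zero on the homotopy of every $\ku_p^{(-1)}$-module, and $p$-completed base change along $\colim_nB_n\to\S_p\llb q^{1/p^\infty}-1\rrb$ changes nothing.
\end{itemize}

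Finally, matching Frobenii needs no Segal-conjecture input. The map $\THH(R)\to R^\Tate$ is already a map of cyclotomic spectra, so an underlying $p$-adic equivalence is automatically a cyclotomic one. Likewise, colimits in $\CycSp$ are computed on underlying spectra, so \cref{lem::orbits_and_inverse_lims_commute} plays no role.

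With these repairs your argument is complete and agrees with the paper's.
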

We end with some remarks on motivic filtrations. 

\begin{remark}
    The motivic filtration on $\THH(\Z_p[\zeta_{p^n}]/\S_p\llb q_{n}-1\rrb) \simeq \ku_{p,n}^{(-1)}$ is given by the even filtration, which identifies with the double-speed Postnikov filtration.

    For absolute $\THH$, the motivic filtration is given by \[\Fil_\mot^{\ge *} \THH(\Z_p[\zeta_{p^n}]) \simeq \tau_{2*-1}  j_{n-1}^{(-1)}\]
    by \cite[Corollary 1.4]{morin2024topological}.
\end{remark}

\section{Transfer Maps for Prismatic Cohomology}\label{sctn::transfer_maps_and_construction_of_trace}
In \cref{sctn::transfer_map_for_finite_extension}, we prove that the transfer maps on the localising invariants considered in \cref{sctn::zoo_of_coh_thries} respect the motivic filtration when $f:X\to Y$ is a finite locally free regular map of $p$-adic formal schemes. As a consequence, we obtain the existence result in \cref{mainthm::prismatic_trace}. The rest of this section is then devoted to proving \cref{mainthm::prismatic_trace}.

\subsection{Transfer Maps for Finite Extensions}\label{sctn::transfer_map_for_finite_extension}
We put ourselves in the following situation. 
\begin{assumption}\label{ass::nice_filtered_localising_invariants}
    Let $\mc F$ be a localising invariant $\Cat_\infty^\perf\to \Sp$ equipped with a functorial exhaustive filtration $$\Fil^\bullet \mc F : \CAlg_\Z^\heart \to \CAlg(\Fil \Sp)$$ satisfying the following conditions:
    \begin{enumerate}
        \item\label{ass_part::gysin_descent} $\Fil^\bullet\mc F$ satisfies descent for the Nisnevich topology; in particular, by Zariski descent, we can view $\Fil^\bullet \mc F:\mr{Sch}^\op \to \CAlg(\Fil\Sp)$.
        \item\label{ass_part::gysin_chern_class} There is a `Chern class' map $$c: \Pic \to \Fil^1\mc F$$ equipped with the data of the following commutative diagram
        \[\begin{tikzcd}
            \Pic \ar{rr}{\mc L\mapsto \O - \mc L^\vee} \ar{d}{c} && K \ar{d}\\
            \Fil^1\mc F\ar{rr}&& \mc F
        \end{tikzcd}\]
        where the right vertical arrow is the canonical map $K\to \mc F$ coming from universality of algebraic $K$-theory as a localizing invariant \cite{blumberg2013universal}. 
        \item\label{ass_part::gysin_proj_bdle_formula}  For all $R\in \CAlg_\Z^\heart$ and all $r\ge 0$, the $\Fil^\bullet \mc F(R)$-linear map \[\sum_{i=0}^r c(\O_{\P^n_R}(1))^i : \bigoplus_{i=0}^r \Fil^{\bullet - i}\mc F(R) \to \Fil^\bullet \mc F(\P^r_R) \]
        is an equivalence; in other words, $\Fil^\bullet \mc F$ satisfies a filtered projective bundle formula. 
    \end{enumerate}
\end{assumption}
\begin{remark}\label{rmk::no_need_for_p-completions_in_motivic_spectra}
    Over here, we have not required the input rings $R$ to be $p$-completed. However, in all of the examples of $\mc F$ that we will care about, one has an equivalence $\Fil^\bullet \mc F(R) \simeq \Fil^\bullet \mc F(R^\wedge_p)$.
\end{remark}

Notice that all of the filtered localising invariants listed in \cref{sctn::zoo_of_coh_thries} satisfy Assumption \ref{ass::nice_filtered_localising_invariants}:
\begin{enumerate}
    \item All of the filtrations are obtained by the sheafification in a topology finer than the Nisnevich topology (\'etale in the case of $L_{K(1)}\TC(-) \simeq L_{K(1)}K(-[\tfrac 1p])$, quasi-syntomic for the rest) of the double-speed Postnikov filtration, so that Assumption \ref{ass::nice_filtered_localising_invariants} (\ref{ass_part::gysin_descent}) holds. 
    
    \item Assumption \ref{ass::nice_filtered_localising_invariants} (\ref{ass_part::gysin_chern_class}) holds for $\Fil^\bullet_\mot \TC$ essentially by construction (see also \cite[Proposition 6.7]{atiyah_duality_motivic_spectra}). The remaining examples of $\Fil^\bullet \mc F$ in \cref{sctn::zoo_of_coh_thries} admit maps from $\Fil_\mot^\bullet \TC$, and one defines their Chern class map to be the composition \[\Pic \to \Fil^1_\mot\TC\to \Fil^1_\mot \mc F.\]
    In particular, they trivially satisfy Assumption \ref{ass::nice_filtered_localising_invariants} (\ref{ass_part::gysin_chern_class}). 
    
    \item For Assumption \ref{ass::nice_filtered_localising_invariants} (\ref{ass_part::gysin_proj_bdle_formula}), if $\mc F\ne L_{K(1)}\TC$ then the filtration is complete and on associated graded the projective bundle formula is given in \cite[Section 9.1]{BL_absolute_prismatic_coh}. 
    
    It remains to check for $L_{K(1)}\TC$. On associated graded, we have \[\R\Gamma_\et(X_\eta, \Z_p(r)) \simeq \fib\left(\varphi - \id : \prism_X\{r\}[\tfrac{1}{\mc I_\prism}]^\wedge_p \to \prism_X\{r\}[\tfrac{1}{\mc I_\prism}]^\wedge_p\right)\]
    and since prismatic cohomology satisfies the projective bundle formula, \'etale cohomology must too. At the level of underlying objects, this follows from $K(1)$-localisation of the projective bundle formula for $\TC$. Since the associated graded functor together with the underlying object functor are jointly conservative, it follows that $\Fil_\mot^\bullet L_{K(1)}\TC$ satisfies it too. 
\end{enumerate}

We will need the following result. 
\begin{theorem}[{\cite[Construction 3.1, Section 4]{tang2026_gysin}}]\label{thm::gysin_map_in_decompleted_TP_preserves_filn}
    Let $\Fil^\bullet\mc F$ satisfy Assumption \ref{ass::nice_filtered_localising_invariants}. 
    Let $X$ be a bounded quasi-syntomic $p$-adic formal scheme and $i : Z\inj X$ a regular closed immersion of codimension $r$. 
    
    Then, there is a canonical map 
    \begin{equation}\label{eqn::filt_gysin_map}
        \Fil^\bullet \mc F(Z) \to \Fil^{\bullet+r} \mc F(X)
    \end{equation}
    of filtered spectra, such that on underlying spectra (i.e. taking the colimit as $\bullet \to \infty$) the induced map $\mc F(Z)\to \mc F(X)$ canonically identifies with \[\mc F(Z) \simeq \mc F(\Perf(Z)) \xrightarrow{\mc F(i_*)} \mc F(\Perf(X)) \simeq \mc F(X).\]

    Moreover, the filtered map (\ref{eqn::filt_gysin_map}) is linear over $\Fil^\bullet \mc F(X)$.
\end{theorem}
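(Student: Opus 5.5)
The plan is to build the filtered Gysin map as a composite
\[\Fil^\bullet \mc F(Z) \xrightarrow{\ \simeq\ } \Fil^{\bullet+r}\mc F_Z(X) \to \Fil^{\bullet+r}\mc F(X).\]
Here $\Fil^\bullet \mc F_Z(X) := \fib\big(\Fil^\bullet \mc F(X)\to \Fil^\bullet\mc F(X\setminus Z)\big)$ is filtered cohomology with supports. The first arrow is a filtered purity (Thom) equivalence, and the second is the evident forgetful map. Linearity over $\Fil^\bullet\mc F(X)$ is then formal, since every object in sight is a $\Fil^\bullet\mc F(X)$-module through pullback along $Z\to X$, $X\setminus Z\to X$, etc. Because $\mc F$ is a localising invariant, the underlying object of $\Fil^\bullet\mc F_Z(X)$ is $\mc F(\Perf_Z(X))$, by the Thomason--Trobaugh localisation sequence $\Perf_Z(X)\to\Perf(X)\to\Perf(X\setminus Z)$.

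\textbf{Step 1: the linear model.} Let $V\to Z$ be a vector bundle of rank $r$ and consider the zero section $Z\inj V\subset \P(V\oplus\O)$. By Nisnevich (indeed Zariski) excision, Assumption \ref{ass::nice_filtered_localising_invariants}(\ref{ass_part::gysin_descent}),
\[\Fil^\bullet\mc F_Z(V)\simeq \Fil^\bullet \mc F_Z(\P(V\oplus \O)).\]
The right-hand side is the fibre of $\Fil^\bullet\mc F(\P(V\oplus\O))\to \Fil^\bullet\mc F(\P(V\oplus \O)\setminus Z)$. The complement $\P(V\oplus\O)\setminus Z$ is a line bundle over $\P(V)$. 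Applying the filtered projective bundle formula, Assumption \ref{ass::nice_filtered_localising_invariants}(\ref{ass_part::gysin_proj_bdle_formula}) (extended from $\P^n_R$ to projective bundles by Zariski descent on $Z$), to $\P(V\oplus\O)$ and to $\P(V)$ gives two direct sums. Their comparison kills the first $r$ summands, leaving
\[\Fil^\bullet \mc F_Z(V)\simeq \Fil^{\bullet-r}\mc F(Z).\]
The generator is the Thom class
\[\mathrm{th}(V)=\sum_i c(\O(1))^{i}\, c_{r-i}(\ldots)\in \Fil^r\mc F_Z(V),\]
built from the Chern class map $c$ of Assumption \ref{ass::nice_filtered_localising_invariants}(\ref{ass_part::gysin_chern_class}) and a filtered splitting-principle argument. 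The map $\Fil^{\bullet-r}\mc F(Z)\to \Fil^\bullet\mc F_Z(V)$ is pull-and-multiply by $\mathrm{th}(V)$.

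\textbf{Step 2: deformation to the normal cone, with $\P^1$ in place of $\A^1$.} We cannot use $\A^1$-invariance, which fails for $\THH$-type theories. Instead, let $D\to \P^1$ be the deformation space, obtained from $\mathrm{Bl}_{Z\times\infty}(X\times\P^1)$ by removing the strict transform of $X\times \infty$. It contains $Z\times\P^1$, has fibre $Z\subset X$ over $0$ and $Z\subset N_ZX$ over $\infty$. The goal is to show that both restrictions
\[\Fil^\bullet\mc F_{Z\times\P^1}(D)\to \Fil^\bullet\mc F_Z(X), \qquad \Fil^\bullet\mc F_{Z\times\P^1}(D)\to\Fil^\bullet \mc F_Z(N_ZX)\]
become equivalences after restricting to a canonical summand. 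The projective bundle formula for $Z\times\P^1$ supplies the splitting, and one checks it Zariski-locally where $Z=V(f_1,\dots,f_r)$. There one reduces, via Nisnevich descent and the Koszul-regular local model, to the universal case $\A^r\supset 0$ over $X$, where the computation is again Step 1. Combined with Step 1 this produces the filtered purity equivalence $\Fil^{\bullet}\mc F(Z)\simeq \Fil^{\bullet+r}\mc F_Z(X)$.

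\textbf{Step 3: identification on underlying objects.} It remains to show that $\mc F(Z)\simeq \mc F_Z(X)\to \mc F(X)$ agrees with $\mc F(i_*)$. Both maps are $\mc F(X)$-linear and compatible with the deformation, so it suffices to treat the zero section of $V=N_ZX$. There, the compatibility square for $c$ in Assumption \ref{ass::nice_filtered_localising_invariants}(\ref{ass_part::gysin_chern_class}) shows that the underlying Thom class is the image of the Koszul complex $[i_*\O_Z]\in K_Z(V)$ under $K\to\mc F$. Indeed, on $\P(V\oplus\O)$ the class $\prod(1-[\mc L_j^\vee])$ is exactly the Koszul resolution of the zero section. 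The universality of $K$ among localising invariants \cite{blumberg2013universal} then identifies multiplication by this class with $\mc F(i_*)$.

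I expect Step 2 to be the main obstacle. Getting the deformation argument to work without homotopy invariance, using only the $\P^1$-bundle formula, and checking that the resulting equivalence is independent of choices and functorial, is where Tang's construction must be invoked in earnest. Step 3's matching of the Chern-class Thom element with the Koszul class is the second delicate point.
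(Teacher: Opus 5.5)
\textbf{There is a genuine gap: your construction depends on the first arrow being an equivalence, and that filtered purity statement $\Fil^\bullet\mc F(Z)\simeq\Fil^{\bullet+r}\mc F_Z(X)$ is false for the theories at hand.}

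Take $\mc F=\THH$ with its motivic filtration, so that $\gr^0_\mot\THH(Y)\simeq\R\Gamma(Y,\O_Y)$. Let $Z=\{t=0\}\subset X=\Spf\Z_p\langle t\rangle$, so $r=1$. Since $\gr^0$ is exact, $\gr^0\Fil^\bullet\mc F_Z(X)\simeq\fib(\Z_p\langle t\rangle\to\Z_p\langle t^{\pm1}\rangle)\neq 0$. Your equivalence instead predicts $\gr^{-1}_\mot\THH(\Z_p)=0$.

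On underlying objects your claim amounts to d\'evissage $\mc F(\Perf(Z))\simeq\mc F(\Perf_Z(X))$. This fails for $\THH$, and already for $K$-theory over non-regular bases because of $NK$-terms.

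The error enters in Step 1. $\P(V\oplus\O)\setminus Z$ is the total space of a line bundle over $\P(V)$, and replacing its filtered $\mc F$ by that of $\P(V)$ is exactly the $\A^1$-invariance you had correctly ruled out. Step 2 reduces to Step 1, so it inherits the problem. No care with the deformation space can rescue an equivalence that is already false in the linear model.

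What survives is the \emph{projective} Thom term. Comparing the projective bundle formulas for $\P(N\oplus\O)$ and $\P(N)$ over $Z$ does honestly give $\fib\bigl(\Fil^\bullet\mc F(\P(N\oplus\O))\to\Fil^\bullet\mc F(\P(N))\bigr)\simeq\Fil^{\bullet-r}\mc F(Z)$, with no $\A^1$-invariance. But this term is not cohomology with supports. The natural comparison $\mc F_Z(\P(N\oplus\O))\to\fib\bigl(\mc F(\P(N\oplus\O))\to\mc F(\P(N))\bigr)$ also points the wrong way for building a Gysin map.

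So the map $\Fil^\bullet\mc F(Z)\to\Fil^{\bullet+r}\mc F(X)$ has to be built directly as a pushforward out of this Thom term. It must then be transported from the normal bundle to $X$ through a $\P^1$-deformation to the normal cone, using only descent, the projective bundle formula and blow-up formulas. That is what Tang's cited construction, via $\P^1$-motivic techniques, has to supply; the paper imports it rather than proving it.

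A supported Thom class does not get around this. You can build one locally in $\Fil^r\mc F_Z$ by applying the Chern class map to $(\O(D_j),s_j)$. But to multiply a class of $\Fil^\bullet\mc F(Z)$ by it, you must first extend that class off $Z$, which again needs rigidity or $\A^1$-invariance.

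Once a filtered map exists, your Step 3 (Koszul class, projection formula, universality of $K$) is the right way to identify its underlying map with $\mc F(i_*)$.
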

\begin{remark}
    At the time of writing, the version of \cite{tang2026_gysin} available publicly does not have a Section 4. The author is relying on an updated version (to be released) coming from private communication with Tang.
\end{remark}

The following is the main result of this section.
\begin{theorem}\label{thm::transfer_map_for_finite_loc_free_extensions}
    Suppose $f:X\to Y$ is a regular finite locally free map of bounded quasi-syntomic $p$-adic formal schemes. Suppose $\Fil^\bullet \mc F$ satisfies Assumption \ref{ass::nice_filtered_localising_invariants}. 
    
    Then, there is a $\Fil^\bullet\mc F(Y)$-linear filtered map \[\tr_f^{\mc F}: \Fil^\bullet \mc F(X) \to \Fil^\bullet\mc F(Y)\]
    satisfying the following properties:
    \begin{enumerate}
        \item on passing to underlying objects of the filtered object, it identifies canonically with the transfer map $\mc F(X)\to \mc F(Y)$;
        \item it is functorial in base-change in $f$;
        \item\label{trace_thm_part::functoriality_in_FilF} it is functorial in maps $\Fil^\bullet \mc F\to \Fil^\bullet \mc F$ that also preserve the Chern class map;
        \item if $g:Y\to Z$ is another regular finite locally free map of bounded quasi-syntomic $p$-adic formal schemes, then there is a non-canonical homotopy \[\tr_g^{\mc F} \circ \tr_f^{\mc F} \simeq \tr_{g\circ f}^{\mc F};\] 
        \item\label{trace_thm_part::factorisation} if $X\inj[\iota] \A^n_Y\subset \P^n_Y \sur[\pi] Y$ is a factorisation of $f$ through a regular closed immersion $\iota$, then there is a homotopy 

\[\begin{tikzcd}
	{\Fil_\mot^{\bullet + n} \mc F(\P^n_Y)} & {\bigoplus_{i=0}^n \Fil_\mot^{\bullet +n-i} \mc F(Y) \cdot c(\O(1))^i} \\
	{\Fil_\mot^\bullet \mc F(X)} & {\Fil_\mot^\bullet \mc F(Y)}
	\arrow[""{name=0, anchor=center, inner sep=0}, "\simeq"{description}, draw=none, from=1-1, to=1-2]
	\arrow["{\text{coeff of } c(\O(1))^n}", from=1-2, to=2-2]
	\arrow["{\mr{gys}_\iota}", from=2-1, to=1-1]
	\arrow[""{name=1, anchor=center, inner sep=0}, "{\tr_f^{\mc F}}"', from=2-1, to=2-2]
	\arrow[shorten <= 15pt, shorten >= 15pt, Rightarrow, from=0, to=1]
\end{tikzcd}\]
        where the map marked $\mr{gys}_\iota$ is the filtered map of \cref{thm::gysin_map_in_decompleted_TP_preserves_filn} for the regular closed immersion $\iota$. This homotopy is functorial in $\mc F$ as in (\ref{trace_thm_part::functoriality_in_FilF}) above, as well as functorial in base-change in $Y$. 
    \end{enumerate}
\end{theorem}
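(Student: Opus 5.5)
The plan is to \emph{define} $\tr_f^{\mc F}$ by the formula in part (\ref{trace_thm_part::factorisation}) and then check that it is well defined and has the listed properties. Since $f$ is finite locally free, Zariski locally on $Y$ the algebra $f_*\O_X$ is generated by finitely many sections. This gives a closed immersion $\iota: X\inj \A^n_Y\subset \P^n_Y$ over $Y$. Because $f$ is regular and $\A^n_Y\to Y$ is smooth, $\iota$ is a regular closed immersion of codimension $n$. Globally one can use a surjection $\mr{Sym}(\mc V)\to f_*\O_X$ for a vector bundle $\mc V$, or work locally and glue using Nisnevich descent (Assumption \ref{ass::nice_filtered_localising_invariants}(\ref{ass_part::gysin_descent})). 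In the local setting I would set
\[\tr_f^{\mc F} := \mr{coeff}_{c(\O(1))^n}\circ \mr{gys}_\iota : \Fil^\bullet\mc F(X)\to \Fil^{\bullet+n}\mc F(\P^n_Y)\simeq \bigoplus_{i=0}^n \Fil^{\bullet+n-i}\mc F(Y)\,c(\O(1))^i\to \Fil^\bullet\mc F(Y).\]
This uses \cref{thm::gysin_map_in_decompleted_TP_preserves_filn} and the filtered projective bundle formula (\ref{ass_part::gysin_proj_bdle_formula}). Both maps are $\Fil^\bullet\mc F(Y)$-linear: the Gysin map is linear over $\Fil^\bullet\mc F(\P^n_Y)$, hence over $\Fil^\bullet\mc F(Y)$, and the projective bundle equivalence is $\Fil^\bullet\mc F(Y)$-linear by construction. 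So the composite is linear.

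The first key step is identifying the underlying map with the transfer $\mc F(f_*)$. On underlying objects the Gysin map is $\mc F(\iota_*)$. The map $\mr{coeff}_{c(\O(1))^n}$ should be identified with $\mc F(\pi_*)$ restricted appropriately, and since $\pi\circ\iota=f$ the composite is $\mc F(f_*)$. This reduces to a statement about $\Perf(\P^n_Y)$. Beilinson's semi-orthogonal decomposition $\langle \O,\O(-1),\dots,\O(-n)\rangle$ gives $\mc F(\P^n_Y)\simeq\bigoplus \mc F(Y)$, and $\pi_*$ picks out one summand. Using (\ref{ass_part::gysin_chern_class}), $c(\O(1))$ corresponds to $1-[\O(-1)]$. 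I would compare the two bases and check that the coefficient of $c(\O(1))^n=(1-[\O(-1)])^n$ agrees with the projection that computes $\pi_*$ on the part of $\mc F(\P^n_Y)$ hit by $\iota_*$. Because $X$ lands inside $\A^n_Y$, the class $\iota_*$ restricts to zero on the hyperplane at infinity. This should kill the discrepancy coming from lower coefficients, since $\pi_*$ of $c(\O(1))^i$ for $i<n$ is supported there and can be computed on $\P^{n-1}$. \textbf{This step is the main obstacle}: making the comparison canonical rather than just on $\pi_*$ of homotopy classes. I would try to prove it universally, for $\Fil^\bullet\mc F=$ the motivic filtration on $K$ or $\TC$, and transport it along the Chern-class-compatible maps of (\ref{trace_thm_part::functoriality_in_FilF}).

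Next, independence of the choice of embedding. Given two embeddings into $\A^n_Y$ and $\A^m_Y$, embed diagonally into $\A^{n+m}_Y$ and compare each to it. Factor $X\inj\A^n_Y\inj \A^n_Y\times\A^m_Y$ through a graph-type section. Then use transitivity of Gysin maps from \cite{tang2026_gysin} together with the compatibility of the projective bundle formula under $\P^n\times\P^m$ and the coefficient extraction. The resulting homotopies are non-canonical, which is why composition functoriality only holds up to non-canonical homotopy. Alternatively, independence on filtered objects follows once the underlying maps agree, provided the mapping space is discrete enough. I would rather rely on the explicit comparison, and glue the local definitions by Nisnevich descent.

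The remaining properties follow from this definition. Base change holds because Gysin maps, Chern classes and the projective bundle formula are compatible with pullback along $Y'\to Y$, and the embedding pulls back to an embedding. Functoriality in Chern-class-preserving maps $\Fil^\bullet\mc F\to\Fil^\bullet\mc F$ holds because every ingredient is built from $c$ and the Gysin construction, which is natural in $\mc F$. For composition, embed $X\inj\A^n_Y$ and $Y\inj\A^m_Z$, and embed $X\inj \A^{n+m}_Z$ compatibly. Then use transitivity of Gysin maps and iterated coefficient extraction on $\P^n\times\P^m$. Property (\ref{trace_thm_part::factorisation}) holds by construction, combined with independence of the embedding.
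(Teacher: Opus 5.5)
Your construction is the paper's. You define $\tr_f^{\mc F}$ as the $c(\O(1))^n$-coefficient of the filtered Gysin map into $\P^n_Y$, identify its underlying map with $\mc F(f_*)$, and compare two embeddings through a diagonal embedding into $\A^{n+n'}_Y$. One thing you should fix is the choice of embedding. The paper never glues local choices. It uses the canonical regular immersion $X\inj\V_Y(f_*\O_X)$ given by the multiplication map $\Sym(f_*\O_X)\to f_*\O_X$, which is compatible with every base change on the nose. Commit to this, i.e.\ take $\mc V=f_*\O_X$. Gluing local embeddings by descent would need coherent identifications on overlaps, but comparing two embeddings only gives non-canonical homotopies. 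Likewise, your argument for base change assumes that the pulled-back embedding is the one defining the trace over $Y'$. That is automatic only for a canonical choice.

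The other two sub-steps differ from the paper.

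For the step you flag as the main obstacle, the paper reduces to $\mc F=K$ using only the unfiltered map $K\to\mc F$ and Assumption (2). The claim is about underlying objects, so no filtered universal theory is needed, and a general $\Fil^\bullet\mc F$ need not receive one. It then computes $\pi_*(c(\O(1))^i)=1$ for $0\le i\le n$ from $\R\Gamma(\P^n_R,\O(-i))=0$ for $1\le i\le n$. Substituting $\iota^*\O(k)\cong\O$ into the inversion formula for the projective bundle decomposition and using a binomial identity gives $[\iota_*M]=[f_*M]\,c(\O(1))^n$.

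Your hyperplane idea gives a cleaner proof of the vanishing half, once it is phrased correctly: what lives on $H_\infty$ is the lower coefficients of $\iota_*M$, not $\pi_*$ of $c^i$. Restriction $j^*$ to $H_\infty\cong\P^{n-1}_Y$ reads off exactly the coefficients of $c^0,\dots,c^{n-1}$, because $c(\O(1))^n$ restricts to zero on $\P^{n-1}_Y$ by the Koszul complex. Meanwhile $j^*\iota_*\simeq 0$, since $X\cap H_\infty=\emptyset$. Combined with $\pi_*(c^n)=1$, the $u^0$-coefficient of $(1-u)^n$ in the Beilinson basis, this yields $\mr{coeff}_n\circ\mc F(\iota_*)\simeq\mc F(f_*)$.

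For the refinement, your graph-type section $\sigma=(\id,h)$ with $h\circ\iota=\iota'$ is the right choice. The coordinate inclusions written in the paper do not satisfy $\sigma\circ\iota=\tilde\iota$. The cost is that such a $\sigma$ is in general non-linear and does not extend to a linear embedding $\P^n_Y\inj\P^{n+n'}_Y$. Matching top coefficients therefore needs an extra argument, such as your $\P^n\times\P^m$ route, factoring through $X\inj\P^{n'}_X\inj\P^{n'}_{\P^n_Y}$. The paper also leaves this step implicit.
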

The following lemma explains why statement (\ref{trace_thm_part::factorisation}) above makes sense. 

\begin{lemma}\label{lem::Gysin_image_maps_to_correct_filn_piece}
    Let $\iota : \Spf S\inj \A^n_R\subset \P^n_R$ be a regular closed immersion of codimension $n$, $\pi:\P^n_R \to \Spf R$ the projection map, and $f = \pi\circ \iota :\Spf S\to \Spf R$. Suppose $f$ is finite flat. Suppose $\Fil^\bullet \mc F$ is as in Assumption \ref{ass::nice_filtered_localising_invariants}. Then, the following hold.
    \begin{enumerate}
        \item the map $\mc F(\pi_*):\mc F(\P^n_R)\to \mc F(R)$ sends $c(\O(1))^i$ to $1$ for all $0\le i\le n$;
        \item the composition \[\mc F(S) \xrightarrow{\mc F(\iota_*)} \mc F(\P^n_R) \simeq \bigoplus_{i=0}^n \mc F(R)\cdot c(\O(1))^i \xrightarrow{\text{coefficient of } c(\O(1))^i} \mc F(R)\]
        is canonically zero unless $i = n$, in which case it is canonically homotopic to $\mc F(f_*): \mc F(S) \to \mc F(R)$. 
    \end{enumerate}    
\end{lemma}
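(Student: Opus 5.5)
Part (1) reduces to a computation with the functor $\pi_* : \Perf(\P^n_R)\to \Perf(R)$. The plan is to express each class $c(\O(1))^i$ as a class coming from $K(\P^n_R)$, push it forward in $K$-theory, and then map to $\mc F$ along the canonical natural transformation $K\to\mc F$. That transformation commutes with $\mc F(\pi_*)$ because transfers on localising invariants are natural in the localising invariant. By Assumption \ref{ass::nice_filtered_localising_invariants}(\ref{ass_part::gysin_chern_class}), on underlying objects $c(\O(1))$ is the image of $[\O]-[\O(-1)] = [\O_H]$, the class of a hyperplane. Since the image of $K\to\mc F$ is multiplicative, $c(\O(1))^i$ is the image of $[\O_{L_i}]$, where $L_i\cong \P^{n-i}_R$ is a linear subspace cut out by $i$ hyperplanes. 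This uses the Koszul resolution, since a product of hyperplane classes is the class of a transverse intersection. Then $\pi_*[\O_{L_i}] = [R\Gamma(\P^{n-i}_R,\O)] = [R] = 1$, because $\O_{\P^m}$ has no higher cohomology. This gives (1). There is a subtlety if the ring structure of $\mc F$ is not compatible with $K$. To avoid it I would work $\mc F(R)$-linearly with the ($\mc F(R)$-linear) transfer and only use that $K\to\mc F$ is lax monoidal, which the paper assumed when constructing $T$.

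For part (2), the structure is as follows. Since $\iota$ factors through the open $\A^n_R=\P^n_R\setminus H_\infty$, the pushforward $\iota_*$ factors as $\Perf(S)\to \Perf_{\iota(S)}(\P^n_R)\to\Perf(\P^n_R)$, with support away from $H_\infty$. The strategy is to compute the components of $\iota_*$ in the projective bundle decomposition. To do this I would use the splitting inverse: by Beilinson's semiorthogonal decomposition $\Perf(\P^n_R)=\langle \O(-n),\dots,\O\rangle\otimes\Perf(R)$, the $\mc F$ projective bundle equivalence is realised by the functors $\pi_*(-\otimes\O(j))$ together with a triangular change of basis. Concretely, the coefficient of $c(\O(1))^i$ can be written as $\sum_j a_{ij}\,\mc F(\pi_*(-\otimes \O(j)))$ for an integer matrix $(a_{ij})$ determined by part (1) applied to the classes $c(\O(1))^k\cdot[\O(j)]$.

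Applying these functors to $\iota_*M$, I would use the fact that $\O(j)$ is trivialised on $\A^n_R\supset \iota(S)$. This gives $\pi_*(\iota_*M\otimes\O(j))\simeq f_*M$ canonically for all $j$, so every $\mc F(\pi_*(-\otimes\O(j)))\circ \mc F(\iota_*)$ equals $\mc F(f_*)$. The coefficient of $c(\O(1))^i$ is then $(\sum_j a_{ij})\,\mc F(f_*)$. It remains to check that the row sums $\sum_j a_{ij}$ equal $\delta_{in}$. This is a formal identity that can be tested universally, for instance on $\iota$ equal to the inclusion of a point $\Spec R\to\A^n_R$. For that inclusion, $[\O_{pt}]$ equals $[\O_{L_n}] = c(\O(1))^n$ exactly, so the coefficient vector is $(0,\dots,0,1)$. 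Since the matrix is independent of $\iota$, the identity holds in general.

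I expect the main obstacle to be making ``canonically zero'' and ``canonically homotopic'' precise rather than merely true on $\pi_0$. The row-sum argument is naturally a homotopy-level statement, since the maps are linear combinations of the same map $\mc F(f_*)$ with integer coefficients. However, identifying the projective bundle inverse with Beilinson's functors requires care. An alternative I would try first is the localisation sequence $\Perf_{H_\infty}(\P^n)\to\Perf(\P^n)\to\Perf(\A^n)$ combined with $\A^1$-invariance-free arguments: the summands for $i<n$ all come from $H_\infty\cong\P^{n-1}$ under the Gysin map for $H_\infty$, while $\iota_*$ lands in sections supported away from $H_\infty$, and this forces those components to vanish canonically.
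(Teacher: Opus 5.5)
Your argument is correct. For part (2) it has the same skeleton as the paper's proof.

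\textbf{Shared structure.} Both proofs express the coefficient maps through the functors $\pi_*(-\otimes\O(k))$ and an integer unimodular matrix of Euler characteristics. The paper takes this from the proof of the projective bundle formula in Blumberg--Mandell, working in the basis $u^i=[\O(-i)]$, where the matrix is unitriangular. You do not actually need Beilinson for this step: $\sum_i c(\O(1))^i$ is an equivalence by Assumption~\ref{ass::nice_filtered_localising_invariants}, and its composite with $\bigoplus_k \mc F(\pi_*(-\otimes\O(k)))$ is that unimodular matrix, so the inverse is formal. Both proofs then use the canonical trivialisation of $\O(k)$ on $\A^n_R$ to identify every composite $\pi_*(\iota_*(-)\otimes\O(k))$ with $f_*$. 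Each coefficient map thereby becomes a fixed integer multiple of $\mc F(f_*)$.

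\textbf{Where you differ.} The difference is in how those integers are found.
\begin{itemize}
\item The paper solves the recursion $a_k=1-\sum_{i<k}\binom{n+k-i}{n}a_i$. It proves $a_k=(-1)^k\binom{n}{k}$ by the generating-function computation of \cref{lem::combinatorial_identity_computing_chern_class_for_O(-n-1)}. Only then does it recognise $\sum_k(-1)^k\binom{n}{k}u^k=(1-u)^n=c(\O(1))^n$.
\item You note that the answer is a universal class, independent of $\iota$, $M$ and $R$. You read it off from the origin $\Spec\Z\to\A^n_\Z$, whose class is $c(\O(1))^n$ by the Koszul resolution. Equivalently, the column of the Euler-characteristic matrix indexed by $c(\O(1))^n$ is $(1,\dots,1)$, so the row sums of the inverse matrix are $\delta_{in}$.
\end{itemize}
Your route removes the combinatorial lemma and explains why the answer is $c(\O(1))^n$. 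It also stays at the level of functors into $\mc F$, rather than passing to $K$ and then to $K_0$ as the paper does, so the canonical homotopies are easier to see.

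Run the universal test over $\Z$, in $K_0(\P^n_\Z)$, rather than in $\pi_0\mc F(R)$, which need not detect integers. Part (1) is the same computation as the paper's: the Koszul resolution of $\O_{L_i}$ is exactly the binomial expansion of $(1-[\O(-1)])^i$ that the paper pushes forward.

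\textbf{The fallback in your last paragraph would not work; drop it.}
\begin{itemize}
\item The classes $c(\O(1))^i$ represented by sheaves supported on $H_\infty$ are those with $i\geq 1$, including $i=n$. The class $c(\O(1))^0$ is the only one not supported there.
\item The origin is disjoint from $H_\infty$, yet its class is exactly $c(\O(1))^n$. So being supported away from $H_\infty$ kills nothing in this decomposition; it is the $i=0$ component that vanishes.
\item For theories such as $\THH$ that are not $\A^1$-invariant, $\mc F(\A^n_R)\neq\mc F(R)$. There is also no d\'evissage identifying $\mc F(\Perf_{H_\infty}(\P^n_R))$ with $\mc F(H_\infty)$. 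So the localisation sequence does not split the way that heuristic requires.
\end{itemize}
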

\begin{proof}
    By Assumption \ref{ass::nice_filtered_localising_invariants}(\ref{ass_part::gysin_chern_class}), and since the lemma is about the underlying unfiltered localising invariant, it suffices to restrict to $\mc F = K$. For the first statement, since the projective bundle formula for $K(\P^n_R)$ is $K(R)$-linear, it suffices to check that $\pi_*c^i$ is 1 for the class $c := 1-[\O(-1)] \in K_0(\P^n_R)$. By \cite[Tag01XT]{stacks-project}, one computes that $$\pi_* \O(-i) = \R\Gamma(\P^n_R, \O(-i)) = 0$$
    for $1\le i\le n$. The binomial formula then yields that $\pi_*c^i = 1$. 
    
    We now prove the second statement. Since $\iota_*$ and $\R\Gamma(\P^n_R, -)$ are $\Perf(R)$-linear functors (i.e. the projection formula holds), the maps $K(S)\to K(R)$ are all $K(R)$-linear. It thus suffices to prove the claim at the level of $K_0$. Let $M\in \Perf(S)$. By the filtered projective bundle formula as well as the proof of \cite[Theorem 1.5]{blumberg2012localization}, we can express $[\iota_*M]\in K_0(\P^n_R)$ as a linear combination 
    \[[\iota_*M] = \sum_{i=0}^n c_i u^i\]
    of powers of $u = 1- c(\O(1)) = [\O(-1)]\in K_0(R)$, where $c_i\in K_0(R)$ are defined by $c_0 = [\R\Gamma(\P^n_R, \iota_*M)]$ and \[c_k = [\R\Gamma(\P^n_R, (\iota_*M)(k))] - \sum_{i=0}^{k-1}\binom{n+k-i}{n} c_i.\]
    By the projection formula, we have $$(\iota_*M)(k) \simeq \iota_*\big(M\otimes \iota^*\O_{\P^n_R}(k)\big).$$
    Since $\Spf S$ does not intersect the hyperplane at $\infty$ by assumption, we have $\iota^*\O_{\P^n_R}(k) \cong S$, the trivial line bundle, and so \[\R\Gamma(\P^n_R, (\iota_*M)(k)) \simeq \R\Gamma(\P^n_R, \iota_*M) \simeq f_*M.\] 
    Hence, we have $c_k = [f_* M] a_k\in K_0(R)$ where $a_k\in \Z$ satisfy the recurrence relation $a_0 = 1$ and \[a_k = 1 - \sum_{i=0}^{k-1} \binom{n+k-i}{n}a_i.\]
    One checks that $a_k = (-1)^k\binom{n}{k}$, which can be proven by induction and an application of \cref{lem::combinatorial_identity_computing_chern_class_for_O(-n-1)} below.

    Thus, we see that \[[\iota_*M] = [f_*M]\sum_{k=0}^n(-1)^k\binom{n}{k} u^k \in K_0(\P^n_R).\]
    Changing base, we see that \[[\iota_* M] = [f_* M] \beta^n.\]
    This proves the second statement.
\end{proof}
\begin{lemma}\label{lem::combinatorial_identity_computing_chern_class_for_O(-n-1)}
    For all integers $0\le k< n$, one has the identity
    \[(-1)^{k+1} \binom{n}{k+1} = 1 - \sum_{i=0}^{k} (-1)^i \binom{n+k+1-i}{n}\binom{n}{i}.\]
\end{lemma}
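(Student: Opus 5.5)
The plan is to prove the identity with generating functions, by recognising the sum on the right as a truncated Cauchy product. We work in the formal power series ring $\Z\llb x\rrb$. There are two standard expansions:
\[(1-x)^n = \sum_{i\ge 0} (-1)^i\binom{n}{i}x^i, \qquad (1-x)^{-(n+1)} = \sum_{j\ge 0}\binom{n+j}{n}x^j.\]
The second is the usual negative binomial series. It can be checked by induction on $n$, differentiating $(1-x)^{-1}=\sum_j x^j$, or via the hockey-stick identity.

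Next we multiply the two series. Their product is $(1-x)^{-1} = \sum_{m\ge 0}x^m$. Comparing coefficients of $x^{k+1}$ on both sides gives
\[\sum_{i=0}^{k+1} (-1)^i\binom{n}{i}\binom{n+k+1-i}{n} = 1.\]

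Finally, we split off the $i=k+1$ term. Its value is $(-1)^{k+1}\binom{n}{k+1}\binom{n}{n} = (-1)^{k+1}\binom{n}{k+1}$. Moving it to the other side yields
\[\sum_{i=0}^{k} (-1)^i\binom{n+k+1-i}{n}\binom{n}{i} = 1 - (-1)^{k+1}\binom{n}{k+1},\]
which rearranges to the claimed identity.

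The hypothesis $k<n$ plays no role in the argument; it only ensures that $\binom{n}{k+1}$ is nonzero, which is the range relevant to \cref{lem::Gysin_image_maps_to_correct_filn_piece}.

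No step is a genuine obstacle. The only point needing care is the negative binomial expansion with its indexing $\binom{n+j}{n}$, and matching the index $n+k+1-i$ with $j=k+1-i$.
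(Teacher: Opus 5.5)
Your proof is correct and is essentially the paper's argument: both recognise the sum as a Cauchy product of $(1-x)^n$ with the negative binomial series $(1-x)^{-(n+1)}=\sum_{j\ge 0}\binom{n+j}{n}x^j$. The paper forms the generating function in $k$ of the whole right-hand side and shows it equals $\frac{(1-z)^n-1}{z}$, whereas you compare the single coefficient of $x^{k+1}$ in $(1-x)^n(1-x)^{-(n+1)}=(1-x)^{-1}$ and split off the $i=k+1$ term, which is a slightly more direct packaging of the same computation.
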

\begin{proof}
    The generating function of the right hand side is 
    \begin{align*}
        G(z) &:= \sum_{k\ge 0} \left(1 - \sum_{i=0}^{k} (-1)^i \binom{n+k+1-i}{n}\binom{n}{i}\right)z^k = \frac{1}{1-z} - \sum_{k\ge i\ge 0}(-1)^i \binom{n+k+1-i}{n}\binom{n}{i} z^k\\
        &= \frac{1}{1-z} - \sum_{i,j\ge 0} (-1)^i \binom{n+1+j}{n}\binom{n}{i}z^{i+j} = \frac{1}{1-z} - \left(\sum_{i\ge 0} (-1)^i\binom{n}{i}z^i\right)\left(\sum_{j\ge 0} \binom{n+1+j}{n} z^j\right)\\
        &= \frac{1}{1-z} - (1-z)^n\left(\frac{(1-z)^{-n-1}-1}{z}\right) = \frac{(1-z)^n-1}{z}
    \end{align*}
    and the coefficient of $z^k$ is $(-1)^{k+1}\binom{n}{k+1}$, as expected. 
\end{proof}

The following is an immediate consequence of \cref{lem::Gysin_image_maps_to_correct_filn_piece} and the filtered projective bundle formula.
\begin{corollary}\label{cor::Gysin_image_maps_to_correct_filn_piece}
    Keep notation as in \cref{lem::Gysin_image_maps_to_correct_filn_piece}. There is a canonical choice of null-homotopy, functorial in $\Fil^\bullet \mc F$ (as in \cref{thm::transfer_map_for_finite_loc_free_extensions}(\ref{trace_thm_part::functoriality_in_FilF}) above) and depending only on the factorisation $f=\pi\circ i$, for the composition \[\Fil^\bullet \mc F(S) \to \mc F(S) \xrightarrow{\mc F(i_*)} \mc F(\P^n_R) \xrightarrow{\mc F(\pi_*)} \mc F(R)\to \mc F(R)/\Fil^{\bullet}\mc F(R).\]
\end{corollary}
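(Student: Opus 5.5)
The idea is to combine the filtered Gysin map of \cref{thm::gysin_map_in_decompleted_TP_preserves_filn} with the filtered projective bundle formula, and then read off the answer using \cref{lem::Gysin_image_maps_to_correct_filn_piece}. The composite in question factors as
\[\Fil^\bullet \mc F(S) \xrightarrow{\mr{gys}_\iota} \Fil^{\bullet+n}\mc F(\P^n_R) \to \mc F(\P^n_R) \xrightarrow{\mc F(\pi_*)} \mc F(R) \to \mc F(R)/\Fil^\bullet \mc F(R).\]
This uses that, on underlying objects, the Gysin map identifies canonically with $\mc F(\iota_*)$; that identification is part of the data in \cref{thm::gysin_map_in_decompleted_TP_preserves_filn}.

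By Assumption \ref{ass::nice_filtered_localising_invariants}(\ref{ass_part::gysin_proj_bdle_formula}),
\[\Fil^{\bullet+n}\mc F(\P^n_R) \simeq \bigoplus_{i=0}^n \Fil^{\bullet+n-i}\mc F(R)\cdot c(\O(1))^i.\]
On underlying objects this is compatible with the unfiltered decomposition $\mc F(\P^n_R)\simeq \bigoplus_i \mc F(R)\cdot c(\O(1))^i$, since the decomposition is defined by the same $\Fil^\bullet\mc F(R)$-linear map. By $\mc F(R)$-linearity of $\pi_*$ (the projection formula) and part (1) of \cref{lem::Gysin_image_maps_to_correct_filn_piece}, the map $\mc F(\pi_*)$ restricted to the $i$-th summand is multiplication by $\pi_*c(\O(1))^i\simeq 1$. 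So the composite splits as a sum over $i$ of maps
\[\Fil^\bullet\mc F(S)\to \Fil^{\bullet+n-i}\mc F(R)\to \mc F(R)\to \mc F(R)/\Fil^\bullet\mc F(R).\]
For every $i\le n$ we have $\Fil^{\bullet+n-i}\mc F(R)\to\Fil^\bullet\mc F(R)$, so each of these maps is canonically null: its image lies in $\Fil^\bullet\mc F(R)$, and the cofibre sequence $\Fil^\bullet\to\mc F\to\mc F/\Fil^\bullet$ supplies the null-homotopy. Summing these gives the desired null-homotopy. Functoriality in $\Fil^\bullet\mc F$ holds because every ingredient (the Gysin map, the Chern class and the projective bundle equivalence) is natural for maps preserving $c$. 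The construction uses only $\iota$ and $\pi$, so it depends only on the factorisation.

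The main subtlety is the identification $\pi_*c(\O(1))^i\simeq 1$, which must hold as a homotopy of $\mc F(R)$-linear maps and not merely on $\pi_0$. I would obtain it from the map $K\to\mc F$ of Assumption \ref{ass::nice_filtered_localising_invariants}(\ref{ass_part::gysin_chern_class}) together with the $K$-theory computation of \cref{lem::Gysin_image_maps_to_correct_filn_piece}. For the null-homotopy itself no such identification is actually needed: it suffices that $\pi_*$ sends the $i$-th summand into $\mc F(R)$ via some $\Fil^\bullet\mc F(R)$-linear map of filtered objects. If necessary one could instead use that $\mc F(\pi_*)$ sends $\Fil^{\bullet+n}\mc F(\P^n_R)$ to $\Fil^\bullet\mc F(R)$, which follows by checking on the summands. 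Part (2) of \cref{lem::Gysin_image_maps_to_correct_filn_piece} is therefore not required here; it only becomes relevant when identifying the resulting filtered lift with the coefficient of $c(\O(1))^n$.
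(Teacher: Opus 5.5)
Your argument is correct and uses the same ingredients as the paper: the filtered Gysin map, the filtered projective bundle formula, and \cref{lem::Gysin_image_maps_to_correct_filn_piece}. It does, however, produce a different null-homotopy.

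\textbf{The paper's route.} The intended argument, as recorded in \cref{thm::transfer_map_for_finite_loc_free_extensions}(\ref{trace_thm_part::factorisation}), uses both parts of the lemma. Together they identify $\mc F(\pi_*)\circ\mc F(\iota_*)$ on underlying objects with the $c(\O(1))^n$-coefficient of $\mc F(\iota_*)$. The filtered lift is then the $c(\O(1))^n$-coefficient of $\mr{gys}_\iota$, which lands in $\Fil^{\bullet}\mc F(R)$ directly.

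\textbf{Your route.} You use only part (1), together with the observation that every summand $\Fil^{\bullet+n-i}\mc F(R)$, $0\le i\le n$, already maps into $\Fil^\bullet\mc F(R)$. Your lift is therefore $\sum_{i=0}^n \mr{coeff}_{c(\O(1))^i}\circ\mr{gys}_\iota$, and in effect you show that $\pi_*$ lowers the filtration by $n$. This is more robust: it never uses that $S$ misses the hyperplane at infinity, nor \cref{lem::combinatorial_identity_computing_chern_class_for_O(-n-1)}. It proves the corollary as stated.

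\textbf{Where your closing remark is too quick.} Part (2) is an unfiltered statement. It identifies your lift with the $c(\O(1))^n$-coefficient only after forgetting the filtration. To match them as filtered maps, which \cref{thm::transfer_map_for_finite_loc_free_extensions}(\ref{trace_thm_part::factorisation}) needs, the maps $\mr{coeff}_{c(\O(1))^i}\circ\mr{gys}_\iota$ for $i<n$ would have to be null as filtered maps. That requires input beyond \cref{thm::gysin_map_in_decompleted_TP_preserves_filn}. The simpler fix is to build the null-homotopy from the $c(\O(1))^n$-coefficient, as the paper does.

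\textbf{Your aside is not right.} You say the identification $\pi_*c(\O(1))^i\simeq 1$ is not needed. But multiplication by $e_i := \pi_*c(\O(1))^i$ is a filtered map $\Fil^{\bullet+n-i}\mc F(R)\to\Fil^\bullet\mc F(R)$ only if $e_i$ lifts to $\Fil^{i-n}\mc F(R)$. The only available reason for that is $e_i\simeq 1\in\Fil^0\mc F(R)$, which is part (1). Since your main line uses part (1) anyway, nothing breaks.
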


We are now in a position to prove \cref{thm::transfer_map_for_finite_loc_free_extensions}.

\begin{proof}[Proof of \cref{thm::transfer_map_for_finite_loc_free_extensions}]
    By Zariski descent, we may suppose that $Y = \Spf R$ and thus $X = \Spf S$ are affine. \cref{cor::Gysin_image_maps_to_correct_filn_piece} gives the entire theorem for free, as long as we can do the following:
    \begin{enumerate}
        \item choose a factorisation $X\inj[\iota] \A^n_Y\subset \P^n_Y \sur Y$ that is functorial in base-change in $f$, where $\iota$ is a regular closed immersion, and
        \item show that any two such factorisations $X\inj[\iota] \P^n_Y\sur Y$ and $X\inj[\iota'] \P^{n'}_Y\sur Y$ have a common refinement $X\inj[\tilde\iota] \P^m_Y \sur Y$, in the sense that the following diagram commutes
        \[\begin{tikzcd}
            & \A^n_Y\ar[hook]{d}{\sigma} \ar[hook]{r} & \P^n_Y \ar[hook]{d} \ar[two heads]{rd} & \\
            X \ar[hook]{r}{\tilde \iota} \ar[hook,swap]{rd}{ \iota'} \ar[hook]{ru}{\iota} & \A^m_Y \ar[hook]{r} & \P^m_Y \ar[two heads]{r} & Y\\
            & \A^n_Y\ar[hook]{u}{\sigma'} \ar[hook]{r} & \P^{n'}_Y \ar[swap, hook]{u} \ar[two heads]{ru} & 
        \end{tikzcd}\]
        for some choice of regular closed immersions $\sigma:\A^n_Y \inj \A^m_Y$ and $\sigma':\A^{n'}_Y\inj \A^m_Y$.
    \end{enumerate}

    For the first, since $f_*\O_X$ is a vector bundle on $Y$ by assumption, we take $\iota$ to be the canonical map \[X \inj \V_Y(f_*\O_X);\]
    this is obtained by taking $\Spf$ of the map $\Sym_R(S) \to S$ induced by the multiplication on $S$. That this $\iota$ is a regular closed immersion follows from \cite[Tag069G]{stacks-project}. This is also clearly functorial in pull-backs along arbitrary maps $Y'\to Y$. 

    For the second, we can simply take $m = n+n'$ and $\tilde \iota:X\inj \A_Y^m$ to be the factorisation \[X \inj X\times_Y X \inj[\iota\times \iota'] \A^n_Y \times_Y \A^{n'}_Y \simeq \A^{n+n'}_Y,\]
    with $\sigma$ and $\sigma'$ the inclusion of the first $n$ and last $n'$ coordinates respectively. 
\end{proof}

\begin{remark}
    We expect there to be a canonical choice of null-homotopy of the composite \[\Fil^\bullet \mc F(X) \to \mc F(X) \xrightarrow{\mc F(f_*)} \mc F(Y) \to \frac{\mc F(Y)}{\Fil^\bullet \mc F(Y)}.\]
    For instance, if $\ms I$ denotes the 1-category of regular closed immersions $\iota : X\inj \A^n_Y$ factorising $f$ (morphisms given by maps $\A^n_Y\to \A^m_Y$ compatible with $f$), then \cref{cor::Gysin_image_maps_to_correct_filn_piece} yields a functor from $\ms I$ to the space of null-homotopies of the above composition. If one can show that this functor admits a colimit, then one immediately gets a stronger result than \cref{thm::transfer_map_for_finite_loc_free_extensions}, where the choice of map $\tr_f^{\mc F}$ is canonically functorial in composition in $f$ too. Unfortunately $\ms I$ is not filtered, so it is non-trivial to check whether such a colimit exists without digging deeper into the space of null-homotopies of the above composition. 
\end{remark}

\subsection{Transfer Maps and Prismatic $F$-Gauges}\label{sctn::transfer_maps_on_F_gauges}
We now show \cref{thm::transfer_map_for_finite_loc_free_extensions} yields a construction of transfer maps for $F$-gauges, thus proving the existence part of \cref{mainthm::prismatic_trace}.

\begin{lemma}\label{lem::abstract_trace_maps_from_transfers}
    Suppose $\mc C$ is a class of maps between bounded quasi-syntomic $p$-adic formal schemes closed under base-change and composition. Suppose for every $f:X\to Y \in \mc C$ that $f_*$ induces transfer maps \[\Fil^\bullet\TC_\prism^-(X)\to \Fil^{\bullet+k} \TC_\prism^-(Y) \quad \text{ and } \quad \Fil^\bullet\TP_\prism(X)\to \Fil^{\bullet+k} \TP_\prism(Y)\]
    for some fixed $k\in \Z$ independent of $f$, functorial in change-of-base $Y$ and composition in $f$ (the latter only up to possibly non-canonical homotopy), and compatible with the two maps $\can, \varphi : \TC_\prism^-\to \TP_\prism$. Then, there is a map of prismatic $F$-gauges 
    \[f_*^\syn \O_{X^\syn} \to \O_{Y^\syn}\{k\}[2k]\]
    again functorial in base-change in $Y$ and composition in $f$ (the latter only up to possibly non-canonical homotopy).
\end{lemma}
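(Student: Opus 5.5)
The plan is to use the description of prismatic $F$-gauges on $Y^\Syn$ as (quasi-syntomically local) modules over the filtered ring $\Fil^\bullet\TC^-_\prism$ with a pair of maps $\can,\varphi$ to $\TP_\prism$. Concretely, following Bhatt's lectures \cite{Bhatt_FGaugeLec}, for $R$ a QRSP the syntomification $(\Spf R)^\Syn$ is obtained by gluing the filtered prismatization $R^\Nyg = \Spf(\Rees(\Fil^\bullet_\Nyg\prism_R))/\G_m$ to itself along its two open immersions $j_{\dR}, j_{\HT}$ from $R^\prism$. The latter correspond to $\can$ and $\varphi$. We would then identify, via the preceding Proposition of \cite{DHRY_even_stacks}, the graded pieces $\gr^n\Fil^\bullet\TC^-_\prism(R)\simeq \Fil^n_\Nyg\prism_R\{n\}[2n]$ and $\gr^n\Fil^\bullet\TP_\prism(R)\simeq\prism_R\{n\}[2n]$.

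Step one is to reduce to the affine QRSP case. Both sides are quasi-syntomic sheaves on $Y$: the target $\O_{Y^\Syn}\{k\}[2k]$ is, and so is the source, because $f^\syn_*$ commutes with base change along quasi-syntomic $Y'\to Y$. Here we use that $f\in\mc C$ is finite, so $X\times_YY'$ stays bounded quasi-syntomic, and that $\mc C$ is closed under base change. Since QRSPs form a basis of $\QSyn_Y$, it suffices to produce the map for $Y=\Spf R$ with $R$ a QRSP, functorially in $R$. We may also take $X=\Spf S$, though $S$ need not be QRSP. We then compute $f^\syn_*\O_{X^\Syn}$ over the Rees algebra by descent on $X$: its value on the Nygaard-filtered chart is $\Fil^\bullet_\Nyg\prism_S\{\bullet\}$, viewed as a $\Fil^\bullet_\Nyg\prism_R$-module, and likewise on the $\prism$-chart.

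Step two is to pass to associated gradeds. We take $\gr^\bullet$ of the given filtered transfers $\Fil^\bullet\TC^-_\prism(X)\to\Fil^{\bullet+k}\TC^-_\prism(Y)$. The result is a graded map $\Fil^\bullet_\Nyg\prism_S\{\bullet\}[2\bullet]\to\Fil^{\bullet+k}_\Nyg\prism_R\{\bullet+k\}[2\bullet+2k]$. This is a map of graded modules over the Rees algebra, i.e.\ a map $f^{\Nyg}_*\O\to\O\{k\}[2k]$ of quasicoherent sheaves on $R^\Nyg$. Linearity over $\Fil^\bullet\TC^-_\prism(R)$ follows from the transfer being linear (projection formula), which holds in the intended applications via \cref{thm::transfer_map_for_finite_loc_free_extensions}. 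If linearity is not part of the hypotheses, I would add it as an implicit assumption. The $\TP_\prism$ transfer gives the analogous map on $R^\prism$. Compatibility with $\can$ and $\varphi$ then says the two restrictions along $j_\dR$ and $j_\HT$ agree with the $R^\prism$-map. Choosing these homotopies yields the gluing data, and hence a map on $R^\Syn$.

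The main obstacle is coherence. The homotopies exhibiting compatibility with $\can$ and $\varphi$, together with the functoriality in $R$, must assemble into a map in the limit category $\Perf(R^\Syn)$, or rather $\QCoh$, since $f^\syn_*\O$ is not perfect. They must also be compatible across the quasi-syntomic descent diagram. I would handle this by phrasing the hypothesis as a natural transformation of functors $\QSyn_Y\to$ (diagrams $\Fil\TC^-_\prism\rightrightarrows\Fil\TP_\prism$), so that $\gr$ followed by the equalizer-type description of $F$-gauges is simply functorial. Composition would hold only up to non-canonical homotopy, exactly as in the input. Functoriality in base change comes from base-change functoriality of the input transfers.
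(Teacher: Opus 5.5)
This is essentially the paper's proof. You both reduce by quasi-syntomic descent to $Y=\Spf R$ with $R$ a QRSP. There you describe $F$-gauges as Nygaard-filtered $\Fil^\bullet_\Nyg\prism_R$-modules with Frobenius; the paper cites Mondal's filtered-module description rather than gluing $R^\Nyg$ along $j_{\dR}, j_{\HT}$, but these are the same data. You then take associated gradeds of the filtered transfers on $\TC^-_\prism$ and $\TP_\prism$, pull out the Breuil--Kisin twists (which your ``i.e.'' leaves implicit), and invoke compatibility with $\can$ and $\varphi$.

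The $\Fil^\bullet\TC^-_\prism(R)$-linearity and gluing-coherence issues you flag are left implicit in the paper; linearity is supplied in the application by \cref{thm::transfer_map_for_finite_loc_free_extensions}. Your extra finiteness assumption on $f$ is unnecessary, since closure of $\mc C$ under base change already keeps $X\times_Y Y'$ bounded quasi-syntomic.
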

\begin{proof}
    By quasi-syntomic descent, we may assume that $Y = \Spf R$ is a QRSP. In this case, recall that the category of prismatic $F$-gauges over $Y$ a QRSP is equivalent to the category of $(p,I)$-complete $\Z$-indexed filtered modules $\Fil^\bullet M$ over the filtered ring $\Fil^\bullet_\Nyg \prism_R$ equipped with a $\Fil^\bullet_\Nyg\prism_R$-linear map \[\varphi : \Fil^\bullet M \to I^\bullet M := I^\bullet\prism_R \otimes_{\prism_R} M\]
    such that the map \[\Fil^\bullet M \hat\otimes_{\Fil^\bullet_\Nyg \prism_R, \varphi} I^\bullet \prism_R \to I^\bullet M\]
    is an isomorphism of filtered $I^\bullet \prism_R$-modules \cite[Remark 3.22]{mondal2024dieudonn}; here, the tensor product on the left is $(p,I)$-completed. In particular, the prismatic $F$-gauge associated to $\O_{Y^\Syn}\{r\}[2r]$ is simply $(\Fil^{\bullet+r}_\Nyg \prism_R)\{r\}[2r]$ itself, while the prismatic $F$-gauge associated to $f^\syn_* \O_{X^\Syn}$ is $\Fil^\bullet_\Nyg \R\Gamma_\prism(X)$. 

    Taking the $i$'th associated graded of the filtered transfer maps in the statement of the lemma, we get a map \[\Fil^i_\Nyg \R\Gamma_\prism(X, \O\{i\})[2i] \to (\Fil^{i+k}_\Nyg \prism_R)\{i+k\}[2i+2k]. \]
    Since $\R\Gamma_\prism(X)$ is an algebra over $\prism_R$ and $\R\Gamma_\prism(X, \O\{i\})$ is a module over $\R\Gamma_\prism(X)$, it follows that we can pull the Breuil--Kisin twist out of the cohomology. We thus get a map \[\Fil^i_\Nyg \R\Gamma_\prism(X) \to (\Fil^{i+k}_\Nyg \prism_R)\{k\}[2k]\]
    for all $i$. This map is compatible with the canonical and Frobenius maps by assumption. Hence, the map \[\Fil^\bullet_\Nyg \R\Gamma_\prism(X) \to (\Fil^{\bullet+k}_\Nyg \prism_R)\{k\}[2k]\]
    is actually a map of prismatic $F$-gauges on $Y^\Syn$. This gives us the required map $f^\syn_* \O_{X^\Syn} \to \O_{Y^\Syn}\{k\}[2k]$.
\end{proof}

\begin{lemma}
    For $f:X\to Y$ any quasi-syntomic map of qcqs quasi-syntomic $p$-adic formal schemes, and any $\mc E\in \mc D_\qcoh(X^\Syn)$ and $\mc F\in \Perf(Y^\Syn)$, the projection formula holds:\[(f^\syn_*\mc E) \otimes \mc F \simeq f^\syn_*(\mc E\otimes f^{\syn,*}\mc F).\]
\end{lemma}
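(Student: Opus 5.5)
The projection formula is a statement about quasi-coherent sheaves on stacks, and the key point is that $\mc F$ is perfect, hence dualizable. I would first construct the comparison map and then check that it is an equivalence.

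The map comes from adjunction. The counit $f^{\syn,*}f^\syn_*\mc E \to \mc E$ gives
\[f^{\syn,*}\big((f^\syn_*\mc E)\otimes \mc F\big) \simeq f^{\syn,*}f^\syn_*\mc E \otimes f^{\syn,*}\mc F \to \mc E\otimes f^{\syn,*}\mc F,\]
using that $f^{\syn,*}$ is symmetric monoidal. Its adjoint is a natural map
\[(f^\syn_*\mc E)\otimes \mc F \to f^\syn_*(\mc E\otimes f^{\syn,*}\mc F).\]
This requires only that $f^\syn_*$ is right adjoint to $f^{\syn,*}$ on $\mc D_\qcoh$, which holds for any map of stacks.

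To see that the map is an equivalence I would use dualizability of $\mc F$ and avoid any base-change or descent argument. Perfect objects of $\mc D_\qcoh(Y^\Syn)$ are exactly the dualizable ones, so $\mc F$ has a dual $\mc F^\vee$, and $f^{\syn,*}$ preserves duals. For any $\mc G\in \mc D_\qcoh(Y^\Syn)$, adjunction and dualizability give natural equivalences
\begin{align*}
\Map(\mc G, (f^\syn_*\mc E)\otimes \mc F) &\simeq \Map(\mc G\otimes \mc F^\vee, f^\syn_*\mc E) \simeq \Map(f^{\syn,*}\mc G\otimes f^{\syn,*}\mc F^\vee, \mc E)\\
&\simeq \Map(f^{\syn,*}\mc G, \mc E\otimes f^{\syn,*}\mc F) \simeq \Map(\mc G, f^\syn_*(\mc E\otimes f^{\syn,*}\mc F)).
\end{align*}
By Yoneda this yields an equivalence, and a routine check shows that it agrees with the map constructed above. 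The quasi-syntomic and qcqs hypotheses are then not needed for this argument. They only guarantee that $f^\syn_*$ preserves quasi-coherence, so that everything stays in $\mc D_\qcoh$.

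The main obstacle is the claim that perfect complexes on $Y^\Syn$ are dualizable in $\mc D_\qcoh(Y^\Syn)$, and that $f^\syn_*$ lands in $\mc D_\qcoh$. I would define $\mc D_\qcoh$ and $\Perf$ of the stack as limits over affine (or QRSP) charts, as in \cite{Bhatt_FGaugeLec}. Dualizability can then be checked chart by chart, because pullbacks are symmetric monoidal and duals are compatible with limits of symmetric monoidal categories. For the pushforward, in the QRSP-local description of $F$-gauges as filtered modules recalled in \cref{lem::abstract_trace_maps_from_transfers}, $f^\syn_*$ is computed via quasi-syntomic descent and lands in complete filtered modules. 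There the tensor product with a perfect filtered module $\mc F$ commutes with the limits involved because $\mc F$ is a finite colimit of shifts and twists of the unit. If any doubt remains about that pushforward, I would fall back on this explicit check: the projection formula holds trivially for the unit, and hence for its Breuil--Kisin twists, shifts, finite (co)limits and retracts. That class generates $\Perf(Y^\Syn)$ locally, and it suffices to verify the claim locally by descent.
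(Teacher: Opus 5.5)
Your proof is correct, and it departs from the paper after the comparison map, which you construct exactly as the paper does. The paper proves the map is an equivalence by a thick subcategory argument: both sides are exact in $\mc F$, so it reduces to $\mc F=\O_{Y^\Syn}$, where the claim is tautological. You instead use the formal fact that for a symmetric monoidal pullback with a right adjoint, the projection map is an equivalence whenever $\mc F$ is dualizable. You prove this by the Yoneda chain through $\mc F^\vee$, together with the chart-by-chart identification of $\Perf(Y^\Syn)$ with the dualizable objects of $\mc D_\qcoh(Y^\Syn)$.

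Your route needs nothing about $f$ beyond the adjunction, and it does not depend on how $\Perf(Y^\Syn)$ is generated. That second point matters. Read literally, the paper's reduction only covers the thick subcategory generated by $\O_{Y^\Syn}$, even allowing Breuil--Kisin twists and shifts, and this is strictly smaller than $\Perf(Y^\Syn)$. For example, over $Y=\Spf\Z_p$, every object in that subcategory has \'etale realisation whose mod $p$ cohomology has only powers of the mod $p$ cyclotomic character as Jordan--H\"older factors. By contrast, $\pi^\syn_*\O_{E^\Syn}$, for $\pi:E\to\Spf\Z_p$ an elliptic curve with supersingular reduction, is perfect, but its mod $p$ realisation contains the irreducible $2$-dimensional $E[p]^\vee$. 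To complete the paper's argument one must run the reduction locally on $Y^\Syn$, which needs $f^\syn_*$ to commute with base change along charts; that is where hypotheses on $f$ would really be used. The paper's version is shorter; yours is hypothesis-free and avoids this issue entirely.

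For the same reason, drop the claim in your last paragraph that a perfect $\mc F$ is a finite colimit of shifts and twists of the unit. This holds only locally, so your fallback would need exactly the base-change property for $f^\syn_*$ just mentioned, which you do not justify. Your main argument needs neither. Also, if $f^\syn_*$ is defined as the right adjoint on $\mc D_\qcoh$, landing in $\mc D_\qcoh$ is automatic. The hypotheses on $f$ matter only for identifying this right adjoint with the sheaf-theoretic pushforward and for base change.
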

\begin{proof}
    Note that there is always a canonical map \[(f^\syn_*\mc E) \otimes \mc F \to f^\syn_*(\mc E\otimes f^{\syn,*}\mc F)\]
    adjoint to the map $$f^{\syn,*}((f^\syn_*\mc E) \otimes \mc F) \to (f^{\syn,*} f^\syn_*\mc E) \otimes f^{\syn,*}\mc F \to \mc E\otimes f^{\syn,*}\mc F.$$
    By a thick subcategory argument, we reduce to the case $\mc F = \O_{Y^\Syn}$ in which case the projection formula is a tautology. 
\end{proof}

We finally prove the existence of the prismatic trace map. 

\begin{proof}[Proof of Existence of Traces and Statement (\ref{mainthm_part::projection_formula}) in \cref{mainthm::prismatic_trace}]
   By \cref{thm::transfer_map_for_finite_loc_free_extensions} and \cref{lem::abstract_trace_maps_from_transfers} for $f:X\to Y$ finite flat and regular, we get a trace map of prismatic $F$-gauges \[\tr^\syn_f: f_*^\syn \O_{X^\Syn} \to \O_{Y^\Syn}.\] Using the projection formula \[f_*^\syn \O_{X^\Syn} \otimes \mc E \simeq f^\syn_* f^{\syn,*}\mc E\]
    for any $\mc E\in \Perf(Y^\Syn)$, we get a trace map \[\tr_f^\syn({\mc E}):  f^\syn_* f^{\syn,*}\mc E \to \mc E\]
    for any prismatic $F$-Gauge $\mc E$ on $Y$. This also establishes \cref{mainthm::prismatic_trace}(\ref{mainthm_part::projection_formula}). 
\end{proof}

\begin{remark}
    Combining \cref{thm::gysin_map_in_decompleted_TP_preserves_filn} and \cite[Theorem 5.38]{tang2024syntomic}, we see that the Gysin map \[\Gys^\syn_{\O\{r\}} : i_*\O_{Z^\Syn} \to \O_{X^\Syn}\{r\}[2r]\]
    coming from \cref{lem::abstract_trace_maps_from_transfers} applied to \cref{thm::gysin_map_in_decompleted_TP_preserves_filn} 
    upon taking coherent cohomology over $X^\Syn$ coincides with the Gysin map \[\Z_p(i)(Z) \to \Z_p(i+r)(X)[2r]\]
    on syntomic cohomology given in \cite[Definition 5.32]{tang2024syntomic}.
\end{remark}

\begin{lemma}
    Suppose $f:X\to Y$ is a finite locally free regular map of quasi-syntomic qcqs $p$-adic formal schemes. Then, for any prismatic $F$-gauge $\mc E\in \mc D_{\qcoh}(Y^\Syn)$, the composition \[\mc E\to f_*^\syn f^{\syn,*}\mc E\xrightarrow{\tr_f^\syn(\mc E)} \mc E\]
    is multiplication by $\deg f$.

    In other words, \cref{mainthm::prismatic_trace}(\ref{mainthm_part::normalisation}) is true.
\end{lemma}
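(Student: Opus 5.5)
By the projection formula (Statement (\ref{mainthm_part::projection_formula}) of \cref{mainthm::prismatic_trace}, already established), $\tr_f^\syn(\mc E)\simeq \tr_f^\syn(\O_{X^\Syn})\otimes \id_{\mc E}$. The unit $\mc E\to f^\syn_*f^{\syn,*}\mc E$ is likewise the unit $\O_{Y^\Syn}\to f^\syn_*\O_{X^\Syn}$ tensored with $\id_{\mc E}$. So I will reduce to $\mc E=\O_{Y^\Syn}$, where the claim is that $\O_{Y^\Syn}\to f^\syn_*\O_{X^\Syn}\xrightarrow{\tr}\O_{Y^\Syn}$ is multiplication by $\deg f$. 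The trace is functorial in base change, so by quasi-syntomic descent I will take $Y=\Spf R$ with $R$ a QRSP and, localising further, assume $\deg f=d$ is constant.

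In this setting $F$-gauges are filtered $\Fil^\bullet_\Nyg\prism_R$-modules with Frobenius, as in the proof of \cref{lem::abstract_trace_maps_from_transfers}. The composite is then an endomorphism of $\Fil^\bullet_\Nyg\prism_R$ as a filtered module over itself, compatible with $\varphi$. Such an endomorphism is determined by the image of $1\in\pi_0\Fil^0_\Nyg\prism_R=\prism_R$, and it suffices to show this element is $d$. By construction the composite is the $\gr^0$ (in weight $0$, i.e.\ $\pi_0$) of the filtered composite
\[\Fil^\bullet\TC^-_\prism(R)\xrightarrow{f^*}\Fil^\bullet\TC^-_\prism(S)\xrightarrow{\tr_f}\Fil^\bullet\TC^-_\prism(R),\]
where $X=\Spf S$. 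Since $R$ is a QRSP this spectrum is even, so $\pi_0$ injects into $\pi_0$ of the underlying object. It is therefore enough to compute on underlying spectra. There, by \cref{thm::transfer_map_for_finite_loc_free_extensions}(1), $\tr_f$ is $\mc F(f_*)$, and the composite $\mc F(f_*)\circ\mc F(f^*)=\mc F(f_*f^*)$ is induced by the functor $M\mapsto M\otimes_R S$ on $\Perf(R)$. This is linear over $\mc F(R)$, so it is multiplication by the class $[S]\in K_0(R)$ via the trace $K\to\mc F$. Locally $S\simeq R^d$, so $[S]=d$, and the composite is multiplication by $d$.

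The main obstacle is the reduction from underlying objects back to the filtered/$F$-gauge statement. In $\TC^-_\prism$ (decompleted) I need that $\pi_0$ of $\Fil^0$ maps isomorphically, or at least injectively, to $\pi_0$ of the underlying object, and that the $\TP_\prism$ component is handled the same way. For a QRSP both hold because $\Fil^0=\tau_{\ge0}$ of an even spectrum, and $\pi_0\TP_\prism(R)=\prism_R$ is what the prismatic $F$-gauge sees. I will also need to check that the identification in \cref{lem::abstract_trace_maps_from_transfers}, which removes Breuil--Kisin twists, does not introduce a unit scalar. Once the composite is known on $1\in\prism_R$, linearity over $\Fil^\bullet_\Nyg\prism_R$ gives the full statement, and descent globalises it.
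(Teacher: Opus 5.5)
Your proposal is correct and is essentially the paper's argument: reduce to the unfiltered composite $\mc F(R)\to\mc F(S)\to\mc F(R)$, observe that it is $\mc F(R)$-linear and hence multiplication by the image of $[S]\in K_0(R)$ under $K\to\mc F$, and use $[S]=[R^d]=d$. Your QRSP/evenness step spells out what the paper leaves as ``by construction'': the endomorphism of $\O_{Y^\Syn}$ is an element of $H^0(\Z_p(0)(R))\subset\prism_R$, so the $\TP_\prism$ side and the weight-$0$ twist impose nothing extra.

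The one point to tighten is that constant rank does not give $[S]=d$ in $K_0(R)$. You should localise further until $S\cong R^d$, as the paper does via Zariski descent; this preserves QRSPs, and since $H^0$ of $\Z_p(0)$ is a sheaf the equality can be checked locally.
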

\begin{proof}
    By Zariski descent, we may suppose $X = \Spf S$ and $Y=\Spf R$ are affine and that $S$ is a free $R$-module of rank $d$. By construction of the prismatic trace, it suffices to check that the composition \[\mc F(R)\to \mc F(S)\xrightarrow{\text{transfer}} \mc F(R)\]
    is multiplication by $d$, for $\mc F\in \{\TC^-_\prism, \TP_\prism\}$. Since this composition is $\mc F(R)$-linear and so $K(R)$-linear, it suffices to check that the above composition is multiplication by $d$ in the `universal' case $\mc F = K$. By linearity again, it suffices to check that $1\in \pi_0K(R)$ is sent to $d\in \pi_0K(R)$. However, this is clear, since the image of 1 in $K_0(R)$ under the above composition is by definition $[S] = [R^d] = d\in K_0(R)$. 
\end{proof}

\subsection{Trace Maps on Prismatic Cohomology for Perfectoids}\label{sctn::calculating_prismatic_trace_for_perfectoids}

We analyse the prismatic trace map associated to a (module-)finite locally free extension $R\to S$ where both $R$ and $S$ are perfectoid.
Then, $\prism_R = A_{\inf}(R)$ and $\prism_S = A_{\inf}(S)$. 

The trace map on prismatic cohomology is thus a map \[\tr_{S/R}^\prism: A_{\inf}(S)\to A_{\inf}(R).\]
On the other hand, by derived Nakayama, we know that the extension $A_{\inf}(R)\to A_{\inf}(S)$ is finite and of the same degree as $S/R$. By the same argument, it is also clear that $A_{\inf}(S)$ is locally free as an $A_{\inf}(R)$-module as the same is true for $S$ over $R$. Thus, there is also the usual trace map \[\tr_{A_{\inf}}:A_{\inf}(S)\to A_{\inf}(R)\]
associated to a finite locally free extension. In this subsection we check that these two maps coincide.

\begin{proposition}\label{prop::compatibility_for_trace_with_perfectoids}
    For any finite locally free extension $S/R$ of $p$-torsion free integral perfectoids, the two maps $\tr^\prism_{S/R}$ and $\tr_{A_{\inf}}$ described above coincide.

    In other words, \cref{mainthm::prismatic_trace}(\ref{mainthm_part::trace_on_perfectoids}) is true. 
\end{proposition}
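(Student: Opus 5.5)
We compare the two maps through $\THH$. The prismatic trace was built from the filtered transfer of \cref{thm::transfer_map_for_finite_loc_free_extensions} applied to $\Fil^\bullet\TC^-_\prism$ and $\Fil^\bullet\TP_\prism$, and \cref{lem::abstract_trace_maps_from_transfers} extracts it by taking associated gradeds. For a perfectoid, all the even homotopy of $\TC^-$ and $\TP$ is concentrated in a single motivic graded piece and all odd homotopy vanishes. Hence $\tr^\prism_{S/R}$ is simply $\pi_0$ of the transfer $\TC^-(S)\to\TC^-(R)$ (or $\pi_0$ of $\TP(S)\to\TP(R)$), once we identify $\pi_0\TC^-(R)\cong A_{\inf}(R)$ as in \cite[Section 6]{bms2}. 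Here we use that $\Fil^\bullet\TC^-_\prism$ agrees with the Nygaard-complete version on perfectoids, since $A_{\inf}$ is Nygaard complete; equivalently, we may work with $\TP$, where $\pi_0\TP(R)\cong A_{\inf}(R)$. So the task becomes: show that the $\THH$-theoretic transfer, which is $\TP(\Perf(S)\xrightarrow{f_*}\Perf(R))$, induces the algebraic trace on $\pi_0$.

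First, $\THH(R;\Z_p)$ is even with $\pi_*\THH(R;\Z_p)=R[u]$, and $S\otimes_R\THH(R)\simeq\THH(S)$ because $R\to S$ is étale after $p$-completion. Indeed, a finite locally free extension of $p$-torsion-free perfectoids has vanishing cotangent complex $\L_{S/R}$, by perfectness mod $p$. Consequently $\THH(S)^\wedge_p\simeq\THH(R)\otimes_R S$, and since $S$ is finite projective over $R$, \[\TP(S)\simeq\TP(R)\otimes_{A_{\inf}(R)}A_{\inf}(S),\] with $A_{\inf}(S)$ finite projective over $A_{\inf}(R)$. This is the same base change that the text above obtains by derived Nakayama.

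Second, we identify the transfer. The functor $f_*:\Perf(S)\to\Perf(R)$ is $\Perf(R)$-linear, so the transfer $\THH(S)\to\THH(R)$ is $\THH(R)$-linear. For $S$ a finite projective $R$-module, the transfer $\THH(\mathrm{End}_R(S))\simeq\THH(R)$ precomposed with $\THH(S)\to\THH(\mathrm{End}_R(S))$ is, by the standard description (Morita invariance plus the trace property of $\THH$ on matrix rings, as in the Dennis trace), the map induced by the $R$-linear trace $\mathrm{End}_R(S)\to R$ in degree $0$. We can check this Zariski-locally on $R$ where $S\cong R^d$: there $\THH(M_d(R))\simeq\THH(R)$ via the matrix trace, and the multiplication map $S\to M_d(R)$ composed with the matrix trace is $\tr_{S/R}$. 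Taking $S^1$-Tate fixed points, which is compatible with this identification and with the base change above, the transfer $\TP(S)\simeq\TP(R)\otimes_{A_{\inf}(R)}A_{\inf}(S)\to\TP(R)$ is $\TP(R)$-linear.

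To pin it down on $\pi_0$, we use the $\TP(R)$-linearity together with the finite projectivity of $\pi_0\TP(S)=A_{\inf}(S)$ over $A_{\inf}(R)$. The map $A_{\inf}(S)\to A_{\inf}(R)$ is then an $A_{\inf}(R)$-linear functional. Reducing along $\theta$, it must lift $\tr_{S/R}$, since modulo $\ker\theta$ it is $\pi_0$ of the $\THH$ transfer, which we computed above. The main obstacle is this last step: a linear functional lifting $\tr_{S/R}$ is not determined by its reduction alone. To fix it uniquely, we also use Frobenius compatibility, since the trace map is one of $F$-gauges, together with $\Z_p^\times$-independence. Concretely, both $\tr^\prism$ and $\tr_{A_{\inf}}$ commute with $\varphi$ and agree modulo $\ker\theta$. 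Their difference $\delta$ is therefore $\varphi$-equivariant with image in $\ker\theta$, and applying $\varphi^{-n}$ (which exists because $A_{\inf}$ is perfect) shows that its image lies in $\bigcap_n\ker\theta\circ\varphi^{-n}$, which we expect to be zero.

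As a cleaner alternative, and the one we would try first, we avoid this by reducing to the universal case via $\TP(R)$-linearity. Both maps are $A_{\inf}(R)$-linear and compatible with base change in $R$, so we may pass to a $p$-completely faithfully flat perfectoid cover over which $S$ becomes split, $S\cong R^d$, using \cite{BS_prisms}-style $\arc$/v-descent for $A_{\inf}$. In the split case, the transfer for $R^d\to R$ is the sum map by additivity of localising invariants, and this agrees with $\tr_{A_{\inf}}$ on $A_{\inf}(R)^d$. The remaining point is to justify that such a splitting cover exists, i.e.\ that $S/R$ is finite étale after inverting nothing, which follows from $\L_{S/R}\simeq0$ since $S/R$ is finite étale.
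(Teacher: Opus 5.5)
Your proposal contains two arguments. The first is exactly the paper's proof. Both maps agree modulo $d$ because $\pi_0$ of the $\THH$-transfer is the algebraic trace, and both are Frobenius equivariant, so their difference lands in $\bigcap_{n\ge 0}\varphi^n(d)A_{\inf}(R)$. You stop, however, at ``which we expect to be zero''. That vanishing is the one non-formal input, and it is \cref{lem::frob_separation_for_perfds}, whose proof is short:
\begin{itemize}
\item Modulo $p$, $\bigcap_n\bar d^{p^n}R^\flat=0$ by $\bar d$-adic completeness of $R^\flat$, so any element $x$ of the intersection is $px_1$.
\item Write $x=\varphi^n(d)b_n$. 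Since $\bar d$ is a nonzerodivisor on $R^\flat$ (this is where $p$-torsion-freeness of $R$ is used), $b_n\in pA_{\inf}(R)$, so $x_1$ again lies in the intersection.
\item The intersection is therefore $p$-divisible, hence zero by $p$-completeness.
\end{itemize}

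Your second argument, which you say you would try first, is a genuinely different route, and it works. Its key observation is that a finite locally free map of perfectoids is automatically finite \'etale. So $S$ splits after a finite \'etale faithfully flat base change $R\to R'$. Here $R'$ is again perfectoid and $A_{\inf}(R)\to A_{\inf}(R')$ is finite \'etale faithfully flat, so no arc- or v-descent is needed.

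In the split case, $\Perf(R'^d)\simeq\Perf(R')^{\times d}$ with pushforward the direct sum, so the transfer on any localising invariant is the sum map. Combine this with three facts: the transfer is compatible with base change, it is $\TP(R')$-linear, and on perfectoids the filtered transfer is just $\tau_{\ge 2*}$ of the underlying one (as your first paragraph notes). These give $\tr^\prism_{S/R}\otimes_{A_{\inf}(R)}A_{\inf}(R')=\tr_{A_{\inf}}\otimes_{A_{\inf}(R)}A_{\inf}(R')$, and equality follows by faithful flatness.

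Compared with the paper, this avoids both the explicit $\HH_0$ computation and the separation lemma. It also shows that \cref{mainthm::prismatic_trace}(\ref{mainthm_part::trace_on_perfectoids}) is really an instance of the finite \'etale situation. The paper's route, in exchange, never needs to know that $S/R$ is \'etale.

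Two justifications in your route need repair:
\begin{itemize}
\item ``By perfectness mod $p$'' is wrong as stated, since $R/p$ and $S/p$ are only semiperfect. Instead, note that $\L_{A_{\inf}(S)/A_{\inf}(R)}\otimes^L\F_p\simeq\L_{S^\flat/R^\flat}=0$. Also $S\simeq A_{\inf}(S)\otimes^L_{A_{\inf}(R)}R$, because a generator $d$ of $\ker\theta_R$ also generates $\ker\theta_S$. Hence $\L_{S/R}$ vanishes $p$-completely.
\item Your final sentence is circular as written. The correct deduction is: $\Omega_{S/R}/p=0$; $\Omega_{S/R}$ is finitely generated over the $p$-adically complete ring $S$, so Nakayama gives $\Omega_{S/R}=0$; and $R\to S$ is flat and finitely presented, hence finite \'etale.
\end{itemize}
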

\begin{proof}
    By Zariski descent we assume $S$ is finite free as an $R$-module. By the results of \cite[Section 6.1]{bms2}, Remark \ref{rmk::bootstrap_from_spectral_trace_to_synthetic_trace}, and multiplicativity of the homotopy-fixed point spectral sequence, the transfer map $\TC^-(S)\to \TC^-(R)$ induces the map of spectral sequences 
    
\[\begin{tikzcd}
	{S[u]\llb t\rrb} & {A_{\inf}(S)[u,t]/(ut-d)} \\
	{R[u]\llb t\rrb} & {A_{\inf}(R)[u,t]/(ut - d)}
	\arrow[Rightarrow, from=1-1, to=1-2]
	\arrow["\tr"', from=1-1, to=2-1]
	\arrow["{\tr^\prism_{S/R}}", from=1-2, to=2-2]
	\arrow[Rightarrow, from=2-1, to=2-2]
\end{tikzcd}\]
    where $d$ is a generator of $\ker(A_{\inf}(R)\to R)$ (and $\ker(A_{\inf}(S)\to S)$), and the vertical maps send $u$ and $t$ to themselves and act on $S$ (resp. $A_{\inf}(S)$) as the transfer maps $\pi_0\tr^\THH$ (resp. $\pi_0\tr^{\TC^-}$). We claim that the map \[\pi_0\tr^\THH: S = \THH_0(S)\to \THH_0(R) = R\]
    coincides with the usual trace map $S\to R$. Indeed, recall that $\THH_0 = \HH_0$ in general, and so this claim is equivalent to the claim that $$\pi_0\tr^\HH:S = \HH_0(S) \to \HH_0(R)=R$$
    is the usual trace map. This latter claim follows by a straightforward calculation unravelling the explicit weak equivalence $\HH(-)\simeq \HH(\Perf(-))$ described in \cite[Section 2]{mccarthy1994_HH_of_category}. 

    By functoriality of the transfer with respect to a map of localising invariants, the map $\tr^\prism_{S/R}$ is also Frobenius equivariant. The usual trace map $\tr_{A_{\inf}}:A_{\inf}(S) \to A_{\inf}(R)$ is also Frobenius equivariant, since one can apply Frobenius to the matrix for multiplication by any element $x\in A_{\inf}(S)$. In particular, the difference \[h:= \tr_{S/R}^\prism - \tr_{A_{\inf}} : A_{\inf}(S)\to A_{\inf}(R)\]
    is a Frobenius equivariant $A_{\inf}(R)$-linear map that is zero modulo $d$. We claim that any such map $h$ has to be zero.  
    Indeed, since $h$ is zero modulo $d$, we have $h(A_{\inf}(S))\subseteq d\cdot A_{\inf}(R)$. Frobenius equivariance then says that $h(A_{\inf}(S))\subseteq \varphi^n(d)\cdot A_{\inf}(R)$ for all $n\in \Z$. We are done by the following \cref{lem::frob_separation_for_perfds}.
\end{proof}
The following lemma is probably well-known but the author could not find a reference in the literature.
\begin{lemma}\label{lem::frob_separation_for_perfds}
    Suppose $R$ is any $p$-torsion free integral perfectoid ring and $d$ a generator of the kernel of Fontaine's map $\theta:A_{\inf}(R)\to R$. Then, \[\bigcap_{n\ge 0} \varphi^n(d)A_{\inf}(R) = 0.\]
\end{lemma}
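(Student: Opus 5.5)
Since $d$ is a distinguished element, $\varphi^n(d)$ is also distinguished, and the ideals $(\varphi^n(d))$ for distinct $n$ should be pairwise comaximal after inverting $p$. They are not comaximal integrally, so the plan is instead to reduce modulo $p$ and exploit the valuation-like behaviour of $\varphi$ on the tilt. Suppose $x\in\bigcap_n \varphi^n(d)A_{\inf}(R)$. Write $x=\varphi^n(d)y_n$. Because $A_{\inf}(R)=W(R^\flat)$ is $p$-torsion free and $p$-adically separated, it suffices to show $x\in p^kA_{\inf}(R)$ for every $k$. I will argue by induction on $k$, using that $\varphi^n(d)$ is a nonzerodivisor modulo $p$.

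First I reduce mod $p$. The image of $d$ in $R^\flat$ is some $d^\flat$ with $R^\flat/d^\flat\cong R/p$. Since $\varphi$ is the $p$-power map on $R^\flat$, the image of $\varphi^n(d)$ is $(d^\flat)^{p^n}$. So $\bar x\in\bigcap_n (d^\flat)^{p^n}R^\flat$. The key claim is that $R^\flat$ is $d^\flat$-adically separated. This holds because $R^\flat$ is $d^\flat$-adically complete (indeed $R^\flat=\varprojlim_{x\mapsto x^p}R/p$), and complete implies separated. Concretely, $R^\flat\simeq\varprojlim_\varphi R^\flat/d^\flat$, and an element lies in $(d^\flat)^{p^n}$ exactly when its $n$-th component vanishes. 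Hence $\bar x=0$, that is, $x=px'$.

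Next I set up the induction. If $x=px'$ and $x=\varphi^n(d)y_n$, then $\varphi^n(d)y_n\in pA_{\inf}$. Since $(p,\varphi^n(d))$ is a regular sequence in $A_{\inf}(R)$ (because $\varphi^n(d)$ is a nonzerodivisor mod $p$, as $R^\flat$ is $d^\flat$-torsion free for $p$-torsion free perfectoid $R$), we get $y_n\in pA_{\inf}$. Therefore $x'=\varphi^n(d)(y_n/p)$, and $x'$ again lies in the intersection. Iterating gives $x\in\bigcap_k p^kA_{\inf}(R)=0$.

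The main obstacle is justifying that $d^\flat$ is a nonzerodivisor in $R^\flat$ and that $R^\flat$ is $d^\flat$-adically complete in this generality, where $R$ need not be a domain. I would cite \cite{bms2} or \cite{BS_prisms}, which say that $(A_{\inf}(R),(d))$ is a perfect prism, so $d$ is distinguished and $(p,d)$ is a regular sequence. The $p$-torsion-freeness of $R$ is exactly what is needed for $d^\flat$ to be a nonzerodivisor in $R^\flat$.
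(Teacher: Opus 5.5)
Your proposal is correct and is essentially the paper's argument: reduce modulo $p$ and use $\bar d$-adic separatedness of $R^\flat$ to get $x\in pA_{\inf}(R)$, then use that $\varphi^n(d)\equiv \bar d^{p^n}$ is a nonzerodivisor modulo $p$ to see that the intersection is $p$-divisible, hence zero by $p$-adic separatedness. One small correction to your justification: being a perfect prism does not by itself make $(p,d)$ a regular sequence (consider $(W(k),(p))$). What does work is comparing the Koszul complexes of $(p,d)$ and $(d,p)$ on $A_{\inf}(R)$, which gives $R^\flat[\bar d]\cong R[p]$; this is exactly why $p$-torsion-freeness of $R$ is the right input, as you say.
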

\begin{proof}
    Let $R^\flat$ be the tilt of $R$, so that $A_{\inf}(R) = W(R^\flat)$ and $R^\flat$ is $\bar d$-adically complete where $\bar d$ is the reduction of $d$ modulo $p$. Note that $\bar d$ is also non-zero divisor in $R^\flat$ by our $p$-torsion-free assumption on $R$. Suppose $x\in \bigcap_{n\ge 0} \varphi^n(d)A_{\inf}(R)$. Reducing mod $p$, one has \[ \bigcap_{n\ge 0} \bar d^{p^n} R^\flat = 0\]
    by $\bar d$-adic completeness of $R^\flat$. In particular, $x = px_1$ for some $x_1\in A_{\inf}(R)$. Next, writing $x = \varphi^n(d)b_n$ implies that $\bar d^{p^n} \cdot b_n = 0$ modulo $p$, and since $\bar d$ is a non-zero divisor we get $b_n\in pA_{\inf}(R)$. As $A_{\inf}(R)$ is $p$-torsion free, we thus have $x_1 = \varphi^n(d)\frac{b_n}{p}$ for all $n$. Thus $\bigcap_{n\ge 0} \varphi^n(d)A_{\inf}(R)$ is $p$-divisible. Since $A_{\inf}(R)$ is $p$-complete, it follows that $\bigcap_{n\ge 0} \varphi^n(d)A_{\inf}(R) = 0$. 
\end{proof}

\subsection{\'Etale Comparison}\label{sctn::etale_comparison_of_trace}

In this section, we prove the compatibility of the prismatic trace with \'etale realisation. 
For the rest of this section, we assume that $f:X\to Y$ is a regular finite locally free map of quasi-syntomic $\Z_p$-flat $p$-adic formal schemes such that the generic fibres $X_\eta$ and $Y_\eta$ are locally Noetherian analytic adic spaces. Then, $f_\eta: X_\eta\to Y_\eta$ is a finite flat map of analytic adic spaces. In this setting, \cite{li_reinecke_zavyalov_trace} construct a trace map $$\tr_f^\et(\mc F): f_{\eta *} f_\eta^* \mc F\to \mc F$$ for any abelian sheaf $\mc F$; we recall their result.

\begin{theorem}[{\cite[Theorem 2.5.6]{li_reinecke_zavyalov_trace}}]
    For any finite flat map $f_\eta:X_\eta\to Y_\eta$ of locally Noetherian analytic adic spaces, there is a unique map $$\tr_f^\et(\mc F): f_{\eta *} f_\eta^* \mc F\to \mc F$$  
    functorial in $\mc F\in \Ab(Y_{\eta,\et})$, compatible with compositions and pull-backs in $f_\eta$, and normalized so that the composition $\mc F\to f_{\eta,*}f^*_\eta\mc F\to \mc F$ is locally given by multiplication by $\deg f$. 
\end{theorem}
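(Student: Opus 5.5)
The plan is to reduce everything to the constant sheaf $\Z$ and then write the trace down by hand. Uniqueness should come out of the normalisation together with compatibility with compositions.

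\emph{Step 1: reduction to $\Z$.} Since $f_\eta$ is finite, $f_{\eta,*}$ is exact on étale sheaves. At a geometric point $\bar y$ its stalks are $(f_{\eta,*}\mc G)_{\bar y}\simeq \bigoplus_{\bar x\mapsto \bar y}\mc G_{\bar x}$, where the sum runs over the finitely many geometric points of the fibre. Hence the natural map
\[f_{\eta,*}\Z \otimes \mc F \to f_{\eta,*}f_\eta^*\mc F\]
is an isomorphism, functorially in $\mc F$, as one checks on stalks. So a trace map functorial in $\mc F$ is the same thing as a single map $t_f\colon f_{\eta,*}\Z\to\Z$, via $\tr^\et_f(\mc F)=t_f\otimes\id_{\mc F}$. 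Compatibility with pull-backs and compositions, and the normalisation, can then all be checked on $t_f$.

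\emph{Step 2: construction of $t_f$.} Let $U\to Y_\eta$ be étale, and put $X_U := X_\eta\times_{Y_\eta}U$. A section of $f_{\eta,*}\Z$ over $U$ is a locally constant function on $X_U$. Working locally on $U$, we may take $U$ quasi-compact; then $X_U$ is quasi-compact, and every such function is a finite $\Z$-combination of indicator functions $1_V$ of clopen subsets $V\subseteq X_U$.

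For such a clopen $V$, the map $f|_V\colon V\to U$ is again finite flat. So $f_*\O_V$ is a locally free $\O_U$-module, whose rank $\deg(f|_V)$ is a locally constant $\Z$-valued function on $U$. We set $t_f(1_V):=\deg(f|_V)$ and extend linearly. Additivity of rank under disjoint unions of clopens shows this is well defined. Explicitly, $t_f(g)(y)=\sum_{x\mapsto y} \mr{length}(\O_{X,x}\otimes\kappa(y))\,g(x)$, which makes compatibility with restriction in $U$ clear.

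The three required properties then follow directly:
\begin{itemize}
\item \emph{Pull-backs:} base change preserves the rank of $f_*\O_V$.
\item \emph{Compositions:} for $X\xrightarrow{f}Y\xrightarrow{g}Z$, the rank of $(g\circ f)_*\O$ restricted to a clopen subset is the rank-weighted sum of the fibre ranks.
\item \emph{Normalisation:} the composite $\Z\to f_{\eta,*}\Z\to\Z$ sends $1$ to $t_f(1_{X_U})=\deg f$.
\end{itemize}

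\emph{Step 3: uniqueness.} Let $\tau$ be any family of traces with the stated properties. By Step 1 it is determined by maps $\tau_f\colon f_{\eta,*}\Z\to\Z$. It suffices to compute $\tau_f(1_V)$ for $V\subseteq X_U$ clopen, and by compatibility with pull-back we may replace $Y_\eta$ by $U$.

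Factor $f|_V=f\circ j$, where $j\colon V\hookrightarrow X_U$ is the clopen immersion; $j$ is finite flat. The sheaf $j_*j^*\Z$ is the extension by zero of $\Z_V$, and normalisation for $j$ (degree $1$ over $V$) forces $\tau_j$ to be the inclusion $j_*\Z\to \Z$. This map sends $1$ to $1_V$.

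Compatibility with composition now gives $\tau_f(1_V)=\tau_f(\tau_j(1))=\tau_{f\circ j}(1)$. Normalisation for the finite flat map $f\circ j$ gives $\tau_{f\circ j}(1)=\deg(f|_V)$. Hence $\tau_f(1_V)=\deg(f|_V)=t_f(1_V)$, so $\tau=t$.

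\emph{Main obstacle.} I expect the main subtlety to be the foundational input in Steps 1 and 2 in the adic setting. Step 1 needs the stalk formula for finite pushforward on the étale site of locally Noetherian analytic adic spaces, which is what gives exactness and the projection formula. Step 2 needs that rank is locally constant and that clopen indicators generate the locally constant functions on quasi-compact opens. I would invoke Huber's results on finite morphisms for the stalk formula. Local Noetherianity is what makes finite flat modules locally free, so the rank argument goes through.
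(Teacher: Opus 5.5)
Your construction in Step 2 is sound. The clopen-immersion computation in Step 3 correctly shows that any admissible family has $\tau_f(\Z)=t_f$. This is close to how the uniqueness argument of Li--Reinecke--Zavyalov is organised once it has reduced to a geometric point: split into connected components, then normalise on each. The paper itself only cites the theorem, and imitates that argument in its \'etale comparison; your version avoids topological invariance.

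The gap is the passage from $\Z$ to a general $\mc F$. In Step 3 you write ``By Step 1 it is determined by maps $\tau_f\colon f_{\eta,*}\Z\to\Z$''. But Step 1 only shows that $t\mapsto t\otimes\id_{\mc F}$ \emph{produces} natural transformations. It does not show that every natural transformation $f_{\eta,*}f_\eta^*(-)\to(-)$ has this form, and naturality in $\mc F$ alone does not imply it. Here is a counterexample:
\begin{itemize}
\item Let $K$ be a complete nonarchimedean field with nontrivial abelian absolute Galois group, say the completion of the fixed field in $\overline{\Q}_p$ of a procyclic subgroup of $G_{\Q_p}$.
\item Put $Y=\Spa(K,\O_K)$. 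Then $\Ab(Y_\et)$ is the category of discrete $G_K$-modules, and each $\sigma\in G_K$ is a natural automorphism of the identity functor.
\item For the fold map $f\colon Y\sqcup Y\to Y$, consider $\mc F\oplus\mc F\to\mc F$, $(m_1,m_2)\mapsto \sigma m_1+(2-\sigma)m_2$.
\end{itemize}
This map is natural in $\mc F$, satisfies the normalisation, and agrees with $t_f$ on $\Z$. Yet it differs from $t_f\otimes\id_{\mc F}$ whenever $\sigma$ acts nontrivially on $\mc F$.

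What excludes such maps is compatibility with pull-back, and you need to invoke it exactly here. Let $j\colon U\to Y_\eta$ be \'etale, $a\in(f_{\eta,*}\Z)(U)$ and $s\in\mc F(U)$. Regard $s$ as a map $\tilde s\colon\Z\to j^*\mc F$ of sheaves on $U$. Then $j^*\tau_f(\mc F)=\tau_{f_U}(j^*\mc F)$, together with naturality of $\tau_{f_U}$ with respect to $\tilde s$, gives $\tau_f(\mc F)(a\cdot s)=\tau_f(\Z)(a)\cdot s$. Such sections generate $f_{\eta,*}f_\eta^*\mc F$ \'etale-locally by your projection formula, so $\tau_f(\mc F)=\tau_f(\Z)\otimes\id_{\mc F}$.

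The example above is indeed killed by this. Base change to a finite Galois $U=\Spa(L,\O_L)$ with $\sigma\notin G_L$. The sheaf $\Z[G_K/G_L]$ becomes a trivial $G_L$-module, on which any natural transformation over $U$ acts on each summand by an integer scalar. But $\sigma$ permutes $G_K/G_L$ nontrivially. Alternatively, do as Li--Reinecke--Zavyalov do and compare stalks at geometric points $\Spa(C,C^+)$ using pull-back there. With this step supplied, the rest of your argument goes through.

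A minor point: your ``explicit'' formula summing $\mathrm{length}(\O_{X,x}\otimes\kappa(y))$ over points $x\mapsto y$ is not right on adic spaces. Several points over a higher-rank point $y$ can share the same support and are then counted repeatedly. The definition via the rank of $f_*\O_V$, which is what you actually use, is fine; a pointwise formula should sum over primes of the fibre algebra instead.
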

\begin{remark}
    \cite{li_reinecke_zavyalov_trace} only construct these \'etale trace maps for finite flat maps of locally Noetherian analytic adic spaces. Using \cref{thm::transfer_map_for_finite_loc_free_extensions}, we may relax the Noetherianity hypothesis at the cost of adding a regularity condition and working with $p$-complete coefficients. 
\end{remark}

 Here, we note that $f_{\eta,*}^\heart$, the underived push-forward, is exact for finite morphisms $f_\eta$ by \cite[Proposition 2.6.3]{huberbook}, so that $f_{\eta,*} \simeq f_{\eta,*}^\heart$.
 
We can extend the construction of trace maps for finite flat maps given in \cite{li_reinecke_zavyalov_trace} to the derived category $\mc D^b(Y_{\eta,\et}, \Ab)$ of \'etale sheaves of bounded complexes of abelian groups. In particular, for any complex of lisse sheaves $\mc L \in \mc D^b_{\mr{lisse}}(Y_\eta, \Z_p)$, we can look at the \'etale sheaves $\mc L/p^n$ for all $n$, and we have a trace map \[\tr_f^\et(\mc L/p^n) : f_{\eta*}f^*_\eta (\mc L/p^n) \to \mc L/p^n\]
compatible for the various transition maps. Taking limits, and passing to the pro-\'etale site for the formation of pushforwards/pullbacks, we get a $\Z_p$-linear trace map \[\tr_f^\et(\mc L): f_{\eta,*} f^*_\eta (\mc L) \to \mc L.\]
In particular, for $\mc L = {\underline \Z_p}_{Y_\eta}$, we have \[\tr_f^\et := \tr_f^\et({\underline \Z_p}_{Y_\eta}) : f_{\eta,*}{\underline \Z_p}_{X_\eta} \to {\underline \Z_p}_{Y_\eta}.\]
By the projection formula, if $\mc L$ is a bounded complex of lisse sheaves, we can identify $\tr_f^\et(\mc L)$ with the map \[f_{\eta,*} f_\eta^*\mc L \simeq f_{\eta,*}({\underline \Z_p}_{X_\eta}) \otimes \mc L \xrightarrow{\tr_f^\et \otimes \id_{\mc L}} \mc L.\]

With this trace map on $p$-adic lisse sheaves for finite morphisms of rigid spaces in hand, we have the following.

\begin{theorem}\label{thm::etale_comparison_of_prismatic_trace}
    Suppose $f:X\to Y$ finite locally free $p$-completely syntomic map of bounded quasi-syntomic $p$-adic formal schemes flat over $\Z_p$. Suppose also that $X_\eta,Y_\eta$ are locally Noetherian. Then, for any prismatic $F$-gauge $\mc E\in \Perf(Y^\Syn)$, the \'etale realisation of the trace map on $F$-gauges \[f_*^\syn f^{\syn, *} \mc E\to \mc E\]
    coincides with the \'etale trace map \[\tr_f^\et(T_\et\mc E) : f_{\eta,*} f_\eta^*\left(T_\et(\mc E)\right) \to T_\et(\mc E)\]
    of \cite{li_reinecke_zavyalov_trace} described above.

    In other words, \cref{mainthm::prismatic_trace}(\ref{mainthm_part::etale_comparison}) holds.
\end{theorem}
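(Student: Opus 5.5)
By the projection formula and \cref{mainthm::prismatic_trace}(\ref{mainthm_part::projection_formula}), the $F$-gauge trace $\tr_f^\syn(\mc E)$ is $\tr_f^\syn(\O_{X^\Syn})\otimes \id_{\mc E}$. The \'etale trace of \cite{li_reinecke_zavyalov_trace} on $T_\et(\mc E)$ is likewise $\tr_f^\et\otimes\id_{T_\et\mc E}$. Since \'etale realisation is symmetric monoidal (extended to the non-perfect object $f^\syn_*\O_{X^\Syn}$ as in Remark \ref{rmk::extending_etale_realisation}) and commutes with $f^\syn_*$ for finite $f$, the statement reduces to the case $\mc E=\O_{Y^\Syn}$. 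So we must show that the realisation of $f^\syn_*\O_{X^\Syn}\to\O_{Y^\Syn}$ agrees with $\tr_f^\et: f_{\eta,*}\underline\Z_p\to\underline\Z_p$. Both sides are maps of $p$-complete sheaves on $Y_{\eta,\et}$ that are compatible with base change in $Y$, so we may work locally on $Y$.

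\textbf{Step 1: identify the realisation with a transfer on $L_{K(1)}\TC$.} For $Y=\Spf R$, the \'etale realisation of $\O\{r\}$ is computed by $\prism\{r\}[1/\mc I_\prism]^\wedge_p$ with its Frobenius fixed points, and this is exactly the associated graded of the Thomason filtration on $L_{K(1)}\TC$ (item (5) of \cref{sctn::zoo_of_coh_thries}, using \cite{kim2025thomason}). The maps $\Fil_\mot\TC\to\Fil_\mot L_{K(1)}\TC$ preserve Chern classes, so \cref{thm::transfer_map_for_finite_loc_free_extensions}(\ref{trace_thm_part::functoriality_in_FilF}) shows that the realisation of $\tr_f^\syn$ is the associated graded of the filtered transfer on $L_{K(1)}\TC(-)\simeq L_{K(1)}K(-[1/p])$. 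On underlying objects this is the $K$-theoretic transfer $f_*$ along the finite flat map $f_\eta$. It therefore suffices to show that this graded transfer $\R\Gamma_\et(X_\eta,\Z_p(*))\to\R\Gamma_\et(Y_\eta,\Z_p(*))$, sheafified on $Y_\eta$, coincides with $\tr^\et_f$.

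\textbf{Step 2: use the uniqueness in \cite[Theorem 2.5.6]{li_reinecke_zavyalov_trace}.} The trace $\tr_f^\et$ is characterised by three properties: functoriality, compatibility with base change and composition, and the normalisation by $\deg f$. Working mod $p^n$ and passing to the limit, the $K$-theoretic trace has the first two properties by \cref{thm::transfer_map_for_finite_loc_free_extensions} (composition only holds up to homotopy, but that is enough on the abelian sheaves $R^0f_{\eta,*}\mu_{p^n}^{\otimes r}$). It has the normalisation by \cref{mainthm::prismatic_trace}(\ref{mainthm_part::normalisation}). Because $f_{\eta,*}$ is exact for finite $f_\eta$, both maps $f_{\eta,*}\Z/p^n\to\Z/p^n$ live in the heart. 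They are therefore determined by maps of sheaves, and uniqueness applies after twisting by $\mu_{p^n}^{\otimes r}$ through the projection formula.

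\textbf{Main obstacle.} The delicate point is that uniqueness in \cite{li_reinecke_zavyalov_trace} is stated for a trace functorial in all abelian sheaves $\mc F$, whereas we only produce one for $\Z/p^n$-lisse coefficients. If the lemma cannot be applied directly, we would fall back on their proof strategy, which checks agreement \'etale-locally, where $f_\eta$ becomes finite \'etale over a dense open. To carry this out we would take the following route. First, reduce to the open where $f_\eta$ is \'etale, since maps of sheaves out of $f_{\eta,*}\Z/p^n$ are detected there by normality or density. Second, over that open, compare with the finite \'etale trace using \cref{mainthm::prismatic_trace}(\ref{mainthm_part::finite_etale_comparison}) or \cite[Section 7.2]{guo_reinecke}. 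Third, pass further to a perfectoid pro-\'etale cover and use \cref{prop::compatibility_for_trace_with_perfectoids}: on $A_{\inf}[1/\mu]$ the prismatic trace is the usual algebraic trace, whose Frobenius fixed points give the sum over sheets that defines $\tr^\et$.
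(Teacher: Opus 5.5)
Your reduction to $\mc E=\O_{Y^\Syn}$ and your Step 1 (realised trace $=\gr^0_\mot$ of the filtered transfer on $L_{K(1)}\TC$) agree with the paper. The gap is in Step 2.

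\cite[Theorem 2.5.6]{li_reinecke_zavyalov_trace} characterises a trace that is defined for all abelian sheaves and is compatible with pull-back along arbitrary maps of analytic adic spaces. You have a single map for $\Z/p^n$-coefficients, compatible with base change only along maps of formal schemes, so the lemma does not apply as stated. More importantly, its proof works at a geometric point $\bar y=\Spa(C,C^+)$. There the stalk $(f_{\eta,*}\Z/p^n)_{\bar y}$ is a sum over the connected components of $X_\eta\times_{Y_\eta}\bar y$, and the trace is fixed component by component. The $\deg f$ normalisation only fixes the value on the diagonal element. To get the value on each summand you need your $K$-theoretic trace to be additive over these components. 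They need not come from any decomposition of the formal scheme: $\Spf C^+[x]/(x^2-p^2)$ is connected, but its generic fibre is two points. So \cref{thm::transfer_map_for_finite_loc_free_extensions} gives you nothing here.

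Your fallback does not repair this, for three reasons.
\begin{itemize}
\item A dense open of $Y_\eta$ over which $f_\eta$ is \'etale need not exist. The map $\Spf\Z_p[x]/(x^2)\to\Spf\Z_p$ satisfies every hypothesis, $f_\eta$ is nowhere \'etale, and both traces are multiplication by $2$. No finite \'etale trace sees this multiplicity, including that of $(X_\eta)_{\mathrm{red}}$.
\item Even where $f_\eta$ is \'etale, \cref{mainthm::prismatic_trace}(\ref{mainthm_part::finite_etale_comparison}) and \cite[Section 7.2]{guo_reinecke} need $f$ itself to be finite \'etale as a map of formal schemes. This fails for $\Z_p\to\Z_p[\zeta_p]$.
\item \cref{prop::compatibility_for_trace_with_perfectoids} needs a finite locally free extension of perfectoids, and base-changing $X$ to a perfectoid cover does not produce one.
\end{itemize}

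The missing idea, which is the paper's proof, is to run the uniqueness argument directly on your two maps.
\begin{itemize}
\item Reduce via \cref{lem::check_equality_on_geometric_points} to $Y=\Spf C^+$.
\item Use $L_{K(1)}\TC(X)\simeq L_{K(1)}K(X_\eta)$ with its Thomason filtration. Then the $\gr^0_\mot$ transfer depends only on the adic space $X_\eta$, and is therefore additive over its connected components.
\item For a connected component $\mc X$, \cref{lem::reduction_of_finite_connected_adic_spaces_over_C} and topological invariance give $\R\Gamma_\et(\mc X,\Z_p)\simeq\Z_p$.
\item Both traces are then multiplication by $\dim_C\O(\mc X)$; for the $K$-theoretic one, this is the class of $\O(\mc X)$ in $K_0(C)$.
\end{itemize}
This handles non-reduced fibres and multiplicities automatically, with no generic \'etaleness needed.
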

\begin{remark}\label{rmk::extending_etale_realisation}
    Suppose $Y$ is some fixed bounded qcqs quasi-syntomic $p$-adic formal scheme flat over $\Z_p$. \'Etale realisation as defined in \cite{Bhatt_FGaugeLec} is a symmetric monoidal exact functor \[T_\et : \Perf(Y^\Syn) \to \mc D_{\mr{lisse}}^b(Y_\eta, \Z_p).\]
    However, for a map $f:X\to Y$ as in \cref{thm::etale_comparison_of_prismatic_trace}, the $F$-gauge $f_*^\syn f^{\syn, *} \mc E$ need not lie in $\Perf(Y^\Syn)$. We thus need to extend the domain of $T_\et$. 
    
    We thus define an exact functor \[T_\et : \mc D_{\mr{geom}}(Y^\Syn) \to \mc D(Y_{\eta, \proet}, \Z_p)\]
    where 
    \begin{itemize}
        \item the source category is the thick subcategory of $\mc D_\qcoh(Y^\Syn)$ generated by $\Perf(Y^\Syn)$ and sheaves of the form $f_*^\syn f^{\syn,*} \mc E$ as $f$ runs over all possible quasi-syntomic maps $f:X\to Y$ from $X$ a bounded qcqs quasi-syntomic $p$-adic formal scheme flat over $\Z_p$ and $\mc E\in \Perf(Y^\Syn)$, and 
        \item the target category is the category of $\Z_p$-sheaves on the pro-\'etale site of $Y_\eta$.
    \end{itemize}

    One defines, for any $\mc E\in \mc D_{\mr{geom}}(Y^\Syn)$, the underlying pro-\'etale sheaf of $T_\et(\mc E)$ on $Y_\eta$ to be given on a perfectoid test object $(R[\tfrac 1p], R)$ (where $\Spf R$ maps to $Y$) by \[\left(\mc E(A_{\inf}(R))\left[\frac{1}{d}\right]^\wedge_p\right)^{\varphi = \id}\]
    where $d$ generates the kernel of $\theta: A_{\inf}(R)\to R$ and $\mc E(A_{\inf}(R))$ denotes the pull-back of $\mc E$ along the syntomification of the map $\Spf R \to Y$. This is compatible with the usual \'etale realisation $\Perf(Y^\Syn)\to \mc D^b_{\mr{lisse}}(Y_\eta, \Z_p)$ by construction.

    Essentially by definition and \cite[Theorem 6.1]{guo_reinecke}, if $\mc E = f_*^\syn \mc E'$ for $\mc E'\in \Perf(X^\Syn)$ (where $f:X\to Y$ is a quasi-syntomic map), then \[T_\et(\mc E) \simeq f_{\eta, *} T_\et(\mc E')\]
    as pro-\'etale sheaves on $Y_\eta$.
\end{remark}

\begin{remark}
    By following the proof of \cite[Lemma 7.15]{guo_reinecke}, and using \cref{prop::compatibility_for_trace_with_perfectoids} and the proof of the \'etale comparison, one checks rather easily that \cref{mainthm::prismatic_trace}(\ref{mainthm_part::finite_etale_comparison}) is true. 
\end{remark}

We now go on to prove \cref{thm::etale_comparison_of_prismatic_trace}, for which we record some lemmas. 

First, recall that for an adic space $X$, a geometric point $\bar x$ of $X$ is a map $\bar x: \Spa(C,C^+)\to X$ where $C$ is a complete algebraically closed nonarchimedean field over $\Q_p$ and $C^+$ is an open bounded valuation subring of $C$. For $\mc F$ a sheaf of abelian groups on the \'etale site of an adic space $X$ and for $\bar x :\Spa(C,C^+)\to X$ a geometric point of $X$, the stalk $\mc F_{\bar x}$ of $\mc F$ at $\bar x$ is defined to be \[\mc F_{\bar x}:= \R\Gamma(\Spa(C,C^+), \bar x^*\mc F)\in \Ab.\]
The following lemma is \cite[Lemma 2.5.1]{li_reinecke_zavyalov_trace}.
\begin{lemma}\label{lem::check_equality_on_geometric_points_on_etale_site}
    For $f:X\to Y$ as above, $\bar y$ a geometric point of $Y_\eta$, and $\mc F$ a sheaf of abelian groups on $Y_\et$. Then there is a natural isomorphism \[\bigoplus_{\bar x\in f_\eta^{-1}(\bar y)} \mc F_{\bar y} \xrightarrow{\simeq} (f_{\eta,*}f_\eta^*\mc F)_{\bar y}.\]
\end{lemma}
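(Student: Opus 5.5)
The plan is to reduce to the case where the base is a single geometric point, using base change for finite morphisms, and then to compute everything directly there.

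\textbf{Step 1 (base change).} First I would construct the map. For each $\bar x\in f_\eta^{-1}(\bar y)$, adjunction gives $\mc F_{\bar y}=(f_\eta^*\mc F)_{\bar x}$. The unit $\mc F\to f_{\eta,*}f_\eta^*\mc F$, combined with the restriction maps from the stalk at $\bar y$ to the stalks at the points $\bar x$ of the fibre, then yields a natural map between the two sides. To see that it is an isomorphism, I would use that $f_\eta$ is finite. By \cite[Proposition 2.6.3]{huberbook} (and the accompanying base change statement for finite morphisms), $f_{\eta,*}$ is exact and commutes with pullback along $\bar y:\Spa(C,C^+)\to Y_\eta$. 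Hence
\[(f_{\eta,*}f_\eta^*\mc F)_{\bar y}\simeq \R\Gamma\big(X_{\bar y,\et},\, f_{\bar y}^*\bar y^*\mc F\big),\]
where $X_{\bar y}:=X_\eta\times_{Y_\eta}\Spa(C,C^+)$. Over the strictly local base $\Spa(C,C^+)$, the sheaf $\bar y^*\mc F$ is the constant sheaf with value $\mc F_{\bar y}$.

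\textbf{Step 2 (structure of the fibre).} Next I would describe $X_{\bar y}$. It is finite flat over $\Spa(C,C^+)$, so it equals $\Spa(B,B^+)$ with $B$ a finite $C$-algebra, which I would take to be the (completed) base change $\O(X)\hat\otimes_{\O(Y)} C$. Since $C$ is algebraically closed, $B=\prod_i B_i$ with each $B_i$ Artinian local with residue field $C$. Étale sites are insensitive to nilpotent thickenings, so $X_{\bar y,\et}$ is equivalent to the étale site of $\coprod_i \Spa(C,C_i^+)$, where the $C_i^+$ range over the finitely many valuation rings lying over $C^+$ in the integral closure. These summands correspond exactly to the geometric points $\bar x\in f_\eta^{-1}(\bar y)$.

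\textbf{Step 3 (conclusion).} Each summand $\Spa(C,C_i^+)$ is strictly local, so taking global sections of a constant sheaf on it gives its value. Therefore
\[\R\Gamma(X_{\bar y,\et},\underline{\mc F_{\bar y}})\simeq\bigoplus_{\bar x}\mc F_{\bar y},\]
and tracing the identifications shows that this is inverse to the natural map from Step 1.

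The main obstacle is Step 2. One has to check carefully that the fibre has exactly one summand per geometric point in $f_\eta^{-1}(\bar y)$, including the bookkeeping of the possibly non-rank-one valuation rings $C_i^+$, and that base change for $f_{\eta,*}$ holds for general (non-torsion) abelian sheaves. I would handle the first point using Huber's description of finite morphisms of adic spaces, and the second using the exactness of $f_{\eta,*}$ from \cite[Proposition 2.6.3]{huberbook}. This is also the content of \cite[Lemma 2.5.1]{li_reinecke_zavyalov_trace}, which I would follow.
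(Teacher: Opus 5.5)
This is essentially the paper's route. The paper gives no argument of its own and simply cites \cite[Lemma 2.5.1]{li_reinecke_zavyalov_trace}, and your reduction to the fibre $X_{\bar y}$, its splitting into nilpotent thickenings of copies of the geometric point, and topological invariance of the \'etale site is the standard proof of that lemma. Two small corrections are needed.

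First, $\bar y^*\mc F$ need not be constant on $\Spa(C,C^+)_{\et}$ when $C^+$ has rank $>1$. For example, take $\mc F=j_!\Z$ for an open $j\colon U\hookrightarrow Y_\eta$ containing the image of the generic point of $\Spa(C,C^+)$ but not that of its closed point. You do not need constancy, though, because every $C_i^+$ is just $C^+$, the integral closure of $C^+$ in $C$ (cf.\ \cref{lem::reduction_of_finite_connected_adic_spaces_over_C}). So each component of $X_{\bar y}$ reduces isomorphically onto $\Spa(C,C^+)$ and contributes $\Gamma(\Spa(C,C^+),\bar y^*\mc F)=\mc F_{\bar y}$ by the definition of the stalk. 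In particular, the ``bookkeeping of the $C_i^+$'' you flag as the main obstacle is vacuous.

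Second, exactness of $f_{\eta,*}$ does not by itself give the base-change identification $(f_{\eta,*}\mc G)_{\bar y}\simeq\Gamma(X_{\bar y,\et},\mc G|_{X_{\bar y}})$ used in Step 1. That identification is the real content of the lemma, and it is exactly what the cited references supply.
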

\begin{corollary}
    For $f:X\to Y$ as above and $\bar y$ a geometric point of $Y_\eta$, there is a natural isomorphism \[\bigoplus_{\bar x\in f_\eta^{-1}(\bar y)} {\underline \Z_p}_{\bar y} \xrightarrow{\simeq} (f_{\eta,*}{\underline \Z_p}_{X_\eta})_{\bar y}\]
    of sheaves on the pro-\'etale site of $\bar y$. 
\end{corollary}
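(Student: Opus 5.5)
The plan is to deduce the corollary from \cref{lem::check_equality_on_geometric_points_on_etale_site} by working with torsion coefficients and then passing to the limit over $n$. The $\Z_p$-sheaves on the pro-\'etale sites are built as limits of their mod $p^n$ reductions, exactly as in the construction of $\tr_f^\et(\mc L)$ above.

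\textbf{Step 1: the mod $p^n$ case.} Fix $n\ge 1$ and apply \cref{lem::check_equality_on_geometric_points_on_etale_site} to the constant sheaf $\mc F = \underline{\Z/p^n}_{Y_\eta}$. Since $f_\eta^*\underline{\Z/p^n}_{Y_\eta} = \underline{\Z/p^n}_{X_\eta}$, this gives a natural isomorphism
\[\bigoplus_{\bar x\in f_\eta^{-1}(\bar y)} (\underline{\Z/p^n})_{\bar y} \xrightarrow{\simeq} (f_{\eta,*}\underline{\Z/p^n}_{X_\eta})_{\bar y}.\]
Because the isomorphism of \cref{lem::check_equality_on_geometric_points_on_etale_site} is natural in $\mc F$, it is compatible with the reduction maps $\underline{\Z/p^{n+1}}\to\underline{\Z/p^n}$. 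So these isomorphisms assemble into an isomorphism of pro-systems indexed by $n$.

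\textbf{Step 2: passing to the limit on each side.} I would pass to the pro-\'etale site via $\nu^*$ and use that on a replete topos $\underline{\Z_p}\simeq \R\!\varprojlim_n \underline{\Z/p^n}$; here the transition maps are surjective, so no $\varprojlim^1$ terms appear. The right-hand side is handled as follows. The functor $f_{\eta,*}$ is a right adjoint, so it commutes with this limit, and it is exact for finite $f_\eta$. Hence $f_{\eta,*}\underline{\Z_p}_{X_\eta}\simeq \R\!\varprojlim_n f_{\eta,*}\underline{\Z/p^n}_{X_\eta}$. The left-hand side is easier: $f_\eta$ is finite, so the fibre $f_\eta^{-1}(\bar y)$ is finite. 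The direct sum is therefore a finite product and commutes with limits.

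\textbf{Step 3: commuting the limit with the stalk at $\bar y$.} It remains to check that restriction to the geometric point $\bar y$ commutes with these limits. The stalk at $\bar y$ is $\R\Gamma(\Spa(C,C^+),\bar y^*(-))$ on the pro-\'etale site of $\bar y$. Since $\Spa(C,C^+)$ is a strictly totally disconnected perfectoid space, global sections there are exact. Both sides are the limit of the sheaves $\bar y^*(\cdots/p^n)$ along surjective transition maps. So I expect the limit of the isomorphisms from Step 1 to give the desired isomorphism of sheaves on the pro-\'etale site of $\bar y$.

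I expect this third step to be the only real obstacle. Pullback $\bar y^*$ is a left adjoint and need not commute with inverse limits in general. My first approach is to argue directly on the $w$-contractible object $\Spa(C,C^+)$: compute $\bar y^*$ of $f_{\eta,*}\underline{\Z/p^n}$ as sections over the finite fibre product $X_\eta\times_{Y_\eta}\Spa(C,C^+)$, which is a finite disjoint union of copies of $\Spa(C,C^+)$. Both sides are then $\prod_{\bar x}\Z/p^n$ levelwise, naturally in $n$, so the limit statement becomes clear.
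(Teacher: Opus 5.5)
Your proposal is correct and is essentially the paper's argument: the paper simply applies \cref{lem::check_equality_on_geometric_points_on_etale_site} with $\mc F=\underline{\Z/p^n}$ and passes to the limit over $n$, and your Steps~2--3 just spell out why that limit commutes with $f_{\eta,*}$ and with taking stalks at $\bar y$. One small correction: $X_\eta\times_{Y_\eta}\Spa(C,C^+)$ is in general a finite disjoint union of nilpotent thickenings of $\Spa(C,C^+)$ rather than of copies of it (cf.\ \cref{lem::reduction_of_finite_connected_adic_spaces_over_C}), but this is harmless by topological invariance of the \'etale site.
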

\begin{proof}
    Take $\mc F = {\underline{\Z/p^n\Z}}_{X_\eta}$ in the lemma and then take the limit over $n$. 
\end{proof}

\begin{lemma}\label{lem::check_equality_on_geometric_points}
    Suppose $f,g:\mc L\to \mc L'$ are two maps of pro-\'etale discrete (i.e. concentrated in cohomological degree 0) $\Z_p$-sheaves on an analytic adic space $X$. Then, $f = g$ if and only if for all geometric points $\bar x$ of $X$, we have $f_{\bar x} = g_{\bar x}$ as maps of stalks $\mc L_{\bar x}\to \mc L'_{\bar x}$. 
\end{lemma}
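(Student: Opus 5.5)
The only nontrivial direction is the "if" direction: a map of discrete pro-\'etale $\Z_p$-sheaves that vanishes on every stalk at a geometric point should vanish. I would reduce to that by setting $h := f - g : \mc L \to \mc L'$, a map of discrete pro-\'etale $\Z_p$-sheaves on $X$. The "only if" direction is immediate, since taking stalks is functorial and additive. So it suffices to show that if $h_{\bar x} = 0$ for every geometric point $\bar x$, then $h = 0$. Equivalently, the image sheaf $\im(h) \subseteq \mc L'$ must be zero. This image is again a discrete pro-\'etale sheaf of $\Z_p$-modules, and since stalks are exact its stalks are $\im(h_{\bar x}) = 0$.

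The key input is then that the family of stalk functors at geometric points is conservative on discrete sheaves on the pro-\'etale site of an analytic adic space. I would prove this via a basis of the pro-\'etale site consisting of affinoid perfectoids $U = \Spa(R, R^+) \to X$. By \cite{scholze_pHT_rigid}-style arguments, a section $s \in \im(h)(U)$ vanishes if it vanishes locally. For a discrete (i.e. $\tau^{\le 0}$, $\tau^{\ge 0}$) sheaf, sections over a qcqs $U$ are computed as a colimit over pro-\'etale neighbourhoods. Moreover, a section whose germ vanishes at each point $u$ of the spectral space $|U|$, with the germ taken along $\Spa(C(u), C(u)^+) \to U$ for $C(u)$ the completed algebraic closure of the residue field, vanishes on some pro-\'etale neighbourhood of that point. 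A quasi-compactness argument for $|U|$ (which is spectral, hence quasi-compact in the constructible topology) then gives finitely many pro-\'etale maps covering $U$ on which $s$ vanishes, so $s = 0$.

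The step I expect to be the main obstacle is justifying that the stalk at a geometric point $\bar x$, defined here as $\R\Gamma(\Spa(C,C^+), \bar x^*\mc F)$, really computes the colimit over pro-\'etale neighbourhoods. This requires that $\Spa(C, C^+)$ be a "w-local" / strictly totally disconnected object, so that $\R\Gamma$ on it is exact and agrees with this colimit. I would cite the fact that strictly totally disconnected perfectoid spaces have no higher pro-\'etale cohomology, that their global sections functor is exact, and that they form a basis. Given this, conservativity follows as above, since points of $|X|$ give all geometric points up to extension of $C$. If this identification is too delicate, a fallback is available because in our application $\mc L, \mc L'$ are of the form $f_{\eta,*}\underline\Z_p$ and $\underline\Z_p$. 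There one can work with the $\Z/p^n$ reductions on the \'etale site, where \cite{huberbook} gives conservativity of stalks directly, and then pass to the limit over $n$ using $p$-completeness and discreteness.
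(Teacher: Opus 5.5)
Your primary route does not work, and the obstacle you flagged is not merely delicate. Geometric stalks are \emph{not} jointly conservative on arbitrary discrete pro-\'etale $\Z_p$-sheaves, so no argument can prove the lemma in the generality you attempt.

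Take $X=\Spa(C,\O_C)$. Its pro-\'etale site is essentially that of profinite sets $S$, via $S\mapsto\underline S\times X$. Let $\mc M$ be the cokernel of the inclusion of the constant sheaf $\underline{\Z_p}^{\mathrm{disc}}$, whose sections are $\mathrm{LC}(S,\Z_p)$, into $\hat\Z_p=\varprojlim_n\nu^*\Z/p^n$, whose sections are $C(S,\Z_p)$.
\begin{itemize}
\item Every pro-\'etale cover of a geometric point splits. Every map from a geometric point to $\underline S\times X$ factors through a section $X\to\underline S\times X$. Hence every geometric stalk of $\mc M$ equals $\mc M(X)=\Z_p/\Z_p=0$.
\item Yet $\mc M\neq 0$. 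The section $\phi$ of $\hat\Z_p$ over $S=\N\cup\{\infty\}$ given by $n\mapsto p^n$, $\infty\mapsto 0$ becomes locally constant on no cover, since a locally constant function on a compact space has finite image.
\item So $\id_{\mc M}$ and $0$ agree on all stalks but differ.
\end{itemize}

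Concretely, two of your steps break.
\begin{itemize}
\item Pro-\'etale sheaves do not turn cofiltered limits into colimits. The same $\phi$ vanishes on the point $\infty$ but on no pro-\'etale neighbourhood whose image is larger than that point.
\item If you instead take the neighbourhood to be the point itself, the compactness step collapses. Images of quasi-compact pro-\'etale maps are pro-constructible, i.e.\ \emph{closed} in the constructible topology, not open. Moreover, the family of all points is not a pro-\'etale covering, so there is no finite subcover to extract.
\end{itemize}

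Your fallback, by contrast, is correct and is essentially the paper's proof. Pass to $h/p^n:\mc L/p^n\to\mc L'/p^n$ on the \'etale site. Apply \cite[Proposition 2.5.5]{huberbook}: joint conservativity of geometric stalks gives joint faithfulness in this abelian setting, since the image sheaf of $h/p^n$ has vanishing stalks. Then pass to the limit over $n$.

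You should state explicitly the hypotheses this needs. The paper's one-line reduction uses them tacitly, and $\mc M$ above violates them, being uniquely $p$-divisible.
\begin{itemize}
\item $\mc L$ and $\mc L'$ are $p$-torsion free and derived $p$-complete.
\item $\mc L/p^n$ and $\mc L'/p^n$ are pulled back from the \'etale site via the fully faithful functor $\nu^*$, compatibly with stalks.
\end{itemize}
Under these, $\Hom(\mc L,\mc L')\to\varprojlim_n\Hom(\mc L/p^n,\mc L'/p^n)$ is injective. The $\lim^1$-term of the Milnor sequence is $\lim^1_n\Ext^{-1}(\mc L,\mc L'/p^n)=0$, because both sheaves are discrete.

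These hypotheses hold for $f_{\eta,*}\underline{\Z_p}$ and $\underline{\Z_p}$, the only case used in the \'etale comparison. So present the fallback as the proof, with these hypotheses stated, and discard the first route.
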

\begin{proof}
    It suffices to prove this for the map $\mc L/p^n \to \mc L'/p^n$ of sheaves for all $n$; in particular, we may reduce to working on the \'etale site rather than the pro-\'etale site. In this case, \cite[Proposition 2.5.5]{huberbook} implies that the collection of functors $\mc F\mapsto \mc F_{\bar x}$ as $\bar x$ ranges over all geometric points of $X$ is jointly conservative. As this functor is valued in an abelian category (we restricted ourselves to discrete sheaves), this implies that the stalk functors are jointly fully faithful, as required. 
\end{proof}

The following lemma was used implicitly in the proof of \cite[Theorem 2.5.6]{li_reinecke_zavyalov_trace}, and we will need it ourselves.
\begin{lemma}\label{lem::reduction_of_finite_connected_adic_spaces_over_C}
    Let $\mc X$ be a connected adic space with a finite map $f:\mc X\to \Spa(C,C^+)$, where $C^+$ is an open integrally closed subring of an algebraically closed extension $C$ of $\Q_p$. Then $\mc X_{\mr{red}} \simeq \Spa(C,C^+)$.
\end{lemma}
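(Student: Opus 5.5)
The plan is to reduce to the structure of finite algebras over the valuation ring $C^+$ and then use Henselianity. Since $f$ is finite, $\mc X = \Spa(B, B^+)$ is affinoid. Here $B$ is a finite $C$-algebra and $B^+$ is the integral closure of $C^+$ in $B$ (possibly after adjusting the integral structure, but finiteness of $f$ pins down $B^+$ up to integral closure, which is all we need). Connectedness of $\mc X$ will then force $B$ to be local.

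\emph{Step 1: $B$ is a finite local Artinian $C$-algebra.} Since $B$ is finite-dimensional over the field $C$, it is Artinian. Hence $B \cong \prod_j B_j$ is a finite product of local Artinian $C$-algebras. Each idempotent of $B$ gives a clopen decomposition of $\Spa(B,B^+)$, because idempotents are integral over $C^+$ and so lie in $B^+$. Connectedness of $\mc X$ therefore forces $B$ to be local, with maximal ideal $\mathfrak m$ consisting of nilpotents. Since $C$ is algebraically closed, the residue field $B/\mathfrak m$ is a finite extension of $C$, hence equal to $C$. Thus $B_{\mr{red}} = B/\mathfrak m \cong C$.

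\emph{Step 2: identify the reduced adic space.} The reduction $\mc X_{\mr{red}}$ is $\Spa(B_{\mr{red}}, B^+_{\mr{red}})$, where $B^+_{\mr{red}}$ is the integral closure of the image of $B^+$ in $B_{\mr{red}} = C$. The image of $B^+$ is integral over $C^+$. Since $C^+$ is integrally closed in $C$, this image, and hence its integral closure, is exactly $C^+$. Therefore $\mc X_{\mr{red}} \simeq \Spa(C, C^+)$, compatibly with $f$.

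\emph{Main obstacle.} The delicate point is the passage from connectedness of $\mc X$ to $B$ being local. This requires knowing that $\Spa(B\times B', B^+\times B'^+) = \Spa(B,B^+) \sqcup \Spa(B',B'^+)$, and that the plus-ring of a finite $C$-algebra splits along idempotents. I would handle this using that finite maps of adic spaces are affinoid with $B^+$ the integral closure of the image of $C^+$, citing \cite[Section 1.4]{huberbook}.

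A secondary check is that "open integrally closed" is enough for $C^+$ to be a valuation ring. This holds because $C$ is a field and $(C,C^+)$ is a Huber pair, so the only input needed in Step 2 is integral closedness.
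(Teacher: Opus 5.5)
Your argument is correct, but it is organised differently from the paper's. The paper first passes to $\mc X_{\mr{red}}$, which is still finite over $\Spa(C,C^+)$ and connected (same underlying space). It then invokes \cite[Lemma 2.1.3(i)]{li_reinecke_zavyalov_trace} as a black box to get $\mc X_{\mr{red}}\simeq\Spa(K,K^+)$, with $K/C$ a finite field extension and $K^+$ the integral closure of $C^+$. Algebraic closedness of $C$ finishes. You instead open that black box in this special case, in the opposite order. You localise first: connectedness, together with the splitting of $(B,B^+)$ along idempotents, forces the Artinian $C$-algebra $B$ to be local. Only then do you reduce, so that $B_{\mr{red}}=B/\mathfrak m=C$ and the plus-ring is pinned down by integral closedness of $C^+$. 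The paper's route is shorter and rests on a lemma valid over any nonarchimedean base field. Yours is self-contained: it needs only the structure of finite algebras over a field, the description of Zariski-closed subspaces (integral closure of the image of $B^+$), and affinoidness of $\mc X$.

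One correction to your ``secondary check'': an open, bounded, integrally closed subring of $C$ need not be a valuation ring. For example, suppose the residue field of $C$ is $k=\overline{\F_p(s,t)}$. Then the preimage in $\O_C$ of the integral closure of $\F_p[s,t]$ in $k$ is open, bounded and integrally closed, but not even local. This does not damage your proof, since, as you note, Step 2 uses only integral closedness. Where the valuation property genuinely helps is your opening claim that $\mc X$ is affinoid. When $C^+$ is a valuation ring, as for the geometric points where the paper applies the lemma, $\Spa(C,C^+)$ has a unique closed point whose only open neighbourhood is the whole space. So affinoidness of $\mc X=f^{-1}(\Spa(C,C^+))$ follows directly from the definition of a finite morphism. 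That is the cleanest justification to give in place of the incorrect remark.
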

\begin{proof}
    Since $\Spa(C,C^+)$ is already reduced, we see that $f$ factors through a finite map $\mc X_{\mr{red}} \to \Spa(C,C^+)$. \cite[Lemma 2.1.3(i)]{li_reinecke_zavyalov_trace} then implies that $\mc X_{\mr{red}} = \Spa(K,K^+)$ for $K$ a finite extension of $C$ and $K^+$ the integral closure of $C^+$ in $K$. Since $C$ is algebraically closed, we thus have $K = C$ and $K^+=C^+$ as required. 
\end{proof}

We finally prove \cref{thm::etale_comparison_of_prismatic_trace}. The idea of the proof is based on the proof of uniqueness of the finite flat trace map in \cite[Theorem 2.5.6]{li_reinecke_zavyalov_trace}.

\begin{proof}[Proof of \cref{thm::etale_comparison_of_prismatic_trace}]
    By the projection formula it suffices to prove this for $\mc E = \O_{Y^\Syn}$, whose \'etale realisation is the local system $\underline \Z_p$ on $Y_\eta$. We have two maps \[f_{\eta,*}  {\underline \Z_p}_{X_\eta} \to {\underline \Z_p}_{Y_\eta},\]
    one of which comes from the \'etale trace and one which comes from the \'etale realization of the prismatic trace.
    By \cref{lem::check_equality_on_geometric_points}, we reduce to checking the case of a geometric point $Y = \Spf C^+$ (so $Y_\eta = \Spa(C,C^+)$) for $C$ a complete algebraically closed non-archimedean field over $\Q_p$ and $C^+$ an open bounded valuation subring (see also the proof of uniqueness of \cite[Theorem 2.5.6]{li_reinecke_zavyalov_trace}). Recall from \cref{sctn::zoo_of_coh_thries} that $$\R\Gamma_\et(\Spa(R[\tfrac 1p],R), \Z_p) \simeq \gr^0_\mot L_{K(1)}\TC(R)$$ for any $p$-complete ring $R$, and that this is compatible with the prismatic \'etale realisation. It thus suffices to check that the \'etale trace \[\R\Gamma_\proet(X_\eta, \Z_p) \to \Z_p\]
    coincides with the abstract transfer map \[\tr_\et'(X_\eta) := \gr^0_\mot  \tr^{L_{K(1)}\TC}_f : \R\Gamma_\proet(X_\eta, \Z_p)\simeq \gr^0_\mot L_{K(1)}\TC(X)\to \gr^0_\mot L_{K(1)}\TC(Y)\simeq \Z_p.\]
    Since the motivic filtration on $L_{K(1)}\TC(X)\simeq L_{K(1)}K(X_\eta)$ is actually an invariant of $X_\eta$ (since it identifies with the Thomason filtration), this abstract transfer map 
    is actually functorial in the analytic adic space $X_\eta$. In particular, it follows that for $X_\eta = \bigsqcup \mc X_i$ a connected component decomposition of the analytic adic space $X_\eta$ (which need not come from a decomposition of the formal scheme $X$) we have $\tr_\et'(X_\eta) = \sum_i \tr'_\et(\mc X_i)$ as maps \[\R\Gamma_\et(X_\eta, \Z_p) \simeq \bigoplus_i \R\Gamma_\et(\mc X_i, \Z_p) \to \Z_p. \]
    Of course, this is also true of the \'etale trace maps of \cite[Section 2.5]{li_reinecke_zavyalov_trace}. 

    We have thus reduced to the case that $\mc X = \Spa(S, S^+)$ is a connected finite flat analytic adic space over $Y_\eta = \Spa(C,C^+)$, at the cost of losing that the map is the generic fibre of a finite flat map of formal schemes. \cref{lem::reduction_of_finite_connected_adic_spaces_over_C} says that $\mc X_{\mr{red}} = \Spa(C,C^+)$. The topological invariance of the \'etale site then implies that \[\R\Gamma_\et(\mc X,\Z_p) \simeq \R\Gamma_\et(\mc X_{\mr{red}},\Z_p) \simeq \R\Gamma_\et(\Spa(C,C^+),\Z_p) \xleftarrow\simeq \Z_p,\]
    where the last arrow is the canonical map $\Z_p \to \R\Gamma_\et(-,\Z_p)$. In particular, the \'etale trace map of \cite[Theorem 2.5.6]{li_reinecke_zavyalov_trace} identifies with the map \[\Z_p \simeq \R\Gamma_\proet(\mc X, \Z_p) \to \Z_p\]
    given by multiplication by $d:= \mr{rank}_{C}\O(\mc X)$. This is of course also true of the trace map $\tr'_\et(\mc X)$ coming from applying $\gr^0_\mot$ to the transfer map $L_{K(1)}K(\mc X) \to L_{K(1)}K(Y_\eta)$.
\end{proof}

\section{Prismatic Trace for the Cyclotomic Tower}\label{sctn::trace_for_cyclotomic_tower}

In this section we finally prove all of our main theorems except the already established \cref{mainthm::prismatic_trace}. 

Throughout this section, we will use the notation $q_n := q^{p^{-n}}$ as well as the notation introduced in \cref{sctn::thh_of_cyclotomic_extns} without comment. Before we can go on to actually prove \cref{mainthm::prismatic_trace}, we make a few remarks since \cref{thm::transfer_map_for_finite_loc_free_extensions} is \textit{a priori} not strong enough for our purposes.

Fix $f:X\to Y$ a finite locally free regular map. In \cref{thm::transfer_map_for_finite_loc_free_extensions}, we have shown that the transfer map $\THH(X)\to \THH(Y)$ upgrades canonically to a map \[\tr_f^\THH: \Fil_\mot^\bullet \THH(X)\to \Fil_\mot^\bullet \THH(Y)\]
of filtered spectra. However, we will crucially need that $\Fil_\mot^\bullet \THH$ takes values in cyclotomic synthetic spectra, and \textit{a priori} \cref{thm::transfer_map_for_finite_loc_free_extensions} does not construct $\tr_f^\THH$ as a map of cyclotomic synthetic spectra. In the following remark, let us explain how the construction of \cref{thm::transfer_map_for_finite_loc_free_extensions} automatically upgrades to a map of cyclotomic synthetic spectra. 
    
\begin{remark}\label{rmk::bootstrap_from_spectral_trace_to_synthetic_trace}
    Recall that a synthetic spectrum is a $\S_\ev$-module in $\Fil \Sp$, and that a synthetic spectrum with synthetic circle action is a $\T_\ev$-module in synthetic spectra. In particular, $\Fil_\mot^\bullet \THH(Y)$ being an $E_\infty$-ring object in synthetic spectra with synthetic circle action, it is in fact an $E_\infty$-algebra over $\T_\ev$ in $\Fil \Sp$. By $\Fil_\mot^\bullet \THH(Y)$-linearity of the filtered transfer map $\tr_f^\THH$, the map $\tr_f^\THH$ is a $\T_\ev$-module map in filtered spectra. Thus $\tr_f^\THH$ naturally upgrades to a map of synthetic spectra with synthetic circle action. The same reasoning applies to $\HH$, so that the filtered transfer map \[\tr_f^\HH: \Fil_\mot^\bullet \HH(X)\to \Fil_\mot^\bullet \HH(Y),\]
    \textit{a priori} only a map of filtered spectra, is naturally a map of synthetic spectra with synthetic circle action.

    For the cyclotomic Frobenius, note that for any qcqs quasi-syntomic $p$-adic formal scheme $X$, one has \[\left(\Fil_\mot^\bullet \THH(X)\right)^{tC_{p,\ev}} \simeq \R\Gamma\left(\QSyn_X, \tau_{\ge 2*} \left(\THH(R)^{tC_p}\right)\right)\]
    as a consequence of quasi-syntomic descent to QRSPs and \cite[Lemma 3.18]{antieau2024cyclotomic}. In particular, we can define $$\Fil_\mot^\bullet \THH(X)^{tC_p} := \left(\Fil_\mot^\bullet \THH(X)\right)^{tC_{p,\ev}},$$ 
    and this promotes $\Fil_\mot^\bullet \THH^{tC_p}$ to a sheaf valued in synthetic spectra with synthetic circle action.
    One checks rather easily that $\Fil_\mot^\bullet \THH^{tC_p}$ now satisfies Assumption \ref{ass::nice_filtered_localising_invariants}. In particular, \cref{thm::transfer_map_for_finite_loc_free_extensions} and the previous argument yields a transfer map \[\tr_f^{\THH^{tC_p}}: \Fil_\mot^\bullet \THH(X)^{tC_p}\to \Fil_\mot^\bullet \THH(Y)^{tC_p}\]
    of synthetic spectra with synthetic circle action natural in $f$. Moreover, since the synthetic cyclotomic Frobenius is just the evenly filtered cyclotomic Frobenius $\THH\to \THH^{tC_p}$ (at least on affines), the functoriality-in-$\mc F$ of \cref{thm::transfer_map_for_finite_loc_free_extensions} implies the existence of a canonical commutative diagram 
    \[\begin{tikzcd}
        \Fil_\mot^\bullet \THH(X) \ar{r}{\varphi_X} \ar{d}{\tr_f^\THH} & \Fil_\mot^\bullet \THH(X)^{tC_p} \ar{d}{\tr_f^{\THH^{tC_p}}} \ar[description, draw=none]{r}{\simeq} & \left(\Fil_\mot^\bullet \THH(X)\right)^{tC_{p,\ev}}\ar{d}{(\tr_f^\THH)^{tC_{p,\ev}}}\\
        \Fil_\mot^\bullet \THH(Y) \ar{r}{\varphi_X}  & \Fil_\mot^\bullet \THH(Y)^{tC_p} \ar[description, draw=none]{r}{\simeq} & \left(\Fil_\mot^\bullet \THH(Y)\right)^{tC_{p,\ev}}.
    \end{tikzcd}\]
    Here, the identification $\tr_f^{\THH^{tC_p}} = (\tr_f^{\THH})^{tC_{p,\ev}}$ holds by the assertion on functoriality in the filtered sheaf of \cref{thm::transfer_map_for_finite_loc_free_extensions}. Hence, $\tr_f^\THH$ upgrades to a map of cyclotomic synthetic spectra. 
\end{remark}

Recall next that \cref{thm::transfer_map_for_finite_loc_free_extensions} does not give a canonical homotopy $\tr_{g\circ f}^{\mc F} \simeq \tr_g^{\mc F} \circ \tr_f^{\mc F}$ where $X\xrightarrow{f} Y \xrightarrow{g} Z$ are two composable finite locally free regular maps. We make a remark to get around this technical deficiency.

\begin{remark}
    For the rest of this paper we will be dealing with the transfer maps in a tower \[\cdots \to X_3\to X_2\to X_1.\] 
    We will not have need to deal with the filtered transfer map along $X_{n+2}\to X_n$ directly. Rather, we will only remember the filtered transfer maps $\tr_n^{\mc F} : \Fil^\bullet \mc F(X_{n+1})\to \Fil^\bullet \mc F(X_n)$ coming from \cref{thm::transfer_map_for_finite_loc_free_extensions}, and for $\Fil^\bullet \mc F(X_{n+k})\to \Fil^\bullet \mc F(X_n)$ we will replace $\tr_{X_{n+k}\to X_n}^{\mc F}$ with the composition $\tr_{n+k-1}^{\mc F} \circ \cdots \circ \tr_{n}^{\mc F}$. This is fine since we only care about computing the limit. Since the above $k$-fold composition is (non-canonically) homotopic to the true transfer map, the limit will not change up to homotopy. 

    Notice however that \cref{thm::transfer_map_for_finite_loc_free_extensions} does guarantee that the filtered transfer $\tr^{\mc F}$ is functorial in pull-backs, so we only actually need to make this choice once and for all for the tower \[\cdots \to \Spf \Z_p[\zeta_{p^{n+1}}] \to \Spf \Z_p[\zeta_{p^n}]\to \cdots .\]
\end{remark}

\subsection{Preliminary Calculations}\label{sctn::prelim_calculations}

In the following, we will need the following standard computation of the homotopy groups of (various $S^1$-fixed points of) $\ku_{p,n}$. Since this result is standard, we only sketch a proof. 

\begin{lemma}\label{lem::computing_htpy_gps_of_fixed_pts_of_kupn}
    For $n\ge 0$, we have 
    \begin{align*}
        \pi_* \ku_{p,n} &\cong \left(\Z_p\llb q_n - 1\rrb/(q-1)\right)[\beta],\\
        \pi_* \ku_{p,n}^{hS^1} &\cong \Z_p \llb q_n - 1\rrb[\beta, t]/(\beta t - (q-1)),\\
        \pi_*\ku_{p,n}^{tS^1} &\cong \Z_p \llb q_n - 1\rrb[t^{\pm 1}]\\
        \pi_* \ku_{p,n}^{tC_p} &\cong \Z_p[\zeta_{p^{n+1}}][t^{\pm 1}] \\
        \pi_* (\ku_{p,n}^{(-1)})^{hS^1} &\cong \Z_p\llb q_{n+1} - 1\rrb[u, t]/(u t - [p]_{q_1}).
    \end{align*}
    where $|\beta| = |u| = 2, |q| = 0$, and $|t|=-2$. Moreover, the following are true:
    \begin{enumerate}
        \item The Adams operations corresponding to $g\in (1+p^{n+1}\Z_p)$ acts on $\ku_{p,n}^{hS^1}$ via \[\beta \mapsto g\beta, \quad q \mapsto q^g, \quad \text{and} \quad t \mapsto \frac{[g]_q}{g}t,\] 
        where $[g]_q := \frac{q^g-1}{q-1}$.
        \item The ring map $\ku_{p,n}^{hS^1}\to (\ku_{p,n}^{(-1)})^{hS^1}$ is given on homotopy by the graded algebra map \[\Z_p\llb q_n-1\rrb[\beta, t]/(\beta t - (q-1)) \to \Z_p\llb q_{n+1} -1\rrb[u,t]/(ut - [p]_{q_1}) \]
        sending $q_n \mapsto q_{n+1}^p$, $t\mapsto t$, and $\beta \mapsto (q_1-1)u$. 
        \item The Frobenius map $\varphi^{hS^1} : (\ku_{p,n}^{(-1)})^{hS^1} \to (\ku_{p,n}^{(-1)})^{tS^1}$ is given on homotopy by the graded algebra map \[\Z_p\llb q_{n+1}-1\rrb[u, t]/(u t - [p]_{q_1}) \to \Z_p\llb q_{n+1} -1\rrb[t^{\pm 1}]\]
        sending $q_{n+1} \mapsto q_{n+1}^p$, $t\mapsto [p]_{q} t$, and $u\mapsto t^{-1}$. In particular, the image of the Bott class is `fixed' by Frobenius; more precisely, $\varphi^{hS^1}(\beta) = \can(\beta)$. 
    \end{enumerate}
\end{lemma}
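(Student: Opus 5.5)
The plan is to bootstrap everything from the case $n=0$, where $\ku_{p,0}=\ku_p$ with trivial circle action, and then pass to $C_{p^n}$-fixed points.

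\textbf{The case $n=0$.} For $n=0$ the computation is classical. Since the action is trivial, $\pi_*\ku_p^{hS^1}\cong \pi_*\ku_p\llb t\rrb = \Z_p[\beta]\llb t\rrb$. Setting $q := 1+\beta t$ identifies this with $\Z_p\llb q-1\rrb[\beta,t]/(\beta t-(q-1))$; here $q$ is the class of the tautological line bundle, i.e. the complex orientation of $\ku$ gives $\ku_p^{BS^1}\simeq \Z_p\llb q-1\rrb$ in degree $0$. Inverting $t$ gives $\pi_*\ku_p^{tS^1}\cong \Z_p\llb q-1\rrb[t^{\pm1}]$.

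\textbf{General $n$.} I would use $\ku_{p,n}^{hS^1}\simeq \tau_{\ge 0}(\ku_p^{hC_{p^n}})^{h(S^1/C_{p^n})}$. Since $\ku_p^{hC_{p^n}}$ is even with $\pi_0 = \ku_p^0(BC_{p^n}) = \Z_p\llb q-1\rrb/([p^n]_q(q-1))$, the homotopy fixed point spectral sequence degenerates. Renaming the generator $q\mapsto q_n$, with $q=q_n^{p^n}$, yields the stated formula for $\pi_*\ku_{p,n}$. Here $\pi_0\ku_{p,n} = \Z_p\llb q_n-1\rrb/(q_n^{p^n}-1)\cdot(\dots)$, and one must check that the quotient is by $q-1$ in the paper's convention $q=q_n^{p^n}$, i.e. $q_n^{p^n}-1$.

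Concretely, I would compare with $\ku_p^{hS^1}$ through the equivalence $(\ku_p^{hC_{p^n}})^{h(S^1/C_{p^n})}\simeq \ku_p^{hS^1}$, before truncation. The truncation $\tau_{\ge 0}$ only affects negative degrees, and the fibre sequence $\tau_{<0}\to\ -\ \to\tau_{\ge0}$ contributes only to $t$-divisibility. The upshot is that $\pi_*\ku_{p,n}^{hS^1}$ is the subring of $\pi_*\ku_p^{hS^1}[1/p]$-type extensions generated by $q_n$; equivalently, the circle $S^1/C_{p^n}$ has orientation class $q_n$ with $q_n^{p^n}=q$.

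The Tate computations follow from these:
\begin{itemize}
\item $\pi_*\ku_{p,n}^{tS^1}$ comes from inverting $t$ in $\pi_*\ku_{p,n}^{hS^1}$.
\item $\pi_*\ku_{p,n}^{tC_p}$ is $\pi_*\ku_{p,n}^{tS^1}/[p]_{q_{n+1}}$ after adjoining the $p$-th root $q_{n+1}$ of $q_n$, which is $\Z_p[\zeta_{p^{n+1}}][t^{\pm1}]$.
\end{itemize}
Finally, $(\ku_{p,n}^{(-1)})^{hS^1} = (\tau_{\ge0}\ku_{p,n}^{tC_p})^{hS^1}$, and I would compute it via $(\ku_{p,n}^{tC_p})^{hS^1}\simeq \ku_{p,n}^{tS^1}$ (using \cite[Lemma II.4.2]{nikolaus_scholze}). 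The connective cover then cuts out the subring generated by $u=t^{-1}$ and $t$, with relation $ut=[p]_{q_1}$ after the reindexing convention of Remark \ref{rmk::on_conventions_for_frob_twists}.

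\textbf{Adams operations and the two maps.} For (1), $\psi^g$ acts on $\beta$ by $g$ and on the orientation by $q\mapsto q^g$. Since $\beta t = q-1$, this forces $t\mapsto \frac{q^g-1}{g(q-1)}t$. For (2), the map $\ku_{p,n}\to \tau_{\ge 0}\ku_{p,n}^{tC_p}$ is identified on $hS^1$ with the canonical map $\ku_{p,n}^{hS^1}\to \ku_{p,n}^{tS^1}$ in degree matching. I would track $t\mapsto t$, and then $\beta = (q-1)t^{-1}$ goes to $(q-1)u = (q_1-1)[p]_{q_1}u$ in the reindexed variables, which is checked against the stated formula. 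For (3), the Frobenius of $\ku_{p,n}^{(-1)}$ is induced from the trivial-action Frobenius of $\ku_{p,n}$. On $tS^1$ it is the $q\mapsto q^p$ map coming from the $p$-fold cover of circles, and the normalisations $u\mapsto t^{-1}$ and $t\mapsto[p]_qt$ are forced by $ut=[p]_{q_1}$.

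The main obstacle I expect is bookkeeping of the variable renamings $q\leftrightarrow q_1$ under the Frobenius-twist convention, together with verifying that $\tau_{\ge0}$ commutes with the fixed points as claimed. I would check everything first for $n=0$ against \cite{sanath_thesis} and then propagate.
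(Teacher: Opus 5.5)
Your overall strategy is essentially the paper's: read everything off $\pi_*\ku_p^{hS^1}$, use evenness to degenerate the spectral sequences, and detect $\pi_*(\tau_{\ge 0}X)^{hS^1}$ inside $\pi_*X^{hS^1}$. That last map is indeed injective, since $(\tau_{<0}X)^{hS^1}$ has homotopy only in even degrees $\le -2$. The gap is in the bookkeeping that is the actual content of the lemma. You never separate the Euler class and $q$-parameter of a circle from those of its quotient by $C_p$.

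\emph{The quotient-circle conflation.} Already for $\ku_{p,n}^{hS^1}$ the roles are reversed: $q_n$ belongs to the original circle, and $q=q_n^{p^n}$ to $S^1/C_{p^n}$. For what follows, write $t'$ and $q'=q_n^{p^n}$ for the classes of the circle acting on $\ku_{p,n}$. The quotient circle acting on $\ku_{p,n}^{(-1)}$ then has $t=[p](t')=[p]_{q'}t'$ and $q=q'^p$. The lemma's notation only \emph{renames} $q'\rightsquigarrow q_1$ and $q_n\rightsquigarrow q_{n+1}$.
\begin{itemize}
\item For the fourth isomorphism, $\pi_*\ku_{p,n}^{tC_p}$ is the quotient $\pi_*\ku_{p,n}^{tS^1}/[p](t')=(\Z_p\llb q_n-1\rrb/[p]_{q'})[t'^{\pm1}]$. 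Since $[p]_{q_n^{p^n}}=\Phi_{p^{n+1}}(q_n)$, this gives $\Z_p[\zeta_{p^{n+1}}]$. Nothing is adjoined. Adjoining $q_{n+1}$ and killing $[p]_{q_{n+1}}=\Phi_p(q_{n+1})$, as you write, yields $\Z_p[\zeta_p]$.
\item For the fifth isomorphism, ``$u=t^{-1}$'' and ``$ut=[p]_{q_1}$'' contradict each other. The correct description is $u=t'^{-1}$ and $t=[p]_{q_1}t'$ inside $\pi_*\ku_{p,n}^{tS^1}$.
\end{itemize}

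\emph{Statements (2) and (3).} The map in (2) is $\phi^0\colon\ku_{p,n}\to\ku_{p,n}^{(-1)}$, which factors the cyclotomic Frobenius. Composed with $(\ku_{p,n}^{(-1)})^{hS^1}\to(\ku_{p,n}^{tC_p})^{hS^1}\simeq\ku_{p,n}^{tS^1}$, it is therefore $\varphi^{hS^1}$, coming from the $p$-fold cover of circles, and not $\can$. That is where $q_n\mapsto q_{n+1}^p$ and $t\mapsto t$ come from. The canonical map would instead send $q_n\mapsto q_{n+1}$, and send the Euler class of $\ku_{p,n}^{hS^1}$ to $t'$. But $t'$ does not lie in $\pi_{-2}(\ku_{p,n}^{(-1)})^{hS^1}=[p]_{q_1}t'\cdot\Z_p\llb q_{n+1}-1\rrb$.
\begin{itemize}
\item Your value $\beta\mapsto(q_1-1)[p]_{q_1}u$ is wrong, not merely a variant. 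It differs from the asserted $(q_1-1)u$ by the non-unit $[p]_{q_1}$, and with $t\mapsto t$ it violates $\beta t=q-1$. The slip is renaming $t^{-1}$ to $u$ while keeping the new $q$. Done consistently, $\beta=(q'-1)t'^{-1}=(q_1-1)u$.
\item In (3), the relation $ut=[p]_{q_1}$ fixes only the product of the images of $u$ and $t$, up to a unit, so nothing is ``forced''. You have to compute via the factorisation $(\ku_{p,n}^{(-1)})^{hS^1}\to\ku_{p,n}^{tS^1}\to(\ku_{p,n}^{(-1)})^{tS^1}$. Its second map is (2) with $t'$ inverted: $t'\mapsto t$ and $q_n\mapsto q_{n+1}^p$, hence $q'\mapsto q$. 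This gives $u=t'^{-1}\mapsto t^{-1}$ and $t=[p]_{q'}t'\mapsto[p]_q t$, which is the paper's argument.
\end{itemize}
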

\begin{proof}[Proof Sketch]
    A standard result says that $\ku_p^{hS^1} \simeq \Z_p\llb q-1\rrb[\beta, t]$, and that \[\ku_p^{hC_{p^n}} \simeq \Z_p\llb q-1\rrb[\beta, t]/([p^n]_qt). \]
    This yields the first isomorphism. The second isomorphism follows from evenness (which gives the description as a graded abelian group by degeneration of the homotopy fixed point spectral sequence) and the fact that $\ku_p^{hS^1} \to \ku_{p,n}^{hS^1}$ are ring maps which are injective on homotopy groups in order to identify the multiplicative structure. The third and fourth equivalences are immediate from the second by standard facts about complex orientations. The fifth isomorphism as well as statement (2) follow in the same way as the second isomorphism, though one needs to use the ring map $\ku_{p,n}^{hS^1} \to (\ku_{p,n}^{(-1)})^{hS^1}$ (an injection on homotopy groups) to identify the multiplicative structure. The claim on Adams operations is a standard fact about Adams operations on $\ku_p^{hS^1}$, while the second statement is clear from unravelling identifications.
    
    The final claim comes about by taking $S^1$-fixed points of the cyclotomic Frobenius map $\varphi^{hS^1}:\ku_{p,n}^{(-1)} \to \ku_{p,n}^{(-1),tC_p}$. By construction of the cyclotomic Frobenius, we see that $\varphi^{hS^1}$ factors as \[\ku_{p,n}^{(-1),hS^1} \to (\ku_{p,n}^{tC_p})^{hS^1} \simeq \ku_{p,n}^{tS^1} \to \ku_{p,n}^{(-1),tS^1}.\]
    The claim follows by computing the image of $q_n$ and $t$ under the map \[\frac{\Z_p\llb q_{n+1}-1\rrb[u,t]}{ut - [p]_{q_1}} \cong \pi_*\ku_{p,n}^{(-1),hS^1}  \to \pi_*(\ku_{p,n}^{tC_p})^{hS^1} \simeq \pi_*\ku_{p,n}^{tS^1} \cong \Z_p\llb q'-1\rrb[(t')^{\pm 1}],\]
    using that $q_{n+1}$ maps to $q'$ and that the Euler class $t$ on the source is sent to the $p$-fold iteration of the formal group law on $t'$ under the identification $(\ku_{p,n}^{tC_p})^{hS^1} \simeq \ku_{p,n}^{tS^1}$.
\end{proof}

\begin{corollary}\label{lem::fibre_seq_computing_jpn_from_kupn}
    There is a fibre sequence \[ j_{n} \to \ku_{p,n} \xrightarrow{\psi^{1+p^{n+1}}_\circ} \tau_{\ge 2} \ku_{p,n}\]
    where $\psi^{1+p^{n+1}}_\circ$ is the unique map factorizing $$\psi^{1+p^{n+1}} - 1 : \ku_{p,n} \to \ku_{p,n}$$
    through the connective cover map $\tau_{\ge 2}\ku_{p,n}\to \ku_{p,n}$. 
\end{corollary}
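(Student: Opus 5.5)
The plan is to realise $j_n = \tau_{\ge 0}\bigl(\ku_{p,n}^{h(1+p^{n+1}\Z_p)}\bigr)$ through the standard fibre sequence for homotopy fixed points under a procyclic group, and then to pass to connective covers. Since $1+p^{n+1}\Z_p$ is topologically generated by $g := 1+p^{n+1}$ (as $p$ is odd), and $\ku_{p,n}$ is $p$-complete with a continuous action, I will use the $p$-complete identification
\[\ku_{p,n}^{h(1+p^{n+1}\Z_p)} \simeq \fib\bigl(\psi^{g}-1 : \ku_{p,n}\to \ku_{p,n}\bigr).\]
I would justify this in the usual way: write $\ku_{p,n}$ as the limit of $\ku_{p,n}/p^k$. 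On each piece the action factors through a finite quotient $\Z/p^m$ for a uniform $m$ in each homotopy degree, by continuity of the Adams operations. There, fixed points under $\Z_p$ agree with fixed points under $\Z$, and those are computed as $\fib(\psi^g-1)$. Passing to the limit then gives the identification above.

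The next step is to check that $\psi^g-1$ factors through $\tau_{\ge 2}\ku_{p,n}\to \ku_{p,n}$, which amounts to showing it vanishes on $\pi_0$. \Cref{lem::computing_htpy_gps_of_fixed_pts_of_kupn} gives $\pi_0\ku_{p,n}\cong \Z_p\llb q_n-1\rrb/(q-1) \cong \Z_p[\zeta_{p^n}]$, and $\psi^g$ acts there by $q_n\mapsto q_n^{g}$. Since $q_n^{p^{n+1}}$ reduces to $q^{p}=1$ modulo $q-1$, this action is trivial. To upgrade this to an actual factorisation of spectrum maps, I would use that $\ku_{p,n}$ is connective, so maps into $\tau_{\ge 2}$ are computed by checking on the bottom cells. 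The obstruction has two parts. The first is the induced map on $\pi_0$, which vanishes as above. The second is a possible class in degree $1$, but $\pi_1\ku_{p,n}=0$ by evenness. A cleaner route is to write $\ku_{p,n}\to \tau_{\le 1}\ku_{p,n} = H\pi_0$ and observe that $H\pi_0$ is discrete with trivial induced action. Uniqueness of the factorisation follows because maps $\ku_{p,n}\to \tau_{\le 1}\ku_{p,n}[-1]$, which is concentrated in degree $-1$, vanish for connective sources.

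With the factorisation $\psi^g_\circ$ in hand, I will show that $\fib(\psi^g_\circ)$ is $\tau_{\ge 0}\fib(\psi^g-1)$. There is a map of fibre sequences relating $\fib(\psi^g_\circ)\to \ku_{p,n}\to\tau_{\ge 2}\ku_{p,n}$ to $\fib(\psi^g-1)\to\ku_{p,n}\to\ku_{p,n}$, and the cofibre of the comparison is $\fib(\tau_{\ge 2}\ku_{p,n}\to\ku_{p,n}) = \tau_{\le 1}\ku_{p,n}[-1] = \pi_0\ku_{p,n}[-1]$, concentrated in degree $-1$. Hence the comparison is an isomorphism on $\pi_i$ for $i\ge 0$. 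It remains to see that $\fib(\psi^g_\circ)$ is connective, which holds because $\pi_0(\tau_{\ge 2}\ku_{p,n})=0$ and so $\pi_{-1}\fib(\psi^g_\circ)$ is a quotient of zero. This identifies $\fib(\psi^g_\circ)$ with $j_n$.

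I expect the main technical step to be the continuity argument identifying $\Z_p$-fixed points with $\fib(\psi^g-1)$. The remaining steps are formal truncation bookkeeping together with the $\pi_0$ computation supplied by \cref{lem::computing_htpy_gps_of_fixed_pts_of_kupn}.
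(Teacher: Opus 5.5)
Your proposal is correct and follows essentially the same route as the paper: you identify the $(1+p^{n+1}\Z_p)$-fixed points with $\fib(\psi^{1+p^{n+1}}-1)$ using that $1+p^{n+1}$ is a topological generator, check that $\psi^{1+p^{n+1}}$ acts trivially on $\pi_0\ku_{p,n}$ so the map factors through $\tau_{\ge 2}\ku_{p,n}$, and take connective covers. Your version also fills in the obstruction-theoretic uniqueness and the truncation bookkeeping that the paper leaves implicit.

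One small slip, which does not affect the argument: $\pi_0\ku_{p,n}\cong\Z_p\llb q_n-1\rrb/(q-1)\cong\Z_p[q_n]/(q_n^{p^n}-1)$ is the group ring $\Z_p[\Z/p^n]$, not $\Z_p[\zeta_{p^n}]$; the latter is $\pi_0\ku_{p,n-1}^{(-1)}$. Your triviality computation $q_n^{1+p^{n+1}}=q_nq^p\equiv q_n$ modulo $q-1$ is carried out in the correct ring, so it stands.
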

\begin{proof}
    For $\psi^{1+p^{n+1}}_\circ$ to exist, notice that the Adams operation $\psi^{1+p^{n+1}}$ acts trivially on $\pi_0(\ku_{p,n}) = \Z_p\llb q_n - 1\rrb/(q-1)$, since \[\psi^{1+p^{n+1}}(q_n) = q^{(1+p^{n+1})/p^n} = q_n\cdot q^p \equiv  q_n \modc{q-1}.\]
    As $1+p^{n+1}\Z_p$ is topologically generated by $1+p^{n+1}$, we see that we have the fibre sequence \[\ku_{p,n}^{h(1+p^{n+1})\Z_p} \to \ku_{p,n} \xrightarrow{\psi^{1+p^{n+1}} - 1} \ku_{p,n}.\]
    Taking connective covers and using the fibre sequence \[\tau_{\ge 2} \ku_{p,n} \to \ku_{p,n} \to \pi_0\ku_{p,n}\]
    then yields the lemma. 
\end{proof}

For the remaining corollaries, we introduce the following notation. 
\begin{notation}\label{notn::spectral_Tate_twist}
    For any $\Z_p^\times$-equivariant module $M$ over $\ku_p$, write \[M(1)[2] := \tau_{\ge 2}\ku_p \otimes_{\ku_p} M.\]
\end{notation}
This notation is reasonable since, ignoring $\Z_p^\times$-actions, there is an equivalence $\tau_{\ge 2}\ku_p \simeq \ku_p[2]$ as a consequence of Bott periodicity. Note also that \[\tau_{\ge 2}\ku_{p,n} \simeq \ku_{p,n}(1)[2].\]

\begin{corollary}\label{cor::htpy_groups_of_tau_>=2ku_as_ideal}
    There is an equivalence $$\tau_{\ge 2}((\tau_{\ge 2}\ku_{p,n})^{tC_p}) \simeq \ku_{p,n}^{(-1)}(1)[2].$$
    The homotopy groups of this $\ku_{p,n}^{(-1)}$-module is the (graded) ideal \[(\zeta_p-1)u \cdot \Z_p[\zeta_{p^{n+1}}][u]\]
    inside $\pi_*\ku_{p,n}^{(-1)} \simeq \Z_p\llb \zeta_{p^{n+1}}[u]$, where the inclusion of the ideal is induced by \[\tau_{\ge 2}((\tau_{\ge 2}\ku_{p,n})^{tC_p}) \to \tau_{\ge 2}(\ku_{p,n}^{tC_p}) \to \ku_{p,n}^{(-1)}.\]
    
    Similarly, the homotopy groups of the $\ku_{p,n}^{(-1)}$-module $$(\tau_{\ge 2}((\tau_{\ge 2}\ku_{p,n})^{tC_p})^{hS^1} \simeq \ku_{p,n}^{(-1),hS^1}(1)[2]$$ is the (graded) ideal \[(q_1-1)u \cdot \Z_p\llb q_{n+1} - 1\rrb[u,t]/(ut - [p]_{q_1}).\]
\end{corollary}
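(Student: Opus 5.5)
We first identify $(\tau_{\ge 2}\ku_{p,n})^{tC_p}$ as a module over $\ku_{p,n}^{tC_p}$, then take $\tau_{\ge 2}$, then pass to $S^1$-fixed points.

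By Notation \ref{notn::spectral_Tate_twist}, $\tau_{\ge 2}\ku_{p,n}\simeq \ku_{p,n}(1)[2] = \tau_{\ge 2}\ku_p\otimes_{\ku_p}\ku_{p,n}$. On underlying spectra with $S^1$-action (the circle acts trivially on the $\ku_p$ factor), this is $\ku_{p,n}[2]$. Since $(-)^{tC_p}$ is exact and commutes with tensoring by the dualisable module $\tau_{\ge 2}\ku_p$ with trivial action, we get
\[(\tau_{\ge 2}\ku_{p,n})^{tC_p}\simeq \tau_{\ge 2}\ku_p\otimes_{\ku_p}\ku_{p,n}^{tC_p}.\]
Applying $\tau_{\ge 2}$ and using that $\tau_{\ge 2}\ku_p\simeq \ku_p[2]$ is an invertible module (shift by $2$) gives
\[\tau_{\ge 2}\big(\tau_{\ge 2}\ku_p\otimes_{\ku_p}\ku_{p,n}^{tC_p}\big)\simeq \tau_{\ge 2}\ku_p\otimes_{\ku_p}\tau_{\ge 0}\ku_{p,n}^{tC_p} = \ku_{p,n}^{(-1)}(1)[2].\]
The $\Z_p^\times$-equivariance is automatic, since every step is functorial in the Adams action.

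For the homotopy groups, we compute the image of $\pi_*$ under the composite to $\ku_{p,n}^{(-1)}$. By \cref{lem::computing_htpy_gps_of_fixed_pts_of_kupn}, $\pi_*\ku_{p,n}^{tC_p}\cong \Z_p[\zeta_{p^{n+1}}][t^{\pm1}]$, so $\pi_*\ku_{p,n}^{(-1)}\cong\Z_p[\zeta_{p^{n+1}}][u]$ with $u=t^{-1}$. The map $\tau_{\ge 2}\ku_{p,n}\to\ku_{p,n}$ is multiplication by the Bott class $\beta$, so the induced map is multiplication by the image of $\beta$ in $\pi_2\ku_{p,n}^{tC_p}$. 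By part (2) of the lemma, this image is $(q_1-1)u$, which specialises to $(\zeta_p-1)u$. Since $\pi_*\ku_{p,n}^{(-1)}$ is a domain, multiplication by $(\zeta_p-1)u$ is injective. Its image in degrees $\ge 2$ is exactly $(\zeta_p-1)u\,\Z_p[\zeta_{p^{n+1}}][u]$, which is the first ideal claimed.

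For the $S^1$-fixed points, $(-)^{hS^1}$ commutes with the tensor factor $\tau_{\ge 2}\ku_p$ because the action on it is trivial and the module is invertible. This gives $(\ku_{p,n}^{(-1)}(1)[2])^{hS^1}\simeq \ku_{p,n}^{(-1),hS^1}(1)[2]$. The map to $\ku_{p,n}^{(-1),hS^1}$ is multiplication by the image of $\beta$, which by \cref{lem::computing_htpy_gps_of_fixed_pts_of_kupn}(2) is $(q_1-1)u$. This element is a nonzerodivisor in $\Z_p\llb q_{n+1}-1\rrb[u,t]/(ut-[p]_{q_1})$, since that ring is torsion-free and embeds in $\Z_p\llb q_{n+1}-1\rrb[t^{\pm1}]$ where $u=[p]_{q_1}t^{-1}$. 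Hence the map is injective on homotopy with image the principal ideal $(q_1-1)u\cdot(\ldots)$, as claimed.

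The main point needing care is identifying the composite $\tau_{\ge2}((\tau_{\ge2}\ku_{p,n})^{tC_p})\to\ku_{p,n}^{(-1)}$ with multiplication by the image of the Bott class in a way compatible with the module structures. The plan is to see this as the module map obtained from $\beta:\tau_{\ge2}\ku_p\to\ku_p$ by base change, and to use the $S^1$-triviality of $\ku_p$ to pull it through $(-)^{tC_p}$.
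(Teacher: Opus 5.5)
Your argument is correct. For the homotopy groups it is exactly the paper's: you identify $\tau_{\ge 2}\ku_{p,n}\to\ku_{p,n}$ with multiplication by the Bott class, use \cref{lem::computing_htpy_gps_of_fixed_pts_of_kupn} to see that $\beta$ goes to $(q_1-1)u$ (resp.\ $(\zeta_p-1)u$ on underlying homotopy), and note that this element is a nonzerodivisor.

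The difference is in the first equivalence. The paper gets it by appeal to Lemma 6.2.5 of Devalapurkar's thesis together with the identification $\ku_{p,n}^{(-1)}\simeq \ku_{p,n}\otimes_{\ku_p}\ku_p^{(-1)}$. You argue directly instead: $\tau_{\ge 2}\ku_p$ is an invertible $\ku_p$-module with trivial circle action, so it can be pulled out of $(-)^{tC_p}$ and $(-)^{hS^1}$, and tensoring with it turns $\tau_{\ge 2}$ into $\tau_{\ge 0}$. Your route is self-contained and needs only $\pi_*\ku_{p,n}\cong\pi_0\ku_{p,n}[\beta]$. It also avoids that base-change identification, which cannot be an equivalence of ring spectra for $n\ge 1$. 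By the paper's own computations, $\pi_0(\ku_{p,n}\otimes_{\ku_p}\ku_p^{(-1)})\cong \Z_p[\zeta_p][q_n]/(q_n^{p^n}-1)$ is not a domain, while $\pi_0\ku_{p,n}^{(-1)}\cong\Z_p[\zeta_{p^{n+1}}]$ is.

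The compatibility you flag at the end does hold. The $\ku_p$-module structure on $\ku_{p,n}^{tC_p}$ coming from lax monoidality of $(-)^{tC_p}$ agrees with the one through $\ku_p\to\ku_{p,n}\to\ku_{p,n}^{(-1)}$, by naturality of the Frobenius construction in the trivial-action ring $\ku_p$. So the relevant image of $\beta$ is indeed the class from part (2) of the lemma.
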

\begin{proof}
    The first equivalence follows in the same way as \cite[Lemma 6.2.5]{sanath_thesis}, using that \[\ku_{p,n}^{(-1)} \simeq \ku_{p,n} \otimes_{\ku_p} \ku_p^{(-1)}.\] The rest follow from \cref{lem::computing_htpy_gps_of_fixed_pts_of_kupn} since multiplication by the Bott class induces the equivalence $\tau_{\ge 2}\ku_{p,n} \simeq \ku_{p,n}[2]$. 
\end{proof}

One proves the following corollary similarly to \cref{lem::fibre_seq_computing_jpn_from_kupn}.
\begin{corollary}\label{cor::-1-twist_of_fibre_seq_computing_jpn}
    For any $g\in 1+p^{n+1}\Z_p$ a topological generator, there is a fibre sequence \[ j_{n}^{(-1)} \to \ku_{p,n}^{(-1)} \xrightarrow{\psi^{g}_{\circ\circ}} \ku_{p,n}^{(-1)}(1)[2].\]
    where $\psi^{g}_{\circ\circ}$ is induced by $\psi^{g}_\circ$.

    In particular, using the equivalences of \cref{sctn::thh_of_cyclotomic_extns}, we have a fibre sequence of cyclotomic spectra 
    \[\THH(\Z_p[\zeta_{p^n}]) \to \THH(\Z_p[\zeta_{p^n}]/\S_p\llb q_n-1\rrb) \xrightarrow{\psi^g_{\circ\circ}} \THH(\Z_p[\zeta_{p^n}]/\S_p\llb q_n-1\rrb)(1)[2]\]
    where now $g$ is a topological generator of $1+p^n\Z_p$. 
\end{corollary}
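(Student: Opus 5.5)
We apply the functor $(-)^{(-1)} = \tau_{\ge 0}(-)^{tC_p}$ of Construction \ref{cons::shift_operator_on_cycsp} to the fibre sequence of \cref{lem::fibre_seq_computing_jpn_from_kupn}, with $1+p^{n+1}$ replaced by an arbitrary topological generator $g$ of $1+p^{n+1}\Z_p$. The argument of \cref{lem::fibre_seq_computing_jpn_from_kupn} works verbatim for such $g$. Indeed, $\psi^g$ acts trivially on $\pi_0\ku_{p,n}=\Z_p\llb q_n-1\rrb/(q-1)$, since $q_n^g = q_n\cdot q^{(g-1)/p^n}\equiv q_n$ modulo $q-1$, and $g$ generates $1+p^{n+1}\Z_p$ topologically. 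So we have a fibre sequence of cyclotomic spectra
\[ j_n\to \ku_{p,n}\xrightarrow{\psi^g_\circ}\tau_{\ge 2}\ku_{p,n}.\]
This is a sequence of cyclotomic spectra because $\psi^g$ commutes with the cyclotomic structure, and $\tau_{\ge 2}\ku_{p,n}$ inherits the structure from $\ku_{p,n}$ via Postnikov truncation as in the construction of $j_n$. Since $(-)^{tC_p}$ is exact, applying it gives a fibre sequence $j_n^{tC_p}\to\ku_{p,n}^{tC_p}\to(\tau_{\ge2}\ku_{p,n})^{tC_p}$ of $S^1$-spectra, and these maps are compatible with Frobenius.

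The main step is to pass to connective covers. Because $\tau_{\ge 0}$ is not exact, we need an exactness statement on homotopy. Write $A=\ku_{p,n}^{tC_p}$ and $B=(\tau_{\ge2}\ku_{p,n})^{tC_p}$. By \cref{lem::computing_htpy_gps_of_fixed_pts_of_kupn}, $\pi_*A=\Z_p[\zeta_{p^{n+1}}][t^{\pm1}]$ is even. The cofibre sequence $\tau_{\ge2}\ku_{p,n}\to\ku_{p,n}\to\pi_0\ku_{p,n}$ shows that $\pi_*B$ is also even, and by \cref{cor::htpy_groups_of_tau_>=2ku_as_ideal} its degree-$\ge 2$ part maps isomorphically onto the ideal $(\zeta_p-1)u\,\Z_p[\zeta_{p^{n+1}}][u]$. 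I would then check that $\pi_{2k}A\to\pi_{2k}B$ is surjective for $k\ge 1$. This is the expected obstacle: one must compute the map induced by $\psi^g_\circ$, which multiplies $u^k$ by roughly $g^k-1$, up to the $q$-twist, and compare it with the ideal generated by $(\zeta_p-1)u$. The source of the valuations here is $v_p(g^k-1)$ together with the factor $(q_1-1)$ recorded in \cref{lem::computing_htpy_gps_of_fixed_pts_of_kupn}(2).

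Granting this surjectivity, the long exact sequence shows that $\tau_{\ge0}j_n^{tC_p}\to\tau_{\ge0}A\to\tau_{\ge2}B$ is a fibre sequence. We then identify $\tau_{\ge2}B\simeq\ku_{p,n}^{(-1)}(1)[2]$ by \cref{cor::htpy_groups_of_tau_>=2ku_as_ideal}. To identify $\tau_{\ge0}j_n^{tC_p}$ with $j_n^{(-1)}$ there is nothing more to do, since that is its definition. A Frobenius-compatible version of the sequence follows because $(-)^{(-1)}$ is functorial on connective cyclotomic spectra, so the induced map $\psi^g_{\circ\circ}$ is a map of cyclotomic spectra.

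For the second statement, we substitute \cref{thm::identification_of_THH_cyclotomic_extn_2} and \cref{cor::rel_thh_of_cyc_extns}, shifting the index $n\mapsto n-1$. Under this shift a generator of $1+p^{n}\Z_p$ plays the role of $g$, and we use the Galois-equivariance of these equivalences so that $\psi^g$ corresponds to the Galois/Adams action.
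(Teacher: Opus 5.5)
Your route is the one the paper intends: apply $(-)^{tC_p}$ to the sequence of \cref{lem::fibre_seq_computing_jpn_from_kupn}, pass to connective covers, and identify the third term with $\tau_{\ge 2}((\tau_{\ge 2}\ku_{p,n})^{tC_p})\simeq\ku_{p,n}^{(-1)}(1)[2]$. However, the step you single out as the crux is false, and it comes from using the wrong criterion for when truncation preserves a fibre sequence.

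\textbf{The surjectivity claim fails.} The operation $\psi^g$ acts trivially on $\pi_0A=\Z_p[\zeta_{p^{n+1}}]$. Since $\psi^g\beta=g\beta$ with $\beta=(\zeta_p-1)u$, and $\pi_*A$ is torsion free, we get $\psi^g(u)=gu$. So for $k\ge 1$ the map $\pi_{2k}A\to\pi_{2k}B$ is $au^k\mapsto (g^k-1)au^k$, landing in $(\zeta_p-1)\Z_p[\zeta_{p^{n+1}}]u^k$. Now $v_p(g^k-1)=n+1+v_p(k)\ge 2$, while $v_p(\zeta_p-1)=\tfrac{1}{p-1}$. Hence this map is injective but never surjective. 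Its cokernel is $\Z_p[\zeta_{p^{n+1}}]/\bigl(\tfrac{g^k-1}{\zeta_p-1}\bigr)$, which is exactly $\pi_{2k-1}\THH(\Z_p[\zeta_{p^{n+1}}])$, the inverse different modulo $k$. If you granted surjectivity, you would conclude that $j_n^{(-1)}$ has no odd homotopy, which is false.

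\textbf{What is actually needed is a low-degree condition.} Let $F\to A\to B$ be a fibre sequence, and suppose $\tau_{\ge 0}A\to B$ factors through $\tau_{\ge 2}B$. Put $G:=\fib(\tau_{\ge 0}A\to\tau_{\ge 2}B)$.
\begin{itemize}
\item $G$ is automatically connective.
\item $\cofib(G\to F)\simeq\fib(\tau_{\le -1}A\to\tau_{\le 1}B)$, whose $\pi_0$ is $\pi_1B$ and whose higher homotopy vanishes.
\item Hence $G\simeq\tau_{\ge 0}F$ as soon as $\pi_1B=0$. Surjectivity in positive degrees plays no role.
\end{itemize}
So you must check two things.
\begin{enumerate}
\item $\pi_1B=0$. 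This is the evenness of $\pi_*B$ that you already derived.
\item $\pi_0A\to\pi_0B$ is zero. This gives the factorisation, because maps from a connective object into $\tau_{\le 0}B$ are detected on $\pi_0$; this holds $S^1$-equivariantly too, since $S^1$ is connected. The vanishing holds because $\pi_0B=(\zeta_p-1)\Z_p[\zeta_{p^{n+1}}]\to\pi_0A$ is injective (as $\pi_1$ of $(\pi_0\ku_{p,n})^{tC_p}$ vanishes), and the composite $\pi_0A\to\pi_0B\to\pi_0A$ is $\psi^g-1=0$.
\end{enumerate}
With this replacement your argument gives $\fib(\psi^g_{\circ\circ})\simeq\tau_{\ge 0}(j_n^{tC_p})=j_n^{(-1)}$. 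The injective-but-not-surjective maps in degrees $2k$ are precisely what produce the odd homotopy.
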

\begin{remark}
    There is a generalisation of this to arbitrary quasi-syntomic $\Z_p[\zeta_{p^n}]$-algebras, given in \cref{prop::prismatic_coh_from_q_de_rham_coh}. 
\end{remark}

The following is a generalisation of \cite[Lemma 6.4.10]{sanath_thesis} and may be proven similarly. 
\begin{lemma}\label{lem::some_pushouts_of_cyc_spectra}
    There are pushout squares of cyclotomic $E_\infty$-ring spectra 
    \[\begin{tikzcd}
        \S_p[Bp^{-n}\Z]^\triv \ar{r}\ar{d} & \S_p[BC_{p^n}]^\triv\ar{r}\ar{d}  &  j_{n-1}^{(-1)} \ar{d}\\
        \S_p^\triv \ar{r} & \S_p[B^2\Z]^\triv \ar{r} & \ku_{p,n-1}^{(-1)} 
        \cocartesian{1-1}{2-2}
        \cocartesian{1-2}{2-3}
    \end{tikzcd}\]
    where:
    \begin{itemize}
        \item the left square is induced by applying $\S_p[-]$ to the cofibre sequence of spectra \[p^{-n}\Z [1] \to C_{p^n}[1] \to \Z[2],\]
        \item the map $\S_p[BC_{p^n}]^\triv \to  j_{n-1}^{(-1)}$ is the composition \[\S_p[BC_{p^n}]^\triv \to \THH(\S_p[C_{p^n}]) \to \THH(\Z_p[\zeta_{p^n}]) \simeq  j_{n-1}^{(-1)}\]
        where the first map is the assembly map, and
        \item the map $\S_p[B^2\Z]^\triv \to \ku_{p,n-1}^{(-1)}$ is adjoint to the map $\Z \to \pi_2(\ku_{p,n-1}^{(-1),hS^1}) \cong \Z_p\llb q_n-1\rrb \cdot u$ sending $1\in \Z$ to $\beta = (q_1-1)u$. 
    \end{itemize}
\end{lemma}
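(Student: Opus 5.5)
The plan is to establish the two squares separately and then paste them. The left square is formal. The cofibre sequence of connective spectra $p^{-n}\Z[1]\to C_{p^n}[1]\to \Z[2]$ gives a cofibre sequence of grouplike $E_\infty$-spaces $B(p^{-n}\Z)\to BC_{p^n}\to B^2\Z$. Since $\S_p[-]$ is symmetric monoidal from $E_\infty$-spaces to $E_\infty$-rings, it carries pushouts of $E_\infty$-groups to pushouts of $E_\infty$-rings. The cofibre sequence is a pushout along the zero map to the point, so it is sent to a pushout of $E_\infty$-rings with $\S_p[\mathrm{pt}]=\S_p$ in the corner. Finally, $(-)^\triv\colon \Sp\to\CycSp$ is symmetric monoidal and a left adjoint (its right adjoint is $\TC$), so it preserves pushouts of $E_\infty$-algebras. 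This yields the left square in cyclotomic $E_\infty$-rings.

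For the right square, I first build the maps. The top map $\S_p[BC_{p^n}]^\triv\to j_{n-1}^{(-1)}$ is given, as a composite of cyclotomic $E_\infty$-maps. For the bottom map, I would use the universal property of $\S_p[B^2\Z]^\triv$: maps of cyclotomic $E_\infty$-rings $\S_p[B^2\Z]^\triv\to A$ are $E_\infty$-maps $B^2\Z\to \Omega^\infty\TC(A)$. I expect these to be determined by a class in $\pi_2\TC(A)$ whose image in $\pi_2 A^{hS^1}$ is the class specified in the statement, modulo checking that Frobenius fixes it. By \cref{lem::computing_htpy_gps_of_fixed_pts_of_kupn}(3), $\varphi^{hS^1}(\beta)=\can(\beta)$, so $\beta=(q_1-1)u$ lifts to $\pi_2\TC(\ku_{p,n-1}^{(-1)})$, and the lift is unique because the relevant $\pi_3$ terms vanish by evenness. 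Compatibility of the square amounts to showing that the composite $BC_{p^n}\to B^2\Z\to\Omega^\infty\TC(\ku^{(-1)}_{p,n-1})$ agrees with $BC_{p^n}\to\Omega^\infty\TC(j^{(-1)}_{n-1})\to\Omega^\infty\TC(\ku^{(-1)}_{p,n-1})$. This should reduce to a $\pi_1$/$\pi_2$ computation: the generator of $C_{p^n}$ maps to $\zeta_{p^n}$ in $\pi_0\THH$, and its image under the Bockstein is the Bott-type class. Here I follow the proof of \cite[Lemma 6.4.10]{sanath_thesis} with $n$ in place of $1$.

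To prove the square is a pushout, I compute the pushout $P := j_{n-1}^{(-1)}\otimes_{\S_p[BC_{p^n}]}\S_p[B^2\Z]$ and show that the comparison map $P\to\ku_{p,n-1}^{(-1)}$ is an equivalence on underlying spectra. This suffices because pushouts of cyclotomic $E_\infty$-rings are computed on underlying $E_\infty$-rings. Pasting with the left square identifies $P$ with $j_{n-1}^{(-1)}\otimes_{\S_p[Bp^{-n}\Z]}\S_p$. After $p$-completion, $B(p^{-n}\Z)\simeq S^1$, so this is a cofibre of a self-map of a rank-one module over $j_{n-1}^{(-1)}$: concretely, $P\simeq\cofib(\Sigma j^{(-1)}_{n-1}\xrightarrow{\partial} j^{(-1)}_{n-1})$, where $\partial$ is multiplication by the class coming from the generator of $\pi_1\S_p[S^1]$. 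I would then compare this with \cref{cor::-1-twist_of_fibre_seq_computing_jpn}, which exhibits $j^{(-1)}_{n-1}\to\ku^{(-1)}_{p,n-1}\to\ku^{(-1)}_{p,n-1}(1)[2]$ as a fibre sequence. Rotating that sequence and checking that the connecting map $\Sigma^{-1}\ku^{(-1)}_{p,n-1}(1)[2]\to j_{n-1}^{(-1)}$ agrees with $\partial$ after the appropriate module identification should give the equivalence. The check can be made on homotopy groups using \cref{lem::computing_htpy_gps_of_fixed_pts_of_kupn} and \cref{cor::htpy_groups_of_tau_>=2ku_as_ideal}, by showing that $\pi_*P\to\pi_*\ku^{(-1)}_{p,n-1}\cong\Z_p[\zeta_{p^n}][u]$ is an isomorphism. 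Alternatively, one can first prove the analogous unshifted pushout $\S_p[BC_{p^n}]\to j_{n-1}$, $\S_p[B^2\Z]\to\ku_{p,n-1}$, where $\ku_{p,n-1}=\tau_{\ge0}\ku_p^{hC_{p^{n-1}}}$ and the Bott map makes the statement close to classical (Snaith-type), and then apply $(-)^{(-1)}$.

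The main obstacle is the last step. One must identify the $\S_p[S^1]$-module structure map $\partial$ on $j^{(-1)}_{n-1}$ with the Adams-operation fibre sequence, and this requires knowing how the generator of $\pi_1$ of $B(p^{-n}\Z)$ acts. I would attack it by computing after applying $(-)^{hS^1}$, where everything is even and torsion-free by \cref{lem::computing_htpy_gps_of_fixed_pts_of_kupn}, and then descending using that $(-)^{hS^1}$ is conservative on the relevant bounded-below modules.
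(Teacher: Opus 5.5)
Your left square is fine. The argument for the right square, however, has a genuine gap at the step you flag as the main obstacle: the model you propose there is false. The identification $P\simeq\cofib(\Sigma j^{(-1)}_{n-1}\xrightarrow{\partial}j^{(-1)}_{n-1})$ would need a two-term resolution $\S_p\simeq\cofib(\Sigma\S_p[S^1]\xrightarrow{x}\S_p[S^1])$ of the augmentation module. Such a resolution exists for the discrete group ring $\S_p[\Z]$ (namely $\cofib(t-1)$), but not for $\S_p[S^1]=\S_p[B\Z]$. Over $\S_p[S^1]$ the module $\S_p$ is not even perfect, since $\S_p\otimes_{\S_p[S^1]}\S_p\simeq\S_p[\C\P^\infty]$.

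You can see the failure for your specific module. By the proof of \cref{lem::stable_transfer_compatible_with_adams_op_transfer}, $\pi_*(j^{(-1)}_{n-1}/p)\cong\Z_p[\zeta_{p^n}]/p[u]\otimes\Lambda(\epsilon)$, with $\epsilon$ the image of the generator of $\pi_1$. So $\ker(\epsilon)=\epsilon\cdot\pi_*$ contributes a copy of $\Z_p[\zeta_{p^n}]/p$ to $\pi_*(\cofib(\partial)/p)$ in every odd degree $\ge 3$, and $\ku^{(-1)}_{p,n-1}/p$ has no such classes. A correct computation has to go through the full bar construction, e.g.\ a K\"unneth spectral sequence; there the $\Lambda(\epsilon)$-freeness of $\pi_*(j^{(-1)}_{n-1}/p)$ is what kills the higher Tor terms.

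Your comparison step also does not typecheck. Rotating the fibre sequence of \cref{cor::-1-twist_of_fibre_seq_computing_jpn} presents $\ku^{(-1)}_{p,n-1}$ as a cofibre of a map out of $\Sigma^{-1}\ku^{(-1)}_{p,n-1}(1)[2]$, not out of $\Sigma j^{(-1)}_{n-1}$, so there is nothing to match with $\partial$. The fallback fails too: $(-)^{(-1)}=\tau_{\ge 0}(-)^{tC_p}$ is only lax monoidal, does not preserve relative tensor products, and does not send $\S_p[BC_{p^n}]^\triv$ or $\S_p[B^2\Z]^\triv$ to themselves.

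Separately, your construction of the bottom map is not justified. $E_\infty$-maps $\S_p[B^2\Z]\to\TC(A)$ are maps of spectra $\Sigma^2H\Z\to\gl_1\TC(A)$, and these are not determined by a class in $\pi_2$. Likewise, agreement on $\pi_1$ and $\pi_2$ does not supply the homotopy making the square commute, and the induced map out of $P$ depends on that homotopy.

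The missing observation is that $P$ is a relative $\THH$; this makes the lemma formal. The paper only says it is proven as in Devalapurkar's Lemma 6.4.10, but the inputs it has already recorded suffice. Take $G=p^{-n}\Z$; after $p$-completion, relative $\THH$ over $\S_p\llb q_n-1\rrb$ agrees with relative $\THH$ over $\S_p[G]$. The definition of relative $\THH$ and \cref{thm::THH_of_group_rings}(4) give $\THH(\Z_p[\zeta_{p^n}])\otimes_{\THH(\S_p[G])}\S_p[G]^\Tate\simeq j^{(-1)}_{n-1}\otimes_{\S_p[BG]^\triv}\S_p^\triv$. By \cref{cor::rel_thh_of_cyc_extns} the left side is $\ku^{(-1)}_{p,n-1}$, compatibly with $j^{(-1)}_{n-1}\to\ku^{(-1)}_{p,n-1}$. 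So the outer rectangle is a pushout of cyclotomic $E_\infty$-rings.

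Since $\S_p[G]\to\Z_p[\zeta_{p^n}]$ factors through $\S_p[C_{p^n}]$ and assembly is natural, the map $\S_p[BG]^\triv\to j^{(-1)}_{n-1}$ factors through the given top map. Define the bottom map as the induced map $\S_p[B^2\Z]^\triv\simeq\S_p[BC_{p^n}]^\triv\otimes_{\S_p[BG]^\triv}\S_p^\triv\to\ku^{(-1)}_{p,n-1}$. The pasting law then shows the right square is a pushout, because the left square and the outer rectangle are. All that remains is to identify this map on $\pi_2$ with $\beta=(q_1-1)u$; no computation of $\pi_*P$ is needed.
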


If one is just interested in describing $\Fil_\mot^\bullet \THH(X_n)$ just as a spectrum with $S^1$-action, the above lemmas suffice. The rest of this subsection is now devoted to far more technical lemmas that we will need just to pin down Frobenius.

For a concrete interpretation of the following result in terms of prismatic cohomology, see Remark \ref{rmk::comparison_bw_zeyuliu_and_our_qdR_comp}.  

\begin{proposition}\label{lem::S1-nilp-of-tilde-jpn-square}
    Fix $n\ge 1$ and $g$ a topological generator of $1+p^{n+1}\Z_p$. For any bounded below $S^1$-equivariant $ j_0^{(-1)}$-module $M$, the following commuting square is a fibre square of spectra with $S^1$-action.
    \[\begin{tikzcd}
        \left(M\otimes_{ j_0} \ku_{p,n-1}^{(-1)}\right)^{tC_p} \ar{r}{\psi_{\circ\circ}^g} \ar{d} & \left(M\otimes_{ j_0} \ku_{p,n-1}^{(-1)}\right)^{tC_p}(1)[2] \ar{d} \\
        \left(M\otimes_{ j_0} \ku_{p,n}^{(-1)}\right)^{tC_p} \ar{r}{\psi_{\circ\circ}^g} & \left(M\otimes_{ j_0} \ku_{p,n}^{(-1)}\right)^{tC_p}(1)[2].
    \end{tikzcd}\]
\end{proposition}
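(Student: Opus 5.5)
To show the square is a fibre square, I will show that the induced map on horizontal fibres is an equivalence. Equivalently, I will show that the total fibre of the square vanishes. Since $(-)^{tC_p}$ is exact and commutes with $M\otimes_{j_0}-$ up to the bounded-below hypothesis, the total fibre is $\left(M\otimes_{j_0} T\right)^{tC_p}$. Here $T$ is the total fibre of the square of $j_0^{(-1)}$-modules
\[\begin{tikzcd}
    \ku_{p,n-1}^{(-1)} \ar{r}{\psi^g_{\circ\circ}}\ar{d} & \ku_{p,n-1}^{(-1)}(1)[2]\ar{d}\\
    \ku_{p,n}^{(-1)} \ar{r}{\psi^g_{\circ\circ}} & \ku_{p,n}^{(-1)}(1)[2].
\end{tikzcd}\]
The map $\psi^g_{\circ\circ}$ is $j_0^{(-1)}$-linear, since $j_0$ is fixed by the Adams operations.

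By \cref{cor::-1-twist_of_fibre_seq_computing_jpn}, the horizontal fibres are $j_{n-1}^{(-1)}$ and $\ku_{p,n-1}^{(-1)h(1+p^{n+1}\Z_p)}$-type objects respectively. More precisely, the top fibre is computed by the generator $g$ of $1+p^{n+1}\Z_p$ rather than $1+p^n\Z_p$. I will therefore first identify the top fibre with $\tau_{\ge 0}$ of the $(1+p^{n+1}\Z_p)$-fixed points of $\ku_{p,n-1}^{(-1)}$, and the bottom fibre with $j_n^{(-1)}$. This reduces the statement to showing that
\[
\left(M\otimes_{j_0} \mathrm{cofib}\bigl(\tau_{\ge0}(\ku_{p,n-1}^{(-1)})^{h(1+p^{n+1}\Z_p)} \to j_n^{(-1)}\bigr)\right)^{tC_p}\simeq 0 .
\]

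The key step is to show that this cofibre $C$ is $S^1$-nilpotent in a strong sense, namely that it is built from modules on which $C_p$ acts through a free (induced) action. Then the Tate construction kills it. I will use the pushout description of \cref{lem::some_pushouts_of_cyc_spectra}, under which
\[
j_n^{(-1)}\simeq j_{n-1}^{(-1)}\otimes_{\S_p[BC_{p^n}]}\S_p[BC_{p^{n+1}}]
\]
up to the relevant identifications. This should exhibit $C$ as a base change of $\mathrm{cofib}(\S_p[BC_{p^n}]\to\S_p[BC_{p^{n+1}}])$ along a $j_0^{(-1)}$-algebra. On homotopy, using \cref{lem::computing_htpy_gps_of_fixed_pts_of_kupn}, the quantity $\pi_*C$ should be $(q_{n}-1)$- or $[p]_{q_n}$-torsion-like material. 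This corresponds algebraically to the statement that the $1+p^{n}\Z_p$ action is trivial modulo $q-1$, as in the Liu reference. I will check that $t\in\pi_{-2}$ of the Tate construction acts nilpotently after tensoring. Since $(-)^{tC_p}$ of a bounded-below module over $\ku_{p,\ast}$ is a module over $\ku^{tC_p}$, where $[p]_q t$ becomes invertible, it suffices to show that $\pi_*C$ is killed by a power of $[p]_{q_{n}}$, or more precisely by the image of $\Phi_{p}(q_n)$. Inverting it in the Tate construction then makes the object vanish. Bounded-belowness of $M$ is used, via \cref{lem::orbits_and_inverse_lims_commute}-style arguments and convergence of the Tate spectral sequence, to pass from this torsion statement to the vanishing of $(M\otimes_{j_0}C)^{tC_p}$ by filtering $M$ by its Postnikov tower.

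The main obstacle is the precise torsion or nilpotence computation. One must identify $C$ exactly, which involves connective covers, and verify that the element becoming a unit in $\pi_*(-)^{tC_p}$, namely the image of $[p]_q t$ or $u$-inverted classes from \cref{lem::computing_htpy_gps_of_fixed_pts_of_kupn}(3), annihilates $\pi_*C$ up to a bounded power. As a first attempt, I will compute $\pi_*$ of both horizontal maps directly using the explicit Adams action $q\mapsto q^g$, $\beta\mapsto g\beta$. From this I will read off that the kernel and cokernel discrepancy between levels $n-1$ and $n$ is $(q_n-1)$-adically controlled.
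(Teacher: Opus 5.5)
Your reduction to the total fibre $T$, and hence to showing $(M\otimes_{j_0^{(-1)}}C)^{tC_p}\simeq 0$, is fine. So is identifying the top fibre with $\tau_{\ge 0}(\ku_{p,n-1}^{(-1)})^{h(1+p^{n+1}\Z_p)}$. However, the two steps that are supposed to do the work both fail.

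First, $j_{n-1}^{(-1)}\otimes_{\S_p[BC_{p^n}]^\triv}\S_p[BC_{p^{n+1}}]^\triv$ is not $j_n^{(-1)}$. By \cref{lem::stable_transfer_compatible_with_adams_op_transfer} it is exactly the \emph{top} horizontal fibre, with $\pi_0=\Z_p[\zeta_{p^n}]$, whereas $\pi_0 j_n^{(-1)}=\Z_p[\zeta_{p^{n+1}}]$. So $C$ is not a base change of $\cofib(\S_p[BC_{p^n}]\to\S_p[BC_{p^{n+1}}])$, and nothing in your plan shows it is built from $C_p$-induced pieces.

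Second, your vanishing criterion is backwards. In the $C_p$-Tate construction of these complex-oriented objects, the Euler class $t$ becomes a unit, but the $p$-series $[p]_q t$ becomes \emph{zero}. For instance, $\pi_*\ku_{p,n}^{tC_p}\cong\Z_p[\zeta_{p^{n+1}}][t^{\pm1}]$ with $[p]_q\mapsto[p]_{\zeta_p}=0$, and $X^{tC_p}\simeq X^{tS^1}/[p]_q t$ for the relative $\THH$'s, as in the proof of \cref{lem::equivalence_on_TP_with_lesser_n}. So $[p]_q$-power torsion is exactly what survives $(-)^{tC_p}$. Concretely, every homotopy group of $\ku_p^{(-1)}$ is killed by $[p]_{q_1}$, which maps to $[p]_{\zeta_p}=0$, yet $(\ku_p^{(-1)})^{tC_p}\neq 0$. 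Also, checking nilpotence of $t$ on the Tate construction itself, where $t$ is already invertible, merely restates the goal. What you need is a property of $T$ that survives $M\otimes_{j_0^{(-1)}}-$ and forces Tate vanishing.

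That property is $S^1$-nilpotence, and the missing idea is that you should only expect it after reducing modulo $(p,\beta)$. The paper lets $F$ be the total cofibre of your square. Then $F/(p,\beta)$ is an $S^1$-equivariant $\F_p$-module, so by the Devalapurkar--Raksit criterion it is $S^1$-nilpotent once $t$ acts nilpotently, with uniform exponent, on $F^{hS^1}/(p,\beta)$. Integrally this criterion is unavailable, and no uniform bound is visible there: $t^k u^k=[p]_{q_1}^k\neq 0$ in $\pi_*(\ku_{p,n}^{(-1)})^{hS^1}$.

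Mod $(p,\beta)$ the argument runs as follows. In degrees $\ge 4$, $t$ acts by zero because $ut=[p]_{q_1}\equiv 0$ modulo $(p,q_1-1)$. In degrees $\le 1$, $\pi_*F^{hS^1}/(p,\beta)$ vanishes. To see this, take $g=1+p^{n+1}$ and split $\F_p\llb q_{n+1}-1\rrb=\bigoplus_{i=0}^{p-1}q_{n+1}^i\F_p\llb q_n-1\rrb$. Then $\psi^{g}_{\circ\circ}$ sends $q_{n+1}^i q_n^h t^j$ to $(i+ph)\,q_{n+1}^i q_n^h t^j\cdot(q-1)$, which is an isomorphism on the summands with $1\le i\le p-1$. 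This unit computation is the algebraic heart of the statement and is absent from your plan.

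It follows that $M\otimes_{j_0^{(-1)}}F$ is $S^1$-nilpotent mod $(p,\beta)$, so its $C_p$-Tate construction vanishes mod $(p,\beta)$ by \cref{lem::tate_Cp_kills_induced_S1_spectra}. Bounded-belowness of $M$ is used not for a Postnikov induction but to show that $(M\otimes_{j_0^{(-1)}}F)^{tC_p}$ is $\beta$-complete, so it vanishes outright.
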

\begin{proof}
    Let $F$ be the total cofibre of the following commutative square of $ j_0^{(-1)}$-modules in cyclotomic spectra
    \begin{equation}\label{eqn::S1-nilp-fibre-square}
        \begin{tikzcd}
            \ku_{p,n-1}^{(-1)} \ar{r}{\psi_{\circ\circ}^g} \ar{d} & \ku_{p,n-1}^{(-1)}(1)[2] \ar{d} \\
            \ku_{p,n}^{(-1)} \ar{r}{\psi_{\circ\circ}^g} & \ku_{p,n}^{(-1)}(1)[2].
        \end{tikzcd}
    \end{equation}
    We claim that $F/(p,\beta)$ is $S^1$-nilpotent in the sense of \cite[Definition 3.1.2]{sanath_arpon_image_of_j}, i.e. it is contained in the thick-tensor ideal of $\Mod_{\S_p^\triv}(\Sp^{BS^1})$ generated by $\S_p[S^1]$. 
    
    Suppose we know the $S^1$-nilpotence of $F/(p,\beta)$. Then $X:= M\otimes_{ j_0^{(-1)}} F$ is also such that $X/(p,\beta)$ is $S^1$-nilpotent, which by \cref{lem::tate_Cp_kills_induced_S1_spectra} below implies that \[\left(X/(p,\beta)\right)^{tC_p} \simeq X^{tC_p}/(p,\beta) \simeq 0.\]
    This implies that $(X^{tC_p})^\wedge_\beta \simeq 0$. Using the same argument as in \cite[Remark 3.1.6]{sanath_arpon_image_of_j}, noting that $X$ is bounded below (since $M$ is), the natural map $X^{tC_p} \to (X^{tC_p})^\wedge_\beta$ is an equivalence, so that $X^{tC_p}\simeq 0$. Since $X^{tC_p}$ is the total cofibre of the square occuring in the statement of \cref{lem::S1-nilp-of-tilde-jpn-square}, we win.

    It remains to establish the $S^1$-nilpotence of $F/(p,\beta)$. By \cite[Lemma 3.4.2]{sanath_arpon_image_of_j} we see that $F/(p,\beta)$ is an $S^1$-equivariant $\F_p$-module. In particular, the criterion of \cite[Proposition 3.1.5]{sanath_arpon_image_of_j} applies, i.e. it suffices to check that the Euler class $t$ acts nilpotently on $(F/(p,\beta))^{hS^1} \simeq F^{hS^1}/(p,\beta)$. One computes the following:
    \begin{align*}
        \pi_*\left(\ku_{p,n-1}^{(-1),hS^1}/(p,\beta)\right) &= \left(\bigoplus_{i\ge 1} \frac{\F_p\llb q_n-1\rrb}{(q_1-1)}\cdot u^i \right) \oplus \left(\bigoplus_{i\ge 0} \frac{\F_p\llb q_n-1\rrb}{q-1} \cdot t^i\right),\\
        \pi_*\left(\ku_{p,n-1}^{(-1),hS^1}(1)[2]/(p,\beta)\right) &= \left(\bigoplus_{i\ge 2} \frac{(q_1-1)\F_p\llb q_n-1\rrb}{(q_1-1)^2}\cdot u^i \right) \oplus \beta\left(\frac{\F_p\llb q_n-1\rrb}{(q-1)}\right) \oplus \left(\bigoplus_{i\ge 0} \frac{(q-1)\F_p\llb q_n-1\rrb}{(q-1)^2} \cdot t^i\right),
    \end{align*}
    and similarly for $n$ replaced by $n+1$. Moreover, the vertical maps in the mod $(p,\beta)$-reduction of (\ref{eqn::S1-nilp-fibre-square}) are simply the obvious inclusion maps induced by $\F_p\llb q_n-1\rrb \inj \F_p\llb q_{n+1}-1\rrb$. 
    
    Recall the identity $ut = [p]_{q_1}$ inside $\ku_{p,n}^{(-1),hS^1}$. Since $[p]_{q_1}=0$ modulo $(p,q_1-1)$, multiplication by $t$ is zero on $\pi_*(-)$, $*\ge 4$, for every term in the mod $(p,\beta)$-reduction of (\ref{eqn::S1-nilp-fibre-square}). In particular, multiplication by $t$ is zero for $\pi_* F^{hS^1}/(p,\beta)$ for $*\ge 4$. To then see that $t$ acts nilpotently on all of $\pi_*F^{hS^1}/(p,\beta)$ (with uniform exponent), it suffices to show that $\pi_* F^{hS^1}/(p,\beta) = 0$ for $*\le 1$.
    
    To see this vanishing claim, since the fibres of the horizontal maps in (\ref{eqn::S1-nilp-fibre-square}) are up to homotopy independent of the choice of $g$ (they are given by $ j_{n-1}^{(-1)}$ and $ j_n^{(-1)}$ respectively), we may choose $g = 1+p^{n+1}$. Using \cref{lem::computing_htpy_gps_of_fixed_pts_of_kupn}, the bottom horizontal map in (\ref{eqn::S1-nilp-fibre-square}) acts as 
    \[\psi_{\circ\circ}^{1+p^{n+1}}(q_{n+1}^i t^j) =   q_{n+1}^i \left(\frac{q^i[1+p^n]_q^j - 1}{q-1}\right)\cdot (q-1) t^{j}  \]
    where the term $q_{n+1}^i \left(\frac{q^i[1+p^n]_q^j - 1}{q-1}\right)$ is viewed as living in $\F_p\llb q_{n+1}-1\rrb/(q-1)$. If $p|i$, then this also gives a formula for the top horizontal map in (\ref{eqn::S1-nilp-fibre-square}). Now, we calculate \[\frac{[1+p^n]_q-1}{q-1}=\frac{[1+p^n]_q-(1+p^n)}{q-1} = \sum_{k=0}^{p^n} \frac{q^k-1}{q-1} = \sum_{k=0}^{p^n} k = p^n \cdot \frac{p^n+1}{2} = 0 \in \frac{\F_p\llb q_{n+1}-1\rrb}{(q-1)},\]
    where we use the fact that $\frac{q^k-1}{q-1} = [k]_q \equiv k \modc{q-1}$. 
    This implies  \[q_{n+1}^i \left(\frac{q^i[1+p^n]_q^j - 1}{q-1}\right) = q_{n+1}^i [i]_q = iq_{n+1}^i\]
    in $\F_p\llb q_{n+1}-1\rrb/(q-1)$, i.e. we have shown that \[\psi^{1+p^{n+1}}_{\circ\circ}(q_{n+1}^i t^j) = iq_{n+1}^i  \cdot (q-1) t^j \in q_{n+1}^i \frac{(q-1)\F_p\llb q_n-1\rrb}{(q-1)^2} t^j .\]
    Now, taking vertical cofibres in the mod $(p,\beta)$ reduction of (\ref{eqn::S1-nilp-fibre-square}) yields the following exact sequence for all $j\ge 0$
    \[0\to \pi_{-2j+1}(F^{hS^1}/(p,\beta)) \to \bigoplus_{i=1}^{p-1} q_{n+1}^i \frac{\F_p\llb q_n-1\rrb}{q-1} t^j \xrightarrow{\psi^{1+p^{n+1}}_{\circ\circ}} \bigoplus_{i=1}^{p-1} q_{n+1}^i \frac{(q-1)\F_p\llb q_n-1\rrb}{(q-1)^2} t^j \to \pi_{-2j}(F^{hS^1}/(p,\beta)) \to 0\]
    where we have used the basis decomposition \[\F_p\llb q_{n+1}-1\rrb \simeq \bigoplus_{i=0}^{p-1} q_{n+1}^i \F_p\llb q_n-1\rrb.\]
    It thus suffices to show that $\psi_{\circ\circ}^{1+p^{n+1}}$ induces an isomorphism of $\F_p$-modules \[q_{n+1}^i \frac{\F_p\llb q_n-1\rrb}{q-1} t^j \xrightarrow[\psi_{\circ\circ}^{1+p^{n+1}}]{\simeq} q_{n+1}^i \frac{(q-1)\F_p\llb q_n-1\rrb}{(q-1)^2} t^j \]
    for $1\le i\le p-1$. The preceding calculation shows that this is simply the map $q_{n+1}^it^j \cdot q_n^h \mapsto q_{n+1}^i t^j \cdot (i+ph)q_n^h$, which is clearly invertible since $i+ph$ is a unit as long as $1\le i\le p-1$.
\end{proof}
\begin{remark}\label{rmk::comparison_bw_zeyuliu_and_our_qdR_comp}
    The preceding proposition has the following concrete consequence in prismatic cohomology. 
    
    As a consequence of the results of \cref{sctn::thh_of_cyclotomic_extns} and \cref{cor::-1-twist_of_fibre_seq_computing_jpn}, we have the following explicit description of prismatic cohomology of $\Z_p[\zeta_{p^n}]$:
    \[\prism_{\Z_p[\zeta_{p^n}]}\{r\} \simeq \left[\Z_p\llb q_n-1\rrb\{r\} \xrightarrow{\frac{g-1}{q-1}} \Z_p\llb q_n-1\rrb \{r\} \right].\]
    Here, $g$ is a topological generator of $1+p^n\Z_p$. However, \cite[Theorem 1.0.2]{zeyuliu2025stacky} says that
    \[\prism_{\Z_p[\zeta_{p^n}]}\{r\} \simeq \left[\Z_p\llb q_{n-1} -1\rrb\{r\} \xrightarrow{\frac{g-1}{q-1}} \Z_p\llb q_{n-1}-1\rrb \{r\} \right].\]
    The proposition above, in the special case $M= j_0^{(-1)}$, is simply saying that the two complexes appearing on the right are quasi-isomorphic, as one expects. One can actually check that these are quasi-isomorphic by a direct computation as well (though this direct calculation will essentially boil down to the calculation carried out in the proof of \cref{lem::S1-nilp-of-tilde-jpn-square}).

    For arbitrary $M$, the same remark holds, except one replaces the Breuil--Kisin twists $\O_{\Z_p[\zeta_{p^n}]^\prism}\{r\}$ with $\pi_* \O_{X_n^\prism}\{r\}$, where $X$ is a $p$-adic formal scheme over $\Z_p[\zeta_p]$, $X_n := X \times_{\Spf \Z_p[\zeta_p]} \Spf \Z_p[\zeta_{p^n}]$, and $\pi:X_n^\Syn \to \Z_p[\zeta_{p^n}]^\Syn$ is the structure map of syntomifications. 
\end{remark}
The following lemma was used in the proof of the above proposition.
\begin{lemma}\label{lem::tate_Cp_kills_induced_S1_spectra}
    Suppose $X$ is an $S^1$-equivariant spectrum that is $S^1$-nilpotent in the sense of \cite[Definition 3.1.2]{sanath_arpon_image_of_j}, i.e. it is in the thick tensor ideal of $\Mod_{\S_p^\triv}(\Sp^{BS^1})$ generated by $\S_p[S^1]$. Then $X^{tC_p} \simeq 0$.
\end{lemma}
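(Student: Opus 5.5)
The plan is to reduce to the generator and then use exactness. The full subcategory of $\Mod_{\S_p^\triv}(\Sp^{BS^1})$ on those $X$ with $X^{tC_p}\simeq 0$ is the first thing to examine. Since $(-)^{tC_p}$ is an exact functor, this subcategory is closed under finite limits, finite colimits and shifts. It is also closed under retracts, because a retract of zero is zero. So it is thick, and it suffices to show that it is a tensor ideal containing $\S_p[S^1]$; then it contains the whole thick tensor ideal generated by $\S_p[S^1]$.

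For the ideal property, I would check that the tensor ideal generated by $\S_p[S^1]$ is already generated, as a thick subcategory, by objects of the form $Y\otimes \S_p[S^1]$ with $Y$ arbitrary, where $S^1$ acts diagonally. This holds because tensoring with a fixed $Y$ preserves thick closure: the class of $Z$ with $Y\otimes Z$ in the thick subcategory generated by such objects is itself thick and contains $\S_p[S^1]$. So it suffices to show $(Y\otimes \S_p[S^1])^{tC_p}\simeq 0$ for every $Y$.

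\textbf{Step 1 (untwisting).} Use the shearing equivalence $Y\otimes \S_p[S^1]\simeq Y^{\mathrm{res}}\otimes \S_p[S^1]$, where on the right the action is only on the $\S_p[S^1]$ factor. This is the standard map $(y,s)\mapsto (s^{-1}y,s)$, which identifies the diagonal action with the action on the second factor. It follows that $Y\otimes \S_p[S^1]$ is induced from the trivial group: it is $\mathrm{Ind}_e^{S^1}$ of the underlying spectrum of $Y$.

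\textbf{Step 2 (restriction to $C_p$).} Restrict the induced $S^1$-spectrum to $C_p$. As a $C_p$-space, $S^1$ is free and is a finite disjoint union of cells of the form $C_p\times D^k$. More precisely, $S^1\simeq C_p\times_{} \,$(a CW structure with $p$ free $0$-cells and $p$ free $1$-cells). Hence $\S_p[S^1]$, as a spectrum with $C_p$-action, lies in the thick subcategory generated by $\S_p[C_p]$. Consequently $Y\otimes \S_p[S^1]$ restricted to $C_p$ lies in the thick subcategory generated by induced objects $Y'\otimes \S_p[C_p]\simeq \mathrm{Ind}_e^{C_p}Y'$.

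\textbf{Step 3 (Tate vanishing).} For a finite group, the norm map $(\mathrm{Ind}_e^{C_p}Y')_{hC_p}\to(\mathrm{Ind}_e^{C_p}Y')^{hC_p}$ is an equivalence, since both sides are identified with $Y'$. Therefore the Tate construction vanishes on induced objects, and by exactness it vanishes on the whole thick subcategory they generate. This gives $(Y\otimes\S_p[S^1])^{tC_p}\simeq 0$.

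Finally, $p$-completion of the zero spectrum is zero, so the convention that $(-)^{tC_p}$ means the $p$-completed Tate construction causes no issue.

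I expect the main point requiring care to be the reduction from the tensor ideal to induced objects. The key is that the shearing equivalence is $S^1$-equivariant and compatible with restriction to $C_p$; everything after that is formal.
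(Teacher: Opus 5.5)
Your proposal is correct and follows essentially the same route as the paper. You reduce via the thick-ideal argument to objects $Y\otimes\S_p[S^1]$, use the $C_p$-CW structure on $S^1$ with one free orbit of $0$-cells and one free orbit of $1$-cells to get the cofibre sequence $\S_p[C_p]\xrightarrow{\id-\sigma}\S_p[C_p]\to\S_p[S^1]$, and conclude from the vanishing of $(-)^{tC_p}$ on induced objects; the paper cites \cite[Lemma I.3.8]{nikolaus_scholze} for that last step, which already covers the whole thick tensor ideal generated by induced objects.

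Your shearing Step 1 is harmless but unnecessary: tensoring that cofibre sequence with $Y$ directly puts $Y\otimes\S_p[S^1]$ in the thick subcategory generated by $Y\otimes\S_p[C_p]$, which is already induced. Also, in Step 2, $S^1$ is a finite free $C_p$-CW complex built from these cells, not a disjoint union of cells.
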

\begin{proof}
    \footnote{The author would like to thank Maxime Ramzi for some help in correctly formalising this proof.} By the usual thick subcategory argument, it suffices to show that $(-)^{tC_p}$ kills $X\otimes \S_p[S^1]$ where $X$ is any spectrum with $S^1$-action. Since $(-)^{tC_p}$ kills the thick tensor ideal generated by spectra with induced $C_p$-action \cite[Lemma I.3.8]{nikolaus_scholze}, it suffices to show that $\S_p[S^1]$ is in the thick subcategory generated by $\S_p[C_p]$. To see this, consider the usual $C_p$-equivariant cell structure on $S^1$ given by one $C_p$-orbit of 0-simplices and one $C_p$-orbit of 1-simplices. This expresses $S^1$ as the coequalizer in spaces with $C_p$-action of the two maps $\id:C_p\to C_p$ and $\cdot \sigma:C_p\to C_p$ (where $\sigma$ is a generator of $C_p$). Applying $\S_p[-]$ then yields the $C_p$-equivariant cofibre sequence \[\S_p[C_p] \xrightarrow{\id - \sigma} \S_p[C_p]\to \S_p[S^1]\]
    as required. 
\end{proof}

We next study the Frobenius on the cyclotomic spectrum $\ku_{p,n}^{(-1)}$. 
\begin{lemma}\label{lem::factorisation_of_frob_on_kupn}
    There is a canonical $S^1$-equivariant factorization of the cyclotomic Frobenius \[\varphi : \ku_{p,n}^{(-1)} \to \ku_{p,n}^{(-1),tC_p}\]
    through the canonical map $\ku_{p,n}^{(-1)} \to \ku_{p,n+1}^{(-1)}$.
\end{lemma}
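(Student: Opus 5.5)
We want an $S^1$-equivariant factorisation of $\varphi : \ku_{p,n}^{(-1)} \to \ku_{p,n}^{(-1),tC_p}$ through $\ku_{p,n}^{(-1)} \to \ku_{p,n+1}^{(-1)}$. The plan is to unwind Construction \ref{cons::shift_operator_on_cycsp}. There $\ku_{p,n}^{(-1)} = \tau_{\ge 0}(\ku_{p,n}^{tC_p})$, and the cyclotomic Frobenius on it is the composite
\[\tau_{\ge 0}\ku_{p,n}^{tC_p} \to \ku_{p,n}^{tC_p} \xrightarrow{\tilde\phi^{tC_p}} (\tau_{\ge 0}\ku_{p,n}^{tC_p})^{tC_p}.\]
Here $\tilde\phi$ is the connective lift of the Frobenius of $\ku_{p,n}$. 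By construction that Frobenius is the composite $\ku_{p,n} \to \tau_{\ge 0}\ku_{p,n}^{hC_p} \to \ku_{p,n}^{hC_p} \to \ku_{p,n}^{tC_p}$, and $\tau_{\ge 0}\ku_{p,n}^{hC_p} \simeq \ku_{p,n+1}$. So $\tilde\phi$ factors $S^1$-equivariantly as $\ku_{p,n} \xrightarrow{c} \ku_{p,n+1} \xrightarrow{\mr{can}} \tau_{\ge 0}\ku_{p,n}^{tC_p} = \ku_{p,n}^{(-1)}$. Here $c$ is the $\ku_{p,n}$-algebra structure map, and the second map is the connective cover of $\ku_{p,n}^{hC_p} \to \ku_{p,n}^{tC_p}$. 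This last map is the one that, after applying $(-)^{(-1)}$, will give our factorisation.

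Concretely, applying $\tau_{\ge 0}(-)^{tC_p}$ (equivalently the functor $(-)^{(-1)}$ on underlying $S^1$-spectra) to $\tilde\phi = \mr{can}\circ c$ gives
\[\ku_{p,n}^{(-1)} \xrightarrow{c^{(-1)}} \ku_{p,n+1}^{(-1)} \xrightarrow{\mr{can}^{tC_p}} (\ku_{p,n}^{(-1)})^{tC_p}.\]
The first arrow is exactly the canonical map $\ku_{p,n}^{(-1)} \to \ku_{p,n+1}^{(-1)}$ appearing in \cref{thm::identification_of_THH_cyclotomic_extn_2}. So I would define the factorisation by precomposing with the connective cover $\tau_{\ge 0}(\ku_{p,n}^{tC_p}) \to \ku_{p,n}^{tC_p}$. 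The point to check is that
\[\tau_{\ge 0}\ku_{p,n}^{tC_p} \to \ku_{p,n}^{tC_p} \xrightarrow{c^{tC_p}} \ku_{p,n+1}^{tC_p}\]
lands in, and agrees with, $c^{(-1)}$ after the connective cover. This holds because a map from a connective spectrum factors uniquely through the connective cover of its target. Composing with $\mr{can}^{tC_p}$ then recovers $\tilde\phi^{tC_p}$ restricted to the connective part, which is $\varphi$. Everything is $S^1$-equivariant because each ingredient is: Tate constructions for the residual $S^1/C_p \cong S^1$-action, and connective covers as right adjoints.

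The main obstacle is bookkeeping of the circle actions. $\ku_{p,n+1}$ carries the residual $S^1/C_{p^{n+1}}$-action, whereas $\tau_{\ge 0}\ku_{p,n}^{hC_p}$ carries an $S^1/C_p$-action on the $S^1/C_{p^n}$-object. We must check that the identification $\tau_{\ge 0}\ku_{p,n}^{hC_p} \simeq \ku_{p,n+1}$ used to define $c$ is compatible with the equivalences $S^1/C_p \cong S^1$ in the Tate construction. Otherwise "the canonical map" and the map in the factorisation could differ by a reparametrisation of the circle.

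I would handle this by working throughout with $\ku_p$ with trivial action and writing every object as $\tau_{\ge 0}(\ku_p^{hC_{p^k}})$ or $\tau_{\ge 0}(\ku_p^{hC_{p^k}})^{tC_p}$ for the subgroup tower $C_{p^k} \subset S^1$. Then every map is induced by restriction along inclusions of subgroups and by $\mr{can} : (-)^{hC_p} \to (-)^{tC_p}$, and all of these are natural in the $S^1$-spectrum. As a sanity check on homotopy I would use \cref{lem::computing_htpy_gps_of_fixed_pts_of_kupn}. On $\pi_0(-)^{hS^1}$ the Frobenius should send $q_{n+1} \mapsto q_{n+1}^p$, and this is consistent with factoring through the inclusion $\Z_p\llb q_{n+1}-1\rrb \to \Z_p\llb q_{n+2}-1\rrb$ followed by the isomorphism $q_{n+2}\mapsto q_{n+1}$ discussed in the introduction.
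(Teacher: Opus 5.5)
Your proposal is correct and is essentially the paper's argument. You unwind the Frobenius on $X^{(-1)}$ and note that the connective lift of the Frobenius of $\ku_{p,n}$ passes through $\tau_{\ge 0}(\ku_p^{hC_{p^n}})^{hC_p}\simeq \ku_{p,n+1}$. You then use the universal property of the connective cover to factor $\tau_{\ge 0}\ku_{p,n}^{tC_p}\to \ku_{p,n}^{tC_p}\to \ku_{p,n+1}^{tC_p}$ through the canonical map to $\ku_{p,n+1}^{(-1)}$; the paper does the same. Your circle-action bookkeeping, which writes everything as $\tau_{\ge 0}(\ku_p^{hC_{p^k}})$, is what the paper does implicitly.
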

\begin{proof}
    Unwinding definitions, the cyclotomic Frobenius of $\ku_{p,n}^{(-1)}$ is the composition \[\ku_{p,n}^{(-1)} \to \ku_{p,n}^{tC_p} \to \left(\tau_{\ge 0}\left((\ku_p^{hC_{p^n}})^{hC_p}\right)\right)^{tC_p} \simeq \left(\tau_{\ge 0}\ku_{p,n}^{hC_p}\right)^{tC_p} \to \left(\ku_{p,n}^{(-1)}\right)^{tC_p},\]
    where the second arrow is induced by the map $\ku_p^{hC_{p^n}} \to \ku_p^{hC_{p^{n+1}}}$ coming from triviality of the action on $\ku_p$. However, we have $\tau_{\ge 0}(\ku_p^{hC_{p^n}})^{hC_p} \simeq \tau_{\ge 0} \ku_p^{hC_{p^{n+1}}} \simeq \ku_{p,n+1}$. The composition of the first two arrows factors through the connective cover map $\ku_{p,n+1}^{(-1)} \to \ku_{p,n+1}^{tC_p}$, and the induced map $\ku_{p,n}^{(-1)}\to \ku_{p,n+1}^{(-1)}$ is the canonical map. The claim follows.
\end{proof}
\begin{notation}
    Let $\tilde \varphi_n : \ku_{p,n}^{(-1)}\to \ku_{p,n-1}^{(-1),tC_p}$ be the $S^1$-equivariant map induced by \cref{lem::factorisation_of_frob_on_kupn}, so that $\varphi:\ku_{p,n-1}^{(-1)} \to \ku_{p,n-1}^{(-1),tC_p}$ factors as the composition of $\ku_{p,n-1}^{(-1)} \to \ku_{p,n}^{(-1)}$ and $\tilde \varphi_n$. 
\end{notation}
\begin{remark}
    On $\pi_0$, $\tilde \varphi_n$ induces the isomorphism \[\Z_p\llb q_{n+1}-1\rrb \xrightarrow[q_{n+1}\mapsto q_n]{\simeq} \Z_p\llb q_n-1\rrb.\]
\end{remark}

Varying $n$, the cyclotomic spectra $\ku_{p,n}^{(-1)}$ are related to each other by the following.
\begin{lemma}\label{lem::labelling_the_q_n_inside_ku_pn(-1)}
    There are $\Z_p^\times$-equivariant cyclotomic $E_\infty$-ring maps $\S_p\llb p^{-n-1}\Z_p\rrb^\Tate \to \ku_{p,n}^{(-1)}$ for $n\ge 0$ inducing equivalences of cyclotomic spectra \[\ku_{p,n}^{(-1)} \simeq \ku_{p,n-1}^{(-1)} \otimes_{\S_p\llb p^{-n}\Z_p\rrb^\Tate} \S_p\llb p^{-n-1}\Z_p\rrb^\Tate. \]
\end{lemma}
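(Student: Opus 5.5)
The plan is to construct the maps $\S_p\llb p^{-n-1}\Z_p\rrb^\Tate \to \ku_{p,n}^{(-1)}$ from relative $\THH$, and then to check the tensor product decomposition on homotopy groups.

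\textbf{Construction of the maps.} By \cref{cor::rel_thh_of_cyc_extns} we have
\[\ku_{p,n}^{(-1)} \simeq \THH(\Z_p[\zeta_{p^{n+1}}]/\S_p\llb q_{n+1}-1\rrb) = \THH(\Z_p[\zeta_{p^{n+1}}]) \otimes_{\THH(\S_p\llb q_{n+1}-1\rrb)} \S_p\llb q_{n+1}-1\rrb^\Tate,\]
and $\S_p\llb q_{n+1}-1\rrb \simeq \S_p\llb p^{-n-1}\Z_p\rrb$ as completed group algebras. The unit of the second tensor factor therefore gives a cyclotomic $E_\infty$-map $\S_p\llb p^{-n-1}\Z_p\rrb^\Tate \to \ku_{p,n}^{(-1)}$. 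It is $\Z_p^\times$-equivariant because the equivalence of \cref{cor::rel_thh_of_cyc_extns} is.

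\textbf{Compatibility in $n$.} The inclusion $p^{-n}\Z_p \subset p^{-n-1}\Z_p$ and the ring map $\Z_p[\zeta_{p^n}] \to \Z_p[\zeta_{p^{n+1}}]$ together induce a map of relative $\THH$. By the compatibility clause of \cref{cor::rel_thh_of_cyc_extns}, this map is the canonical map $\ku_{p,n-1}^{(-1)} \to \ku_{p,n}^{(-1)}$. We therefore get a commuting square of cyclotomic $E_\infty$-rings, and hence a comparison map
\[\ku_{p,n-1}^{(-1)} \otimes_{\S_p\llb p^{-n}\Z_p\rrb^\Tate} \S_p\llb p^{-n-1}\Z_p\rrb^\Tate \to \ku_{p,n}^{(-1)}.\]
Since the forgetful functor from cyclotomic spectra to spectra with $S^1$-action is symmetric monoidal and conservative on objects, it suffices to show that this map is an equivalence of underlying spectra.

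\textbf{Underlying equivalence.} On underlying spectra, $\S_p\llb p^{-n-1}\Z_p\rrb$ is finite free of rank $p$ over $\S_p\llb p^{-n}\Z_p\rrb$, with basis $1, q_{n+1}, \dots, q_{n+1}^{p-1}$. Hence the source has homotopy
\[\pi_*\ku_{p,n-1}^{(-1)} \otimes_{\Z_p\llb q_n-1\rrb} \Z_p\llb q_{n+1}-1\rrb.\]
\cref{lem::computing_htpy_gps_of_fixed_pts_of_kupn} and \cref{cor::htpy_groups_of_tau_>=2ku_as_ideal} give $\pi_*\ku_{p,n-1}^{(-1)} = \Z_p[\zeta_{p^n}][u]$ and $\pi_*\ku_{p,n}^{(-1)} = \Z_p[\zeta_{p^{n+1}}][u]$. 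The map is then $\Z_p[\zeta_{p^n}][u] \otimes_{\Z_p\llb q_n-1\rrb} \Z_p\llb q_{n+1}-1\rrb \to \Z_p[\zeta_{p^{n+1}}][u]$, which is an isomorphism since $\Z_p[\zeta_{p^{n+1}}] = \Z_p[\zeta_{p^n}][x]/(x^p - \zeta_{p^n})$.

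\textbf{Main obstacle.} The difficulty is making sure the structure maps on homotopy really send $q_{n+1}\mapsto \zeta_{p^{n+1}}$ and $u\mapsto u$. This requires tracing the relative-$\THH$ identification carefully, in particular the Frobenius-twist convention of \cref{rmk::on_conventions_for_frob_twists}. It also requires checking that $\S_p\llb q_{n+1}-1\rrb^\Tate$, rather than the trivial structure, is the correct cyclotomic structure. To settle this, I would compare with the case $n=0$, where the corresponding pushout statement is in \cite[Lemma 6.4.10]{sanath_thesis}.
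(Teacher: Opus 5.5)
Your argument is correct, but it proves the equivalence by a homotopy-group computation, whereas the paper's proof is purely formal.

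The paper's route runs as follows. Because $\S_p\llb q_{n+1}-1\rrb$ is free of rank $p$ over $\S_p\llb q_n-1\rrb$, there is already an equivalence of $E_\infty$-rings $\Z_p[\zeta_{p^n}]\otimes_{\S_p\llb q_n-1\rrb}\S_p\llb q_{n+1}-1\rrb\simeq\Z_p[\zeta_{p^{n+1}}]$. This is the same identity $x^p=\zeta_{p^n}$ that you use only on $\pi_0$ at the end. Symmetric monoidality of $\THH$ and naturality of $\THH(A)\to A^\Tate$ then give, directly as $\Z_p^\times$-equivariant cyclotomic $E_\infty$-rings,
\[\THH(\Z_p[\zeta_{p^{n+1}}])\otimes_{\THH(\S_p\llb q_{n+1}-1\rrb)}\S_p\llb q_{n+1}-1\rrb^\Tate\simeq\THH(\Z_p[\zeta_{p^{n}}]/\S_p\llb q_{n}-1\rrb)\otimes_{\S_p\llb q_{n}-1\rrb^\Tate}\S_p\llb q_{n+1}-1\rrb^\Tate,\]
and \cref{cor::rel_thh_of_cyc_extns} turns this into the statement. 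The comparison map you build from the map of pairs is exactly this map. On this route, the step you single out as the main obstacle (checking that $q_{n+1}\mapsto\zeta_{p^{n+1}}$ and that $u\mapsto u$ up to a unit on $\pi_2$) never has to be verified, and no knowledge of $\pi_*\ku_{p,n}^{(-1)}$ is needed. Your route gives an explicit picture on homotopy, at the cost of that bookkeeping.

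Two minor corrections. First, the compatibility clause of \cref{cor::rel_thh_of_cyc_extns} concerns the absolute identifications $\THH(\Z_p[\zeta_{p^n}])\simeq j_{n-1}^{(-1)}$, not the transition maps in $n$. So it does not by itself identify the induced map of relative $\THH$ with the canonical map $\ku_{p,n-1}^{(-1)}\to\ku_{p,n}^{(-1)}$. You do not actually need that identification, since the map of pairs suffices. Second, your worry about the $\Tate$ structure versus the trivial structure is moot: the paper defines the cyclotomic structure on relative $\THH$ over $\S\llb x-1\rrb$ using $\S\llb x-1\rrb^\Tate$.
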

\begin{proof}
    This is obvious from the equivalence $$\ku_{p,n-1}^{(-1)} \simeq \THH(\Z_p[\zeta_{p^n}] /\S_p\llb q_n-1\rrb) \simeq \THH(\Z_p[\zeta_{p^n}]) \otimes_{\THH(\S_p\llb p^{-n}\Z_p\rrb)} \S_p\llb p^{-n}\Z_p\rrb^\Tate$$
    of \cref{cor::rel_thh_of_cyc_extns}.
\end{proof}
\begin{remark}\label{rmk::where_p^-n_goes_into_kupn-1}
    On underlying spectra with $S^1$-action, the map $\S_p\llb p^{-n-1}\Z_p\rrb^\triv \to \ku_{p,n}^{(-1)}$ is adjoint to a map of ring spectra $\S_p\llb p^{-n-1}\Z_p\rrb \to \ku_{p,n}^{(-1), hS^1}$, and the data of this latter map is equivalent to the data of a map \[p^{-n-1}\Z_p \to \pi_0(\ku_{p,n}^{(-1),hS^1}) = \Z_p \llb q^{1/p^{n+1}}-1\rrb .\]
    This sends the generator $p^{-n-1}$ of $p^{-n-1}\Z_p$ to $q_{n+1}$.
\end{remark}

\begin{construction}\label{cons::projection_map_on_kupn_via_semiadd}
    We construct an $\Z_p^\times\times S^1$-equivariant $\ku_{p,n-1}^{(-1)}$-linear map \[\rho_n : \ku_{p,n}^{(-1)} \to \ku_{p,n-1}^{(-1)}\]
    as follows. Consider the following composition of $S^1$-equivariant $K(1)$-local spectra
    \[\KU_p^{hC_p} \xleftarrow[\Nm_{C_p}]{\simeq} \KU_{p,hC_p} \to \KU_p\]
    where $\KU_p$ has the trivial circle action, the first equivalence is due to $K(1)$-local ambidexterity of $BC_p$ (cf. \cite{CSY_ambidexterity_and_chromatic}), and the second map exists due to the trivial circle action on $\KU_p$. This composition is $\KU_p$-linear. Taking $\tau_{\ge 0}(-)^{hC_{p^{n-1}}}$ then yields an $S^1$-equivariant $\ku_{p,n-1}$-linear map \[\ku_{p,n} \to \ku_{p,n-1}.\]
    Applying $\tau_{\ge 0} (-)^{tC_p}$ then yields a $\ku_{p,n-1}^{(-1)}$-linear map \[\ku_{p,n}^{(-1)}\to \ku_{p,n-1}^{(-1)}.\]
\end{construction}

\begin{lemma}\label{lem::properties_of_projection_map_on_kupn}
    The map $\rho_n$ satisfies the following properties.
    \begin{enumerate}
        \item There is a canonical commuting diagram 
        \[\begin{tikzcd}
            \ku_{p,n}^{(-1)} \ar{r}{\tilde \varphi_n} \ar{d}{\rho_n} & \ku_{p,n-1}^{(-1),tC_p} \ar{d}{\rho_{n-1}^{tC_p}} \\
            \ku_{p,n-1}^{(-1)} \ar{r}{\tilde \varphi_n} & \ku_{p,n-2}^{(-1),tC_p} .
        \end{tikzcd}\]
        \item The map $\rho_n$ canonically identifies with the following composition 
        \[\ku_{p,n}^{(-1)} \simeq \ku_{p,n-1}^{(-1)} \otimes_{\S_p\llb q_n-1\rrb} \S_p\llb q_{n+1}-1\rrb \xrightarrow{\id \otimes \proj} \ku_{p,n-1}^{(-1)},\]
        where the first equivalence is \cref{lem::labelling_the_q_n_inside_ku_pn(-1)} and $\proj : \S_p\llb q_{n+1}-1\rrb \to \S_p\llb q_n-1\rrb$ is the $\S_p\llb q_n-1\rrb$-linear map given by $q_{n+1}^i \mapsto 0$ if $p\nmid i$ and $q_{n+1}^{pi} \mapsto q_n^i$. 
    \end{enumerate}
\end{lemma}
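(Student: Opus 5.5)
Both parts reduce to the naturality of the building blocks of Construction \ref{cons::projection_map_on_kupn_via_semiadd}: the ambidexterity norm $\Nm_{C_p}:\KU_{p,hC_p}\xrightarrow{\simeq}\KU_p^{hC_p}$ and the collapse $\KU_{p,hC_p}\to\KU_p$. I will prove (2) first, since it pins $\rho_n$ down on homotopy, and then deduce (1) from it together with the explicit description of $\tilde\varphi_n$.

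\textbf{Part (2).} Both sides are $\ku_{p,n-1}^{(-1)}$-linear maps out of $\ku_{p,n}^{(-1)}$. By \cref{lem::labelling_the_q_n_inside_ku_pn(-1)}, $\ku_{p,n}^{(-1)}$ is a free $\ku_{p,n-1}^{(-1)}$-module on $1,q_{n+1},\dots,q_{n+1}^{p-1}$. Indeed, $\S_p\llb q_{n+1}-1\rrb$ is free of rank $p$ over $\S_p\llb q_n-1\rrb$ on these monomials. So a linear map to $\ku_{p,n-1}^{(-1)}$ is the same as a $p$-tuple of points of $\Omega^\infty\ku_{p,n-1}^{(-1)}$. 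This space has $\pi_0\cong\Z_p[\zeta_{p^n}]$ and its higher homotopy is that of a connective spectrum, so for maps of underlying spectra it suffices to determine where the $p$ basis elements go on $\pi_0$.

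To compute $\rho_n$ on $\pi_0$, I would pass through the description $\pi_0\ku_p^{hC_{p^k}}=\Z_p\llb q-1\rrb/([p^k]_q)$, i.e. $K$-theory of $BC_{p^k}$. In these terms the norm followed by collapse is the transfer $\Z_p[C_{p^n}]\to\Z_p[C_{p^{n-1}}]$ in representation rings. It sends a character $q_n^i$ to $0$ if $p\nmid i$ and to $p\,q_{n-1}^{i/p}$ otherwise. Then $\tau_{\ge0}(-)^{tC_p}$ relabels $q_k\mapsto q_{k+1}$ (compare \cref{lem::computing_htpy_gps_of_fixed_pts_of_kupn}(2) and Remark \ref{rmk::where_p^-n_goes_into_kupn-1}). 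In $\pi_0\ku_{p,n-1}^{(-1)}=\Z_p[\zeta_{p^n}]$, which is $p$-torsion free, this gives $\proj$ up to the factor $p$. I expect this factor to be absorbed by the normalisation of the $K(1)$-local norm, whose inverse on $BC_p$ introduces $1/p$ (the idempotent of \cite{CSY_chromatic_cyclotomic}). Checking this normalisation carefully is the main obstacle. I would verify it by evaluating on the unit: $\rho_n(1)$ should be $1$, because the composite $\KU_p\to\KU_p^{hC_p}\simeq\KU_{p,hC_p}\to\KU_p$ is the identity after inverting $\Nm$ on the trivial summand.

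To upgrade from spectra to $S^1$-equivariant maps, I would use that $\rho_n$ is constructed $S^1$-equivariantly as $\tau_{\ge0}(-)^{tC_p}$ of a $C_{p^{n-1}}$-fixed-point map, and that $\id\otimes\proj$ is likewise equivariant for the trivial action on the $\S_p\llb-\rrb^\Tate$ factor. Equivariant linear maps of free modules over the even ring $\ku_{p,n-1}^{(-1)}$ are then detected on $\pi_0(-)^{hS^1}=\Z_p\llb q_{n}-1\rrb$, which is torsion free, so the same $\pi_0$ check suffices there. The $\Z_p^\times$-equivariance is automatic, since both maps commute with Adams operations, which act on $q_{n+1}^i$ by $q_{n+1}^{gi}$ and preserve the condition $p\mid i$.

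\textbf{Part (1).} Using (2), both composites in the square are $\ku_{p,n-1}^{(-1)}$-semilinear along $\tilde\varphi$, so it suffices to compare them on the basis $q_{n+1}^i$. By the Remark following the Notation for $\tilde\varphi_n$, $\tilde\varphi_n$ sends $q_{n+1}\mapsto q_n$. Hence both composites send $q_{n+1}^i$ to $0$ if $p\nmid i$ and to $q_{n-1}^{i/p}$ if $p\mid i$. For the equivariant homotopy I would instead argue structurally: $\tilde\varphi_n$ is built from the restriction $\ku_p^{hC_{p^{n-1}}}\to\ku_p^{hC_{p^{n}}}$ followed by $(-)^{tC_p}$. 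The norm-and-collapse map commutes with restriction of fixed points along the trivial action, since it is $\KU_p$-linear and natural. Applying $(-)^{tC_p}$ to that naturality square then gives the asserted canonical square, with the $\pi_0$ computation above as a consistency check.
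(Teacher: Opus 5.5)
The gap is in part (2), at the step that carries its content. You assert that on $\pi_0$ the map induced by $\KU_p^{hC_p}\xleftarrow{\Nm}\KU_{p,hC_p}\to\KU_p$ (after $C_{p^{n-1}}$-fixed points) is the group-ring trace $q_n^i\mapsto p\,q_{n-1}^{i/p}$ or $0$. You then note that this clashes with $\rho_n(1)=1$, and hope to absorb the factor $p$ into a ``normalisation'' fixed by evaluating on the unit. That is not an argument, for two reasons. First, nothing you say shows the map is a scalar multiple of the trace. Second, a $\ku_{p,n-1}^{(-1)}$-linear map out of the free module on $1,q_{n+1},\dots,q_{n+1}^{p-1}$ has $p$ independent coordinates, and the unit check fixes only one of them.

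So the vanishing $\rho_n(q_{n+1}^i)=0$ for $p\nmid i$ is never proved, and this is exactly what singles out $\id\otimes\proj$ among all retractions of the unit map. Your ``trace divided by $p$'' does land on the right formula: on characters the integral is $\chi\mapsto\frac1p\sum_{h\in C_p}\chi(h\,\cdot)$, i.e.\ $V\mapsto V^{C_p}$. But that formula is precisely what must be established.

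The paper gets it from Yuan's description of the $K(1)$-local integral, together with the fact that the $i$-th power of the tautological line bundle on $\C\P^\infty$ is not fixed by the $p$-power map when $p\nmid i$. For the unit it uses $|BC_p|_{K(1)}=1$ (CSY), which is a genuine input, not a formality about ``inverting the norm''.

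An elementary substitute for the vanishing runs as follows. The integral $\int_{BC_p}$ commutes with Adams operations, which fix $\pi_0\KU_p$ and permute $\{q^i:p\nmid i\}$ transitively, so $\int_{BC_p}q^i=c$ for all such $i$. The regular character $\sum_{i=0}^{p-1}q^i$ is the integral of $1$ along $\mathrm{pt}\to BC_p$, so it integrates to $1=\int_{BC_p}1$. This forces $(p-1)c=0$.

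Your equivariant upgrade has a second gap. $S^1$-equivariant linear maps out of the free module are classified by their values in $\pi_0(\ku_{p,n-1}^{(-1)})^{hS^1}=\Z_p\llb q_n-1\rrb$. The edge map from this to $\pi_0\ku_{p,n-1}^{(-1)}=\Z_p[\zeta_{p^n}]$ has kernel $([p]_{q_1})$. So a computation in $\Z_p[\zeta_{p^n}]$ does not pin down the equivariant class, and torsion-freeness of $\Z_p\llb q_n-1\rrb$ is beside the point. The computation must be done on $\pi_0\KU_p^{hS^1}=\Z_p\llb q-1\rrb$, for the fibrewise integral along the $p$-fold map $BS^1\to BS^1$; this is where the paper does it.

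The Adams argument does bootstrap to this setting. Let $q'$ be the source class, so $q'^p=q$, and set $c_i=\int q'^i\in\Z_p\llb q-1\rrb$. Base-linearity and $\psi^{1+p}$-equivariance give $c_i(q^{1+p})=q^ic_i(q)$, and together with $c_i(1)=0$ this forces $c_i=0$.

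Your structural argument for part (1), via naturality of $\can$ and $\Nm_{C_p}$ under passing to fixed points, is the paper's and is fine.
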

\begin{proof}
    Recall that $\tilde \varphi_n$ is obtained by applying $(-)^{(-1)}$ to $$\tau_{\ge 0}(\ku_p^{hC_{p^n}}) \simeq \tau_{\ge 0}\left(\tau_{\ge 0}(\ku_p^{hC_{p^{n-1}}})\right)^{hC_p} \xrightarrow\can \tau_{\ge 0}\left(\tau_{\ge 0}(\ku_p^{hC_{p^{n-1}}})\right)^{tC_p}$$
    followed by the connective cover map $(\ku_{p,n-1}^{(-1)})^{(-1)}\to \ku_{p,n-1}^{(-1), tC_p}$. Since $\tau_{\ge 0}\ku_p^{hC_N} \simeq \tau_{\ge 0} \KU_p^{hC_N}$ for any $N\ge 1$, statement (1) follows by noticing that the following diagram (obtained by functoriality of $\Nm_{C_p}$ and $\can:(-)^{hC_p}\to (-)^{tC_p}$) commutes:
   
\[\begin{tikzcd}
	{\tau_{\ge 0}(\KU_p^{hC_p})^{hC_{p^{n-1}}}} & {\tau_{\ge 0}\left(\tau_{\ge 0}\KU_p^{hC_{p^{n-1}}}\right)^{hC_p}} & {\tau_{\ge 0}\left(\tau_{\ge 0}\KU_p^{hC_{p^{n-1}}}\right)^{tC_p}} \\
	{\tau_{\ge 0}(\KU_{p,hC_p})^{hC_{p^{n-1}}}} & {\tau_{\ge 0}\left(\tau_{\ge 0}(\KU_{p,hC_p})^{hC_{p^{n-2}}}\right)^{hC_p}} & {\tau_{\ge 0}\left(\tau_{\ge 0}(\KU_{p,hC_p})^{hC_{p^{n-2}}}\right)^{tC_p}} \\
	{\tau_{\ge 0}\KU_p^{hC_{p^{n-1}}}} & {\tau_{\ge 0}\left(\tau_{\ge 0}\KU_p^{hC_{p^{n-2}}}\right)^{hC_p}} & {\tau_{\ge 0}\left(\tau_{\ge 0}\KU_p^{hC_{p^{n-2}}}\right)^{tC_p}}
	\arrow["\simeq"{description}, draw=none, from=1-1, to=1-2]
	\arrow["\can", from=1-2, to=1-3]
	\arrow["\simeq"{marking, allow upside down}, shift left=4, draw=none, from=2-1, to=1-1]
	\arrow["{\Nm_{C_p}^{hC_{p^{n-1}}}}"', shift left, from=2-1, to=1-1]
	\arrow["\simeq"{description}, draw=none, from=2-1, to=2-2]
	\arrow[from=2-1, to=3-1]
	\arrow["{\Nm_{C_p}^{hC_{p^{n-2}}}}"', from=2-2, to=1-2]
	\arrow["\simeq"{marking, allow upside down}, shift left=3, draw=none, from=2-2, to=1-2]
	\arrow["\can", from=2-2, to=2-3]
	\arrow[from=2-2, to=3-2]
	\arrow["{\Nm_{C_p}^{hC_{p^{n-2}}}}"', from=2-3, to=1-3]
	\arrow["\simeq"{marking, allow upside down}, shift left=3, draw=none, from=2-3, to=1-3]
	\arrow[from=2-3, to=3-3]
	\arrow["\simeq"{description}, draw=none, from=3-1, to=3-2]
	\arrow["\can", from=3-2, to=3-3]
\end{tikzcd}\]

    For the second statement, notice first that the composition $$\KU_p \to \KU_p^{hC_p} \xleftarrow[\Nm_{C_p}]{\simeq} \KU_{p,hC_p} \to \KU_p$$ is the identity map since the $K(1)$-local cardinality of $BC_p$ is 1 (see \cite[Proposition 2.2.5]{CSY_ambidexterity_and_height}). Applying $\left(\tau_{\ge 0}(-)^{hC_{p^{n-1}}}\right)^{(-1)}$ then shows that the map $\ku_{p,n-1}^{(-1)} \to \ku_{p,n}^{(-1)} \xrightarrow{\rho_n} \ku_{p,n-1}^{(-1)}$ is the identity map, where the first map is the canonical map $\ku_{p,n-1}^{(-1)}\to \ku_{p,n}^{(-1)}$. The same is of course true for $\id \otimes \proj$ too.
    By $\ku_{p,n-1}^{(-1)}$-linearity of $\rho_n$ and of $\id\otimes \proj$, it thus suffices to check that the classes $q_{n+1}^i \in \pi_0(\ku_{p,n}^{(-1),hS^1})$ are killed by $\rho_n$ for $1\le i\le p-1$. For this, it actually suffices to show that the composition $$\KU_p^{hS^1} \simeq (\KU_p^{hC_p})^{hS^1} \xleftarrow[\Nm_{C_p}]{\simeq} (\KU_{p,hC_p})^{hS^1} \to \KU_p^{hS^1}$$ kills $q^i$ for $1\le i\le p-1$. Using \cite[Example 2.16]{yuan2024sphere} (see also \cite[Remark 2.2.13]{sanath_arpon_image_of_j}) and the fact that $q^i\in \pi_0(\KU_p^{hS^1})$ represents the $i$'th power of the tautological line bundle on $BS^1\simeq \C\P^\infty$, the claim that $q^i$ dies under the above composition follows from the simple observation that the $i$'th power of the tautological line bundle is not fixed under the $p$-power map $\C\P^\infty \to \C\P^\infty$ whenever $p\nmid i$.
\end{proof}

\subsection{The Case of a Point}\label{sctn::case_of_point}
The main result of this subsection is \cref{prop::computing_trace_maps_explicitly_non_equivariant}. The key idea behind the proof is the following lemma, which allows us to use Schlichtkrull's description of the transfer maps in $\THH$ of group algebras (given in \cref{sctn::thh_of_grp_algebras}) to then describe the transfer maps for the cyclotomic extensions of $\Z_p$. 

\begin{lemma}\label{lem::THH_transfer_bootstrap}
    Suppose $A$ is an $E_\infty$-ring spectrum, and suppose $B$ and $C$ are $E_\infty$-$A$-algebras such that $B$ is dualizable over $A$ (so that there is a $\THH(A)$-linear transfer map $\THH(B)\to \THH(A)$). Then, the transfer map $\THH(C\otimes_A B)\to \THH(C)$ associated to the dualisable $E_\infty$-$C$-algebra $C\otimes_A B$ coincides with the map \[\THH(C\otimes_A B) \simeq \THH(C)\otimes_{\THH(A)}\THH(B) \to \THH(C)\]
    induced by the transfer $\THH(B)\to \THH(A)$.
\end{lemma}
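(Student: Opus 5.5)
The plan is to reduce everything to functoriality of $\THH$ as a symmetric monoidal functor on $\Cat_\infty^\perf$, together with the description of the transfer as $\THH$ applied to restriction of scalars. Concretely, the transfer $\THH(B)\to\THH(A)$ is $\THH$ applied to the exact functor $\mathrm{res}:\Perf(B)\to\Perf(A)$, which lands in $\Perf(A)$ because $B$ is dualisable over $A$. Similarly, the transfer $\THH(C\otimes_A B)\to \THH(C)$ is $\THH$ applied to $\mathrm{res}':\Perf(C\otimes_A B)\to\Perf(C)$.

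The first step is to identify $\Perf(C\otimes_A B)$ with the relative Lurie tensor product $\Perf(C)\otimes_{\Perf(A)}\Perf(B)$ in $\Cat_\infty^\perf$. This is standard: module categories tensor, and passing to compact objects commutes with the relative tensor product of compactly generated presentable categories. Under this identification, $\mathrm{res}'$ should become $\id_{\Perf(C)}\otimes_{\Perf(A)}\mathrm{res}$.

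To check this, I would compare the two functors on the level of presentable categories, i.e.\ $\Mod_{C\otimes_A B}\to \Mod_C$. Both are colimit-preserving and $\Mod_C$-linear; colimit preservation uses that $B$ is dualisable, so restriction has a colimit-preserving right adjoint and is itself a left adjoint. A $\Mod_C$-linear colimit-preserving functor out of $\Mod_{C\otimes_A B}\simeq \Mod_C\otimes_{\Mod_A}\Mod_B$ is determined by where it sends the generator $C\otimes_A B$ together with its right $(C\otimes_A B)$-module structure. Both functors send it to $C\otimes_A B$ regarded as a $C$-module, with the evident action, so they agree. This identification is the main point requiring care: the linearity data must match, not just the objects. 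I would handle it by viewing both functors as induced from the same $(C\otimes_A B, C)$-bimodule, namely $C\otimes_A B$ itself, via the Morita-theoretic description of linear functors.

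The second step uses that $\THH:\Cat_\infty^\perf\to\Sp$ is symmetric monoidal and, by Morita invariance, preserves relative tensor products (geometric realisations of bar constructions). This gives
\[\THH(C\otimes_A B)\simeq\THH(C)\otimes_{\THH(A)}\THH(B),\]
compatibly with the equivalence already recorded in the paper. Naturality of this equivalence in the $\Perf(A)$-module functor $\Perf(B)\to\Perf(A)$ then identifies $\THH(\id\otimes\mathrm{res})$ with $\id_{\THH(C)}\otimes_{\THH(A)}\THH(\mathrm{res})$. Here the target $\Perf(C)\otimes_{\Perf(A)}\Perf(A)\simeq\Perf(C)$ has $\THH$ equal to $\THH(C)\otimes_{\THH(A)}\THH(A)\simeq\THH(C)$. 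This concludes the identification of the transfer $\THH(C\otimes_A B)\to \THH(C)$ with the map induced by $\THH(B)\to\THH(A)$, as claimed.
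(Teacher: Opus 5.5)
Your argument is correct. It uses the same two ingredients as the paper: the compatibility of restriction of scalars with base change at the level of categories, and symmetric monoidality of $\THH$. The difference is the level at which you do the linearity bookkeeping.

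\textbf{The paper's route.} It writes down the Beck--Chevalley square (restriction $\Perf(B)\to\Perf(A)$ and $\Perf(C\otimes_A B)\to\Perf(C)$, with base change as vertical maps) and applies $\THH$. It then concludes from three facts: the top transfer is $\THH(A)$-linear, the bottom one is $\THH(C)$-linear, and a $\THH(C)$-linear map out of $\THH(C)\otimes_{\THH(A)}\THH(B)$ is determined by its $\THH(A)$-linear restriction to $\THH(B)$.

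\textbf{Your route.} You perform this extension-of-scalars step before applying $\THH$, identifying $\mr{res}'\simeq\id_{\Perf(C)}\otimes_{\Perf(A)}\mr{res}$ as $\Perf(C)$-linear functors via Morita theory. This makes explicit a coherence the paper leaves implicit: the adjunction argument needs the homotopy filling the square to be $\Perf(A)$-linear, hence $\THH(A)$-linear. The cost is a stronger input. You need $\THH$ to be compatible with relative tensor products in $\Cat_\infty^\perf$, naturally in $\Perf(A)$-linear functors that are not induced by ring maps. The paper only needs $\THH(C\otimes_A B)\simeq\THH(C)\otimes_{\THH(A)}\THH(B)$ for $E_\infty$-rings.

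Two justifications in your proposal should be corrected.
\begin{itemize}
\item Restriction of scalars preserves colimits for any ring map. Dualisability of $B$ is needed for a different reason: it makes the right adjoint $\Hom_A(B,-)$ preserve filtered colimits. That is what makes restriction preserve compact objects and land in $\Perf(A)$.
\item ``By Morita invariance'' is not why $\THH$ commutes with the bar construction. Lax monoidality gives a comparison map $\THH(\mc M)\otimes_{\THH(\mc R)}\THH(\mc N)\to\THH(\mc M\otimes_{\mc R}\mc N)$, natural in linear functors. It is an equivalence for your inputs because $\Perf(C)\otimes_{\Perf(A)}\Perf(B)\simeq\Perf(C\otimes_A B)$ is the realisation of $\Perf(C\otimes A^{\otimes\bullet}\otimes B)$, and $\THH$ of $E_\infty$-rings preserves relative tensor products.
\end{itemize}
With these fixes in place, naturality in $\mr{res}$ yields exactly $\id_{\THH(C)}\otimes_{\THH(A)}\tr$.
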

\begin{proof}
    Notice that the following diagram canonically commutes 
    \[\begin{tikzcd}
        \Perf(B) \ar{d}\ar{r} & \Perf(A)\ar{d} \\
        \Perf(C\otimes_A B) \ar{r} & \Perf(C),
    \end{tikzcd}\]
    where both horizontal maps are given by restriction of scalars, and both vertical maps are given by base-change; this is the Beck--Chevalley condition for pull-backs of perfect modules. In more elementary terms, this is simply the statement that for any $B$-module $M$, we have a canonical equivalence of $C$-modules $$M \otimes_B (B\otimes_A C) \simeq M\otimes_A C.$$ Taking $\THH$, we see that  
    \[\begin{tikzcd}
        \THH(B) \ar{d}\ar{r} & \THH(A)\ar{d} \\
        \THH(C\otimes_A B) \ar{r} & \THH(C),
    \end{tikzcd}\]
    where the horizontal maps are transfers and the vertical maps are the canonical ones. The lemma follows since $\THH$ is symmetric monoidal, and since the top horizontal map is $\THH(A)$-linear while the bottom horizontal map is $\THH(C)$-linear. 
\end{proof}

\begin{lemma}\label{lem::stable_transfer_compatible_with_adams_op_transfer}
    For $n\ge 1$, there is an equivalence of cyclotomic spectra \[ j_{n-1}^{(-1)} \otimes_{\S_p[Bp^{-n}\Z]^\triv} \S_p[Bp^{-n-1}\Z]^\triv \simeq \fib(\psi_{\circ\circ}^{g^{p^n}} : \ku_{p,n-1}^{(-1)} \to \ku_{p,n-1}^{(-1)}(1)[2]),\]
    where $g$ is a topological generator of $1+p\Z_p$. 
    
    Moreover, the $ j_{n-1}^{(-1)}$-linear map \[ j_{n-1}^{(-1)} \otimes_{\S_p[Bp^{-n}\Z]^\triv} \S_p[Bp^{-n-1}\Z]^\triv \to  j_{n-1}^{(-1)}\]
    induced by the stable transfer map $\S_p[Bp^{-n-1}\Z]\to \S_p[Bp^{-n}\Z]$ sits in the following commuting diagram of fibre sequences of cyclotomic spectra.
    \[\begin{tikzcd}
         j_{n-1}^{(-1)} \otimes_{\S_p[Bp^{-n}\Z]^\triv} \S_p[Bp^{-n-1}\Z]^\triv \ar{d} \ar{r} & \ku_{p,n-1}^{(-1)} \ar{r}{\psi^{g^{p^n}}_{\circ\circ}}\ar{d}{\sum_{i=0}^{p-1} \psi^{g^{ip^{n-1}}}} & \ku_{p,n-1}^{(-1)}(1)[2] \ar[equals]{d} \\
         j_{n-1}^{(-1)} \ar{r} & \ku_{p,n-1}^{(-1)} \ar[swap]{r}{\psi^{g^{p^{n-1}}}_{\circ\circ}}  & \ku_{p,n-1}^{(-1)}(1)[2]
    \end{tikzcd}\]
\end{lemma}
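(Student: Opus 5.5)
We will deduce both parts of the statement from the pushout squares of \cref{lem::some_pushouts_of_cyc_spectra} together with the fibre sequence of \cref{cor::-1-twist_of_fibre_seq_computing_jpn}.

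\textbf{The equivalence.} By the right-hand square of \cref{lem::some_pushouts_of_cyc_spectra} we have $\ku_{p,n-1}^{(-1)} \simeq j_{n-1}^{(-1)}\otimes_{\S_p[BC_{p^n}]^\triv}\S_p[B^2\Z]^\triv$. By the left square, $\S_p[B^2\Z]^\triv\simeq \S_p[BC_{p^n}]^\triv\otimes_{\S_p[Bp^{-n}\Z]^\triv}\S_p^\triv$. Combining these gives
\[\ku_{p,n-1}^{(-1)}\simeq j_{n-1}^{(-1)}\otimes_{\S_p[Bp^{-n}\Z]^\triv}\S_p^\triv .\]
Now $\S_p[Bp^{-n-1}\Z]$ is an $\S_p[Bp^{-n}\Z]$-algebra, finite over it because $p^{-n}\Z\subset p^{-n-1}\Z$ has index $p$. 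Base changing along it, we want to identify $j_{n-1}^{(-1)}\otimes_{\S_p[Bp^{-n}\Z]}\S_p[Bp^{-n-1}\Z]$ with the fibre of $\psi^{g^{p^n}}_{\circ\circ}$ on $\ku_{p,n-1}^{(-1)}$.

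The plan is to compare it with $j_n^{(-1)}$. Since $1+p^{n+1}\Z_p=\langle g^{p^n}\rangle$, \cref{cor::-1-twist_of_fibre_seq_computing_jpn} gives $j_n^{(-1)}\simeq\fib(\psi^{g^{p^n}}_{\circ\circ}:\ku_{p,n}^{(-1)}\to\ku_{p,n}^{(-1)}(1)[2])$. \cref{lem::S1-nilp-of-tilde-jpn-square} (with $M=j_0^{(-1)}$) says the analogous fibre computed on $\ku_{p,n-1}^{(-1)}$ agrees after $(-)^{tC_p}$. This is only a statement after Tate, however, so instead we argue directly. Write $\S_p[B\Z]$ with $\Z=p^{-n-1}\Z/p^{-n}\Z$-quotient, via the fibre sequence $\S_p[Bp^{-n}\Z]\to\S_p[Bp^{-n-1}\Z]$ with cofibre controlled by $C_p$. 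Then $j_{n-1}^{(-1)}\otimes_{\S_p[Bp^{-n}\Z]}\S_p[Bp^{-n-1}\Z]$ is the $(1+p^n\Z_p)/(1+p^{n+1}\Z_p)\cong C_p$-induced object over $j_{n-1}^{(-1)}$. Concretely, inside the Galois description $j_{n-1}^{(-1)}=(\ku_{p,n-1}^{(-1)})^{h(1+p^n\Z_p)}$ (connective version), tensoring up corresponds to passing to fixed points for the index-$p$ subgroup $1+p^{n+1}\Z_p$.

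Under \cref{lem::some_pushouts_of_cyc_spectra}, the class $p^{-n}$ acts on $\ku_{p,n-1}^{(-1)}$ through the Adams operation $\psi^{g^{p^{n-1}}}$; this uses \cref{rmk::where_p^-n_goes_into_kupn-1}. Hence $\S_p[Bp^{-n-1}\Z]$ corresponds to the $p$-th power $\psi^{g^{p^n}}$. Given this, the fibre-sequence description follows exactly as in \cref{lem::fibre_seq_computing_jpn_from_kupn}: the fibre of $\psi^{g^{p^n}}-1$ on $\ku_{p,n-1}^{(-1)}$ factors through $(1)[2]$ because $\psi^{g^{p^n}}$ acts trivially on $\pi_0$ modulo the relevant ideal. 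We check this last point on homotopy using \cref{lem::computing_htpy_gps_of_fixed_pts_of_kupn}.

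\textbf{The transfer square.} We take $C_p$-orbits of the identification with $\psi^{g^{p^{n-1}}}$ generating $C_p$. By Schlichtkrull (\cref{prop::explicit_thh_transfer_for_cyclic_groups}), restricted along the assembly map, the stable transfer $\S_p[Bp^{-n-1}\Z]\to\S_p[Bp^{-n}\Z]$ is the norm $\sum_{\sigma\in C_p}\sigma$ for the residual $C_p$-action. Via the previous paragraph this becomes $\sum_{i=0}^{p-1}\psi^{g^{ip^{n-1}}}$ on the middle term. For the right-hand square, the factorisation
\[\psi^{g^{p^n}}-1=\Big(\sum_{i=0}^{p-1}\psi^{g^{ip^{n-1}}}\Big)(\psi^{g^{p^{n-1}}}-1)\]
in the group ring shows that $\psi^{g^{p^{n-1}}}_{\circ\circ}\circ\sum_i\psi^{g^{ip^{n-1}}}=\psi^{g^{p^n}}_{\circ\circ}$. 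This makes the right square commute with the identity on the right; it then induces the left vertical map on fibres, which we identify with the transfer by naturality.

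\textbf{Main obstacle.} The hard step is making the match between $\S_p[Bp^{-n}\Z]$-module structures and Adams-operation fixed points precise as cyclotomic spectra rather than merely on homotopy, and checking that the left vertical map really is the stable transfer and not just some map with the right fibre. I would first try to reduce to $\pi_*(-)^{hS^1}$, using \cref{lem::computing_htpy_gps_of_fixed_pts_of_kupn}, injectivity on homotopy, and evenness to rigidify.
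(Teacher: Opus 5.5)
There is a genuine gap, in both halves.

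\textbf{The equivalence.} You never construct a comparison map. The sentence ``tensoring up corresponds to passing to fixed points for $1+p^{n+1}\Z_p$'' is the claim itself, and the justification you give conflates two different group rings. \cref{rmk::where_p^-n_goes_into_kupn-1} is about the completed group ring $\S_p\llb p^{-n-1}\Z_p\rrb$ of the discrete group, whose generator goes to $q_{n+1}$. It says nothing about the circle group ring $\S_p[Bp^{-n}\Z]$. That ring acts on $j_{n-1}^{(-1)}$ through $Bp^{-n}\Z\to BC_{p^n}$ followed by the assembly map, which picks out the degree-one exterior class. It does not act ``through $\psi^{g^{p^{n-1}}}$''.

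The paper supplies the missing map as follows.
\begin{enumerate}
\item Pushing $Bp^{-n}\Z\to Bp^{-n-1}\Z$ out along $Bp^{-n}\Z\to BC_{p^n}$ gives $BC_{p^{n+1}}$. So the source is $j_{n-1}^{(-1)}\otimes_{\S_p[BC_{p^n}]^\triv}\S_p[BC_{p^{n+1}}]^\triv$.
\item The inclusion $C_{p^{n+1}}\inj S^1$, together with \cref{lem::some_pushouts_of_cyc_spectra}, maps this to $\ku_{p,n-1}^{(-1)}$.
\item Since $1+p^{n+1}\Z_p$ acts trivially on this connective source, the map factors through $\tau_{\ge 0}\bigl(\ku_{p,n-1}^{(-1)}\bigr)^{h(1+p^{n+1}\Z_p)}$.
\item One then checks it is an equivalence on mod $p$ homotopy, by matching the degree-one exterior classes hit by $(\S/p)[Bp^{-n}\Z]$ and $(\S/p)[Bp^{-n-1}\Z]$.
\end{enumerate}

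\textbf{The transfer square.} Your factorisation of $\psi^{g^{p^n}}-1$ does make the right square commute. But the map it induces on fibres is then merely some map with the right fibre; ``by naturality'' gives you nothing to identify it with the stable transfer.

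Your description of the stable transfer as $\sum_{\sigma\in C_p}\sigma$ is only right for the deck group of the cover $Bp^{-n}\Z\to Bp^{-n-1}\Z$, which acts by rotations of a circle. Converting that into the Galois sum $\sum_i\psi^{g^{ip^{n-1}}}$ again rests on the flawed Adams-operation identification above. Also, \cref{prop::explicit_thh_transfer_for_cyclic_groups} concerns the free loop transfer and is not what is needed here.

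The paper's key input is a commuting square of group-ring spectra:
\begin{itemize}
\item the group transfer $\S_p[BC_{p^{n+1}}]\to\S_p[BC_{p^n}]$ followed by $\S_p[BC_{p^n}]\to\S_p[B^2\Z_p]$
\item agrees with $\S_p[BC_{p^{n+1}}]\to\S_p[B^2\Z_p]$ followed by $\sum_{i=0}^{p-1}g^{ip^{n-1}}$, using the multiplicative $\Z_p^\times$-action on $B^2\Z_p$.
\end{itemize}
After composing with the Bott map to $\ku_p$, this is the character identity $\mathrm{Ind}_{C_{p^n}}^{C_{p^{n+1}}}(\chi|_{C_{p^n}})=\sum_{k=0}^{p-1}\chi^{1+kp^n}$. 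One then base-changes along $\S_p[BC_{p^n}]^\triv\to j_{n-1}^{(-1)}$ via \cref{lem::some_pushouts_of_cyc_spectra}, using $\Z_p^\times$-equivariance of $\S_p[B^2\Z]\to\ku_p$. This produces the left square directly as a map of cyclotomic spectra.

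Your proposed fallback, rigidifying through $\pi_*(-)^{hS^1}$ and evenness, is not available, because the $j$-terms are not even.
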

\begin{proof}
    We need to show that \[ j_{n-1}^{(-1)} \otimes_{\S_p[Bp^{-n}\Z]^\triv} \S_p[Bp^{-n-1}\Z]^\triv \simeq \tau_{\ge 0}\left(\ku_{p,n-1}^{(-1)}\right)^{h(1+p^{n+1}\Z_p)}.\]
    The inclusion $C_{p^{n+1}}\inj S^1$ induces a map \[ j_{n-1}^{(-1)} \otimes_{\S_p[BC_{p^n}]^\triv} \S_p[BC_{p^{n+1}}]^\triv \to  j_{n-1}^{(-1)} \otimes_{\S_p[BC_{p^n}]^\triv} \S_p[BS^1]^\triv \simeq \ku_{p,n-1}^{(-1)}\]
    where the equivalence is \cref{lem::some_pushouts_of_cyc_spectra}. Since $(1+p^{n+1}\Z_p)$ acts trivially on the left hand side, and since the left hand side is connective, the above composition factors as a map \[  j_{n-1}^{(-1)} \otimes_{\S_p[Bp^{-n}\Z]^\triv} \S_p[Bp^{-n-1}\Z]^\triv \simeq  j_{n-1}^{(-1)} \otimes_{\S_p[BC_{p^n}]^\triv} \S_p[BC_{p^{n+1}}]^\triv \to \tau_{\ge 0}\left(\ku_{p,n-1}^{(-1)}\right)^{h(1+p^{n+1}\Z_p)}\]
    of cyclotomic rings. It thus suffices to check that this map is an equivalence on underlying $p$-complete spectra. By $p$-completeness, it in turn suffices to check that the map induces an equivalence on mod $p$ homotopy groups. One checks using \cref{cor::-1-twist_of_fibre_seq_computing_jpn} that there are isomorphisms of graded rings \[\pi_* j_{n-1}^{(-1)}/p \cong \Z_p[\zeta_{p^n}]/p[u] \otimes_{\F_p}{ \bigwedge}_{\F_p}(\epsilon) \quad \text{ and } \quad \pi_*\left(\tau_{\ge 0}\left(\ku_{p,n-1}^{(-1)}\right)^{h(1+p^{n+1}\Z_p)}/p\right) \cong \Z_p[\zeta_{p^n}]/p[u] \otimes_{\F_p} {\bigwedge}_{\F_p}(\epsilon')\]
    where $u$ is a class in degree 2, $\epsilon$ is a class in degree 1 coming from the fibre sequence of \cref{cor::-1-twist_of_fibre_seq_computing_jpn}, and $\epsilon'$ is a class in degree 1 similarly defined. One moreover checks that the map $(\S/p)[Bp^{-n}\Z] \to  j_{n-1}^{(-1)}/p$ picks out the class $\epsilon$ while $(\S/p)[Bp^{-n-1}\Z] \to \tau_{\ge 0}\left(\ku_{p,n-1}^{(-1)}\right)^{h(1+p^{n+1}\Z_p)}/p$ picks out the class $\epsilon'$. The claim follows.

    It remains to prove the existence of the commutative diagram. The right hand square clearly canonically commutes, and so we just need to show that there exists an $S^1$-equivariant commutative square     \begin{equation}\label{eqn::fibre_square_computing_trasnfer_map_on_jn^-1}
        \begin{tikzcd}
         j_{n-1}^{(-1)}\otimes_{\S_p[Bp^{-n}\Z]^\triv} \S_p[Bp^{-n-1}\Z]^\triv \ar{d} \ar{r} & \ku_{p,n-1}^{(-1)} \ar{d}{\sum_{i=0}^{p-1} \psi^{g^{ip^{n-1}}}} \\
         j_{n-1}^{(-1)} \ar{r} & \ku_{p,n-1}^{(-1)}
    \end{tikzcd}
    \end{equation}
    of cyclotomic spectra. However, notice that we have a commuting diagram of spectra \[\begin{tikzcd}
        \S_p[BC_{p^{n+1}}] \ar[swap]{d}{\text{gp transfer}} \ar{r} & \S_p[B^2\Z_p] \ar{d}{\sum_{i=0}^{p-1} {g^{ip^{n-1}}}} \\
        \S_p[BC_{p^n}] \ar{r} & \S_p[B^2\Z_p]
    \end{tikzcd}\]
    where $\Z_p^\times$ acts on $B^2\Z_p$ via its multiplicative action on $\Z_p$. Applying $-\otimes_{\S_p[BC_{p^n}]}  j_{n-1}^{(-1)}$ and using \cref{lem::some_pushouts_of_cyc_spectra} yields the result; here, we also need to use that the map $\S_p[B^2\Z] \to \ku_p$ classifying the Bott element is $\Z_p^\times$-equivariant. 
\end{proof}
\begin{proposition}\label{prop::computing_trace_maps_explicitly_non_equivariant}
    Let $g$ be a topological generator of $(1+p\Z_p)$. The transfer map $$\tr: \THH(\Z_p[\zeta_{p^{n+1}}])\to \THH(\Z_p[\zeta_{p^n}])$$ sits in the following canonical $S^1$-equivariant commutative diagram of $\THH(\Z_p[\zeta_{p^n}]) \simeq  j_{n-1}^{(-1)}$-module spectra
\[\begin{tikzcd}
	{ j_n^{(-1)}} && {\ku_{p,n}^{(-1)}} && {\ku_{p,n}^{(-1)}(1)[2]} \\
	{\THH(\Z_p[\zeta_{p^{n+1}}])} \\
	&& {\ku_{p,n-1}^{(-1)}} && {\ku_{p,n-1}^{(-1)}(1)[2]} \\
	{\THH(\Z_p[\zeta_{p^n}])} \\
	{ j_{n-1}^{(-1)}} && {\ku_{p,n-1}^{(-1)}} && {\ku_{p,n-1}^{(-1)}(1)[2]}
	\arrow[from=1-1, to=1-3]
	\arrow["{\psi_{\circ\circ}^{g^{p^n}}}", from=1-3, to=1-5]
	\arrow["{\rho_n}", from=1-3, to=3-3]
	\arrow["{\rho_{n}(1)[2]}", from=1-5, to=3-5]
	\arrow["\simeq"{marking, allow upside down}, draw=none, from=2-1, to=1-1]
	\arrow["\tr"', from=2-1, to=4-1]
	\arrow["{\psi_{\circ\circ}^{g^{p^n}}}", from=3-3, to=3-5]
	\arrow["{\sum_{i=0}^{p-1}\psi^{g^{ip^{n-1}}}}", from=3-3, to=5-3]
	\arrow["\id", equals, from=3-5, to=5-5]
	\arrow["\simeq"{marking, allow upside down}, draw=none, from=4-1, to=5-1]
	\arrow[from=5-1, to=5-3]
	\arrow["{\psi_{\circ\circ}^{g^{p^{n-1}}}}", from=5-3, to=5-5]
\end{tikzcd}\]
    where the top and bottom rows are fibre sequences and the maps $\rho_n$ are from Construction \ref{cons::projection_map_on_kupn_via_semiadd}. 
\end{proposition}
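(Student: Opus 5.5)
The plan is to realise the cyclotomic extension $\Z_p[\zeta_{p^{n+1}}]/\Z_p[\zeta_{p^n}]$ as a base change of a spherical group algebra extension. Then \cref{lem::THH_transfer_bootstrap} reduces the transfer to Schlichtkrull's formula (\cref{prop::explicit_thh_transfer_for_cyclic_groups}). The resulting map is then matched against \cref{lem::stable_transfer_compatible_with_adams_op_transfer}, and the $\rho_n$ row is fed in from the identification $\ku_{p,n}^{(-1)}\simeq \ku_{p,n-1}^{(-1)}\otimes_{\S_p\llb q_n-1\rrb}\S_p\llb q_{n+1}-1\rrb$ of \cref{lem::labelling_the_q_n_inside_ku_pn(-1)} together with \cref{lem::properties_of_projection_map_on_kupn}(2).

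\textbf{Step 1: reduction to group algebras.} Write $A=\S_p[p^{-n}\Z]$ and $B=\S_p[p^{-n-1}\Z]$, so $B$ is free of rank $p$ over $A$. Set $C=\Z_p[\zeta_{p^n}]$ via $p^{-n}\mapsto\zeta_{p^n}$ (after completion this is the $\S_p\llb q_n-1\rrb$-structure). Then $C\otimes_A B\simeq \Z_p[\zeta_{p^n}][x]/(x^p-\zeta_{p^n})\simeq \Z_p[\zeta_{p^{n+1}}]$. By \cref{lem::THH_transfer_bootstrap}, $\tr$ is identified with
\[\THH(\Z_p[\zeta_{p^n}])\otimes_{\THH(A)}\THH(B)\to \THH(\Z_p[\zeta_{p^n}])\]
induced by $\tr:\THH(B)\to\THH(A)$. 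Under $\THH(\S[G])\simeq\S[LBG]=\S[BG\times G]$, \cref{prop::explicit_thh_transfer_for_cyclic_groups} splits this transfer into two pieces. The first is the stable transfer $\S[BB']\to\S[BA']$ of the $C_p$-cover of classifying spaces $Bp^{-n-1}\Z\to Bp^{-n}\Z$, i.e. the $S^1$-preimage sum. The second is the projection $\S[p^{-n-1}\Z]\to\S[p^{-n}\Z]$ killing non-divisible exponents. This is, after completion, exactly $\proj=\tau$ on $\S_p\llb q_{n+1}-1\rrb\to\S_p\llb q_n-1\rrb$.

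\textbf{Step 2: comparing with the known rows.} Using \cref{thm::THH_of_group_rings}(4) and \cref{lem::some_pushouts_of_cyc_spectra}, I would rewrite the source as follows:
\[ j_{n-1}^{(-1)}\otimes_{\THH(A)}\THH(B)\simeq \left(j_{n-1}^{(-1)}\otimes_{\S_p[Bp^{-n}\Z]^\triv}\S_p[Bp^{-n-1}\Z]^\triv\right)\otimes_{\S_p\llb q_n-1\rrb^\Tate}\S_p\llb q_{n+1}-1\rrb^\Tate.\]
By \cref{lem::stable_transfer_compatible_with_adams_op_transfer}, the first factor is the fibre of $\psi_{\circ\circ}^{g^{p^n}}$ on $\ku_{p,n-1}^{(-1)}$. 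Base changing along $q_n\to q_{n+1}$ and using \cref{lem::labelling_the_q_n_inside_ku_pn(-1)} turns this into the top row $j_n^{(-1)}\to\ku_{p,n}^{(-1)}\to\ku_{p,n}^{(-1)}(1)[2]$. This must be checked to be compatible with \cref{cor::-1-twist_of_fibre_seq_computing_jpn} for $n+1$, i.e. that the induced equivalence agrees with $\THH(\Z_p[\zeta_{p^{n+1}}])\simeq j_n^{(-1)}$ of \cref{thm::identification_of_THH_cyclotomic_extn_2}. The transfer factors as ($\proj$ on the $q$-variable) followed by (the $BC_p$-transfer). On the top row the first piece is $\id\otimes\proj$, which by \cref{lem::properties_of_projection_map_on_kupn}(2) is $\rho_n$ (and $\rho_n(1)[2]$ after twisting). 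The second piece is the diagram of \cref{lem::stable_transfer_compatible_with_adams_op_transfer}, giving the bottom half with $\sum_i\psi^{g^{ip^{n-1}}}$ and the identity on the twist. Stacking the two squares gives the claimed diagram, and $\THH(\Z_p[\zeta_{p^n}])$-linearity is inherited from \cref{lem::THH_transfer_bootstrap}.

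\textbf{Main obstacle.} The delicate step is the splitting in Step 1. I must show that Schlichtkrull's map on $\S[LB\Z]$, transported through the pushouts of \cref{lem::some_pushouts_of_cyc_spectra}, really factors as the tensor product of the $Bp^{-n}\Z$-transfer and $\proj$, compatibly with how $q_{n+1}$ is labelled (\cref{rmk::where_p^-n_goes_into_kupn-1}). I would first check this $S^1$-equivariantly on $\S_p[S^1\times\Z]=\S_p[S^1]\otimes\S_p[\Z]$, where the formula is visibly a product. The concern is that the $S^1$-action on the target mixes the factors through $t\cdot(a,s)=(a,t^{a/n}s)$, and only trivialises after base change to $\S_p\llb q-1\rrb^\Tate$. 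Only $S^1$-equivariance is asserted here, not compatibility with Frobenius (cf. \cref{rmk::transfer_formula_not_commute_with_frob}), so this should suffice.
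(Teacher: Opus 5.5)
Your outline matches the paper's. \cref{lem::THH_transfer_bootstrap} reduces $\tr$ to Schlichtkrull's free loop transfer, which is ``project onto winding number $0$, then stable transfer''. The two halves of the diagram then come from \cref{lem::stable_transfer_compatible_with_adams_op_transfer} and \cref{lem::properties_of_projection_map_on_kupn}(2).

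\textbf{The gap is the identification in Step 2},
\[ j_{n-1}^{(-1)}\otimes_{\THH(A)}\THH(B)\simeq \left(j_{n-1}^{(-1)}\otimes_{\S_p[Bp^{-n}\Z]^\triv}\S_p[Bp^{-n-1}\Z]^\triv\right)\otimes_{\S_p\llb q_n-1\rrb^\Tate}\S_p\llb q_{n+1}-1\rrb^\Tate .\]
This holds on underlying spectra but not $S^1$-equivariantly, and \cref{thm::THH_of_group_rings}(4) does not supply it (it only describes the result after killing all of $\S[BG]$).

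As an $S^1$-equivariant $\THH(A)$-module, $\THH(B)$ splits over $\gamma\in p^{-n-1}\Z/p^{-n}\Z$ into summands $\THH(A)\otimes_{\S_p[Bp^{-n}\Z]^\triv}\S_p[Bp^{-n-1}\Z]_{(\gamma)}$. In each, the circle rotates $Bp^{-n-1}\Z$ at speed $\gamma$; this is the twist $t\cdot(a,s)=(a,t^as)$. Only the $\gamma=0$ summand is untwisted.

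The problem goes further. The first factor $\tau_{\ge 0}(\ku_{p,n-1}^{(-1)})^{h(1+p^{n+1}\Z_p)}$ carries no $S^1$-equivariant $\S_p\llb q_n-1\rrb^\triv$-algebra structure compatible with its map to $\ku_{p,n-1}^{(-1)}$. Such a structure would force $q_n\in\pi_0(\ku_{p,n-1}^{(-1)})^{hS^1}=\Z_p\llb q_n-1\rrb$ to be $\psi^{g^{p^n}}$-fixed, but $\psi^{g^{p^n}}(q_n)=q_n^{g^{p^n}}\neq q_n$.

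For the same reason, you cannot get the top row by ``base changing along $q_n\to q_{n+1}$''. The map $\psi^{g^{p^n}}_{\circ\circ}$ is only semilinear over $\S_p\llb q_n-1\rrb$, so its fibre is not a base change. Your ``main obstacle'' paragraph sees the twist but misdiagnoses it: the twist obstructs $S^1$-equivariance itself. So the fact that Frobenius compatibility is not being claimed does not help.

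\textbf{The fix, which is what the paper does, is never to untwist.}
\begin{itemize}
\item Take the top row to be the fibre sequence of \cref{cor::-1-twist_of_fibre_seq_computing_jpn} for $n+1$.
\item Base change Schlichtkrull's formula along $\THH(A)\to j_{n-1}^{(-1)}$. This gives $\tr=(\id\otimes\tr_{\mathrm{st}})\circ\proj_{\gamma=0}$ on
$j_n^{(-1)}\simeq\bigoplus_\gamma j_{n-1}^{(-1)}\otimes_{\S_p[Bp^{-n}\Z]^\triv}\S_p[Bp^{-n-1}\Z]_{(\gamma)}$.
The target of $\proj_{\gamma=0}$ is the untwisted summand, so \cref{lem::stable_transfer_compatible_with_adams_op_transfer} handles $\id\otimes\tr_{\mathrm{st}}$.
\item For the upper half, you only need an $S^1$-equivariant square intertwining $\proj_{\gamma=0}$ with $\id\otimes\proj$ under $j_n^{(-1)}\to\ku_{p,n}^{(-1)}\simeq\ku_{p,n-1}^{(-1)}\otimes_{\S_p[p^{-n}\Z]}\S_p[p^{-n-1}\Z]$.
\item That square exists because $\ku_{p,n}^{(-1)}\simeq j_n^{(-1)}\otimes_{\THH(\S_p[p^{-n-1}\Z])}\S_p[p^{-n-1}\Z]^\Tate$ (\cref{cor::rel_thh_of_cyc_extns}). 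The $S^1$-equivariant map $\THH(\S_p[G])\to\S_p[G]^\Tate$ sends the winding-number-$a$ component to $\S_p\cdot q^a$, hence the $\gamma$-summand into $q^\gamma\cdot\S_p[p^{-n}\Z]$. The twisted summands thus map equivariantly into untwisted ones, and the two gradings match.
\item \cref{lem::properties_of_projection_map_on_kupn}(2) then identifies $\id\otimes\proj$ with $\rho_n$. Its $\Z_p^\times$-equivariance gives the upper right square, and the induced map on fibres is $\proj_{\gamma=0}$.
\end{itemize}
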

\begin{proof}
    By \cref{lem::THH_transfer_bootstrap}, the transfer map $\tr :  j_n^{(-1)} \to  j_{n-1}^{(-1)}$ of cyclotomic spectra is the base-change along $\THH(\S_p[p^{-n}\Z]) \to  j_{n-1}^{(-1)}$ of the free loop transfer map \[\S_p[ p^{-n-1}\Z \times Bp^{-n-1}\Z] \simeq  \THH(\S_p[p^{-n-1}\Z]) \to \THH(\S_p[p^{-n}\Z])\simeq \S_p[ p^{-n}\Z \times Bp^{-n}\Z].\]
    where the $S^1$-action on the suspension spectra in the source and target is the twisted one. 
    By \cref{prop::explicit_thh_transfer_for_cyclic_groups}, the free loop transfer is the $S^1$-equivariant composition
    \begin{multline*}
        \THH(\S_p[p^{-n-1}\Z]) \simeq \bigoplus_{\gamma\in p^{-n-1}\Z/p^{-n}\Z} \THH(\S_p[p^{-n}\Z]) \otimes_{\S_p[ Bp^{-n}\Z]^\triv} \S_p[ Bp^{-n-1}\Z]_{(\gamma)} \\
        \xrightarrow{\proj_{\gamma=0}} \THH(\S_p[p^{-n}\Z]) \otimes_{\S_p[ Bp^{-n}\Z]^\triv} \S_p[ Bp^{-n-1}\Z]^\triv \xrightarrow{\id \otimes \tr_{st}} \THH(\S_p[p^{-n}\Z])
    \end{multline*}
    where $\tr_{st}$ is the stable transfer. Tensoring up yields the composition 
    \begin{multline*}
          j_n^{(-1)} \simeq \bigoplus_{\gamma\in p^{-n-1}\Z/p^{-n}\Z}  j_{n-1}^{(-1)} \otimes_{\S_p[ Bp^{-n}\Z]^\triv} \S_p[Bp^{-n-1}\Z]_{(\gamma)} \\
        \xrightarrow{\proj_{\gamma=0}}   j_{n-1}^{(-1)} \otimes_{\S_p[ Bp^{-n}\Z]^\triv} \S_p[ Bp^{-n-1}\Z]^\triv \xrightarrow{\id \otimes \tr_{st}}   j_{n-1}^{(-1)} .
    \end{multline*}
    By \cref{lem::stable_transfer_compatible_with_adams_op_transfer}, it remains to check the existence of a 2-simplex witnessing commutativity of the following diagram 
    \[\begin{tikzcd}
         j_n^{(-1)} \ar{rr} && \ku_{p,n}^{(-1)} \\
          j_{n-1}^{(-1)} \otimes_{\S_p[ Bp^{-n}\Z]^\triv} \bigoplus_{\gamma} \S_p[Bp^{-n-1}\Z]_{(\gamma)} \ar{d}{\proj_{\gamma=0}} && \ku_{p,n-1}^{(-1)} \otimes_{\S_p[p^{-n}\Z]} \S_p[p^{-n-1}\Z]  \ar{d}{\mr{proj}_{p^{-n-1}\Z/p^{-n}\Z}} \\
         j_{n-1}^{(-1)} \otimes_{\S_p[Bp^{-n}\Z]^\triv} \S_p[Bp^{-n-1}\Z]^\triv \ar{rr} && \ku_{p,n-1}^{(-1)} 
        \arrow["\simeq"{marking, allow upside down}, draw=none, from=1-1, to=2-1]
        \arrow["\simeq"{marking, allow upside down}, draw=none, from=1-3, to=2-3]
    \end{tikzcd}\]
    of $S^1$-equivariant $ j_{n-1}^{(-1)}$-module spectra compatible in $n$, where the horizontal maps are the usual ring maps. Here, we have used \cref{lem::properties_of_projection_map_on_kupn} (and the fact that $-\otimes_{\S_p\llb p^{-n}\Z_p\rrb} \S_p\llb p^{-n-1}\Z_p\rrb \simeq -\otimes_{\S_p[p^{-n}\Z]} \S_p[p^{-n-1}\Z]$) to identify $\rho_n$ with $\id \otimes \proj$. That this commutative diagram exists is in fact clear by Remark \ref{rmk::where_p^-n_goes_into_kupn-1} and by the fact that the map \[\bigoplus_{\gamma \in p^{-n}\Z} \S_p[ Bp^{-n}\Z]_{(\gamma)} \simeq \S_p[p^{-n}\Z \times Bp^{-n}\Z]\simeq \THH(\S_p[p^{-n}\Z]) \to \S_p[p^{-n}\Z]\]
    sends the $\gamma$-summand to $\S_p \cdot q^\gamma$. 
\end{proof}

\begin{remark}
    Compare this theorem with the following description of the usual ring map $\THH(\Z_p[\zeta_{p^n}])\to \THH(\Z_p[\zeta_{p^{n+1}}])$:
\[\begin{tikzcd}
	{ j_{n-1}^{(-1)}} && {\ku_{p,n-1}^{(-1)}} && {\ku_{p,n-1}^{(-1)}(1)[2]} \\
	{\THH(\Z_p[\zeta_{p^{n}}])} && {\ku_{p,n}^{(-1)}} && {\ku_{p,n}^{(-1)}(1)[2]} \\
	{\THH(\Z_p[\zeta_{p^{n+1}}])} \\
	{ j_{n}^{(-1)}} && {\ku_{p,n}^{(-1)}} && {\ku_{p,n}^{(-1)}(1)[2]}
	\arrow[from=1-1, to=1-3]
	\arrow["{\psi_{\circ\circ}^{g^{p^{n-1}}}}", from=1-3, to=1-5]
	\arrow["\can", from=1-3, to=2-3]
	\arrow["\can", from=1-5, to=2-5]
	\arrow["\simeq"{marking, allow upside down}, draw=none, from=2-1, to=1-1]
	\arrow["\tr"', from=2-1, to=3-1]
	\arrow["{\psi_{\circ\circ}^{g^{p^{n-1}}}}", from=2-3, to=2-5]
	\arrow[equals, from=2-3, to=4-3]
	\arrow["{\sum_{i=0}^{p-1}\psi^{g^{ip^{n-1}}}}", from=2-5, to=4-5]
	\arrow["\simeq"{marking, allow upside down}, draw=none, from=3-1, to=4-1]
	\arrow[from=4-1, to=4-3]
	\arrow["{\psi_{\circ\circ}^{g^{p^{n}}}}", from=4-3, to=4-5]
\end{tikzcd}\]
    where $\can : \ku_{p,n-1}^{(-1)}\to \ku_{p,n}^{(-1)}$ is the usual ring map. 
\end{remark}
\begin{remark}
    At the level of homotopy groups, one computes rather easily that the transfer map on $\pi_0$ is given by the usual trace map \[\Tr_{\Q_p(\zeta_{p^{n+1}})/\Q_p(\zeta_{p^n})} := \sum_{g\in \Gal(\Q_p(\zeta_{p^{n+1}})/\Q_p(\zeta_{p^n}))} g : \Z_p[\zeta_{p^{n+1}}]\to \Z_p[\zeta_{p^n}] ,\]
    while the transfer map on $\pi_{2i-1}$ ($i\ge 1$) (which are the only other non-zero homotopy groups of $\THH(\Z_p[\zeta_{p^n}])$) is given by \[\frac{\Z_p[\zeta_{p^{n+1}}]}{ip^n \frac{p}{\zeta_p-1}} \xrightarrow{\Tr_n^\Tate} \frac{\Z_p[\zeta_{p^{n}}]}{ip^{n-1} \frac{p}{\zeta_p-1}},\]
    where $$\Tr_n^\Tate := \frac{1}{p} \Tr_{\Q_p(\zeta_{p^{n+1}})/\Q_p(\zeta_{p^n})} : \Z_p[\zeta_{p^{n+1}}]\to \Z_p[\zeta_{p^n}]$$ is Tate's normalized trace for the perfectoid tower $(\Q_p(\zeta_{p^n}))_n$.
\end{remark}

\subsection{General Case}\label{sctn::computing_transfers_general_case}
We now upgrade \cref{prop::computing_trace_maps_explicitly_non_equivariant} to arbitrary bounded quasi-syntomic $p$-adic formal schemes $X$ over $\Z_p[\zeta_p]$, while also keeping track of motivic filtrations. In the following, we endow $F_\ev^{\ge *} \ku_{p,n}^{(-1)}$ with the cyclotomic synthetic spectrum structure coming from Example \ref{ex::even_cyc_sp_becomes_synth}. Recall the following definition from \cite{HahnRaksitWilson_motivic_fil_on_TC}.

\begin{definition}
    A ring map of $E_\infty$-rings $A\to B$ is said to be $p$-completely evenly faithfully flat (abbreviated eff) if for every $p$-complete $E_\infty$-$A$-algebra $E$ that is even and with bounded $p$-power-torsion, the $p$-completed tensor product $E\otimes_A B$ is also even, and the map \[\pi_{2*}E\to \pi_{2*}(E\otimes_A B)\]
    is $p$-completely faithfully flat. 
\end{definition}

\begin{lemma}\label{lem::shj_n_to_shku_n_is_eff}
    For all $m\ge n\ge 0$, the maps $ j_{n}^{(-1)} \to \ku_{p,m}^{(-1)}$ and $ j_n^{(-1),tC_p} \to \ku_{p,m}^{(-1),tC_p}$ are $p$-completely eff.
\end{lemma}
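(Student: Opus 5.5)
We first reduce to the two generating maps. The maps $\ku_{p,m}^{(-1)}\to \ku_{p,m'}^{(-1)}$ for $m\le m'$ are, by \cref{lem::labelling_the_q_n_inside_ku_pn(-1)}, iterated base changes of $\S_p\llb q_m-1\rrb\to \S_p\llb q_{m'}-1\rrb$. This map is free of rank $p^{m'-m}$, so the base changes are finite free and in particular $p$-completely eff. Since eff maps are closed under composition, it suffices to show that $ j_n^{(-1)}\to \ku_{p,n}^{(-1)}$ and $ j_n^{(-1),tC_p}\to \ku_{p,n}^{(-1),tC_p}$ are eff.

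For the first map, we use the pushout square of \cref{lem::some_pushouts_of_cyc_spectra} with $n$ replaced by $n+1$:
\[\ku_{p,n}^{(-1)}\simeq  j_n^{(-1)}\otimes_{\S_p[BC_{p^{n+1}}]}\S_p[B^2\Z].\]
We combine this with the left pushout square $\S_p[B^2\Z]\simeq \S_p[BC_{p^{n+1}}]\otimes_{\S_p[Bp^{-n-1}\Z]}\S_p$. This exhibits $ j_n^{(-1)}\to\ku_{p,n}^{(-1)}$ as a base change of the augmentation $\S_p[BC_{p^{n+1}}]\to \S_p[B^2\Z]$, which is itself a base change of $\S_p[S^1]\simeq \S_p[B\Z_p]\to \S_p$ (here we work $p$-completely). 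Since eff maps are stable under base change, it is enough to show that $\S_p[S^1]\to\S_p$ is $p$-completely eff. This is the standard example in \cite{HahnRaksitWilson_motivic_fil_on_TC}. For an even $p$-complete $\S_p[S^1]$-algebra $E$, the class of the circle in $\pi_1$ must act by zero on $\pi_*E$ for degree reasons. Consequently $E\otimes_{\S_p[S^1]}\S_p$ has homotopy $\pi_*E[\sigma]$, where $\sigma$ is a divided power generator in degree $2$. This is even and free over $\pi_{2*}E$, hence faithfully flat. Let me justify this via the bar spectral sequence $\Tor^{\pi_*\S_p[S^1]}(\pi_*E,\pi_*\S_p)$. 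One should double-check that it degenerates; here we would simply cite \cite[Section 2]{HahnRaksitWilson_motivic_fil_on_TC}, where $\S[S^1]\to\S$, equivalently $\S\to\S[\Omega S^2]$-type examples, are treated.

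For the Tate version, the main obstacle is that $(-)^{tC_p}$ does not commute with tensor products, so we cannot directly base change. The plan is to use \cref{lem::computing_htpy_gps_of_fixed_pts_of_kupn} and an explicit description. First, $\pi_*\ku_{p,n}^{(-1),tC_p}$ is even, equal to $\pi_*$ of $\ku_{p,n+1}^{tC_p}$-type rings, namely a Laurent polynomial ring $\Z_p[\zeta_{p^{n+2}}][t^{\pm1}]$ over its $\pi_0$. Second, we show that $ j_n^{(-1),tC_p}\otimes_{ j_n^{(-1)}}\ku_{p,n}^{(-1)}\to \ku_{p,n}^{(-1),tC_p}$ is an equivalence after $p$-completion. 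The approach is to note that $\ku_{p,n}^{(-1)}$ is built from $ j_n^{(-1)}$ by a finite-stage colimit in the $\S_p[S^1]\to \S_p$ direction, namely the divided power algebra, which is a filtered colimit of finite cell $ j_n^{(-1)}$-modules. We handle the interchange with $tC_p$ by the boundedness argument of \cref{lem::orbits_and_inverse_lims_commute}, or by the $\beta$-completion trick in the proof of \cref{lem::S1-nilp-of-tilde-jpn-square}. Granting this equivalence, the Tate map is a base change of an eff map and we are done. If the equivalence fails to be directly accessible, the fallback is to verify eff by hand. In that case one shows that $E\otimes_{ j_n^{(-1),tC_p}}\ku_{p,n}^{(-1),tC_p}$ has homotopy $\pi_*E\otimes_{\pi_*}(\pi_*\ku^{tC_p})$ via a Künneth spectral sequence, using that $\pi_*\ku^{(-1),tC_p}_{p,n}$ is obtained from the graded ring of $ j_n^{(-1),tC_p}$ by killing an odd class, as in \cref{cor::-1-twist_of_fibre_seq_computing_jpn}.
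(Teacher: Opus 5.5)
Your reduction to $m=n$, using the finite freeness from \cref{lem::labelling_the_q_n_inside_ku_pn(-1)}, is the paper's argument. So is your treatment of $j_n^{(-1)}\to\ku_{p,n}^{(-1)}$ as a base change of $\S_p[S^1]\to\S_p$ via the two pushout squares of \cref{lem::some_pushouts_of_cyc_spectra}. Your Tor spectral sequence sketch for $\S_p[S^1]\to\S_p$ is also fine: it degenerates because everything sits in even total degree. The paper instead cites \cite[Corollary 2.36]{antieau2024cyclotomic} at that point.

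The gap is the Tate half. You correctly reduce it to the interchange
\[
\bigl(j_n^{(-1)}\otimes_{\S_p[S^1]}\S_p\bigr)^{tC_p}\simeq j_n^{(-1),tC_p}\otimes_{\S_p[S^1]}\S_p ,
\]
equivalently your $j_n^{(-1),tC_p}\otimes_{j_n^{(-1)}}\ku_{p,n}^{(-1)}\simeq\ku_{p,n}^{(-1),tC_p}$. This is also the step the paper dispatches with ``the projection formula for $(-)^{tC_p}$''. But you only grant it, and neither tool you name proves it.

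\cref{lem::orbits_and_inverse_lims_commute} moves $(-)^{tC_p}$ past inverse limits of uniformly bounded below diagrams. By contrast, $-\otimes_{\S_p[S^1]}\S_p$ is a colimit. $\S_p$ is not a perfect $\S_p[S^1]$-module: its bar resolution has a free cell in every even degree. So the projection formula is not formal here, and in general it fails. For example, take $M=\Z_p$ with trivial action, made an $\S_p[S^1]$-module through the augmentation. Then $(M\otimes_{\S_p[S^1]}\S_p)^{tC_p}\simeq\prod_{k\ge 0}\Sigma^{2k}\Z_p^{tC_p}$, whereas $M^{tC_p}\otimes_{\S_p[S^1]}\S_p\simeq\bigoplus_{k\ge 0}\Sigma^{2k}\Z_p^{tC_p}$. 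Any proof therefore has to use specific features of $j_n^{(-1)}$.

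It is not clear that the $\beta$-completion argument of \cref{lem::S1-nilp-of-tilde-jpn-square} supplies those features.
\begin{itemize}
\item The error term is $\colim_k$ of the Tate constructions of the cofibres $\ku_{p,n}^{(-1)}/(j_n^{(-1)}\otimes_{\S_p[S^1]}P_k)$, where the $P_k$ are the skeleta of that bar resolution.
\item The first of these cofibres is $\ku_{p,n}^{(-1)}(1)[2]$, reached from $\ku_{p,n}^{(-1)}$ via $\psi_{\circ\circ}$.
\item On $\pi_{2m}(-)^{tC_p}\cong\Z_p[\zeta_{p^{n+2}}]$, the map $\psi_{\circ\circ}$ is, up to units, $x\mapsto (c\,\sigma(x)-x)/(\zeta_p-1)$. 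Here $c\in 1+p^{n+1}\Z_p$ and $\sigma$ generates $\Gal(\Q_p(\zeta_{p^{n+2}})/\Q_p(\zeta_{p^{n+1}}))$.
\item This map is invertible on the span of the $\zeta_{p^{n+2}}^b$ with $p\nmid b$. So there is no evident $p$-adic nilpotence available to kill the error term.
\end{itemize}

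Your fallback K\"unneth computation is not carried out. It would also be delicate, because $\pi_*j_n^{(-1),tC_p}$ has a free rank-one $\Z_p[\zeta_{p^{n+1}}]$-module in degree $-1$ and nonzero torsion in the other odd degrees. As written, you have not shown that $j_n^{(-1),tC_p}\to\ku_{p,n}^{(-1),tC_p}$ is $p$-completely eff.
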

\begin{proof}
    Notice that $\ku_{p,m}^{(-1)}$ is a finite free $\ku_{p,n}^{(-1)}$-algebra (for $m>n$) by \cref{lem::labelling_the_q_n_inside_ku_pn(-1)}. In particular, we reduce to $m=n$. Since $$ j_n^{(-1)} \otimes_{\S_p[Bp^{-n-1}\Z]} \S_p \simeq \ku_{p,n}^{(-1)} \quad \text{ and } \quad  j_n^{(-1),tC_p} \otimes_{\S_p[Bp^{-n-1}\Z]} \S_p \simeq \left( j_n^{(-1)} \otimes_{\S_p[Bp^{-n-1}\Z]^\triv} \S_p^\triv\right)^{tC_p} \simeq \ku_{p,n}^{(-1), tC_p}$$ by \cref{lem::some_pushouts_of_cyc_spectra} (we also use the projection formula for $(-)^{tC_p}$ to pull $-\otimes_{\S_p[Bp^{-n-1}\Z]} \S_p$ out), it suffices to check whether $\S_p[Bp^{-n-1}\Z] \simeq \S_p[S^1] \to \S_p$ is $p$-completely eff. This is proved in the same way as \cite[Corollary 2.36]{antieau2024cyclotomic}. 
\end{proof}
\begin{corollary}\label{cor::rel_thh_always_even_for_qrsps}
    Suppose $m\ge n\ge 1$. If $R\in \QSyn_{\Z_p[\zeta_{p^n}]}$ is a QRSP, then $\THH(R \otimes_{\Z_p[\zeta_{p^n}]} \Z_p[\zeta_{p^m}]/\S_p\llb q_m-1\rrb)$ is even and the motivic filtration is the double-speed Postnikov filtration.
\end{corollary}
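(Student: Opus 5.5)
We plan to write relative $\THH$ as a base change along a $p$-completely eff map and then apply the definition of eff. Set $R_m := R\otimes_{\Z_p[\zeta_{p^n}]}\Z_p[\zeta_{p^m}]$. By symmetric monoidality of $\THH$ and the definition of relative $\THH$ as $\THH(-)\otimes_{\THH(\S_p\llb q_m-1\rrb)}\S_p\llb q_m-1\rrb^\Tate$, we have ($p$-completely)
\[\THH(R_m/\S_p\llb q_m-1\rrb)\simeq \THH(R)\otimes_{\THH(\Z_p[\zeta_{p^n}])}\THH(\Z_p[\zeta_{p^m}]/\S_p\llb q_m-1\rrb).\]
This uses $R_m = R\otimes_{\Z_p[\zeta_{p^n}]}\Z_p[\zeta_{p^m}]$, which is a flat base change, so the tensor products are underived. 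By \cref{thm::identification_of_THH_cyclotomic_extn_2} and \cref{cor::rel_thh_of_cyc_extns}, the right-hand side is $\THH(R)\otimes_{ j_{n-1}^{(-1)}}\ku_{p,m-1}^{(-1)}$. Here the algebra structure comes from the composite $ j_{n-1}^{(-1)}\to j_{m-1}^{(-1)}\to\ku_{p,m-1}^{(-1)}$, and this should be checked to agree with the map used in \cref{lem::shj_n_to_shku_n_is_eff}.

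Next we use that $R$ is a QRSP over $\Z_p[\zeta_{p^n}]$, and hence over $\Z_p$. By \cite{bms2}, $\THH(R)^\wedge_p$ is even, and $\pi_{2*}\THH(R)$ has bounded $p$-power torsion; in fact it is $p$-torsion free in even degrees, since $\pi_{2i}$ is a $\Fil_\Nyg$-graded piece of $\prism_R$ and $R$ is $p$-completely flat. So $E:=\THH(R)^\wedge_p$ is an even $p$-complete $E_\infty$-$ j_{n-1}^{(-1)}$-algebra with bounded $p$-power torsion. \cref{lem::shj_n_to_shku_n_is_eff}, applied with $n-1\le m-1$, says that $ j_{n-1}^{(-1)}\to\ku_{p,m-1}^{(-1)}$ is $p$-completely eff. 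Therefore $E\otimes_{ j_{n-1}^{(-1)}}\ku_{p,m-1}^{(-1)}$ is even, and $\pi_{2*}E\to\pi_{2*}$ of this tensor product is $p$-completely faithfully flat. This gives the evenness claim.

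For the motivic filtration, we argue via quasi-syntomic descent: the motivic filtration is defined as $\R\Gamma$ over the quasi-syntomic site of $\tau_{\ge 2*}$ of relative $\THH$ (see \cite[Appendix A]{wagner2025q}). The QRSPs $S$ over $R_m$ that are quasi-syntomic covers form a basis. For any such $S$, the same argument shows that relative $\THH$ of $S$ is even, so $\tau_{\ge 2*}$ equals $\tau_{\ge 2*-1}$ there. The even-degree homotopy then forms a quasi-syntomic sheaf with vanishing higher cohomology on QRSPs: the $\pi_{2i}$ are graded pieces of the Nygaard filtration on Frobenius-twisted relative prismatic cohomology, and these satisfy descent with no higher cohomology on QRSPs, as in \cite[Section 7]{bms2}. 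Hence $\R\Gamma$ of the double-speed Postnikov filtration on $R_m$ is just $\tau_{\ge 2*}$ evaluated at $R_m$.

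Alternatively, \cite{HahnRaksitWilson_motivic_fil_on_TC} shows that the even filtration of an even $E_\infty$-ring is its double-speed Postnikov filtration, and that it agrees with the motivic filtration on affines. This is probably the cleanest route, since the section identifies the two filtrations.

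The main obstacle is checking that $R_m$ is still a QRSP. That statement is not needed if one uses the even-filtration identification. So we plan to use that identification, and to be careful that the bounded torsion hypothesis holds for $\THH(R)$.
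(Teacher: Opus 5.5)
Your proposal is correct and follows the paper's proof: you rewrite the relative $\THH$ as $\THH(R)\otimes_{j_{n-1}^{(-1)}}\ku_{p,m-1}^{(-1)}$, combine evenness of $\THH(R)$ from \cite{bms2} with the $p$-complete effness of \cref{lem::shj_n_to_shku_n_is_eff}, and get the filtration statement from the identification of the motivic filtration with the even filtration. Your check of the bounded $p$-power torsion hypothesis is a detail the paper leaves implicit, and you are right not to rely on $R\otimes_{\Z_p[\zeta_{p^n}]}\Z_p[\zeta_{p^m}]$ being a QRSP, since for $m>n$ its reduction mod $p$ is generally not semiperfect.
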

\begin{proof}
    This follows from the lemma, the fact that \[\THH(R \otimes_{\Z_p[\zeta_{p^n}]} \Z_p[\zeta_{p^m}]/\S_p\llb q_m-1\rrb) \simeq \THH(R) \otimes_{ j_{n-1}^{(-1)}} \ku_{p,m-1}^{(-1)},\]
    and since $\THH(R)$ is even by \cite[Theorem 7.1]{bms2}.
    The assertion about the motivic filtration follows from its identification with the even filtration.
\end{proof}

The following is a generalisation of \cref{cor::-1-twist_of_fibre_seq_computing_jpn}. 
\begin{proposition}\label{prop::prismatic_coh_from_q_de_rham_coh}
    For any $n\ge 1$ and any choice of topological generator $\gamma_n\in 1+p^n\Z_p$, there is a fibre sequence of sheaves (in fact, a fibre sequence at the level of pre-sheaves) of cyclotomic synthetic spectra on $\QSyn_{\Z_p[\zeta_{p^n}]}$ \[\Fil_\mot^{\ge *} \THH(-) \to \Fil_\mot^{\ge *} \THH(-/\S_p\llb q_n-1\rrb) \xrightarrow{\psi_{\circ\circ}^{\gamma_n}} \Fil_\mot^{\ge *-1}\THH(-/\S_p\llb q_n-1\rrb)(1)[2].\] 
\end{proposition}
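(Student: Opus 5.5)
The plan is to base-change the fibre sequence of \cref{cor::-1-twist_of_fibre_seq_computing_jpn} along $\THH(\Z_p[\zeta_{p^n}]) \to \THH(R)$, first at the level of cyclotomic spectra and then with motivic filtrations. For any $R \in \QSyn_{\Z_p[\zeta_{p^n}]}$, \cref{cor::-1-twist_of_fibre_seq_computing_jpn} gives a fibre sequence of $j_{n-1}^{(-1)}$-modules in cyclotomic spectra
\[ j_{n-1}^{(-1)} \to \ku_{p,n-1}^{(-1)} \xrightarrow{\psi^{\gamma_n}_{\circ\circ}} \ku_{p,n-1}^{(-1)}(1)[2]. \]
Applying the exact, symmetric monoidal functor $\THH(R)\otimes_{j_{n-1}^{(-1)}} -$ to this sequence, I will use the identification
\[ \THH(R/\S_p\llb q_n-1\rrb) \simeq \THH(R)\otimes_{j_{n-1}^{(-1)}}\ku_{p,n-1}^{(-1)} \]
from \cref{cor::rel_thh_always_even_for_qrsps}. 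This identification comes from \cref{cor::rel_thh_of_cyc_extns} together with symmetric monoidality of $\THH$. The resulting fibre sequence of cyclotomic spectra
\[ \THH(R) \to \THH(R/\S_p\llb q_n-1\rrb) \to \THH(R/\S_p\llb q_n-1\rrb)(1)[2] \]
is functorial in $R$, and so it gives a fibre sequence of presheaves of cyclotomic spectra. The map $\psi^{\gamma_n}_{\circ\circ}$ is defined on the $\ku$-factor, and it is $j_{n-1}^{(-1)}$-linear because $\gamma_n$ acts trivially on $j_{n-1}^{(-1)}$.

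Next I restrict to QRSPs $R$, which form a basis of $\QSyn$. There, \cref{cor::rel_thh_always_even_for_qrsps} and \cite[Theorem 7.1]{bms2} show that $\THH(R)$ and $\THH(R/\S_p\llb q_n-1\rrb)$ are even, and their motivic filtrations are $\tau_{\ge 2*}$. The twisted term $\THH(R/\S_p\llb q_n-1\rrb)(1)[2]$ is then concentrated in even degrees $\ge 2$, and its double-speed Postnikov filtration reindexes to $\Fil^{\ge *-1}(\ldots)(1)[2]$.

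The main point is to show that applying $\tau_{\ge 2*}$ termwise still yields a fibre sequence. In the long exact sequence on homotopy, the odd groups vanish everywhere, so I must show that $\pi_{2k}$ of the middle term surjects onto $\pi_{2k}$ of the third term. This surjectivity is the main obstacle. My first approach is to reduce to the universal case using $p$-complete eff base change (\cref{lem::shj_n_to_shku_n_is_eff}). Since $j_{n-1}^{(-1)}\to\ku_{p,n-1}^{(-1)}$ is eff, the sequence $\pi_{2*}$ is obtained from the corresponding sequence for $\THH(R)$ by flat base change. Alternatively, I can identify it with the two-term complex computing Nygaard-filtered prismatic cohomology from $q$-de Rham cohomology, in the spirit of Remark \ref{rmk::comparison_bw_zeyuliu_and_our_qdR_comp}, and show that this complex has $H^1=0$ on QRSPs because $\THH(R)$ is even. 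Concretely, evenness of the fibre forces the connecting map to vanish, so the long exact sequence splits into short exact sequences in even degrees. Consequently $\tau_{\ge 2*}$ of the three terms is again a fibre sequence, since truncations of a fibre sequence whose long exact sequence splits this way remain exact.

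Because all three terms are even, the sequence of filtered objects upgrades to cyclotomic synthetic spectra. Example \ref{ex::even_cyc_sp_becomes_synth} and \cite[Lemma 3.18]{antieau2024cyclotomic} give $(\tau_{\ge 2*}X)^{tC_{p,\ev}}\simeq\tau_{\ge 2*}X^{tC_p}$ and handle the Frobenius compatibly. Finally, right Kan extension or unfolding from QRSPs, which is how $\Fil_\mot$ is defined, preserves fibre sequences because limits commute with fibres. This gives the claimed fibre sequence of sheaves, and in fact of presheaves, on all of $\QSyn_{\Z_p[\zeta_{p^n}]}$.
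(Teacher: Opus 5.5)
Your proposal is correct and follows the paper's proof. You base-change the fibre sequence of \cref{cor::-1-twist_of_fibre_seq_computing_jpn} along $j_{n-1}^{(-1)}\to\THH(R)$, pass to QRSPs where all three terms are even, apply $\tau_{\ge 2*}$ (reindexing the twisted term to $\Fil^{\ge *-1}_\mot(-)(1)[2]$), and unfold by quasi-syntomic descent. The surjectivity you single out as the main obstacle is immediate from $\pi_{2k-1}\THH(R)=0$, as your second argument notes. You should therefore drop the eff-base-change detour: it would not by itself give surjectivity, since $j_{n-1}^{(-1)}$ is not even.
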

\begin{remark}
    Upon taking $(-)^{t\T_\ev}$ and then taking associated graded, this gives a computation of absolute (Nygaard completed) prismatic cohomology in terms of (Nygaard completed) prismatic cohomology relative to $\Z_p\llb q_n-1\rrb$. A Nygaard decompleted generalisation of this latter result to coefficients is \cite[Theorem 1.0.2]{zeyuliu2025stacky}. On the other hand, \textit{loc. cit.} does not consider the Nygaard filtration. 
\end{remark}
\begin{proof}
    Note that both $\THH(-)$ and $\THH(-/\S_p\llb q_n-1\rrb)$ are sheaves of cyclotomic spectra on $\QSyn_{\Z_p[\zeta_{p^n}]}$, and that \[\THH(-/\S_p\llb q_n-1\rrb) \simeq \THH(-) \otimes_{ j_{n-1}^{(-1)}} \ku_{p,n-1}^{(-1)}\]
    as pre-sheaves, and thus as sheaves where the right hand side denotes the point-wise tensor product. Since $\ku_{p,n-1}^{(-1)}(1)[2]$ is a perfect $\ku_{p,n-1}^{(-1)}$-module, it also follows that \[\THH(-/\S_p\llb q_n-1\rrb)(1)[2] \simeq \THH(-/\S_p\llb q_n-1\rrb) \otimes_{\ku_{p,n-1}^{(-1)}} \ku_{p,n-1}^{(-1)}(1)[2]\]
    is a sheaf of cyclotomic spectra on $\QSyn_{\Z_p[\zeta_{p^n}]}$. Applying $\THH(-)\otimes_{ j_{n-1}^{(-1)}} -$ to the fibre sequence in \cref{cor::-1-twist_of_fibre_seq_computing_jpn} thus yields the fibre sequence of sheaves of cyclotomic spectra 
    \[\THH(-) \to \THH(-/\S_p\llb q_n-1\rrb) \xrightarrow{\psi_{\circ\circ}^{\gamma_n}} \THH(-/\S_p\llb q_n-1\rrb)(1)[2].\]
    We need to upgrade this to motivic filtrations. The motivic filtrations are defined via quasi-syntomic descent from QRSPs, and so it suffices to consider a QRSP $R$. In this case, $\THH(R)$ and $\THH(R/\S_p\llb q_n-1\rrb)$ are even. We thus get a fibre sequence \[\tau_{\ge 2*}\THH(R) \to \tau_{\ge 2*}\THH(R/\S_p\llb q_n-1\rrb) \xrightarrow{\psi_{\circ\circ}^{\gamma_n}} \tau_{\ge 2*}\left(\THH(R/\S_p\llb q_n-1\rrb)(1)[2]\right)\]
    of filtered (in fact synthetic) cyclotomic spectra. Since \[\tau_{\ge 2*}\left(\THH(R/\S_p\llb q_n-1\rrb)(1)[2]\right) \simeq \left(\tau_{\ge 2*-2}\THH(R/\S_p\llb q_n-1\rrb)\right)(1)[2],\]
    unfolding the preceding fibre sequence yields the result. 
\end{proof}
\begin{corollary}\label{cor::identifying_mot_filt_on_thh(R_n)_for_R_qrsp}
    For $R$ a QRSP $\Z_p[\zeta_{p}]$-algebra, one has an equivalence \[\Fil_\mot^{\ge *} \THH(R\otimes_{\Z_p[\zeta_p]}\Z_p[\zeta_{p^n}]) \simeq \tau_{\ge 2*-1} \THH(R\otimes_{\Z_p[\zeta_p]}\Z_p[\zeta_{p^n}]) .\]
\end{corollary}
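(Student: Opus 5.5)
We would apply \cref{prop::prismatic_coh_from_q_de_rham_coh} to the QRSP $R_n := R\otimes_{\Z_p[\zeta_p]}\Z_p[\zeta_{p^n}]$, working over $\Z_p[\zeta_{p^n}]$ with a topological generator $\gamma_n\in 1+p^n\Z_p$. Note first that $R_n$ is again a QRSP (finite flat base change of a QRSP along a syntomic map). This gives a fibre sequence of filtered spectra
\[\Fil_\mot^{\ge *}\THH(R_n)\to \Fil_\mot^{\ge *}\THH(R_n/\S_p\llb q_n-1\rrb)\xrightarrow{\psi^{\gamma_n}_{\circ\circ}} \Fil_\mot^{\ge *-1}\THH(R_n/\S_p\llb q_n-1\rrb)(1)[2].\]
By \cref{cor::rel_thh_always_even_for_qrsps}, the relative term is even and its motivic filtration is $\tau_{\ge 2*}$. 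The shifted term is then $\tau_{\ge 2*}$ of the even spectrum $T(1)[2]$, where $T:=\THH(R_n/\S_p\llb q_n-1\rrb)$. So the fibre sequence reads
\[\Fil_\mot^{\ge *}\THH(R_n)\simeq \fib\big(\tau_{\ge 2*}T\xrightarrow{\psi}\tau_{\ge 2*}T(1)[2]\big),\]
and the underlying fibre sequence is $\THH(R_n)\to T\to T(1)[2]$.

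Next we compare this fibre with $\tau_{\ge 2*-1}\THH(R_n)$. Fix a weight $k$ and consider the long exact sequence in homotopy. Since $T$ and $T(1)[2]$ are even, $\pi_{2j}\THH(R_n)=\ker(\pi_{2j}\psi)$ and $\pi_{2j-1}\THH(R_n)=\coker(\pi_{2j}\psi)$. The fibre $F^k$ of $\tau_{\ge 2k}T\to\tau_{\ge 2k}T(1)[2]$ has the following homotopy groups:
\begin{itemize}
\item for $j\ge k$, $\pi_{2j}F^k=\ker(\pi_{2j}\psi)$ and $\pi_{2j-1}F^k=\coker(\pi_{2j}\psi)$;
\item in degree $2k-1$, $\pi_{2k-1}F^k=\coker(\pi_{2k}\psi)$, since both truncations start in degree $2k$;
\item all lower groups vanish.
\end{itemize}
Thus $F^k$ is $(2k-1)$-connective, and the natural map $F^k\to \THH(R_n)$ is an isomorphism on $\pi_i$ for $i\ge 2k-1$. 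Hence $F^k\simeq\tau_{\ge 2k-1}\THH(R_n)$, naturally in $k$, because the identification comes from the canonical map to the underlying object composed with the universal property of connective covers.

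The only step needing care is the grading bookkeeping in the twist $(1)[2]$. Specifically, we must check that $\Fil^{\ge *-1}_\mot T$ twisted by $(1)[2]$ is precisely $\tau_{\ge 2*}(T(1)[2])$, as in the last display of the proof of \cref{prop::prismatic_coh_from_q_de_rham_coh}, so that the degree $2k-1$ term appears and no other. Once that is in place, the argument is the same as the case of a point in \cite[Corollary 1.4]{morin2024topological}, now with the even flat coefficients supplied by \cref{lem::shj_n_to_shku_n_is_eff}.
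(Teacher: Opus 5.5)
Your argument is correct and is essentially the paper's proof: evaluate the fibre sequence of \cref{prop::prismatic_coh_from_q_de_rham_coh} at $R_n$, use \cref{cor::rel_thh_always_even_for_qrsps} to see that the relative terms are even with motivic filtration $\tau_{\ge 2*}$, and then take connective covers.

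One correction: your parenthetical claim that $R_n$ is again a QRSP is false in general. For example, with $R=\Z_p^\cyc$ and $n=2$ one gets $R_n\cong \Z_p^\cyc[y]/(y^p-1)$, and $y$ is not a $p$-th power mod $p$. Nothing in your argument depends on this, however. \cref{cor::rel_thh_always_even_for_qrsps} is stated precisely for such base changes of QRSPs, and the fibre sequence holds on all of $\QSyn_{\Z_p[\zeta_{p^n}]}$.
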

\begin{proof}
    Set $R_n := R\otimes_{\Z_p[\zeta_p]}\Z_p[\zeta_{p^n}]$. By \cref{cor::rel_thh_always_even_for_qrsps}, $\THH(R_n /\S_p\llb q_n-1\rrb)$ is even and its motivic filtration is the double-speed Postnikov filtration. The corollary follows from taking connective covers in the fibre sequence \[\THH(R_n) \to \THH(R_n/\S_p\llb q_n-1\rrb) \to \THH(R_n/\S_p\llb q_n-1\rrb)(1)[2] \]
    from the proposition. 
\end{proof}

The following generalises \cref{lem::labelling_the_q_n_inside_ku_pn(-1)}.
\begin{lemma}\label{lem::base_change_of_prisms_in_rel_thh}
    For each $r\in \Z$, there is a natural $\Z_p^\times$-equivariant $\S_p\llb q_n-1\rrb^\triv$-module structure on the sheaf $\Fil^{\ge r}_\mot\THH(-/\S_p\llb q_n-1\rrb)$.
    
    Moreover, there is a $\Z_p^\times$-equivariant equivalence of sheaves of synthetic spectra with synthetic circle action \[\Fil_\mot^{\ge *} \THH(-\otimes_{\Z_p[\zeta_p]} \Z_p[\zeta_{p^n}] /\S_p\llb q_n-1\rrb) \simeq \left(\Fil_\mot^{\ge *} \THH(-/\S_p\llb q_1-1\rrb)\right) \otimes_{\S_p\llb q_1-1\rrb^\triv} \S_p\llb q_n-1\rrb^\triv.\]
\end{lemma}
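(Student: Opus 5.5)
First I will build the module structure. For a quasi-syntomic $\Z_p[\zeta_{p^n}]$-algebra $R$, the relative $\THH$ is
\[\THH(R/\S_p\llb q_n-1\rrb) \simeq \THH(R)\otimes_{\THH(\S_p\llb q_n-1\rrb)} \S_p\llb q_n-1\rrb^\Tate.\]
This is a module over $\S_p\llb q_n-1\rrb^\Tate$ in cyclotomic spectra. On underlying spectra with $S^1$-action it is therefore a module over $\S_p\llb q_n-1\rrb^\triv$, and functorially so in $R$.

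To put this on the motivic filtration, I will use that $\S_p\llb q_n-1\rrb$ is even with $\pi_*$ concentrated in degree $0$ after $p$-completion. Then $F_\ev^{\ge *}\S_p\llb q_n-1\rrb$ is just $\S_p\llb q_n-1\rrb$ placed in filtration $\le 0$, with the $\tau_{\ge 2*}$ convention. On QRSPs, \cref{cor::rel_thh_always_even_for_qrsps} says $\THH(R/\S_p\llb q_n-1\rrb)$ is even and its motivic filtration is $\tau_{\ge 2*}$. The module action
\[\S_p\llb q_n-1\rrb\otimes\tau_{\ge 2r}\THH(R/\S_p\llb q_n-1\rrb)\to \tau_{\ge 2r}\THH(R/\S_p\llb q_n-1\rrb)\]
then exists because the source is connective in the right sense. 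It is natural in $R$, since $\tau_{\ge 2r}$ is lax monoidal and the action comes from a map of $E_\infty$-rings $\S_p\llb q_n-1\rrb\to\THH(R/\S_p\llb q_n-1\rrb)$. Quasi-syntomic descent, i.e.\ taking the limit over QRSPs, then gives the module structure on the sheaf $\Fil^{\ge r}_\mot$.

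$\Z_p^\times$-equivariance: the group acts on $\S_p\llb q_n-1\rrb\simeq\S_p\llb p^{-n}\Z_p\rrb$ through the multiplicative action. The action on $\THH(R/\S_p\llb q_n-1\rrb)$ is induced from this via the formula above, with trivial action on $\THH(R)$. I expect this to be honest only when $R$ comes from $\Z_p[\zeta_p]$ so that the relevant Galois action exists, and otherwise as $(1+p^n\Z_p)$-action. I would follow the paper's convention and transport the action as in \cref{cor::rel_thh_of_cyc_extns}.

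Next, the base-change equivalence. Write $R_n:=R\otimes_{\Z_p[\zeta_p]}\Z_p[\zeta_{p^n}]$. On underlying objects I will compute
\[\THH(R_n/\S_p\llb q_n-1\rrb)\simeq \THH(R)\otimes_{j_0^{(-1)}}\THH(\Z_p[\zeta_{p^n}]/\S_p\llb q_n-1\rrb)\simeq \THH(R)\otimes_{j_0^{(-1)}}\ku_{p,n-1}^{(-1)}.\]
This uses symmetric monoidality of $\THH$ together with \cref{cor::rel_thh_of_cyc_extns}. Applying \cref{lem::labelling_the_q_n_inside_ku_pn(-1)} iteratively gives
\[\ku_{p,n-1}^{(-1)}\simeq\ku_{p,0}^{(-1)}\otimes_{\S_p\llb q_1-1\rrb}\S_p\llb q_n-1\rrb.\]
Combining these, the right side becomes $\THH(R/\S_p\llb q_1-1\rrb)\otimes_{\S_p\llb q_1-1\rrb}\S_p\llb q_n-1\rrb$, functorially and equivariantly in $R$.

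Passing to filtrations is the main point. Since $\S_p\llb q_n-1\rrb$ is finite free of rank $p^{n-1}$ over $\S_p\llb q_1-1\rrb$, with basis $q_n^i$, the tensor product is a finite direct sum. It therefore commutes with $\tau_{\ge 2*}$ on even inputs, and also with the limit defining quasi-syntomic descent. For $R$ a QRSP over $\Z_p[\zeta_p]$, $R_n$ is again quasi-syntomic and lies in the image of base change. Both sides are then even by \cref{cor::rel_thh_always_even_for_qrsps}, so their filtrations are the double-speed Postnikov filtrations, and these are matched by the finite-free splitting.

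\textbf{Main obstacle.} To conclude for general $R$, I need the motivic filtration on $R_n$ to be computed by descent along covers pulled back from $\QSyn_{\Z_p[\zeta_p]}$. The base change $R\mapsto R_n$ sends QRSPs to QRSPs, since $\Z_p[\zeta_{p^n}]$ is finite flat and $\zeta_p$ is already present. It also sends quasi-syntomic covers to covers. Showing that these base changes are cofinal among QRSP covers of $R_n$ is the step I would check most carefully. I would argue it via $R_n\otimes$-compatible covers, i.e.\ the hypercover obtained by base-changing a QRSP hypercover of $R$. Finally, the synthetic circle action is compatible because all maps involved are $\T_\ev$-linear maps between double-speed Postnikov towers.
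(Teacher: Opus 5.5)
Your argument is essentially the paper's. The module structure comes from the connective ring $\S_p\llb q_n-1\rrb$ acting on $\tau_{\ge 2r}$, and the equivalence is the underlying identification (symmetric monoidality plus \cref{lem::labelling_the_q_n_inside_ku_pn(-1)}), matched on double-speed Postnikov filtrations for QRSPs via \cref{cor::rel_thh_always_even_for_qrsps} and then descended. Two of your side remarks are false but never used, and your ``main obstacle'' is not one:
\begin{itemize}
\item $\S_p\llb q_n-1\rrb$ is not concentrated in degree $0$, since it retracts onto $\S_p$. Only its connectivity is needed for the action on $\tau_{\ge 2r}$.
\item $R\mapsto R_n$ does not preserve QRSPs, because $R_n/p\cong (R/p)[y]/(y^{p^{n-1}})$ is not semiperfect.
\item Base change preserves quasi-syntomic covers and their \v{C}ech nerves, so $R\mapsto\Fil_\mot^{\ge *}\THH(R_n/\S_p\llb q_n-1\rrb)$ is already a sheaf on $\QSyn_{\Z_p[\zeta_p]}$. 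Checking on QRSPs $R$ therefore suffices, with no cofinality statement required.
\end{itemize}
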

\begin{proof}
    The first follows from the fact that $$\tau_{\ge 2r} \ku_{p,n-1}^{(-1)} \simeq \Fil_\mot^{\ge r} \THH(\Z_p[\zeta_{p^n}]/\S_p\llb q_n-1\rrb)$$
    has an obvious $\S_p\llb q_n-1\rrb^\triv$ module structure. For the second statement, there is an obvious map from the right to the left, which is an equivalence for QRSPs as a consequence of \cref{lem::labelling_the_q_n_inside_ku_pn(-1)} (in which case the motivic filtration is the double-speed Postnikov filtration) and thus for everything by quasi-syntomic descent. 
\end{proof}
\begin{lemma}\label{lem::rel_tensor_prod_of_cyc_spectra}
    Suppose $R\in \QSyn_{\Z_p[\zeta_p]}$. Then, \[\Fil^{\ge *}_\mot \THH(R \otimes_{\Z_p[\zeta_p]} \Z_p[\zeta_{p^n}] /\S_p\llb q_n-1\rrb) \simeq \Fil^{\ge *}_\mot \THH(R/\S_p\llb q_1-1\rrb) \otimes_{F_\ev^{\ge *}\ku_p^{(-1)}}F_\ev^{\ge *}\ku_{p,n-1}^{(-1)}\]
    as cyclotomic synthetic spectra, functorially in $R$. 
\end{lemma}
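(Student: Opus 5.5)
We first build the comparison map, then check it is an equivalence on QRSPs, and finally extend by descent.

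The comparison map will be a map of cyclotomic synthetic spectra, built in three steps.

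First, the underlying cyclotomic spectra agree. Set $R_n := R\otimes_{\Z_p[\zeta_p]}\Z_p[\zeta_{p^n}]$. Using $\THH(R_n/\S_p\llb q_n-1\rrb)\simeq \THH(R)\otimes_{ j_0^{(-1)}}\ku_{p,n-1}^{(-1)}$ and $\THH(R/\S_p\llb q_1-1\rrb)\simeq \THH(R)\otimes_{ j_0^{(-1)}}\ku_p^{(-1)}$, both equalities coming from \cref{cor::rel_thh_of_cyc_extns} and symmetric monoidality of $\THH$, we obtain an equivalence of cyclotomic spectra
\[\THH(R_n/\S_p\llb q_n-1\rrb)\simeq \THH(R/\S_p\llb q_1-1\rrb)\otimes_{\ku_p^{(-1)}}\ku_{p,n-1}^{(-1)}.\]

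Second, we produce the filtered map. The ring maps $\ku_p^{(-1)}\to\ku_{p,n-1}^{(-1)}$ (via \cref{ex::even_cyc_sp_becomes_synth}) and $\THH(R/\S_p\llb q_1-1\rrb)\to\THH(R_n/\S_p\llb q_n-1\rrb)$ are maps of even cyclotomic rings when $R$ is a QRSP. Their filtered versions therefore induce, by the universal property of the relative tensor product in $\CAlg(\CycSyn_p)$, a map of cyclotomic synthetic $E_\infty$-rings
\[\Fil^{\ge *}_\mot \THH(R/\S_p\llb q_1-1\rrb) \otimes_{F_\ev^{\ge *}\ku_p^{(-1)}}F_\ev^{\ge *}\ku_{p,n-1}^{(-1)}\longrightarrow \Fil^{\ge *}_\mot \THH(R_n/\S_p\llb q_n-1\rrb).\]
This map is natural in $R$. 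Since both sides satisfy quasi-syntomic descent (the left because $F_\ev^{\ge *}\ku_{p,n-1}^{(-1)}$ is a finite free $F_\ev^{\ge *}\ku_p^{(-1)}$-module, see below), we can define the map for all $R$ by descent from QRSPs, and it suffices to prove it is an equivalence for QRSP $R$.

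Third, and this is the main point, we show that the map is an equivalence of underlying synthetic spectra; the cyclotomic structure then comes for free, since the map was constructed in $\CycSyn_p$. By \cref{lem::labelling_the_q_n_inside_ku_pn(-1)}, $\ku_{p,n-1}^{(-1)}\simeq \ku_p^{(-1)}\otimes_{\S_p\llb q_1-1\rrb}\S_p\llb q_n-1\rrb$. Hence $\pi_*\ku_{p,n-1}^{(-1)}$ is free over $\pi_*\ku_p^{(-1)}$ on the basis $\{q_n^i\}_{0\le i<p^{n-1}}$, concentrated in degree $0$. It follows that
\[F_\ev^{\ge *}\ku_{p,n-1}^{(-1)}=\tau_{\ge 2*}\ku_{p,n-1}^{(-1)}\simeq \bigoplus_i \tau_{\ge 2*}\ku_p^{(-1)}\cdot q_n^i\]
as $F_\ev^{\ge *}\ku_p^{(-1)}$-modules. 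The left side of the comparison is thus a finite direct sum of copies of $\tau_{\ge 2*}\THH(R/\S_p\llb q_1-1\rrb)$, using evenness from \cref{cor::rel_thh_always_even_for_qrsps}. The right side is $\tau_{\ge 2*}$ of the same direct sum, since it is even with motivic filtration the double-speed Postnikov filtration, again by \cref{cor::rel_thh_always_even_for_qrsps}. Because $\tau_{\ge 2*}$ commutes with finite direct sums, the map is an equivalence.

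The expected obstacle is this last step. One must check that the relative tensor product of synthetic spectra is computed termwise, which relies on the finite freeness just established and not on a general flatness statement. If that argument proves fiddly, the fallback is to invoke the eff property of \cref{lem::shj_n_to_shku_n_is_eff}. A $p$-completely eff map makes base change commute with the even filtration, as in \cite{HahnRaksitWilson_motivic_fil_on_TC}, and this yields the equivalence directly; it also matches the $\Z_p^\times$-equivariant statement of \cref{lem::base_change_of_prisms_in_rel_thh}.
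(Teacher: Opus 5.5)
Your proposal is correct and follows essentially the same route as the paper. Both arguments identify the underlying cyclotomic spectra via \cref{cor::rel_thh_of_cyc_extns}, get the filtered comparison map from monoidality of the even filtration, reduce to QRSPs by quasi-syntomic descent, and then use the evenness of \cref{cor::rel_thh_always_even_for_qrsps}. The only difference is in the last check. The paper verifies the isomorphism on associated graded by citing Hahn--Raksit--Wilson's base-change result for the $p$-completely eff map $\ku_p^{(-1)}\to\ku_{p,n-1}^{(-1)}$, whereas you verify it directly using the explicit finite free basis $\{q_n^i\}$, which is the same finite-freeness the paper mentions parenthetically.
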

\begin{proof}
    By symmetric monoidality of $\THH$ and \cref{cor::rel_thh_of_cyc_extns}, this equivalence holds on underlying objects. Since the motivic filtration is the even filtration, and the even filtration is lax-symmetric monoidal, we get a canonical map from the right to the left. By quasi-syntomic descent, we may assume $R$ is QRSP, in which case $\THH(R/\S_p\llb q_n-1\rrb)$ is even and all filtrations at play are the double-speed Postnikov filtration, by \cref{cor::rel_thh_always_even_for_qrsps}. To prove the filtered equivalence, it suffices to consider this equivalence on associated graded, i.e. we need to check that the canonical map \[\pi_{2*}\ku_{p,n-1}^{(-1)} \otimes_{\pi_{2*} \ku_p^{(-1)}} \pi_{2*} \THH(R/\S_p\llb q_1-1\rrb) \to \pi_{2*}\THH(R \otimes_{\Z_p[\zeta_p]} \Z_p[\zeta_{p^n}] /\S_p\llb q_n-1\rrb)\]
    is an isomorphism. 
    Since $\ku_p^{(-1)} \to \ku_{p,n-1}^{(-1)}$ is $p$-completely eff (in fact $\ku_{p,n-1}^{(-1)}$ is a finite free even $\ku_p^{(-1)}$-module), and since $\THH(R/\S_p\llb q_1-1\rrb)$ is even, this isomorphism is \cite[Proposition 2.2.6]{HahnRaksitWilson_motivic_fil_on_TC}.
\end{proof}
\begin{construction}\label{cons::construction_of_rho_n}
    Let $\rho_n$ denote the map of sheaves of $\Z_p^\times$-equivariant synthetic spectra with synthetic circle action obtained by the composition
    \begin{align*}
        \Fil^{\ge *}_\mot \THH(- \otimes_{\Z_p[\zeta_p]} \Z_p[\zeta_{p^{n+1}}] /\S_p\llb q_{n+1}-1\rrb &) \simeq \Fil^{\ge *}_\mot \THH(-/\S_p\llb q_1-1\rrb) \otimes_{F_\ev^{\ge *}\ku_p^{(-1)}}F_\ev^{\ge *}\ku_{p,n}^{(-1)} \\
        &\xrightarrow{\id \otimes F_\ev^{\ge *}\rho_{n}} \Fil^{\ge *}_\mot \THH(-/\S_p\llb q_1-1\rrb) \otimes_{F_\ev^{\ge *}\ku_p^{(-1)}}F_\ev^{\ge *}\ku_{p,n-1}^{(-1)} \\
        &\simeq \Fil^{\ge *}_\mot \THH(- \otimes_{\Z_p[\zeta_p]} \Z_p[\zeta_{p^{n}}] /\S_p\llb q_{n}-1\rrb)
    \end{align*}
    where the equivalences are from \cref{lem::rel_tensor_prod_of_cyc_spectra} and the map $\rho_n$ in the middle is from Construction \ref{cons::projection_map_on_kupn_via_semiadd}. 
\end{construction}
The following is immediate from \cref{lem::properties_of_projection_map_on_kupn}.
\begin{lemma}\label{lem::projection_for_arbitrary_X}
    The map of sheaves $\rho_n$ defined above identifies with the composition
    \begin{align*}
        \Fil^{\ge *}_\mot \THH(- \otimes_{\Z_p[\zeta_p]} \Z_p[\zeta_{p^{n+1}}] /\S_p\llb q_{n+1}-1\rrb) &\simeq \Fil^{\ge *}_\mot \THH(-/\S_p\llb q_1-1\rrb) \otimes_{\S_p\llb q_1-1\rrb^\triv} \S_p\llb q_{n+1}-1\rrb^\triv \\
        &\xrightarrow{\id \otimes \proj_n} \Fil^{\ge *}_\mot \THH(-/\S_p\llb q_1-1\rrb) \otimes_{\S_p\llb q_1-1\rrb^\triv} \S_p\llb q_{n}-1\rrb^\triv \\
        &\simeq \Fil^{\ge *}_\mot \THH(- \otimes_{\Z_p[\zeta_p]} \Z_p[\zeta_{p^{n}}] /\S_p\llb q_{n}-1\rrb)
    \end{align*}
    where the equivalences are from \cref{lem::base_change_of_prisms_in_rel_thh} and $\proj_n$ is the map $\S_p\llb q_{n+1}-1\rrb \to \S_p\llb q_n-1\rrb$ given by \[q_{n+1}^i \mapsto \begin{cases}
        0 & p\nmid i,\\
        q_n^{i/p} & p|i.
    \end{cases}\]
\end{lemma}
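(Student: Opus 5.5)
The plan is to reduce to the universal case of a point, where the identification is \cref{lem::properties_of_projection_map_on_kupn}(2), and then transport it along the base-change equivalences.

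By Construction \ref{cons::construction_of_rho_n}, $\rho_n$ is $\id\otimes F_\ev^{\ge *}\rho_n$ under the equivalence of \cref{lem::rel_tensor_prod_of_cyc_spectra}. So the first step is to compare the two descriptions of the source and target. These are the tensor product over $F_\ev^{\ge *}\ku_p^{(-1)}$ with $F_\ev^{\ge *}\ku_{p,m}^{(-1)}$ from \cref{lem::rel_tensor_prod_of_cyc_spectra}, and the tensor product over $\S_p\llb q_1-1\rrb^\triv$ with $\S_p\llb q_{m+1}-1\rrb^\triv$ from \cref{lem::base_change_of_prisms_in_rel_thh}.

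To do this, rewrite $F_\ev^{\ge *}\ku_{p,m}^{(-1)}$ as
\[F_\ev^{\ge *}\ku_p^{(-1)}\otimes_{\S_p\llb q_1-1\rrb^\triv}\S_p\llb q_{m+1}-1\rrb^\triv.\]
This follows by iterating \cref{lem::labelling_the_q_n_inside_ku_pn(-1)}. The evenness is preserved: $\S_p\llb q_{m+1}-1\rrb$ is finite free over $\S_p\llb q_1-1\rrb$ with basis $q_{m+1}^i$, so the tensor product is a finite direct sum of shifts-free copies. Hence the double-speed Postnikov filtration commutes with the tensor product, and the equivalence holds at the filtered level.

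With this rewriting, associativity of the tensor product gives
\[\Fil^{\ge *}_\mot\THH(-/\S_p\llb q_1-1\rrb)\otimes_{F_\ev^{\ge *}\ku_p^{(-1)}}F_\ev^{\ge *}\ku_{p,m}^{(-1)}\simeq \Fil^{\ge *}_\mot\THH(-/\S_p\llb q_1-1\rrb)\otimes_{\S_p\llb q_1-1\rrb^\triv}\S_p\llb q_{m+1}-1\rrb^\triv.\]
One then checks that this agrees with the composite of the equivalences of \cref{lem::rel_tensor_prod_of_cyc_spectra} and \cref{lem::base_change_of_prisms_in_rel_thh}. Both are the canonical comparison maps out of the tensor product, so by quasi-syntomic descent it is enough to check this on QRSPs. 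There all objects are even with Postnikov filtrations, and the check reduces to a statement about the $\S_p\llb q_{m+1}-1\rrb$-module structure on homotopy, which is evident.

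Under these identifications, $\id\otimes F_\ev^{\ge *}\rho_n$ becomes $\id\otimes F_\ev^{\ge *}(\id_{\ku_p^{(-1)}}\otimes \proj_n)$ by \cref{lem::properties_of_projection_map_on_kupn}(2). This equals $\id\otimes\proj_n$, provided the identification of $\rho_n$ with $\id\otimes\proj$ is compatible with the $\ku_p^{(-1)}$-linear structure. That compatibility holds because $\rho_n$ is $\ku_{p,n-1}^{(-1)}$-linear by construction. The $\Z_p^\times$-equivariance and the synthetic circle action are carried along by naturality.

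The main obstacle is that \cref{lem::properties_of_projection_map_on_kupn}(2) is stated only on underlying spectra with $S^1$-action, while here it is needed after applying $F_\ev^{\ge *}$. The approach is to use that both maps are maps of even spectra, so $F_\ev^{\ge *}$ acts as $\tau_{\ge 2*}$ and is functorial. A homotopy on underlying objects then induces one on double-speed Postnikov filtrations, and no Frobenius compatibility is needed since only synthetic circle actions are claimed.
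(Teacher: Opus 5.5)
Your proposal is correct and is the paper's argument. The paper simply declares the lemma immediate from \cref{lem::properties_of_projection_map_on_kupn}(2) via Construction \ref{cons::construction_of_rho_n}, and you are filling in the bookkeeping it leaves implicit: matching the base-change equivalences of \cref{lem::rel_tensor_prod_of_cyc_spectra} and \cref{lem::base_change_of_prisms_in_rel_thh}, making the homotopy $\ku_{p,n-1}^{(-1)}$-linear, and passing to $F_\ev^{\ge *}\simeq\tau_{\ge 2*}$ on even objects. One step should be phrased more carefully: justify the agreement of the two base-change equivalences by the universal property of the relative tensor product (or by linearity on the free basis $q_{m+1}^i$), not by agreement on homotopy groups, since that alone does not produce a homotopy.
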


We can finally prove the main result of this subsection. 
\begin{theorem}\label{thm::computing_trace_maps_explicitly_for_any_R}
    Let $g$ be a topological generator of $(1+p\Z_p)$, and $R\in \QSyn_{\Z_p[\zeta_p]}$. Set $R_n := R\otimes_{\Z_p[\zeta_p]} \Z_p[\zeta_{p^n}]$. The transfer map $$\tr: \Fil_\mot^{\ge *} \THH(R_{n+1})\to \Fil_\mot^{\ge *} \THH(R_n)$$ sits in the following functorial-in-$R$ commutative diagram of $\Fil_\mot^{\ge *} \THH(R_n)$-modules in synthetic spectra with synthetic $S^1$-action:
    
    \begin{equation}\label{eqn::diagram_for_transfers_for_arbitrary_R}
        \begin{tikzcd}
        	{\Fil_\mot^{\ge *}\THH(R_{n+1})} & {\Fil_\mot^{\ge *}\THH(R_{n+1}/\S_p\llb q_{n+1}-1\rrb)} && {\Fil_\mot^{\ge *-1}\THH(R_{n+1}/\S_p\llb q_{n+1}-1\rrb)(1)[2]} \\
        	& {\Fil_\mot^{\ge *}\THH(R_{n}/\S_p\llb q_{n}-1\rrb)} && {\Fil_\mot^{\ge *-1}\THH(R_{n}/\S_p\llb q_{n}-1\rrb)(1)[2]} \\
        	{\Fil_\mot^{\ge *}\THH(R_n)} & {\Fil_\mot^{\ge *}\THH(R_{n}/\S_p\llb q_{n}-1\rrb)} && {\Fil_\mot^{\ge *-1}\THH(R_{n}/\S_p\llb q_{n}-1\rrb)(1)[2]}
        	\arrow[from=1-1, to=1-2]
        	\arrow["\tr"', from=1-1, to=3-1]
        	\arrow["{\psi_{\circ\circ}^{g^{p^n}}}", from=1-2, to=1-4]
        	\arrow["{\rho_n}"', from=1-2, to=2-2]
        	\arrow["{\rho_n(1)[2]}"', from=1-4, to=2-4]
        	\arrow["{\psi_{\circ\circ}^{g^{p^n}}}", from=2-2, to=2-4]
        	\arrow["{\sum_{i=0}^{p-1}\psi^{g^{ip^{n-1}}}}", from=2-2, to=3-2]
        	\arrow["\id", equals, from=2-4, to=3-4]
        	\arrow[from=3-1, to=3-2]
        	\arrow["{\psi_{\circ\circ}^{g^{p^{n-1}}}}", from=3-2, to=3-4]
        \end{tikzcd}
    \end{equation}
    where the top and bottom rows are the fibre sequences of \cref{prop::prismatic_coh_from_q_de_rham_coh}. 
\end{theorem}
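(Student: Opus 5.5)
We obtain the diagram for general $R$ by base change from the case of a point, \cref{prop::computing_trace_maps_explicitly_non_equivariant}, and then upgrade from underlying spectra to motivic filtrations using quasi-syntomic descent to QRSPs.

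\textbf{Unfiltered statement.} Since $R_{n+1}\simeq R_n\otimes_{\Z_p[\zeta_{p^n}]}\Z_p[\zeta_{p^{n+1}}]$ and $\Z_p[\zeta_{p^{n+1}}]$ is finite free over $\Z_p[\zeta_{p^n}]$, \cref{lem::THH_transfer_bootstrap} (applied with $A=\Z_p[\zeta_{p^n}]$, $B=\Z_p[\zeta_{p^{n+1}}]$, $C=R_n$) identifies the transfer with
\[\THH(R_n)\otimes_{ j_{n-1}^{(-1)}} j_n^{(-1)}\xrightarrow{\id\otimes\tr}\THH(R_n).\]
We then tensor the whole diagram of \cref{prop::computing_trace_maps_explicitly_non_equivariant} with $\THH(R_n)$ over $ j_{n-1}^{(-1)}$. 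This tensoring preserves fibre sequences and $\THH(R_n)$-linearity, and we identify each resulting term as follows:
\begin{itemize}
\item $\THH(R_n)\otimes_{ j_{n-1}^{(-1)}}\ku_{p,n-1}^{(-1)}\simeq\THH(R_n/\S_p\llb q_n-1\rrb)$, as in the proof of \cref{prop::prismatic_coh_from_q_de_rham_coh};
\item $\THH(R_n)\otimes_{ j_{n-1}^{(-1)}}\ku_{p,n}^{(-1)}\simeq\THH(R_{n+1})\otimes_{ j_n^{(-1)}}\ku_{p,n}^{(-1)}\simeq\THH(R_{n+1}/\S_p\llb q_{n+1}-1\rrb)$.
\end{itemize}
By construction these identifications carry the tensored rows onto the fibre sequences of \cref{prop::prismatic_coh_from_q_de_rham_coh}. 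The tensored vertical maps are $\id\otimes\rho_n$, the Adams operation sum, and the identity. We must also check that $\id\otimes\rho_n$ agrees with the sheaf-level $\rho_n$ of Construction \ref{cons::construction_of_rho_n}. Both are $\ku_{p,n-1}^{(-1)}$-linear and are built from the same $\rho_n$ on $\ku_{p,n}^{(-1)}$, so they agree through the equivalence of \cref{lem::rel_tensor_prod_of_cyc_spectra}.

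\textbf{Filtered statement.} Everything in the diagram is a quasi-syntomic sheaf, and the filtered transfer of \cref{thm::transfer_map_for_finite_loc_free_extensions} is compatible with base change. We may therefore assume $R$ is a QRSP, choosing compatible QRSP covers. In that case:
\begin{itemize}
\item the relative terms are even, with motivic filtration $\tau_{\ge 2*}$ (\cref{cor::rel_thh_always_even_for_qrsps});
\item the absolute terms have motivic filtration $\tau_{\ge 2*-1}$ (\cref{cor::identifying_mot_filt_on_thh(R_n)_for_R_qrsp});
\item the horizontal rows are already filtered fibre sequences by \cref{prop::prismatic_coh_from_q_de_rham_coh}.
\end{itemize}
Every vertical map in the right two columns is a map between double-speed Postnikov filtrations. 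It is therefore determined by its underlying map, and the square it forms commutes in filtered objects as soon as it commutes on underlying objects: $\tau_{\ge 2*}$ is a functor and the mapping spaces agree because the sources are connective covers. For the left column, the filtered transfer refines the underlying transfer by \cref{thm::transfer_map_for_finite_loc_free_extensions}(1). The left column is then the induced map on fibres, and $\tau_{\ge 2*-1}$ of the underlying map gives the same filtered map.

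\textbf{Main obstacle.} The hard step is ensuring that the filtered transfer of \cref{thm::transfer_map_for_finite_loc_free_extensions}, which is constructed abstractly via Gysin maps, agrees with the map obtained by truncating the underlying transfer. I would argue from the QRSP case. There $\Fil^{\ge *}_\mot\THH(R_n)=\tau_{\ge 2*-1}\THH(R_n)$, and filtered maps out of a Postnikov-type filtration are determined by the underlying map up to contractible choice. More precisely, the space of filtered lifts of a fixed map $\tau_{\ge 2k-1}X\to Y$ into $\tau_{\ge 2k-1}Y$ is contractible, since $\tau_{\ge 2k-1}$ is right adjoint to the inclusion of $(2k-1)$-connective objects. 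So the two filtered maps agree on QRSPs. Descent then glues this identification, functorially in $R$, using the chosen compatibility with base change.

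Finally, linearity over $\Fil^{\ge *}_\mot\THH(R_n)$ and the synthetic circle action come from Remark \ref{rmk::bootstrap_from_spectral_trace_to_synthetic_trace}. Naturality in $R$ holds because every step is natural.
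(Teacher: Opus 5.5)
Your proposal is correct and follows the paper's proof: you base change the point case \cref{prop::computing_trace_maps_explicitly_non_equivariant} using \cref{lem::THH_transfer_bootstrap}, then reduce to QRSPs and take (double-speed) connective covers of the unfiltered diagram. Tensoring over $\THH(\Z_p[\zeta_{p^n}])$ rather than the paper's $\THH(\Z_p[\zeta_p])$ is cosmetic, and your full-faithfulness argument on QRSPs (identifying the Gysin-constructed filtered transfer with $\tau_{\ge 2*-1}$ of the underlying transfer) makes explicit a step the paper leaves implicit.
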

\begin{proof}
    Hitting \cref{prop::computing_trace_maps_explicitly_non_equivariant} with $\THH(R)\otimes_{\THH(\Z_p[\zeta_p])} -$, and using \cref{lem::THH_transfer_bootstrap}, yields the result at the unfiltered level. For the motivic filtrations, by quasi-syntomic descent it suffices to consider $R$ a QRSP, in which case $\THH(R_n/\S_p\llb q_n-1\rrb)$ is even with the motivic filtration the double-speed Postnikov filtration, and \[\Fil_\mot^{\ge *}\THH(R_n) \simeq \tau_{\ge 2*-1} \THH(R_n)\]
    by \cref{cor::identifying_mot_filt_on_thh(R_n)_for_R_qrsp}. We can thus take connective covers in the unfiltered version of (\ref{eqn::diagram_for_transfers_for_arbitrary_R}) to obtain the filtered diagram (\ref{eqn::diagram_for_transfers_for_arbitrary_R}). 
\end{proof}

\subsection{Limits Over the Cyclotomic Tower}\label{sctn::lim_of_thh_in_cyc_tower}

We now compute $$\varprojlim_n \Fil_\mot^{\ge *}\THH(-\otimes_{\Z_p[\zeta_p]} \Z_p[\zeta_{p^n}])$$
as a sheaf of $\Z_p^\times$-equivariant cyclotomic synthetic spectra.

We thank Tomer Schlank for suggesting the following lemma. Recall that for a connective $E_\infty$-ring $R$, a bounded below $R$-module spectrum $M$ is said to be \emph{almost perfect} if $\tau_{\le n} M$ is a perfect $R$-module for all $n$. If $R$ is a classical ring, this notion of almost perfect is equivalent to the notion of being pseudo-coherent in the sense of \cite[Tag064Q]{stacks-project}. 
\begin{lemma}\label{lem::almost_perfect_commute_with_limit_for_connective_sp}
    Suppose $R$ is a connective $E_\infty$-ring, $N$ an almost perfect connective $R$-module, and $\{M_i\}_{i}$ any sequential diagram of connective $R$-modules with limit $M := \lim_{i} M_i$. Then, the natural map \[M \otimes_R N \to \lim_{i} (M_i \otimes_R N)\]
    is an equivalence.
\end{lemma}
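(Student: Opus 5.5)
The strategy is to show that the connectivity of the $M_i$ lets us check the map one homotopy group at a time, and on each fixed range of homotopy groups to replace $N$ by a perfect module. For perfect modules the statement is immediate: $-\otimes_R P$ is an exact functor and $P$ is a retract of a finite cell complex, so it commutes with all limits.

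Fix an integer $k\ge 0$. The goal is to show that the natural map induces an isomorphism on $\pi_j$ for all $j\le k$. Since $N$ is almost perfect and connective, I will use the standard fact that there is a perfect (indeed finite cell, connective) $R$-module $P$ together with a map $P\to N$ whose fibre $F$ is $(k+1)$-connective, i.e. $F\in \Sp_{\ge k+1}$. This is the usual characterisation of almost perfect modules over a connective ring: build $N$ by attaching cells in each degree, using only finitely many cells in degrees $\le k+1$. If one prefers to use the definition via truncations given just above the lemma, the same conclusion follows by choosing such a $P$ approximating $\tau_{\le k+1}N$.

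Tensoring the fibre sequence $F\to P\to N$ with $M$ and with each $M_i$ gives a map of fibre sequences
\[M\otimes_R F\to M\otimes_R P\to M\otimes_R N,\qquad \lim_i(M_i\otimes_R F)\to \lim_i (M_i\otimes_R P)\to \lim_i(M_i\otimes_R N),\]
where the second row is a fibre sequence because limits preserve fibre sequences. The middle vertical map is an equivalence because $P$ is perfect. It therefore suffices to show that both $M\otimes_R F$ and $\lim_i(M_i\otimes_R F)$ lie in $\Sp_{\ge k}$. The long exact sequence then gives isomorphisms on $\pi_j$ for $j<k$, and letting $k$ vary finishes the proof.

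Connectivity of the tensor products is clear: $R$, $M$, $M_i$ are connective and $F$ is $(k+1)$-connective, so $M\otimes_R F$ and every $M_i\otimes_R F$ lie in $\Sp_{\ge k+1}$. The one point that needs care, and the main obstacle, is the limit term. A sequential limit can lower connectivity by one, through the Milnor sequence
\[0\to {\varprojlim}^1\pi_{j+1}\to \pi_j\lim\to\varprojlim\pi_j\to 0.\]
So $\lim_i(M_i\otimes_R F)\in \Sp_{\ge k}$, which is exactly the bound used above. (The same sequence shows $M$ itself is $(-1)$-connective and not necessarily connective. This is harmless, because the perfect-module step does not need connectivity of $M$, and $M\otimes_R F$ is a finite-type computation only through $F$.) Strictly, $M\otimes_R F$ is then only $k$-connective, which is still enough. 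Hence the natural map is an isomorphism on $\pi_j$ for all $j<k$, for every $k$, and is therefore an equivalence.
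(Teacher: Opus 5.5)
Your proof is correct and is essentially the paper's argument. Both approximate $N$ by a perfect module $P\to N$ with highly connected fibre, use that $-\otimes_R P$ commutes with limits, and bound the error terms using the Milnor sequence together with the fact that $M=\lim_i M_i$ is only $(-1)$-connective; you rightly correct this point after first calling $M$ connective. The only difference is phrasing: the paper says that $\tau_{\le n}$ of a sequential limit depends only on $\tau_{\le n+1}$ of the terms, where you state the equivalent connectivity bound on $\lim_i(M_i\otimes_R F)$.
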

\begin{remark}\label{rmk::criterion_for_almost_perfectness}
    If the connective $E_\infty$-ring $R$ is coherent (in the sense of \cite[Definition 7.2.4.16]{HA_lurie}), then an $R$-module $M$ is almost perfect if and only if $M$ is bounded below and $\pi_i M$ is finitely presented as a $\pi_0R$-module \cite[Proposition 7.2.4.17]{HA_lurie}. In particular, if $R$ is Noetherian (i.e. $\pi_0 R$ is Noetherian and $\pi_i R$ for $i>0$ is a finitely generated $\pi_0R$-module), then $R$ is coherent and the above criterion applies. 
\end{remark}
\begin{proof}
    It suffices to show that \[\tau_{\le n}(M \otimes_R N) \to \tau_{\le n}\lim_i (M_i \otimes_R N) \] is an equivalence for all $n\ge 0$. So fix $n\ge 0$. Since $N$ is almost perfect over $R$, there exists a perfect $R$-module $N'$ and a map $N' \to N$ that is an equivalence on $\tau_{\le n+1}$. Consider the diagram 
    \begin{equation}\label{eqn::square_for_proof_of_pseudocoh_commute_with_conn_lim}
        \begin{tikzcd}
            M\otimes_R N' \ar{r}{\simeq}\ar{d} & \lim_i (M_i \otimes_R N') \ar{d} \\
            M\otimes_R N \ar{r} & \lim_i (M_i \otimes_R N).
        \end{tikzcd}
    \end{equation}
    where the top arrow is an equivalence as $N'$ is a perfect $R$-module. Since $M_i$ is connective and $\cofib(N' \to N)\in \Sp_{>n+1}$, we have $M_i\otimes \cofib(N'\to N)\in \Sp_{>n+1}$ and so \[\tau_{\le n+1} (M_i\otimes_R N') \xrightarrow\simeq \tau_{\le n+1} (M_i\otimes_R N).\] 
    By the Milnor exact sequence, $\tau_{\le n} \lim_i (M_i \otimes_R N)$ depends only on $\tau_{\le n+1} (M_i \otimes_R N)$ and similarly for $\lim_i (M_i \otimes_R N')$. In particular, the preceding connectivity estimate implies that the right vertical arrow in (\ref{eqn::square_for_proof_of_pseudocoh_commute_with_conn_lim}) is an equivalence on $\tau_{\le n}$. Again by the Milnor sequence and the connectivity of $M_i$, we have $M\in \Sp_{\ge -1}$ so that $M\otimes \cofib(N' \to N)\in \Sp_{>n}$ and thus \[\tau_{\le n} (M\otimes_R N') \xrightarrow\simeq \tau_{\le n} (M\otimes_R N).\]
    Hence, in (\ref{eqn::square_for_proof_of_pseudocoh_commute_with_conn_lim}), both vertical arrows and the top horizontal arrow are equivalences on $\tau_{\le n}$. It then follows that the bottom horizontal arrow is also an equivalence on $\tau_{\le n}$ as required. 
\end{proof}

\begin{lemma}\label{lem::trivialising_action_mod_zeta_p-1}
    For any $R\in \QSyn_{\Z_p[\zeta_{p^n}]}$, the $1+p^n\Z_p$-action on $$\gr^{*}_\mot\Phi \THH_\prism(R/\S_p\llb q_n-1\rrb) \quad \text{ and } \quad \gr^{*}_\mot \THH(R/\S_p\llb q_n-1\rrb)$$ is canonically trivial modulo $\zeta_p-1$.
\end{lemma}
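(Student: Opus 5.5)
We reduce to the universal case $R=\Z_p[\zeta_{p^n}]$ and then extend to general $R$ by base change and quasi-syntomic descent. The motivic graded pieces are computed on QRSPs. For a QRSP $R$ over $\Z_p[\zeta_{p^n}]$, \cref{lem::rel_tensor_prod_of_cyc_spectra} together with \cref{cor::rel_thh_always_even_for_qrsps} identify
\[
\gr^*_\mot\THH(R/\S_p\llb q_n-1\rrb) \simeq \pi_{2*}\bigl(\THH(R)\otimes_{ j_{n-1}^{(-1)}}\ku_{p,n-1}^{(-1)}\bigr).
\]
The $\Z_p^\times$-action here comes only from the second tensor factor, since the action on $\THH(R)$ and on $ j_{n-1}^{(-1)}$ is trivial for the subgroup $1+p^n\Z_p$. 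The same holds for $\Phi\THH_\prism$, using $\Phi\THH_\prism(R)\simeq \THH(R)\otimes_{\THH(\Z_p)}\THH(\Z_p)^{tC_p}$. So it suffices to produce a canonical trivialisation, compatible with the $ j_{n-1}^{(-1)}$-module structure, of the $1+p^n\Z_p$-action on $F_\ev^{\ge *}\ku_{p,n-1}^{(-1)}$ (respectively its Tate analogue) after reducing modulo $\zeta_p-1$. This trivialisation should be natural enough to survive base change along the eff map of \cref{lem::shj_n_to_shku_n_is_eff} and descent.

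For the universal case, \cref{lem::computing_htpy_gps_of_fixed_pts_of_kupn} and \cref{lem::fibre_seq_computing_jpn_from_kupn} give
\[
\pi_*\ku_{p,n-1}^{(-1)}\cong \Z_p[\zeta_{p^n}][u],
\]
and a generator $\gamma$ of $1+p^n\Z_p$ acts by $q_n\mapsto q_n q^{(\gamma-1)/p^n}$ and $u\mapsto (\text{unit})\cdot u$. Modulo $q-1=\zeta_p-1$ (image of $q$), $q_n$ is fixed, and the scalar on $u$ is $[\gamma]_{q_1}/\gamma\equiv 1$. So the action is trivial on homotopy. To make this canonical at the level of graded objects, I would use the fibre sequence of \cref{cor::-1-twist_of_fibre_seq_computing_jpn}: the map $\psi^\gamma-1$ on $\ku_{p,n-1}^{(-1)}$ factors canonically as $\psi^\gamma_{\circ\circ}$ through $\ku_{p,n-1}^{(-1)}(1)[2]$. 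By \cref{cor::htpy_groups_of_tau_>=2ku_as_ideal}, the homotopy of $\ku_{p,n-1}^{(-1)}(1)[2]$ is the ideal generated by $(\zeta_p-1)u$, so after applying $F_\ev$ and taking associated graded, $\psi^\gamma-1$ factors canonically through multiplication by $\zeta_p-1$. Reducing modulo $\zeta_p-1$ then gives a canonical null-homotopy of $\psi^\gamma-1$. This is a trivialisation of the $\Z$-action generated by $\gamma$, which extends by continuity to $1+p^n\Z_p\cong\Z_p$ after $p$-completion.

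The same argument applies to $\Phi\THH_\prism$, using the description of $\pi_*$ of $\ku_{p,n-1}^{(-1),tC_p}$ as a Laurent polynomial ring in \cref{lem::computing_htpy_gps_of_fixed_pts_of_kupn}.

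The main obstacle is the canonicity and coherence of the trivialisation. A null-homotopy of $\psi^\gamma-1$ only trivialises the action of the discrete group $\gamma^\Z$ up to a choice. To get a genuine trivialisation as $1+p^n\Z_p$-objects, I would aim for something stronger: identify $F_\ev^{\ge *}\ku_{p,n-1}^{(-1)}/(\zeta_p-1)$ on graded pieces as induced from the trivial representation along the map $ j_{n-1}^{(-1)}\to\ku_{p,n-1}^{(-1)}$. The model for this is the case $n=1$ in \cite[Section 3.8]{BL_absolute_prismatic_coh}, and the general case in \cite[Theorem 1.0.6]{zeyuliu2025stacky}. If the spectral route is hard to make coherent, the fallback is to import that algebraic statement directly through the identification of these graded pieces with relative prismatic cohomology over $\Z_p\llb q_n-1\rrb$.
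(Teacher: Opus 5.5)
Your main argument takes a different route from the paper's, and it can be completed, though not quite as you describe it. The paper does not use $\ku$ here at all. It imports \cite[Theorem 1.0.6]{zeyuliu2025stacky}: quasi-coherent complexes on $\Z_p[\zeta_{p^n}]^\prism$ embed fully faithfully into modules over the $q$-de Rham prism $A$ with semilinear $1+p^n\Z_p$-action trivialised mod $q-1$. Restricting to the Hodge--Tate locus, this gives a canonical trivialisation on $\bar{\prism}_{R/A}\{*\}$ mod $\zeta_p-1$. For large quasi-syntomic $R$, the paper then uses \cite[Theorem 15.2]{BS_prisms} to place the Nygaard graded pieces of $\prism^{(1)}_{R/A}$ inside $\bar{\prism}_{R/A}\{*\}$ via Frobenius; these graded pieces are what $\gr^*_\mot$ computes. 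It finishes by quasi-syntomic descent. Your fallback is this route, but as stated it omits the Hodge--Tate and large-algebra step. A trivialisation on relative prismatic cohomology does not by itself pass to the Nygaard graded pieces.

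\emph{The step that fails as written.} In your spectral route, the weak point is the transport from $R=\Z_p[\zeta_{p^n}]$ to general $R$. A trivialisation on $\gr^*F^{\ge *}_\ev\ku_{p,n-1}^{(-1)}=\pi_{2*}\ku_{p,n-1}^{(-1)}$ does not base change, because $j_{n-1}^{(-1)}$ is not even. Such a trivialisation only controls an associated graded of $\pi_{2*}(\THH(R)\otimes_{j_{n-1}^{(-1)}}\ku_{p,n-1}^{(-1)})$, say via a K\"unneth spectral sequence. A unipotent action can be non-trivial mod $\zeta_p-1$ while being trivial on an associated graded, and the eff property does not fix this.

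\emph{The repair.} Use your factorisation at the level of module maps, before passing to graded pieces. Non-equivariantly, $\ku_{p,n-1}^{(-1)}(1)[2]\simeq\ku_{p,n-1}^{(-1)}[2]$ as a module. The map to $\ku_{p,n-1}^{(-1)}$ through which $\psi^\gamma-1$ factors is multiplication by the image of the Bott class, $\beta=(\zeta_p-1)u$, by \cref{lem::computing_htpy_gps_of_fixed_pts_of_kupn}. So $\psi^\gamma-1\simeq(\zeta_p-1)\cdot u\cdot\psi^\gamma_{\circ\circ}$ as $j_{n-1}^{(-1)}$-module maps.

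Tensoring with $\THH(R)$ gives the same factorisation on $\THH(R/\S_p\llb q_n-1\rrb)$, naturally in $R$. On QRSPs, apply $\tau_{\ge 2*}$, take $\gr^*$, and base change along $\Z_p[\zeta_{p^n}]\to\Z_p[\zeta_{p^n}]/(\zeta_p-1)$; this kills the factor $\zeta_p-1$. The result is a null-homotopy of $\psi^\gamma-1$, natural in $R$, which descends. The same works for $\Phi\THH_\prism$ after applying $-\otimes_{\THH(\Z_p)}\THH(\Z_p)^{tC_p}$.

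\emph{Your coherence worry.} This is not an obstacle. Such a null-homotopy is exactly a trivialisation of the $\gamma^{\Z}$-action. The lemma's only use, in the proof of \cref{thm::limit_THH_for_cyclotomic_tower}, is to replace $\sum_i\psi^{g^{ip^{n-1}}}$ by $p$, and that needs nothing more.

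\emph{Comparison.} Your route stays within the Devalapurkar--Raksit input already used in \cref{cor::-1-twist_of_fibre_seq_computing_jpn} and avoids the stacky machinery. It also proves more: $\psi^\gamma-1$ is divisible by $\zeta_p-1$ on all of $\THH(R/\S_p\llb q_n-1\rrb)$, not just on its graded pieces. This is a spectral form of the operator $\frac{g-1}{q-1}$ in \cref{rmk::comparison_bw_zeyuliu_and_our_qdR_comp}. The paper's route instead gives a trivialisation that is canonical in the sense of Hodge--Tate crystals.
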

\begin{proof}
    By \cite[Theorem 1.0.6]{zeyuliu2025stacky}, evaluation at the prism $(\Z_p\llb q_{n-1}-1\rrb, [p]_q)$ induces a fully faithful functor from $\mc D_\qcoh(\Z_p[\zeta_{p^n}]^\prism)$ into the $\infty$-category of $\Z_p\llb q_{n-1}-1\rrb$-modules with semi-linear $1+p^n\Z_p$ action with a trivialisation of the action modulo $q-1$. Passing to the Hodge--Tate locus (see also the proof of \cite[Theorem 3.4.6]{zeyuliu2025stacky}), this embeds $\mc D_\qcoh(\Z_p[\zeta_{p^n}]^\HT)$ fully faithfully into the $\infty$-category of $\Z_p[\zeta_{p^n}]$-modules with linear $1+p^n\Z_p$-action equipped with a trivialisation modulo \[\frac{\Z_p\llb q_{n-1}-1\rrb}{([p]_q, q-1)} \simeq \frac{\Z_p[\zeta_{p^n}]}{\zeta_p-1}.\]
    In particular, this holds for the Breuil--Kisin twists and for $f_* \O$ where $f:R^\HT \to \Z_p[\zeta_{p^n}]^\HT$ is the canonical map of stacks (for more on the stacky approach see \cite{BL_absolute_prismatic_coh, BL_prismatization}). The evaluation of the Hodge--Tate crystal $f_* \O$ at $\Z_p\llb q_{n-1}-1\rrb$ is precisely $\bar \prism_{R/ \Z_p\llb q_{n-1}-1\rrb}$. Thus, the natural $1+p^n\Z_p$-action on $\bar \prism_{R/ \Z_p\llb q_{n-1}-1\rrb}\{*\}$ is canonically trivialised modulo $\zeta_p-1$. 

    Now suppose $R$ is a large quasi-syntomic $\Z_p[\zeta_{p^n}]$-algebra (see \cite[Definition 15.1]{BS_prisms} for the definition of large); such $R$ form a basis for the quasi-syntomic topology on $\QSyn_{\Z_p[\zeta_{p^n}]}$. As a consequence of \cite[Theorem 15.2(2)]{BS_prisms}, we see that $\gr^*_\Nyg \prism_{R/\Z_p\llb q_{n-1}-1\rrb}^{(1)}\{*\}$, being functorially identified as a subset of $\bar\prism_{R/\Z_p\llb q_{n-1}-1\rrb}\{*\}$ via Frobenius, has the trivial $1+p^n\Z_p$ action modulo $\zeta_p-1$. However,  $$\gr^*_\Nyg  \prism_{R/\Z_p\llb q_{n-1}-1\rrb}^{(1)}\{*\} \simeq \gr_\mot^*\Phi \THH_\prism (R/\S_p\llb q_n-1\rrb)[-2*].$$
    The lemma now follows by quasi-syntomic descent. 
\end{proof}

Finally, we give some lemmas that will be used to study the cyclotomic Frobenius on $\varprojlim_n \THH(-\otimes_{\Z_p[\zeta_p]}\Z_p[\zeta_{p^n}])$.
\begin{construction}
    Define the map of sheaves on $\QSyn_{\Z_p[\zeta_p]}$ valued in $\Z_p^\times$-equivariant synthetic spectra with synthetic circle action \[\tilde \varphi_n: \Fil_\mot^{\ge *} \THH(- \otimes_{\Z_p[\zeta_{p}]} \Z_p[\zeta_{p^{n+1}}]/\S_p\llb q_{n+1}-1\rrb) \to \left(\Fil_\mot^{\ge *} \THH(-\otimes_{\Z_p[\zeta_{p}]} \Z_p[\zeta_{p^{n}}]/\S_p\llb q_{n}-1\rrb)\right)^{tC_{p,\ev}} \]
    by the composition 
    \begin{align*}
        \Fil_\mot^{\ge *} \THH&(- \otimes_{\Z_p[\zeta_{p}]} \Z_p[\zeta_{p^{n+1}}]/\S_p\llb q_{n+1}-1\rrb) \simeq \Fil_\mot^{\ge *} \THH(-/\S_p\llb q_1-1\rrb) \otimes_{F_\ev^{\ge *}\ku_p^{(-1)}}F_\ev^{\ge *}\ku_{p,n}^{(-1)} \\
        &\xrightarrow{\varphi_{\THH(-/\S_p\llb q_1-1\rrb)} \otimes F_\ev^{\ge *}\tilde \varphi_n} (\Fil_\mot^{\ge *} \THH(-/\S_p\llb q_1-1\rrb) )^{tC_{p,\ev}} \otimes_{(F_\ev^{\ge *}\ku_p^{(-1)})^{tC_{p,\ev}}} (F_\ev^{\ge *}\ku_{p,n}^{(-1)})^{tC_{p,\ev}} \\
        &\longrightarrow \left(\Fil_\mot^{\ge *} \THH(-/\S_p\llb q_1-1\rrb) \otimes_{F_\ev^{\ge *}\ku_p^{(-1)}}F_\ev^{\ge *}\ku_{p,n}^{(-1)}\right)^{tC_{p,\ev}}\\
        &\simeq \left(\Fil_\mot^{\ge *} \THH(-\otimes_{\Z_p[\zeta_{p}]} \Z_p[\zeta_{p^{n}}]/\S_p\llb q_{n}-1\rrb)\right)^{tC_{p,\ev}}
    \end{align*}
    where the first and last equivalences are \cref{lem::rel_tensor_prod_of_cyc_spectra}, the second arrow is the tensor product of the cyclotomic Frobenius on $\THH(-/\S_p\llb q_1-1\rrb)$ with the evenly filtered map $\tilde \varphi_n : \ku_{p,n}^{(-1)}\to \ku_{p,n-1}^{(-1),tC_p}$, and the third arrow comes from the lax symmetric monoidality of Tate fixed points. Here, we are using that $F_\ev^{\ge *} \ku_{p,n-1}^{(-1),tC_p} \simeq \left(F_\ev^{\ge *} \ku_{p,n-1}^{(-1)}\right)^{tC_{p,\ev}}$ by evenness of $\ku_{p,n-1}$ \cite[Lemma 3.18]{antieau2024cyclotomic}.
\end{construction}

These maps $\tilde \varphi_n$ are moreover compatible for varying $n$ as in the following lemma. 

\begin{lemma}\label{lem::proj_and_refined_frob_commute}
    There is a commutative diagram of sheaves on $\QSyn_{\Z_p[\zeta_p]}$ valued in synthetic spectra with synthetic circle action 
    \[\begin{tikzcd}
        \Fil_\mot^{\ge *} \THH(- \otimes_{\Z_p[\zeta_{p}]} \Z_p[\zeta_{p^{n+1}}]/\S_p\llb q_{n+1}-1\rrb) \ar{r}{\tilde \varphi_n} \ar{d}{\rho_n} & \left(\Fil_\mot^{\ge *} \THH(-\otimes_{\Z_p[\zeta_{p}]} \Z_p[\zeta_{p^{n}}]/\S_p\llb q_{n}-1\rrb)\right)^{tC_{p,\ev}} \ar{d}{\rho_{n-1}^{tC_{p,\ev}}} \\
        \Fil_\mot^{\ge *} \THH(- \otimes_{\Z_p[\zeta_{p}]} \Z_p[\zeta_{p^{n}}]/\S_p\llb q_{n}-1\rrb) \ar{r}{\tilde \varphi_{n-1}} & \left(\Fil_\mot^{\ge *} \THH(-\otimes_{\Z_p[\zeta_{p}]} \Z_p[\zeta_{p^{n-1}}]/\S_p\llb q_{n-1}-1\rrb)\right)^{tC_{p,\ev}}
    \end{tikzcd}\]
    where the vertical maps are the projection maps. 
\end{lemma}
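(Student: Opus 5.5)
The plan is to reduce the square to the corresponding square for the cyclotomic spectra $\ku_{p,n}^{(-1)}$, namely part (1) of \cref{lem::properties_of_projection_map_on_kupn}, by unwinding the definitions. Both $\tilde\varphi_n$ and $\rho_n$ on sheaves are built, via the equivalence of \cref{lem::rel_tensor_prod_of_cyc_spectra}, as tensor products of a map on the factor $\Fil_\mot^{\ge *}\THH(-/\S_p\llb q_1-1\rrb)$ with a map on the factor $F_\ev^{\ge *}\ku_{p,n}^{(-1)}$, over the base $F_\ev^{\ge *}\ku_p^{(-1)}$. For $\rho_n$ (Construction \ref{cons::construction_of_rho_n}) the first factor carries the identity and the second carries $F_\ev^{\ge *}\rho_n$. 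For $\tilde\varphi_n$ the first factor carries the cyclotomic Frobenius $\varphi_{\THH(-/\S_p\llb q_1-1\rrb)}$ and the second carries $F_\ev^{\ge *}\tilde\varphi_n$, followed by the lax monoidal structure map of $(-)^{tC_{p,\ev}}$.

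The square therefore decomposes into three pieces, which I would verify in turn.

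\begin{enumerate}
\item The square formed by the tensor product of $\varphi_{\THH(-/\S_p\llb q_1-1\rrb)}$ with the evenly filtered square
\[\begin{tikzcd}
F_\ev^{\ge *}\ku_{p,n}^{(-1)} \ar{r}{\tilde\varphi_n}\ar{d}{\rho_n} & (F_\ev^{\ge *}\ku_{p,n-1}^{(-1)})^{tC_{p,\ev}}\ar{d}{\rho_{n-1}^{tC_{p,\ev}}}\\
F_\ev^{\ge *}\ku_{p,n-1}^{(-1)}\ar{r}{\tilde\varphi_{n-1}} & (F_\ev^{\ge *}\ku_{p,n-2}^{(-1)})^{tC_{p,\ev}}.
\end{tikzcd}\]
The evenly filtered square is obtained by applying $F_\ev^{\ge *}$ to the commuting square of \cref{lem::properties_of_projection_map_on_kupn}(1). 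Since all the $\ku_{p,m}^{(-1)}$ and their $tC_p$ are even, $F_\ev^{\ge *}$ agrees with $\tau_{\ge 2*}$ on them, and $F_\ev^{\ge *}(X^{tC_p})\simeq (F_\ev^{\ge *}X)^{tC_{p,\ev}}$ by \cite[Lemma 3.18]{antieau2024cyclotomic}. So the square lifts to $S^1$-equivariant filtered spectra and is compatible with the $F_\ev^{\ge *}\ku_p^{(-1)}$-module structures.
\item The naturality square for the lax monoidal map $A^{tC_{p,\ev}}\otimes_{B^{tC_{p,\ev}}}C^{tC_{p,\ev}}\to(A\otimes_B C)^{tC_{p,\ev}}$, applied to the morphism $\id\otimes\rho_{n-1}$.
\item The compatibility of the equivalences of \cref{lem::rel_tensor_prod_of_cyc_spectra} with the transition maps between levels $n$ and $n-1$. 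These equivalences are natural in the $\ku$-factor, so this is formal.
\end{enumerate}

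The main obstacle is piece 1. The square in \cref{lem::properties_of_projection_map_on_kupn}(1) must commute $F_\ev^{\ge *}\ku_p^{(-1)}$-linearly, with the linearity taken along the cyclotomic Frobenius of $\ku_p^{(-1)}$ (which twists scalars through $(-)^{tC_p}$), and not merely as a square of bare spectra. Otherwise the tensor product with $\varphi_{\THH(-/\S_p\llb q_1-1\rrb)}$ is not defined.

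The first thing I would try is to observe that every map in that square arises by applying $\tau_{\ge0}(-)^{hC_{p^{k}}}$, $\tau_{\ge 0}(-)^{tC_p}$ and $(-)^{(-1)}$ to the $\KU_p$-linear maps $\Nm_{C_p}$ and $\can$, as in the big diagram used in its proof. All of these operations are lax monoidal and natural, so the linearity over $\ku_p^{(-1)}\to\ku_p^{(-1),tC_p}$ is inherited. Evenness of every term then shows that the Postnikov filtration introduces no additional coherence data.

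Granting this, I would finish by sheafifying. All constructions are natural in $R$, and the motivic filtrations are defined by quasi-syntomic descent. It therefore suffices to produce the square functorially on QRSPs, where \cref{cor::rel_thh_always_even_for_qrsps} makes every filtration the double-speed Postnikov filtration.
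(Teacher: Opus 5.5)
Your proposal is correct and follows the paper's proof. The paper uses \cref{lem::rel_tensor_prod_of_cyc_spectra} together with the definitions of $\tilde\varphi_n$ and $\rho_n$ to reduce directly to the evenly filtered square for $\ku_{p,n}^{(-1)}$, which is \cref{lem::properties_of_projection_map_on_kupn}(1). Your extra bookkeeping fills in what the paper leaves implicit: naturality of the lax monoidal map for $(-)^{tC_{p,\ev}}$, and the requirement that the $\ku$-square commute linearly over $F_\ev^{\ge *}\ku_p^{(-1)}$ along its Frobenius.
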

\begin{proof}
    By \cref{lem::rel_tensor_prod_of_cyc_spectra} and the definition of $\tilde \varphi_n$ and $\rho_n$, it suffices to show that the following diagram of $S^1$-equivariant spectra commutes 
    \[\begin{tikzcd}
        F_\ev^{\ge *}\ku_{p,n}^{(-1)} \ar{r}{\tilde \varphi_n} \ar{d}{\rho_n} & F_\ev^{\ge *}\ku_{p,n-1}^{(-1),tC_p} \ar{d}{\rho_{n-1}^{tC_p}} \\
        F_\ev^{\ge *}\ku_{p,n-1}^{(-1)} \ar{r}{\tilde \varphi_{n-1}} & F_\ev^{\ge *}\ku_{p,n-2}^{(-1),tC_p} .
    \end{tikzcd}\]
    This is \cref{lem::properties_of_projection_map_on_kupn}.
\end{proof}

\begin{lemma}\label{lem::equivalence_on_TP_with_lesser_n}
    Fix a topological generator $g$ of $1+p^{n+1}\Z_p$. Define the sheaf $F^{\ge *} \mc F$ of cyclotomic synthetic spectra on $\QSyn_{\Z_p[\zeta_{p^{n}}]}$ to be the mapping fibre of \[\Fil_\mot^{\ge *}\THH(- /\S_p\llb q_{n}-1\rrb) \xrightarrow{\psi_{\circ\circ}^{g}} \Fil_\mot^{\ge *-1} \THH(-/\S_p\llb q_{n}-1\rrb)(1)[2].\]
    Then there is the following commutative diagram of cyclotomic synthetic spectra functorial in $R\in \QSyn_{\Z_p[\zeta_{p^n}]}$ 
    \[\begin{tikzcd}
        F^{\ge *}\mc F(R) \ar{r} \ar{d}{\alpha} & \Fil_\mot^{\ge *}\THH(R/\S_p\llb q_{n}-1\rrb) \ar{d}{\can} \ar{r}{\psi_{\circ\circ}^{g}} & \Fil_\mot^{\ge *-1}\THH(R/\S_p\llb q_{n}-1\rrb)(1)[2] \ar{d}{\can}\\
        \Fil_\mot^{\ge *} \THH(R_{n+1}) \ar{r} \ar{d}{\alpha'} & \Fil_\mot^{\ge *}\THH(R_{n+1}/\S_p\llb q_{n+1}-1\rrb) \ar{d}{\rho_{n}} \ar{r}{\psi_{\circ\circ}^{g}} & \Fil_\mot^{\ge *-1}\THH(R_{n+1}/\S_p\llb q_{n+1}-1\rrb)(1)[2] \ar{d}{\rho_{n}} \\
        F^{\ge *}\mc F(R) \ar{r} & \Fil_\mot^{\ge *}\THH(R/\S_p\llb q_{n}-1\rrb) \ar{r}{\psi_{\circ\circ}^{g}} & \Fil_\mot^{\ge *-1}\THH(R/\S_p\llb q_{n}-1\rrb)(1)[2]
    \end{tikzcd}\]
    where the vertical compositions are the identity, all rows are fibre sequences, and $R_{n+1} := R\otimes_{\Z_p[\zeta_{p^n}]}\Z_p[\zeta_{p^{n+1}}]$. Moreover, $\alpha^{tC_{p,\ev}}$ induces an equivalence \[(F^{\ge *}\mc F_n(-))^{tC_{p,\ev}} \simeq \left(\Fil_\mot^{\ge *} \THH(- \otimes_{\Z_p[\zeta_{p^n}]}\Z_p[\zeta_{p^{n+1}}])\right)^{tC_{p,\ev}}\]
    of sheaves of synthetic spectra with synthetic circle action on $\QSyn_{\Z_p[\zeta_{p^n}]}$, with inverse $(\alpha')^{tC_{p,\ev}}$. 
\end{lemma}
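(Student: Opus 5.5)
The plan is to build the diagram columnwise from maps already available, and then get the Tate statement from \cref{lem::S1-nilp-of-tilde-jpn-square}.

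\emph{Middle and right columns.} Take the upper vertical maps to be the canonical base-change maps $\can$ induced by $\S_p\llb q_n-1\rrb\to\S_p\llb q_{n+1}-1\rrb$. Take the lower vertical maps to be the projections $\rho_n$ of Construction \ref{cons::construction_of_rho_n}. After shifting indices so that the base is $\Z_p[\zeta_{p^n}]$, \cref{lem::base_change_of_prisms_in_rel_thh} and \cref{lem::projection_for_arbitrary_X} identify these with $\id\otimes\mathrm{incl}$ and $\id\otimes\proj$ on $\Fil_\mot^{\ge *}\THH(-/\S_p\llb q_n-1\rrb)\otimes_{\S_p\llb q_n-1\rrb^\triv}\S_p\llb q_{n+1}-1\rrb^\triv$. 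Since $\proj\circ\mathrm{incl}=\id$, the vertical composites are the identity; this is the statement of \cref{lem::properties_of_projection_map_on_kupn} that $\rho_n\circ\can\simeq\id$.

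\emph{Commutation with $\psi_{\circ\circ}^g$.} Because $g\in 1+p^{n+1}\Z_p$, the Adams operation fixes $q_{n+1}$ modulo the relevant ideal and commutes with both $\can$ and $\rho_n$. For $\can$ this is $\Z_p^\times$-equivariance of the base change. For $\rho_n$ it is $\Z_p^\times$-equivariance in Construction \ref{cons::projection_map_on_kupn_via_semiadd}. The squares therefore commute, naturally in $R$, as squares of $\ku$-modules and hence after applying $F_\ev^{\ge *}$ and tensoring as in \cref{lem::rel_tensor_prod_of_cyc_spectra}.

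\emph{Rows and left column.} The middle row is a fibre sequence by \cref{prop::prismatic_coh_from_q_de_rham_coh}, once we note that $g$ topologically generates $1+p^{n+1}\Z_p$ and $R_{n+1}$ is a $\Z_p[\zeta_{p^{n+1}}]$-algebra. Define $\alpha$ and $\alpha'$ as the induced maps on fibres; their composite is the identity because the other columns compose to the identity.

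A caveat: $\rho_n$ is only $S^1$-equivariant and not Frobenius-compatible (Remark \ref{rmk::transfer_formula_not_commute_with_frob}). So the bottom half should be stated as a diagram of synthetic spectra with synthetic circle action. The top half, by contrast, is genuinely cyclotomic, since $\can$ is a ring map.

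\emph{Tate statement.} Apply $(-)^{tC_{p,\ev}}$ to the top half. By exactness, the map $\alpha^{tC_{p,\ev}}$ is the map on fibres of the square formed by $\psi^g_{\circ\circ}$ and $\can$, after Tate. It is therefore an equivalence if and only if that square becomes cartesian after $(-)^{tC_{p,\ev}}$. This should follow from \cref{lem::S1-nilp-of-tilde-jpn-square}.

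First reduce to QRSPs by quasi-syntomic descent, using the descent for $(-)^{tC_{p,\ev}}$ recorded in Remark \ref{rmk::bootstrap_from_spectral_trace_to_synthetic_trace}. For a QRSP, the filtrations are double-speed Postnikov towers (\cref{cor::rel_thh_always_even_for_qrsps}), so $F_\ev^{\ge *}(-)^{tC_{p,\ev}}\simeq\tau_{\ge 2*}((-)^{tC_p})$ by \cite[Lemma 3.18]{antieau2024cyclotomic}. It then suffices to check the underlying statement levelwise: the square is cartesian after $(-)^{tC_p}$, together with a filtered refinement. Apply \cref{lem::S1-nilp-of-tilde-jpn-square} with $M=\THH(R)$, which is bounded below, regarded as a $j_0^{(-1)}$-module. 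Then \cref{lem::rel_tensor_prod_of_cyc_spectra} gives $\THH(R/\S_p\llb q_n-1\rrb)\simeq M\otimes\ku_{p,n-1}^{(-1)}$ and $\THH(R_{n+1}/\S_p\llb q_{n+1}-1\rrb)\simeq M\otimes\ku_{p,n}^{(-1)}$.

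The main obstacle is the filtered refinement. Since $\tau_{\ge 2*}$ does not preserve pullbacks, I would instead compare associated gradeds, namely the Hodge--Tate/Nygaard pieces, and there use the explicit computation from the proof of \cref{lem::S1-nilp-of-tilde-jpn-square}: $\psi_{\circ\circ}$ acts invertibly on the summands $q_{n+1}^i$ with $p\nmid i$. Once $\alpha^{tC_{p,\ev}}$ is an equivalence, $(\alpha')^{tC_{p,\ev}}$ is its inverse, because $\alpha'\circ\alpha=\id$.
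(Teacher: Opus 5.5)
Your construction of the diagram is fine. Your caveat is also correct: the lower half involving $\rho_n$ is a diagram of synthetic spectra with synthetic circle action, not of cyclotomic ones. This costs nothing, since only that structure is needed to conclude that $(\alpha')^{tC_{p,\ev}}$ inverts $\alpha^{tC_{p,\ev}}$.

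The gap is in the Tate statement, exactly at the step you flag as the main obstacle. \cref{lem::S1-nilp-of-tilde-jpn-square} only tells you that the \emph{unfiltered} square is cartesian after $(-)^{tC_p}$. Your proposed fix, comparing associated gradeds and reusing the explicit computation from its proof, does not go through as stated. That computation is about the point: it concerns $\pi_*$ of $hS^1$-fixed points of $\ku_{p,n}^{(-1)}$ modulo $(p,\beta)$, and it only feeds an $S^1$-nilpotence argument. It says nothing directly about the graded pieces of $(\Fil_\mot^{\ge *}\THH(R/\S_p\llb q_n-1\rrb))^{tC_{p,\ev}}$ for general $R$. On those pieces, $1+p^{n+1}\Z_p$ also acts nontrivially through the relative prismatic part, so ``$\psi_{\circ\circ}$ is invertible on the $q_{n+1}^i$, $p\nmid i$, summands'' does not transfer. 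You also never identify what those graded pieces are.

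The missing idea is \emph{evenness of the Tate constructions}. It removes your worry that $\tau_{\ge 2*}$ does not preserve pullbacks. The paper works on large quasi-syntomic algebras over $A/I$ with $(A,I)=(\Z_p\llb q_{n-1}-1\rrb,[p]_q)$. By Bhatt--Scholze Theorem 15.2, $\hat\prism^{(1)}_{R/A}$ is discrete and $[p]_q$-torsion free, with honest Nygaard filtration. Hence $\THH(R/\S_p\llb q_n-1\rrb)$ is even. Using the complex orientation coming from $\ku_p^{(-1)}$, one has $\THH(R/\S_p\llb q_n-1\rrb)^{tC_p}\simeq \TP(R/\S_p\llb q_n-1\rrb)/[p]_q t$, which is therefore also even. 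The $q_{n+1}$-terms follow because $\ku_{p,n-1}^{(-1)}\to\ku_{p,n}^{(-1)}$ is finite free.

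Now suppose all four corners of the cartesian square $A\to B$, $A\to C$, $B\to D$, $C\to D$ are even. Then the Mayer--Vietoris sequence splits into short exact sequences $0\to\pi_{2k}A\to\pi_{2k}B\oplus\pi_{2k}C\to\pi_{2k}D\to 0$. The only possible failure of $\tau_{\ge 2m}$ to preserve the pullback is a cokernel in degree $2m-1$. That cokernel is killed by this surjectivity, so $\tau_{\ge 2*}$ of the square is still cartesian.

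Combine this with $(\Fil_\mot^{\ge *}-)^{tC_{p,\ev}}\simeq \tau_{\ge 2*}((-)^{tC_p})$ in this even situation. Then take horizontal fibres, identifying the bottom fibre via \cref{prop::prismatic_coh_from_q_de_rham_coh}. This shows $\alpha^{tC_{p,\ev}}$ is an equivalence on that basis, and quasi-syntomic descent finishes. If you prefer to stay on QRSPs, you must supply the analogous evenness of $\THH(R/\S_p\llb q_n-1\rrb)^{tC_p}$ there. \cref{cor::rel_thh_always_even_for_qrsps} only gives evenness of the underlying object, not of its $C_p$-Tate construction.
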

\begin{proof}
    That the diagram exists is obvious from \cref{prop::prismatic_coh_from_q_de_rham_coh}, the fact that \[\THH(R/\S_p\llb q_n-1\rrb) \simeq \THH(R) \otimes_{ j_0^{(-1)}} \ku_{p,n-1}^{(-1)}\] (and similarly for $n$ replaced with $n+1$), and the definition of $\rho_n$. Here, the maps $\alpha$ and $\alpha'$ are defined to be the fibres of the corresponding middle and right vertical maps.
    
    Thus, the real claim to check is that $\alpha^{tC_{p,\ev}}$ is an equivalence. We first assume that $R$ is a large quasi-syntomic algebra over $\Z_p[\zeta_{p^n}] := A/I$ in the sense of \cite[Definition 15.1]{BS_prisms}, where $(A,I)$ is the oriented prism $(\Z_p\llb q_{n-1}-1\rrb, [p]_q)$. Using \cref{lem::S1-nilp-of-tilde-jpn-square}, we obtain a fibre square 
    \begin{equation}\label{eqn::fib_sq_for_equivalence_on_thh_tCp_with_lesser_n}
        \begin{tikzcd}
            \THH(R/\S_p\llb q_{n}-1\rrb)^{tC_p} \ar{d}{\can} \ar{r}{\psi_{\circ\circ}^{g}} & \THH(R/\S_p\llb q_{n}-1\rrb)^{tC_p}(1)[2] \ar{d}{\can}\\
            \THH(R_{n+1}/\S_p\llb q_{n+1}-1\rrb)^{tC_p}  \ar{r}{\psi_{\circ\circ}^{g}} & \THH(R_{n+1}/\S_p\llb q_{n+1}-1\rrb)^{tC_p}(1)[2].
        \end{tikzcd}
    \end{equation}
    
    We claim that both $\THH(R/\S_p\llb q_n-1\rrb)$ and $\THH(R/\S_p\llb q_{n}-1\rrb)^{tC_p}$ are even.
    By \cite[Theorem 15.2]{BS_prisms}, $\prism_{R/A}$ is a discrete $(p,I)$-faithfully flat $A$-algebra. In particular, both $\prism_{R/A}$ and $\hat\prism_{R/A}$ are discrete and $I$-torsion free. As Frobenius on $A$ is faithfully flat, this is also true for $\prism_{R/A}^{(1)}$ and $\hat\prism_{R/A}^{(1)}$. Moreover, by Theorem 15.2 of \textit{loc. cit.}, the Nygaard filtrations are honest filtrations. In particular, it follows that all of $$\TC^-(R/\S_p\llb q_n-1\rrb), \TP(R/\S_p\llb q_n-1\rrb), \THH(R/\S_p\llb q_n-1\rrb)$$
    are even, which dispenses with the first claim. Moreover, since $\ku_p^{(-1)}$ is complex-oriented and both $\THH(R/\S_p\llb q_n-1\rrb)$ and $\Phi\THH_\prism(R/\S_p\llb q_n-1\rrb)$ are $\ku_p^{(-1)}$-algebras, we have \[\THH(R/\S_p\llb q_n-1\rrb)^{tC_p} \simeq \TP(R/\S_p\llb q_n-1\rrb) / [p]_{q}t\]
    with $t$ a unit in $\TP(R/\S_p\llb q_n-1\rrb)$. Since $\hat\prism_{R/A}^{(1)}$ is $[p]_q$-torsion free, this implies that $\THH(R/\S_p\llb q_n-1\rrb)^{tC_p}$ is even.

    Evenness of $\THH(R_n/\S_p\llb q_n-1\rrb)$ and \cite[Lemma 3.18]{antieau2024cyclotomic} then imply that \[\left(\Fil_\mot^{\ge *} \THH(R_n/\S_p\llb q_n-1\rrb\right)^{tC_{p,\ev}} \simeq \tau_{\ge 2*}\left(\THH(R_n/\S_p\llb q_n-1\rrb)^{tC_p} \right).\]
    Similarly for $n$ replaced with $n+1$ since $\ku_{p,n-1}^{(-1)}\to \ku_{p,n}^{(-1)}$ is finite free and thus $p$-completely eff. Hence, applying $\tau_{\ge 2*}$ to (\ref{eqn::fib_sq_for_equivalence_on_thh_tCp_with_lesser_n}) yields a fibre square of filtered spectra
    \[\begin{tikzcd}
        \left(\Fil_\mot^{\ge 2*}\THH(R/\S_p\llb q_{n}-1\rrb)\right)^{tC_{p,\ev}} \ar{d}{\can} \ar{r}{\psi_{\circ\circ}^{g}} & \left(\Fil_\mot^{\ge 2*}\THH(R/\S_p\llb q_{n}-1\rrb)\right)^{tC_{p,\ev}}(1)[2] \ar{d}{\can}\\
        \left(\Fil_\mot^{\ge *}\THH(R_{n+1}/\S_p\llb q_{n+1}-1\rrb)\right)^{tC_{p,\ev}}  \ar{r}{\psi_{\circ\circ}^{g}} & \left(\Fil_\mot^{\ge *}\THH(R_{n+1}/\S_p\llb q_{n+1}-1\rrb)\right)^{tC_{p,\ev}}(1)[2].
    \end{tikzcd}\]
    Taking horizontal fibres and using \cref{prop::prismatic_coh_from_q_de_rham_coh} to identify the bottom horizontal fibre, we immediately get that \[\alpha^{tC_{p,\ev}}(R) : (F^{\ge *} \mc F_n(R))^{tC_{p,\ev}}  \simeq \left(\Fil_\mot^{\ge *}\THH(R_n)\right)^{tC_{p,\ev}}\]
    is an equivalence. Quasi-syntomic descent then gives the lemma in general. 
\end{proof}

\begin{proposition}\label{prop::identifying_frob_in_the_tower}
    Fix a choice of topological generator $g$ of $1+p\Z_p$. For all $n\ge 2$, the following is a commutative diagram of synthetic spectra with synthetic circle action functorial in $R\in \QSyn_{\Z_p[\zeta_p]}$, where $R_n := R\otimes_{\Z_p[\zeta_p]}\Z_p[\zeta_{p^n}]$. To fit the diagram on the page, we abbreviate $\S_p\llb q_n-1\rrb =: \S^\qdR_n$ and drop all references to $\Fil_\mot^{\ge *}$ (with shifts in filtration denoted by $\<-\>$ from \cref{sctn::cyc_synth_sp}). 
    \[\begin{tikzcd}[cramped]
	& {\THH(R_{n+1}/\S^{\qdR}_{n+1})(1)\<1\>[1]} & \\
	{\THH(R_n/\S^{\qdR}_{n})(1)\<1\>[1]} && {\THH(R_n/\S^{\qdR}_{n})^{tC_{p,\ev}}(1)\<1\>[1]} \\
	{\THH(R_n)} && {\THH(R_n)^{tC_{p,\ev}}} \\
	\\
	{\THH(R_{n-1})} && {\THH(R_{n-1})^{tC_{p,\ev}}} \\
	{\THH(R_{n-1}/\S^{\qdR}_{n-1})(1)\<1\>[1]} && {\THH(R_{n-1}/\S^{\qdR}_{n-1})^{tC_{p,\ev}}(1)\<1\>[1]} \\
	& {\THH(R_{n}/\S^{\qdR}_{n})(1)\<1\>[1]}
	\arrow["{\rho_n(1)\<1\>[1]}"', from=1-2, to=2-1]
	\arrow["{\tilde \varphi_n}", from=1-2, to=2-3]
	\arrow["{\rho_n(1)\<1\>[1]}", from=1-2, to=7-2]
	\arrow["{\partial_n}"', from=2-1, to=3-1]
	\arrow["{\rho_{n-1}(1)\<1\>[1]}", bend right = 75, from=2-1, to=6-1]
	\arrow["{\partial_n^{tC_{p,\ev}}}", from=2-3, to=3-3]
	\arrow["{\rho_{n-1}^{tC_{p,\ev}}(1)\<1\>[1]}"', bend left = 75, from=2-3, to=6-3]
	\arrow["{\varphi_{\THH(R_n)}}"'{pos=0.3}, from=3-1, to=3-3]
	\arrow["\tr", from=3-1, to=5-1]
	\arrow["\tr^{tC_{p,\ev}}"', from=3-3, to=5-3]
	\arrow["{\varphi_{\THH(R_{n-1})}}"'{pos=0.3}, from=5-1, to=5-3]
	\arrow["{\partial_{n-1}}", from=6-1, to=5-1]
	\arrow["{\partial_{n-1}^{tC_{p,\ev}}}"', from=6-3, to=5-3]
	\arrow["{\rho_{n-1}(1)\<1\>[1]}"', from=7-2, to=6-1]
	\arrow["{\tilde \varphi_{n-1}}", from=7-2, to=6-3]
\end{tikzcd}\]
    Here $\tr:\Fil_\mot^{\ge *}\THH(R_n)\to \Fil_\mot^{\ge *}\THH(R_{n-1})$ denotes the transfer map and $$\partial_n : \Fil_\mot^{\ge *}\THH(R_n/\S_p\llb q_n-1\rrb)(1)[1] \to \Fil_\mot^{\ge *}\THH(R_n)$$ is the connecting map in the fibre sequence of \cref{prop::prismatic_coh_from_q_de_rham_coh} (where we choose the topological generator $g^{p^{n-1}}$ for $1+p^n\Z_p$ for all $n$).
\end{proposition}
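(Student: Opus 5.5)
We have to show that the large diagram commutes. It naturally splits into four pieces: the left half, the right half, the top triangle-square and the bottom triangle-square, plus the middle square relating $\varphi_{\THH(R_n)}$, $\varphi_{\THH(R_{n-1})}$ and the transfers.

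\emph{Left half.} The two outer arcs $\rho_n(1)\<1\>[1]$ and $\rho_{n-1}(1)\<1\>[1]$, together with $\partial_n$, $\tr$ and $\partial_{n-1}$, commute by \cref{thm::computing_trace_maps_explicitly_for_any_R}. There the transfer sits in a map of fibre sequences whose right-hand component is $\rho_{n-1}(1)[2]$ followed by the identity. Rotating these fibre sequences gives the compatibility of the connecting maps: $\tr\circ\partial_n\simeq\partial_{n-1}\circ\rho_{n-1}(1)\<1\>[1]$. (The index shift is harmless, since the diagram of \cref{thm::computing_trace_maps_explicitly_for_any_R} is functorial in $R$.)

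\emph{Right half.} Apply $(-)^{tC_{p,\ev}}$, which is exact, to the same identity. This gives $\tr^{tC_{p,\ev}}\circ\partial_n^{tC_{p,\ev}}\simeq\partial_{n-1}^{tC_{p,\ev}}\circ\rho_{n-1}^{tC_{p,\ev}}$.

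\emph{Top and bottom pieces.} The square with vertices $\THH(R_{n+1}/\S^\qdR_{n+1})$, its image under $\rho_n$, and the two $\tilde\varphi$'s commutes by \cref{lem::proj_and_refined_frob_commute}; the bottom piece is the same statement with $n$ replaced by $n-1$. The remaining region of the top piece is
\[\varphi_{\THH(R_n)}\circ\partial_n\circ\rho_n(1)\<1\>[1]\simeq\partial_n^{tC_{p,\ev}}\circ\tilde\varphi_n .\]
To prove it, I would use the factorisation of the cyclotomic Frobenius from \cref{lem::factorisation_of_frob_on_kupn}. Because $\partial_n$ is a map of cyclotomic synthetic spectra (\cref{prop::prismatic_coh_from_q_de_rham_coh}), we have
\[\varphi_{\THH(R_n)}\circ\partial_n\simeq\partial_n^{tC_{p,\ev}}\circ\varphi_{\THH(R_n/\S^\qdR_n)}.\]
Moreover $\varphi_{\THH(R_n/\S^\qdR_n)}\simeq\tilde\varphi_n\circ\can$, where $\can$ is the base-change map $\THH(R_n/\S^\qdR_n)\to\THH(R_{n+1}/\S^\qdR_{n+1})$. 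So it suffices to compare $\partial_n\circ\rho_n$ with $\partial_n$ after Frobenius, i.e.\ to check that $\tilde\varphi_n$ kills the difference of $\can\circ\rho_n$ and $\id$ on the relevant terms.

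\emph{Main obstacle.} The key input is \cref{lem::equivalence_on_TP_with_lesser_n}. The map $\alpha'$ (built from $\rho_n$) and the map $\alpha$ (built from $\can$) become mutually inverse equivalences after $(-)^{tC_{p,\ev}}$. Since $\rho_n\circ\can=\id$, I expect the required homotopy $\partial_n^{tC_{p,\ev}}\tilde\varphi_n\simeq\varphi\,\partial_n\,\rho_n$ to follow by a diagram chase, which I would carry out on the fibre-sequence diagram of \cref{lem::equivalence_on_TP_with_lesser_n} after applying $(-)^{tC_{p,\ev}}$. Making these homotopies coherent, and in particular compatible with the twists $(1)\<1\>[1]$ and with the choice of generators $g^{p^{n-1}}$, is the delicate step. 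I would first verify everything for $R=\Z_p[\zeta_p]$ using the explicit $\ku_{p,n}^{(-1)}$ models and \cref{lem::properties_of_projection_map_on_kupn}. I would then base change using \cref{lem::rel_tensor_prod_of_cyc_spectra} and pass to general $R$ by quasi-syntomic descent.
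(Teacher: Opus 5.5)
Your decomposition matches the paper's. The two outer squares are the rotated fibre-sequence diagram of \cref{thm::computing_trace_maps_explicitly_for_any_R} and its image under $(-)^{tC_{p,\ev}}$. The right trapezium is \cref{lem::proj_and_refined_frob_commute}. The middle square, which you list but never justify, commutes because the filtered transfer is a map of cyclotomic synthetic spectra (\cref{rmk::bootstrap_from_spectral_trace_to_synthetic_trace}).

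The gap is the pentagon $\varphi_{\THH(R_n)}\circ\partial_n\circ\rho_n\simeq\partial_n^{tC_{p,\ev}}\circ\tilde\varphi_n$. This is the only non-formal content of the proposition, and you leave it as an expected diagram chase. Your first reduction is sound: $\partial_n$ commutes with Frobenius, and $\varphi_{\THH(R_n/\S^{\qdR}_n)}\simeq\tilde\varphi_n\circ\can$ is \cref{lem::factorisation_of_frob_on_kupn} tensored with the Frobenius of $\THH(-/\S^{\qdR}_1)$. So you must show $\partial_n^{tC_{p,\ev}}\tilde\varphi_n\can\rho_n\simeq\partial_n^{tC_{p,\ev}}\tilde\varphi_n$. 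However, your gloss that $\tilde\varphi_n$ kills $\can\circ\rho_n-\id$ is false. Already for $R=\Z_p[\zeta_p]$, on $\pi_0(-)^{hS^1}$ one has $\tilde\varphi_n(q_{n+1})=q_n\neq 0$, while $\rho_n(q_{n+1})=0$. So the Tate-level connecting map does real work here. Moreover, the relevant instance of \cref{lem::equivalence_on_TP_with_lesser_n} is not the one built from the pair $\rho_n,\can$, which concerns $\THH(R_{n+1})^{tC_{p,\ev}}$; it is the one at level $n-1$.

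The missing idea is that $\partial_n^{tC_{p,\ev}}$ factors through $\rho_{n-1}^{tC_{p,\ev}}$. Apply \cref{lem::equivalence_on_TP_with_lesser_n} to $R_{n-1}$ over $\Z_p[\zeta_{p^{n-1}}]$; this is where the $S^1$-nilpotence of \cref{lem::S1-nilp-of-tilde-jpn-square} enters. Let $\tilde\partial$ be the connecting map of the fibre sequence defining $F^{\ge *}\mc F$, and set $A:=\alpha^{tC_{p,\ev}}\circ\tilde\partial^{tC_{p,\ev}}$. Since $(\alpha')^{tC_{p,\ev}}$ inverts $\alpha^{tC_{p,\ev}}$, rotating the lower half of that diagram gives $\partial_n^{tC_{p,\ev}}\simeq A\circ\rho_{n-1}^{tC_{p,\ev}}$. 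Combine this with \cref{lem::proj_and_refined_frob_commute} and with $\rho_n\circ\can\simeq\id$ (from the proof of \cref{lem::properties_of_projection_map_on_kupn}). With twists suppressed, your route then closes:
\begin{align*}
\partial_n^{tC_{p,\ev}}\tilde\varphi_n\can\rho_n &\simeq A\,\rho_{n-1}^{tC_{p,\ev}}\tilde\varphi_n\can\rho_n \simeq A\,\tilde\varphi_{n-1}\rho_n\can\rho_n \simeq A\,\tilde\varphi_{n-1}\rho_n\\
&\simeq A\,\rho_{n-1}^{tC_{p,\ev}}\tilde\varphi_n \simeq \partial_n^{tC_{p,\ev}}\tilde\varphi_n.
\end{align*}

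Completed this way, your argument is a genuine and slightly more economical variant of the paper's. The paper factors the Frobenius the other way, as $\iota^{tC_{p,\ev}}\circ\tilde\varphi_{n-1}$, and glues in both squares extracted from \cref{lem::equivalence_on_TP_with_lesser_n}: the one with $\alpha$ and $\iota$, and the one with $\alpha'$ and $\rho_{n-1}$. Your factorisation is the one literally given by \cref{lem::factorisation_of_frob_on_kupn}, and it needs only the second square plus $\rho_n\circ\can\simeq\id$.

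All these ingredients already hold for sheaves of synthetic spectra with synthetic circle action. So your fallback of checking the point on explicit $\ku_{p,n}^{(-1)}$ models and then base changing is unnecessary. As stated it would also not suffice, because agreement on homotopy groups does not produce a homotopy of maps.
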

\begin{proof}
    The left-most and right-most commutative squares are obtained by rotating the diagram in \cref{thm::computing_trace_maps_explicitly_for_any_R}. The middle rectangle commutes since the transfer maps on $\THH$ are maps of cyclotomic spectra. The big trapezium (with a curved short side) on the left commutes trivially, while the big right trapezium (with a curved short side) commutes by \cref{lem::proj_and_refined_frob_commute}. Thus, the only thing left to check is that the following pentagon commutes for all $n$. 
    \begin{equation}\label{eqn::pentagon_describing_frob_in_tower}
        \begin{tikzcd}[cramped]
        	& {\Fil_\mot^{\ge r-1}\THH(R_{n+1}/\S^{\qdR}_{n+1})(1)[1]} & \\
        	{\Fil_\mot^{\ge r-1}\THH(R_n/\S^{\qdR}_{n})(1)[1]} && {\Fil_\mot^{\ge r-1}\THH(R_n/\S^{\qdR}_{n})^{tC_{p,\ev}}(1)[1]} \\
        	{\Fil_\mot^{\ge r}\THH(R_n)} && {\Fil_\mot^{\ge r}\THH(R_n)^{tC_{p,\ev}}}
        	\arrow["{\rho_n(1)[1]}"', from=1-2, to=2-1]
        	\arrow["{\tilde \varphi_n}", from=1-2, to=2-3]
        	\arrow["{\partial_n}"', from=2-1, to=3-1]
        	\arrow["{\partial_n^{tC_{p,\ev}}}", from=2-3, to=3-3]
        	\arrow["{\varphi_{\THH(R_n)}}"', from=3-1, to=3-3]
        \end{tikzcd}
    \end{equation}
    Define the sheaf \[F^{\ge *}\mc F_n := \fib\left(\Fil_\mot^{\ge *}\THH(-\otimes_{\Z_p[\zeta_{p}]} \Z_p[\zeta_{p^n}] /\S_p\llb q_n-1\rrb) \xrightarrow{\psi_{\circ\circ}^{g_n}} \Fil_\mot^{\ge *-1}\THH(-\otimes_{\Z_p[\zeta_{p}]} \Z_p[\zeta_{p^n}]/\S_p\llb q_n-1\rrb)(1)[2]\right)\] where $g_n$ is a topological generator $1+p^{n+1}\Z_p$ chosen so that $g_{n+1}=g_n^p$; there is an equivalence $$\alpha^{tC_{p,\ev}}: (F^{\ge *}\mc F_n(R))^{tC_{p,\ev}} \xrightarrow{\simeq} \left(\Fil_\mot^{\ge *} \THH(R_{n+1})\right)^{tC_{p,\ev}}$$ from \cref{lem::equivalence_on_TP_with_lesser_n}. Rotating the diagram of fibre sequences in \cref{lem::equivalence_on_TP_with_lesser_n} yields the following two diagrams 
    \begin{equation}\label{eqn::connecting_map_in_TP_and_isom_alpha}
        \begin{tikzcd}
        	{\left(\Fil_\mot^{\ge *-1}\THH(R_{n-1}/\S_p\llb q_{n-1}-1\rrb)\right)^{tC_{p,\ev}}(1)[1]} && {(F^{\ge *}\mc F_{n-1}(R))^{tC_{p,\ev}}} \\
        	{\left(\Fil_\mot^{\ge *-1}\THH(R_{n}/\S_p\llb q_{n}-1\rrb)\right)^{tC_{p,\ev}}(1)[1]} && {\left(\Fil_\mot^{\ge *}\THH(R_n)\right)^{tC_{p,\ev}}}
        	\arrow["{\tilde \partial_n^{tC_{p,\ev}}}", from=1-1, to=1-3]
        	\arrow["\iota^{tC_{p,\ev}}", from=1-1, to=2-1]
        	\arrow["{\alpha^{tC_{p,\ev}}}"', from=1-3, to=2-3]
        	\arrow["\simeq"{marking, allow upside down}, shift left=3, draw=none, from=1-3, to=2-3]
        	\arrow["{\partial_n^{tC_{p,\ev}}}"', from=2-1, to=2-3]
        \end{tikzcd}
    \end{equation}
    and 
    \begin{equation}\label{eqn::connecting_map_in_TP,_proj_and_inv_isom_alpha}
        \begin{tikzcd}
        	{\left(\Fil_\mot^{\ge *-1}\THH(R_n/\S_p\llb q_{n}-1\rrb)\right)^{tC_{p,\ev}}(1)[1]} && {\left(\Fil_\mot^{\ge *}\THH(R_n)\right)^{tC_{p,\ev}}} \\
        	{\left(\Fil_\mot^{\ge *-1}\THH(R_{n-1}/\S_p\llb q_{n-1}-1\rrb)\right)^{tC_{p,\ev}}(1)[1]} && {(F^{\ge *}\mc F_{n-1}(R))^{tC_{p,\ev}}}
        	\arrow["{\partial_n^{tC_{p,\ev}}}"', from=1-1, to=1-3]
        	\arrow["{\rho_{n-1}^{tC_{p,\ev}}(1)[1]}"', from=1-1, to=2-1]
        	\arrow["{(\alpha^{tC_{p,\ev}})^{-1}}", from=1-3, to=2-3]
        	\arrow["{\tilde \partial_n^{tC_{p,\ev}}}", from=2-1, to=2-3]
        	\arrow["\simeq"{marking, allow upside down}, shift left=3, draw=none, from=2-3, to=1-3]
        \end{tikzcd}
    \end{equation}  
    where $\iota$ is induced by the canonical map $\ku_{p,n-1}^{(-1)}\to \ku_{p,n}^{(-1)}$ and $\tilde \partial_n :\Fil_\mot^{\ge *-1}\THH(R_n/\S_p\llb q_{n}-1\rrb)(1)[1] \to F^{\ge *}\mc F_n(R)$ is the connecting map coming from the definition of $\mc F_n$. On the other hand, since the fibre sequence in \cref{prop::prismatic_coh_from_q_de_rham_coh} is of cyclotomic synthetic spectra, we have a commutative diagram 
    \begin{equation}\label{eqn::frob_commute_with_connecting_map}
        \begin{tikzcd}
            \Fil_\mot^{\ge *-1}\THH(R_n/\S_p\llb q_n-1\rrb)(1)[1] \ar{d}{\varphi_{n,\qdR}} \ar{r}{\partial_n} & \Fil_\mot^{\ge *}\THH(R_n)\ar{d}{\varphi_{n}}\\
            \left(\Fil_\mot^{\ge *-1}\THH(R_n/\S_p\llb q_n-1\rrb)\right)^{tC_{p,\ev}}(1)[1]\ar{r}{\partial_n^{tC_{p,\ev}}} & \left(\Fil_\mot^{\ge *}\THH(R_n)\right)^{tC_{p,\ev}}
        \end{tikzcd}
    \end{equation}
    where for notational simplicity we have denoted the cyclotomic Frobenius for $\THH(R_n)$ and for $\THH(R_n/\S_p\llb q_n-1\rrb)(1)[1]$ by $\varphi_n$ and $\varphi_{n,\qdR}$. However, as a consequence of the definition of $\tilde \varphi_{n-1}$ the map $\varphi_{n,\qdR}$ factors through $\tilde \varphi_{n-1}$ (see \cref{lem::factorisation_of_frob_on_kupn}), and so we can glue diagrams (\ref{eqn::connecting_map_in_TP_and_isom_alpha}), (\ref{eqn::connecting_map_in_TP,_proj_and_inv_isom_alpha}), and the one from \cref{lem::proj_and_refined_frob_commute} onto diagram (\ref{eqn::frob_commute_with_connecting_map}) to get the following large diagram. As before, in order to fit the diagram onto the page, we have abbreviated $\S_p\llb q_n-1\rrb := \S_n^\qdR$ and have dropped the notation $\Fil_\mot^{\ge *}$ (using $\<-\>$ to denote shifts in filtration instead).
    
\[\begin{tikzcd}
	{\THH(R_{n+1}/\S^{\qdR}_{n+1})(1)\<1\>[1]} & {\THH(R_n/\S^{\qdR}_{n})(1)\<1\>[1]} & \\
	&& {\THH(R_n)} \\
	& {\THH(R_n/\S^{\qdR}_{n})^{tC_{p,\ev}}(1)\<1\>[1]} \\
	\\
	{\THH(R_{n-1}/\S^{\qdR}_{n-1})^{tC_{p,\ev}} (1)\<1\>[1]} & {(F^{\ge *}\mc F_n(R))^{tC_{p,\ev}}} & {\THH(R_n)^{tC_{p,\ev}}} \\
	\\
	{\THH(R_n/\S^{\qdR}_{n})^{tC_{p,\ev}}(1)\<1\>[1]}
	\arrow["{\rho_n(1)\<1\>[1]}"', draw = none, shift right = 2, from=1-1, to=1-2]
	\arrow[from=1-1, to=1-2]
	\arrow[""{name=0, anchor=center, inner sep=0}, "{\tilde \varphi_n}", bend right = 80, from=1-1, to=7-1]
	\arrow["{\partial_n}", from=1-2, to=2-3]
	\arrow["{\varphi_{n,\qdR}}", from=1-2, to=3-2]
	\arrow[""{name=1, anchor=center, inner sep=0}, "{\tilde \varphi_{n-1}}"', from=1-2, to=5-1]
	\arrow["{(\ref{eqn::frob_commute_with_connecting_map})}"{description}, draw=none, from=2-3, to=3-2]
	\arrow["{\varphi_n}", from=2-3, to=5-3]
	\arrow["{(\ref{eqn::connecting_map_in_TP_and_isom_alpha})}"', draw=none, from=3-2, to=5-2]
	\arrow["{\partial_n^{tC_{p,\ev}}}", from=3-2, to=5-3]
	\arrow["{\iota^{tC_{p,\ev}}}"', from=5-1, to=3-2]
	\arrow["{\tilde \partial_n^{tC_{p,\ev}}}", from=5-1, to=5-2]
	\arrow["{\alpha^{tC_{p,\ev}}}"', from=5-2, to=5-3]
	\arrow["\simeq"{marking, allow upside down}, shift left=3, draw=none, from=5-2, to=5-3]
	\arrow["{\rho_{n-1}^{tC_{p,\ev}}(1)\<1\>[1]}"', from=7-1, to=5-1]
	\arrow[""{name=2, anchor=center, inner sep=0}, "{\partial_n^{tC_{p,\ev}}}"', bend right = 10, from=7-1, to=5-3]
	\arrow["{\text{\cref{lem::proj_and_refined_frob_commute}}}", shift left=3, draw=none, from=0, to=1]
	\arrow["{(\ref{eqn::connecting_map_in_TP,_proj_and_inv_isom_alpha})}"{description}, draw=none, from=5-1, to=2]
\end{tikzcd}\]
    The outermost pentagon is the one we are after. 
\end{proof}

\begin{lemma}\label{lem::synthetic_tate_commute_with_limit_over_transfers}
    For any bounded qcqs quasi-syntomic $p$-adic formal scheme $X$ over $\Z_p[\zeta_p]$, write $X_n := X\times_{\Spf\Z_p[\zeta_p]} \Spf \Z_p[\zeta_{p^n}].$ 
    There are functorial-in-$X$ equivalences of synthetic spectra with synthetic circle action
    \begin{align*}
        \left(\varprojlim_n \Fil_\mot^{\ge *}\THH(X_n)\right)^{tC_{p,\ev}} &\simeq \varprojlim_n \left(\Fil_\mot^{\ge *}\THH(X_n)\right)^{tC_{p,\ev}}, \\
        \left(\varprojlim_n \Fil_\mot^{\ge *}\THH(X_n/\S_p\llb q_n-1\rrb)\right)^{tC_{p,\ev}} &\simeq \varprojlim_n \left(\Fil_\mot^{\ge *}\THH(X_n/\S_p\llb q_n-1\rrb)\right)^{tC_{p,\ev}},
    \end{align*}
    where the transition maps in the left hand limits are the transfer maps and in the right hand limits are the maps $\rho_n$. 
\end{lemma}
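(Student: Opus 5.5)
The plan is to reduce both statements to the unfiltered commutation of $(-)^{tC_p}$ with sequential limits (\cref{lem::orbits_and_inverse_lims_commute}), applied filtration-degree by filtration-degree. Limits in synthetic spectra with synthetic circle action are computed pointwise in the filtration. So it suffices to understand $F^{\ge k}$ of $(F^{\ge *}M)^{tC_{p,\ev}}$ in terms of $F^{\ge *}M$ via a formula that commutes with sequential limits, provided the terms are uniformly bounded below.

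\textbf{Step 1: reduce to QRSPs.} Both sides are sheaves on $\QSyn_X$. The source is a limit of sheaves, and $\varprojlim_n$ commutes with $\R\Gamma(\QSyn_X,-)$. By Remark \ref{rmk::bootstrap_from_spectral_trace_to_synthetic_trace}, $(\Fil_\mot^{\ge*}\THH(X))^{tC_{p,\ev}}\simeq\R\Gamma(\QSyn_X,\tau_{\ge 2*}\THH(-)^{tC_p})$, and similarly for the relative version. So I would first construct the comparison map $\left(\varprojlim_n\right)^{tC_{p,\ev}}\to\varprojlim_n(-)^{tC_{p,\ev}}$, which is the canonical map since $(-)^{tC_{p,\ev}}$ is an exact functor. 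Descent should then let me check it is an equivalence on QRSP $R$ (or on large quasi-syntomic $R$, as in \cref{lem::equivalence_on_TP_with_lesser_n}). The subtle point is that $(-)^{tC_{p,\ev}}$ does not obviously commute with the totalisation computing global sections. I would handle this by writing $(-)^{tC_{p,\ev}}$ as a cofibre of the norm from $(-)_{hC_{p,\ev}}(-1)[1]$ to $(-)^{hC_{p,\ev}}$. The fixed-point part is a limit and commutes with everything. For the orbit part, each filtration piece $\Fil^{\ge k}$ is uniformly bounded below (roughly $(2k-1)$-connective on QRSPs, by \cref{cor::identifying_mot_filt_on_thh(R_n)_for_R_qrsp}), which gives the commutation with countable limits of uniformly bounded below objects.

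\textbf{Step 2: evaluate on QRSPs.} On a QRSP $R$, \cref{cor::identifying_mot_filt_on_thh(R_n)_for_R_qrsp} gives $\Fil^{\ge *}_\mot\THH(R_n)\simeq\tau_{\ge 2*-1}\THH(R_n)$, and \cref{cor::rel_thh_always_even_for_qrsps} shows the relative $\THH$ is even with the double-speed Postnikov filtration. For the relative case, \cite[Lemma 3.18]{antieau2024cyclotomic} then identifies $(F^{\ge*})^{tC_{p,\ev}}$ with $\tau_{\ge 2*}(\THH(R_n/\S_p\llb q_n-1\rrb)^{tC_p})$, and evenness of the Tate term follows as in the proof of \cref{lem::equivalence_on_TP_with_lesser_n}. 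For the absolute case, I would use the fibre sequence of \cref{prop::prismatic_coh_from_q_de_rham_coh} to reduce to the relative terms, since $(-)^{tC_{p,\ev}}$ and $\varprojlim_n$ are exact.

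The key point is then to show that $\varprojlim_n\tau_{\ge 2k}(Y_n^{tC_p})\simeq\tau_{\ge 2k}\bigl((\varprojlim_n Y_n)^{tC_p}\bigr)$, and that $\varprojlim_n Y_n$ is again even, so that its synthetic Tate is computed by the same formula. I would get this from three facts:
\begin{enumerate}
\item \cref{lem::orbits_and_inverse_lims_commute} applies, because the $Y_n$ are uniformly connective and sequential limits have $d=1$.
\item $\lim^1$ vanishes on even homotopy groups, since the transition maps $\rho_n$ are split surjections by the projection description in \cref{lem::projection_for_arbitrary_X}.
\item The Milnor sequence then shows that $\tau_{\ge 2k}$ commutes with the limit.
\end{enumerate}

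\textbf{Main obstacle.} The hardest part is Step 1: exchanging $(-)^{tC_{p,\ev}}$ with the quasi-syntomic descent limit, since the synthetic Tate construction involves an orbit term. I would first try to prove that $\Fil^{\ge k}$ of the synthetic orbits of a $\tau_{\ge 2*}$-type object is built from $F^{\ge j}$ with $j\ge k-c$ for a uniform $c$, and so stays uniformly bounded below, which lets \cref{lem::orbits_and_inverse_lims_commute} apply to the totalisation as well.
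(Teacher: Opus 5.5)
You have a genuine gap in Step 1, and it is the point you flag yourself. To check the equivalence on QRSPs, you need $X\mapsto \bigl(\varprojlim_n \Fil_\mot^{\ge *}\THH(X_n)\bigr)^{tC_{p,\ev}}$ to be a quasi-syntomic sheaf. That means commuting $(-)^{tC_{p,\ev}}$ with the descent totalisation.

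Uniform boundedness below of the terms does not give this. \cref{lem::orbits_and_inverse_lims_commute}, and any synthetic analogue of it, needs the indexing category to satisfy $\lim_I(\Sp^I_{\ge 0})\subseteq \Sp_{\ge -d}$ for a fixed $d$. That holds for sequential limits but fails for $I=\Delta$: totalisations of connective cosimplicial objects can have homotopy in arbitrarily negative degrees. So the fix in your ``main obstacle'' paragraph does not reach the totalisation. What you would need there is at least as hard as the lemma itself.

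There is also a smaller problem in Step 2. Under the fibre sequence of \cref{prop::prismatic_coh_from_q_de_rham_coh}, the transfer tower maps to a middle tower whose transition maps are $\bigl(\sum_{i}\psi^{g^{ip^{n-1}}}\bigr)\circ\rho_n$ (see \cref{thm::computing_trace_maps_explicitly_for_any_R}), not $\rho_n$. These are not split surjections, so your Mittag-Leffler/$\lim^1$ argument does not cover the absolute case as written.

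The missing idea is that you never need to descend to QRSPs.
\begin{itemize}
\item Since $X$ is qcqs, a finite affine Zariski cover writes everything as a finite limit of affine cases. Finite limits commute with the exact functor $(-)^{tC_{p,\ev}}$ and with $\varprojlim_n$.
\item For $X=\Spf R$ affine, with $R$ not necessarily a QRSP, the motivic filtrations on $\THH(R_n)$ and $\THH(R_n/\S_p\llb q_n-1\rrb)$ agree with the Hahn--Raksit--Wilson even filtrations of connective $E_\infty$-rings.
\item These even filtrations are connective in the Postnikov $t$-structure on synthetic spectra \cite[Remark 2.13]{antieau2024cyclotomic}, uniformly in $n$. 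This connectivity is not visible from the descent definition on non-QRSP affines, and it is the key input.
\item The synthetic analogue of \cref{lem::orbits_and_inverse_lims_commute}, \cite[Lemma 2.80]{antieau2024cyclotomic}, then commutes $(-)^{tC_{p,\ev}}$ with these sequential limits directly. This works for any transition maps, transfers or $\rho_n$, with no evenness, Milnor-sequence or $\lim^1$ input.
\end{itemize}
This is the paper's argument.
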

\begin{proof}
    We may first restrict to $X=\Spf R$ affine, since the totalisation for the \v Cech nerve of a finite Zariski cover by affines is a finite limit, by the qcqs condition on $X$. In the affine case, the motivic filtrations in question are all the even filtrations on connective $E_\infty$-ring spectra, which by \cite[Remark 2.13]{antieau2024cyclotomic} are all connective in the Postnikov $t$-structure on synthetic spectra. The lemma follows after an application of \cite[Lemma 2.80]{antieau2024cyclotomic}.
\end{proof}

We can now prove a stronger form of \cref{mainthm::desc_of_limit_of_THH_of_cyc}, the main result of this paper. 
\begin{theorem}\label{thm::limit_THH_for_cyclotomic_tower}
    For any bounded qcqs quasi-syntomic $p$-adic formal  $X$ over $\Z_p[\zeta_p]$, write $$X_n := X\times_{\Spf\Z_p[\zeta_p]} \Spf \Z_p[\zeta_{p^n}] \quad \text{ and } \quad X_\infty:= X\times_{\Spf \Z_p[\zeta_p]} \Spf \Z_p^\cyc.$$ 
    There is a functorial-in-$X$ $\Z_p^\times$-equivariant equivalence of $\Fil_\mot^{\ge *}\THH(X_\infty)$ 
    modules in cyclotomic synthetic spectra:
    \begin{align*}
        \varprojlim_n \Fil_\mot^{\ge *}\THH(X_n) &\simeq \varprojlim_n\Fil_\mot^{\ge *-1}\THH(X_n/\S_p\llb q_n-1\rrb)(1)[1],
    \end{align*}
    where the transition maps on the right hand side are the maps $\rho_n$, and where the cyclotomic synthetic Frobenius on the right hand side is given by the following composition
    \begin{align*}
        \varprojlim_{n\ge 1}\Fil_\mot^{\ge *-1}\THH(X_n/\S_p\llb q_n-1\rrb)(1)[1] &\xrightarrow[\mr{shift}]{\simeq} \varprojlim_{n\ge 1} \Fil_\mot^{\ge *-1}\THH(X_{n+1}/\S_p\llb q_{n+1}-1\rrb)(1)[1] \\
        &\xrightarrow{(\tilde \varphi_n)} \varprojlim_{n\ge 1}\left(\Fil_\mot^{\ge *-1}\THH(X_n/\S_p\llb q_n-1\rrb)(1)[1]\right)^{tC_{p,\ev}},\\
        &\simeq \left(\varprojlim_{n\ge 1}\Fil_\mot^{\ge *-1}\THH(X_n/\S_p\llb q_n-1\rrb)(1)[1]\right)^{tC_{p,\ev}},
    \end{align*}
    where we use \cref{lem::proj_and_refined_frob_commute} to see that $\tilde \varphi_n$ commutes with the transition maps in the limit and the last equivalence is due to \cref{lem::synthetic_tate_commute_with_limit_over_transfers}.
\end{theorem}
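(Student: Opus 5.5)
We reduce to affines and take limits in the diagram of \cref{thm::computing_trace_maps_explicitly_for_any_R}. Since $X$ is qcqs, a finite Zariski \v{C}ech nerve computes everything as a finite limit, and sequential limits commute with finite limits. So we may take $X=\Spf R$ and work functorially in $R\in\QSyn_{\Z_p[\zeta_p]}$.

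Rotating the fibre sequences of \cref{prop::prismatic_coh_from_q_de_rham_coh} (with generators $g^{p^{n-1}}$) gives fibre sequences
\[\Fil_\mot^{\ge *}\THH(R_n/\S_p\llb q_n-1\rrb)\xrightarrow{\psi_{\circ\circ}}\Fil_\mot^{\ge *-1}\THH(R_n/\S_p\llb q_n-1\rrb)(1)[2]\xrightarrow{\partial_n[1]}\Fil_\mot^{\ge *}\THH(R_n)[1].\]
By \cref{thm::computing_trace_maps_explicitly_for_any_R}, these are compatible in $n$. On the outer terms the transition maps are the composites $(\sum_i\psi^{g^{ip^{n-1}}})\circ\rho_n$ and $\rho_n(1)[2]$, and on $\THH(R_n)$ they are the transfers. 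Taking $\varprojlim_n$ therefore gives a fibre sequence whose first term is $L:=\varprojlim_n \Fil_\mot^{\ge *}\THH(R_n/\S_p\llb q_n-1\rrb)$ with transitions $(\sum_i\psi^{g^{ip^{n-1}}})\circ\rho_n$.

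The key step is to show $L\simeq 0$. By \cref{lem::projection_for_arbitrary_X}, $\rho_n=\id\otimes\proj_n$ after base change from $\S_p\llb q_1-1\rrb$. The map $\sum_{i=0}^{p-1}\psi^{g^{ip^{n-1}}}$ is the sum over the coset representatives of $(1+p^{n}\Z_p)/(1+p^{n+1}\Z_p)$. By \cref{lem::trivialising_action_mod_zeta_p-1}, this group acts trivially on associated gradeds modulo $\zeta_p-1$, so the sum is $p\equiv 0$ modulo $\zeta_p-1$. On each $\gr^*$ the transition therefore factors through multiplication by $(\zeta_p-1)$, up to a canonically defined map. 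Composing $p-1$ transitions produces a factor of $p$, so $\varprojlim$ of the gradeds is $p$-adically trivial. Since everything is $p$-complete, the limit of the gradeds vanishes. Completeness of the motivic filtration, or a $\lim^1$/Milnor argument on each filtration level using that the objects are uniformly bounded below on QRSPs, then gives $L\simeq 0$. This vanishing is the step I expect to be the main obstacle: one must make the ``trivial mod $\zeta_p-1$'' statement functorial enough to factor the composite maps, working on large quasi-syntomic algebras where everything is discrete and then descending.

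Granting $L\simeq0$, we obtain the equivalence of synthetic spectra with synthetic circle action
\[\varprojlim_n\Fil_\mot^{\ge *}\THH(R_n)\simeq\varprojlim_n\Fil_\mot^{\ge *-1}\THH(R_n/\S_p\llb q_n-1\rrb)(1)[1],\]
realized by $\varprojlim\partial_n$ with transitions $\rho_n$. It is $\Z_p^\times$-equivariant, and it is linear over $\Fil_\mot\THH(R_\infty)$ via the compatible module structures. For the Frobenius, the limits commute with $(-)^{tC_{p,\ev}}$ by \cref{lem::synthetic_tate_commute_with_limit_over_transfers}, and the limit exists in cyclotomic synthetic spectra as in \cref{cor::limits_in_cycsp_computed_on_underlying}. \cref{prop::identifying_frob_in_the_tower} then says that $\varphi_{\THH(R_n)}\circ\partial_n\circ\rho_n=\partial_n^{tC_{p,\ev}}\circ\tilde\varphi_n$ compatibly in $n$. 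Passing to the limit, the cyclotomic Frobenius on the left corresponds on the right to the shift followed by $(\tilde\varphi_n)$, which is exactly the formula claimed. Functoriality in $X$ is inherited from the functoriality of all the inputs.
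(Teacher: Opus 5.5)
Your proposal is correct and follows essentially the same route as the paper: take limits in the diagram of \cref{thm::computing_trace_maps_explicitly_for_any_R}, show the middle limit vanishes on associated gradeds via \cref{lem::trivialising_action_mod_zeta_p-1}, and then pass to the limit in the pentagon of \cref{prop::identifying_frob_in_the_tower}, using that $(\partial_n)_n$ and $(\partial_n^{tC_{p,\ev}})_n$ are equivalences and that $(\rho_n)_n$ is the shift. The only difference is cosmetic and lies in the vanishing step. The paper simply notes that the transition maps become null after reducing modulo $\zeta_p-1$, so the reduced tower has zero limit, and $(\zeta_p-1)$-completeness then finishes the argument. This avoids your factor-through-$(\zeta_p-1)$-and-compose-$(p-1)$-times argument, which needs the chosen lifts to be $\Z_p[\zeta_p]$-linear; that holds on the gradeds, so your version also works.
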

\begin{proof}
    We will implicitly use \cref{lem::synthetic_tate_commute_with_limit_over_transfers} to commute the Tate construction past limits over transfers. 
    
    First, taking the limit in \cref{thm::computing_trace_maps_explicitly_for_any_R}, we get a fibre sequence of $S^1$-equivariant spectra \[\varprojlim_n \Fil_\mot^{\ge *}\THH(R_n) \to \varprojlim_n \Fil_\mot^{\ge *}\THH(R_n/\S_p\llb q_n-1\rrb) \to \varprojlim_n \Fil_\mot^{\ge *-1}\THH(R_n/\S_p\llb q_n-1\rrb)(1)[2],\]
    where the transition maps in the middle term are given by 
    \begin{align*}
        \Fil_\mot^{\ge *}\THH(R_{n+1}/\S_p\llb q_{n+1}-1\rrb)& \simeq \Fil_\mot^{\ge *}\THH(R_n/\S_p\llb q_n-1\rrb)  \otimes_{\S_p\llb p^{-n}\Z_p\rrb} \S_p\llb p^{-n-1}\Z_p\rrb \\
        &\xrightarrow{\id \otimes \mr{proj}} \Fil_\mot^{\ge *}\THH(R_n/\S_p\llb q_n-1\rrb) \\
        &\xrightarrow{\sum_{i=0}^{p-1} \psi^{g^{ip^{n-1}}}} \Fil_\mot^{\ge *}\THH(R_n/\S_p\llb q_n-1\rrb) ,
    \end{align*}
    and the transition maps in the third limit are the maps $\rho_n$.
    Here, we have fixed a topological generator $g$ of $1+p\Z_p$. 
    
    We first claim that the middle term in the above fibre sequence is zero. To do this, it suffices to show that the underlying spectrum is zero, so we may forget the circle action. Moreover, the motivic filtration being complete and exhaustive, it suffices to show the associated graded vanishes. Working non-$S^1$-equivariantly (or, more accurately, non-$\gr^*\T_\ev$-equivariantly), the middle term is a $\Z_p[\zeta_p]$-algebra, and since we are working in the $p$-complete category (equivalently, the $(\zeta_p-1)$-complete category) we may work mod $\zeta_p - 1$. In this case, it suffices to check that the transition maps \[\gr^*_\mot\THH(R_{n+1}/\S_p\llb q_{n+1}-1\rrb) /(\zeta_p - 1) \to \gr^*_\mot\THH(R_{n}/\S_p\llb q_{n} - 1\rrb) /(\zeta_p - 1)\]
    of sheaves are zero, as the limit over zero maps is zero. By definition, this transition map factors through the map \[\sum_{i=0}^{p-1} \psi^{g^{ip^{n-1}}} : \gr^*_\mot\THH(R_{n}/\S_p\llb q_{n} - 1\rrb) /(\zeta_p - 1) \to \gr^*_\mot\THH(R_{n}/\S_p\llb q_{n} - 1\rrb) /(\zeta_p - 1).\]
    \cref{lem::trivialising_action_mod_zeta_p-1} identifies this map with the multiplication by $p$ map, which is zero mod $\zeta_p-1$ as claimed.

    It remains to study the cyclotomic synthetic Frobenius. For this, we take limits in $n$ of the diagram (\ref{eqn::pentagon_describing_frob_in_tower}) in \cref{prop::identifying_frob_in_the_tower} to obtain the following commutative pentagon. Again, in order to fit the diagram onto the page, we have abbreviated $\S_p\llb q_n-1\rrb := \S_n^\qdR$ and have dropped the notation $\Fil_\mot^{\ge *}$ (using $\<-\>$ to denote shifts in filtration instead).
    \[\begin{tikzcd}
        & {\varprojlim_n \THH(R_{n+1}/\S_{n+1}^\qdR)(1)\<1\>[1]} & \\
        {\varprojlim_n \THH(R_n/\S_n^\qdR)(1)\<1\>[1]} && {\varprojlim_n \THH(R_n/\S_n^\qdR)^{tC_{p,\ev}}(1)[1]} \\
        {\varprojlim_n \THH(R_n)} && {\varprojlim_n \THH(R_n)^{tC_{p,\ev}}}
        \arrow["{(\rho_n(1)\<1\> [1])_n}"', from=1-2, to=2-1]
        \arrow["{(\tilde \varphi_n)_n}", from=1-2, to=2-3]
        \arrow["{(\partial_n)_n}"', from=2-1, to=3-1]
        \arrow["{(\partial_n^{tC_{p,\ev}})_n}", from=2-3, to=3-3]
        \arrow["{(\varphi_{\THH(R_n)})_n}"', from=3-1, to=3-3]
    \end{tikzcd}\]
    Now, we have established that the maps $(\partial_n)_n$ and $(\partial_n^{tC_{p,\ev}})_n$ are both equivalences. Moreover, since the limit is over $\rho_n(1)\<1\>[1]$ anyway, the map $(\rho_n(1)\<1\>[1])_n$ is the shift down map with inverse the shift up map appearing in the statement of \cref{thm::limit_THH_for_cyclotomic_tower}. The claim follows. 
\end{proof}

\begin{construction}
    We endow $$\ms M_\Iw := \varprojlim_n \S_p\llb q_n-1\rrb,$$ the limit over the $\S_p\llb q_1-1\rrb$-linear projection maps $\proj_n : \S_p\llb q_{n+1}-1\rrb \to \S_p\llb q_n-1\rrb$, with a cyclotomic structure.
    
    The circle action is the trivial one. The cyclotomic Frobenius is given by the composition \[\varprojlim_{n\ge 1} \S_p\llb q_n-1\rrb \xrightarrow{\mr{shift}} \varprojlim_{n\ge 1} \S_p\llb q_{n+1}-1\rrb \to \varprojlim_{n\ge 1} \S_p\llb q_n-1\rrb \simeq \varprojlim_{n\ge 1} \S_p\llb q_n-1\rrb^{tC_p} \simeq \left(\varprojlim_{n\ge 1} \S_p\llb q_n-1\rrb\right)^{tC_p}\] 
    where the second arrow is the ring map \[\tilde \varphi: \S_p\llb q_{n+1}-1\rrb \xrightarrow\simeq \S_p\llb q_{n}-1\rrb, \quad q_{n+1} \mapsto q_n.\]
\end{construction}

One checks that the tautological $\S_p\llb q_1-1\rrb$-module structure on $\varprojlim_n \S_p\llb q_n-1\rrb$ in $\Sp$ extends to a $\S_p\llb q_1-1\rrb^\Tate$-module structure on $\varprojlim_n \S_p\llb q_n-1\rrb$ in $\CycSp$. Moreover, this cyclotomic spectrum $\ms M_\Iw$ is the same as that in the introduction, as one can check that the following diagram canonically commutes
\[\begin{tikzcd}
    \S_p\llb q_{n+1}-1\rrb \ar{r}{\tilde \varphi^n} \ar{d}{\proj} & \S_p\llb q_1-1\rrb \ar{d}{\tau}\\
    \S_p\llb q_n-1\rrb \ar{r}{\tilde \varphi^{n-1}} & \S_p\llb q_1-1\rrb
\end{tikzcd}\]
where both horizontal maps are equivalences, and $\tau:\S_p\llb q_1-1\rrb \to \S_p\llb q_1-1\rrb$ is given by \[q_1^k \mapsto \begin{cases}
    0 & p\nmid k,\\
    q_1^{k/p} & p|k.
\end{cases}\]

We need to endow this cyclotomic spectrum with a synthetic structure. Consider the functor \[\mr{ins}^0 : \Sp_p \to \Fil\Sp_p\]
given by $F^{\ge i}\mr{ins}^0 X := X$ if $i\le 0$, and $F^{\ge i} \mr{ins}^0 X := 0$ if $i>0$. Recall that $\mr{ins}^0 \S_p$ is the unit of the Day convolution symmetric monoidal structure on $\Fil \Sp_p$. 
\begin{lemma}
    There is an equivalence of synthetic spectra \[\S_\ev \otimes_{\mr{ins}^0\S_p} \mr{ins}^0 \S_p\llb q-1\rrb \simeq \S_\ev \llb q-1\rrb \quad \text{ and } \quad \S_\ev \otimes_{\mr{ins}^0 \S_p} \mr{ins}^0 \ms M_\Iw \simeq \varprojlim_n \S_\ev \llb q_n-1\rrb\]
    where the limit on the right hand side of the right hand equivalence is over the projection maps.
\end{lemma}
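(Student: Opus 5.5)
The plan is to reduce both equivalences to a statement about even filtrations of $E_\infty$-rings that are free (after completion) over $\S_p$. There is always a canonical comparison map of synthetic spectra
\[\S_\ev \otimes_{\mr{ins}^0\S_p} \mr{ins}^0 A \to F_\ev^{\ge *} A.\]
For $A = \S_p\llb q-1\rrb$ it is adjoint to the map $\mr{ins}^0 A \to F^{\ge *}_\ev A$, which exists because $F^{\ge 0}_\ev A \to A$ is an equivalence for connective $A$. One then uses $\S_\ev$-linearity. Define $\S_\ev\llb q-1\rrb$ as $F_\ev^{\ge *}\S_p\llb q-1\rrb$, or equivalently as the $(p,q-1)$-completion of $\S_\ev[q] = \S_\ev \otimes \mr{ins}^0 \S_p[\N]$. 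I expect the intended meaning is the latter, so the real content is the left equivalence with $F^{\ge *}_\ev$.

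For the first equivalence I would argue as follows. Write $\S_p\llb q-1\rrb \simeq \varprojlim_m \S_p[\Z/p^m]$, or more simply as the completion of $\S_p[\N] = \bigoplus_{k\ge 0}\S_p \cdot q^k$. Since $\mr{ins}^0$ and $\S_\ev\otimes_{\mr{ins}^0\S_p}-$ commute with colimits, the left side is $\bigoplus_k \S_\ev$ before completion. On the right, the even filtration of $\S_p[\N]$ can be computed by base change along the eff map used in \cref{lem::shj_n_to_shku_n_is_eff}, for instance to $\MU$ or $\ku_p$. There $\S_p[\N]\otimes E \simeq \bigoplus_k E$ is even, with $\pi_{2*}$ free over $\pi_{2*}E$. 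By flat base change for the even filtration \cite[Proposition 2.2.6]{HahnRaksitWilson_motivic_fil_on_TC}, $F_\ev^{\ge *}\S_p[\N] \simeq \S_\ev[\N] := \bigoplus_k \S_\ev q^k$. So the comparison map is an equivalence before completion. It remains to check that both sides commute with $(p,q-1)$-completion. On the left this holds by definition of the completed tensor product in $\SynSp_p$. On the right it follows because the even filtration is right Kan extended and complete, and commutes with the sequential limit $\varprojlim_m \cofib((q-1)^m)$: each term is connective in the Postnikov $t$-structure on synthetic spectra \cite[Remark 2.13]{antieau2024cyclotomic}, so \cite[Lemma 2.80]{antieau2024cyclotomic} applies, as in \cref{lem::synthetic_tate_commute_with_limit_over_transfers}. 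I expect this completion and limit interchange to be the main technical point.

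For the second equivalence, the functor $X\mapsto \S_\ev \otimes_{\mr{ins}^0\S_p}\mr{ins}^0 X$ is exact. So the problem is to commute it past the sequential limit $\ms M_\Iw = \varprojlim_n \S_p\llb q_n-1\rrb$. Each $\S_p\llb q_n-1\rrb \simeq \S_p\llb q-1\rrb$ is identified with $\S_\ev\llb q_n -1\rrb$ by the first part, and this identification is natural in the $\S_p\llb q_1-1\rrb$-linear maps $\proj_n$. Indeed $\proj_n$ is a map of the underlying free completed modules, sending basis elements $q_{n+1}^i$ to $0$ or to $q_n^{i/p}$, and $\S_\ev\otimes \mr{ins}^0(-)$ is functorial. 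The key step is then to show
\[\S_\ev\otimes_{\mr{ins}^0\S_p}\mr{ins}^0 \varprojlim_n M_n \simeq \varprojlim_n \S_\ev\otimes_{\mr{ins}^0\S_p}\mr{ins}^0 M_n .\]
I would check this filtration degree by filtration degree. For the Day convolution tensor with $\mr{ins}^0$, $F^{\ge i}$ of the left side is $F^{\ge i}\S_\ev \otimes \varprojlim_n M_n$. Here $F^{\ge i}\S_\ev = \tau_{\ge 2i}$ of the even filtration of $\S_p$ is a connective, bounded below $p$-complete spectrum. The $M_n$ are connective, and the limit has vanishing $\varprojlim^1$, since the $\proj_n$ are split surjective on $\pi_*$. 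Thus \cref{lem::almost_perfect_commute_with_limit_for_connective_sp} over $R=\S_p$ gives the interchange, provided $F^{\ge i}\S_\ev$ is almost perfect over $\S_p$. That almost perfectness is where I expect the main obstacle. Almost perfectness holds in the $p$-complete setting because $\pi_*\S_p$ is finitely generated over $\Z_p$ in each degree and $\S_p$ is Noetherian (\cref{rmk::criterion_for_almost_perfectness}), and $F^{\ge i}\S_\ev$ has degreewise finitely generated homotopy, using the known identification of $\gr^*_\ev\S_p$ with the Adams–Novikov $E_2$-page. If that input is unavailable, the fallback is to use $p$-completeness and check the interchange mod $p$, where the terms are $\F_p$-finite degreewise.
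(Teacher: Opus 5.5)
Your argument for the second equivalence is exactly the paper's proof. In filtration degree $i$ the functor $\S_\ev\otimes_{\mr{ins}^0\S_p}\mr{ins}^0(-)$ is $F^{\ge i}\S_\ev\otimes-$. Since $F^{\ge i}\S_\ev$ is almost perfect over $\S_p$, \cref{lem::almost_perfect_commute_with_limit_for_connective_sp} lets this functor pass through sequential limits of connective spectra.

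The gap is in your first equivalence. You say that on the left, commuting with $(p,q-1)$-completion ``holds by definition of the completed tensor product in $\SynSp_p$''. Only $p$-completion is built into that tensor product. The $(q-1)$-adic completion is the sequential limit $\varprojlim_m \S_p[q]/(q-1)^m$, and tensoring with a non-finite spectrum does not commute with such limits in general. The paper defines $\S_\ev\llb q-1\rrb := (\S_\ev[q])^\wedge_{(p,q-1)}$ in the introduction. So moving $F^{\ge i}\S_\ev\otimes-$ past this limit is precisely the content of the first equivalence, and you have asserted it rather than proved it.

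The detour through $F_\ev^{\ge *}\S_p\llb q-1\rrb$ is not part of the statement, and your justification for it does not work. Being right Kan extended from even rings does not by itself make $F_\ev$ commute with limits of $E_\infty$-rings. Also, \cite[Lemma 2.80]{antieau2024cyclotomic} is what the paper uses to commute the synthetic Tate construction $(-)^{tC_{p,\ev}}$ with limits of Postnikov-connective synthetic spectra; it says nothing about the even filtration commuting with limits.

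The repair is to drop the even filtration and reuse your own second-paragraph argument. Because the functor preserves colimits, $\S_\ev\otimes_{\mr{ins}^0\S_p}\mr{ins}^0\S_p[q]\simeq\bigoplus_k\S_\ev\, q^k=\S_\ev[q]$ formally. Then apply the almost-perfect interchange to the connective tower $\S_p[q]/(q-1)^m$, or to $\S_p[\Z/p^m]$. This is how the paper treats both equivalences at once: it reduces the whole lemma to the single claim that $\S_\ev\otimes_{\mr{ins}^0\S_p}\mr{ins}^0(-)$ commutes with sequential limits of connective spectra.
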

\begin{proof}
    It suffices to check that $\S_\ev \otimes_{\mr{ins}^0 \S_p} \mr{ins}^0(-) $ commutes with sequential limits of connective spectra. Since $F^i(\S_\ev \otimes_{\mr{ins}^0 \S_p} \mr{ins}^0(-)) \simeq F^i\S_\ev \otimes - $, the claim follows from \cref{lem::almost_perfect_commute_with_limit_for_connective_sp} by noting that $F^i\S_\ev$ is almost perfect over $\S_p$. 
\end{proof}
In particular, this lemma implies that the equivalence in the following corollary identifies with the equivalence in the statement of \cref{mainthm::desc_of_limit_of_THH_of_cyc}. 

\begin{corollary}\label{cor::when_we_bring_qdR_out_of_limit}
    Continue with notation as in \cref{thm::limit_THH_for_cyclotomic_tower}. If $X$ is such that for each $i$, $\gr_\Nyg^i\hat \prism^{(1)}_{X/\Z_p\llb q-1\rrb}$ is almost perfect over $\Z_p\llb q_1-1\rrb$, then there is an equivalence of cyclotomic synthetic spectra 
        \[\varprojlim_n \Fil_\mot^{\ge *}\THH(X_n) \simeq \left(\Fil_\mot^{\ge *-1}\THH(X/\S_p\llb q_1-1\rrb)\right)(1)[1] \otimes_{\S_p\llb q_1-1\rrb^\Tate} \varprojlim_n \S_p\llb q_n-1\rrb .\] 
\end{corollary}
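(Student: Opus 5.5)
Starting from \cref{thm::limit_THH_for_cyclotomic_tower}, which identifies $\varprojlim_n \Fil_\mot^{\ge *}\THH(X_n)$ with $\varprojlim_n \Fil_\mot^{\ge *-1}\THH(X_n/\S_p\llb q_n-1\rrb)(1)[1]$ over the maps $\rho_n$, the plan is to pull the fixed factor $\Fil_\mot^{\ge *-1}\THH(X/\S_p\llb q_1-1\rrb)$ out of the limit. By \cref{lem::base_change_of_prisms_in_rel_thh} and \cref{lem::projection_for_arbitrary_X}, the $n$-th term is $\Fil_\mot^{\ge *-1}\THH(X/\S_p\llb q_1-1\rrb)\otimes_{\S_p\llb q_1-1\rrb^\triv}\S_p\llb q_n-1\rrb^\triv$, and $\rho_n$ is $\id\otimes\proj_n$. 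So we obtain a canonical comparison map
\[\Fil_\mot^{\ge *-1}\THH(X/\S_p\llb q_1-1\rrb)(1)[1]\otimes_{\S_p\llb q_1-1\rrb}\varprojlim_n \S_p\llb q_n-1\rrb \longrightarrow \varprojlim_n \Fil_\mot^{\ge *-1}\THH(X_n/\S_p\llb q_n-1\rrb)(1)[1],\]
and the main task is to show it is an equivalence.

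Since both sides are complete filtered objects and the filtration on $\varprojlim_n\S_\ev\llb q_n-1\rrb$ is described by the preceding lemma, I would check the equivalence on each associated graded piece. Here all terms are connective in the appropriate sense, and the transition maps only involve the $\S_p\llb q_1-1\rrb$-linear maps $\proj_n$. On the $i$-th graded piece, the statement becomes that the natural map
\[\gr^i \otimes_{\Z_p\llb q_1-1\rrb}\varprojlim_n \Z_p\llb q_n-1\rrb \to \varprojlim_n\big(\gr^i\otimes_{\Z_p\llb q_1-1\rrb}\Z_p\llb q_n-1\rrb\big)\]
is an equivalence, where $\gr^i$ is (a shift of) $\gr^i_\Nyg\hat\prism^{(1)}_{X/\Z_p\llb q-1\rrb}$ with its Breuil--Kisin twist. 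This is exactly \cref{lem::almost_perfect_commute_with_limit_for_connective_sp} applied with $R=\Z_p\llb q_1-1\rrb$ and $N=\gr^i$, which is almost perfect by hypothesis. The one thing to arrange is connectivity: $\gr^i$ may not be connective, but it is bounded below (because $X$ is qcqs), so we shift it to make it connective, and the $\Z_p\llb q_n-1\rrb$ are connective. Working at the spectral level first, with $R=\S_p\llb q_1-1\rrb$, and then passing to graded pieces, is equally fine. The $p$-completions cause no trouble, since $p$-completion commutes with limits and the tensor products are already $p$-complete.

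It then remains to match the cyclotomic structures. On the left-hand side the Frobenius is $\varphi_{\THH(X/\S_p\llb q_1-1\rrb)}\otimes\varphi_{\ms M_\Iw}$, using the $\S_p\llb q_1-1\rrb^\Tate$-module structure. On the right-hand side, \cref{thm::limit_THH_for_cyclotomic_tower} gives the shift followed by $(\tilde\varphi_n)$. By the construction of $\tilde\varphi_n$, this is $\varphi_{\THH(-/\S_p\llb q_1-1\rrb)}\otimes\tilde\varphi_n$, and on $\pi_0$ the factor $\tilde\varphi_n$ is $q_{n+1}\mapsto q_n$, which is exactly the Frobenius of $\ms M_\Iw$. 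Checking that the comparison map intertwines these two Frobenii, compatibly with \cref{lem::synthetic_tate_commute_with_limit_over_transfers}, should be formal once the comparison map is built levelwise as a map of cyclotomic synthetic spectra.

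I expect the main obstacle to be the base-change identification: showing that the equivalence of \cref{lem::rel_tensor_prod_of_cyc_spectra}, rewritten via \cref{lem::labelling_the_q_n_inside_ku_pn(-1)} as $-\otimes_{\S_p\llb q_1-1\rrb^\Tate}\S_p\llb q_n-1\rrb^\Tate$, carries $F_\ev^{\ge*}\tilde\varphi_n$ to the Tate Frobenius tensored with $q_{n+1}\mapsto q_n$, coherently in $n$. To handle this, I would reduce to the case of a point, i.e.\ to $\ku_{p,n}^{(-1)}$, and use \cref{lem::factorisation_of_frob_on_kupn} together with \cref{lem::generalized_segal_conjecture_and_Tate_frobenius} for $\S_p[p^{-n}\Z]$.
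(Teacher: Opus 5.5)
This is essentially the paper's argument: rewrite the terms via \cref{lem::base_change_of_prisms_in_rel_thh} and \cref{lem::projection_for_arbitrary_X}, use completeness to pass to associated graded, and conclude with \cref{lem::almost_perfect_commute_with_limit_for_connective_sp}. The one difference is the base ring. The paper applies that lemma over $\S_p\llb q_1-1\rrb$, justifying almost perfectness via Noetherianity of $\S_p\llb q_1-1\rrb$ and Remark~\ref{rmk::criterion_for_almost_perfectness}. You apply it over $\Z_p\llb q_1-1\rrb$, which also needs $\Z_p\otimes_{\S_p}\varprojlim_n\S_p\llb q_n-1\rrb\simeq\varprojlim_n\Z_p\llb q_n-1\rrb$; this is another instance of the same lemma. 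The paper leaves the matching of Frobenii implicit in \cref{thm::limit_THH_for_cyclotomic_tower} and the construction of $\ms M_\Iw$, so your discussion of that point goes beyond what the paper writes.
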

\begin{proof}
    By the theorem, and using \cref{lem::base_change_of_prisms_in_rel_thh} and \cref{lem::projection_for_arbitrary_X}, it suffices to show that the canonical map of spectra
    \begin{multline*}
        \varprojlim_n\left(\Fil_\mot^{\ge *-1}\THH(X/\S_p\llb q_1-1\rrb)(1)[1] \otimes_{\S_p\llb q_1-1\rrb}  \S_p\llb q_n-1\rrb \right) \\ \to \Fil_\mot^{\ge *-1}\THH(X/\S_p\llb q_1-1\rrb)(1)[2] \otimes_{\S_p\llb q_1-1\rrb} \varprojlim_n \S_p\llb q_n-1\rrb 
    \end{multline*}
    is an equivalence under the extra condition on $X$. The motivic filtration being complete, we may pass to associated graded. Notice that $\S_p\llb p^{-1}\Z_p\rrb$ is a Noetherian $E_\infty$-ring: $\pi_0\S_p\llb p^{-1}\Z_p\rrb \cong \Z_p\llb T\rrb$ is Noetherian and for $i>0$, \[\pi_i \S_p\llb p^{-1}\Z_p\rrb \simeq (\pi_i \S_p) \otimes_{\Z_p} \Z_p\llb p^{-1}\Z_p\rrb\]      
    is finitely generated over $\Z_p\llb p^{-1}\Z_p\rrb$ since $\pi_i\S_p$ is finite. Moreover, the condition on $X$ combined with Remark \ref{rmk::criterion_for_almost_perfectness} guarantees that $\gr^*_\mot \THH(X/\S_p\llb q_1-1\rrb)$ is almost perfect over $\S_p\llb q_1-1\rrb$ for each $*$. We may then conclude by \cref{lem::almost_perfect_commute_with_limit_for_connective_sp}.
\end{proof}

The following is just the associated graded version of \cref{cor::when_we_bring_qdR_out_of_limit}.

\begin{corollary}\label{cor::limit_of_nyg_filt_prismatic_coh_over_cyc_tower}
    Fix $r\ge 0$. Let $X$ and $X_n$ be as in \cref{thm::limit_THH_for_cyclotomic_tower}. If $\gr^i_\Nyg \hat \prism_{X/\Z_p\llb q-1\rrb}^{(1)}$ is almost perfect (i.e. pseudo-coherent) as a $\Z_p\llb q-1\rrb$-module for all $i$, then 
    \begin{align*}
        \varprojlim_n (\Fil_\Nyg^{\ge r} \hat \prism_{X_n}) \{r\} &\simeq (\Fil_\Nyg^{\ge r-1} \hat \prism_{X_1/\Z_p\llb q-1\rrb}^{(1)})\{r-1\} \hat \otimes_{\Z_p\llb q_1-1\rrb} \varprojlim_{n\ge 1} \Z_p\llb q_n-1\rrb(1)[-1]\\
        \varprojlim_n \hat \prism_{X_n}\{r\} &\simeq \hat \prism_{X_1/\Z_p\llb q-1\rrb}^{(1)}\{r-1\} \hat \otimes_{\Z_p\llb q_1-1\rrb} \varprojlim_{n\ge 1} \Z_p\llb q_n-1\rrb(1)[-1]
    \end{align*}
    where the right hand side is $(p, q-1)$-adically completed. 
\end{corollary}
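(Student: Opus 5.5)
The plan is to deduce this from \cref{cor::when_we_bring_qdR_out_of_limit} by applying the synthetic homotopy fixed points and Tate constructions, and then passing to associated graded. The first step is to take $(-)^{h\T_\ev}$, respectively $(-)^{t\T_\ev}$, of both sides of the equivalence of synthetic spectra with synthetic circle action
\[\varprojlim_n \Fil_\mot^{\ge *}\THH(X_n) \simeq \Fil_\mot^{\ge *-1}\THH(X/\S_p\llb q_1-1\rrb)(1)[1] \otimes_{\S_p\llb q_1-1\rrb} \ms M_\Iw.\]
Only the underlying synthetic circle action matters here, not the cyclotomic Frobenius. On the left, $(-)^{h\T_\ev}$ commutes with limits, since it is a right adjoint. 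For the Tate construction I would argue as in \cref{lem::synthetic_tate_commute_with_limit_over_transfers}: on affines everything is Postnikov-connective, so \cite[Lemma 2.80]{antieau2024cyclotomic} lets $(-)^{t\T_\ev}$ commute with the sequential limit over transfers. Using the identifications $(F_\ev^{\ge *}\THH)^{h\T_\ev}\simeq \Fil_\mot^{\ge *}\TC^-$ and $(F_\ev^{\ge *}\THH)^{t\T_\ev}\simeq \Fil_\mot^{\ge *}\TP$, the left-hand sides become $\varprojlim_n\Fil_\mot^{\ge *}\TC^-(X_n)$ and $\varprojlim_n\Fil_\mot^{\ge *}\TP(X_n)$. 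Taking $\gr^r$, which commutes with limits, gives $\varprojlim_n\Fil^{\ge r}_\Nyg\hat\prism_{X_n}\{r\}[2r]$ and $\varprojlim_n\hat\prism_{X_n}\{r\}[2r]$.

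On the right, $\ms M_\Iw$ carries the trivial circle action. I therefore want
\[(M\otimes_{\S_p\llb q_1-1\rrb}\ms M_\Iw)^{h\T_\ev}\simeq M^{h\T_\ev}\otimes_{\S_p\llb q_1-1\rrb}\ms M_\Iw,\]
and the analogous statement for $t\T_\ev$, with $M=\Fil_\mot^{\ge *-1}\THH(X/\S_p\llb q_1-1\rrb)(1)[1]$. This is the main obstacle, because $\ms M_\Iw$ is an infinite limit and is not perfect over $\S_p\llb q_1-1\rrb$. I would not attempt to pull out the completed tensor product directly. Instead I would return to the form $\varprojlim_n M\otimes_{\S_p\llb q_1-1\rrb}\S_p\llb q_n-1\rrb$ supplied by \cref{lem::projection_for_arbitrary_X} and \cref{thm::limit_THH_for_cyclotomic_tower}. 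Each $\S_p\llb q_n-1\rrb$ is finite free over $\S_p\llb q_1-1\rrb$, so $(-)^{h\T_\ev}$ and $(-)^{t\T_\ev}$ commute with each term and with the limit, using \cref{lem::synthetic_tate_commute_with_limit_over_transfers} for the Tate case. After that, I would reassemble the tensor product with $\varprojlim_n$ on associated graded via \cref{lem::almost_perfect_commute_with_limit_for_connective_sp}, exactly as in the proof of \cref{cor::when_we_bring_qdR_out_of_limit}.

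This step uses the pseudo-coherence hypothesis on $\gr^i_\Nyg\hat\prism^{(1)}_{X/\Z_p\llb q-1\rrb}$. The graded pieces of $\Fil_\mot\TC^-(X/\S_p\llb q_1-1\rrb)$ are $\Fil^{\ge r}_\Nyg\hat\prism^{(1)}\{r\}[2r]$, and these are built from the $\gr^i_\Nyg$ by a complete filtration. Lemma~\ref{lem::almost_perfect_commute_with_limit_for_connective_sp} only directly handles almost perfect modules and connective inputs. I would therefore apply it after reducing modulo $(p,q_1-1)^k$, where the Nygaard filtration becomes finite in each range of degrees, and then take the limit over $k$. This is why the right-hand side ends up $(p,q-1)$-completed. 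Boundedness below comes from $X$ being qcqs.

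Finally I would identify the right-hand graded pieces. One has $\gr^r$ of $\Fil_\mot^{\ge *-1}\TC^-(X/\S_p\llb q_1-1\rrb)(1)[1]$ equal to $(\Fil^{\ge r-1}_\Nyg\hat\prism^{(1)}_{X/\Z_p\llb q-1\rrb})\{r-1\}[2r-2](1)[1]$. Comparing it with $\gr^r$ on the left and cancelling the common shift $[2r]$ produces the net $(1)[-1]$ in the statement. Here the Tate twist $(1)$ records the $\Z_p^\times$-action on $\tau_{\ge2}\ku_p$. The $\TP$ case is identical, with the Nygaard filtrations dropped.
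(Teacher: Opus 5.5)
Your outline is the argument the paper intends. Its entire proof is the remark that this is the associated graded version of \cref{cor::when_we_bring_qdR_out_of_limit}: apply $(-)^{h\T_\ev}$ and $(-)^{t\T_\ev}$, commute with the limit, take $\gr^r$, match shifts. You correctly see that the only real issue is moving $\ms M_\Iw$ past the fixed points. Going back to $\varprojlim_n M\otimes_{\S_p\llb q_1-1\rrb}\S_p\llb q_n-1\rrb$ is the right move, and up to that point everything is fine and uses no hypothesis.

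\textbf{The gap is in the reassembly step.} Put $A=\Z_p\llb q_1-1\rrb$, $A_n=\Z_p\llb q_n-1\rrb$ and $N=\Fil^{\ge r-1}_\Nyg\hat\prism^{(1)}_{X/\Z_p\llb q-1\rrb}$.
\begin{itemize}
\item In the monomial bases $\{q_n^a\}$ the maps $\proj_n$ are coordinate projections. Hence $\varprojlim_n N\otimes_A A_n\simeq\prod_{\N}N$ for \emph{every} $N$, while $\mc M_\Iw\simeq\prod_{\N}A$.
\item Reducing mod $(p,q_1-1)$, the map $(N\otimes_A\mc M_\Iw)^\wedge_{(p,q-1)}\to\prod_\N N$ is an equivalence (for bounded $N$) iff $N\otimes_A\F_p$ has finite-dimensional cohomology. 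This is essentially iff $N$ is pseudo-coherent over $A$.
\item Your claim that modulo $(p,q_1-1)^k$ the Nygaard filtration ``becomes finite in each range of degrees'' is exactly this statement, and you give no argument for it.
\item It does not follow from the hypothesis, which only controls the graded pieces. A complete filtration with perfect graded pieces can have a non-pseudo-coherent limit; completed PD envelopes $\{\sum_i a_i\gamma_i(x)\}$ are the standard example.
\end{itemize}

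This is a real obstruction, not a formality. Modulo $[p]_{q_1}$, the de Rham comparison identifies $\hat\prism^{(1)}_{X/\Z_p\llb q-1\rrb}$, with its induced filtration (graded pieces $\wedge^i\L_{X/\Z_p[\zeta_p]}[-i]$), with Hodge-completed derived de Rham cohomology $\hat\dR_{X/\Z_p[\zeta_p]}$. That is a PD-type completion. For instance, take $X=\Spf\big(\Z_p[\zeta_p][x]/(f)\big)$ with $f=x^2-(\zeta_p-1)$. It is proper and regular, and all its Nygaard graded pieces are perfect. Let $\hat D$ be the PD-completed divided power envelope of $(f)$ in $\Z_p[\zeta_p][x]$.
\begin{itemize}
\item In the complex $\hat D\to\hat D\,dx$, one has $d\gamma_{j+1}(f)=2x\gamma_j(f)\,dx$, which kills the $x\gamma_j(f)\,dx$-part.
\item One also has $d(x\gamma_j(f))=(2j+1)\gamma_j(f)\,dx+2(\zeta_p-1)\gamma_{j-1}(f)\,dx$.
\item So $H^1$ reduces mod $\zeta_p-1$ to $\prod_{p\mid 2j+1}\F_p$. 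Hence $N$ is not pseudo-coherent, and your finiteness claim fails in the generality of the statement.
\end{itemize}

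What your argument does prove is the following. Each $\Fil^{\ge r-1}/\Fil^{\ge r-1+m}$ is almost perfect, being a finite extension of the $\gr^i$, so \cref{lem::almost_perfect_commute_with_limit_for_connective_sp} applies to it. Interchanging $\varprojlim_n$ with $\varprojlim_m$ then identifies the left side with the tensor product completed for the \emph{Nygaard} filtration, $\varprojlim_m\big((\Fil^{\ge r-1}/\Fil^{\ge r-1+m})\{r-1\}\otimes_A\mc M_\Iw\big)(1)[-1]$, and likewise for $\hat\prism$. This coincides with the $(p,q-1)$-completed tensor product precisely when $N$ is pseudo-coherent. That holds for example when $X$ is smooth and proper, where the Nygaard filtration is eventually $[p]_{q_1}$-adic and $N$ is perfect. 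The paper's one-line deduction passes over the same point, so you have located the genuine difficulty, but your proposed fix does not resolve it.
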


We record a condition on $X$ that guarantees $\gr^i_\Nyg\hat \prism_{X/\Z_p\llb q-1\rrb}$ is pseudo-coherent over $\Z_p\llb q-1\rrb$. 

\begin{lemma}\label{lem::sufficient_condition_for_almost_perf_gr_Nyg}
    If $X$ is a qcqs proper regular bounded $p$-adic formal scheme over $\Z_p[\zeta_p]$, then $\gr^i_\Nyg \hat \prism^{(1)}_{X_1/\Z_p\llb q-1\rrb}$ is perfect as a $\Z_p\llb q_1-1\rrb$-module.
\end{lemma}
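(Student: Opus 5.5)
We will identify $\gr^i_\Nyg \hat\prism^{(1)}_{X/\Z_p\llb q-1\rrb}$ with a Hodge--Tate-type object and deduce perfectness from coherent cohomology of a proper scheme. The key input is the relative Nygaard graded comparison: for a relative prism $(A,I)=(\Z_p\llb q-1\rrb,[p]_q)$ (Frobenius twisted so the base ring is $\Z_p\llb q_1-1\rrb$ with $A/I\simeq \Z_p[\zeta_p]$), one has, functorially in quasi-syntomic $X$ over $A/I$, an identification
\[\gr^i_\Nyg \hat\prism^{(1)}_{X/A} \simeq \Fil^{\mathrm{conj}}_i \bar\prism_{X/A}\{i\},\]
where $\Fil^{\mathrm{conj}}_\bullet$ is the conjugate filtration on relative Hodge--Tate cohomology (cf. \cite[Theorem 15.2]{BS_prisms} on large algebras and descent). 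The graded pieces of the conjugate filtration are $\gr^{\mathrm{conj}}_j \bar\prism_{X/A} \simeq \R\Gamma(X, \L\Omega^j_{X/(A/I)})[-j]$ (Hodge--Tate comparison, up to Breuil--Kisin twist). Since the filtration $\Fil^{\mathrm{conj}}_i$ is finite (indices $0\le j\le i$), it suffices by the two-out-of-three property of perfect complexes to show each $\R\Gamma(X,\L\Omega^j_{X/\Z_p[\zeta_p]})$ is perfect over $\Z_p\llb q_1-1\rrb$.

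For that step we proceed as follows. The scheme $X$ is regular and $\Z_p[\zeta_p]$ is a DVR. Assuming $X\to \Spf\Z_p[\zeta_p]$ is locally of finite type and quasi-syntomic (hence lci), the cotangent complex $\L_{X/\Z_p[\zeta_p]}$ is perfect of Tor-amplitude $[-1,0]$, so each derived wedge power $\L\Omega^j$ is a perfect complex on $X$. Properness of $X$ over $\Spf\Z_p[\zeta_p]$ then gives, by the formal-function/Grothendieck coherence theorem, that $\R\Gamma(X,\L\Omega^j)$ is a perfect (i.e. pseudo-coherent with bounded Tor-amplitude) $\Z_p[\zeta_p]$-complex. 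Finally, $\Z_p[\zeta_p]=\Z_p\llb q_1-1\rrb/[p]_{q_1}$ (after the relabelling of Remark~\ref{rmk::on_conventions_for_frob_twists}) is a perfect $\Z_p\llb q_1-1\rrb$-module since $[p]_{q_1}$ is a nonzerodivisor. Hence restriction of scalars preserves perfectness, and the twist $\{i\}$ is an invertible module, so it does not matter.

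We expect the main obstacle to be the first step, namely making the identification $\gr^i_\Nyg \simeq \Fil^{\mathrm{conj}}_i\bar\prism\{i\}$ rigorous for non-affine, non-smooth $X$ with the Frobenius twist and Nygaard completion in place. We would try to prove it for large quasi-syntomic algebras, where all terms are discrete and \cite[Theorem 15.2]{BS_prisms} applies directly. We would then extend by quasi-syntomic descent, noting that both sides are sheaves and that Nygaard completion does not change $\gr_\Nyg$. If regularity were needed beyond lci, it would enter only to guarantee finite type and bounded Tor-amplitude, which is where we would check the hypotheses on $X$ carefully.
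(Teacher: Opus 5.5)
Your proposal is correct and follows essentially the same route as the paper. You identify $\gr^i_\Nyg \hat\prism^{(1)}$ with $\Fil^\conj_i \bar\prism_{X/\Z_p\llb q-1\rrb}\{i\}$, use the Hodge--Tate comparison for the conjugate graded pieces, get perfectness over $\Z_p[\zeta_p]$ from regularity plus properness, and conclude by perfectness of $\Z_p[\zeta_p]$ over the base and finite induction on the conjugate filtration. The paper simply cites \cite[Remark 5.1.2]{BL_absolute_prismatic_coh} for the first identification, so the step you flag as the main obstacle is handled there by reference. Your ordering, taking $\R\Gamma(X,-)$ of the derived wedge powers of the perfect sheaf $\L_{X/\Z_p[\zeta_p]}$, is if anything cleaner than the paper's phrasing, which takes exterior powers after passing to global sections.
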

\begin{proof}
    By \cite[Remark 5.1.2]{BL_absolute_prismatic_coh}, we have \[\gr^i_\Nyg \hat \prism^{(1)}_{X/\Z_p\llb q-1\rrb} \simeq \Fil_i^\conj \bar \prism_{X/\Z_p\llb q-1\rrb} \{i\}\]
    where the right hand side is the (increasing) conjugate filtration on relative Hodge--Tate cohomology as defined in \cite[Construction 7.6]{BS_prisms}. Now, $$\gr_j^\conj \bar \prism_{X/\Z_p\llb q-1\rrb} \simeq (\bigwedge^j_{X} \L_{X/\Z_p[\zeta_p]})^\wedge_p[-j]\{j\}.$$ However, as $X$ is regular over $\Z_p[\zeta_p]$, it follows that $(\L_{-/\Z_p[\zeta_p]})^\wedge_p$ is a perfect quasi-coherent sheaf on $X$. Properness of $X$ then implies that $$(\L_{X/\Z_p[\zeta_p]})^\wedge_p := \R\Gamma(X, (\L_{-/\Z_p[\zeta_p]})^\wedge_p)$$ is a perfect $\Z_p[\zeta_p]$-module. All exterior powers are then also perfect, i.e. $(\bigwedge^j_{X} \L_{X/\Z_p[\zeta_p]})^\wedge_p[-j]\{j\}$ is a perfect $\Z_p[\zeta_p]$-module. Since $\Z_p[\zeta_p]$ is a perfect $\Z_p\llb q-1\rrb$-module, it follows that $\gr_j^\conj \bar \prism_{X/\Z_p\llb q-1\rrb}$ is a perfect $\Z_p\llb q-1\rrb$-module. Since $\Fil_{-1}^\conj \bar \prism_{X/\Z_p\llb q-1\rrb} \simeq 0 $, it follows by a finite induction that $\Fil_i^\conj \bar\prism_{X/\Z_p\llb q-1\rrb}\{i\}$ is itself a perfect $\Z_p\llb q-1\rrb$-module, as required. 
\end{proof}

We end this subsection with the following remark on Tate twists. 
\begin{remark}\label{rmk::the_tate_twist_in_D_Iw_and_prismatic_log}
    The equivalence in \cref{thm::limit_THH_for_cyclotomic_tower} has an implicit factor of $\beta$, the Bott class, on the right hand side. This results in a filtration shift and Tate twist in the prismatic statement given in \cref{cor::limit_of_nyg_filt_prismatic_coh_over_cyc_tower}. Up to a multiple in $\Z_p^\times$, the class in prismatic cohomology responsible for this filtration shift and Tate twist is the element $\log_\prism(q^p) \in \Fil^1_\Nyg \prism_{\Z_p[\zeta_p]/\Z_p\llb q-1\rrb}^{(1)} \{1\}$ (see \cite[Section 2]{BL_absolute_prismatic_coh} for details on this element). 
    
    Indeed, under the isomorphism \[\Z_p\llb q_1-1\rrb \{1\} \cong \pi_2\TP(\Z_p[\zeta_p]/\S_p\llb q_1-1\rrb) \simeq \pi_2\ku_p^{(-1), tS^1} \cong \Z_p\llb q_1-1\rrb t^{-1},\]
    a generator of the Breuil--Kisin twist goes to the class $t^{-1}$. On the other hand, by \cite[Proposition 2.6.1]{BL_absolute_prismatic_coh}, a generator of the Breuil--Kisin twist is \[\frac{1}{q-1} \log_\prism(q^p)\]
    where $\log_\prism$ is the prismatic logarithm. Thus, up to a unit $\alpha\in \Z_p\llb q_1-1\rrb^\times$, we have $$t^{-1} = \alpha \cdot \frac{1}{q-1} \log_\prism(q^p)$$
    and so $\beta = (q-1)t^{-1} = \alpha \cdot \log_\prism(q^p)$ (c.f. \cite[Construction 2.7]{bhatt_mathew_syntomic_and_tate_twists}).
    Since the previous isomorphisms were $\Z_p^\times$-equivariant, the above equality of elements in $\Z_p\llb q-1\rrb\{1\}$ must also be $\Z_p^\times$-equivariant. We claim that $\Z_p^\times$ acts by multiplication on $\log_\prism(q^p)$: by definition of the prismatic logarithm (\cite[Section 2.2]{BL_absolute_prismatic_coh}) it suffices to check that $$q^{\gamma p^{i}}- 1 \equiv \gamma (q^{p^{i}}-1) \modc{[p^{i}]_q^2},$$
    which is an easy calculation. Hence, $\Z_p^\times$ acts trivially on $\alpha$, so that $\alpha \in \Z_p^\times$. 
    In fact, this unit $\alpha$ should be 1, though we will not need it here. A related discussion is given in \cite[Section 2.1]{bhatt_mathew_syntomic_and_tate_twists}. 

    Since Frobenius preserves the element $\log_\prism(q^p)$ \cite[Proposition 2.5.18]{BL_absolute_prismatic_coh}, we may safely suppress this factor.
\end{remark}

\subsection{de Rham Cohomology}\label{sctn::de_rham_coh}
In this section we prove \cref{maincor::limit_of_de_rham_coh} in \cref{cor::mainthm_for_de_rham_coh} below. Forgetting motivic filtrations, one just needs to apply $-\otimes_{\THH(\Z_p)}\Z_p$ to \cref{thm::limit_THH_for_cyclotomic_tower} since $\HH \simeq \THH \otimes_{\THH(\Z_p)}\Z_p$. However, one needs to keep track of motivic filtrations, which is made difficult by the fact that none of $\THH(\Z_p)\to \Z_p$, $\THH(-)\to \HH(-)$, or $\THH(-/\S_p\llb q_n-1\rrb) \to \HH(-/\Z_p\llb q_n-1\rrb)$ are $p$-completely eff. Thus, we retrace our steps. 

\begin{lemma}\label{lem::HH(rel_qdR)_is_THH(rel_qdR)_tensored_with_Zp}
    There are equivalences of sheaves of $S^1$-equivariant spectra on $\QSyn_{\Z_p[\zeta_{p^n}]}$: 
    \begin{align*}
        \HH(-/\Z_p\llb q_n-1\rrb) &\simeq \THH(-/\S_p\llb q_n-1\rrb)\otimes_{\THH(\Z_p)}\Z_p^\triv\\
        &\simeq \HH(-) \otimes_{\THH(\Z_p[\zeta_{p^n}])} \THH(\Z_p[\zeta_{p^n}]/\S_p\llb q_n-1\rrb)
    \end{align*}
    where the right hand side denotes the point-wise tensor product.
\end{lemma}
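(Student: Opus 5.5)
The plan is to reduce both equivalences to the cyclic-bar-construction formula for relative $\THH$ together with symmetric monoidality of $\THH$. Both sides are sheaves on $\QSyn_{\Z_p[\zeta_{p^n}]}$. The tensor products over the constant objects $\THH(\Z_p)$ and $\THH(\Z_p[\zeta_{p^n}])$ are computed pointwise and then ($p$-)completed. So it suffices to construct natural equivalences of presheaves of $S^1$-equivariant spectra and observe that the pointwise tensor products remain sheaves. For the sheaf property I would argue as in the proof of \cref{prop::prismatic_coh_from_q_de_rham_coh}: the right-hand sides are base changes of the sheaf $\THH(-)$ along modules which, on the relevant base, are either perfect or obtained by a base change that commutes with the totalisations appearing in quasi-syntomic descent. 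Since $\Z_p^\triv$ is not perfect over $\THH(\Z_p)$, I would instead rely on connectivity: all terms are connective, and totalisations of cosimplicial connective objects commute with $-\otimes_{\THH(\Z_p)}\Z_p$ after $p$-completion, via the same truncation argument as in \cref{lem::almost_perfect_commute_with_limit_for_connective_sp}.

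For the first equivalence, take $R \in \QSyn_{\Z_p[\zeta_{p^n}]}$. By definition,
\[\THH(R/\S_p\llb q_n-1\rrb) \simeq \THH(R)\otimes_{\THH(\S_p\llb q_n-1\rrb)} \S_p\llb q_n-1\rrb^\triv.\]
Tensoring over $\THH(\Z_p)$ with $\Z_p^\triv$ and rearranging gives
\[\THH(R)\otimes_{\THH(\Z_p)\otimes \THH(\S_p\llb q_n-1\rrb)} \bigl(\Z_p\otimes \S_p\llb q_n-1\rrb\bigr)^\triv.\]
By symmetric monoidality, $\THH(\Z_p)\otimes\THH(\S_p\llb q_n-1\rrb)\simeq \THH(\Z_p\llb q_n-1\rrb)$, and $(\Z_p\otimes\S_p\llb q_n-1\rrb)^\wedge_p \simeq \Z_p\llb q_n-1\rrb$. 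The resulting expression is $\THH(R)\otimes_{\THH(\Z_p\llb q_n-1\rrb)}\Z_p\llb q_n-1\rrb^\triv\simeq \THH(R/\Z_p\llb q_n-1\rrb) = \HH(R/\Z_p\llb q_n-1\rrb)$, using the general formula $\THH(S/A)\simeq \THH(S)\otimes_{\THH(A)}A^\triv$ for $E_\infty$-maps $A\to S$. Each identification is natural in $R$ and $S^1$-equivariant. The one point to check is that the $(p,q_n-1)$-completions match. This holds because $\S_p\llb q_n-1\rrb\simeq\varprojlim\S_p[\Z/p^m]$ and everything is $p$-complete, so the completions commute with the relevant connective tensor products.

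For the second equivalence, I would use $\HH(R)\simeq\THH(R)\otimes_{\THH(\Z_p)}\Z_p^\triv$ together with \cref{cor::rel_thh_of_cyc_extns}, which gives $\THH(R/\S_p\llb q_n-1\rrb)\simeq \THH(R)\otimes_{\THH(\Z_p[\zeta_{p^n}])}\THH(\Z_p[\zeta_{p^n}]/\S_p\llb q_n-1\rrb)$ (as in the proof of \cref{prop::prismatic_coh_from_q_de_rham_coh}). Then
\[\HH(R)\otimes_{\THH(\Z_p[\zeta_{p^n}])}\THH(\Z_p[\zeta_{p^n}]/\S_p\llb q_n-1\rrb) \simeq \Z_p^\triv\otimes_{\THH(\Z_p)}\THH(R/\S_p\llb q_n-1\rrb),\]
by associativity of relative tensor products, since both are $\THH(R)$ tensored with $\Z_p$ over $\THH(\Z_p)$ and with the relative term over $\THH(\Z_p[\zeta_{p^n}])$. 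Combining this with the first equivalence finishes the argument.

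I expect the main obstacle to be the completion and sheafification bookkeeping rather than the algebra. Specifically, one must justify that the pointwise tensor with the non-perfect module $\Z_p^\triv$ over $\THH(\Z_p)$ preserves the quasi-syntomic sheaf condition and is compatible with the $p$-completions implicit throughout. I would handle this first by reducing to the affine QRSP case and using connectivity of everything involved.
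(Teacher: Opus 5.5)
Your proposal is correct and follows essentially the paper's route: reduce to presheaves, rewrite using $\THH(S/A)\simeq\THH(S)\otimes_{\THH(A)}A^\triv$ and symmetric monoidality of $\THH$, and obtain the second equivalence from the base-change formula $\THH(\Z_p[\zeta_{p^n}]/\S_p\llb q_n-1\rrb)\simeq\THH(\Z_p[\zeta_{p^n}])\otimes_{\THH(\S_p\llb q_n-1\rrb)}\S_p\llb q_n-1\rrb^\triv$ together with associativity of relative tensor products. Your connectivity argument for the sheaf condition is not needed. Once you have a natural presheaf equivalence with the sheaf $\HH(-/\Z_p\llb q_n-1\rrb)$, the pointwise tensor products are automatically sheaves.
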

\begin{proof}
    It suffices to show that the equivalence holds at the level of pre-sheaves. We then compute:
    \begin{align*}
        \THH(-/\S_p\llb q_n-1\rrb) \otimes_{\THH(\Z_p)}\Z_p^\triv &\simeq \S_p\llb q_n-1\rrb^\triv \otimes_{\THH(\S_p\llb q_n-1\rrb)} \THH(-)\otimes_{\THH(\Z_p)}\Z_p^\triv \\
        &\simeq \S_p\llb q_n-1\rrb^\triv \otimes_{\THH(\S_p\llb q_n-1\rrb)} \HH(-) \\
        &\simeq \Z_p\llb q_n-1\rrb^\triv \otimes_{\THH(\S_p\llb q_n-1\rrb) \otimes \Z_p} \HH(-) \\
        &\simeq \HH(-/\Z_p\llb q_n-1\rrb).
    \end{align*}
    This is the first equivalence. The second equivalence follows since \[\THH(\Z_p[\zeta_{p^n}]/\S_p\llb q_n-1\rrb) \simeq \THH(\Z_p[\zeta_{p^n}]) \otimes_{\THH(\S_p\llb q_n-1\rrb)} \S_p\llb q_n-1\rrb^\triv. \]
\end{proof}
\begin{corollary}\label{cor::HH_and_rel_HH_even_for_QRSP_R}
    For $R$ a QRSP, $\HH(R)$ is even. If $R$ is furthermore a $\Z_p[\zeta_{p^n}]$-algebra, then $\HH(R/\Z_p\llb q_n-1\rrb)$ is also even. The motivic filtrations are moreover the double-speed Postnikov filtration.
\end{corollary}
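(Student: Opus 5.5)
The plan is to reduce evenness of $\HH$ to evenness of $\THH$ via the base change $\HH(-)\simeq \THH(-)\otimes_{\THH(\Z_p)}\Z_p$, and then to get the relative statement from \cref{lem::HH(rel_qdR)_is_THH(rel_qdR)_tensored_with_Zp}.

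\textbf{Absolute case.} For $R$ a QRSP, \cite[Theorem 7.1]{bms2} gives that $\THH(R;\Z_p)$ is even. The corresponding statement for $\HH(R;\Z_p)$ is also standard and is proved in the same place. One way to see it is through the HKR filtration on $\HH(R)$, whose graded pieces are $(\wedge^i\L_{R/\Z_p})^\wedge_p[i]$. For $R$ a QRSP, $\L_{R/\Z_p}[-1]$ is $p$-completely flat over $R$, so $(\wedge^i\L_{R/\Z_p})^\wedge_p[i]\simeq (\Gamma^i_R(\L_{R/\Z_p}[-1]))^\wedge_p[2i]$ is concentrated in degree $2i$. A filtered object all of whose graded pieces are even has even underlying object, because the filtration is complete and exhaustive and all connecting maps vanish for degree reasons. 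Hence $\HH(R)$ is even, and its HKR filtration coincides with $\tau_{\ge 2*}$.

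\textbf{Relative case.} Let $R$ be a QRSP $\Z_p[\zeta_{p^n}]$-algebra. I will run the same argument with the HKR filtration on $\HH(R/\Z_p\llb q_n-1\rrb)$. Its graded pieces are $(\wedge^i\L_{R/\Z_p\llb q_n-1\rrb})^\wedge_p[i]$, so the task is to show that $\L_{R/\Z_p\llb q_n-1\rrb}[-1]$ is $p$-completely flat over $R$. Use the transitivity triangle for $\Z_p\to\Z_p\llb q_n-1\rrb\to R$:
\[R\hat\otimes_{\Z_p\llb q_n-1\rrb}\L_{\Z_p\llb q_n-1\rrb/\Z_p}\to \L_{R/\Z_p}\to \L_{R/\Z_p\llb q_n-1\rrb}.\]
The first term is $R\,dq_n$, which is free and sits in degree $0$. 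The middle term is flat and sits in degree $1$.

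The step I expect to be the main obstacle is the degree-zero part of this triangle. Passing to homotopy, one needs $\Omega^1_{R/\Z_p}\to\L$ to behave so that $\pi_0\L_{R/\Z_p\llb q_n-1\rrb}=0$ ($p$-completely) and $\pi_1$ is flat. The second point holds because the map $R\,dq_n\to\pi_0$ of the middle term vanishes, since that term has no $\pi_0$; this gives $\pi_1\L_{R/\Z_p\llb q_n-1\rrb}$ as an extension of $R\,dq_n$ by the flat module $\pi_1\L_{R/\Z_p}$, which is flat. The first point holds because $\pi_0$ of the cofibre is $\Omega^1_{R/\Z_p\llb q_n-1\rrb}$, which is $p$-completely zero since $R$ is semiperfectoid over $\Z_p[\zeta_{p^n}]$ and $p$-th roots exist. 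Then the graded pieces $(\Gamma^i(\L[-1]))^\wedge_p[2i]$ are again even. The same conclusion should also follow from the formula $\HH(R/\Z_p\llb q_n-1\rrb)\simeq \HH(R)\otimes_{\HH(\Z_p[\zeta_{p^n}])}\HH(\Z_p[\zeta_{p^n}]/\Z_p\llb q_n-1\rrb)$, obtained by tensoring \cref{lem::HH(rel_qdR)_is_THH(rel_qdR)_tensored_with_Zp} down; this can serve as a cross-check.

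\textbf{Motivic filtration.} On QRSPs the motivic filtration is, by definition (or by its comparison with the even filtration of \cite{HahnRaksitWilson_motivic_fil_on_TC}), the double-speed Postnikov filtration as soon as the value is even. The even filtration of an even ring is $\tau_{\ge 2*}$, so this part is immediate once evenness is established.
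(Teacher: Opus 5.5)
Your proof is correct, but for the relative statement it takes a different route from the paper.

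\textbf{The paper's route.} For $\HH(R)$ the paper simply cites \cite{bms2}; your HKR argument is the standard proof of that. For $\HH(R/\Z_p\llb q_n-1\rrb)$ it uses the preceding lemma to write $\HH(R/\Z_p\llb q_n-1\rrb)\simeq \HH(R)\otimes_{\THH(\Z_p[\zeta_{p^n}])}\THH(\Z_p[\zeta_{p^n}]/\S_p\llb q_n-1\rrb)$. This is a base change of the even ring $\HH(R)$ along $j_{n-1}^{(-1)}\to \ku_{p,n-1}^{(-1)}$. The paper then invokes the $p$-complete even faithful flatness of that map (\cref{lem::shj_n_to_shku_n_is_eff}). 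That flatness rests on the pushout squares of \cref{lem::some_pushouts_of_cyc_spectra} and on $\S_p[S^1]\to\S_p$ being evenly faithfully flat.

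\textbf{Your route.} You stay in commutative algebra. The transitivity triangle for $\Z_p\to\Z_p\llb q_n-1\rrb\to R$ exhibits $\L_{R/\Z_p\llb q_n-1\rrb}[-1]$ as a $p$-completely flat extension of $R\,dq_n$ by $\L_{R/\Z_p}[-1]$. Hence the HKR graded pieces $\Gamma^i_R(\L_{R/\Z_p\llb q_n-1\rrb}[-1])^\wedge_p[2i]$ are discrete and sit in degree $2i$. Two small remarks on the details:
\begin{itemize}
\item The vanishing of $\pi_0$ already follows from the long exact sequence, since the $p$-completed $\L_{R/\Z_p}$ has no $\pi_0$. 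The detour through $\Omega^1$ is not needed.
\item Flatness of the extension is most cleanly checked after reducing mod $p$.
\end{itemize}

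\textbf{What each buys.} Your argument is more elementary and more general:
\begin{itemize}
\item it uses neither the Devalapurkar--Raksit identification nor any spherical input;
\item it does not depend on the absolute case;
\item it works for any QRSP over a base like $\Z_p\llb x\rrb$;
\item it gives an explicit description of $\pi_{2*}$ and shows directly that the HKR filtration equals $\tau_{\ge 2*}$.
\end{itemize}
The paper's argument is a one-liner given machinery already in place. It mirrors \cref{cor::rel_thh_always_even_for_qrsps} and presents $\HH(R/\Z_p\llb q_n-1\rrb)$ in exactly the base-change form used to build the fibre sequence of \cref{lem::de_rham_coh_from_rel_dR}.

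\textbf{On your cross-check.} It tensors over $\HH(\Z_p[\zeta_{p^n}])$ rather than $\THH(\Z_p[\zeta_{p^n}])$. To use it for evenness you would need even faithful flatness of $\HH(\Z_p[\zeta_{p^n}])\to\HH(\Z_p[\zeta_{p^n}]/\Z_p\llb q_n-1\rrb)$. That is not what \cref{lem::shj_n_to_shku_n_is_eff} provides.
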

\begin{proof}
    The first is \cite[Lemma 5.14]{bms2}. The second follows from the lemma using that $\THH(\Z_p[\zeta_{p^n}])\to  \THH(\Z_p[\zeta_{p^n}]/\S_p\llb q_n-1\rrb)$ is $p$-completely eff by \cref{lem::shj_n_to_shku_n_is_eff}. The final claim is immediate from evenness and the fact that the motivic filtration is the even filtration.  
\end{proof}
The following is a version of \cref{prop::prismatic_coh_from_q_de_rham_coh}, and is proved similarly. 
\begin{lemma}\label{lem::de_rham_coh_from_rel_dR}
    For any $n\ge 1$ and any choice of topological generator $\gamma_n\in 1+p^n\Z_p$, there is a fibre sequence of sheaves (in fact, a fibre sequence at the level of pre-sheaves) of synthetic spectra with synthetic circle action on $\QSyn_{\Z_p[\zeta_{p^n}]}$ \[\Fil_\mot^{\ge *} \HH(-) \to \Fil_\mot^{\ge *} \HH(-/\S_p\llb q_n-1\rrb) \xrightarrow{\psi_{\circ\circ}^{\gamma_n}} \Fil_\mot^{\ge *-1}\HH(-/\S_p\llb q_n-1\rrb)(1)[2].\]
\end{lemma}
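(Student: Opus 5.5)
Following the proof of \cref{prop::prismatic_coh_from_q_de_rham_coh}, I first produce the unfiltered fibre sequence of presheaves and then upgrade it to motivic filtrations by descent to QRSPs. Here $\HH(-/\S_p\llb q_n-1\rrb)$ is read as $\HH(-/\Z_p\llb q_n-1\rrb)$, as in \cref{lem::HH(rel_qdR)_is_THH(rel_qdR)_tensored_with_Zp}.

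For the unfiltered sequence, start from the fibre sequence of $S^1$-equivariant $ j_{n-1}^{(-1)}$-modules in \cref{cor::-1-twist_of_fibre_seq_computing_jpn}:
\[ j_{n-1}^{(-1)}\to \ku_{p,n-1}^{(-1)}\xrightarrow{\psi^{\gamma_n}_{\circ\circ}}\ku_{p,n-1}^{(-1)}(1)[2].\]
Apply the exact functor $\HH(-)\otimes_{\THH(\Z_p[\zeta_{p^n}])}-$, using $\THH(\Z_p[\zeta_{p^n}])\simeq  j_{n-1}^{(-1)}$. The first term is $\HH(-)$. By the second equivalence of \cref{lem::HH(rel_qdR)_is_THH(rel_qdR)_tensored_with_Zp}, the middle term is $\HH(-/\Z_p\llb q_n-1\rrb)$. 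Since $\ku_{p,n-1}^{(-1)}(1)[2]$ is a perfect (invertible) $\ku_{p,n-1}^{(-1)}$-module, the third term is $\HH(-/\Z_p\llb q_n-1\rrb)(1)[2]$. This gives a fibre sequence of presheaves of $S^1$-equivariant spectra. All three terms are sheaves: the first two by quasi-syntomic descent for $\HH$, and the third because tensoring with a perfect module preserves limits. The maps are $\T_\ev$-linear once filtrations are added, because everything is built from module structures over $S^1$-equivariant rings.

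For the filtered upgrade, the motivic filtrations are defined by quasi-syntomic descent from QRSPs, so it suffices to build the filtered fibre sequence naturally on QRSPs $R$. By \cref{cor::HH_and_rel_HH_even_for_QRSP_R}, both $\HH(R)$ and $\HH(R/\Z_p\llb q_n-1\rrb)$ are even, and their motivic filtrations are double-speed Postnikov. Apply $\tau_{\ge 2*}$ to the sequence. We have
\[\tau_{\ge 2*}\big(\HH(R/\Z_p\llb q_n-1\rrb)(1)[2]\big)\simeq\big(\tau_{\ge 2*-2}\HH(R/\Z_p\llb q_n-1\rrb)\big)(1)[2],\]
which gives the shift $*-1$ in the third term.

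The main obstacle is that $\tau_{\ge 2*}$ preserves the fibre sequence, which holds only if the connecting map
\[\pi_{2k+1}\big(\HH(R/\Z_p\llb q_n-1\rrb)(1)[2]\big)\to\pi_{2k}\HH(R)\]
vanishes, i.e. if the fibre is computed degreewise. Evenness of all three terms gives this at once: the long exact sequence splits into short exact sequences $0\to\pi_{2k}\HH(R)\to\pi_{2k}(\text{middle})\to\pi_{2k}(\text{third})\to 0$, so the middle map is surjective on even homotopy. This surjectivity is exactly what makes the truncated sequence a fibre sequence. If direct evenness of the third term causes trouble, I would instead argue through the base change of \cref{lem::shj_n_to_shku_n_is_eff}, which is $p$-completely eff, exactly as in the proof of \cref{cor::HH_and_rel_HH_even_for_QRSP_R}. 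Functoriality in $R$ is automatic, and descent finishes the proof.
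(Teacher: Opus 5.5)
Your proposal is correct and follows essentially the paper's route. Your unfiltered sequence, obtained by applying $\HH(-)\otimes_{\THH(\Z_p[\zeta_{p^n}])}-$ to \cref{cor::-1-twist_of_fibre_seq_computing_jpn}, agrees by associativity with the paper's $-\otimes_{\THH(\Z_p)}\Z_p$ applied to \cref{prop::prismatic_coh_from_q_de_rham_coh}, and the filtered upgrade by $\tau_{\ge 2*}$ on QRSPs using evenness is exactly the paper's argument.

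One small slip: the connecting map whose vanishing makes $\tau_{\ge 2*}$ preserve the fibre sequence is $\pi_{2k}(\text{third})\to\pi_{2k-1}\HH(R)$, equivalently surjectivity on $\pi_{2k}$ as you then correctly say. It is not $\pi_{2k+1}(\text{third})\to\pi_{2k}\HH(R)$. Both vanish by evenness, so nothing breaks.
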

\begin{proof}
    On underlying sheaves of spectra, one can just apply $-\otimes_{\THH(\Z_p)}\Z_p$ to the underlying fibre sequence of \cref{prop::prismatic_coh_from_q_de_rham_coh}. For motivic filtrations, by quasi-syntomic descent to QRSP $\Z_p[\zeta_{p^n}]$-algebras $R$, we can just apply $\tau_{\ge 2*}$ to the fibre sequence on underlying $S^1$-spectra for $R$. 
\end{proof}
The following is proved in the exact same way as \cref{cor::identifying_mot_filt_on_thh(R_n)_for_R_qrsp}.
\begin{corollary}\label{cor::identifying_mot_filt_on_HH(R_n)_for_R_qrsp}
    For $R$ a QRSP $\Z_p[\zeta_{p}]$-algebra, one has an equivalence \[\Fil_\mot^{\ge *} \HH(R\otimes_{\Z_p[\zeta_p]}\Z_p[\zeta_{p^n}]) \simeq \tau_{\ge 2*-1} \HH(R\otimes_{\Z_p[\zeta_p]}\Z_p[\zeta_{p^n}]) .\]
\end{corollary}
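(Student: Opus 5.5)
The plan is to follow the proof of \cref{cor::identifying_mot_filt_on_thh(R_n)_for_R_qrsp} almost verbatim, with $\HH$ in place of $\THH$. Set $R_n := R\otimes_{\Z_p[\zeta_p]}\Z_p[\zeta_{p^n}]$. Since $R$ is a QRSP over $\Z_p[\zeta_p]$ and $\Z_p[\zeta_p]\to\Z_p[\zeta_{p^n}]$ is finite free, $R_n$ is again a QRSP $\Z_p[\zeta_{p^n}]$-algebra. Indeed, $R_n/p$ is semiperfect because $\zeta_{p^n}$ has a $p$-th root $\zeta_{p^{n+1}}$ only after passing to $R_\infty$. If this gives trouble, one can instead cover by a QRSP containing all $p$-power roots of unity and use descent. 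Concretely, I will apply \cref{cor::HH_and_rel_HH_even_for_QRSP_R} to $R_n$ to conclude that $\HH(R_n/\Z_p\llb q_n-1\rrb)$ is even and that its motivic filtration is the double-speed Postnikov filtration.

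Next I evaluate the fibre sequence of \cref{lem::de_rham_coh_from_rel_dR} at $R_n$:
\[\Fil_\mot^{\ge *}\HH(R_n)\to \Fil_\mot^{\ge *}\HH(R_n/\Z_p\llb q_n-1\rrb)\xrightarrow{\psi^{\gamma_n}_{\circ\circ}}\Fil_\mot^{\ge *-1}\HH(R_n/\Z_p\llb q_n-1\rrb)(1)[2].\]
By evenness, the middle term is $\tau_{\ge 2*}$ of an even spectrum $E$, and the right term is $\tau_{\ge 2*}(E(1)[2])$, where $E(1)[2]$ is also even. The map $\psi^{\gamma_n}_{\circ\circ}\colon E\to E(1)[2]$ between even spectra therefore induces, on $\pi_{2k}$, a map whose kernel sits in $\pi_{2k}\HH(R_n)$ and whose cokernel sits in $\pi_{2k-1}\HH(R_n)$. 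Taking fibres degreewise identifies the fibre of $\tau_{\ge 2k}E\to\tau_{\ge 2k}(E(1)[2])$ with $\tau_{\ge 2k-1}$ of the fibre of $E\to E(1)[2]$. The reason is that a fibre of a map of $(2k)$-connective spectra is $(2k-1)$-connective, and it agrees with the unfiltered fibre in degrees $\ge 2k-1$: the degree-$(2k-1)$ part is the cokernel on $\pi_{2k}$, and both agree in degrees $\ge 2k$. This yields $\Fil^{\ge k}_\mot\HH(R_n)\simeq\tau_{\ge 2k-1}\HH(R_n)$.

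The main obstacle is verifying the QRSP and evenness input for $R_n$ itself, as opposed to $R$, in \cref{cor::HH_and_rel_HH_even_for_QRSP_R}. It is also necessary to check that the filtered fibre sequence of \cref{lem::de_rham_coh_from_rel_dR} is genuinely given by $\tau_{\ge 2*}$ at $R_n$, and is not merely obtained after sheafification. I would handle this by noting that \cref{lem::de_rham_coh_from_rel_dR} was proven by applying $\tau_{\ge 2*}$ to the fibre sequence at QRSPs, and that $\HH(R_n/\Z_p\llb q_n-1\rrb)\simeq\HH(R/\Z_p\llb q_1-1\rrb)\otimes_{\Z_p\llb q_1-1\rrb}\Z_p\llb q_n-1\rrb$ with finite free base change, so evenness transfers from the $n=1$ case.
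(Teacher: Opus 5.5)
Your strategy is the same as the paper's. The paper proves this exactly as \cref{cor::identifying_mot_filt_on_thh(R_n)_for_R_qrsp}: it uses that the relative term at $R_n$ is even, with motivic filtration $\tau_{\ge 2*}$, and takes connective covers in the fibre sequence of \cref{lem::de_rham_coh_from_rel_dR}. Your second paragraph carries out that truncation step correctly; it only needs the odd homotopy of $E$ to vanish.

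However, you should drop the claim that $R_n$ is a QRSP, because it is false for $n\ge 2$. Take $R=\Z_p^\cyc$. Then $R_n=\Z_p^\cyc[x]/(x^{p^{n-1}}-\zeta_p)$, and $x^{p^{n-1}}-\zeta_p=(x-\zeta_{p^n})^{p^{n-1}}$ modulo $p$. So $R_n/p\cong(\Z_p^\cyc/p)[y]/(y^{p^{n-1}})$, and here $y$ is not a $p$-th power, so $R_n/p$ is not semiperfect. Consequently \cref{cor::HH_and_rel_HH_even_for_QRSP_R} cannot be applied to $R_n$ as stated. Your fallback of covering by a QRSP and using descent does not rescue this. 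The assertion concerns the value at the single object $R_n$, and $\tau_{\ge 2*-1}$ does not commute with the descent limit.

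The input you actually need is the one in your final paragraph. Since $R_n\simeq R\otimes_{\Z_p\llb q_1-1\rrb}\Z_p\llb q_n-1\rrb$, you get $\HH(R_n/\Z_p\llb q_n-1\rrb)\simeq\HH(R/\Z_p\llb q_1-1\rrb)\otimes_{\Z_p\llb q_1-1\rrb}\Z_p\llb q_n-1\rrb$. This is a finite free base change of the spectrum shown to be even in the $n=1$ case of \cref{cor::HH_and_rel_HH_even_for_QRSP_R}, where $R$ itself is QRSP.

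The paper reaches the same fact differently, mirroring \cref{cor::rel_thh_always_even_for_qrsps}. It writes $\HH(R_n/\Z_p\llb q_n-1\rrb)\simeq\HH(R)\otimes_{j_0^{(-1)}}\ku_{p,n-1}^{(-1)}$ via \cref{lem::HH(rel_qdR)_is_THH(rel_qdR)_tensored_with_Zp}, then uses evenness of $\HH(R)$ and eff-ness from \cref{lem::shj_n_to_shku_n_is_eff}. Your base-change route is slightly more elementary.

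In either route, identifying the middle and right terms at $R_n$ with $\tau_{\ge 2*}$ comes from this evenness together with the agreement of motivic and even filtrations on affines. Alternatively it comes from a filtered base change as in \cref{lem::base_change_of_prisms_in_rel_thh}. It does not come from how \cref{lem::de_rham_coh_from_rel_dR} was proved at QRSPs, since $R_n$ is not one. With that substitution your proof is complete.
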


\begin{proposition}\label{prop::trace_for_HH}
    Let $g$ be a topological generator of $(1+p\Z_p)$, and $R\in \QSyn_{\Z_p[\zeta_p]}$. Set $R_n := R\otimes_{\Z_p[\zeta_p]} \Z_p[\zeta_{p^n}]$. The transfer map $$\tr: \Fil_\mot^{\ge *} \HH(R_{n+1})\to \Fil_\mot^{\ge *} \HH(R_n)$$ sits in the following functorial-in-$R$ commutative diagram of $\Fil_\mot^{\ge *} \HH(R_n)$-modules in synthetic spectra with synthetic $S^1$-action:
    \[\begin{tikzcd}
        	{\Fil_\mot^{\ge *}\HH(R_{n+1})} & {\Fil_\mot^{\ge *}\HH(R_{n+1}/\S_p\llb q_{n+1}-1\rrb)} && {\Fil_\mot^{\ge *-1}\HH(R_{n+1}/\S_p\llb q_{n+1}-1\rrb)(1)[2]} \\
        	& {\Fil_\mot^{\ge *}\HH(R_{n}/\S_p\llb q_{n}-1\rrb)} && {\Fil_\mot^{\ge *-1}\HH(R_{n}/\S_p\llb q_{n}-1\rrb)(1)[2]} \\
        	{\Fil_\mot^{\ge *}\HH(R_n)} & {\Fil_\mot^{\ge *}\HH(R_{n}/\S_p\llb q_{n}-1\rrb)} && {\Fil_\mot^{\ge *-1}\HH(R_{n}/\S_p\llb q_{n}-1\rrb)(1)[2]}
        	\arrow[from=1-1, to=1-2]
        	\arrow["\tr"', from=1-1, to=3-1]
        	\arrow["{\psi_{\circ\circ}^{g^{p^n}}}", from=1-2, to=1-4]
        	\arrow["{\rho_n}"', from=1-2, to=2-2]
        	\arrow["{\rho_n(1)[2]}"', from=1-4, to=2-4]
        	\arrow["{\psi_{\circ\circ}^{g^{p^n}}}", from=2-2, to=2-4]
        	\arrow["{\sum_{i=0}^{p-1}\psi^{g^{ip^{n-1}}}}", from=2-2, to=3-2]
        	\arrow["\id", equals, from=2-4, to=3-4]
        	\arrow[from=3-1, to=3-2]
        	\arrow["{\psi_{\circ\circ}^{g^{p^{n-1}}}}", from=3-2, to=3-4]
        \end{tikzcd}\]
    where the top and bottom rows are the fibre sequences of \cref{lem::de_rham_coh_from_rel_dR}. 
\end{proposition}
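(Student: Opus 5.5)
The plan is to deduce the statement from \cref{thm::computing_trace_maps_explicitly_for_any_R} by base change along $\THH(\Z_p)\to \Z_p^\triv$, and then to redo the filtered upgrade with evenness statements for $\HH$ in place of those for $\THH$. This mirrors the structure of that proof, since none of the relevant maps are $p$-completely eff and the filtered statement cannot simply be tensored down.

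\textbf{Step 1: the unfiltered diagram.} By definition of the transfer as a localising invariant, and since $\HH(-)\simeq \THH(-)\otimes_{\THH(\Z_p)}\Z_p^\triv$ functorially in $\Cat_\infty^\perf$, the $\HH$-transfer $\HH(R_{n+1})\to \HH(R_n)$ is the base change of the $\THH$-transfer. So I apply $-\otimes_{\THH(\Z_p)}\Z_p^\triv$ to the underlying unfiltered diagram of $S^1$-equivariant spectra in \cref{thm::computing_trace_maps_explicitly_for_any_R}. By \cref{lem::HH(rel_qdR)_is_THH(rel_qdR)_tensored_with_Zp}, the middle and right columns become the corresponding relative $\HH$ terms. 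The maps $\rho_n$, $\psi^{a}$ and $\psi^{g}_{\circ\circ}$ are $\THH(\Z_p)$-linear, being induced from $\ku_{p,n}^{(-1)}$-level maps tensored over $j_0^{(-1)}$, so they base change as well. Since base change is exact, the rows remain fibre sequences, and they agree with the underlying fibre sequences of \cref{lem::de_rham_coh_from_rel_dR}, which were constructed in exactly this way. Linearity over $\HH(R_n)$ follows from the $\THH(R_n)$-linearity of the original diagram.

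\textbf{Step 2: the motivic filtrations.} By quasi-syntomic descent it suffices to treat a QRSP $R$, functorially. In that case \cref{cor::HH_and_rel_HH_even_for_QRSP_R} shows that $\HH(R_m/\Z_p\llb q_m-1\rrb)$ is even, with motivic filtration $\tau_{\ge 2*}$. I also need that $R_m$ is again QRSP (or at least that the relevant terms stay even); this holds because $\Z_p[\zeta_{p^m}]$ is finite free over $\Z_p[\zeta_p]$, and alternatively it follows from the eff argument via \cref{lem::shj_n_to_shku_n_is_eff}. By \cref{cor::identifying_mot_filt_on_HH(R_n)_for_R_qrsp}, the absolute term carries the filtration $\tau_{\ge 2*-1}$. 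The filtration on the twisted right-hand terms is identified as in the proof of \cref{prop::prismatic_coh_from_q_de_rham_coh}: $\tau_{\ge 2*}(M(1)[2])\simeq (\tau_{\ge 2*-2}M)(1)[2]$.

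The filtered diagram is then obtained by applying the functors $\tau_{\ge 2*}$, respectively $\tau_{\ge 2*-1}$, to the unfiltered diagram, with each vertical map inducing a filtered map by functoriality of Postnikov truncations. The rows stay fibre sequences: the right-hand map $\psi^{g}_{\circ\circ}$ is surjective on even homotopy up to the odd fibre, which is exactly the computation behind \cref{cor::identifying_mot_filt_on_HH(R_n)_for_R_qrsp}. Finally, the upgrade to synthetic spectra with synthetic $S^1$-action is automatic by linearity over $\Fil_\mot^{\ge *}\HH(R_n)$, as in Remark \ref{rmk::bootstrap_from_spectral_trace_to_synthetic_trace}.

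\textbf{Main obstacle.} The delicate point is checking that the filtered transfer of \cref{thm::transfer_map_for_finite_loc_free_extensions} for $\HH$ agrees with the map $\tau_{\ge 2*-1}(\tr)$ produced by truncation. I expect this to hold because for QRSP inputs the motivic filtration is a Postnikov-type filtration, so any filtered lift of a map of underlying objects is unique. Concretely, a map into $\tau_{\ge 2*-1}$ from a filtered object whose $*$-th piece is $(2*-1)$-connective is determined by its underlying map. Functoriality in $R$ then comes for free from the canonicity of truncation.
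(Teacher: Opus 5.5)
Your proposal is correct and is essentially the paper's proof: you base-change the unfiltered diagram of \cref{thm::computing_trace_maps_explicitly_for_any_R} along $\THH(\Z_p)\to\Z_p^\triv$, reduce to QRSP $R$, and pass to the truncations $\tau_{\ge 2*}$ and $\tau_{\ge 2*-1}$ using \cref{cor::HH_and_rel_HH_even_for_QRSP_R} and \cref{cor::identifying_mot_filt_on_HH(R_n)_for_R_qrsp}; your coreflection argument (filtered maps from an object whose $i$-th piece is $(2i-1)$-connective into $\tau_{\ge 2*-1}Y$ correspond to maps from its underlying object into $Y$) usefully makes explicit why the filtered transfer is the truncated one, which the paper leaves implicit. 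One correction: $R_m$ is \emph{not} QRSP for $m\ge 2$ (mod $p$ it is $(R/p)[U]/(U^{p^{m-1}})$, which is not semiperfect), so the evenness of the relative terms must come from your alternative eff argument via \cref{lem::shj_n_to_shku_n_is_eff}, as in \cref{cor::rel_thh_always_even_for_qrsps}; also, the truncated rows are fibre sequences simply because the middle and right-hand terms are even, and no surjectivity of $\psi^{g}_{\circ\circ}$ is needed.
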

\begin{proof}
    Forgetting filtrations, we can just apply $-\otimes_{\THH(\Z_p)}\Z_p$ to the underlying unfiltered diagram in \cref{thm::computing_trace_maps_explicitly_for_any_R}. For motivic filtrations, just as in the proof of \cref{thm::computing_trace_maps_explicitly_for_any_R}, we can take connective covers in the underlying unfiltered diagram for $R$ a QRSP, using \cref{cor::HH_and_rel_HH_even_for_QRSP_R} to identify $$\tau_{\ge 2*} \HH(R_n/\Z_p\llb q_n-1\rrb)\simeq \Fil_\mot^{\ge *} \HH(R_n/\Z_p\llb q_n-1\rrb)$$ as well as \cref{cor::identifying_mot_filt_on_HH(R_n)_for_R_qrsp} to identify \(\tau_{\ge 2*-1} \HH(R_n)\simeq \Fil_\mot^{\ge *} \HH(R_n).\)
\end{proof}

\begin{proposition}
    For any bounded qcqs quasi-syntomic $p$-adic formal scheme $X$ over $\Z_p[\zeta_p]$, write $$X_n := X\times_{\Spf\Z_p[\zeta_p]} \Spf \Z_p[\zeta_{p^n}] .$$ 
    There is a functorial-in-$X$ $\Z_p^\times$-equivariant equivalence of synthetic spectra with synthetic circle action:
    \begin{align*}
        \varprojlim_n \Fil_\mot^{\ge *}\HH(X_n) &\simeq \varprojlim_n\Fil_\mot^{\ge *-1}\HH(X_n/\Z_p\llb q_n-1\rrb)(1)[1]
    \end{align*}
    where the transition maps on the right hand side are given by 
    \begin{align*}
        \Fil_\mot^{\ge *-1}\HH(X_n/\Z_p\llb q_n-1\rrb) &\simeq \left(\Fil_\mot^{\ge *-1}\HH(X_1/\Z_p\llb q_1-1\rrb) \right) \otimes_{\Z_p\llb q_1-1\rrb} \Z_p\llb q_n-1\rrb \\
        &\xrightarrow{\id \otimes \proj} \left(\Fil_\mot^{\ge *-1}\HH(X_1/\Z_p\llb q_1-1\rrb) \right) \otimes_{\Z_p\llb q_1-1\rrb} \Z_p\llb q_{n-1}-1\rrb  \\
        &\simeq \Fil_\mot^{\ge *-1}\HH(X_{n-1}/\Z_p\llb q_{n-1}-1\rrb).
    \end{align*}
\end{proposition}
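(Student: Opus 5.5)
The plan is to copy the first half of the proof of \cref{thm::limit_THH_for_cyclotomic_tower}, with \cref{prop::trace_for_HH} in place of \cref{thm::computing_trace_maps_explicitly_for_any_R}. Since the statement involves no cyclotomic Frobenius, the pentagon argument of \cref{prop::identifying_frob_in_the_tower} is not needed.

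First, we reduce to the affine case $X=\Spf R$. Since $X$ is qcqs, the \v{C}ech totalisation of a finite affine Zariski cover is a finite limit, and finite limits commute with the sequential limit. All constructions are sheaves on $\QSyn_{\Z_p[\zeta_p]}$, so we get functoriality in $X$ from functoriality in $R$.

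Next, we take $\varprojlim_n$ of the diagram in \cref{prop::trace_for_HH}. The rows are fibre sequences and limits are exact, so we obtain a fibre sequence
\[\varprojlim_n \Fil_\mot^{\ge *}\HH(R_n)\to \varprojlim_n \Fil_\mot^{\ge *}\HH(R_n/\Z_p\llb q_n-1\rrb)\to \varprojlim_n \Fil_\mot^{\ge *-1}\HH(R_n/\Z_p\llb q_n-1\rrb)(1)[2].\]
In the middle term the transition maps are $(\sum_{i=0}^{p-1}\psi^{g^{ip^{n-1}}})\circ\rho_n$, and in the third term they are $\rho_n(1)[2]$. Rotating the sequence identifies the left-hand limit with the right-hand limit shifted by $[-1]$, i.e.\ with $\varprojlim_n\Fil_\mot^{\ge *-1}\HH(R_n/\Z_p\llb q_n-1\rrb)(1)[1]$, provided the middle term vanishes.

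The main step is this vanishing. We forget the circle action. Because the motivic filtration is complete and exhaustive, it suffices to check that the associated graded of the middle limit vanishes. Everything is a $p$-complete $\Z_p[\zeta_p]$-module, so it suffices to work modulo $\zeta_p-1$. Each transition map factors through $\sum_{i=0}^{p-1}\psi^{g^{ip^{n-1}}}$ acting on $\gr^*_\mot\HH(R_n/\Z_p\llb q_n-1\rrb)/(\zeta_p-1)$. If the $1+p^n\Z_p$-action on this graded piece is canonically trivial modulo $\zeta_p-1$, then this map is multiplication by $p$, which is zero modulo $\zeta_p-1$. The transition maps are then zero, and so is the limit.

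The obstacle is that \cref{lem::trivialising_action_mod_zeta_p-1} is stated for $\THH$, not $\HH$. I would first try to deduce the $\HH$ version from \cref{lem::HH(rel_qdR)_is_THH(rel_qdR)_tensored_with_Zp}, which gives $\HH(-/\Z_p\llb q_n-1\rrb)\simeq \THH(-/\S_p\llb q_n-1\rrb)\otimes_{\THH(\Z_p)}\Z_p^\triv$. The $\Z_p^\times$-action is on the first factor only. For a QRSP $R$ the graded pieces are the even homotopy groups by \cref{cor::HH_and_rel_HH_even_for_QRSP_R}, so the mod-$(\zeta_p-1)$ trivialisation should pass through the tensor product on homotopy. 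If that is awkward, the fallback is to rerun the Hodge--Tate stack argument in \cref{lem::trivialising_action_mod_zeta_p-1} directly on the relative derived Hodge cohomology $L\Omega^*_{R/\Z_p\llb q_n-1\rrb}$. It is a base change of the relative Hodge--Tate data along $\bar\prism\to$ Hodge, and the semilinear action on $\Z_p\llb q_{n-1}-1\rrb$ is trivial modulo $q-1$.

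Finally, we identify the transition maps in the form stated. Base change along $\Z_p\llb q_1-1\rrb\to\Z_p\llb q_n-1\rrb$ should be an $\HH$-analogue of \cref{lem::base_change_of_prisms_in_rel_thh}, proved in the same way: it is true for QRSPs because everything is even and flat over $\Z_p\llb q_1-1\rrb$, and then follows by descent. With this, $\rho_n$ becomes $\id\otimes\proj$ exactly as in \cref{lem::projection_for_arbitrary_X}. The $\Z_p^\times$-equivariance comes for free, since every map used is equivariant.
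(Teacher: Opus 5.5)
The unproved step in your proposal is the one you yourself call the main one. The rest of your structure matches the paper: taking $\varprojlim_n$ of the diagram in \cref{prop::trace_for_HH}, rotating, killing the middle term, reducing to affines, and identifying the transition maps by flat base change along $\Z_p\llb q_1-1\rrb\to\Z_p\llb q_n-1\rrb$. The vanishing of the middle term, however, rests on an $\HH$-analogue of \cref{lem::trivialising_action_mod_zeta_p-1}, and neither of your routes supplies it.

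\textbf{Route 1 does not go through as stated.} Because $\THH(\Z_p)\to\Z_p$ is not $p$-completely eff, the motivic graded pieces of $\HH(-/\Z_p\llb q_n-1\rrb)$ are not base changes of those of $\THH(-/\S_p\llb q_n-1\rrb)$. Even for a QRSP, $\pi_{2*}$ of $\THH(R/\S_p\llb q_n-1\rrb)\otimes_{\THH(\Z_p)}\Z_p$ is not $\pi_{2*}\THH(R/\S_p\llb q_n-1\rrb)\otimes_{\pi_*\THH(\Z_p)}\Z_p$. The odd torsion in $\pi_*\THH(\Z_p)$ produces Tor contributions; already for absolute $\THH$ of a perfectoid, the polynomial algebra $\pi_*\THH$ becomes the divided power algebra $\pi_*\HH$. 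So a trivialisation known only on the associated graded of $\THH$ does not transport to $\pi_{2*}\HH$.

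\textbf{Route 2 is only a sketch, and its premise is off.} The pieces $\wedge^i\L_{R/\Z_p\llb q_n-1\rrb}$ are built from $\L_{R/\Z_p[\zeta_{p^n}]}$ together with divided powers of $R\otimes\L_{\Z_p[\zeta_{p^n}]/\Z_p\llb q_n-1\rrb}$. They are not obtained by base change from $\bar\prism_{R/\Z_p\llb q_{n-1}-1\rrb}$. So the fully faithfulness result on Hodge--Tate crystals used in \cref{lem::trivialising_action_mod_zeta_p-1} does not apply without further work.

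\textbf{The missing idea: no trivialisation for $\HH$ is needed.} The vanishing for $\HH$ is inherited from the vanishing for $\THH$ already established in the proof of \cref{thm::limit_THH_for_cyclotomic_tower}. This is how the paper argues.
\begin{enumerate}
\item By quasi-syntomic descent, reduce to $R$ a QRSP.
\item By \cref{cor::HH_and_rel_HH_even_for_QRSP_R}, the terms $\HH(R_n/\Z_p\llb q_n-1\rrb)$ are even and their motivic filtration is the double-speed Postnikov filtration. By the Milnor sequence, it therefore suffices that the underlying limit $\varprojlim_n\HH(R_n/\Z_p\llb q_n-1\rrb)$ vanishes.
\item The diagram of \cref{prop::trace_for_HH} is obtained from the $\THH$ diagram by applying $-\otimes_{\THH(\Z_p)}\Z_p$. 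Hence this limit is $\bigl(\varprojlim_n\THH(R_n/\S_p\llb q_n-1\rrb)\bigr)\otimes_{\THH(\Z_p)}\Z_p\simeq 0$.
\item The limit commutes with this base change by \cref{lem::almost_perfect_commute_with_limit_for_connective_sp}, because all terms are connective and $\Z_p$ is almost perfect over the Noetherian $E_\infty$-ring $\THH(\Z_p)$.
\end{enumerate}
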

\begin{proof}
    As in the proof of \cref{thm::limit_THH_for_cyclotomic_tower}, as a consequence of \cref{prop::trace_for_HH} it suffices to show that
    \[\varprojlim_n \Fil_\mot^{\ge *} \THH(R_n/\S_p\llb q_n-1\rrb)\]
    is zero where the transition maps in the middle term are given by 
    \begin{align*}
        \Fil_\mot^{\ge *}\HH(R_{n+1}/\S_p\llb q_{n+1}-1\rrb)& \simeq \Fil_\mot^{\ge *}\HH(R_n/\S_p\llb q_n-1\rrb)  \otimes_{\S_p\llb p^{-n}\Z_p\rrb} \S_p\llb p^{-n-1}\Z_p\rrb \\
        &\xrightarrow{\id \otimes \mr{proj}} \Fil_\mot^{\ge *}\HH(R_n/\S_p\llb q_n-1\rrb) \\
        &\xrightarrow{\sum_{i=0}^{p-1} \psi^{g^{ip^{n-1}}}} \Fil_\mot^{\ge *}\HH(R_n/\S_p\llb q_n-1\rrb).
    \end{align*}
    Since $\varprojlim_n \Fil_\mot^{\ge *} \HH(R_n/\S_p\llb q_n-1\rrb)$ is a quasi-syntomic sheaf in $R$, it suffices by quasi-syntomic descent to show the vanishing when $R$ is a QRSP. Using \cref{cor::HH_and_rel_HH_even_for_QRSP_R}, we need to show that $$\varprojlim_n \tau_{\ge 2*} \HH(R_n/\S_p\llb q_n-1\rrb)$$
    vanishes. By the Milnor sequence, it suffices to show that $\varprojlim_n \HH(R_n/\S_p\llb q_n-1\rrb) = 0$. This follows from \[\varprojlim_n \THH(R_n/\S_p\llb q_n-1\rrb) \simeq 0\]
    established in the proof of \cref{thm::limit_THH_for_cyclotomic_tower} by applying $-\otimes_{\THH(\Z_p)}\Z_p$, noting that this commutes with the limit as a consequence of \cref{lem::almost_perfect_commute_with_limit_for_connective_sp}. 
\end{proof}

The same argument as in \cref{cor::limit_of_nyg_filt_prismatic_coh_over_cyc_tower} then yields the following.
\begin{corollary}\label{cor::mainthm_for_de_rham_coh}
    Suppose $X$ is a qcqs bounded $p$-adic formal scheme over $\Z_p[\zeta_p]$ such that $\L_{X/\Z_p\llb q_1-1\rrb}$ is pseudo-coherent as a $\Z_p\llb q_1-1\rrb$-module, then there are equivalences 
    \begin{align*}
        \varprojlim_n \hat \dR^{\ge r}_{X_n} &\simeq \hat \dR_{X/\Z_p\llb q_1-1\rrb}^{\ge r-1} \hat\otimes_{\Z_p\llb q_1-1\rrb} \varprojlim_n \Z_p\llb q_n-1\rrb (1)[-1],\\
        \varprojlim_n \hat \dR_{X_n} &\simeq \hat \dR_{X/\Z_p\llb q_1-1\rrb} \hat\otimes_{\Z_p\llb q_1-1\rrb} \varprojlim_n \Z_p\llb q_n-1\rrb (1)[-1],\\
        \varprojlim_n \hat \dR^{\le r}_{X_n} &\simeq \hat \dR_{X/\Z_p\llb q_1-1\rrb}^{\le r-1} \hat\otimes_{\Z_p\llb q_1-1\rrb} \varprojlim_n \Z_p\llb q_n-1\rrb (1)[-1],
    \end{align*}
    where $X_n:= X\times_{\Spf\Z_p[\zeta_p]} \Spf \Z_p[\zeta_{p^n}]$, and the right hand side of all three equivalences are $(p,q_1-1)$-adically completed.
\end{corollary}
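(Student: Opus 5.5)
Our plan is to follow the same route as \cref{cor::limit_of_nyg_filt_prismatic_coh_over_cyc_tower}, now for $\HH$ in place of $\THH$. We start from the preceding proposition, which identifies $\varprojlim_n \Fil_\mot^{\ge *}\HH(X_n)$ with $\varprojlim_n\Fil_\mot^{\ge *-1}\HH(X_n/\Z_p\llb q_n-1\rrb)(1)[1]$, the limit taken along $\id\otimes\proj$. This identification is compatible with the circle action. We then apply $(-)^{h\T_\ev}$, $(-)^{t\T_\ev}$ and $\gr^*$. By the motivic-filtration dictionary of \cref{sctn::zoo_of_coh_thries}, these produce $\hat\dR^{\ge r}[2r]$, $\hat\dR[2r]$ and $L\Omega^{*}[2*]$ respectively, and the quotient $\hat\dR/\hat\dR^{\ge r+1}=\hat\dR^{\le r}$ is obtained as a cofibre. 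Fixed points commute with sequential limits, so $(-)^{h\T_\ev}$ passes through the limit directly. For the Tate construction we argue as in \cref{lem::synthetic_tate_commute_with_limit_over_transfers}: all terms are connective in the Postnikov $t$-structure on synthetic spectra, since we may reduce to affines by the qcqs hypothesis, where the motivic filtration is the even filtration on connective rings. The resulting terms $\Fil_H^{\ge r-1}$ and $\hat\dR_{X/\Z_p\llb q_1-1\rrb}$ already carry the Frobenius-twist convention $\Z_p\llb q_1-1\rrb=\Z_p\llb q-1\rrb^{(1)}$ of Remark \ref{rmk::on_conventions_for_frob_twists}.

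Next we pull the base change out of the limit. By (the $\HH$-analogue of) \cref{lem::base_change_of_prisms_in_rel_thh}, which follows by applying $-\otimes_{\THH(\Z_p)}\Z_p$ together with \cref{lem::HH(rel_qdR)_is_THH(rel_qdR)_tensored_with_Zp}, each term is $\gr^i_\mot\HH(X_1/\Z_p\llb q_1-1\rrb)\otimes_{\Z_p\llb q_1-1\rrb}\Z_p\llb q_n-1\rrb$, with transition maps $\id\otimes\proj$. Via the Hodge/HKR identification, $\gr^i_\mot\HH(X_1/\Z_p\llb q_1-1\rrb)$ is $(\wedge^i\L_{X/\Z_p\llb q_1-1\rrb})^\wedge_p[i]$. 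The pseudo-coherence hypothesis on $\L$ propagates to its derived exterior powers, which are computed by finite complexes of tensor powers in the derived category of the Noetherian ring $\Z_p\llb q_1-1\rrb$. These graded pieces are therefore almost perfect, so \cref{lem::almost_perfect_commute_with_limit_for_connective_sp}, with Remark \ref{rmk::criterion_for_almost_perfectness}, lets us commute $\otimes$ with $\varprojlim_n$ on each graded piece. A shift is needed to make things connective, but these modules are bounded below, so this is harmless. The limit $\varprojlim_n\Z_p\llb q_n-1\rrb$ is then $\mc M_\Iw$.

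To pass from graded pieces to the Hodge-filtered and Hodge-completed statements, we use completeness of the Hodge filtration on $\hat\dR$. The truncations $\hat\dR^{\le r}$ are finite extensions of the graded pieces, so their statement follows at once by five-lemma induction. For $\hat\dR^{\ge r}$ we write it as $\varprojlim_s \hat\dR^{\ge r}/\hat\dR^{\ge s}$ and commute the two limits. The completed tensor product $\hat\otimes$, being the $(p,q_1-1)$-completion, is compatible with this. This completed step, where the tensor product has to be commuted with an inverse limit in the Hodge direction, is the one to watch, since the pieces are not bounded below uniformly. We expect it to be handled by taking $(p,q_1-1)$-completion on the right, and by noting that $\mc M_\Iw$ is flat and $(p,q-1)$-complete over $\Z_p\llb q_1-1\rrb$, being a countable product in the guise of $\Hom(A_{\inf},-)$.

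Finally, the Bott-class Tate twist $(1)$ and the shift $[-1]$ arise exactly as in Remark \ref{rmk::the_tate_twist_in_D_Iw_and_prismatic_log}. The $[-1]$ comes from converting the spectral shift $[1]$ together with the filtration shift $*-1$ after taking $\gr$, which has weight $2*$. Equivariance under $(1+p\Z_p)^\times$ is inherited from the previous proposition.
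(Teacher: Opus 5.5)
Your route is the paper's own. Its proof is literally ``the same argument as \cref{cor::limit_of_nyg_filt_prismatic_coh_over_cyc_tower}'': start from the preceding proposition, pull $-\otimes_{\Z_p\llb q_1-1\rrb}\Z_p\llb q_n-1\rrb$ out of the limit on motivic graded pieces via \cref{lem::almost_perfect_commute_with_limit_for_connective_sp}, then apply $(-)^{h\T_\ev}$, $(-)^{t\T_\ev}$ and $\gr^*$. You only reorder this, taking fixed points first and rebuilding the Hodge filtration by hand.

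The step you flag is a genuine gap, and your proposed fix does not close it. Each $\proj_n$ is split by the inclusion $\Z_p\llb q_n-1\rrb\subset\Z_p\llb q_{n+1}-1\rrb$, with finite free kernel, so $\mc M_\Iw\cong\prod_\N\Z_p\llb q_1-1\rrb$. Write $B:=\hat\dR^{\ge r-1}_{X/\Z_p\llb q_1-1\rrb}$. What your argument actually produces is $\varprojlim_s\bigl((B/\Fil^s_H)\otimes\mc M_\Iw\bigr)\cong\prod_\N B$.

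Flatness and completeness of a countable product do not make this agree with $(B\otimes\mc M_\Iw)^\wedge_{(p,q_1-1)}$. Both sides are derived complete, so you can compare them modulo $(p,q_1-1)$. There the comparison map becomes $V\otimes_{\F_p}\F_p^{\N}\to V^{\N}$ with $V=B\otimes^L_{\Z_p\llb q_1-1\rrb}\F_p$. This is an equivalence only if every $\pi_iV$ is finite-dimensional. A toy case: for $B=\prod_\N\Z_p=\varprojlim_s\Z_p^s$, only finite-rank matrices are hit mod $p$.

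Pseudo-coherence of the graded pieces does not give this finiteness, and it fails already for $X=\Spf\Z_p[\zeta_p]$. There $\hat\dR_{X/\Z_p\llb q_1-1\rrb}$ is the Hodge-completed $p$-adic PD envelope of $([p]_{q_1})$, with $\gr^i_H\cong\Z_p[\zeta_p]$ for all $i\ge 0$. Its reduction mod $(p,q_1-1)$ contains the linearly independent images of the divided powers $\gamma_{p^k}([p]_{q_1})$. So neither it nor any $\hat\dR^{\ge r-1}$ is pseudo-coherent.

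What your argument honestly proves is the equivalence with $\hat\otimes$ read as completed along the Hodge filtration too, i.e.\ $\varprojlim_s\bigl((B/\Fil^s_H)\otimes\mc M_\Iw\bigr)$. The paper's argument proves the same thing; there the point is hidden in identifying $\gr^r$ of $(\Fil_\mot\HH(X/\Z_p\llb q_1-1\rrb)\otimes\mc M_\Iw)^{h\T_\ev}$ with a completed tensor product. With that reading your proof goes through, since each $B/\Fil^s_H$ is a finite extension of pseudo-coherent graded pieces. The $\hat\dR^{\le r}$ statement is unaffected. For the literal $(p,q_1-1)$-completed reading you would need pseudo-coherence of $B$ itself as an extra input, and the example above shows it is not available.

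Separately, your justification for the graded pieces does not work for non-affine $X$. The derived exterior powers are taken over $\O_X$, not over $\Z_p\llb q_1-1\rrb$. So pseudo-coherence of $\R\Gamma(X,\L_{X/\Z_p\llb q_1-1\rrb})$ does not propagate to $\R\Gamma(X,L\wedge^i\L_{X/\Z_p\llb q_1-1\rrb})$ by forming tensor powers over $\Z_p\llb q_1-1\rrb$. What works, as in the remark following the corollary, is that for proper regular $X$ the sheaf $\L_{X/\Z_p\llb q_1-1\rrb}$ is perfect. Hence its derived exterior powers are perfect, and so is their cohomology.
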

\begin{remark}
    The usual transitivity triangle for the cotangent complex, and the standard computation $\L_{\Z_p[\zeta_p]/\Z_p\llb q_1-1\rrb} \simeq \Z_p[\zeta_p]\{1\}[1]$ yield a fibre sequence \[\R\Gamma(X, \O_X)[1] \to \L_{X/\Z_p\llb q_1-1\rrb} \to \L_{X/\Z_p[\zeta_p]}.\]
    Here, $\R\Gamma(X,\O_X)$ is the coherent cohomology of the structure sheaf of $X$. Since $\Z_p[\zeta_p]$ is itself a pseudo-coherent $\Z_p\llb q_1-1\rrb$-algebra, \cite[Tag064Z]{stacks-project} implies that $\L_{X/\Z_p\llb q_1-1\rrb}$ is a pseudo-coherent $\Z_p\llb q_1-1\rrb$-module if $\R\Gamma(X,\O_X)$ and $\L_{X/\Z_p[\zeta_p]}$ are pseudo-coherent $\Z_p[\zeta_p]$-modules.
    
    If $X$ is proper, then $\R\Gamma(X,\O_X)$ is a perfect $\Z_p[\zeta_p]$-module. We also saw in the proof of \cref{lem::sufficient_condition_for_almost_perf_gr_Nyg} that $\L_{X/\Z_p[\zeta_p]}$ is a perfect quasi-coherent sheaf whenever $X$ is proper regular over $\Z_p[\zeta_p]$. Thus, \cref{cor::mainthm_for_de_rham_coh} holds for proper regular $X$ over $\Z_p[\zeta_p]$. 
\end{remark}

\printbibliography

\end{document}